\documentclass{amsart}
\usepackage[utf8]{inputenc}

\usepackage{graphicx}
\usepackage[a4paper,top=2cm,bottom=2cm,left=2cm,right=2cm,marginparwidth=1.75cm]{geometry}
\usepackage{standalone}
\usepackage{amsfonts}
\usepackage{amsmath}
\usepackage{amssymb}
\usepackage{tikz-cd}
\usepackage{hyperref}
\usepackage{blindtext}
\usepackage{enumitem}
\usepackage{braket}
\usepackage{mathtools}
\usepackage{environ}
\usepackage{fancyhdr}


\setlength{\parindent}{0em}
\setlength{\parskip}{0.5em}

\allowdisplaybreaks
\sloppy

\newtheorem{theorem}{Theorem}[subsection]
\newtheorem{proposition}[theorem]{Proposition}
\newtheorem{lemma}[theorem]{Lemma}
\newtheorem{corollary}[theorem]{Corollary}
\newtheorem{definition}[theorem]{Definition}
\newtheorem{example}[theorem]{Example}
\newtheorem{remark}[theorem]{Remark}
\newtheorem{condition}[theorem]{Condition}
\let\olddefinition\definition
\let\oldexample\example
\let\oldremark\remark
\let\oldcondition\condition
\newcommand{\rank}{\text{rank}}
\renewcommand{\definition}{\olddefinition\normalfont}
\renewcommand{\example}{\oldexample\normalfont}
\renewcommand{\remark}{\oldremark\normalfont}
\renewcommand{\condition}{\oldcondition\normalfont}

\newcommand{\theoremname}[1]{\!\textup{\textbf{(#1)}}}

\newcommand\numberthis{\addtocounter{equation}{1}\tag{\theequation}}

\usepackage[backend=bibtex, hyperref=true,maxbibnames=99,sorting=nty]{biblatex}
\addbibresource{bibfile.bib}

\newenvironment{customtheorem}[1]
  {\innercustomtheorem}
  {\endinnercustomtheorem}
\NewEnviron{restatethis}[1]{%
  \expandafter\xdef\csname restatethis@#1\endcsname{%
    \unexpanded\expandafter{\BODY}%
  }%
  \begin{customtheorem}{\ref{#1}}\BODY\end{customtheorem}%
}
\newcommand{\restate}[1]{%
  \begin{theorem}\label{#1}\csname restatethis@#1\endcsname\end{theorem}%
}

\title{Seiberg-Witten Theory and Topological Recursion}
\author{Wee Chaimanowong}
\date{August 2020}

\address{
School of Mathematics and Statistics, 
University of Melbourne, 
Royal Parade, Parkville, Victoria 3010, Australia
} 

\email{
n.chaimanowong@student.unimelb.edu.au
} 

\begin{document}

\maketitle

\begin{abstract}
Kontsevich-Soibelman (2017) reformulated Eynard-Orantin topological recursion (2007) in terms of Airy structure which provides some geometrical insights into the relationship between the moduli space of curves and topological recursion. In this work, we investigate the analytical approach to this relationship using the Seiberg-Witten family of curves as the main example. In particular, we are going to show that the formula computing the Hitchin systems' Special Kahler's prepotential from the genus zero part of topological recursion as obtained by Baraglia-Huang (2017) can be generalized for a more general family of curves embedded inside a foliated symplectic surface, including the Seiberg-Witten family. Consequently, we obtain a similar formula relating the Seiberg-Witten prepotential to the genus zero part of topological recursion on a Seiberg-Witten curve. 
\end{abstract}

\setcounter{tocdepth}{2}
\tableofcontents{}

\section{Introduction}\label{introductionchapter}
Since its introduction in 1994, \emph{Seiberg-Witten theory} \cite{seiberg1994electric} has been an active research topic in both the physics and mathematics communities. From the physics point of view, it is an exact low-energy effective field theory of an $\mathcal{N}=2$ supersymmetric $SU(N+1)$ gauge theory. From the mathematical point of view, all the physical information can be encoded into an $N$-dimensional family $\mathcal{B}$ of genus $N$ connected compact Riemann surfaces $\Sigma$ known as \emph{Seiberg-Witten curves}. Each Seiberg-Witten curve is equipped with a residueless meromorphic differential form $dS_{SW}$ called the \emph{Seiberg-Witten differential}. The homology group $H_1(\Sigma, \mathbb{C})$ is naturally equipped with a symplectic structure, given by the oriented intersection product. By choosing symplectic basis cycles $A$ and $B$ for the homology group, the corresponding $A$ and $B$-periods of $dS_{SW}$ recover the \emph{electric} $(a^1,\cdots,a^N)$ and \emph{magnetic} $(b_1,\cdots,b_N)$ charges of the theory respectively. 

The supersymmetric origin of Seiberg-Witten theory enforces that the effective action of the theory is completely determined by a holomorphic function $\mathfrak{F}^{SW} = \mathfrak{F}^{SW}(\{a^{i=1,\cdots,N}\},\Lambda)$ known as the \emph{Seiberg-Witten prepotential}. The parameter $\Lambda$ is the energy scale of the theory where gauge couplings become strong. Therefore, technically we should have written $\mathcal{B}$ as $\mathcal{B}(\Lambda)$ to emphasize the dependency of the family on the energy scale $\Lambda$. In the original work \cite{seiberg1994electric}, the electric-magnetic duality was employed to constrain the form of the prepotential. A more mathematical approach to the computation of the prepotential has been studied via Nekrasov's partition function a few years later by various authors \cite{nekrasov2003seiberg, nekrasov2006seiberg, nakajima2004lectures, nakajima2005instanton}. 

Nekrasov's partition function $Z(\{a^{i=1,\cdots,N}\}, \epsilon_1,\epsilon_2,\Lambda)$ is the path-integral over the non-perturbative instanton contributions of a certain $(\epsilon_1,\epsilon_2)$-deformed $\mathcal{N}=2$ supersymmetric $SU(N+1)$ gauge theory. It can be written as an integral over the moduli space of instantons which can be evaluated via Atiyah-Bott equivariant localization formula \cite{atiyah1984moment}. In the end, we find that $Z(\{a^i\}, \epsilon_1,\epsilon_2,\Lambda)$ is a sum of terms indexed by $N$-tuples of Young's diagrams over all possible $N$-tuples of Young's diagrams with boxes of width $-\epsilon_1$ and height $+\epsilon_2$. In the limit $\epsilon_1,\epsilon_2 \rightarrow 0$, Young diagrams become graphs of continuous functions $f$ and there exists the function $f_0$ in which the corresponding term in the summation:
\begin{equation*}
	\mathfrak{F}_{0}(\{a^i\}, \Lambda) = \lim_{\epsilon_1,\epsilon_2\rightarrow 0}\epsilon_1\epsilon_2\log Z(\{a^i\},\epsilon_1,\epsilon_2,\Lambda)
\end{equation*}
will dominate the contribution. It was proven by Nekrasov-Okounkov \cite{nekrasov2006seiberg, nekrasov2003seiberg} that $\mathfrak{F}_0$ is the Seiberg-Witten prepotential $\mathfrak{F}_{SW}$. The same result was independently proven using techniques in algebraic geometry by Nakajima-Yoshioka \cite{nakajima2005instanton,nakajima2004lectures}. 

Mathematically, the Seiberg-Witten prepotential $\mathfrak{F}_{SW}$ contains information about the deformation of Seiberg-Witten curves $\Sigma$ in the family $\mathcal{B}$. A Seiberg-Witten curve $\Sigma$ is represented by a point $[\Sigma] \in \mathcal{B}$. Let us define the \emph{regular locus} $\mathcal{B}^{reg}$ of $\mathcal{B}$ to be the dense subset of points $[\Sigma]\in \mathcal{B}$ such that $\Sigma$ is non-singular. The set $\{a^1 := a^1(\Sigma),\cdots,a^N := a^N(\Sigma)\}$ where $a^i(\Sigma) := \oint_{A_i\subset \Sigma}dS_{SW}$ are $A$-periods of the Seiberg-Witten differential on $\Sigma$ is a system of local holomorphic coordinates on some neighbourhoods $\mathcal{B}_{\Sigma} \subseteq \mathcal{B}^{reg}$ of an arbitrary point $[\Sigma] \in \mathcal{B}^{reg}$ giving $\mathcal{B}^{reg}$ the structure of an $N$-dimensional complex manifold. Let us fix a point $[\Sigma_0] \in \mathcal{B}^{reg}$. The prepotential $\mathfrak{F}^{SW}_{\Sigma_0} : \mathcal{B}_{\Sigma_0}\rightarrow \mathbb{C}$ is a holomorphic function locally defined on some open neighbourhood $\mathcal{B}_{\Sigma_0} \subseteq \mathcal{B}^{reg}$ of $[\Sigma_0]$ which relates the $B$-periods on $\Sigma$ of the Seiberg-Witten differential, $b_i(\Sigma) := \oint_{B_i\subset \Sigma}dS_{SW}$, to its $A$-periods as follows:
\begin{equation*}
	b_i = b_i(\Sigma) = \frac{\partial \mathfrak{F}^{SW}_{\Sigma_0}(\{a^k\},\Lambda)}{\partial a^i}, \qquad i = 1,\cdots, N, \qquad [\Sigma] \in \mathcal{B}_{\Sigma_0}.
\end{equation*}

Let us explore the prepotential from another important viewpoint. Similarly to how we equipped the homology group $H_1(\Sigma,\mathbb{C})$ with a symplectic structure, the cohomology group $H^1(\Sigma,\mathbb{C}) \cong H_1(\Sigma, \mathbb{C})^*$ may be equipped with a symplectic structure given by the oriented intersection of cohomological classes via Poincar\'{e} duality. We can say that the existence of prepotential $\mathfrak{F}^{SW}_{\Sigma_0}$ is also equivalent to the fact that
\begin{equation*}
	\mathcal{L}_{\Sigma_0} := \left\{\left(a^1(\Sigma),\cdots,a^N(\Sigma),b_1(\Sigma), \cdots, b_N(\Sigma)\right)\ |\ [\Sigma] \in \mathcal{B}_{\Sigma_0}\right\} \subset \mathbb{C}^{2g} \cong H^1(\Sigma_0, \mathbb{C})
\end{equation*}
is a Lagrangian submanifold. From this point of view, the prepotential is the generating function of $\mathcal{L}_{\Sigma_0}$. Here we have identified $H^1(\Sigma,\mathbb{C})$ for every $[\Sigma] \in \mathcal{B}_{\Sigma_0}$ with $\mathbb{C}^{2g}$. Consider the symplectic vector bundle $\mathcal{H}\rightarrow \mathcal{B}_{\Sigma_0}$ equipped with the \emph{Gauss-Manin} connection $\nabla_{GM}$ and with the fiber over $[\Sigma] \in \mathcal{B}_{\Sigma_0}$ given by $\mathcal{H}_{\Sigma} := H^1(\Sigma, \mathbb{C})$. Then $\mathcal{L}_{\Sigma_0} \subset \mathcal{H}_{\Sigma_0} = H^1(\Sigma_0, \mathbb{C})$ is the image of the embedding of $\mathcal{B}_{\Sigma_0}$ into the fiber $\mathcal{H}_{\Sigma_0}$ given by sending the cohomology class of $dS_{SW}$ over $[\Sigma] \in \mathcal{B}_{\Sigma_0}$ to $[\Sigma_0]$ via the parallel transport. 

In fact, the existence of the Seiberg-Witten prepotential $\mathfrak{F}^{SW}_{\Sigma_0}$ is a consequence of an important property of the Seiberg-Witten differential, namely, any infinitesimal variations of $dS_{SW}$ in the family $\mathcal{B}$ using a certain choice of connection is a global holomorphic form on $\Sigma$. Based on this key property, we may consider a generalized notion of `Seiberg-Witten theory' as follows. Let $\mathcal{B}$ be an arbitrary $g$ dimensional family of genus $g$ curves $\Sigma$ and each $\Sigma$ is equipped with a residueless meromorphic differential $dS$ such that 
\begin{equation}\label{intro.generalsw}
	\frac{\partial dS}{\partial u^i} = \text{holomorphic form on $\Sigma$}, \qquad i = 1,\cdots, g
\end{equation}
with respect to some choices of connections, where $\{u^1,\cdots,u^g\}$ are local coordinates of $\mathcal{B}$. Let $\mathcal{B}_{\Sigma_0} \subset \mathcal{B}^{reg}$ be some small open neighbourhood of $[\Sigma_0]\in \mathcal{B}^{reg}$. The image of the embedding 
\begin{equation}\label{intro.embedding}
	[\Sigma] \mapsto (\mathbf{a}^1(\Sigma),\cdots,\mathbf{a}^g(\Sigma),\mathbf{b}_1(\Sigma),\cdots,\mathbf{b}_g(\Sigma)) \in \mathcal{H}_{\Sigma_0} = H^1(\Sigma_0,\mathbb{C}),
\end{equation}
given by sending the cohomology class of $dS$ over $[\Sigma] \in \mathcal{B}_{\Sigma_0}$ to $[\Sigma_0]$ via parallel transport, is a Lagrangian submanifold $\mathcal{L}_{\Sigma_0}$. Here, $\mathbf{a}^i = \mathbf{a}^i(\Sigma), \mathbf{b}_i = \mathbf{b}_i(\Sigma)$ are $A, B$-periods of $dS$ and the prepotential $\mathfrak{F}_{\Sigma_0} = \mathfrak{F}_{\Sigma_0}(\{\mathbf{a}^k\})$ exists on $\mathcal{B}_{\Sigma_0}$ as the generating function of $\mathcal{L}_{\Sigma_0}$: $\mathbf{b}_i = \frac{\partial \mathfrak{F}_{\Sigma_0}(\{\mathbf{a}^k\})}{\partial \mathbf{a}^i}$. In the more general context of \emph{Whitham hierarchy} \cite{itoyama1997prepotential, marshakov1999seiberg}, instead of just a single energy scale parameter $\Lambda$, we can consider an infinite number of \emph{time} variables $\{T^{k = 0,1,2,\cdots}\}$ such that $\frac{\partial dS}{\partial T^k} = d\Omega_k$ for some meromorphic differentials $d\Omega_k$ on $\Sigma$. In this context, the main problem to consider is the following: given a set of meromorphic differentials $\{d\Omega_{k=0,1,2,\cdots}\}$ on $\Sigma$ in the family of curves $\mathcal{B}(\{T^{k=0,1,2,\cdots}\})$, find (if possible) the meromorphic differential $dS$ on $\Sigma$ satisfying (\ref{intro.generalsw}) and $\frac{\partial dS}{\partial T^k} = d\Omega_k$.

In this work, we aim to study some mathematical aspects of the Seiberg-Witten family of curves and the prepotential from the point of view of \emph{topological recursion}. In the bigger picture, we will consider the complex analytic moduli space $\mathcal{B}$ of smooth compact complex genus $g$ curves $\Sigma$ embedded inside a foliated symplectic surface $S$ tangent to the leaves of the foliation $\mathcal{F}$ at finitely many points with tangency order $1$. Let us call any curve $[\Sigma] \in \mathcal{B}$ a  $\mathcal{F}$-\emph{transveral curve} and call points where $\Sigma$ is tangent to $\mathcal{F}$ the \emph{ramification points} of $\Sigma$. We will examine the deformation of curves in $\mathcal{B}$, using the Kontsevich-Soibelman approach to topological recursion \cite{kontsevich2017airy} with Seiberg-Witten family of curves as the main example.

A \emph{spectral curve} is a Riemann surface $\Sigma$ with some distinct points $\{r_{\alpha \in Ram}\}$ called \emph{ramification points} together with some additional data. Topological recursion is a recursive procedure which takes a spectral curve as input and produces symmetric meromorphic multi-differentials $\omega_{g,n}$ which are sections of $(\Omega^1_{\Sigma})^{\boxtimes n}$ on $(\Sigma\setminus \{r_{\alpha \in Ram}\})^n$. For instance, $\omega_{g,1}$ is a meromorphic differential on $\Sigma$. Topological recursion was introduced by Eynard-Orantin \cite{eynard2007invariants, eynard2008algebraic} around 2007-2008 with a wide variety of applications in areas ranging from mathematical physics to enumerative geometry. For example, it has been shown \cite{eynard2007invariants, eynard2008algebraic} by comparing the topological recursion relations with a particular choice of the spectral curve to loop equations that the corresponding output $\omega_{g,n}$ are given by the correlation functions in Matrix models. Similarly, let the spectral curve be given by an Airy curve with some additional data. It has been shown \cite{eynard2014invariants, eynard2011intersection, dunin2014identification} that coefficients of the Laurent expansion of $\omega_{g,n}$ centered at the only ramification point is related to the intersection number of divisor classes of universal cotangent line bundles $\{\psi_1,\cdots,\psi_n\}$ (known as $\psi$-classes) on $\bar{\mathcal{M}}_{g,n}$. The generating function of the intersection numbers was conjectured and later proven to be the tau function of KdV hierarchy by Witten and Kontsevich respectively \cite{witten1990two, kontsevich1992intersection}.

In 2017, Kontsevich-Soibelman \cite{kontsevich2017airy} introduced the notion of \emph{Airy structures} which is given by a Lie algebra $\mathfrak{g}$ generated by a set of quadratic polynomials $\{H_{k\in\mathbb{I}}\}$ of a certain form on a symplectic vector space $W$ where the Lie bracket is naturally given by the Poisson bracket on $W$. They showed that any Airy structure corresponds to a recursion called \emph{abstract topological recursion}. This reduces to the usual Eynard-Orantin topological recursion when considering an example of Airy structure $\{H_{i\in\mathbb{I}}\} := \{(H_{Airy})_{k=1,2,3,\cdots}\}$ and $W := W^{Ram}_{Airy}$ arises from the so-called \emph{residue constraints}. The zero set of the generators of $\mathfrak{g}$ is a Lagrangian submanifold $L \subset W$ by the fact that $\mathfrak{g}$ is closed under the Lie bracket. We call $L$ a \emph{quadratic Lagrangian submanifold} since it is defined by a set of quadratic polynomials. In particular, we let $L^{Ram}_{Airy} := \{(H_{Airy})_{k=1,2,3,\cdots}=0\} \subset W^{Ram}_{Airy}$.

Let us return to the discussion of the particular Airy structure $\{(H_{Airy})_{k=1,2,3,\cdots}\}$ corresponding to topological recursion. Let $Discs^{Ram}$ be a space of $|Ram|$-disjoint union of discs $\mathbb{D}_{\alpha}, \alpha \in Ram$ embedded inside the foliated symplectic surface $S$. Each $\mathbb{D}_\alpha$ are deformed Airy curves which can be parameterized by 
\begin{equation}\label{intro.parameterization}
	z_\alpha \mapsto \left(x_\alpha = z_\alpha^2 + a, y_\alpha = \sum_{k=1}^\infty b_{k\alpha}z_\alpha^k\right)    
\end{equation}
on the local coordinates chart, $(U_\alpha, x_\alpha, y_\alpha)$ of $S$ with the ramification point $r_\alpha = (x_\alpha, y_\alpha) = (a,b_{0\alpha}) \in U_\alpha$. Given an annulus on a disc $\mathbb{D}_\alpha$ centered at $r_\alpha$, the set of all Laurent series which converge on the annulus naturally has a structure of an infinite dimensional symplectic vector space. The infinite-dimensional symplectic vector space $W^{Ram}_{Airy}$ is given by the direct sum these vector spaces over $\alpha \in Ram$. The space $Discs^{Ram}$ can be thought of as the local version of the moduli space of curves $\mathcal{B}$. Introduce a trivial vector bundle $W^{Ram}\rightarrow Discs^{Ram}$ with fiber over any point given by $W^{Ram}_{Airy}$ equipped with a certain connection $\nabla$. Let us fix a reference point $t_0 \in Discs^{Ram}$ and let $Discs^{Ram}_{t_0} \ni t_0$ be a certain subset of $Discs^{Ram}$ analogous to $\mathcal{B}_{\Sigma_0}$. The important observation is that there exists a local section $\theta$ of $W^{Ram}$ on $Discs^{Ram}_{t_0}$ such that the parallel transport of $\theta_t \in W^{Ram}_t$ from any point $t\in Discs^{Ram}_{t_0}$ to $t_0$ gives an embedding of $Discs^{Ram}_{t_0}$ into the fiber $W^{Ram}_{t_0}$ with the image contained in the quadratic Lagrangian submanifold $L^{Ram}_{Airy}$. The section $\theta$ can be thought of like a local version of $dS$ or a Seiberg-Witten differential.

At first glance, this on-going discussion seems to have little to do with the deformation of curves or Seiberg-Witten theory. But we note that an infinitesimal deformation of a smooth curve $\Sigma$ will produce a global holomorphic section of the normal bundle on $\Sigma$. If $\Sigma$ is embedded inside a symplectic surface, then by applying the symplectic form to the global section of the normal bundle, we produce a global holomorphic form on $\Sigma$ which naturally arises from the infinitesimal deformation of $\Sigma$. In particular, let $\mathcal{B}$ be the moduli space of $\mathcal{F}$-transversal genus $g$ curves, then we have the map $\pmb{\phi}_{\Sigma} : T_{[\Sigma]}\mathcal{B}\rightarrow \Gamma(\Sigma, \Omega^1_\Sigma) \hookrightarrow \mathcal{H}_\Sigma$ for each $[\Sigma] \in \mathcal{B}$. It is possible to check that $d_{\nabla_{GM}}\pmb{\phi} = 0$ where $d_{\nabla_{GM}}$ denotes the exterior covariant derivative using the Gauss-Manin connection. Let us fix a reference point $[\Sigma_0] \in \mathcal{B}$. By the Pointcar\'{e} Lemma, there exists an open neighbourhood $\mathcal{B}_{\Sigma_0}$ of $[\Sigma_0]$ with a local section $\pmb{\theta}$ of $\mathcal{H}$ on $\mathcal{B}_{\Sigma_0}$ such that $d_{\nabla_{GM}}\pmb{\theta} = \pmb{\phi}$. Then the parallel transport of $\pmb{\theta}_{\Sigma} \in \mathcal{H}_{\Sigma}$ from any point $[\Sigma] \in \mathcal{B}_{\Sigma_0}$ to $[\Sigma_0]$ defines an embedding of $\mathcal{B}_{\Sigma_0}$ into $\mathcal{H}_{\Sigma_0}$ as a Lagrangian submanifold $\mathcal{L}_{\Sigma_0}$. This is exactly the same embedding given in (\ref{intro.embedding}) with
\begin{equation*}
	\mathbf{a}^i := \mathbf{a}^i(\Sigma) = \oint_{A_i}\pmb{\theta}_{\Sigma}, \qquad \mathbf{b}_i := \mathbf{b}_i(\Sigma) = \oint_{B_i}\pmb{\theta}_\Sigma = \frac{\partial \mathfrak{F}_{\Sigma_0}(\{\mathbf{a}^k\})}{\partial\mathbf{a}^i}, \qquad i = 1,\cdots, g,
\end{equation*}
where the holomorphic function $\mathfrak{F}_{\Sigma_0} = \mathfrak{F}_{\Sigma_0}(\{\mathbf{a}^k\})$ defined on $\mathcal{B}_{\Sigma_0}$ is the generating function of $\mathcal{L}_{\Sigma_0}$ called the prepotential.
For each $[\Sigma] \in \mathcal{B}_{\Sigma_0}$, we may consider a lift of the cohomology class $\pmb{\theta}_{\Sigma}$ to a differential form on $\Sigma$. Such a lift is not unique since an addition by $df$ for any meromorphic function $f$ on $\Sigma$ will not change its cohomology class. However, the key idea of Kontsevich-Soibelman is there exists exactly one lift $\theta_\Sigma$ of $\pmb{\theta}_\Sigma$ such that the direct sum of Laurent series expansions of $\theta_\Sigma$ at each ramification point coincides with $\theta_t \in W^{Ram}_t$. Here, $t\in Discs^{Ram}_{t_0}$ is given in term of $[\Sigma] \in \mathcal{B}_{\Sigma_0}$ by the disjoint union of some open neighbourhood $\mathbb{D}_\alpha(\Sigma) \subset \Sigma \subset S$ which we shall denote by $t = \gamma(\Sigma)$. The image of $\theta_t$ under the parallel transport from $t$ to $t_0$ is contained inside $L^{Ram}_{Airy}\subset W^{Ram}_{t_0}$.

In the end, we get a family of differential forms $\theta_\Sigma$ which relates the Lagrangian submanifold $\mathcal{L}_{\Sigma_0} \subset \mathcal{H}_{\Sigma_0}$ with the quadratic Lagrangian submanifold $L^{Ram}_{Airy} \subset W^{Ram}_{t_0}$ and each $\theta_\Sigma$ can be thought of as a generalized Seiberg-Witten differential $dS$ satisfying (\ref{intro.generalsw}). We can understand the relationship between $\mathcal{L}_{\Sigma_0}$ and $L^{Ram}_{Airy}$ more geometrically. The map $[\Sigma] \mapsto \gamma(\Sigma)$ embeds $\mathcal{B}_{\Sigma_0}$ as a subset of $Discs^{Ram}_{t_0}$ and $\mathcal{L}_{\Sigma_0}$ is a cohomological image in $\mathcal{H}_{\Sigma_0}$ of the cross-section of $L^{Ram}_{Airy} \subset W^{Ram}_{Airy}$ over $\mathcal{B}_{\Sigma_0}\subset Discs^{Ram}_{t_0}$. The quadratic Lagrangian submanifold $L^{Ram}_{Airy}$ may be regarded as \emph{universal} in the following sense. Any two Langrangian submanifolds $\mathcal{L}_{\Sigma_0}, \mathcal{L}'_{\Sigma'_0}$ arise from two different moduli spaces $\mathcal{B}, \mathcal{B}'$ of $\mathcal{F}$-transversal curves with the same number of genus and ramification points will correspond to two different cross-sections of $L^{Ram}_{Airy}$. 

From the idea described above, we obtain the main result as follows:

\begin{restatethis}{prepotentialvsomega0ntheorem}
	Let $\mathcal{B}$ be the moduli space of $\mathcal{F}$-transversal genus $g$ curves in a foliated symplectic surface $S$. Then for some small open neighbourhood $\mathcal{B}_{\Sigma_0} \subset \mathcal{B}$ of the reference point $[\Sigma_0] \in \mathcal{B}$, the prepotential $\mathfrak{F}_{\Sigma_0} = \mathfrak{F}_{\Sigma_0}(\mathbf{a}^1,\cdots, \mathbf{a}^g)$ can be expressed as
	\begin{align*}
		\mathfrak{F}_{\Sigma_0}&(\mathbf{a}^1,\cdots,\mathbf{a}^g) = \mathfrak{F}_{\Sigma_0}(\mathbf{a}^1_0,\cdots,\mathbf{a}^g_0)\\ 
		&+ \sum_{i=1}^g(\mathbf{a}^i - \mathbf{a}^i_0)\frac{\partial \mathfrak{F}_{\Sigma_0}}{\partial \mathbf{a}^i}(\mathbf{a}^1_0,\cdots,\mathbf{a}^g_0) + \frac{1}{2}\sum_{i,j=1}^g(\mathbf{a}^i - \mathbf{a}^i_0)(\mathbf{a}^j - \mathbf{a}^j_0)\frac{\partial^2\mathfrak{F}_{\Sigma_0}}{\partial\mathbf{a}^i\partial\mathbf{a}^j}(\mathbf{a}^1_0,\cdots,\mathbf{a}^g_0)\\
		&+ \sum_{n=3}^\infty\frac{1}{n!}\left(\frac{1}{2\pi i}\right)^{n-1}\sum_{i_1,\cdots,i_n = 1}^g\left(\oint_{p_1\in B_{i_1}}\cdots \oint_{p_n \in B_{i_n}}\omega_{0,n}(p_1,\cdots,p_n)\right)(\mathbf{a}^{i_1}-\mathbf{a}^{i_1}_0)\cdots (\mathbf{a}^{i_n}-\mathbf{a}^{i_n}_0)\label{intro.prepotentialvsomega0nformula}\numberthis
	\end{align*}
	for all $(\mathbf{a}^1,\cdots, \mathbf{a}^g) = [\Sigma]\in \mathcal{B}_{\Sigma_0}$. Where $\mathbf{a}_0^k := \mathbf{a}^k(\Sigma_0)$ and the multi-differentials $\left\{\omega_{g,n} \in \Gamma\left((\Sigma_0\setminus \cup_{\alpha\in Ram} r_\alpha)^n, (\Omega_{\Sigma_0}^1)^{\boxtimes n}\right)\right\}$ are produced from topological recursion on $\Sigma_0$.
\end{restatethis}

This theorem is a joint work with Paul Norbury, Michael Swaddle and Mehdi Tavakol which has been proved previously in \cite{chaimanowong2020airystructures}, however in this paper we are going to approach it from an analytical point of view as oppose to the formal approach taken by \cite{chaimanowong2020airystructures}. The interesting point about Theorem \ref{prepotentialvsomega0ntheorem} is that the prepotential $\mathfrak{F}_{\Sigma_0}$ contains information about the deformation of curves $\Sigma$ within the moduli space $\mathcal{B}$ whereas $\omega_{0,n}$ is produced from topological recursion, which is done on a single curve $\Sigma_0$ without any knowledge on the moduli space it belongs to.

We are going to revisit in detail all the discussions about Airy structures, deformations of curves, and topological recursion later in Section \ref{airystructurechapter} along with the proof of Theorem \ref{prepotentialvsomega0ntheorem}.
To apply this result to the Seiberg-Witten family of curves we need to find a foliated symplectic surface $S$ such that each Seiberg-Witten curve can be embedded as a $\mathcal{F}$-transversal curve. It turns out that such a foliated symplectic surface exists as we are going to show in Section \ref{swexamplechapter}, hence there is a nice relationship between the Seiberg-Witten prepotential $\mathfrak{F}^{SW}_{\Sigma_0} = \mathfrak{F}^{SW}_{\Sigma_0}(\{a^k\}, \Lambda)$ and the genus zero part of topological recursion on Seiberg-Witten curve with $\omega_{0,1} = dS_{SW}$. 

The formula (\ref{intro.prepotentialvsomega0nformula}) can also be obtained via the Rauch variational formula \cite{Fay73, eynard2007invariants, baraglia2017special}. For example, Baraglia-Huang \cite{baraglia2017special} studied the moduli space $\mathcal{B}$ of curves $\Sigma$ arising from the \emph{Hitchin systems}, where each $\Sigma$ is embedded inside the total space of the cotangent bundle $T^*\mathcal{C}$ of some compact Riemann surface $\mathcal{C}$. They showed that the Rauch variational formula holds in this case and that the formula (\ref{intro.prepotentialvsomega0nformula}) follows. Theorem \ref{prepotentialvsomega0ntheorem} generalizes the result by Baraglia-Huang to the case where $S$ is an arbitrary foliated symplectic surface instead of $T^*\mathcal{C}$. Moreover, even in the $S = T^*\mathcal{C}$ case, our approach of proving the formula via the idea of Kontsevich-Soibelman should shed light on some interesting underlying geometries between the Lagrangian submanifold in $\mathcal{H}_{\Sigma_0} = H^1(\Sigma_0, \mathbb{C})$ given by the cohomology of the tautological 1-form and the quadratic Lagrangian submanifolds with $\omega_{0,n}$ as the generating function.

\subsection{Seiberg-Witten Family of Curves}\label{intro.swsection}
In this section, we give a quick summary of the needed mathematical details of the Seiberg-Witten family of curves. More comprehensive reviews of this material can be found abundantly in the literature such as \cite{seiberg1994electric, itoyama1997prepotential, marshakov1999seiberg, nakajima2004lectures}. 
The information of $SU(N+1)$ Seiberg-Witten theory can be encoded into a family $\mathcal{B}$ of genus $g = N$ \emph{Seiberg-Witten curves} $\Sigma$ given by
\begin{equation}\label{swcurvedefn}
	\Lambda^{g+1}\left(w + \frac{1}{w}\right) = z^{g+1} + u^gz^{g-1} + ... + u^1 =: P(z;u)
\end{equation}
where $\{u^1,...,u^g\}$ are the coordinates of $\mathcal{B}$. Occasionally, we will write the curve $\Sigma$ as $\Sigma(u)$ to emphasize its dependence on $u = (u^1,\cdots,u^g)$. We may regard (\ref{swcurvedefn}) as a plane curve equation, in that case, we have $w,z \in \mathbb{C}$ and $\Sigma(u) \subset \mathbb{C}^2$ as a non-compact curve. The compactification of $\Sigma(u)$ is obtained by adding two points $\infty_+ = (w,z) = (\infty, \infty)$ and $\infty_- = (w,z) = (0,\infty)$. In this way, we regard $\Sigma(u)$ defined in (\ref{swcurvedefn}) as a compact smooth curve. The map $\pi : \Sigma(u)\rightarrow \mathbb{P}^1, (w,z) \mapsto z$ is a double cover that shows that $\Sigma(u)$ is a hyperelliptic curve with the hyperelliptic involution given by $(w,z) \leftrightarrow (1/w,z)$. An alternative way to express (\ref{swcurvedefn}) is 
\begin{equation*}
	y^2 = P^2(z;u) - 4\Lambda^{2g+2}\qquad \text{where} \qquad y := \Lambda^{g+1}\left(w - \frac{1}{w}\right)
\end{equation*}
which makes it even more obvious that $\Sigma(u)$ is a hyperelliptic curve with the hyperelliptic involution $(y,z) \leftrightarrow (-y,z)$. The $2g+2$ ramification points of $\pi$ are given by $(y,z) = (0, z^{i\pm})$ where $z^{i\pm}, i = 1,\cdots,g+1$ are the roots of the polynomial $P(z;u) \mp 2\Lambda^{g+1} = 0$. We can check that the curve $\Sigma(u)$ is smooth unless $P'(z^{i+};u) = 0$ or $P'(z^{i-};u) = 0$. In other words, we may define the family of smooth Seiberg-Witten curves to be
\begin{equation*}
	\mathcal{B}^{reg} := \left\{u = (u^1,\cdots,u^g) \in \mathcal{B}\ |\ \text{$P^2(z;u) - 4\Lambda^{2g+2} = 0$ has $2g+2$ distinct roots}\right\} \subset \mathcal{B}. 
\end{equation*}
We note that $\mathcal{B}^{reg}$ is dense inside $\mathcal{B}$ and since we will mainly be interested in smooth Seiberg-Witten curves in our works, we will relabel $\mathcal{B}^{reg}$ as $\mathcal{B}$ for simplicity.

We define the \emph{Seiberg-Witten differential} to be the meromorphic differential form
\begin{equation*}
	dS_{SW} := z\frac{dw}{w} = \frac{zP'(z;u)dz}{y}
\end{equation*}
defined on each Seiberg-Witten curve $\Sigma(u)$ with residueless poles of order $2$ at $\infty_{\pm} \in \Sigma(u)$. Occasionally we will write $dS_{SW}(\Sigma(u))$ if we want to emphasize the specific $dS_{SW}$ defined on $\Sigma(u)$. Let us check that the variations of $dS_{SW}(\Sigma(u))$ within the family $\mathcal{B}$ produce global homolomorphic forms on $\Sigma(u)$, hence $dS_{SW}$ satisfies (\ref{intro.generalsw}). As we have mentioned, a choice of connection must be specified in order for the derivatives $\frac{\partial}{\partial u^i}$ to makes sense, in this case let us keep $w = const$, we have 
\begin{equation}
	\frac{\partial dS}{\partial u^i}\Big|_{w = const} = \frac{\partial z}{\partial u^i}\Big|_{w = const}\frac{dw}{w} = -\frac{z^{i-1}}{P'(z;u)}\frac{dw}{w} = \frac{z^{i-1}dz}{y}, \qquad i = 1,\cdots,g.
\end{equation}
which are holomorphic on $\Sigma(u)$. We define
\begin{equation*}
	a^i := \oint_{A_i}dS_{SW}, \qquad b_i := \oint_{B_i}dS_{SW}, \qquad i = 1,\cdots, g.
\end{equation*}
We note that $\{a^1,\cdots,a^g\}$ gives another set of holomorphic coordinates for $\mathcal{B}$. We can see this by noting that $\left(\frac{\partial a^i}{\partial u^j}\right)$ is invertible because if $\sum_{i=1}^gc_i\frac{\partial a^i}{\partial u^j} = \oint_{A_k}\sum_{i=1}^g\frac{c_iz^{i-1}dz}{y} = 0$ for all $j, k = 1,\cdots, g$ and for some constants $c_i \in \mathbb{C}$ then $\sum_{i=1}^g\frac{c_iz^{i-1}dz}{y} = 0$ because it is a holomorphic form with zero $A$-periods. It follows that $\frac{\partial dS_{SW}}{\partial a^i} = \omega_i$ are normalized holomorphic differentials:
\begin{equation*}
	\oint_{A_j}\omega_i = \delta_j^i, \qquad \oint_{B_j}\omega_i = \frac{\partial b_i}{\partial a^j} =: \tau_{ij},
\end{equation*}
where $b_i = b_i(a^1,\cdots,a^g)$ is treated as a holomorphic function of $\{a^{i=1,\cdots,g}\}$. The standard argument involving Riemann bilinear identity shows that $\tau_{ij} = \tau_{ji}$:
\begin{equation*}
	0 = \frac{1}{2\pi i}\sum_{k=1}^g\left(\oint_{A_k}\omega_i\oint_{B_k}\omega_j - \oint_{B_k}\omega_i\oint_{A_k}\omega_j\right) = \frac{1}{2\pi i}\left(\tau_{ji} - \tau_{ij}\right).
\end{equation*}
In other words, $\frac{\partial b_i}{\partial a^j} = \frac{\partial b_j}{\partial a^i}$, therefore, by Poincar\'{e} lemma given any $[\Sigma_0] \in \mathcal{B}$ there exists an open neighbourhood $\mathcal{B}_{\Sigma_0} \ni [\Sigma_0]$ with a holomorphic function $\mathfrak{F}^{SW}_{\Sigma_0} = \mathfrak{F}^{SW}_{\Sigma_0}(\{a^k\},\Lambda)$ such that 
\begin{equation*}
	b_i = \frac{\partial \mathfrak{F}^{SW}_{\Sigma_0}(\{a^k\},\Lambda)}{\partial a^i}.
\end{equation*}
We call $\mathfrak{F}^{SW}_{\Sigma_0} = \mathfrak{F}^{SW}_{\Sigma_0}(\{a^k\},\Lambda)$ the \emph{Seiberg-Witten prepotential}. The expansion of $\mathfrak{F}_{\Sigma_0}^{SW}$ can be computed via Nekrasov’s partition function \cite{nekrasov2006seiberg,okounkov2006random} or Nakajima’s blow-up formula \cite{nakajima2004lectures,nakajima2005instanton}  or by computing the odd periods of $M_{A_g}\otimes QH^*(\mathbb{P}^1)$ Frobenius manifold \cite{dubrovin2004almost}.

\section*{Acknowledgements}
I would like to express my deepest gratitude to Paul Norbury and Mehdi Tavakol for supervising me throughout this project. I also wish to thank Michael Swaddle and Ga\"{e}tan Borot for many useful discussions. Part of this work was done during my visit to the Max Planck Insitute for Mathematics, Bonn, I would like to thank them for their hospitality and for partially funded this project. This work is primarily funded by the Melbourne Research Scholarship.

\section{Airy Structures and Deformation of Curves}\label{airystructurechapter}

In this section, we summarize the work of Kontsevich-Soibelman \cite{kontsevich2017airy} on Airy structures, topological recursion, and the moduli space of curves in a foliated symplectic surface. In particular, there are 3 important ideas from \cite{kontsevich2017airy} which we are going to explore:
\begin{enumerate}
    \item The quantization of Airy structures leads to a generalized version of topological recursion. The usual Eynard-Orantin topological recursion corresponds to an example of Airy structure given by the so-called \emph{residue constraints}.
    \item A deformation of an Airy curve $y^2 = x$, i.e. a \emph{disc}, inside a foliated symplectic surface produces a differential form satisfying the residue constraints.
    \item The cohomology class, arising from the deformation of a curve in a foliated symplectic surface, can be lifted to the differential form on the curve, which locally agrees near each ramification point with the differential form produced from the deformation of a disc.
\end{enumerate}

Connecting these ideas leads to a nice relationship between the prepotential and the genus zero part of topological recursion which is the content of Theorem \ref{prepotentialvsomega0ntheorem}, the main result of this section. We are going to start in Section \ref{deformationofcurvessection} by reviewing some basic deformation theory of curves $\Sigma$ in a foliated symplectic surface $(S,\Omega_S,\mathcal{F})$, following \cite[Section 4]{kontsevich2017airy}. This eventually leads to an embedding of a Lagrangian submanifold in $H^1(\Sigma,\mathbb{C})$ and establishes the notion of \emph{prepotential} in this general context. Section \ref{airystructuressection} reviews definitions and basic results concerning Airy structures, including the corresponding quadratic Lagrangian submanifold, quantization of Airy structures, and abstract topological recursion, all of which can be found in Section 1-3 of \cite{kontsevich2017airy}. An important example of an Airy structure arises from residue constraints as introduced in \cite[Section 3.3]{kontsevich2017airy} will be discussed in Section \ref{residueconstraintairystructuresection}. 

We have been following the material in \cite{kontsevich2017airy} closely so far, and we will be for the rest of this section. However, for the remaining sections, with an exception of Section \ref{relationstotrsection}, we are going to approach the subject from an analytical perspective rather than a more algebraic treatment of \cite{kontsevich2017airy}. In particular, we will replace all formal power series with power series converges on some annulus. We believe that this should be a more natural way to arrive at our main result involving the prepotential which is a holomorphic function. 

We study the analytic version of residue constraints Airy structure in Section \ref{theembeddingofdiscssection}. In particular, we look at the deformation of analytic discs and show that an embedding of a neighbourhood in the space of all discs gives an analytic solution to the residue constraints, as explained in \cite[Section 6]{kontsevich2017airy}. The connection between the `global' deformations, as studied in Section \ref{deformationofcurvessection}, with the `local' deformations, studied in terms of discs in Section \ref{theembeddingofdiscssection}, will be discussed in Section \ref{localtoglobalsection} analytically, with a summary given by Proposition \ref{localvsglobalproposition} \cite[Proposition 7.1.2]{kontsevich2017airy}. In Section \ref{relationstotrsection} we review the well-known result from \cite[Section 3]{kontsevich2017airy} how Eynard-Orantin topological recursion arises from the residue constraints Airy structure. At last, we prove our main result, Theorem \ref{prepotentialvsomega0ntheorem}, in Section \ref{prepotentialandtrsection}. We argue that when specializing to an example of a family of curves arises from the \emph{Hitchin systems} \cite{hitchin1987stable, baraglia2017special}, our result agrees with the result previously obtained by Baraglia-Huang \cite[Theorem 7.4]{baraglia2017special} using a different technique. An application of Theorem \ref{prepotentialvsomega0ntheorem} to the Seiberg-Witten family of curves will also be briefly mentioned before a more extensive discussion in Section \ref{swexamplechapter}.

\subsection{Deformation of Curves in Foliated Symplectic Surfaces}\label{deformationofcurvessection}
\subsubsection{Foliated symplectic surfaces}
Let us start by fixing some frequently used definitions which should be standard in complex geometry:
\begin{definition}\label{symplecticmanifolddefinition}
A $2n$-dimensional \emph{holomorphic sympletic manifold} $(X,\Omega_X)$ is a $2n$-dimensional complex manifold $X$ equipped with a holomorphic closed non-degenerates $2$-form $\Omega_X$, i.e. at any point $x \in X$, $\Omega_{X,x} : T_xX \times T_xX \rightarrow \mathbb{C}$ is a non-degenerate anti-symmetric bilinear form on the holomorphic tangent space $T_xX$.
\end{definition}
\begin{definition}\label{lagrangiansubmanifolddefinition}
Let $(X,\Omega_X)$ be a holomorphic symplectic manifold. A complex submanifold $L \subset X$ is a \emph{holomorphic Lagrangian submanifold} if for every $x \in L$, $T_xL \subset T_xX$ is a maximal isotropic subspace, i.e. $T_xL$ is a maximal subspace of $T_xX$ such that $\Omega_{X,x}|_{T_xL} = 0$. 
\end{definition}

Since no other types of symplectic manifolds or Lagrangian submanifolds will be considered in this work, we are going to refer to $(X,\Omega)$ and $L \subset X$ in the above definitions simply as \emph{symplectic manifold} and \emph{Lagrangian submanifold} respectively, omitting the term `holomorphic'. Definition \ref{symplecticmanifolddefinition} and Definition \ref{lagrangiansubmanifolddefinition} also make sense when $X$ is infinite-dimensional, in that case any Lagrangian submanifold $L \subset X$ will also be infinite-dimensional. On the other hand, when $X$ is finite-dimensional, it is common to replace the maximal isotropic subspace condition in Definition \ref{lagrangiansubmanifolddefinition} with $\dim L = \frac{1}{2}\dim X$. If $(X, \Omega_X)$ and $L \subset X$ are complex vector spaces, then we also call them a \emph{symplectic vector space} and a \emph{Lagrangian subspace} respectively.

\begin{definition}\label{foliationdefinition}
An $m$-dimensional holomorphic foliation $\mathcal{F}$ on an $n$-dimensional complex manifold $X$ is a decomposition of $X$ into a disjoint union of connected $(n-m)$-dimensional complex submanifolds $\{L_\alpha\}$ called the \emph{leaves} of the foliation satisfying the following property: At every point, $x \in X$ there exists an open neighbourhood $U \ni x$ and a system of local holomorphic coordinates $(x^1,\cdots,x^n)$ such that for each leaf $L_\alpha$, each component of $L_\alpha \cap U$ is given by $\{x^{m+1} = const,\cdots,x^n = const\}$.
\end{definition}
In the case where $X$ is a surface, $n = 2$ we choose $m = 1$ and we will refer to a $1$-dimensional foliation $\mathcal{F}$ of $X$ simply as a foliation $\mathcal{F}$ of $X$.  The leaves of a $1$-dimensional foliation $\mathcal{F}$ of $X$ are Lagrangian for dimensional reasons.

\begin{definition}\label{fomegachartdefinition}
Let $(S, \Omega_S, \mathcal{F})$ be a foliated symplectic surface $S$ with a symplectic form $\Omega_S$ and a foliation $\mathcal{F}$.
Let $U\subseteq S$ be an open subset with a coordinates system $(x,y)$ such that $\Omega_S|_U = dx\wedge dy$ and the foliation $\mathcal{F}$ is given locally on $U$ by the equations $x = const$. We call $(x,y)$ a $(\mathcal{F}, \Omega_S)$-local coordinate system and call $(U,x,y)$ a $(\mathcal{F}, \Omega_S)$-chart.
\end{definition}

\begin{lemma}\theoremname{\cite[Section 4.1]{kontsevich2017airy}}\label{fomegachartlemma}
Given any point $p \in S$ there exists a $(\mathcal{F}, \Omega_S)$-chart $(U_p,x_p,y_p), U_p \subseteq S$ containing $p$.
\end{lemma}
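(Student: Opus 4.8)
The plan is to build the $(\mathcal{F},\Omega_S)$-chart in two stages: first straighten the foliation, then rescale the second coordinate to normalize the symplectic form. Let $p\in S$. Since $\mathcal{F}$ is a one-dimensional holomorphic foliation of the surface $S$, Definition \ref{foliationdefinition} (with $n=2$, $m=1$) gives an open neighbourhood $V\ni p$ and local holomorphic coordinates $(u,v)$ on $V$ in which the leaves of $\mathcal{F}$ are the level sets $\{u=\mathrm{const}\}$. In these coordinates the symplectic form reads $\Omega_S|_V = g(u,v)\,du\wedge dv$ for some holomorphic function $g$ on $V$; non-degeneracy of $\Omega_S$ forces $g(p)\neq 0$, so after shrinking $V$ we may assume $g$ is nowhere vanishing on $V$.

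Next I would fix the coordinate $x:=u$ (which already cuts out the leaves) and look for a new second coordinate $y=y(u,v)$ with $y(p)=0$ such that $dx\wedge dy = \Omega_S$, i.e.\ $\partial y/\partial v = g(u,v)$. Define
\begin{equation*}
y(u,v) := \int_{v_0}^{v} g(u,s)\,ds,
\end{equation*}
where $(u_0,v_0)$ are the coordinates of $p$; this is holomorphic on a (possibly smaller) polydisc neighbourhood of $p$ by holomorphic dependence of the integral on parameters, and by construction $\partial y/\partial v = g\neq 0$, so $(x,y)=(u,y(u,v))$ is a local biholomorphism near $p$ onto its image — its Jacobian is $\partial y/\partial v = g \neq 0$. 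In the coordinates $(x,y)$ we then have $dx\wedge dy = dx\wedge\big(\partial_u y\,du + \partial_v y\,dv\big) = (\partial_v y)\,dx\wedge dv = g\,du\wedge dv = \Omega_S$, and the foliation is still $\{x=\mathrm{const}\}$ since $x=u$. Taking $U_p$ to be this smaller neighbourhood, $(U_p,x_p,y_p):=(U_p,x,y)$ is the desired $(\mathcal{F},\Omega_S)$-chart.

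The only real subtlety — and where one must be a little careful — is ordering the two normalizations correctly: straightening the foliation first guarantees that $\Omega_S$ has the form $g\,du\wedge dv$ with the $du$-factor aligned to the leaves, so that the subsequent antiderivative in $v$ adjusts only the transverse coordinate and does not disturb the already-straightened foliation. One also has to check that closedness of $\Omega_S$ is not needed here in dimension two ($d\Omega_S=0$ is automatic for a $2$-form on a surface), so the construction is purely local and elementary; the content of the lemma is really just this two-step normalization, which is the surface case of the Darboux theorem adapted to respect a given foliation.
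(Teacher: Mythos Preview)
Your proof is correct and follows essentially the same approach as the paper: first straighten the foliation using Definition~\ref{foliationdefinition}, then normalize the symplectic form by integrating in the transverse direction. The only cosmetic difference is that the paper writes the coordinate change in the inverse direction, solving $\Omega_S(x_p,G)\,\partial G/\partial y_p=1$ for $G$ via the implicit function theorem, whereas you write down $y=\int_{v_0}^v g(u,s)\,ds$ directly; these are inverse to one another and the content is identical.
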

\begin{proof}
By Definition \ref{foliationdefinition}, we choose a local coordinate system, $(\bar{x}_p,\bar{y}_p)$ on some open set $\bar{U}_p \ni p$ such that leaves of the foliation $\mathcal{F}$ are given by $\bar{x}_p = const$. Using this coordinate system, the symplectic form can be written as $\Omega_S = \Omega_S(\bar{x}_p,\bar{y}_p)d\bar{x}_p\wedge d\bar{y}_p$. Consider a coordinates transformation of the form
\begin{equation*}
\bar{x}_p = x_p, \qquad \bar{y}_p = G(x_p, y_p),
\end{equation*}
so that the leaves of foliation are still given by $x_p = const$. Without loss of generality, let us suppose that $(x_p,y_p) = (0,0)$ is the point $p\in S$. The symplectic form becomes
\begin{equation*}
    \Omega_S = \Omega_S\left(x_p, G(x_p, y_p)\right)dx_p\wedge\left(\frac{\partial G}{\partial x_p}dx_p + \frac{\partial G}{\partial y_p}dy_p\right) = \Omega_S\left(x_p, G(x_p,y_p)\right)\frac{\partial G}{\partial y_p}dx_p\wedge dy_p.
\end{equation*}
Therefore, we need to find $G = G(x_p,y_p)$ such that $\Omega_S(x_p, G(x_p, y_p))\frac{\partial G}{\partial y_p} = 1$. Integrating this equation, we get that $G$ is implicitly defined by $\tilde{\Omega}_S(x_p,y_p,G) := \int_0^G\Omega_S(x_p,\tau)d\tau - y_p + C = 0$ for some constant $C$. Let us pick $C = 0$, although this choice is arbitrary. We have $\tilde{\Omega}_S(0,0,0) = 0$ and $\frac{\partial\tilde{\Omega}_S}{\partial G}(0,0,0) = \Omega_S(0,0) \neq 0$ because the symplectic form is non-degenerates by definition. Therefore, we can apply the complex implicit function theorem to conclude that there exists $G = G(x_p,y_p)$ holomorphic on some open neighbourhood $U$ of $p = (x_p = 0,y_p = 0)$.
\end{proof}

We note that the $(\mathcal{F}, \Omega_S)$-local coordinates in the neighbourhood of a point $p \in S$ is not unique. In fact, given a $(\mathcal{F}, \Omega_S)$-local coordinates system $(x, y)$ then the new coordinates system $(\bar{x}, \bar{y})$ is also a $(\mathcal{F}, \Omega_S)$-local coordinates system if and only if it is related to $(x,y)$ under the transformation of type
\begin{equation}\label{coordtransformationpreservingomegaandfoliation}
    \bar{x} = F(x), \qquad \bar{y} = \frac{y}{F'(x)} + G'(x),
\end{equation}
where $F$ and $G$ are locally defined holomorphic functions and $F'(x(p)) \neq 0$. In particular, $F$ is any function locally bi-holomorphic in a neighbourhood of $p$.

\subsubsection{Deformation of $\mathcal{F}$-transversal curves}\label{deformationofcurvesinfamilysubsection}
\begin{definition}\theoremname{\cite[Definition 4.1.2]{kontsevich2017airy}}\label{ftransversaldefinition}
A smooth compact complex curve $\Sigma$ embedded inside a foliated symplectic surface $(S,\Omega_S,\mathcal{F})$ is called $\mathcal{F}$-\emph{transversal} if $\Sigma$ is tangent to the leaves of $\mathcal{F}$ at only finitely many points, with tangency order $1$. We call points on a $\mathcal{F}$-transversal curve $\Sigma$ where $\Sigma$ is tangent to a leaf of foliation the \emph{ramification points} $\{r_{\alpha \in Ram(\Sigma)}\}$ where $Ram(\Sigma)$ is an index set.
\end{definition}

\begin{remark}
Given a Riemann surface $\Sigma$ and a holomorphic map $f : \Sigma \rightarrow \mathbb{P}^1$, the term `ramification point' traditionally refers to a point $r \in \Sigma$ such that $f$ can be locally written as $f(z) = z^n$, for some $n \in \mathbb{Z}_{\geq 2}$, using some holomorphic coordinates charts $(U,z)$ and $(V, w)$ around $r$ and $f(r)$ respectively. For us, instead of a map $f:\Sigma\rightarrow \mathbb{P}^1$, we have the foliation $\mathcal{F}$. By Lemma \ref{fomegachartlemma}, a $(\mathcal{F},\Omega_S)$-chart $(U_p,x_p,y_p)$ can be chosen around any point $p \in \Sigma$, hence the foliation locally induces a map $q \in \Sigma\cap U_p \mapsto x_p(q) \in \mathbb{C}$.  Our usage of the term `ramification point' follows from the fact that each $r_\alpha, \alpha \in Ram$ are ramification points of these locally foliation induced maps. Additionally, the `tangency order $1$' condition means the ramification index of each ramification point is $2$. Note that a different choice of $(\mathcal{F},\Omega_S)$-chart $(U_p, \bar{x}_p, \bar{y}_p)$ will give a different local map $q \mapsto \bar{x}_p(q)$ but ramification points will be the same. This is because $\bar{x}_p = F_p(x_p)$ for some function $F_p$ such that $F_p'(x_p(q)) \neq 0, q \in U_p$ by (\ref{coordtransformationpreservingomegaandfoliation}), hence $d\bar{x}_p|_\Sigma = F_p'(x_p)dx_p|_\Sigma$ means $d\bar{x}_p|_\Sigma = 0$ exactly when $dx_p|_\Sigma = 0$.
\end{remark}

Let $\mathcal{B}$ be the complex analytic moduli space of $\mathcal{F}$-transversal genus $g$ curves in $(S,\Omega_S,\mathcal{F})$. We are going to show later that the dimension of $\mathcal{B}$ is $g$. We write $[\Sigma] \in \mathcal{B}$ for the point in $\mathcal{B}$ corresponding to a $\mathcal{F}$-transversal curve $\Sigma$. Since the number of ramification points is constant for every $[\Sigma] \in \mathcal{B}$, we will simply write the index set as $Ram = Ram(\Sigma)$. For any $p \in \Sigma \subseteq S$ we find a $(\mathcal{F}, \Omega_S)$-chart $(U_p, x_p, y_p)$ containing $p$ according to Lemma \ref{fomegachartlemma}. Let us think of $x_p, y_p$ as functions $\Sigma\cap U_p \rightarrow \mathbb{C}$, the ramification points $\{r_{\alpha \in Ram}\}$ are exactly where $dx_p|_\Sigma$ vanishes with an order $1$ zero. Consequently, if $r_\alpha \notin \Sigma \cap U_p$ for all $\alpha \in Ram$ then $dx_p|_\Sigma \neq 0$ on $\Sigma \cap U_p$ and we may assume that $U_p$ is chosen small enough such that $x_p$ is a local coordinate on $\Sigma \cap U_p$ according to the inverse function theorem. On the other hand, if $p = r_\alpha \in \Sigma, \alpha \in Ram$, we denote by $(U_\alpha, x_\alpha, y_\alpha)$ a $(\mathcal{F},\Omega_S)$-chart such that $\Sigma\cap U_\alpha$ is bi-holomorphic to a simply-connected open subset of $\mathbb{C}$ with a local coordinate $z_\alpha := \sqrt{x_\alpha - x_\alpha(r_\alpha)}$.
\begin{definition}\label{standardlocalcoordinatesdefinition0}
Given a $(\mathcal{F}, \Omega_S)$-chart $(U_\alpha, x_\alpha, y_\alpha)$, we call the local coordinate $z_\alpha := \sqrt{x_\alpha - x_\alpha(r_\alpha)}$ on an open neighbourhood $\Sigma\cap U_\alpha$ of the ramification point $r_\alpha$ a \emph{standard local coordinate}.
\end{definition}
The parameterization of $\Sigma\cap U_\alpha$ using the standard local coordinate takes the form
\begin{equation}
    (x_\alpha = a_\alpha(\Sigma) + z_\alpha^2, y_\alpha = b_{0\alpha}(\Sigma) + b_{1\alpha}(\Sigma)z_\alpha + b_{2\alpha}(\Sigma)z^2_\alpha + ...), \qquad a_\alpha(\Sigma), b_{k\alpha}(\Sigma) \in \mathbb{C},
\end{equation}
where $b_{1\alpha}(\Sigma) \neq 0$ by the tangency order $1$ condition. Therefore, we have $dy_p|_\Sigma(r_\alpha) \neq 0$ and $y_p$ is a local coordinate on some neighbourhoods of $r_\alpha$. In particular, since $\Sigma$ is compact, we can always cover $\Sigma$ with a finite collection of $(\mathcal{F},\Omega_S)$-charts $\mathcal{U} = \{(U_{p\in \sigma}, x_{p\in \sigma}, y_{p\in \sigma})\}$, where $\sigma$ is a finite index set, such that either $x_p$ or $y_p$ is a local coordinate on each $\Sigma \cap U_p$.

The following fact will be useful to us later:
\begin{lemma}\theoremname{\cite[Section 4.1 and 6.2]{kontsevich2017airy}}\label{picoveringmaplemma}
Given $[\Sigma] \in \mathcal{B}$, there exists a $(\mathcal{F},\Omega_S)$-chart $(U_\alpha, x_\alpha, y_\alpha)$ containing the ramification point $r_\alpha, \alpha \in Ram$ of $\Sigma$ such that the parameterization of $\Sigma\cap U_\alpha$ using the the standard local coordinate $z_\alpha$ takes the form $(x_\alpha = z_\alpha^2, y_\alpha = z_\alpha)$. The $(\mathcal{F}, \Omega_S)$-chart $(U_\alpha, x_\alpha, y_\alpha)$ is unique up to the $\mathbb{Z}_3$ automorphism action $x\mapsto \xi^2 x, y\mapsto \xi y$, where $\xi$ is a third-root of unity.
\end{lemma}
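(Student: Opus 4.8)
The plan is to start from an arbitrary $(\mathcal{F},\Omega_S)$-chart $(U_\alpha, \tilde x_\alpha, \tilde y_\alpha)$ around $r_\alpha$, in which $\Sigma\cap U_\alpha$ is parameterized via the standard local coordinate $\tilde z_\alpha = \sqrt{\tilde x_\alpha - \tilde x_\alpha(r_\alpha)}$ as
\begin{equation*}
    \left(\tilde x_\alpha = \tilde a_\alpha + \tilde z_\alpha^2,\quad \tilde y_\alpha = \tilde b_{0\alpha} + \tilde b_{1\alpha}\tilde z_\alpha + \tilde b_{2\alpha}\tilde z_\alpha^2 + \cdots\right),
\end{equation*}
with $\tilde b_{1\alpha}\neq 0$ by the tangency order $1$ condition. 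The goal is to exhibit a coordinate transformation of the admissible type \eqref{coordtransformationpreservingomegaandfoliation}, namely $x_\alpha = F(\tilde x_\alpha)$, $y_\alpha = \tilde y_\alpha/F'(\tilde x_\alpha) + G'(\tilde x_\alpha)$, after which the parameterization becomes exactly $(x_\alpha = z_\alpha^2,\ y_\alpha = z_\alpha)$. First I would translate: compose with $\tilde x_\alpha \mapsto \tilde x_\alpha - \tilde a_\alpha$ (this is an $F$ of the allowed form) and with a shift in $y$ coming from a suitable $G'$ to kill $\tilde b_{0\alpha}$, so that we may assume $\tilde a_\alpha = 0$ and $\tilde b_{0\alpha}=0$; then $\tilde y_\alpha = \tilde z_\alpha(\tilde b_{1\alpha} + \tilde b_{2\alpha}\tilde z_\alpha + \cdots)$ is an odd-plus-even series in $\tilde z_\alpha$ vanishing to order $1$.

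The key step is to choose the bi-holomorphism $F$ so that the new square-root coordinate $z_\alpha = \sqrt{F(\tilde x_\alpha)}$ satisfies $z_\alpha = \tilde y_\alpha/F'(\tilde x_\alpha) + G'(\tilde x_\alpha)$ on $\Sigma$. Writing $\tilde y_\alpha$ as a function of $\tilde z_\alpha$ and splitting into odd and even parts, $\tilde y_\alpha = \tilde z_\alpha\, p(\tilde z_\alpha^2) + \tilde z_\alpha^2\, q(\tilde z_\alpha^2)$ with $p(0)=\tilde b_{1\alpha}\neq 0$, the even part $\tilde z_\alpha^2 q(\tilde z_\alpha^2) = \tilde x_\alpha\, q(\tilde x_\alpha)$ is a genuine holomorphic function of $\tilde x_\alpha$ and can be absorbed into $G'$; this reduces us to matching the odd part $\tilde z_\alpha\, p(\tilde z_\alpha^2) = \tilde z_\alpha\, p(\tilde x_\alpha)$. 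I then want $z_\alpha = \sqrt{F(\tilde x_\alpha)}$ and simultaneously $z_\alpha = \tilde z_\alpha\, p(\tilde x_\alpha)/F'(\tilde x_\alpha)$. Squaring the desired identity $z_\alpha^2 = \tilde z_\alpha^2 p(\tilde x_\alpha)^2 / F'(\tilde x_\alpha)^2$ gives $F(\tilde x_\alpha) = \tilde x_\alpha\, p(\tilde x_\alpha)^2/F'(\tilde x_\alpha)^2$, i.e. the first-order ODE $F'(\tilde x_\alpha)^2 F(\tilde x_\alpha) = \tilde x_\alpha\, p(\tilde x_\alpha)^2$. Taking square roots (choosing the branch so $F(0)=0$, $F'(0)=p(0)^{2/3}\neq 0$, which is possible since $p(0)\neq 0$), this becomes $\frac{d}{d\tilde x_\alpha}\!\left(F^{3/2}\right) = \tfrac{3}{2}\tilde x_\alpha^{1/2} p(\tilde x_\alpha)$, whose right side, after the substitution $\tilde x_\alpha = \tilde z_\alpha^2$, is a holomorphic odd series in $\tilde z_\alpha$ times $\tilde z_\alpha$, hence integrates to a holomorphic even series; thus $F^{3/2}$ — equivalently $F$ — is obtained as a convergent power series with $F(0)=0$ and $F'(0)\neq 0$, so $F$ is locally bi-holomorphic. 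With this $F$ chosen, set $G'$ to absorb the even part and check directly that $y_\alpha = \tilde y_\alpha/F'(\tilde x_\alpha) + G'(\tilde x_\alpha)$ equals $z_\alpha$ along $\Sigma$, and $x_\alpha = F(\tilde x_\alpha) = z_\alpha^2$.

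For uniqueness: suppose $(U_\alpha, x_\alpha, y_\alpha)$ and $(U_\alpha', x_\alpha', y_\alpha')$ both put $\Sigma\cap U_\alpha$ in the form $(z^2, z)$ (possibly in different standard coordinates $z$, $z'$). The change of coordinates is of type \eqref{coordtransformationpreservingomegaandfoliation}: $x_\alpha' = F(x_\alpha)$ with $F(0)=0$ (ramification points match). Restricting to $\Sigma$, $z'^2 = F(z^2)$ and $z' = z/F'(z^2) + G'(z^2)$. From $z'^2 = F(z^2)$ and the fact that $z' = \pm z\sqrt{F(z^2)/z^2}$ must be a holomorphic function of $z$, write $F(z^2) = z^2 h(z^2)^2$ for a holomorphic $h$ with $h(0)\neq 0$, so $z' = \pm z\, h(z^2)$ and $F'(z^2) = h(z^2)^2 + z^2\cdot(\text{stuff})$. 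Plugging into $z' = z/F'(z^2) + G'(z^2)$ and comparing odd and even parts in $z$ forces $G'\equiv 0$ and $\pm h(z^2) = 1/F'(z^2)$; combined with $F(z^2)=z^2 h(z^2)^2$ this yields $F'(z^2)^2 = h(z^2)^{-2}$ and $F(z^2) = z^2 F'(z^2)^{-2}$, i.e. $F'(x)^2 F(x) = x$, the same ODE as above but now with $p\equiv 1$. Its solutions with $F(0)=0$, $F'(0)\neq 0$ are exactly $F(x) = c\, x$ with $c^3 = 1$, i.e. $c=\xi$ a cube root of unity, giving $x\mapsto \xi x$, and correspondingly $y\mapsto y/\xi^{1/2} = \xi y$ after fixing the square-root branch consistently with the $z$-parameterization; this is precisely the claimed $\mathbb{Z}_3$ action. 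The main obstacle I anticipate is purely bookkeeping: carefully tracking the odd/even (parity in $z$) decomposition through the allowed transformations \eqref{coordtransformationpreservingomegaandfoliation}, verifying that the ODE $F'^2 F = x\,p^2$ has a holomorphic solution with $F'(0)\neq 0$ (convergence of the power series obtained by integrating $\tfrac{d}{dx}F^{3/2}$), and pinning down the compatible branches of the various square roots so that the residual freedom is exactly $\mathbb{Z}_3$ and not a larger group.
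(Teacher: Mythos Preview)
Your proposal is correct and follows essentially the same approach as the paper: split $y$ into odd and even parts in the square-root coordinate $z$, absorb the even part into $G'$, and determine $F$ from the ODE $\tfrac{d}{dz}\bigl(F^{3/2}\bigr)\propto z\cdot(\text{odd part})$, which integrates to a holomorphic function with $F'(0)\neq 0$. Your uniqueness argument is more complete than the paper's one-line remark; just note that with $F(x)=\xi x$ the transformation on $y$ is $y\mapsto y/F'(x)=y/\xi=\xi^{2}y$ (not $y/\xi^{1/2}$), which agrees with the statement's $x\mapsto \xi^{2}x,\ y\mapsto \xi y$ after relabeling the cube root.
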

\begin{proof}
Let $(U_\alpha, x_\alpha, y_\alpha)$ be a $(\mathcal{F}, \Omega_S)$-chart around $r_\alpha$ which exists due to Lemma \ref{fomegachartlemma} and $z_\alpha := \sqrt{x_\alpha - x_\alpha(r_\alpha)}$ be the standard local coordinate on $\Sigma \cap U_\alpha$. We suppose that the parameterization of $\Sigma\cap U_\alpha$ is given by $(x_\alpha = z^2_\alpha + a, y_\alpha = z_\alpha \psi(z^2_\alpha) + \phi(z^2_\alpha))$ for some $\psi$ and $\phi$ holomorphic on $\Sigma\cap U_\alpha$. We wish to find $F$ and $G$ such that the new $(\mathcal{F}, \Omega_S)$-local coordinates $(\bar{x}_\alpha, \bar{y}_\alpha)$ given by 
\begin{equation*}
    \bar{x}_\alpha = F(x_\alpha), \qquad \bar{y}_{\alpha} = \frac{y_\alpha}{F'(x_\alpha)} + G'(x_\alpha)
\end{equation*}
with the new standard local coordinate $\bar{z}_\alpha := \sqrt{\bar{x}_\alpha - \bar{x}_\alpha(r_\alpha)} = \sqrt{F(z_\alpha^2 + a) - F(a)}$ satisfies
\begin{equation*}
    \bar{z}^2_\alpha = F(z_\alpha^2 + a), \qquad \bar{z}_\alpha = \frac{z_\alpha \psi(z_\alpha^2) + \phi(z_\alpha^2)}{F'(z_\alpha^2 + a)} + G'(z_\alpha^2 + a).
\end{equation*}
This is equivalent to 
\begin{equation}\label{discafterchangecoordinates}
    F^{-1}(0) = a, \qquad F'\left(z_\alpha^2 + F^{-1}(0)\right)\left(\left(F\left(z_\alpha^2 + F^{-1}(0)\right)\right)^{1/2} - G'\left(z_\alpha^2 + F^{-1}(0)\right)\right) = z_\alpha \psi(z_\alpha^2) + \phi(z_\alpha^2).
\end{equation}
The second equation above can be solved by separately equating the odd and even terms. The odd terms yields:
\begin{equation}\label{findingf}
    F'\left(z^2_\alpha + F^{-1}(0)\right)F\left(z^2_\alpha + F^{-1}(0)\right)^{1/2} = z_\alpha\psi(z^2_\alpha)
    \implies \frac{1}{3}\frac{d}{dz_\alpha}\left(F\left(z^2_\alpha + F^{-1}(0)\right)\right)^{3/2} = z^2_\alpha\psi(z^2_\alpha).
\end{equation}
Both sides can be expanded as power series in $z^2$, starting with the $z^2$ term, so $F(z^2 + a)$ and hence $F(x)$ can be found by comparing coefficients. Similarly, we can define $G(x) := \frac{\phi(x - F^{-1}(0))}{F'(x)}$. Note that if $\left(F(z^2+a)\right)^{1/2}$ satisfies (\ref{findingf}), then $\xi \left(F(z^2 + a)\right)^{1/2}$ also does for any third root of unity $\xi$.
\end{proof}

The tangent space $T_{[\Sigma]}\mathcal{B}$ describes the infinitesimal deformations of the curve $\Sigma$ inside $S$. It is isomorphic to the space of global sections of the holomorphic normal bundle $\nu_\Sigma \cong TS|_{\Sigma}/T\Sigma$, where $TS$ and $T\Sigma$ are holomorphic tangent bundles of $S$ and $\Sigma$ respectively. In general, given any holomorphic vector bundle $\mathcal{V}$ on a complex manifold $X$, we denote the set of holomorphic sections of $\mathcal{V}$ by $\Gamma(X,\mathcal{V})$. The availability of the symplectic form $\Omega_S$ gives the following result:
\begin{lemma}\theoremname{\cite[Section 4.2]{kontsevich2017airy}}
Let $\Omega^1_{\Sigma}$ be the holomorphic canonical line bundle on $\Sigma$, then the symplectic form $\Omega_S$ induces the isomorphism
$\Gamma(\Sigma, \nu_\Sigma) \cong \Gamma(\Sigma,\Omega^1_{\Sigma})$. Consequently, $\dim T_{[\Sigma]}\mathcal{B} = \dim \Gamma(\Sigma, \Omega^1_\Sigma) = g$, hence $\mathcal{B}$ is $g$-dimensional.
\end{lemma}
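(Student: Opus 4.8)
The plan is to construct the isomorphism $\Gamma(\Sigma,\nu_\Sigma)\cong\Gamma(\Sigma,\Omega^1_\Sigma)$ pointwise from the symplectic form and then check that this pointwise identification is holomorphic, so that it upgrades to an isomorphism of sheaves (hence of spaces of global sections). The key point is that the leaves of $\mathcal{F}$ are Lagrangian (here, one-dimensional, so this is automatic), which lets us use the symplectic form to contract a normal vector against a tangent vector of $\Sigma$ and land in the canonical bundle. Concretely, at a point $p\in\Sigma$ pick a $(\mathcal{F},\Omega_S)$-chart $(U_p,x_p,y_p)$ with $\Omega_S|_{U_p}=dx_p\wedge dy_p$; away from the ramification points $x_p$ restricts to a local coordinate on $\Sigma\cap U_p$, and near a ramification point $y_p$ does (as established in the paragraph following Definition~\ref{standardlocalcoordinatesdefinition0}).

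First I would define the map $\Gamma(\Sigma,\nu_\Sigma)\to\Gamma(\Sigma,\Omega^1_\Sigma)$ by the contraction $v\mapsto \iota_{\tilde v}\Omega_S|_\Sigma$, where $\tilde v$ is any local lift of the normal section $v$ to a section of $TS|_\Sigma$. One must check this is well-defined: changing the lift $\tilde v$ by a section of $T\Sigma$ changes $\iota_{\tilde v}\Omega_S$ by $\iota_w\Omega_S$ for $w\in T\Sigma$, and restricted to $\Sigma$ the one-form $\iota_w\Omega_S|_{T_p\Sigma}$ vanishes since $w\in T_p\Sigma$ and $\Omega_S$ is alternating — so the induced form on $\Sigma$ is unchanged. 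Thus the map descends to the quotient $\nu_\Sigma$ and is $\mathcal{O}_\Sigma$-linear. Next I would verify in the two types of local charts that it is an isomorphism of line bundles: in a chart where $x_p$ is a coordinate on $\Sigma$, a frame for $\nu_\Sigma$ is represented by $\partial_{y_p}$ (since $T\Sigma$ is spanned by $\partial_{x_p}+(\partial_{x_p}y_p)\partial_{y_p}$ and $\partial_{y_p}$ is transverse to it), and $\iota_{\partial_{y_p}}(dx_p\wedge dy_p)=-dx_p$, a local frame for $\Omega^1_\Sigma$; near a ramification point one runs the same computation with the roles of $x_p$ and $y_p$ exchanged, using that $y_p$ is then a local coordinate. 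Since a nonvanishing-frame-to-nonvanishing-frame $\mathcal{O}$-linear bundle map is an isomorphism, and the local trivializations are holomorphic, the map is a holomorphic isomorphism of line bundles, hence induces $\Gamma(\Sigma,\nu_\Sigma)\cong\Gamma(\Sigma,\Omega^1_\Sigma)$.

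Finally, the dimension count: $\Sigma$ is a smooth compact genus $g$ curve, so $\dim\Gamma(\Sigma,\Omega^1_\Sigma)=g$ by the standard theory of Riemann surfaces (it is $\dim H^0(\Sigma,K_\Sigma)=g$). Combined with the identification $T_{[\Sigma]}\mathcal{B}\cong\Gamma(\Sigma,\nu_\Sigma)$ from deformation theory recalled just before the lemma, this gives $\dim T_{[\Sigma]}\mathcal{B}=g$, so $\mathcal{B}$ is $g$-dimensional. The main obstacle I expect is not any single computation but making sure the contraction map is genuinely well-defined as a bundle map on $\nu_\Sigma$ (the independence of the lift) and that the two local pictures — generic points versus ramification points — glue into one global holomorphic isomorphism; once that is set up, both the isomorphism and the dimension statement follow immediately.
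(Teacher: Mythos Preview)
Your proposal is correct and follows essentially the same approach as the paper: both define the map $v\mapsto\Omega_S(v,\cdot)|_\Sigma$, check that it is well-defined on the quotient $\nu_\Sigma$ because $\Omega_S$ vanishes on $T\Sigma\times T\Sigma$, and then verify it is an isomorphism using the $(\mathcal{F},\Omega_S)$-charts. The only cosmetic difference is that you argue it is a bundle isomorphism by sending local frames to local frames, whereas the paper proves injectivity and surjectivity on global sections directly; these are equivalent here.
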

\begin{proof}
We establish the isomorphism $\Gamma(\Sigma, \nu_\Sigma) \cong \Gamma(\Sigma,\Omega^1_{\Sigma})$ by $v \mapsto \Omega_S(v, .)|_{\Sigma}$. Any holomorphic section of the normal bundle $v \in \Gamma(\Sigma,TS|_{\Sigma}/T\Sigma)$ can be represented as a collection of sections $\{v_{p\in \sigma} \in \Gamma(V_p, TS|_\Sigma)\}$ such that $v_p - v_{p'} \in \Gamma(V_p\cap V_{p'}, T\Sigma)$ for some open cover $\{V_{p\in \sigma}\}$ of $\Sigma$. We note that for each $p \in \sigma$, $\Omega_S(v_p,.)|_{V_p}$ is a holomorphic form on $V_p \subset \Sigma$ as it is a pull-back of the contraction between $\Omega_S$ and $v_p$ which are both holomorphic. Since $\Omega_S(t,.)|_{\Sigma}$ for all $t \in \Gamma(\Sigma, T\Sigma)$, we can patch together $\Omega_S(v_p,.)|_{V_p}$ for each $p \in \sigma$ to get a global section $\Omega_S(v,.)|_{\Sigma} \in \Gamma(\Sigma, \Omega^1_\Sigma)$.

The map $v \mapsto \Omega_S(v, .)|_{\Sigma}$ is injective because if $\Omega_S(v, .)|_\Sigma = 0$ then by the fact that $\Sigma \subseteq S$ is a Lagrangian submanifold, we have $v(p) \in T_p\Sigma$ for all $p \in \Sigma$. Therefore, $v = 0\in \Gamma(\Sigma, \nu_\Sigma)$. For surjectivity, given a holomorphic differential form $\omega \in \Gamma(\Sigma, \Omega^1_\Sigma)$, we cover $\Sigma$ with a collection of $(\mathcal{F},\Omega_S)$-charts $\mathcal{U} = \{(U_{p\in \sigma}, x_{p\in \sigma}, y_{p\in \sigma})\}$ such that either $x_p$ or $y_p$ is a local coordinate on $\Sigma \cap U_p$. Let us assume without loss of generality that $x_p$ is a local coordinate on $\Sigma\cap U_p$. Then in general $\omega|_{\Sigma \cap U_p} = \omega_p(x_p)dx_p$ and by taking $v_p := -\omega_p(x_p)\frac{\partial}{\partial y_p} \in \Gamma(\Sigma\cap U_p, TS|_{\Sigma})$ we have $\Omega_S(v_p,.)|_{\Sigma\cap U_p} = \omega|_{\Sigma\cap U_p}$. For $p,p' \in \sigma$ we have on $U_p \cap U_{p'}$ that 
\begin{equation*}
    \Omega_S(v_p - v_{p'},.)|_{\Sigma \cap U_p\cap U_{p'}} = \left(\omega|_{\Sigma\cap U_p}\right)|_{\Sigma \cap U_p\cap U_{p'}} - \left(\omega|_{\Sigma\cap U_{p'}}\right)|_{\Sigma \cap U_p\cap U_{p'}} = 0.
\end{equation*}
Therefore, $(v_p - v_{p'})|_{\Sigma \cap U_p \cap U_{p'}} \in \Gamma(\Sigma\cap U_p\cap U_{p'}, T\Sigma)$ and $\{v_{p\in\sigma}\}$ represents a global section of a normal bundle $v \in \Gamma(\Sigma, \nu_{\Sigma})$ such that $\Omega_S(v,.) = \omega$.
\end{proof}

Alternatively, since the symplectic form $\Omega_S$ is a global section of the holomorphic canonical line bundle $\Omega_S^2$ of $S$ with no zeros or poles, we have from the adjunction formula: $\Omega^1_\Sigma \cong \Omega^2_S|_\Sigma \otimes \nu_\Sigma$ that $\Omega^1_\Sigma \cong \nu_\Sigma$. It follows that $\Gamma(\Sigma, \nu_\Sigma) \cong \Gamma(\Sigma, \Omega^1_\Sigma)$ and the map $v \mapsto \Omega_S(v,.)$ explicitly express this isomorphism. 

\begin{definition}
Consider the \emph{universal family} of $\mathcal{F}$-transversal curves:
\begin{equation*}
    Z := \{([\Sigma], p) \in \mathcal{B}\times S\ |\ p \in \Sigma\} \subseteq \mathcal{B}\times S,
\end{equation*}
let $\pi : Z \rightarrow \mathcal{B}$ be the canonical projection and let $\mathbb{C}$ be a constant sheaf on $Z$. Then we define the holomorphic symplectic vector bundle $(\mathcal{H}\rightarrow \mathcal{B}, \Omega_{\mathcal{H}}, \nabla_{GM})$ where $\mathcal{H} := R^1\pi_*\mathbb{C}$ is a holomorphic vector bundle with fiber over each point $[\Sigma] \in \mathcal{B}$ given by $\mathcal{H}_{\Sigma} = H^1(\Sigma, \mathbb{C})$. $\nabla_{GM}$ is the \emph{Gauss-Manin} connection and the symplectic form $\Omega_{\mathcal{H}}$ is given by 
\begin{equation}\label{omegahdefinition}
    \Omega_{\mathcal{H}}([\xi_1],[\xi_2]) := \iint_\Sigma \xi_1\wedge \xi_2
\end{equation}
where $\xi_1, \xi_2$ are smooth differential forms representing $[\xi_1], [\xi_2] \in \mathcal{H}_\Sigma$.
\end{definition}

For any open subset, $\mathcal{B}'\subseteq \mathcal{B}$ we denote by $\Gamma(\mathcal{B}', \mathcal{H})$ the set of holomorphic sections of $\mathcal{H}$ over $U$.

\begin{remark}\label{homologycohomologyremark}
The natural symplectic form $\Omega^*_{\mathcal{H}}$ on $H_1(\Sigma, \mathbb{C})$ can be defined using the oriented intersection product of homology classes as follows. For any $[C_1], [C_2] \in H_1(\Sigma, \mathbb{Z})$, the intersection product is given by $[C_1]\cdot [C_2] = [C_1 \cap C_2] \in H_0(\Sigma, \mathbb{Z}) \cong \mathbb{Z}$ which can be extended linearly to the intersection product on $H_1(\Sigma, \mathbb{C}) = H_1(\Sigma, \mathbb{Z})\otimes \mathbb{C}$. Then, for any $[C_1], [C_2] \in H_1(\Sigma,\mathbb{C})$, we define $\Omega^*_{\mathcal{H}}([C_1], [C_2]) := [C_1]\cdot [C_2]$. Typically, we denote a symplectic basis of $H_1(\Sigma,\mathbb{C})$ by $A$ and $B$ cycles: $\{[A_1],\cdots,[A_g],[B_1],\cdots,[B_g]\}$,
\begin{equation*}
    \Omega_{\mathcal{H}}^*([A_i],[A_j]) = \Omega_{\mathcal{H}}^*([B_i],[B_j]) = 0, \qquad \Omega_{\mathcal{H}}^*([A_i],[B_j]) = \delta_{ij}.
\end{equation*}

On $H^1(\Sigma, \mathbb{C})$, the symplectic form $\Omega_{\mathcal{H}}$ is defined by the intersection product of cohomology classes via Poincar\'{e} duality $P.D. : H^1(\Sigma,\mathbb{C}) \cong H_1(\Sigma,\mathbb{C})$. In other words, given $[\xi_1], [\xi_2] \in H^1(\Sigma, \mathbb{C})$ then (\ref{omegahdefinition}) is equivalent to $\Omega_{\mathcal{H}}([\xi_1],[\xi_2]) := \Omega^*_{\mathcal{H}}(P.D.[\xi_1],P.D.[\xi_2])$. Note also that the expression for $\Omega_{\mathcal{H}}([\xi_1],[\xi_2])$ as given in (\ref{omegahdefinition}) only depends on the cohomology classes of $\xi_1$ and $\xi_2$, because $\iint_{\Sigma}df\wedge \xi_2 = \iint_{\Sigma}d(f\xi_2) = 0$ by Stokes Theorem.
\end{remark}

\begin{remark}\label{htrivializationtremark}
Suppose that the $A, B$ symplectic basis cycles $\{[A_1],\cdots,[A_g],[B_1],\cdots,[B_g]\}$ of $H_1(\Sigma, \mathbb{C})$ are chosen for $[\Sigma] \in \mathcal{B}$. Since $H^1(\Sigma, \mathbb{C}) \cong H_1(\Sigma, \mathbb{C})^*$, a cohomology class $[\xi] \in H^1(\Sigma, \mathbb{C})$ is determined by its values on $A$ and $B$ cycles. In particular, the cohomology class $[\xi]$ represented by a closed differential form $\xi$ is given by the $A$ and $B$ periods of $\xi$ and we identify $\mathcal{H}_\Sigma = H^1(\Sigma, \mathbb{C}) \xrightarrow{\cong} \mathbb{C}^{2g}$ by 
\begin{equation*}
    H^1(\Sigma,\mathbb{C}) \ni [\xi] \mapsto \left(\oint_{A_1}\xi,\cdots, \oint_{A_g}\xi,\oint_{B_1}\xi,\cdots, \oint_{B_g}\xi\right) \in \mathbb{C}^{2g}.
\end{equation*}
It can happen as we move from $[\Sigma]$ along some closed path in $\mathcal{B}$, letting $A, B$ cycles deform continuously, that once we return to the point $[\Sigma]$ the $A, B$ cycles will differ from the original by some Dehn twist actions. On the other hand, given a sufficiently small open neighbourhood $\mathcal{B}_{\Sigma} \subseteq \mathcal{B}$, we will be able to choose $A, B$ symplectic basis cycles consistently throughout $\mathcal{B}_\Sigma$. The Gauss-Manin connection $\nabla_{GM}$ can be described locally on $\mathcal{B}_{\Sigma}$ as the identification of any two cohomology classes on two different fibers $[\xi_1] \in \mathcal{H}_{\Sigma_1}, [\xi_2] \in \mathcal{H}_{\Sigma_2}$, if they have the same $A$ and $B$ periods: $\oint_{A_k \subset \Sigma_1}\xi_1 = \oint_{A_k\subset \Sigma_2}\xi_2, \oint_{B_k\subset \Sigma_1}\xi_1 = \oint_{B_k\subset \Sigma_2}\xi_2$. This gives a local trivialization $\mathcal{H}|_{\mathcal{B}_{\Sigma}}\cong \mathbb{C}^{2g}\times\mathcal{B}_{\Sigma}$, verifying that $\mathcal{H}$ is a vector bundle.
\end{remark}

The following gives an alternative expression for $\Omega_{\mathcal{H}}$:
\begin{lemma}\theoremname{Riemann bilinear identity}\label{symplecticformonhlemma}
Let $[\xi_1], [\xi_2] \in H^1(\Sigma,\mathbb{C})$ then
\begin{equation*}
    \iint_\Sigma \xi_1\wedge \xi_2 = \frac{1}{2\pi i} \sum_{k=1}^g\left(\oint_{A_k}\xi_2\oint_{B_k}\xi_1 - \oint_{B_k}\xi_2\oint_{A_k}\xi_1\right)
\end{equation*}
where $\xi_1, \xi_2$ are smooth differential forms representing $[\xi_1], [\xi_2]$.
\end{lemma}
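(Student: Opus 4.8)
The plan is to prove the Riemann bilinear identity by the classical cut-surface argument. First I would choose a canonical polygon representation of $\Sigma$: cut $\Sigma$ along the $A$ and $B$ cycles (all passing through a common base point) to obtain a $4g$-gon $\Pi$ whose boundary reads $A_1 B_1 A_1^{-1} B_1^{-1} \cdots A_g B_g A_g^{-1} B_g^{-1}$, with $\Pi$ simply connected. On $\Pi$, since $\xi_1$ is closed and $\Pi$ is simply connected, I can write $\xi_1 = d f_1$ for a smooth function $f_1$ on $\Pi$ (well-defined only on the cut surface, not on $\Sigma$). Then $\xi_1 \wedge \xi_2 = d f_1 \wedge \xi_2 = d(f_1 \xi_2)$ using $d\xi_2 = 0$, so by Stokes' theorem
\begin{equation*}
\iint_\Sigma \xi_1 \wedge \xi_2 = \iint_\Pi d(f_1 \xi_2) = \oint_{\partial \Pi} f_1 \xi_2.
\end{equation*}

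Next I would evaluate the boundary integral. The boundary $\partial \Pi$ consists of pairs of edges identified in $\Sigma$: the edge $A_k$ and its partner $A_k^{-1}$ (traversed in opposite directions), and likewise $B_k$ and $B_k^{-1}$. A point on $A_k$ and the corresponding point on $A_k^{-1}$ differ, for the multivalued primitive $f_1$, by the constant jump picked up going around the complementary cycle; the standard bookkeeping gives that the value of $f_1$ on $B_k^{-1}$ exceeds its value on $B_k$ by $-\oint_{A_k}\xi_1$, and on $A_k^{-1}$ it exceeds the value on $A_k$ by $\oint_{B_k}\xi_1$ (signs depending on orientation conventions, which I would fix once and carry through). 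Since $\xi_2$ is single-valued on $\Sigma$, pairing each edge with its partner and using these constant jumps collapses $\oint_{\partial\Pi} f_1\xi_2$ to
\begin{equation*}
\sum_{k=1}^g\left(\oint_{B_k}\xi_1\oint_{A_k}\xi_2 - \oint_{A_k}\xi_1\oint_{B_k}\xi_2\right),
\end{equation*}
and I would reconcile the $\frac{1}{2\pi i}$ normalization with the convention used elsewhere in the paper (here the $\frac{1}{2\pi i}$ must come from the normalization built into the period pairing or the definition of $\Omega_{\mathcal H}$; I would check consistency with Remark \ref{homologycohomologyremark} and absorb it accordingly). Finally, since both sides depend only on cohomology classes (already noted in Remark \ref{homologycohomologyremark} via Stokes), the identity extends from the smooth representatives to arbitrary classes.

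The main obstacle is bookkeeping rather than conceptual: correctly tracking the orientation of the $4g$-gon boundary, the direction in which each paired edge is traversed, and the sign and cycle-label of the constant jump of $f_1$ across each pair of identified edges — a single sign slip flips the whole right-hand side. I would mitigate this by fixing the polygon word $A_1 B_1 A_1^{-1} B_1^{-1}\cdots$ explicitly, computing the jump on one representative pair in detail, and invoking symmetry for the rest. A secondary point to be careful about is that $\xi_1,\xi_2$ are only assumed smooth (not holomorphic), so I must not use any holomorphicity; the argument above uses only closedness and Stokes, so this is fine.
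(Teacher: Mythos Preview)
Your argument is correct and is the standard direct route: smooth Poincar\'{e} lemma on the cut polygon, Stokes, then the edge-pairing computation of the jump of the primitive. The paper's proof reaches the same Stokes/boundary computation, but takes a detour first: it invokes the $\partial\bar\partial$-Lemma (using that $\Sigma$ is K\"{a}hler) to replace each smooth representative by one of the form $\alpha+\bar\beta$ with $\alpha$ holomorphic and $\bar\beta$ anti-holomorphic, and then writes $\alpha=d\phi$ for a \emph{holomorphic} primitive $\phi$ on the cut surface before applying Stokes. Your version avoids this reduction entirely, using only closedness and the smooth Poincar\'{e} lemma, which is both more elementary and better matched to the hypothesis (smooth representatives). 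The paper's detour buys nothing for the identity as stated; it would only matter if one later needed the primitive itself to be holomorphic.

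Your caution about the $\tfrac{1}{2\pi i}$ is well placed: the classical cut-surface computation produces the right-hand side without that factor, so its presence here is a normalization peculiar to this paper rather than something that falls out of your argument. Track it against the paper's convention for $\Omega_{\mathcal H}$ and the residue/contour-integral identifications used elsewhere, and adjust at the end.
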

\begin{proof}
First, we show that for any $[\xi] \in H^1(\Sigma,\mathbb{C})$ has a representative $\xi = \alpha + \bar{\beta}$ where $\alpha$ is holomorphic ($\bar{\partial}\alpha = 0$) and $\bar{\beta}$ is anti-holomorphic ($\partial \bar{\beta} = 0$). A smooth differential form $\xi'$ representing $[\xi]$ can be decomposed as $\xi' = \alpha' + \bar{\beta'}$ where $\alpha'$ is a smooth $(1,0)$-form and $\bar{\beta'}$ is a smooth $(0,1)$-form. Then $d\xi' = (\partial+\bar{\partial})\xi' = 0$ implies $\bar{\partial}\alpha' = -\partial\bar{\beta'}$. Since Riemann surfaces are examples of K\"{a}hler manifolds, $\partial\bar{\partial}$-Lemma \cite[Proposition 6.17]{voisin2002} tells us that there exists a smooth function $f$ such that $\bar{\partial}\alpha' = \partial\bar{\partial}f = -\partial\bar{\beta'}$. We can see that $\xi := \xi' + df = \alpha + \bar{\beta}$ where $\alpha := \alpha' + \partial f$ and $\bar{\beta} = \beta' + \bar{\partial} f$ is the desired representative. 

The rest of the proof follows from Stokes' theorem and the standard argument leading to the Riemann bilinear identity. For example, we have
\begin{equation*}
    \iint_{\Sigma}\alpha \wedge \bar{\beta} = \iint_{\Delta} d\phi\wedge \bar{\beta} = \iint_{\Delta}d(\phi\bar{\beta}) = \oint_{\partial \bar{\Delta}}\phi\bar{\beta} = \frac{1}{2\pi i} \sum_{k=1}^g\left(\oint_{A_k}\bar{\beta}\oint_{B_k}\alpha - \oint_{B_k}\bar{\beta}\oint_{A_k}\alpha\right)
\end{equation*}
where $\Delta := \Sigma \setminus \cup_{k=1}^g\left(A_k\cup B_k\right)$ is a simply-connected domain and $\alpha = df = \partial f$ for some holomorphic function $\phi$ on $\Delta$ by Poincar\'{e} Lemma.
\end{proof}

Let us fix a \emph{reference point} $[\Sigma_0] \in \mathcal{B}$ and an open neighbourhood $\mathcal{B}_{\Sigma_0} \subseteq \mathcal{B}$ of $[\Sigma_0]$. Later we will explain more precisely how $\mathcal{B}_{\Sigma_0}$ is selected (see Condition \ref{howtochoosebsigma0condition}). But for now, we simply require $\mathcal{B}_{\Sigma_0}$ to be contractible and sufficiently small such that it can be covered by a single coordinate chart $(\mathcal{B}_{\Sigma_0}, u^1,...,u^g)$. We will denote by $\mathcal{H}\rightarrow \mathcal{B}_{\Sigma_0}$ the vector bundle given by the restriction of $\mathcal{H}$ to $\mathcal{B}_{\Sigma_0}$, which is trivial because $\mathcal{B}_{\Sigma_0}$ is contractible.
Let $T\mathcal{B}$ and $T^*\mathcal{B}$ denote the holomorphic tangent and cotangent bundles of $\mathcal{B}$ respectively. Following \cite[Section 5.2]{kontsevich2017airy}, we introduce $\pmb{\phi} \in \Gamma(\mathcal{B}_{\Sigma_0},T^*\mathcal{B}\otimes \mathcal{H})$ as follows. At any point $[\Sigma] \in \mathcal{B}_{\Sigma_0}$ we define the map $\pmb{\phi}_{\Sigma}$ via the following diagram: 
\begin{equation*}
    \begin{tikzcd}
    T_{[\Sigma]}\mathcal{B}_{\Sigma_0} \arrow{r}{\cong}\arrow[bend left=20]{rrr}{\pmb{\phi}_\Sigma} & \Gamma(\Sigma, TS|_\Sigma/T\Sigma) \arrow{r}{\cong} & \Gamma(\Sigma, \Omega^1_\Sigma) \arrow[hookrightarrow]{r}{[.]} & H^1(\Sigma, \mathbb{C}) \cong \mathcal{H}_\Sigma.
    \end{tikzcd}
\end{equation*}

Let us discuss the first map $T_{[\Sigma]}\mathcal{B}_{\Sigma_0} \xrightarrow{\cong} \Gamma(\Sigma, TS|_\Sigma/T\Sigma)$.
We map $v = \sum_{k=1}^g v^k\frac{\partial}{\partial u^k}$ to a global normal vector field $n_v \in \Gamma(\Sigma, TS|_\Sigma/T\Sigma)$ which can be defined in terms of $TS|_\Sigma$-sections on each $(\mathcal{F}, \Omega_S)$-chart $(U_p, x_p, y_p)$:
\begin{equation}\label{nvlocal}
    n_v|_{\Sigma\cap U_p} := \sum_{k=1}^gv^k\left(\frac{\partial x_p}{\partial u^k}\frac{\partial}{\partial x_p} + \frac{\partial y_p}{\partial u^k}\frac{\partial}{\partial y_p}\right) \in \Gamma(\Sigma\cap U_p, TS|_\Sigma).
\end{equation}
Taking the quotient of $n_v|_{\Sigma\cap U_p}$ on each $(U_p, x_p, y_p)$ and patch each of them together, we get a global section $n_v$. Finally, we define $\pmb{\phi}_\Sigma(v) := [\phi_\Sigma(v)] \in \mathcal{H}_\Sigma$ where $\phi_\Sigma(v) := \Omega_S(n_v,.)|_\Sigma \in \Gamma(\Sigma, \Omega^1_\Sigma)$. Let $v_k := \frac{\partial}{\partial u^k}$, then we can equivalently write this as
\begin{equation}\label{definingphi}
    \phi_\Sigma := \sum_{k=1}^g\Omega_S(n_{v_k},.)|_\Sigma du^k \in T^*_{[\Sigma]}\mathcal{B}\otimes \Gamma(\Sigma, \Omega^1_\Sigma), \qquad \pmb{\phi}_\Sigma := [\phi_\Sigma] = \sum_{k=1}^g[\Omega_S(n_{v_k},.)|_\Sigma]du^k \in T^*_{[\Sigma]}\mathcal{B}\otimes\mathcal{H}_\Sigma.
\end{equation}

\begin{remark}
The image $\text{im}\pmb{\phi}_{\Sigma} = \Gamma(\Sigma, \Omega^1_{\Sigma})$ is a Lagrangian subspace of $\mathcal{H}_{\Sigma}$ for every $[\Sigma] \in \mathcal{B}_{\Sigma_0}$.
\end{remark}

At first glance, the expression (\ref{nvlocal}) seems ambiguous because it depends on the choice of a connection, in other words, on how the derivatives of $x_p$ and $y_p$ are taken with respect to $u^k$. Suppose that $\Sigma \cap U_p = \{(x_p, y_p)\in U_p \ |\ \mathcal{P}_p(x_p,y_p;u) = 0\}$ for some polynomial $\mathcal{P}_p$. For any $q \in \Sigma \cap U_p$, we have $t \in T_q\Sigma \subset T_qS$ if $\iota_t d\mathcal{P}_p = 0$ where $d\mathcal{P}_p = \frac{\partial \mathcal{P}_p}{\partial x_p}dx_p + \frac{\partial \mathcal{P}_p}{\partial y_p}dy_p$. The variation of the curve $\Sigma$ in the $u^k$-direction in $\mathcal{B}$ gives 
\begin{equation}\label{ppequation}
    \frac{\partial \mathcal{P}_p}{\partial u^k} + \frac{\partial \mathcal{P}_p}{\partial x_p}\frac{\partial x_p}{\partial u^k} + \frac{\partial \mathcal{P}_p}{\partial y_p}\frac{\partial y_p}{\partial u^k} = 0.
\end{equation}
Choosing a connection is equivalent to choosing $\frac{\partial x_p}{\partial u^k}$ and $\frac{\partial y_p}{\partial u^k}$ satisfying this equation. We need to check that if connections are chosen differently on each $(U_p,x_p,y_p)$ then 
\begin{equation*}
    \left(n_v|_{\Sigma \cap U_p}\right)|_{\Sigma\cap U_p\cap U_{p'}} - \left(n_v|_{\Sigma \cap U_{p'}}\right)|_{\Sigma\cap U_p\cap U_{p'}} \in \Gamma(\Sigma \cap U_p \cap U_{p'}, T\Sigma)
\end{equation*}
and therefore we can patch the quotient of each $n_v|_{\Sigma \cap U_p}$ to form a global section $n_v \in \Gamma(\Sigma,\nu_{\Sigma})$. The following lemma resolves this concern:
\begin{lemma}\label{normallocalsectionslemma}
Suppose we obtain $(n_v|_{\Sigma\cap U_p})_i, i = 1,2$ using any two different choices of connections, then $(n_v|_{\Sigma\cap U_p})_1 - (n_v|_{\Sigma\cap U_p})_2 \in \Gamma(\Sigma\cap U_p, T\Sigma)$.
\end{lemma}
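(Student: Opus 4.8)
The plan is to work entirely within a single fixed $(\mathcal{F},\Omega_S)$-chart $(U_p,x_p,y_p)$ and show that the ambiguity in $n_v|_{\Sigma\cap U_p}$ coming from the choice of connection is a vector field tangent to $\Sigma$. Write $v=\sum_k v^k\frac{\partial}{\partial u^k}$. For two choices of connection we obtain two collections $\left(\frac{\partial x_p}{\partial u^k}\right)_i,\left(\frac{\partial y_p}{\partial u^k}\right)_i$, $i=1,2$, each satisfying the compatibility equation (\ref{ppequation}) for the defining polynomial $\mathcal{P}_p$. Set $\Delta x^k:=\left(\frac{\partial x_p}{\partial u^k}\right)_1-\left(\frac{\partial x_p}{\partial u^k}\right)_2$ and $\Delta y^k:=\left(\frac{\partial y_p}{\partial u^k}\right)_1-\left(\frac{\partial y_p}{\partial u^k}\right)_2$; subtracting the two instances of (\ref{ppequation}) kills the $\partial\mathcal{P}_p/\partial u^k$ term and leaves
\begin{equation*}
    \frac{\partial \mathcal{P}_p}{\partial x_p}\Delta x^k + \frac{\partial \mathcal{P}_p}{\partial y_p}\Delta y^k = 0
\end{equation*}
on $\Sigma\cap U_p$, for each $k$.

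Next I would observe that the difference of the two local normal sections is, by (\ref{nvlocal}),
\begin{equation*}
    \left(n_v|_{\Sigma\cap U_p}\right)_1 - \left(n_v|_{\Sigma\cap U_p}\right)_2 = \sum_{k=1}^g v^k\left(\Delta x^k\frac{\partial}{\partial x_p} + \Delta y^k\frac{\partial}{\partial y_p}\right) =: t.
\end{equation*}
Contracting $t$ with $d\mathcal{P}_p = \frac{\partial\mathcal{P}_p}{\partial x_p}dx_p + \frac{\partial\mathcal{P}_p}{\partial y_p}dy_p$ gives $\iota_t\, d\mathcal{P}_p = \sum_k v^k\left(\frac{\partial\mathcal{P}_p}{\partial x_p}\Delta x^k + \frac{\partial\mathcal{P}_p}{\partial y_p}\Delta y^k\right) = 0$ by the displayed identity above. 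Since $\Sigma\cap U_p$ is cut out by $\mathcal{P}_p=0$, the tangent space at any $q\in\Sigma\cap U_p$ is exactly the kernel of $d\mathcal{P}_p|_q$, so $t(q)\in T_q\Sigma$. Hence $t\in\Gamma(\Sigma\cap U_p, T\Sigma)$, which is the claim. Holomorphicity of $t$ is automatic: the connection coefficients, and therefore the $\Delta x^k,\Delta y^k$, are holomorphic functions of the coordinates.

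The one point that deserves care — and the only place the argument could stumble — is the tacit assumption that $\Sigma\cap U_p$ is globally defined in $U_p$ by a single holomorphic equation $\mathcal{P}_p=0$ with $d\mathcal{P}_p$ nonvanishing along $\Sigma$. This is legitimate because $\Sigma$ is a smooth hypersurface in the surface $S$, so after shrinking $U_p$ it is a zero locus of one holomorphic function with nonzero differential; one should also check that, on overlaps where one wants different defining functions, the argument only ever uses a local defining equation, so no compatibility between charts is needed for this particular lemma (that compatibility is exactly what the lemma is being invoked to provide afterwards). With $d\mathcal{P}_p\neq 0$ in hand, the identification $T_q\Sigma = \ker d\mathcal{P}_p|_q$ is immediate and the rest is the short contraction computation above.
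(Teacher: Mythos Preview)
Your proof is correct and follows essentially the same approach as the paper: subtract the two instances of equation (\ref{ppequation}) to eliminate the $\partial\mathcal{P}_p/\partial u^k$ term, then recognize the resulting relation as $\iota_t\, d\mathcal{P}_p = 0$, which characterizes tangency to $\Sigma$. The paper simplifies by assuming $v = \partial/\partial u^k$ without loss of generality, whereas you carry the full sum over $k$; this is an immaterial difference.
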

\begin{proof}
Without loss of generality, let us assume that $v = \frac{\partial}{\partial u^k}$. For $i = 1,2$, let
\begin{equation*}
    (n_v|_{\Sigma\cap U_p})_i := \left(\frac{\partial x_p}{\partial u^k}\right)_i\frac{\partial}{\partial x_p} + \left(\frac{\partial y_p}{\partial u^k}\right)_i\frac{\partial}{\partial y_p}
\end{equation*}
where
\begin{equation}\label{ppequationi}
    \frac{\partial \mathcal{P}_p}{\partial u^k} + \frac{\partial \mathcal{P}_p}{\partial x_p}\left(\frac{\partial x_p}{\partial u^k}\right)_i + \frac{\partial \mathcal{P}_p}{\partial y_p}\left(\frac{\partial y_p}{\partial u^k}\right)_i = 0.
\end{equation}
If we subtract (\ref{ppequationi}) with $i = 2$ from (\ref{ppequationi}) with $i = 1$ then 
\begin{equation*}
    \left(\left(\frac{\partial x_p}{\partial u^k}\right)_1 - \left(\frac{\partial x_p}{\partial u^k}\right)_2\right)\frac{\partial \mathcal{P}_p}{\partial x_p} + \left(\left(\frac{\partial y_p}{\partial u^k}\right)_1 - \left(\frac{\partial y_p}{\partial u^k}\right)_2\right)\frac{\partial \mathcal{P}_p}{\partial y_p} = 0.
\end{equation*}
The lemma follows as we recognize the left-hand-side to be $\iota_{(n_v|_{\Sigma\cap U_p})_1 - (n_v|_{\Sigma\cap U_p})_2}d\mathcal{P}_p$. 
\end{proof}

The next lemma tells us that $\nabla_{GM}[\xi] = \sum_{k=1}^g\left[\frac{\partial \xi}{\partial u^k}\right]du^k$ is independent of the choice of the connection and the differential form $\xi$ we choose to represent $[\xi]$. 

\begin{lemma}\label{integralandderivativelemma}
Let $\xi_\Sigma$ be a holomorphic family of meromorphic differential forms defined for each $[\Sigma] \in \mathcal{B}_{\Sigma_0}$ on an open subsets $D_\Sigma \subseteq \Sigma$ and let $\left(\frac{\partial \xi_\Sigma}{\partial u^k}\right)_i, i = 1,2$ be the derivative $\frac{\partial}{\partial u^k}$ taken using two different choices of connections. Then $\left(\frac{\partial \xi_\Sigma}{\partial u^k}\right)_1 - \left(\frac{\partial \xi_\Sigma}{\partial u^k}\right)_2$ is an exact meromorphic differential on $D_\Sigma$.
In particular, for any closed contour $C \subseteq D_\Sigma$ which deforms homotopically over $\mathcal{B}_{\Sigma_0}$ and avoiding all poles of $\xi_\Sigma$ for each $[\Sigma] \in \mathcal{B}_{\Sigma_0}$, we have 
\begin{equation}\label{integralandderivativeformula}
    \frac{\partial}{\partial u^k}\oint_C\xi_\Sigma = \oint_C \left(\frac{\partial}{\partial u^k}\xi_\Sigma\right)_{\nabla}
\end{equation}
independent of the the choice of connection $\nabla$ the derivative $\frac{\partial}{\partial u^k}\xi_\Sigma$ is taken with respect to.
\end{lemma}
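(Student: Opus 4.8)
The plan is: (i) show that the connection-dependence of $\frac{\partial}{\partial u^k}\xi_\Sigma$ is only up to an exact meromorphic differential, by a direct chart computation which amounts to Cartan's formula on a curve; and then (ii) deduce (\ref{integralandderivativeformula}) by trivializing the universal family in a neighbourhood of the contour using a fixed connection and differentiating under the integral sign. For (i), work in a $(\mathcal{F},\Omega_S)$-chart $(U_p,x_p,y_p)$ on which $x_p$ is a coordinate on $\Sigma\cap U_p$, so that $\xi_\Sigma|_{\Sigma\cap U_p}=\xi_p(x_p;u)\,dx_p$ with $\xi_p$ meromorphic in $(x_p,u)$. A connection amounts to the data of the functions $X_p:=\tfrac{\partial x_p}{\partial u^k}$ (and $\tfrac{\partial y_p}{\partial u^k}$) solving (\ref{ppequation}); unwinding the definition gives $\left(\tfrac{\partial\xi_\Sigma}{\partial u^k}\right)_\nabla\big|_{\Sigma\cap U_p}=\big(\partial_{x_p}\xi_p\cdot X_p+\partial_{u^k}\xi_p\big)\,dx_p+\xi_p\,dX_p$. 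Hence for two connections with data $X_p^{(1)},X_p^{(2)}$ the difference is $\partial_{x_p}\xi_p\cdot(X_p^{(1)}-X_p^{(2)})\,dx_p+\xi_p\,d(X_p^{(1)}-X_p^{(2)})=d\!\big(\xi_p\,(X_p^{(1)}-X_p^{(2)})\big)$. By Lemma \ref{normallocalsectionslemma} the local fields $(n_{v_k})_1-(n_{v_k})_2$ glue to a global vector field $W_k\in\Gamma(\Sigma,T\Sigma)$ (meromorphic on $D_\Sigma$), and $\xi_p\,(X_p^{(1)}-X_p^{(2)})=\iota_{W_k}\xi_\Sigma$, which is therefore a single-valued meromorphic function on $D_\Sigma$; so the difference of the two derivatives is the exact meromorphic differential $d(\iota_{W_k}\xi_\Sigma)$. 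Equivalently: since $\Sigma$ is a curve there are no nonzero meromorphic $2$-forms on it, so $d\xi_\Sigma=0$ and Cartan's formula gives $\mathcal{L}_{W_k}\xi_\Sigma=d\iota_{W_k}\xi_\Sigma$.

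For (ii), fix one connection $\nabla$ and write $u_t$ for the point obtained from $u_0$ by increasing the $k$-th coordinate by $t$. Since $C$ deforms within $D_\Sigma$ avoiding the poles of $\xi_\Sigma$ over all of $\mathcal{B}_{\Sigma_0}$, near $C$ the connection furnishes a holomorphic lift of $\partial/\partial u^k$ to the universal family $Z$, whose local flow $\Phi^k_t$ identifies a neighbourhood of $C$ in $\Sigma(u_0)$ with the corresponding neighbourhood in $\Sigma(u_t)$ for small $t$. Choosing the representative of $C$ in $\Sigma(u_t)$ to be $\Phi^k_t(C_0)$ — legitimate because $\xi_\Sigma$ is closed, so periods depend only on the homotopy class — we get $\oint_{C}\xi_{\Sigma(u_t)}=\oint_{C_0}(\Phi^k_t)^*\xi_{\Sigma(u_t)}$, a holomorphic family in $t$ of forms on the fixed compact contour $C_0$. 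Differentiating under the integral sign, and using that by the chain rule $\frac{d}{dt}\big|_{0}(\Phi^k_t)^*\xi_{\Sigma(u_t)}$ is exactly the chart expression for $\left(\tfrac{\partial\xi_\Sigma}{\partial u^k}\right)_\nabla$ found in (i), we obtain $\tfrac{\partial}{\partial u^k}\oint_C\xi_\Sigma=\oint_C\left(\tfrac{\partial}{\partial u^k}\xi_\Sigma\right)_\nabla$. Finally, by part (i) together with $\oint_C d(\iota_{W_k}\xi_\Sigma)=0$ (Stokes, $\iota_{W_k}\xi_\Sigma$ being single-valued along $C$), the right-hand side is independent of the connection.

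I expect no serious obstacle here: part (i) is an elementary chart computation, and part (ii) is the standard argument for differentiating a period under the integral sign. The one point requiring care is to verify that the $t$-derivative at $0$ of $(\Phi^k_t)^*\xi_{\Sigma(u_t)}$ really agrees with the coordinatewise definition of $\left(\tfrac{\partial}{\partial u^k}\xi_\Sigma\right)_\nabla$ built from the connection data in (\ref{ppequation}) — a direct but slightly bookkeeping-heavy chain-rule check — together with the routine verification that differentiation under the integral is legitimate (immediate, since $C_0$ is compact and the integrand is jointly holomorphic) and that the translated contours $\Phi^k_t(C_0)$ stay inside $D_\Sigma$ away from the poles of $\xi_\Sigma$, which is guaranteed by the hypothesis that $C$ deforms homotopically over all of $\mathcal{B}_{\Sigma_0}$ avoiding those poles.
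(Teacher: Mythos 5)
Your proof is correct and follows essentially the same route as the paper: the chart computation in (i) exhibiting the difference of two connection derivatives as $d\bigl(\xi_p(X_p^{(1)}-X_p^{(2)})\bigr)$ is identical, and your identification of the glued primitive as $\iota_{W_k}\xi_\Sigma$ via Lemma \ref{normallocalsectionslemma} is just a cleaner packaging of the paper's patching step. For (ii) the paper dissects $C$ into chart segments and applies the Leibniz integral rule with the endpoint contributions cancelling around the closed contour, which is the same mechanism as your flow-and-pullback argument.
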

\begin{proof}
First, let us consider the case in which $D_\Sigma \subseteq \Sigma \cap U_p$ where $(U_p, x_p, y_p)$ is a $(\mathcal{F}, \Omega_S)$-chart with $dx_p|_{\Sigma U_p}\neq 0$. We write in general $\xi_\Sigma|_{\Sigma \cap U_p} = \xi_{\Sigma,p}(x_p,u)dx_p$. Then we have for $i = 1,2$:
\begin{align*}
    \left(\frac{\partial \xi_\Sigma}{\partial u^k}\right)_i &= \frac{\partial \xi_{\Sigma,p}}{\partial u^k}dx_p + \left(\frac{\partial x_p}{\partial u^k}\right)_i\frac{\partial \xi_{\Sigma,p}}{\partial x_p}dx_p + \xi_{\Sigma,p}d\left(\frac{\partial x_p}{\partial u^k}\right)_i\\
    &= \frac{\partial \xi_{\Sigma,p}}{\partial u^k}dx_p + \left(\frac{\partial x_p}{\partial u^k}\right)_i\frac{\partial \xi_{\Sigma,p}}{\partial x_p}dx_p - \left(\frac{\partial x_p}{\partial u^k}\right)_i\frac{\partial \xi_{\Sigma,p}}{\partial x_p}dx_p + d\left(\xi_p\left(\frac{\partial x_p}{\partial u^k}\right)_i\right) = \frac{\partial \xi_{\Sigma,p}}{\partial u^k}dx_p + d\left(\xi_p\left(\frac{\partial x_p}{\partial u^k}\right)_i\right).
\end{align*}
Evidently, we have 
\begin{equation*}
    \left(\frac{\partial \xi_\Sigma}{\partial u^k}\right)_1 - \left(\frac{\partial \xi_\Sigma}{\partial u^k}\right)_2 = d\left(\xi_{\Sigma,p}\left(\frac{\partial x_p}{\partial u^k}\right)_1 - \xi_{\Sigma,p}\left(\frac{\partial x_p}{\partial u^k}\right)_2\right)
\end{equation*}
which is exact as claimed. In general we cover $D_\Sigma$ with a collection of $(\mathcal{F},\Omega_S)$-charts $\{(U_{p\in\sigma}, x_{p\in\sigma}, y_{p\in\sigma})\}$ and check that the function $\xi_{\Sigma,p}\left(\frac{\partial x_p}{\partial u^k}\right)_1 - \xi_{\Sigma,p}\left(\frac{\partial x_p}{\partial u^k}\right)_2$ on each $\Sigma\cap U_p$ patches together to give us a function $\Xi_{\Sigma,12}$ defined on $D_\Sigma$ such that $d\Xi_{\Sigma,12} = \left(\frac{\partial \xi_\Sigma}{\partial u^k}\right)_1 - \left(\frac{\partial \xi_\Sigma}{\partial u^k}\right)_2$. 

To see that (\ref{integralandderivativeformula}) is true, we dissect $C$ into closed segments $C_i, i = 1,\cdots,N$ such that $C_i \subset \Sigma\cap U_p$, then apply the Leibniz integral rule on each segment $C_i$. Since the endpoints of each $C_i$ vary with $u^k$ we also get the boundary term contribution, but the sum of these contributions from each $C_i$ will be zero because $C$ is closed. Note that there is no ambiguity in what connection to use on the right-hand-side of (\ref{integralandderivativeformula}) because the result will only be differed by an exact differential, which vanishes on a closed contour $C$.
\end{proof}

\begin{lemma}\theoremname{\cite[Section 5.1]{kontsevich2017airy}}\label{hisaflatbundlelemma}
The symplectic vector bundle $(\mathcal{H}\rightarrow \mathcal{B}, \Omega_{\mathcal{H}}, \nabla_{GM})$ is flat and $\Omega_{\mathcal{H}}$ is $\nabla_{GM}$-covariantly constant.
\end{lemma}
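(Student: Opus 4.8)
The plan is to work locally over a contractible neighbourhood $\mathcal{B}_{\Sigma_0}$ and to exhibit an explicit $\nabla_{GM}$-flat frame of $\mathcal{H}$; this will settle both assertions at once, since flatness of a connection is the same as the local existence of a frame of parallel sections. First I would fix $\mathcal{B}_{\Sigma_0}$ small enough that a symplectic basis of cycles $\{[A_k],[B_k]\}_{k=1}^g$ can be chosen consistently (without monodromy) for all $[\Sigma]\in\mathcal{B}_{\Sigma_0}$, as in Remark \ref{htrivializationtremark}; this yields the trivialization $\mathcal{H}|_{\mathcal{B}_{\Sigma_0}}\cong\mathbb{C}^{2g}\times\mathcal{B}_{\Sigma_0}$ sending a class $[\xi]$ to its tuple of $A$- and $B$-periods, and $\nabla_{GM}$ is then described by declaring classes on nearby fibres parallel exactly when all their periods agree. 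Lemma \ref{integralandderivativelemma} is the link between this period-wise description and the differential-geometric formula $\nabla_{GM}[\xi]=\sum_k[\partial_{u^k}\xi]\,du^k$: it guarantees the formula is independent of the representative and of the auxiliary connection, and that $\nabla_{GM}[\xi]=0$ iff $\oint_{A_k}\xi$ and $\oint_{B_k}\xi$ are locally constant.

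Since a cohomology class in $H^1(\Sigma,\mathbb{C})$ is determined by its $2g$ periods, for each standard basis vector of $\mathbb{C}^{2g}$ there is a unique local section $e_i$ of $\mathcal{H}$ over $\mathcal{B}_{\Sigma_0}$ with that (constant) period vector, and by the criterion above $\nabla_{GM}e_i=0$. The sections $e_1,\dots,e_{2g}$ form a frame, so $\mathcal{H}$ has a flat frame near every point; hence the curvature of $\nabla_{GM}$ vanishes and $(\mathcal{H}\to\mathcal{B},\nabla_{GM})$ is flat.

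For $\nabla_{GM}$-covariant constancy of $\Omega_{\mathcal{H}}$, I would invoke the Riemann bilinear identity, Lemma \ref{symplecticformonhlemma}, which expresses
$$\Omega_{\mathcal{H}}([\xi_1],[\xi_2])=\frac{1}{2\pi i}\sum_{k=1}^g\left(\oint_{A_k}\xi_2\oint_{B_k}\xi_1-\oint_{B_k}\xi_2\oint_{A_k}\xi_1\right)$$
purely in terms of periods. Evaluated on the flat frame $\{e_i\}$, whose period vectors are constant, this shows each $\Omega_{\mathcal{H}}(e_i,e_j)$ is a locally constant function on $\mathcal{B}_{\Sigma_0}$. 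Then the Leibniz rule $d\big(\Omega_{\mathcal{H}}(e_i,e_j)\big)=(\nabla_{GM}\Omega_{\mathcal{H}})(e_i,e_j)+\Omega_{\mathcal{H}}(\nabla_{GM}e_i,e_j)+\Omega_{\mathcal{H}}(e_i,\nabla_{GM}e_j)$ has vanishing left-hand side and vanishing last two terms, so $(\nabla_{GM}\Omega_{\mathcal{H}})(e_i,e_j)=0$ for all $i,j$; as $\{e_i\}$ is a frame, $\nabla_{GM}\Omega_{\mathcal{H}}=0$.

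The hard part will not be any calculation but the bookkeeping behind the local trivialization: one must choose $\mathcal{B}_{\Sigma_0}$ so that the basis cycles deform without monodromy (so that ``equal periods'' is a meaningful relation between neighbouring fibres), and one must confirm that the period-wise description of $\nabla_{GM}$ in Remark \ref{htrivializationtremark} genuinely agrees with the definition $\nabla_{GM}[\xi]=\sum_k[\partial_{u^k}\xi]\,du^k$ — which is precisely the content of Lemma \ref{integralandderivativelemma} through $\partial_{u^k}\oint_C\xi=\oint_C\partial_{u^k}\xi$. As a cross-check I would also note the even shorter conceptual argument: over contractible $\mathcal{B}_{\Sigma_0}$ the sheaf $R^1\pi_*\mathbb{C}$ is the constant sheaf $H^1(\Sigma_0,\mathbb{C})$ and $\nabla_{GM}$ is literally the exterior derivative in that trivialization, hence flat, while $\Omega_{\mathcal{H}}$ is the fixed topological cup-product pairing on $H^1(\Sigma_0,\mathbb{C})$ and therefore manifestly parallel; I prefer the period-based argument above because it uses exactly the tools already set up in the paper.
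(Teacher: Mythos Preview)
Your proposal is correct, and for the covariant constancy of $\Omega_{\mathcal{H}}$ it is essentially the same argument as the paper's (both reduce to the Riemann bilinear identity, Lemma~\ref{symplecticformonhlemma}, plus the fact that parallel transport preserves periods). For flatness, however, you take a genuinely different route: the paper computes the curvature directly, writing $d_{\nabla_{GM}}\psi$ for $\psi=s[\xi]$ using $\nabla_{GM}[\xi]=\sum_k[\partial_{u^k}\xi]\,du^k$ and then verifying $d_{\nabla_{GM}}^2\psi=0$ by the symmetry of mixed partials; you instead exhibit an explicit $\nabla_{GM}$-parallel frame (the sections with constant period vectors) and invoke the equivalence between flatness and local existence of a parallel frame. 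Your approach is arguably more transparent and packages the geometry more cleanly---it is exactly the ``$R^1\pi_*\mathbb{C}$ is locally constant'' argument you mention at the end, made concrete via periods---whereas the paper's direct computation has the virtue of not needing to first argue that the period description and the differential description of $\nabla_{GM}$ agree (though that agreement, via Lemma~\ref{integralandderivativelemma}, is used anyway for the second half).
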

\begin{proof}
Let $d_{\nabla_{GM}} : \Gamma(\mathcal{B}_{\Sigma_0}, \Omega^r_{\mathcal{B}}\otimes \mathcal{H}) \rightarrow \Gamma(\mathcal{B}_{\Sigma_0}, \Omega^{r+1}_{\mathcal{B}}\otimes \mathcal{H})$ be the exterior covariant derivative. The condition that $\nabla_{GM}$ is flat is the same as the vanishing of the curvature tensor $R_{\nabla_{GM}} = d_{\nabla_{GM}}\circ d_{\nabla_{GM}} = 0$. For $\psi = s[\xi] \in \Gamma(\mathcal{B}_{\Sigma_0}, \Omega^r_{\mathcal{B}}\otimes \mathcal{H}), s \in \Gamma(\mathcal{B}_{\Sigma_0}, \Omega^r_{\mathcal{B}})$ and $[\xi] \in \Gamma(\mathcal{B}_{\Sigma_0}, \mathcal{H})$, we have 
\begin{equation*}
    d_{\nabla_{GM}}\psi = (ds)[\xi] + (-1)^rs\wedge\nabla_{GM}[\xi] = (ds)[\xi] + (-1)^rs\wedge\sum_{k=1}^g\left[\frac{\partial \xi}{\partial u^k}\right]du^k
\end{equation*}
where we have used Lemma \ref{integralandderivativelemma}: $\nabla_{GM}[\xi] = \sum_{k=1}^g\left[\frac{\partial \xi}{\partial u^k}\right]du^k$. Therefore,
\begin{align*}
    d^2_{\nabla_{GM}}\psi &= (d^2s)[\xi] + (-1)^r(ds)\wedge\sum_{k=1}^g\left[\frac{\partial \xi}{\partial u^k}\right]du^k\\
    &\qquad + (-1)^{r+1}(ds)\wedge\sum_{k=1}^g\left[\frac{\partial \xi}{\partial u^k}\right]du^k + (-1)^rs\wedge\sum_{k,l = 1}^g\left[\frac{\partial \xi}{\partial u^k\partial u^l}\right]du^k\wedge du^l
    = 0
\end{align*}
which means $\nabla_{GM}$ is a flat connection. 

Because the vector bundle $\mathcal{H}\rightarrow \mathcal{B}$ is flat, parallel transport on $\mathcal{H}\rightarrow \mathcal{B}_{\Sigma_0}$ using the Gauss-Manin connection is path-independent. Let us denote by $\Gamma_{[\Sigma]}^{[\Sigma']}[\xi] \in \mathcal{H}_{\Sigma'}$ the parallel transport of $[\xi] \in \mathcal{H}_{\Sigma}$ from the point $[\Sigma] \in \mathcal{B}_{\Sigma_0}$ to $[\Sigma'] \in \mathcal{B}_{\Sigma_0}$.
Since parallel transport preserves $A$ and $B$-periods, we have $\Omega_{\mathcal{H}}(\Gamma_{[\Sigma]}^{[\Sigma']}[\xi_1], \Gamma_{[\Sigma]}^{[\Sigma']}[\xi_2]) = \Omega_{\mathcal{H}}([\xi_1],[\xi_2])$ proving that $\Omega_{\mathcal{H}}$ is $\nabla_{GM}$-covariantly constant.
\end{proof}

We will find in Section \ref{localtoglobalsection} that the connection given by taking derivatives keeping the direction of foliation leaves constant is particularly important and we denote it by $\nabla_{\mathcal{F}}$. Let us write down $\phi_{\Sigma}(v)$ and $\pmb{\phi}_{\Sigma}(v)$ using $\nabla_{\mathcal{F}}$. Let us cover $\Sigma$ with a collection of $(\mathcal{F},\Omega_S)$-charts $\mathcal{U} = \{(U_{p\in \sigma}, x_{p\in \sigma}, y_{p\in \sigma})\}$. On each $\Sigma\cap U_p$, we write $\nabla_{\mathcal{F}} = \sum_{k=1}^gdu^k\frac{\partial}{\partial u^k}|_{x_p = const}$. Straightforwardly, we have
\begin{align*}
    \phi_\Sigma|_{\Sigma\cap U_p} &= \sum_{k=1}^g\Omega_S(n_{v_k},.)|_{\Sigma\cap U_p}du^k = \sum_{k=1}^g\Omega_S\left(\frac{\partial x_p}{\partial u^k}\Big|_{x_p = const}\frac{\partial}{\partial x_p} + \frac{\partial y_p}{\partial u^k}\Big|_{x_p = const}\frac{\partial}{\partial y_p},.\right)\Big|_{\Sigma\cap U_p}du^k\\
    &= \sum_{k=1}^g\Omega_S\left(\frac{\partial y_p}{\partial u^k}\Big|_{x_p = const}\frac{\partial}{\partial y_p},.\right)\Big|_{\Sigma\cap U_p}du^k = -\sum_{k=1}^g\left(\frac{\partial y_p}{\partial u^k}\Big|_{x_p = const}dx_p|_{\Sigma\cap U_p}\right)du^k\\
    &= -\nabla_\mathcal{F}(y_pdx_p|_{\Sigma\cap U_p}),
\end{align*}
and so $\phi_\Sigma(v)|_{\Sigma\cap U_p} = -\iota_{v_k}\nabla_{\mathcal{F}}(y_pdx_p|_{\Sigma\cap U_p})$. Lastly, we simply put $\pmb{\phi}_{\Sigma}(v) = [\phi_\Sigma(v)] \in\mathcal{H}_\Sigma$.

\begin{remark}
Note that, from (\ref{ppequation}) we have $\frac{\partial y_p}{\partial u^k}|_{x_p = const} = -\frac{\partial \mathcal{P}_p/\partial u^k}{\partial \mathcal{P}_p/\partial y_p}$. Using
\begin{equation*}
    d\mathcal{P}_p|_{\Sigma\cap U_p} = \frac{\partial \mathcal{P}_p}{\partial x_p}dx_p|_{\Sigma\cap U_p} + \frac{\partial \mathcal{P}_p}{\partial y_p}dy_p|_{\Sigma\cap U_p} = 0,
\end{equation*}
we can see that $\frac{\partial \mathcal{P}_p}{\partial y_p}$ vanishes with order $1$ zero exactly at ramification points because $dx_p|_{\Sigma\cap U_p}$ does. Suppose that $\Sigma\cap U_p$ contains a ramification point $r_\alpha$, then $\frac{\partial y_p}{\partial u^k}|_{x_p = const}$ has a simple pole at $r_\alpha$. However, $\frac{\partial y_p}{\partial u^k}|_{x_p = const}dx_p|_{\Sigma\cap U_p}$ remains finite at $r_\alpha$ because the simple pole cancels out with the simple zero of $dx_p|_{\Sigma\cap U_p}$. From this analysis, we conclude that $\phi_\Sigma(v)$ is holomorphic on $\Sigma$, i.e. $\phi_\Sigma(v) \in \Gamma(\Sigma, \Omega^1_\Sigma)$, as expected since the definition of $\phi_\Sigma$ is independent of our choice of the connection (see Lemma \ref{normallocalsectionslemma}). We refer to the proof of Lemma \ref{phiongtophionhandwlemma} for a version of this argument repeated using the standard local coordinate $z_\alpha$.
\end{remark}

Let us compute the exterior covariant derivative of $\phi_\Sigma$ using $\nabla_{\mathcal{F}}$, we have
\begin{align*}
    d_{\nabla_{\mathcal{F}}}\phi_\Sigma|_{\Sigma\cap U_p} &= d_{\nabla_{\mathcal{F}}}\left(\sum_{k=1}^g\Omega_S(n_{v_k},.)|_\Sigma du^k\right)\Big|_{\Sigma \cap U_p} = d_{\nabla_\mathcal{F}}\left(-\nabla_\mathcal{F}(y_pdx_p|_{\Sigma\cap U_p})\right)\\
    &= \sum_{k,l=1}^g\left(\frac{\partial^2y_p}{\partial u^k\partial u^l}\Big|_{x_p=const}dx_p|_{\Sigma\cap U_p}\right)du^k\wedge du^l = 0
\end{align*}
for any $(\mathcal{F}, \Omega_S)$-chart $(U_p, x_p, y_p) \in \mathcal{U}$. This means $\nabla_{\mathcal{F}}\phi_\Sigma = 0$ and therefore,
\begin{equation*}
    d_{\nabla_{GM}}\pmb{\phi}_\Sigma = d_{\nabla_{GM}}\left[\phi_\Sigma\right] = \left[d_{\nabla_{\mathcal{F}}}\phi_\Sigma\right] = 0,
\end{equation*}
where we have applied Lemma \ref{integralandderivativelemma} at the second equality. Since $\mathcal{B}_{\Sigma_0}$ is contractible, by the Poincar\'{e} Lemma (see Lemma \ref{flatvectorbundlepoincarelemma} below), $d_{\nabla_{GM}}\pmb{\phi} = 0$ on $\mathcal{B}_{\Sigma_0}$ implies that there exists
\begin{equation}\label{definingthetainh}
\text{$\pmb{\theta} \in \Gamma(\mathcal{B}_{\Sigma_0}, \mathcal{H})$ such that $\nabla_{GM}\pmb{\theta} = \pmb{\phi}$}.
\end{equation}

\subsubsection{The embedding of $\mathcal{B}_{\Sigma_0}$ into $\mathcal{H}_{\Sigma_0}$ as a Lagrangian submanifold}\label{embeddingofbinhsubsection}
Let us consider the map $\pmb{\Phi}_{\Sigma_0}$ embedding the neighbourhood $\mathcal{B}_{\Sigma_0} \subseteq \mathcal{B}$ into a fiber $\mathcal{H}_{\Sigma_0}$ at $[\Sigma_0] \in \mathcal{B}_{\Sigma_0}$. Following \cite{kontsevich2017airy}, we first define the affine connection $\nabla_{GM} + \pmb{\phi}$ on $\mathcal{H}$:
\begin{equation*}
    \left(\nabla_{GM} + \pmb{\phi}\right)[\xi] := \nabla_{GM}[\xi] + \pmb{\phi}
\end{equation*}
for any section $[\xi] \in \Gamma(\mathcal{B}_{\Sigma_0}, \mathcal{H})$.
The map $\pmb{\Phi}_{\Sigma_0} : \mathcal{B}_{\Sigma_0}\rightarrow \mathcal{H}_{\Sigma_0}$ is defined by sending a point $[\Sigma] \in \mathcal{B}_{\Sigma_0}$ to the image of the zero vector $0_\Sigma \in \mathcal{H}_\Sigma$ under the parallel transport from point $[\Sigma_0]$ to $[\Sigma]$ using the affine connection $\nabla_{GM} +  \pmb{\phi}$. In general, given $[\Sigma], [\Sigma_1] \in \mathcal{B}_{\Sigma_0}$ the parallel transport of $0_{\Sigma_1} \in \mathcal{H}_{\Sigma_1}$ to $\mathcal{H}_{\Sigma}$ is unique, path-independent and it is given by
\begin{equation*}
    v_{\Sigma_1}(\Sigma) := -\pmb{\theta}_{\Sigma} + \Gamma_{[\Sigma_1]}^{[\Sigma]}\pmb{\theta}_{\Sigma_1}.
\end{equation*}
It follows that
\begin{equation*}
    \pmb{\Phi}_{\Sigma_0}(\Sigma) := v_{[\Sigma]}(\Sigma_0) = -\pmb{\theta}_{\Sigma_0} + \Gamma_{[\Sigma]}^{[\Sigma_0]}\pmb{\theta}_{\Sigma}.
\end{equation*}
We denote the image of $\pmb{\Phi}_{\Sigma_0}$ by
\begin{equation*}
    \mathcal{L}_{\Sigma_0} := \text{im}\pmb{\Phi}_{\Sigma_0} \subset \mathcal{H}_{\Sigma_0}.
\end{equation*}
Let $v = \sum_{k=1}^gv^k\frac{\partial}{\partial u^k} \in T_{[\Sigma]}\mathcal{B}_{\Sigma_0}$ be any tangent vector, then 
\begin{equation*}
    \iota_{v}d\pmb{\Phi}_{\Sigma_0}(\Sigma) = \sum_{k=1}^gv^k\frac{\partial \pmb{\Phi}_{\Sigma_0}(\Sigma)}{\partial u^k} = \Gamma_{[\Sigma]}^{[\Sigma_0]}\iota_{v}\nabla_{GM}\pmb{\theta}_{\Sigma} = \Gamma_{[\Sigma]}^{[\Sigma_0]}\iota_{v}\pmb{\phi}_{\Sigma}
\end{equation*}
is a tangent vector to $\mathcal{L}_{\Sigma_0} \subset \mathcal{H}_{\Sigma_0}$. Since $v \mapsto \iota_vd\pmb{\Phi}_{\Sigma_0}$ is injective because $\pmb{\phi}_{\Sigma}$ is injective, by assuming that $\mathcal{B}_{\Sigma_0}$ is small enough we have that $\pmb{\Phi}_{\Sigma_0} : \mathcal{B}_{\Sigma_0} \rightarrow \mathcal{H}_{\Sigma_0}$ embeds $\mathcal{B}_{\Sigma_0}$ as a submanifold in $\mathcal{H}_{\Sigma_0}$.
\begin{lemma}\theoremname{\cite[Section 5.1]{kontsevich2017airy}}\label{lsigma0islagrangianlemma}
$\mathcal{L}_{\Sigma_0}$ is a Lagrangian submanifold of $\mathcal{H}_{\Sigma_0}$.
\end{lemma}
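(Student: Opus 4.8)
The plan is to verify directly that at every point of $\mathcal{L}_{\Sigma_0}$ the tangent space is a maximal isotropic subspace of $\mathcal{H}_{\Sigma_0}$. Since $\pmb{\Phi}_{\Sigma_0}$ has already been shown to embed the $g$-dimensional manifold $\mathcal{B}_{\Sigma_0}$, and $\dim \mathcal{H}_{\Sigma_0} = \dim H^1(\Sigma_0,\mathbb{C}) = 2g$, the dimension count $\dim \mathcal{L}_{\Sigma_0} = g = \tfrac12\dim \mathcal{H}_{\Sigma_0}$ is automatic; so the only thing I would need to check is that $T_{\pmb{\Phi}_{\Sigma_0}(\Sigma)}\mathcal{L}_{\Sigma_0}$ is isotropic for $\Omega_{\mathcal{H}}$ at each $[\Sigma]\in\mathcal{B}_{\Sigma_0}$.

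For this I would use the description of the tangent space computed just above the lemma: it is spanned by the vectors $\iota_v d\pmb{\Phi}_{\Sigma_0}(\Sigma) = \Gamma_{[\Sigma]}^{[\Sigma_0]}\iota_v\pmb{\phi}_\Sigma$ as $v$ ranges over $T_{[\Sigma]}\mathcal{B}_{\Sigma_0}$. By Lemma~\ref{hisaflatbundlelemma}, $\Omega_{\mathcal{H}}$ is $\nabla_{GM}$-covariantly constant, so parallel transport is a symplectomorphism of fibers (path-independence being guaranteed by flatness); hence for $v_1, v_2 \in T_{[\Sigma]}\mathcal{B}_{\Sigma_0}$,
\[
\Omega_{\mathcal{H}}\!\left(\Gamma_{[\Sigma]}^{[\Sigma_0]}\iota_{v_1}\pmb{\phi}_\Sigma,\ \Gamma_{[\Sigma]}^{[\Sigma_0]}\iota_{v_2}\pmb{\phi}_\Sigma\right) = \Omega_{\mathcal{H}}\!\left(\iota_{v_1}\pmb{\phi}_\Sigma,\ \iota_{v_2}\pmb{\phi}_\Sigma\right).
\]
It then remains to see that the right-hand side vanishes, and here I would invoke the Remark that $\mathrm{im}\,\pmb{\phi}_\Sigma = \Gamma(\Sigma,\Omega^1_\Sigma)$ is a Lagrangian subspace of $\mathcal{H}_\Sigma$: indeed $\iota_{v_i}\pmb{\phi}_\Sigma = [\phi_\Sigma(v_i)]$ is the class of the holomorphic $1$-form $\phi_\Sigma(v_i) = \Omega_S(n_{v_i},\cdot)|_\Sigma$, and by the defining formula (\ref{omegahdefinition}), $\Omega_{\mathcal{H}}([\alpha],[\beta]) = \iint_\Sigma \alpha\wedge\beta$, which is zero for holomorphic $\alpha,\beta$ since $\alpha\wedge\beta$ is a $(2,0)$-form on a Riemann surface.

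Assembling these: at each point of $\mathcal{L}_{\Sigma_0}$ the tangent space is isotropic of dimension $g = \tfrac12\dim\mathcal{H}_{\Sigma_0}$, hence maximal isotropic, so $\mathcal{L}_{\Sigma_0}$ is a Lagrangian submanifold of $\mathcal{H}_{\Sigma_0}$. I do not expect a genuine obstacle here: all of the real content --- flatness of $\mathcal{H}$ and covariant constancy of $\Omega_{\mathcal{H}}$, the identification of $T\mathcal{L}_{\Sigma_0}$ through $\pmb{\phi}$, and the fact that holomorphic differentials span a Lagrangian subspace of $H^1$ --- has been established before the lemma, so the proof is a short assembly of these facts. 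The only mild point to be careful about is to run the argument fiberwise at an arbitrary $[\Sigma]$ rather than only at $[\Sigma_0]$, transporting everything back to $\mathcal{H}_{\Sigma_0}$ via $\Gamma_{[\Sigma]}^{[\Sigma_0]}$.
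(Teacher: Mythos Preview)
Your proposal is correct and follows essentially the same route as the paper: dimension count $g=\tfrac12\dim\mathcal{H}_{\Sigma_0}$, identify $T_{\pmb{\Phi}_{\Sigma_0}(\Sigma)}\mathcal{L}_{\Sigma_0}$ with $\Gamma_{[\Sigma]}^{[\Sigma_0]}(\mathrm{im}\,\pmb{\phi}_\Sigma)$, use $\nabla_{GM}$-covariant constancy of $\Omega_{\mathcal{H}}$ to drop the parallel transport, and conclude via the Remark that $\Gamma(\Sigma,\Omega^1_\Sigma)\subset\mathcal{H}_\Sigma$ is Lagrangian. Your additional sentence explaining why two holomorphic $1$-forms pair to zero under $\Omega_{\mathcal{H}}$ is a helpful elaboration but not a departure from the paper's argument.
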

\begin{proof}
Since $\mathcal{H}_{\Sigma_0} = H^1(\Sigma_0, \mathbb{C}) \cong \mathbb{C}^{2g}$ and $\dim \mathcal{L}_{\Sigma_0} = \dim\mathcal{B}_{\Sigma_0} = \frac{1}{2}\dim\mathcal{H}_{\Sigma_0}$ it remains for us to check that $\Omega_{\mathcal{H}}|_{\mathcal{L}_{\Sigma_0}} = 0$. For $v_i \in T_{[\Sigma]}\mathcal{B}_{\Sigma_0}, i = 1,2$ we have $\iota_{v_i}d\pmb{\Phi}_{\Sigma_0} \in T_{\pmb{\Phi}_{\Sigma_0}(\Sigma)}\mathcal{L}_{\Sigma_0}$ and
\begin{equation*}
    \Omega_{\mathcal{H}}(\iota_{v_1}d\pmb{\Phi}_{\Sigma_0}, \iota_{v_2}d\pmb{\Phi}_{\Sigma_0}) = \Omega_{\mathcal{H}}(\Gamma_{[\Sigma]}^{[\Sigma_0]}\iota_{v_1}\pmb{\phi}_{\Sigma}, \Gamma_{[\Sigma]}^{[\Sigma_0]}\iota_{v_2}\pmb{\phi}_{\Sigma}) = \Omega_{\mathcal{H}}(\iota_{v_1}\pmb{\phi}_{\Sigma},\iota_{v_2}\pmb{\phi}_{\Sigma}) = 0
\end{equation*}
because the image $\text{im} \pmb{\phi}_{\Sigma} = \Gamma(\Sigma, \Omega^1_{\Sigma})$ is a Lagrangian subspace of $\mathcal{H}_{\Sigma}$. 
\end{proof}

Let us consider a trivialization of $\mathcal{H}\rightarrow \mathcal{B}_{\Sigma_0}$ by choosing $A, B$-symplectic basis cycles of $H_1(\Sigma,\mathbb{C})$ for each $[\Sigma] \in \mathcal{B}_{\Sigma_0}$ and identify $\mathcal{H}_{\Sigma} \cong \mathbb{C}^{2g}$ as in Remark \ref{htrivializationtremark}: $[\xi] \mapsto \left(\oint_{A_1}\xi,\cdots, \oint_{A_g}\xi,\oint_{B_1}\xi,\cdots, \oint_{B_g}\xi\right) \in \mathbb{C}^{2g}$.
Let us define
\begin{equation*}
    \mathbf{a}^i(\Sigma) := \oint_{A_i}\pmb{\theta}_{\Sigma},\qquad \mathbf{b}_i(\Sigma) := \oint_{B_i}\pmb{\theta}_{\Sigma},\qquad i = 1,...,g.
\end{equation*}
We will often write $\mathbf{a}^i(\Sigma)$ and $\mathbf{b}_i(\Sigma)$ simply as $\mathbf{a}^i$ and $\mathbf{b}_i$ when they do not cause any ambiguity. 
\begin{lemma}\label{aiscoordforblemma}
$\{\mathbf{a}^1,...,\mathbf{a}^g\}$ is a coordinate system for $\mathcal{B}_{\Sigma_0}$.
\end{lemma}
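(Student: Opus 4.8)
The plan is to show that the holomorphic map
$[\Sigma] \mapsto (\mathbf{a}^1(\Sigma),\dots,\mathbf{a}^g(\Sigma))$ has invertible Jacobian at the reference point $[\Sigma_0]$, and then to invoke the holomorphic inverse function theorem, shrinking $\mathcal{B}_{\Sigma_0}$ if necessary so that this map becomes a biholomorphism onto an open subset of $\mathbb{C}^g$. The argument is the abstract counterpart of the concrete verification already carried out for the Seiberg-Witten curves in Section \ref{intro.swsection}.

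First I would compute the Jacobian. Writing $v_j := \partial/\partial u^j$, the defining relation $\nabla_{GM}\pmb{\theta} = \pmb{\phi}$ together with the formula $\nabla_{GM}[\xi] = \sum_k [\partial\xi/\partial u^k]\,du^k$ from Lemma \ref{integralandderivativelemma} yields $[\partial_{u^j}\theta_\Sigma] = \pmb{\phi}_\Sigma(v_j)$ in $\mathcal{H}_\Sigma$, where $\partial_{u^j}\theta_\Sigma$ may be computed with respect to any connection since the ambiguity is an exact form. Combining this with the interchange of derivative and contour integral, also from Lemma \ref{integralandderivativelemma}, gives $\dfrac{\partial \mathbf{a}^i}{\partial u^j} = \oint_{A_i}\partial_{u^j}\theta_\Sigma = \oint_{A_i}\phi_\Sigma(v_j)$, in which $\phi_\Sigma(v_j) \in \Gamma(\Sigma,\Omega^1_\Sigma)$ is the holomorphic differential associated to $v_j$ through the chain of isomorphisms defining $\pmb{\phi}_\Sigma$.

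Next I would argue invertibility of the matrix $M = \big(\oint_{A_i}\phi_\Sigma(v_j)\big)$. Since $\phi_\Sigma : T_{[\Sigma]}\mathcal{B} \to \Gamma(\Sigma,\Omega^1_\Sigma)$ is an isomorphism onto the $g$-dimensional space of holomorphic differentials, the forms $\phi_\Sigma(v_1),\dots,\phi_\Sigma(v_g)$ form a basis of $\Gamma(\Sigma,\Omega^1_\Sigma)$. If $M$ were singular there would be constants $c_1,\dots,c_g$, not all zero, with $\oint_{A_i}\big(\sum_j c_j\phi_\Sigma(v_j)\big) = 0$ for every $i$; but a holomorphic differential on a compact Riemann surface whose $A$-periods all vanish is identically zero, since the $A$-period map $\Gamma(\Sigma,\Omega^1_\Sigma)\to\mathbb{C}^g$ is an isomorphism. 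Hence $\sum_j c_j\phi_\Sigma(v_j) = 0$, and injectivity of $\phi_\Sigma$ forces all $c_j = 0$, a contradiction. So $M$ is invertible at $[\Sigma_0]$; being holomorphic, hence continuous, in $[\Sigma]$, it remains invertible on a neighbourhood, and the holomorphic inverse function theorem shows the map is a biholomorphism there. Taking $\mathcal{B}_{\Sigma_0}$ inside this neighbourhood — permissible under the standing convention that $\mathcal{B}_{\Sigma_0}$ may be shrunk freely — shows $\{\mathbf{a}^1,\dots,\mathbf{a}^g\}$ is a holomorphic coordinate system on $\mathcal{B}_{\Sigma_0}$.

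I do not expect a genuine obstacle here. The only points requiring care are the justification of differentiating under the integral sign and of the connection-independence of $\partial_{u^j}\theta_\Sigma$ modulo exact forms, both supplied by Lemma \ref{integralandderivativelemma}, together with the elementary fact that invertibility of the Jacobian at one point propagates to a neighbourhood, handled simply by further shrinking $\mathcal{B}_{\Sigma_0}$.
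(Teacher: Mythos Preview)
Your proof is correct and follows essentially the same approach as the paper: compute the Jacobian $\partial\mathbf{a}^i/\partial u^j = \oint_{A_i}\phi_\Sigma(v_j)$ via $\nabla_{GM}\pmb{\theta}=\pmb{\phi}$, then argue that a nontrivial kernel vector would produce a nonzero holomorphic differential with vanishing $A$-periods, contradicting injectivity of $\phi_\Sigma$. The only cosmetic difference is that the paper's argument shows the Jacobian is invertible at \emph{every} $[\Sigma]\in\mathcal{B}_{\Sigma_0}$ directly (so no continuity step is needed), whereas you check it at $[\Sigma_0]$ and propagate; either way the inverse function theorem finishes the job.
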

\begin{proof}
The matrix $\left(\frac{\partial \mathbf{a}^i}{\partial u^j}\right)$ is invertible because $\frac{\partial \mathbf{a}^i}{\partial u^j} = \frac{\partial}{\partial u^j}\oint_{A_i}\pmb{\theta}_{\Sigma} = \oint_{A_i}\iota_{\frac{\partial}{\partial u^j}}\nabla_{GM}\pmb{\theta}_{\Sigma} = \oint_{A_i}\iota_{\frac{\partial}{\partial u^j}}\pmb{\phi}_\Sigma$. So if there exists $(v^i) \in \mathbb{C}^g$ such that $\sum_{j=1}^gv^j\frac{\partial \mathbf{a}^i}{\partial u^j} = 0, i = 1,...,g$ then $\phi_\Sigma(v) = \sum_{j=1}^gv^j\iota_{\frac{\partial}{\partial u^j}}\phi_{\Sigma} \in \Gamma(\Sigma, \Omega^1_\Sigma), v := \sum_{i=1}^gv^i\frac{\partial}{\partial u^j}$ is a holomorphic form on $\Sigma$ with zero $A$-periods, therefore $\phi_\Sigma(v) = 0$. But we know that $v \mapsto \iota_v\phi_\Sigma$ is injective, therefore $v = 0$. Hence $\mathbf{a}^i := \mathbf{a}^i(u^1,...,u^g), i = 1,...,g$ is a holomorphic coordinates transformation.
\end{proof}

So $\mathbf{b}_i$ can be written as a function of $\{\mathbf{a}^i\}$. We can write $\pmb{\Phi}_{\Sigma_0} : \mathcal{B}_{\Sigma_0} \rightarrow \mathcal{H}_{\Sigma_0}$ using this trivialization as
\begin{equation*}
    [\Sigma] := (\mathbf{a}^1(\Sigma),...,\mathbf{a}^g(\Sigma)) \mapsto \pmb{\Phi}(\Sigma) = (\mathbf{a}^1(\Sigma) - \mathbf{a}^1(\Sigma_0),...,\mathbf{a}^g(\Sigma) - \mathbf{a}^g(\Sigma_0),\mathbf{b}^1(\Sigma) - \mathbf{b}^1(\Sigma_0),...,\mathbf{b}^g(\Sigma) - \mathbf{b}^g(\Sigma_0))
\end{equation*}
and 
\begin{equation*}
    \frac{\partial\pmb{\Phi}_{\Sigma_0}(\Sigma)}{\partial \mathbf{a}^i} = \left(0,...,1,...,0,\frac{\partial \mathbf{b}_1}{\partial \mathbf{a}^i},...,\frac{\partial \mathbf{b}_g}{\partial \mathbf{a}^i}\right) \in T_{\pmb{\Phi}_{\Sigma_0}(\Sigma)}\text{im}\pmb{\Phi}_{\Sigma_0}, \qquad i = 1,...,g.
\end{equation*}
It follows from Lemma \ref{lsigma0islagrangianlemma} that for any $i,j = 1,...,g$ we have
\begin{equation*}
    0 = \Omega_{\mathcal{H}}\left(\frac{\partial\pmb{\Phi}_{\Sigma_0}(\Sigma)}{\partial \mathbf{a}^i}, \frac{\partial\pmb{\Phi}_{\Sigma_0}(\Sigma)}{\partial \mathbf{a}^j}\right) = \frac{\partial \mathbf{b}_i}{\partial \mathbf{a}^j} - \frac{\partial \mathbf{b}_j}{\partial \mathbf{a}^i}.
\end{equation*}
Therefore, by an application of Poincar\'{e} lemma, there exists a holomorphic function $\mathfrak{F}_{\Sigma_0} = \mathfrak{F}_{\Sigma_0}(\mathbf{a}^1,...,\mathbf{a}^g)$ on $\mathcal{B}_{\Sigma_0}$ such that
\begin{equation}\label{abperiodsandprepotential}
    \mathbf{b}_i = \frac{\partial \mathfrak{F}_{\Sigma_0}(\mathbf{a}^1,...,\mathbf{a}^g)}{\partial \mathbf{a}^i},\qquad i = 1,...,g.
\end{equation}
The section $\pmb{\theta} \in \Gamma(\mathcal{B}_{\Sigma_0}, \mathcal{H})$ is uniquely determined up to an addition of a parallel section $\mathbf{c} \in \Gamma(\mathcal{B}_{\Sigma_0},\mathcal{H}), \nabla_{GM}\mathbf{c} = 0$. Therefore, given the moduli space $\mathcal{B}$ of $\mathcal{F}$-transversal curves, the function $\mathfrak{F}_{\Sigma_0}$ is unique up to an addition of a linear term. 
\begin{definition}\label{prepotentialdefinition}
Given the moduli space $\mathcal{B}$ of $\mathcal{F}$-transversal curves inside a foliated symplectic surface $(S,\Omega_S,\mathcal{F})$, the \emph{prepotential} $\mathfrak{F}_{\Sigma_0} = \mathfrak{F}_{\Sigma_0}(\mathbf{a}^1,\cdots,\mathbf{a}^g)$ is a holomorphic function locally defined on the open neighbourhood $\mathcal{B}_{\Sigma_0} \ni [\Sigma_0]$ which relates $A$ and $B$-periods of $\pmb{\theta}_{\Sigma} \in \mathcal{H}_{\Sigma}$ according to (\ref{abperiodsandprepotential}). 
\end{definition}
The prepotential is the generating function of the Lagrangian submanifold $\mathcal{L}_{\Sigma_0} := \text{im}\pmb{\Phi}_{\Sigma_0} \subset \mathcal{H}_{\Sigma_0}$.
In other words, the prepotential $\mathfrak{F}_{\Sigma_0}$ naturally arises from the study of the deformation of curves $\Sigma$ embedded in a foliated symplectic surface $(S, \Omega_S, \mathcal{F})$ around some neighbourhood $\mathcal{B}_{\Sigma_0}$ of $[\Sigma_0]$ in the moduli space $\mathcal{B}$. 

\subsubsection{Generalization}
Previously, we considered a particular vector bundle $\mathcal{H}\rightarrow \mathcal{B}$ and an embedding of a neighbourhood $\mathcal{B}_{\Sigma_0}$ of $[\Sigma_0]\in \mathcal{B}$ into a fiber $\mathcal{H}_{\Sigma_0}$ using the affine connection $\nabla_{GM} + \pmb{\phi}$. We showed that the image is a Lagrangian submanifold. This type of embedding can be done with a more general class of symplectic vector bundles. Let us end this section by giving a discussion of the general setting. First, let us state and proof a version of Poincar\'{e} Lemma for holomorphic flat vector bundles:

\begin{lemma}\label{flatvectorbundlepoincarelemma}
Let $(\mathcal{E}\rightarrow X, \nabla)$ be a $n$-dimensional holomorphic vector bundle over a contractible $m$ dimensional complex manifold $X$ with a flat connection $\nabla$ and let $d_{\nabla} : \Omega^r_X\otimes \mathcal{E} \rightarrow \Omega^{r+1}_X\otimes \mathcal{E}$ be the corresponding exterior covariant derivative. Then given a section $\xi \in \Gamma(X,\Omega^r_X\otimes \mathcal{E})$ such that $d_{\nabla}\xi = 0$, there exists a section $\eta \in \Gamma(X,\Omega^{r-1}_X\otimes \mathcal{E})$ such that $\xi = d_{\nabla}\eta$. 
\end{lemma}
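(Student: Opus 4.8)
The plan is to reduce this to the ordinary holomorphic Poincar\'{e} lemma on a contractible base by first trivializing $\mathcal{E}$ by a global \emph{flat holomorphic frame}. Such a frame exists precisely because $\nabla$ is flat and $X$ is contractible: fixing a point $x_0 \in X$ and a basis $e_1,\dots,e_n$ of the fibre $\mathcal{E}_{x_0}$, flatness makes parallel transport path-independent and contractibility makes $X$ simply connected, so extending each $e_i$ by parallel transport yields globally defined sections $s_1,\dots,s_n \in \Gamma(X,\mathcal{E})$ with $\nabla s_i = 0$. Since $\nabla$ is a holomorphic connection --- locally $\nabla = d + A$ with $A$ a matrix of holomorphic $1$-forms obeying the integrability condition $dA + A\wedge A = 0$ --- its flat sections solve a holomorphic linear system $ds = -As$ and are therefore holomorphic, so $\{s_i\}$ is indeed a holomorphic frame for $\mathcal{E}$.

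Next I would push everything through this frame. Writing $\xi = \sum_{i=1}^n \xi^i \otimes s_i$ with uniquely determined $\xi^i \in \Gamma(X,\Omega^r_X)$, the fact that $\nabla s_i = 0$ gives $d_{\nabla}\xi = \sum_{i=1}^n (d\xi^i)\otimes s_i$, so the hypothesis $d_{\nabla}\xi = 0$ is equivalent to $d\xi^i = 0$ for every $i$, and producing $\eta$ with $d_{\nabla}\eta = \xi$ is equivalent to producing holomorphic $(r-1)$-forms $\eta^i$ with $d\eta^i = \xi^i$ and setting $\eta := \sum_i \eta^i \otimes s_i$. Thus (for $r \ge 1$) the lemma reduces to the statement that on the contractible complex manifold $X$ every $d$-closed holomorphic $r$-form is $d$-exact \emph{through a holomorphic primitive}.

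For this last step --- which is the only real content --- I would invoke the holomorphic Poincar\'{e} lemma in the form adapted to our situation: since $\mathcal{B}_{\Sigma_0}$ is taken small enough to lie in a single coordinate chart, we may assume $X$ is biholomorphic to a polydisc, hence star-shaped about the origin. On a star-shaped domain the radial contraction $h_t(z) = tz$, $t \in [0,1]$, furnishes the standard homotopy operator $K$, which sends holomorphic $p$-forms to holomorphic $(p-1)$-forms --- holomorphicity being preserved because $h_t$ is holomorphic in $z$ --- and which satisfies $d\circ K + K\circ d = \mathrm{id}$ on forms of degree $\ge 1$. Applying $K$ to each closed $\xi^i$ gives holomorphic $\eta^i := K\xi^i$ with $d\eta^i = \xi^i$, and reassembling $\eta = \sum_i \eta^i \otimes s_i$ completes the argument.

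The main obstacle is exactly this holomorphic Poincar\'{e} lemma step: the naive smooth Poincar\'{e} lemma would only yield a $C^\infty$ primitive, so one genuinely needs a holomorphicity-preserving homotopy --- available on a polydisc as above --- or, if one insists on an arbitrary contractible $X$, the holomorphic de Rham theorem combined with Cartan's Theorem B ($H^q(X,\Omega^p_X) = 0$ for $q \ge 1$ when $X$ is Stein) and a hypercohomology spectral sequence identifying holomorphic de Rham cohomology with $H^{\bullet}(X,\mathbb{C})$, which vanishes in positive degrees by contractibility. By contrast, the construction of the flat holomorphic frame and the translation of $d_{\nabla}$ into that frame are entirely routine, so I would keep them brief.
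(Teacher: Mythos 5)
Your proposal is correct and follows essentially the same route as the paper: construct a global flat frame by parallel transport (using flatness for path-independence and contractibility for global existence), decompose $\xi$ in that frame so that $d_{\nabla}$ becomes the componentwise exterior derivative, and then apply the Poincar\'{e} lemma to each closed component. The only difference is that you are more careful than the paper about the final step --- the paper simply invokes ``the usual Poincar\'{e} Lemma,'' whereas you correctly flag that one needs a \emph{holomorphic} primitive and supply the holomorphicity-preserving homotopy operator on a polydisc (or the Stein/Cartan B route) to justify it.
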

\begin{proof}
Let us pick a basis $\{e_{i,x_0}\}$ of $\mathcal{E}_{x_0}$ at some point $x_0 \in X$ and let the section $e_i \in \Gamma(X,\mathcal{E})$ be given at any $x \in X$ by $e_{i,x} := \Gamma_{x_0}^xe_{i,x_0}$, where $\Gamma_{x_0}^x$ is the parallel transport on $\mathcal{E}$ from $x_0$ to $x$ using the flat connection $\nabla$. Since $\mathcal{E}\rightarrow X$ is flat, the parallel transport with respect to the connection $\nabla$ is path-independent. On the other hand, the existence and uniqueness of the parallel transport of $e_{i,x_0}$ from $x_0$ to $x$ along any path follows from Picard–Lindel\"{o}f theorem. It follows that $e_{i,x} := \Gamma_{x_0}^xe_{i,x_0} \in \mathcal{E}_x$ is well-defined for $x \in X$.
Therefore, we have parallel sections $\nabla e_i \in \Gamma(X,\mathcal{E})$, $\nabla e_i = 0$, and $\{e_{i,x}\}$ is a basis of the fiber $\mathcal{E}_x$ for all $x\in X$. A section $\xi\in \Gamma(X, \Omega^r_X\otimes \mathcal{E})$ can be expressed using the local coordinates $\{x^1,\cdots, x^m\}$ of $X$ as $\xi :=  \sum_{k=1}^n\xi_ke_k$ where $\xi_k := \sum_{i_1,\cdots,i_r = 1}^m\xi_{i_1\cdots i_r;k}(x)dx^{i_1}\wedge \cdots \wedge dx^{i_r} \in \Gamma(X,\Omega^r_X)$. Using $\nabla e_k = 0$, we have that
\begin{equation*}
    d_\nabla \xi = \sum_{k=1}^n\left((d\xi_k)e_k + (-1)^r\xi_k\wedge\nabla e_k\right) = \sum_{k=1}^n(d\xi_k)e_k.
\end{equation*}
Consequently, if $d_\nabla \xi = 0$ then $d\xi_k = 0$ for all $k = 1,\cdots,n$. Since $X$ is contractible, we can apply the usual Poincar\'{e} Lemma for differential forms to $\xi_k$ and conclude that there exists $\eta_k \in \Gamma(X, \Omega^{r-1}_X)$ such that $d\eta_k = \xi_k$. It is now a matter of simple calculation to verify that $d_\nabla \eta = \xi$ where $\eta = \sum_{k=1}^n\eta_ke_k$.
\end{proof}

Let us now present the general version of the embedding $\mathcal{B}_{\Sigma_0} \rightarrow \mathcal{H}_{\Sigma_0}$:

\begin{proposition}\label{generalembeddingtofiberproposition}
Let $(\mathcal{E}\rightarrow X, \Omega_{\mathcal{E}}, \nabla, \phi)$ be a $2n$-dimensional holomorphic symplectic vector bundle over a $n$-dimensional complex manifold $X$ with a flat connection $\nabla$, a $\nabla$-covariantly constant symplectic form $\Omega_{\mathcal{E}}$ and $\phi \in \Gamma(X, \Omega^1_X\otimes \mathcal{E})$, $d_\nabla \phi = 0$ such that for each $x \in X$, $\phi_x : T_xX\rightarrow \mathcal{E}_x$ embeds $T_xX$ as a Lagrangian subspace of $\mathcal{E}_x$. Fix a reference point $x_0 \in X$. There exists an open neighbourhood $X_{x_0} \subseteq X$ of $x_0$ such that the map $\Phi_{x_0} : X_{x_0}\rightarrow \mathcal{E}_{x_0}$, given by sending $x \in X_{x_0}$ to the image of $0_x \in \mathcal{E}_x$ under the parallel transport from $x$ to $x_0$ using the affine connection $\nabla + \phi$, is a well-defined embedding of $X_{x_0}$ into a Lagrangian submanifold $\mathcal{L}_{x_0} := \text{im}\Phi_{x_0} \subseteq \mathcal{E}_{x_0}$.
\end{proposition}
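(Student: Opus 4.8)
The plan is to transcribe the argument of Section~\ref{embeddingofbinhsubsection} to the abstract setting, replacing $(\mathcal{H}\to\mathcal{B},\nabla_{GM},\pmb{\phi})$ by $(\mathcal{E}\to X,\nabla,\phi)$. First I would choose $X_{x_0}$ to be a contractible open neighbourhood of $x_0$ which is moreover small enough to support the embedding claim (see below). Since $\nabla$ is flat and $d_\nabla\phi=0$, Lemma~\ref{flatvectorbundlepoincarelemma} with $r=1$ yields a section $\theta\in\Gamma(X_{x_0},\mathcal{E})$ with $\nabla\theta=\phi$ --- the abstract analogue of (\ref{definingthetainh}) --- unique up to addition of a $\nabla$-parallel section.

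Next I would make the parallel transport with respect to the affine connection $\nabla+\phi$ explicit. A local section $s$ of $\mathcal{E}$ is $(\nabla+\phi)$-parallel precisely when $\nabla s=-\phi=-\nabla\theta$, i.e. when $s+\theta$ is $\nabla$-parallel. Writing $\Gamma_x^{x'}$ for $\nabla$-parallel transport $\mathcal{E}_x\to\mathcal{E}_{x'}$ --- which exists and is path-independent because $\nabla$ is flat, by the Picard--Lindel\"{o}f argument used in the proof of Lemma~\ref{flatvectorbundlepoincarelemma} --- it follows that the $(\nabla+\phi)$-parallel section passing through $0_x\in\mathcal{E}_x$ takes the value $\Gamma_x^{x'}\theta_x-\theta_{x'}$ at $x'$. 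In particular $\Phi_{x_0}$ is well-defined and path-independent, with
\begin{equation*}
    \Phi_{x_0}(x) = -\theta_{x_0} + \Gamma_x^{x_0}\theta_x, \qquad x\in X_{x_0}.
\end{equation*}

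Differentiating this formula, for $v\in T_xX_{x_0}$ one gets $\iota_v\,d\Phi_{x_0}=\Gamma_x^{x_0}\iota_v\nabla\theta=\Gamma_x^{x_0}\phi_x(v)$; since $\phi_x$ is injective and $\Gamma_x^{x_0}$ is a linear isomorphism, $d\Phi_{x_0}$ is injective everywhere on $X_{x_0}$, so $\Phi_{x_0}$ is an immersion, and shrinking $X_{x_0}$ to a small enough ball makes it an embedding with image $\mathcal{L}_{x_0}$. To see $\mathcal{L}_{x_0}$ is Lagrangian, note first that $\dim\mathcal{L}_{x_0}=\dim X_{x_0}=n=\frac{1}{2}\dim\mathcal{E}_{x_0}$, and then for $v_1,v_2\in T_xX_{x_0}$,
\begin{equation*}
    \Omega_{\mathcal{E}}(\iota_{v_1}d\Phi_{x_0},\iota_{v_2}d\Phi_{x_0}) = \Omega_{\mathcal{E}}(\Gamma_x^{x_0}\phi_x(v_1),\Gamma_x^{x_0}\phi_x(v_2)) = \Omega_{\mathcal{E}}(\phi_x(v_1),\phi_x(v_2)) = 0,
\end{equation*}
using that $\Omega_{\mathcal{E}}$ is $\nabla$-covariantly constant (hence preserved by $\Gamma_x^{x_0}$) and that $\text{im}\,\phi_x\subset\mathcal{E}_x$ is Lagrangian, so in particular isotropic. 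Thus $\mathcal{L}_{x_0}$ is isotropic of half dimension, hence Lagrangian, exactly as in Lemma~\ref{lsigma0islagrangianlemma}.

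I expect the only genuinely non-formal point to be the upgrade from \emph{injective immersion} to \emph{embedding}, together with being careful that a single $X_{x_0}$ can be chosen that is simultaneously contractible, supports the potential $\theta$, and is small enough that $\Phi_{x_0}$ is a homeomorphism onto its image; the rest is a direct transcription of the $\mathcal{H}\to\mathcal{B}$ case treated above. It is also worth recording, as in the discussion preceding Definition~\ref{prepotentialdefinition}, that $\mathcal{L}_{x_0}$ --- and hence any associated generating function --- depends on the choice of $\theta$ only through a translation by a $\nabla$-parallel section of $\mathcal{E}$.
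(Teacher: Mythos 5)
Your proposal is correct and follows essentially the same route as the paper's own proof: contractible neighbourhood, the Poincar\'{e} Lemma for flat bundles (Lemma \ref{flatvectorbundlepoincarelemma}) to produce the potential $\theta$ with $\nabla\theta=\phi$, the explicit formula $\Phi_{x_0}(x)=-\theta_{x_0}+\Gamma_x^{x_0}\theta_x$, and then the isotropy of $\operatorname{im}\phi_x$ together with $\nabla$-invariance of $\Omega_{\mathcal{E}}$ to conclude that $\mathcal{L}_{x_0}$ is Lagrangian. Your reformulation of $(\nabla+\phi)$-parallelism as ``$s+\theta$ is $\nabla$-parallel'' is a slightly cleaner packaging of the same computation, and your explicit remarks about the immersion-to-embedding step and the ambiguity of $\theta$ up to a parallel section are consistent with the paper's treatment.
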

\begin{proof}
Let $X_{x_0} \subseteq$ be a contractible open neighbourhood of $x_0 \in X$. Since $d_\nabla\phi = 0$, by Lemma \ref{flatvectorbundlepoincarelemma}, there exists a section $\theta \in \Gamma(X_{x_0}, \mathcal{E})$ such that $d_\nabla\theta = \nabla \theta = \phi$. For $x_1\in X_{x_0}$, the section $v_{x_1} \in \Gamma(X_{x_0}, \mathcal{E}), v_{x_1}(x) := -\theta_x + \Gamma_{x_1}^x\theta_{x_1}$ satisfies $v_{x_1}(x_1) = 0$ and $\nabla v_{x_1}(x) = \phi_x$. For all $x_1, x \in X_{x_0}$, the parallel transport of $0_{x_1} \in \mathcal{E}_{x_1}$ to $\mathcal{E}_x$ using the affine connection $\nabla + \phi$ is path-independent and it is uniquely given by $v_{x_1}(x)$. Therefore, the map $\Phi_{x_0} : X_{x_0} \rightarrow \mathcal{E}_{x_0}$ is given by
\begin{equation*}
    \Phi_{x_0}(x) := v_{x}(x_0) = -\theta_{x_0} + \Gamma_{x}^{x_0}\theta_x.
\end{equation*}
Let us compute the derivatives of $\Phi_{x_0}$. Suppose that we can cover $X_{x_0}$ with a single coordinates chart $(X_{x_0}, x^1,...,x^n)$ then 
\begin{align*}
    d\Phi_{x_0}(x) &= \sum_{i=1}^ndx^i\frac{\partial \Phi_{x_0}(x)}{\partial x^i} = \sum_{i=1}^ndx^i\frac{\partial}{\partial x^i}\Gamma_x^{x_0}\left(\sum_{k=1}^{2n}\theta_{k}(x)e_{k,x}\right) = \sum_{i=1}^ndx^i\frac{\partial}{\partial x^i}\left(\sum_{k=1}^{2n}\theta_{k}(x)e_{k,x_0}\right)\\
    &= \sum_{i=1}^ndx^i\sum_{k=1}^{2n}\frac{\partial \theta_k(x)}{\partial x^i}e_{k,x_0} = \sum_{i=1}^ndx^i\Gamma_{x}^{x_0}\left(\sum_{k=1}^{2n}\frac{\partial \theta_k(x)}{\partial x^i}e_{k,x}\right) = \Gamma_{x}^{x_0}\nabla\left(\sum_{k=1}^{2n}\theta_{k}(x)e_{k,x}\right) = \Gamma_x^{x_0}\phi_x.
\end{align*}
For every $v \in T_xX_{x_0}$, we have $\iota_vd\Phi_{x_0}(x) \in T_{\Phi_{x_0}(x)}\mathcal{L}_{x_0}$. Furthermore, given $v_1,v_2 \in T_xX_{x_0}$ we have
\begin{equation*}
    \Omega_{\mathcal{E}}(\iota_{v_1}d\Phi_{x_0}(x), \iota_{v_2}d\Phi_{x_0}(x)) = \Omega_{\mathcal{E}}(\Gamma_x^{x_0}\iota_{v_1}\phi_x, \Gamma_x^{x_0}\iota_{v_2}\phi_x) = \Omega_{\mathcal{E}}(\iota_{v_1}\phi_x, \iota_{v_2}\phi_x) = 0
\end{equation*}
by the assumption that $\phi_x$ embeds $T_xX$ as a Lagrangian subspace of $\mathcal{E}_x$. Since $\phi_x$ is injective, the rank of $d\Phi_{x_0}(x)$ is $n$ for all $x \in X_{x_0}$. It follows that $\dim \mathcal{L}_{x_0} = \frac{1}{2}\dim \mathcal{E}_{x_0}$, hence $\mathcal{L}_{x_0} \subseteq \mathcal{E}_{x_0}$ is a Lagrangian submanifold. 
\end{proof} 

The proof of Proposition \ref{generalembeddingtofiberproposition} only works when $\mathcal{E}\rightarrow X$ is finite-dimensional, hence the result need not hold when the dimension is infinite. Nevertheless, Proposition \ref{generalembeddingtofiberproposition} gives us a guideline of what to expect when we consider the analogous embedding in the infinite-dimensional case. We will see this in Section \ref{theembeddingofdiscssection} where we consider the infinite-dimensional vector bundle $W^{\epsilon, M}\rightarrow Discs^M$.

\subsection{Airy Structures}\label{airystructuressection}

In this section, we review some basic terminologies and facts about Airy Structures as introduced in \cite{kontsevich2017airy}
.
\subsubsection{Topological vector spaces and symplectic structures}
\begin{definition}
A $k$-\emph{topological vector space} $V$ is a vector space equipped with a Hausdorff topology over a topological field $k$ such that 
\begin{equation}
    \text{vector addition : } V\times V \rightarrow V, \qquad \text{scalar multiplication : } k\times V \rightarrow V
\end{equation}
are continuous.
\end{definition}
We need a topology on $V$ to define the dual space $V^* := Hom(V,k)$ of $V$ as the vector space of continuous linear functionals on $V$. Typically, we equip $V^*$ with the weakest topology such that the map $f \in V^* \mapsto \phi_v(f) := f(v) \in k$ is continuous for all $v \in V$, making $V^*$ also a topological vector space. Therefore, we have $\phi_v \in V^{**} := Hom(V^*, k)$ by definition, where $V^{**}$ is the vector space of continuous linear functionals on $V^*$. We will not assume vector spaces in this section to be finite-dimensional. To help with the notation when listing the basis or coordinates we introduce an index set $\mathbb{I}$. For example, we write the basis of $V$ as $\{e_{i\in \mathbb{I}}\}$ and take $\mathbb{I} = \{1,...,n\}$ if $V$ is finite $n$-dimensional or $\mathbb{I} = \mathbb{Z}_{>0}$ if $V$ is countably infinite-dimensional.

\begin{remark}
The topology of $V$ will only be important to us when $n$ is infinite because when $n$ is finite, linear functionals on $V$ are automatically continuous. 
\end{remark}

\begin{definition}
A symplectic topological vector space $(W,\Omega)$ is a pair consisting of a topological vector space $W$ and a continuous antisymmetric bilinear form $\Omega : W\times W \rightarrow k$ such that $w \mapsto \Omega(w,.)$ is an isomorphism $W \xrightarrow{\cong} W^*$. We call $\Omega$ the \emph{symplectic form} of $W$.
\end{definition}

Let us introduce the example of symplectic topological vector space $(W,\Omega)$ which will be important in defining Airy structures. We will exclusively work with $k := \mathbb{C}$ equipped with a discrete topology. Let $V$ be a topological vector space with a discrete topology and a basis $\{e^{i\in \mathbb{I}}\}$ over $\mathbb{C}$. Any vector $v 
\in V$ can be written as $v = \sum_{i \in \mathbb{I}}y_ie^i$ where only a finite number of $y_i \in \mathbb{C}$ are non-zero. Let $\{f_{i\in \mathbb{I}}\} \subseteq V^*$ be the set of linearly independent vectors such that $f_i(e^j) = \delta_i^j$. Then elements of $V^*$ are $f = \sum_{i\in \mathbb{I}}x^if_i$ for $x^i \in \mathbb{C}$. We call $\{y_{i\in \mathbb{I}}\}$ and $\{x^{i\in \mathbb{I}}\}$ coordinates of $V$ and $V^*$ respectively. Equip $V^*$ with the weakest topology such that the map $f\mapsto \phi_v(f) := f(v) \in \mathbb{C}$ is continuous for all $v \in V$. The open sets in $V^*$ take the form $f + U_{\mathbb{J}} \subseteq V^*$ where $f \in V^*, \mathbb{J} \subseteq \mathbb{I}$ is a finite set and.
\begin{equation}\label{ujsubspace}
    U_{\mathbb{J}} := \left\{\sum_{i\in \mathbb{I}\setminus\mathbb{J}}x^if_i\ |\ x^i \in \mathbb{C}\right\} \subseteq V^*.
\end{equation}

\begin{remark}
If $V$ is finite dimensional then $\{f_{i\in \mathbb{I}}\}$ is a basis of $V^*$. When the dimension is infinite, $\{f_{i\in \mathbb{I}}\}$ is a \emph{Schauder basis} of $V^*$ not an ordinary basis because their finite linear combination do not span $V^*$. However, let us also refer to $\{f_{i\in\mathbb{I}}\}$ as a basis of $V^*$ to simplify the language. 
\end{remark}

Finally, we define $W := V\oplus V^*$ with standard symplectic form $\Omega := \sum_{i\in \mathbb{I}} dy_i\wedge dx^i$ where $dy_i := f_i, dx^i := \phi_{e^i}$, 
\begin{equation}\label{efcanonicalbasis}
    \Omega(e^i,e^j) = \Omega(f_i,f_j) = 0, \qquad \Omega(e^i,f_j) = \delta^i_j.
\end{equation}
$\{e^{i\in\mathbb{I}}, f_{i\in \mathbb{I}}\}$ is a basis of $W$. We call $\{e^{i\in\mathbb{I}}, f_{i\in \mathbb{I}}\}$, or any basis of $W$ satisfying (\ref{efcanonicalbasis}), a \emph{canonical basis} of $W$. To show that $(W,\Omega)$ is a symplectic topological vector space we will need the following:

\begin{lemma}
Let $V$ be a discrete topological vector space with basis $\{e^{k\in \mathbb{I}}\}$ over a discrete topological field $\mathbb{C}$, then $V^{**} \cong V$.
\end{lemma}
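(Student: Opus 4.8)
The plan is to show that the natural map $v \mapsto \phi_v$ from $V$ into $V^{**}$ is an isomorphism of vector spaces, where $\phi_v(f) := f(v)$ for $f \in V^*$. Injectivity is immediate: if $\phi_v = 0$, then $f(v) = 0$ for all $f \in V^*$; writing $v = \sum_{i \in \mathbb{I}} y_i e^i$ with only finitely many $y_i$ nonzero, applying $f_i$ gives $y_i = f_i(v) = 0$ for each $i$, so $v = 0$. (Continuity of each $\phi_v$ on $V^*$ holds by the very definition of the topology on $V^*$, so the map indeed lands in $V^{**}$.) The content of the lemma is therefore surjectivity: every continuous linear functional $\Psi : V^* \to \mathbb{C}$ is of the form $\phi_v$ for some $v \in V$.

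The key step is to exploit the explicit description of the topology on $V^*$: a neighbourhood base of $0 \in V^*$ is given by the subspaces $U_{\mathbb{J}}$ of \eqref{ujsubspace}, for $\mathbb{J} \subseteq \mathbb{I}$ finite. First I would use continuity of $\Psi$ at $0$: the preimage $\Psi^{-1}(D)$ of the open unit disc $D \subseteq \mathbb{C}$ is open and contains $0$, hence contains some basic open set $U_{\mathbb{J}}$ with $\mathbb{J}$ finite. But $U_{\mathbb{J}}$ is a \emph{linear subspace}, so $\Psi(U_{\mathbb{J}})$ is a linear subspace of $\mathbb{C}$ contained in the bounded set $D$; the only such subspace is $\{0\}$. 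Therefore $\Psi$ vanishes identically on $U_{\mathbb{J}}$. Now $V^* = \mathrm{span}\{f_i : i \in \mathbb{J}\} \oplus U_{\mathbb{J}}$ (any $f = \sum_{i \in \mathbb{I}} x^i f_i$ decomposes as its finite $\mathbb{J}$-part plus an element of $U_{\mathbb{J}}$), so $\Psi$ is determined by the finitely many values $c_i := \Psi(f_i)$, $i \in \mathbb{J}$. Setting $v := \sum_{i \in \mathbb{J}} c_i e^i \in V$ (a finite sum, hence a genuine element of $V$), one checks $\phi_v(f_j) = f_j(v) = c_j$ for $j \in \mathbb{J}$ and $\phi_v|_{U_{\mathbb{J}}} = 0$, so $\phi_v = \Psi$ on a spanning set and hence everywhere. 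This proves surjectivity.

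I expect the main obstacle to be purely bookkeeping: making sure the direct-sum decomposition $V^* = \mathrm{span}\{f_i : i \in \mathbb{J}\} \oplus U_{\mathbb{J}}$ and the evaluation $f_j(v)$ are handled correctly given that $\{f_i\}$ is only a Schauder basis of $V^*$ when $\mathbb{I}$ is infinite. The point is that we never need to expand $\Psi$ against the full Schauder basis — the continuity hypothesis collapses everything to the finite index set $\mathbb{J}$, which is exactly the reason the statement is true. One should also remark that the discreteness of the field $\mathbb{C}$ here (as stipulated before \eqref{efcanonicalbasis}) plays no essential role beyond ensuring $D$ is an honest open neighbourhood of $0$; the argument is the standard fact that the continuous dual of $\mathbb{C}^{(\mathbb{I})}$ (finite-support sequences, i.e. $V$) with its natural topology recovers, after taking the continuous dual once more, the original space $V$.
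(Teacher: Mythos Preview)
Your proof is correct and reaches the same conclusion as the paper---that any continuous $\Psi \in V^{**}$ is supported on only finitely many $f_i$---but by a different mechanism. The paper argues via sequential continuity: embedding $\mathbb{Z}_{>0}$ into $\mathbb{I}$, it considers sequences $s_n = \sum_{k=1}^n x^k f_k + \Lambda_{n+1} f_{n+1}$ converging to $f$ for an \emph{arbitrary} sequence $\{\Lambda_n\}$, and observes that for $\lim_n \Psi(s_n)$ to be well-defined one must have $\Psi(f_n) = 0$ for cofinitely many $n$. You instead work directly with the neighbourhood base: continuity of $\Psi$ at $0$ forces $U_{\mathbb{J}} \subseteq \Psi^{-1}(D)$ for some finite $\mathbb{J}$, and the bounded-image-of-a-subspace trick kills $\Psi$ on $U_{\mathbb{J}}$ at once. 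Your route is the more standard one and avoids any implicit reliance on sequential characterisations of continuity, which can be delicate when $\mathbb{I}$ is uncountable and $V^*$ is not first-countable. One minor remark: since $\mathbb{C}$ carries the \emph{discrete} topology here, $\{0\}$ itself is open, so you could take $D = \{0\}$ and dispense with the bounded-subspace step entirely; your closing comment about the role of discreteness is slightly off on this point, though the argument as written is valid regardless.
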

\begin{proof}
If $V$ is finite-dimensional, this is trivial. Suppose that $V$ is infinite-dimensional and let $\phi \in V^{**} = Hom(V^*, \mathbb{C})$ be a continuous linear functional. For any sequence $s : \mathbb{Z}_{>0} \rightarrow V^*, n\mapsto s_n$ converges to $f \in V^*$, we have $\phi(f) = \phi\left(\lim_{n\rightarrow \infty} s_n\right) = \lim_{n\rightarrow \infty}\phi(s_n)$ by the continuity. Let us embed $\mathbb{Z}_{>0}$ into the index set $\mathbb{I}$. Suppose that $f := \sum_{k \in \mathbb{Z}_{>0} \subseteq \mathbb{I}}x^kf_k$, where $f_i(e^j) = \delta_i^j$ and consider the sequence $s_n := \sum_{k=1}^{n}x^kf_k + \Lambda_{n+1} f_{n+1}$ converges to $f$ in the topology of $V^*$ for any arbitrary sequence $\{\Lambda_n\}$.

By the continuity of $\phi$ we have 
\begin{equation*}
    \phi(f) = \phi(\lim_{n\rightarrow \infty} s_n) = \lim_{n\rightarrow \infty} \phi(s_n) = \lim_{n\rightarrow \infty}\phi\left(\sum_{k=1}^nx^kf_k + \Lambda_{n+1} f_{n+1}\right) = \lim_{n\rightarrow \infty}\left(\sum_{k=1}^nx^k\phi(f_k)+\Lambda_{n+1}\phi(f_{n+1})\right).
\end{equation*}
But the sequence $\{\Lambda_n\}$ is arbitrary, therefore, the only way for the above to be true is if $\phi(f_n)$ is only non-zero for finitely many $n \in \mathbb{Z}_{>0}\subseteq \mathbb{I}$. Since the embedding of $\mathbb{Z}_{>0}$ into $\mathbb{I}$ is also arbitrary, we conclude that $\phi(f_k)$ is non-zero for only finite number of $k \in \mathbb{I}$.

The topology on $V^{**}$ is given by the weakest topology such that the map $\phi \mapsto \phi(f)$ is continuous for all $f \in V^*$, which is not hard to check to be a discrete topology. The isomorphism $V^{**}\cong V$ is given by $e^i \leftrightarrow \phi_{e^i}, i \in \mathbb{I}$ where $\phi_{e^i}(f_j) := f_j(e^i) = \delta_j^i$, proving the lemma.
\end{proof}

Let us finish showing that $(W,\Omega)$ is a symplectic topological vector space. We have that $e^i \mapsto \Omega(e^i,.) = \phi_{e^i}$ is an isomorphism $V\xrightarrow{\cong}V^{**}$ and $f_i \mapsto \Omega(f_i,.) = -f_i$ is an isomorphism $V^*\xrightarrow{\cong}V^*$. It follows that $(W,\Omega)$ is a symplectic topological vector space because $w \mapsto \Omega(w,.)$ gives an isomorphism $W = V\oplus V^* \xrightarrow{\cong} V^{**}\oplus V^* = (V^*\oplus V)^* = W^*$. The topological symplectic vector space $W$ is an example of a \emph{Tate space}.

\begin{definition}\theoremname{Tate space \cite{drinfeld2006infinite}}
A Tate space is a topological vector space over a discrete field $k$ which is isomorphic to $V_1\oplus V_2^*$ for some discrete topological $k$-vector spaces $V_1$ and $V_2$.
\end{definition}

\begin{remark}\label{xycoordsarevectorsremark}
The coordinate $x^i$ of $V^*$ can be thought of as a linear functional $x^i \in Hom(V^*, \mathbb{C}) = V^{**} \cong V$. Equivalently, $x^i$ is identified with the vector $e^i \in V$ via the isomorphism $e^i \mapsto \Omega(e^i,.)$. The reason why we are keeping both notations is that they carry different conceptual meanings. Similarly, the coordinate $y_i$ can be identified with the vector $-f_i \in V^*$. Together, $\{x^{i\in\mathbb{I}}, y_{i\in\mathbb{I}}\}$ is a coordinate system of $W$.
\end{remark}

Given two vector spaces over a field $k$, a \emph{tensor product} is a frequently used operation to construct a new vector space. Formally, the tensor product $V_1\otimes V_2$ of vector spaces $V_1$ and $V_2$ is uniquely defined up to isomorphism as the vector space satisfying a certain universal property. More concretely, suppose that $V_1$ and $V_2$ are potentially infinite-dimensional vector spaces with basis $\{e^{i \in \mathbb{I}_1}_1\}$ and $\{e^{i \in \mathbb{I}_2}_2\}$ respectively, then $V_1\otimes V_2$ is a vector space with a basis $\{e^i_1\otimes e^j_2\ |\ (i,j) \in \mathbb{I}_1\times \mathbb{I}_2\}$. In other words, 
\begin{equation}\label{tensorprodofvectorspacesdefinition}
V_1\otimes V_2 = \left\{\sum_{(i,j) \in \mathbb{I}_1\times \mathbb{I}_2} v_{ij}e^i_1\otimes e^j_2\ |\ v_{ij}\in k \text{ is non-zero for finitely many pairs of } (i,j)\in \mathbb{I}_1\times \mathbb{I}_2\right\}.
\end{equation}

\begin{remark}
The important message of (\ref{tensorprodofvectorspacesdefinition}) is that only finite linear combinations of tensor products of vectors are allowed in $V_1\otimes V_2$ even though $V_1$ or $V_2$ are possibly infinite-dimensional. 
\end{remark}

$V_1\otimes V_2$ is different from \emph{topological complete tensor product} $V_1\hat{\otimes}V_2$ mentioned in \cite{kontsevich2017airy}. Let us briefly explain the meaning of $V_1\hat{\otimes}V_2$. Given a topological vector space $V$, the \emph{completion} of $V$ is the unique complete topological vector space $\hat{V}$ such that $V$ can be embedded densely into $\hat{V}$ \cite[Theorem 5.2]{treves2016topological}. Let $V_1, V_2$ be topological vector spaces. We assign the topology to $V_1\otimes V_2$ such that it is the unique object satisfying the universal properties of the tensor product in the category of $k$-topological vector spaces \cite{glockner2004tensor}. For instance, when $V_1$ and $V_2$ are locally convex, then this topology on $V_1\otimes V_2$ is the \emph{projective topology} \cite[Definition 43.2]{treves2016topological}.
\begin{definition}
The \emph{topological complete tensor product} $V_1\hat{\otimes} V_2$ is the topological completion of the tensor product $V_1\otimes V_2$.
\end{definition}

Let us give an explicit presentation of $W\hat{\otimes}W$ as an example, where $W = V\oplus V^*$ is a Tate space. Assign a topology to $W\otimes W$ by declaring open sets around $w\in W$ to be $w + U_{\mathbb{J}_1\mathbb{J}_2\mathbb{J}_3} \subseteq W\otimes W$ where $\mathbb{J}_{k=1,2} := (\mathbb{J}_k^{i\in\mathbb{I}})$, $\mathbb{J}^i_1, \mathbb{J}^i_2, \mathbb{J}_3 \subseteq \mathbb{I}$ are finite sets and
\begin{equation*}
    U_{\mathbb{J}_1\mathbb{J}_2\mathbb{J}_3} := \sum_{i \in \mathbb{I}}\left(U_{\mathbb{J}^i_1}\otimes \mathbb{C}e^i + \mathbb{C}e^i\otimes U_{\mathbb{J}^i_2}\right) + \sum_{\mathbb{J}_3\subseteq \mathbb{I}_1, \mathbb{I}_2 \subseteq \mathbb{I}} U_{\mathbb{I}_1}\otimes U_{\mathbb{I}_2} \subseteq W\otimes W
\end{equation*}
where $U_{\mathbb{J}} \subseteq V^*$ are vector subspaces given by (\ref{ujsubspace}). Since open neighborhoods of zero $U_{\mathbb{J}_1\mathbb{J}_2\mathbb{J}_3}$ are subspaces of $W\otimes W$ in this case, the completion can be computed using the inverse limit:
\begin{align*}
    W\hat{\otimes} W &:= \varprojlim W\otimes W/U_{\mathbb{J}_1\mathbb{J}_2\mathbb{J}_3}\\
    &= \left\{\sum_{i,j\in\mathbb{I}}a_{ij}e^i\otimes e^j + \sum_{i,j\in \mathbb{I}}(b_{1i}^je^i\otimes f_j + b_{2j}^if_i\otimes e^j) + \sum_{i,j\in\mathbb{I}}c^{ij}f_i\otimes f_j\ |\ \begin{array}{c}
    a_{ij}, b_{1i}^j, b_{2i}^j, c^{ij} \in\mathbb{C}\\
    a_{ij} \neq 0 \text{ for finitely }\\
    \text{ many pairs of } (i,j) \in \mathbb{I}\times \mathbb{I}, \\
    \text{ and for each } i \in \mathbb{I}, b_{1j}^i, b_{2j}^i \neq 0\\
    \text{ for finitely many } j \in \mathbb{I}
    \end{array}\right\}.
\end{align*}
For $k=1,2$, let us write $\mathbb{J}'_k \subseteq \mathbb{J}_k$ if $\mathbb{J}'^i_k \subseteq \mathbb{J}^i_k$ for all $i \in \mathbb{I}$. The inverse limit is taken over the directed poset of tuples $(\mathbb{J}_1,\mathbb{J}_2,\mathbb{J}_3)$ with with partial order $(\mathbb{J}'_1,\mathbb{J}'_2,\mathbb{J}'_3) \leq (\mathbb{J}_1,\mathbb{J}_2,\mathbb{J}_3)$ if $\mathbb{J}'_i\subseteq \mathbb{J}_i, \forall i = 1,2,3$.

The $n$-th order topological complete tensor product $W^{\hat{\otimes} n} := W\hat{\otimes}\cdots \hat{\otimes} W$ can be defined in a similar way. We denote by $\widehat{Sym}_{n}(W)$ the subspace of symmetric tensors in $W^{\hat{\otimes}n}$ and let $\widehat{Sym}_{\leq n}(W) := \bigoplus_{i=0}^n\widehat{Sym}_i(W)$.
Using Remark \ref{xycoordsarevectorsremark}, we can think of $\prod_{i=0}^\infty\widehat{Sym}_i(W)$ as the space of formal polynomials on $W$ and denote it by $\mathbb{C}[[W]] = \mathbb{C}[[x^{i\in \mathbb{I}}, y_{i\in \mathbb{I}}]]$. In particular, elements of $\mathbb{C}[[W]]$ are formal series of the form
\begin{align*}
    \sum_{n = 0}^\infty\sum_{\substack{n_1 + n_2 = n\\n_1,n_2 \geq 0}}\sum_{\substack{i_1,\cdots,i_{n_1}\in\mathbb{I}\\j_1,\cdots,j_{n_2}\in\mathbb{I}}}P_{i_1,\cdots,i_{n_1}}^{j_1,\cdots,j_{n_2}}x^{i_1}\cdots x^{i_{n_1}}y_{j_1}\cdots y_{j_{n_2}}, \qquad \begin{array}{c}
    \text{Where $P_{i_1,\cdots,i_{n_1}}^{j_1,\cdots,j_{n_2}} \in \mathbb{C}$ and }\\
    \text{for each $(j_1,\cdots,j_{n_2}) \in \mathbb{I}^{n_2}$, $P_{i_1,\cdots,i_{n_1}}^{j_1,\cdots,j_{n_2}}\neq 0$}\\
    \text{for finitely many $(i_1,\cdots,i_{n_1}) \in \mathbb{I}^{n_1}$.}
    \end{array}
\end{align*}
The symplectic form $\Omega$ defines a Poisson bracket on $\mathbb{C}[[W]]$ given by 
\begin{equation}\label{xycanonicalcoordinates}
    \{x^i, x^j\} = \{y_i, y_j\} = 0, \qquad \{y_i, x^j\} = \delta^j_i
\end{equation}
and extended to arbitrary formal polynomials by the Leibniz rule. We call coordinates $\{x^{i\in \mathbb{I}}, y_{i\in \mathbb{I}}\}$ satisfying (\ref{xycanonicalcoordinates}), or equivalently coordinates such that $\Omega$ takes the canonical form: $\Omega = \sum_{i\in \mathbb{I}}dy_i\wedge dx^i$, a \emph{canonical coordinates system} of $W$.

\subsubsection{Pre-Airy structures}
Let $V$ be a topological vector space with a discrete topology over the discrete topological field $\mathbb{C}$.
\begin{definition}\theoremname{\cite[Definition 2.6.1]{kontsevich2017airy}}
A \emph{classical pre-Airy structure} on $V$ is a tuple $(V,A,B,C)$ where 
\begin{equation*}
    A = (a_{ijk})_{i,j,k \in \mathbb{I}} \in V\otimes V \otimes V, \qquad B = (b^k_{ij})_{i,j,k \in \mathbb{I}} : V \rightarrow V\otimes V, \qquad C = (c_i^{jk})_{i,j,k \in \mathbb{I}} : V\otimes V \rightarrow V. 
\end{equation*}
A \emph{quantum pre-Airy structure} on $V$ is a tuple $(V,A,B,C,\epsilon)$ where $(V,A,B,C)$ is a classical pre-Airy structure and $\epsilon = (\epsilon_i) \in V$.
\end{definition}

\begin{remark}
Equivalently, a (quantum) pre-Airy structure is given by any $A = (a_{ijk})_{i,j,k \in \mathbb{I}}, B = (b^k_{ij})_{i,j,k \in \mathbb{I}}, C = (c_i^{jk})_{i,j,k \in \mathbb{I}}$ and $\epsilon = (\epsilon_i)$ such that: $a_{ijk}, \epsilon_i \in \mathbb{C}$ are non-zero for only finitely many $(i,j,k) \in \mathbb{I}\times \mathbb{I}\times \mathbb{I}$, $b_{ij}^k \in \mathbb{C}$ is non-zero for only finitely many $(i,j) \in \mathbb{I}\times \mathbb{I}$ for each $k \in \mathbb{I}$ and $c_i^{jk} \in \mathbb{C}$ is non-zero for only finitely many $i \in \mathbb{I}$ for each $(j,k) \in \mathbb{I}\times \mathbb{I}$.
\end{remark}

Given a quantum pre-Airy structure $(V,A,B,C,\epsilon)$ there exists a recursive procedure called \emph{Abstract Topological Recursion (ATR)} \cite[Section 2.5]{kontsevich2017airy} produces a set of numbers $\{S_{g,n;i_1,...,i_n} \in \mathbb{C}\ |\ i_k \in \mathbb{I}\}$. Let the basis of $V$ be $\{e^{i\in \mathbb{I}}\}$. We package the set of numbers $\{S_{g,n;i_1,...,i_n}\}$ into the tensor
\begin{equation*}
    S_{g,n} := \sum_{i_1,...,i_n \in \mathbb{I}}\frac{1}{n!}S_{g,n;i_1,...,i_n}e^{i_1}\otimes ... \otimes e^{i_n} \in V^{\otimes n} = V\otimes ... \otimes V.
\end{equation*}
As the name suggests, ATR is related to topological recursion. We will see later this section the quantum pre-Airy structure such that the ATR reduces to the usual topological recursion. 

Let us now describe ATR in details. We set the initial conditions to be
\begin{equation*}
    S_{0,3} := 2A \in V\otimes V\otimes V, \qquad S_{1,1} := \epsilon \in V, \qquad S_{0,n\leq 2} = S_{g\geq 0, 0} = 0.
\end{equation*}
Then $S_{g,n}$ for $g \geq 0, n\geq 4$ or $g\geq 2, n \geq 1$ or $g\geq 1, n \geq 2$ are computed recursively as follows
\begin{align*}\label{tratr}
    S_{g,n;i_1,...,i_n} &= 2\sum_{k=2}^n\sum_{j\in \mathbb{I}}b_{i_1i_k}^{j}S_{g,n-1;ji_{\{2,...,n\}\setminus\{k\}}} + \sum_{\substack{g_1+g_2=g\\I_1\coprod I_2 = \{2,...,n\}}}\sum_{j_1,j_2 \in \mathbb{I}}c_{i_1}^{j_1j_2}S_{g_1,1+|I_1|;j_1i_{I_1}}S_{g_2,1+|I_2|;j_2i_{I_2}}\\
    &\qquad + \sum_{j_1,j_2 \in \mathbb{I}}c_{i_1}^{j_1j_2}S_{g-1,n+1;j_1j_2i_2...i_n}.\numberthis
\end{align*}
Note that the right-hand-side of (\ref{tratr}) only involve $S_{g',n'}$ with $2g'-2+n' < 2g-2+n$, therefore the recursion is guaranteed to terminate.
\begin{remark}\label{atrisfiniteremark}
The right-hand-side of (\ref{tratr}) will indeed define an element of $V^{\otimes n}$. Let $v := \sum_{k\in \mathbb{I}}v_ke^k \in V, u :=\sum_{k\in \mathbb{I}}u_ke^k \in V$, then by definition we have $\sum_{i,j,k\in \mathbb{I}}b^k_{ij}v_ke^i\otimes e^j = B(v) \in V\otimes V$ and $\sum_{i,j,k \in \mathbb{I}}c^{jk}_iv_ju_ke^i = C(v\otimes u) \in V$. Note that all summations came from contractions of a vectors in $V$ and $V^*$, so they have at most finitely non-zero terms even though the carnality of the index set $\mathbb{I}$ may be infinite.

Given $g,n$ and suppose that $S_{g',n'} \in V^{\otimes n'}$ for all $2g'-2 + n' < 2g-2+n$ then it follows that the first and second terms in (\ref{tratr}) are well defined as elements of $V^{\otimes n}$. Finally, the last term of (\ref{tratr}) is a well-defined element of $V^{\otimes n}$ because $S_{g-1,n+1} \in V^{\otimes n+1}$, a non-completed tensor products of $V$, so it contains only finite linear combination of tensor products of vectors in $V$. More explicitly, we can write $\sum_{j_1,j_2\in\mathbb{I}}S_{g-1,n+1;j_1j_2i_2...i_n}e^{j_1}\otimes e^{j_2}$ as a finite sum $\sum_{k=1}^Nv_{1,k}\otimes v_{2,k}$ for some $v_{1,k}, v_{2,k} \in V$ and $N \in \mathbb{Z}_{\geq 0}$. Then
\begin{align*}
    \sum_{i_1\in \mathbb{I}}\sum_{j_1,j_2 \in \mathbb{I}}c^{j_1,j_2}_{i_1}S_{g-1,n+1;j_1j_2i_2...i_n}e^{i_1} &= C\left(\sum_{j_1,j_2\in\mathbb{I}}S_{g-1,n+1;j_1j_2i_2...i_n}e^{j_1}\otimes e^{j_2}\right)\\
    &= C\left(\sum_{k=1}^Nv_{1,k}\otimes v_{2,k}\right) = \sum_{k=1}^NC\left(v_{1,k}\otimes v_{2,k}\right) \in V.
\end{align*}
Overall, the output $S_{g,n}$ is a well-defined element of $V^{\otimes n}$.
\end{remark}

\begin{definition}\theoremname{\cite[Definition 2.6.1]{kontsevich2017airy}}
A  classical pre-Airy \emph{sub-structure} $(U,A,B,C)$ of $(V,A,B,C)$ is a classical pre-Airy structure given by a vector subspace $U \subseteq V$ compatible with tensors $A,B,C$ in the sense that
\begin{equation*}
    A \in U\otimes U\otimes U, \qquad B|_U : U \rightarrow U\otimes U, \qquad C|_{U\otimes U} : U\otimes U \rightarrow U.
\end{equation*}
A quantum pre-Airy sub-structure $(U,A,B,C,\epsilon)$ of $(V,A,B,C,\epsilon)$ is a quantum pre-Airy structure given by a classical pre-Airy sub-structure $(U,A,B,C)$ of $(V,A,B,C)$ such that $\epsilon \in U$.

A (quantum or classical) pre-Airy structure on $V$ is called \emph{primitive} if it does not contain a non-trivial (quantum or classical) pre-Airy sub-structure $(U,A,B,C,\epsilon)$ with $U\subsetneq V$.
\end{definition}

\begin{remark}\label{atrandprestructuresremark}
Given two quantum pre-Airy structures $(V_i, A_i, B_i, C_i, \epsilon_i)$ and their ATR outputs $\{S^{(i)}_{g,n} \in V^{\otimes n}_i\}, i = 1,2$ the important feature is the following. If both quantum pre-Airy structures $(V_i, A_i, B_i, C_i, \epsilon_i)$, $i = 1,2$ contain the same quantum pre-Airy substructure $(V,A,B,C,\epsilon)$, i.e. $V \subseteq V_1, V_2$ and
\begin{equation*}
    A_1 = A = A_2, \qquad B_1|_{V} = B = B_2|_{V}, \qquad C_1|_{V\otimes V} = C = C_2|_{V\otimes V}, \qquad \epsilon_1 = \epsilon = \epsilon_2
\end{equation*}
then their ATR outputs will be the same \cite[Section 2.6]{kontsevich2017airy}:
\begin{equation*}
    S^{(1)}_{g,n} = S^{(2)}_{g,n} = S_{g,n} \in V^{\otimes n} \subseteq V^{\otimes n}_i,\qquad i = 1,2
\end{equation*}
where $\{S_{g,n}\in V^{\otimes n}\}$ is the output of ATR using the pre-Airy structure $(V,A,B,C,\epsilon)$.
\end{remark}

\subsubsection{Airy structures}
In this section we let $W = V\oplus V^*$ be a Tate space and let $\{x^{i\in \mathbb{I}}, y_{i\in \mathbb{I}}\}$ be canonical coordinates of $W$.
\begin{definition}\theoremname{\cite[Definition 2.1.1]{kontsevich2017airy}}
A \emph{classical Airy structure} on $V$ is a classical pre-Airy structure $(V,A,B,C)$ such that the collection of \emph{at-most quadratic polynomials} $\{H_{i\in \mathbb{I}} \in \widehat{Sym}_{\leq 2}(W)\}$, where
\begin{equation}\label{airystructuredefn}
    H_i := -y_i + \sum_{j,k \in \mathbb{I}}a_{ijk}x^jx^k + 2\sum_{j,k \in \mathbb{I}}b_{ij}^kx^jy_k + \sum_{j,k\in \mathbb{I}}c^{jk}_iy_jy_k,
\end{equation}
generates the vector space $\mathfrak{g} := \bigoplus_{i\in \mathbb{I}}\mathbb{C}\cdot H_i \subset \widehat{Sym}_{\leq 2}(W)$ which is closed under the Poisson bracket : $\{H_i,H_j\} = \sum_{k\in \mathbb{I}}g^k_{ij}H_k \in \mathfrak{g}$ for some \emph{structure constants} $g^k_{ij} \in \mathbb{C}$. In other words, the structure constants $g^k_{ij}$ are the matrix entries of the operator $\mathfrak{g}$ is a Lie algebra with Lie bracket given by the Poisson bracket.
\end{definition}

\begin{remark}\label{symmetryabcremark}
We can always assume that the tensors $A = (a_{ijk})$ and $(c_i^{jk})$ are symmetric in the indices $j,k$ because their anti-symmetric part would vanish in the expression of $H_i$ after summing against $x^jx^k$ and $y_jy_k$. On the other hand, the tensor $B = (b_{jk}^i)$ does not possess any symmetry in the indices $j,k$.

In fact, given a classical Airy structure $\{H_{i\in \mathbb{I}} \in \widehat{Sym}_{\leq 2}(W)\}$, it can be shown (see \cite[Section 2.2]{andersen2017abcd}, \cite[Section 2.1]{kontsevich2017airy}) from $\{H_i,H_j\} = \sum_{k\in \mathbb{I}}g^k_{ij}H_k$ that $a_{ijk}$ is symmetric in the indices $i,j$ and that $g_{ij}^k = 2(b^k_{ij}-b^k_{ji})$. In other words, $a_{ijk}$ is symmetric in all indices $i,j,k$ and the structure constants $g_{ij}^k$ are components of the tensor $(g^k_{ij})_{i,j,k\in \mathbb{I}} : V\rightarrow V\otimes V$ which is entirely determined by the tensor $B$.
\end{remark}

\begin{remark}
Note that each $H_i$ may be treated as a function $H_i : W \rightarrow \mathbb{C}$. Denote by $H_i(w)$ the evaluation of $H_i$ at a point $w := \sum_{i\in \mathbb{I}}(x^if_i + y_ie^i) \in W$ where $x^i, y_i \in \mathbb{C}$, then $H := \sum_{i\in\mathbb{I}}H_i(w)e^i \in V$.
\end{remark}

Let us define the Weyl algebra $\mathcal{D}^\hbar(W) := \mathbb{C}\left[\left[x^{i\in \mathbb{I}}, \hbar\partial_{i\in \mathbb{I}}\right]\right]\left[\left[\hbar\right]\right]$ where $\hbar\partial_i := \hbar\frac{\partial}{\partial x^i}$. We introduce grading on $\mathcal{D}^\hbar(W)$ by setting $\deg \hbar = 2, \deg x^i = 1$ and $\deg \hbar\partial_i = 1$. Let $\mathcal{D}^\hbar_n(W)$ be the vector subspace of degree $n$ elements in $\mathcal{D}^\hbar(W)$ and let $\mathcal{D}^\hbar_{\leq n}(W) := \bigoplus_{i=0}^n \mathcal{D}^\hbar_i(W)$. We also introduce the following notations when working with $\mathcal{D}^\hbar(W)$:
\begin{definition}
Let $\hat{f}, \hat{g} \in \mathcal{D}^\hbar(W)$. We denote \emph{applying the operator $\hat{f}$ on $\hat{g}$} by $\hat{f}(\hat{g})$. We denote \emph{multiplying the operator $\hat{f}$ with the operator $\hat{g}$} by $\hat{f}\hat{g}$. For example, $\hbar\partial_i(x^j) = \hbar\delta^j_i$, and $\hbar\partial_ix^j = \hbar\delta_i^j + x^j\hbar\partial_i$. 
\end{definition}
Conceptually, $\mathcal{D}^\hbar(W)$ is the quantization of $\mathbb{C}[[W]]$ with $y_i$ replaced by $\hbar\partial_i$ and the Poisson bracket $\{.,.\}$ replaced by the commutator $\frac{1}{\hbar}[.,.]$, where $[\hat{f}, \hat{g}] := \hat{f}\hat{g} - \hat{g}\hat{f}, \hat{f}, \hat{g} \in \mathcal{D}^\hbar(W)$. We define the \emph{classical limit} to be the map $cl : \mathcal{D}^\hbar(W) \rightarrow \mathbb{C}[[W]]$, given by $\hbar \mapsto 0, x^i\mapsto x^i$ and $\hbar\partial_i \mapsto y_i$. 

\begin{definition}\theoremname{\cite[Definition 2.2.1]{kontsevich2017airy}}
A \emph{quantum Airy structure} on $V$ is a collection $\{\hat{H}_{i\in \mathbb{I}}\in \mathcal{D}^\hbar_{\leq 2}(W)\}$ such that $\hat{\mathfrak{g}} := \bigoplus_{i\in \mathbb{I}}\mathbb{C}\cdot \hat{H}_i \subset \mathcal{D}^\hbar_{\leq 2}(W)$ is a Lie algebra with Lie bracket given by the commutator $\frac{1}{\hbar}[.,.]$. In other words, $\frac{1}{\hbar}[\hat{H}_1, \hat{H}_2] = \sum_{k\in\mathbb{I}}g^k_{ij}\hat{H}_k \in \hat{\mathfrak{g}}$ for some constants $g^k_{ij} \in \mathbb{C}$.

A classical Airy structure $\{H_{i\in \mathbb{I}}\}$ is called the \emph{classical limit} of $\{\hat{H}_{i\in \mathbb{I}}\}$ if $H_i = cl(\hat{H}_i)$ in which case we also have $\{H_i, H_j\} = cl\left(\frac{1}{\hbar}[\hat{H}_i, \hat{H}_j]\right) = \sum_{k \in \mathbb{I}}g^k_{ij}H_k$. We say that $\{\hat{H}_{i\in \mathbb{I}}\}$ \emph{quantizes} or it is a \emph{quantization} of $\{H_{i\in\mathbb{I}}\}$.
\end{definition}

\begin{remark}\label{abcepsilonrelationsremark}
The condition $\frac{1}{\hbar}[\hat{H}_i, \hat{H}_j] = \sum_{k\in \mathbb{I}}g^k_{ij}\hat{H}_k$ can be written as relations among tensors $A,B,C$ and $\epsilon$, \cite{andersen2017abcd}. If we ignored all relations involving $\epsilon$, then the remaining are relations among $A,B$ and $C$ arise from the classical condition $\{H_i,H_j\} = \sum_{k\in \mathbb{I}}g^k_{ij}H_k$.
\end{remark}

\begin{remark}
Quantization is more difficult than just replacing $x^i$ by $x^i$ and $y_i$ by $\hbar\partial_i$ because $\mathbb{C}[[W]]$ is a commutative algebra whereas $\mathcal{D}^\hbar(W)$ is non-commutative, hence there is an ambiguity in ordering $x^i$ and $\hbar\partial_i$. For instance, it is unclear whether the term $x^iy_j$ should be quantized as $x^i\hbar\partial_j$ or $\hbar\partial_jx^i = \hbar\delta_j^i + x_i\hbar\partial_j$. In general, suppose that $\hat{f}, \hat{g}, \hat{h} \in \mathcal{D}^\hbar_{\leq 2}(W)$ with classical limit $f = cl(\hat{f}), g = cl(\hat{g}), h = cl(\hat{h}) \in \widehat{Sym}_{\leq 2}(W)$ then $\{\hat{f},\hat{g}\} = \hat{h}$ implies $\{f,g\} = cl\left(
\frac{1}{\hbar}[\hat{f},\hat{g}]\right) = h$ but on the other hand, if $\{f,g\} = h$ then $cl\left(
\frac{1}{\hbar}[\hat{f},\hat{g}]\right) = \{f,g\} + c = h+c$ for some constant $c \in \mathbb{C}$. Therefore, the quantization of $\{H_{i\in \mathbb{I}}\}$ where $H_i$ are given by (\ref{airystructuredefn}) and $\{H_i,H_j\} = \sum_{k\in \mathbb{I}}g^k_{ij}H_k$ amount to finding $\{\epsilon_{i\in \mathbb{I}} \in \mathbb{C}\}$ such that
\begin{equation}\label{quantumairystructuredefn}
    \hat{H}_i := -\hbar\partial_i + \sum_{j,k \in \mathbb{I}}a_{ijk}x^jx^k + 2\hbar\sum_{j,k \in \mathbb{I}}b^k_{ij}x^j\partial_k + \hbar^2\sum_{j,k \in \mathbb{I}}c^{jk}_i\partial_j\partial_k + \hbar\epsilon_i
\end{equation}
satisfying $\frac{1}{\hbar}[\hat{H}_i, \hat{H}_j] = \sum_{k\in\mathbb{I}}g^k_{ij}\hat{H}_k$. 

If follows from (\ref{quantumairystructuredefn}) we can write a quantum Airy structure on $V$ as a tuple $(V,A,B,C,\epsilon)$. In other words, a quantum Airy structure is a quantum pre-Airy structure $(V,A,B,C,\epsilon)$ such that $\hat{\mathfrak{g}} := \bigoplus_{i\in \mathbb{I}}\mathbb{C}\cdot \hat{H}_i$, where $\hat{H}_i$ given by (\ref{quantumairystructuredefn}), is a Lie algebra.

Given a classical Airy structure $\{H_{i\in\mathbb{I}}\}$, then neither the uniqueness nor existence of its quantization is guaranteed in general. It turns out that the sufficient condition for the existence of quantization of $\{H_{i\in \mathbb{I}}\}$ is given by the vanishing of the second cohomology of Lie algebra $H^2(\mathfrak{g}, \mathbb{C})$. On the other hand, the necessary and sufficient condition for the uniqueness of the quantization is given by the vanishing of $H^1(\mathfrak{g},\mathbb{C})$ \cite{kontsevich2017airy}.
\end{remark}

We will often refer to classical (pre-)Airy structures simply as (pre-)Airy structures unless it is important to emphasize their distinction from quantum (pre-)Airy structures.

There is a correspondence between Airy structures and \emph{quadratic Lagrangian submanifolds} $L := \{H_{i\in\mathbb{I}} = 0\}$ of $W$, i.e. Lagrangian submanifolds defined by at-most quadratic polynomials in $W$. When $V$ is finite-dimensional, this correspondence is easy:
\begin{lemma}\theoremname{\cite[Section 2.1]{kontsevich2017airy}}\label{airystrandquadlagrangianlemma}
Let $V$ be an $n$-dimensional vector space and $\{H_1,...,H_n\}$ is a collection of at-most quadratic polynomials on $W = V\oplus V^*$ in the form given in (\ref{airystructuredefn}). Then $L := \{H_{i=1,\cdots,n} = 0\} \subset W$ is a Lagrangian submanifold if and only if the collection $\{H_1,...,H_n\}$ gives an Airy structure. 
\end{lemma}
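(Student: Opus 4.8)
The plan is to prove both directions of the equivalence by exploiting the close relationship, in the finite-dimensional setting, between ``$L$ is Lagrangian'' and ``$\mathfrak{g}$ is closed under the Poisson bracket.'' I will work in the canonical coordinates $\{x^{i\in\mathbb{I}},y_{i\in\mathbb{I}}\}$ of $W$ with $\mathbb{I}=\{1,\dots,n\}$. First observe that the defining functions $H_i$ in \eqref{airystructuredefn} have the special feature that $\partial H_i/\partial y_j = -\delta_{ij} + (\text{linear in }x,y)$, so the Jacobian $(\partial H_i/\partial y_j)$ equals $-\mathrm{Id}$ at the origin; hence near $0\in W$ the system $\{H_i=0\}$ can be solved for $y$ as a function of $x$, and $L=\{H_1=\dots=H_n=0\}$ is a smooth $n$-dimensional submanifold through the origin, regardless of whether $\{H_i\}$ forms an Airy structure. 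The only question is whether $L$ is isotropic, i.e.\ whether $\Omega|_{T_wL}=0$ for all $w\in L$.

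Next I would recall the standard symplectic-geometry fact connecting Poisson brackets to coisotropic/Lagrangian conditions: for functions $H_1,\dots,H_n$ whose differentials are linearly independent along the common zero locus $L$, the submanifold $L$ is coisotropic if and only if each $\{H_i,H_j\}$ vanishes on $L$, i.e.\ lies in the ideal generated by $H_1,\dots,H_n$; and since here $\dim L = n = \tfrac12\dim W$, coisotropic is the same as Lagrangian. (The linear independence of $dH_1,\dots,dH_n$ along $L$ is exactly the fact established in the previous paragraph from the $-\mathrm{Id}$ Jacobian.) So the entire statement reduces to: $\{H_i,H_j\}\big|_L = 0$ for all $i,j$ $\iff$ $\{H_i,H_j\}\in\mathfrak{g}=\bigoplus_k\mathbb{C}\cdot H_k$ for all $i,j$.

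The key computation is then to examine the polynomial degree of $\{H_i,H_j\}$. Since each $H_i\in\widehat{\mathrm{Sym}}_{\le 2}(W)$, the Poisson bracket $\{H_i,H_j\}$ again lies in $\widehat{\mathrm{Sym}}_{\le 2}(W)$ — a Poisson bracket of two at-most-quadratic polynomials in Darboux coordinates is at most quadratic (the top-degree terms contribute a bracket of two quadratics, which is again quadratic, plus lower order). The crucial point is that the \emph{linear part} of $\{H_i,H_j\}$ can be read off: computing $\{-y_i,\,2\sum b_{jk}^{\ell}x^k y_\ell\} + \{2\sum b_{ik}^{\ell}x^ky_\ell,-y_j\}$ one gets a term $2(b_{ij}^k - b_{ji}^k)(-y_k)$ up to an additive constant, so the linear-in-$y$ part of $\{H_i,H_j\}$ matches the linear part of $\sum_k g_{ij}^k H_k$ with $g_{ij}^k = 2(b_{ij}^k-b_{ji}^k)$ — this is exactly the computation referenced in Remark~\ref{symmetryabcremark}. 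Hence, after subtracting $\sum_k g_{ij}^k H_k$ from $\{H_i,H_j\}$, the remainder $R_{ij} := \{H_i,H_j\} - \sum_k g_{ij}^k H_k$ has no $-y_k$ terms and therefore vanishes on $L$ if and only if it vanishes identically: indeed any element of $\widehat{\mathrm{Sym}}_{\le 2}(W)$ of the form $-y_i + (\text{quadratic})$ that vanishes on $\{H_i=0\}$ must be a $\mathbb{C}$-linear combination of the $H_i$ (solve for $y$ in terms of $x$ and match), but $R_{ij}$ has \emph{no} such linear part surviving, so the only way $R_{ij}|_L=0$ is $R_{ij}=0$. This gives: $\{H_i,H_j\}|_L = 0$ $\iff$ $\{H_i,H_j\} = \sum_k g_{ij}^k H_k \in \mathfrak{g}$. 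Combined with the coisotropic/Lagrangian dictionary of the second paragraph, this is exactly the claimed equivalence.

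The main obstacle — and the step requiring the most care — is the careful bookkeeping of the degree-$\le 2$ structure of $\{H_i,H_j\}$: one must verify that no higher-degree or spurious linear terms appear and, in particular, justify the claim that a quadratic polynomial in Darboux coordinates whose linear-in-$y$ part is absent cannot vanish on the graph-like submanifold $L$ unless it is the zero polynomial. This is where the special form \eqref{airystructuredefn} (the distinguished $-y_i$ term with unit coefficient) is essential, and it is the only place the hypothesis ``$\dim V < \infty$'' is genuinely used, since it lets us freely use the implicit function theorem and finite-dimensional Lagrangian = half-dimensional isotropic.
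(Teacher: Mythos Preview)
Your overall strategy matches the paper's: both directions route through the equivalence ``$L$ coisotropic $\Leftrightarrow$ $\{H_i,H_j\}|_L=0$'' (the paper phrases this via Hamiltonian vector fields $v_{H_i}$ and the identity $\Omega(v_{H_i},v_{H_j})=\{H_i,H_j\}$; you invoke it as the standard coisotropic dictionary --- these are the same argument). Both then use the fact that $\{H_i,H_j\}$ has degree $\le 2$ to pass from ``vanishes on $L$'' to ``lies in $\mathrm{span}_{\mathbb C}(H_k)$.''

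Where your write-up goes beyond the paper is in trying to justify this last step explicitly: you subtract $\sum_k g_{ij}^k H_k$ (with $g_{ij}^k$ read off from the linear-in-$y$ part) and assert that the purely quadratic remainder $R_{ij}$, having no linear-in-$y$ term, must vanish identically if it vanishes on the graph $L$. That assertion is false in general. Take $n=2$, $H_1=-y_1+(x^1)^2$, $H_2=-y_2$; then $L=\{y_1=(x^1)^2,\ y_2=0\}$ is Lagrangian, and the quadratic $R=x^1y_2$ vanishes on $L$ without being zero and without lying in $\mathrm{span}_{\mathbb C}(H_1,H_2)$. So ``the only way $R_{ij}|_L=0$ is $R_{ij}=0$'' does not follow from the graph structure of $L$. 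The paper makes the analogous leap --- from ``$\{H_i,H_j\}$ belongs to the ideal of $L$ and is at most quadratic'' directly to ``constant-coefficient combination of the $H_k$'' --- without further argument, so your proposal is at the same level of rigor as the paper; but the concrete mechanism you offer to close the gap does not work, and you should not expect to salvage it in that form, since the intersection of the ideal of $L$ with $\widehat{\mathrm{Sym}}_{\le 2}(W)$ can genuinely be larger than $\mathrm{span}_{\mathbb C}(H_1,\dots,H_n)$.
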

\begin{proof}
Suppose that $\{H_1,...,H_n\}$ is an Airy structure.
Let $v_{H_i}$ be a Hamiltonian vector field corresponding to $H_i$, i.e. $\iota_{v_{H_i}}\Omega = dH_i$. Then $v_{H_i}(x) \in T_xL$ for all $x \in L$ because $\iota_{v_{H_i}(x)}dH_j(x) = \{H_i, H_j\}(x) = 0$. Because $\dim L = n$, we conclude that $\{v_{H_1}(x),...,v_{H_n}(x)\}$ is a basis of $T_xL$. Finally, $\Omega(v_{H_i},v_{H_j}) = \{H_i,H_j\} = 0$ on $L$ and $\dim L = \frac{1}{2}\dim W$, therefore $L \subset W$ is a Lagrangian submanifold.

Conversely, suppose that $L\subset W$ is a Lagrangian submanifold. Then for all $v \in T_xL$ we have $\Omega(v_{H_i}(x),v) = \iota_vdH_i(x) = 0$. Because $T_xL \subset W$ is a Lagrangian subspace, i.e. a maximal isotropic subspace, it follows that $v_{H_i} \in T_xL$. Then $\{H_i, H_j\}(x) = \Omega(v_{H_i}, v_{H_j}) = 0$ for all $x \in L$, which means $\{H_i, H_j\}$ belongs to an ideal generated by $\{H_1,...,H_n\}$. But each $H_i$ is at most quadratic, so $\{H_i, H_j\}$ also is at most quadratic, we conclude that $\{H_i, H_j\} = \sum_{k=1}^ng^k_{ij}H_k$ for some constants $g_{ij}^k \in \mathbb{C}$.
\end{proof}

\begin{remark}\label{lagrangiancomplementremark}
When $V$ is finite dimensional, we define the tangent space to $L$ at $x \in L \subset W$ to be
\begin{equation*}
    T_xL := \{v \in W\ |\ \iota_vdH_i(x) = 0, i = 1,\cdots,n\}.
\end{equation*}
It is more subtle how to interpret this definition when $V$ is infinite-dimensional, except at $x = 0 \in W$. It is easy to see from the form of each $H_i$ in (\ref{airystructuredefn}) even when $V$ is infinite-dimensional that $H_{i}(0) = 0$ and $dH_i(0) = dy_i$ for all $i\in \mathbb{I}$, therefore we define $T_0L := \{y_{i\in \mathbb{I}} = 0\} = V^* \subset W$ which is a Lagrangian subspace of $W$. Since $V\subset W$ is also a Lagrangian subspace of $W$ and $W = V\oplus V^* \cong V\oplus T_0L$ we call $V$ a \emph{Lagrangian complement of $T_0L$}. 
\end{remark}

When $V$ is infinite-dimensional the arguments used in the proof of Lemma \ref{airystrandquadlagrangianlemma} will no longer hold. Instead, what we mean by quadratic Lagrangian submanifold is that there exists a generating function $S_0 \in \prod_{n=1}^\infty Sym_n(V)$ such that
\begin{equation}\label{s0isgeneratingfunction}
    H_i\left(\{x^{i\in \mathbb{I}}\}, \left\{y_{i\in \mathbb{I}} := \frac{\partial S_0}{\partial x^i}\right\}\right) = 0, \qquad i \in \mathbb{I}.
\end{equation}
We are going to show this in a moment. Keep in mind that $\partial_iS_0 = \frac{\partial S_0}{\partial x^i} \in \prod_{n=1}^\infty Sym_n(V)$ is a formal derivative and (\ref{s0isgeneratingfunction}) should be understood as: the image of $H_i$ under the homomorphism $\widehat{Sym}_{\leq 2}(W) \rightarrow \prod^\infty_{n=1}Sym_n(V), x^i\mapsto x^i, y_i \mapsto \partial_iS_0$ is zero.

Note that we can think of a quantum Airy structures $\{\hat{H}_{i\in \mathbb{I}}\}$ as a quantization of a quadratic Lagrangian submanifold $L := \{H_{i\in\mathbb{I}} = 0\}$. Let us consider solving for $\psi_L$ such that $\psi_L^{-1}\hat{H}_i(\psi_L) = 0$ where $\psi_L$ is given by the formal expression
\begin{equation}\label{psiwavefunction}
    \psi_L := \exp\left(\sum_{g = 0}^\infty \hbar^{g-1}S_g\right), \qquad S_g =  \sum_{n=0}^\infty S_{g,n} \in \prod_{n=1}^\infty Sym_n(V), 
\end{equation}
such that $\psi_L^{-1}(\hbar\partial_i)\psi_L := \hbar\partial_i + \partial_i\left(\sum_{g=0}^\infty\hbar^gS_g\right)$ and each $S_{g,n}$ are homogeneous polynomial of degree $n$:
\begin{equation}\label{sgnpolynomial}
    S_{g,n} := \sum_{i_1,...,i_n\in \mathbb{I}}\frac{1}{n!}S_{g,n;i_1,\cdots,i_n}x^{i_1}\cdots x^{i_n} \in Sym_n(V).
\end{equation}
Observe that $S_{g,n} \in Sym_n(V)$ without completion of symmetric tensor product and $S_g$ is a formal sum of $S_{g,n}$. The following Proposition guarantee the existence and uniqueness of the solution $\psi_L$:

\begin{theorem}\theoremname{\cite[Theorem 2.4.2]{kontsevich2017airy}}\label{quantizationofltheorem}
Let $\{\hat{H}_{i\in\mathbb{I}}\}$ be a quantum Airy structure. There exists a unique solution of the form $\psi_L$ of the form (\ref{psiwavefunction}) such that $\psi^{-1}_L\hat{H}_i(\psi_L) = 0, \forall i \in \mathbb{I}$.
\end{theorem}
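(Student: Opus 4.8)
I want to solve the equation $\psi_L^{-1}\hat H_i(\psi_L)=0$ order by order in the grading of $\mathcal{D}^\hbar(W)$, extracting from it the recursion \eqref{tratr} for the coefficients $S_{g,n;i_1,\dots,i_n}$, and then check that the recursion determines all of them uniquely starting from the initial data $S_{0,3}=2A$, $S_{1,1}=\epsilon$. First I would compute $\psi_L^{-1}\hat H_i(\psi_L)$ explicitly. Writing $T:=\sum_{g\ge 0}\hbar^g S_g$ (so that $\psi_L=\exp(\hbar^{-1}T)$), conjugation gives $\psi_L^{-1}\partial_i\psi_L=\partial_i+\hbar^{-1}\partial_i T$, hence $\psi_L^{-1}(\hbar\partial_i)\psi_L=\hbar\partial_i+\partial_i T$ and $\psi_L^{-1}(\hbar^2\partial_j\partial_k)\psi_L=(\hbar\partial_j+\partial_j T)(\hbar\partial_k+\partial_k T)$. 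Substituting into \eqref{quantumairystructuredefn} and then letting the resulting operator act on the constant function $1$ (equivalently, applying the augmentation $\hbar\partial_i\mapsto 0$, which is legitimate since we want the \emph{function} $\psi_L^{-1}\hat H_i(\psi_L)\cdot 1$ to vanish — one checks $\psi_L^{-1}\hat H_i(\psi_L)$ is already a formal function, not a genuine differential operator, once everything is normal-ordered), I obtain an identity in $\prod_n Sym_n(V)[[\hbar]]$ of the shape
\begin{equation*}
0=-\partial_i T+\sum_{j,k}a_{ijk}x^jx^k+2\hbar\sum_{j,k}b_{ij}^k x^j\partial_k T+\hbar^2\sum_{j,k}c_i^{jk}\partial_j\partial_k T+\hbar^2\sum_{j,k}c_i^{jk}(\partial_j T)(\partial_k T)+\hbar\epsilon_i.
\end{equation*}

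**Extracting the recursion.** Next I would expand $T=\sum_{g,n}\hbar^g S_{g,n}$ and collect the coefficient of $\hbar^g$ and of a fixed monomial $x^{i_2}\cdots x^{i_n}$. The term $-\partial_i T$ contributes $-S_{g,n;i\,i_2\cdots i_n}$ (up to the combinatorial factor from $\partial_i$ acting on $S_{g,n}\in Sym_n$); the term $x^j\partial_k T$ shifts $n\mapsto n-1$ and lowers no $\hbar$, producing the first sum in \eqref{tratr} with the coefficient $b_{ik}^j$; the term $c_i^{jk}\partial_j\partial_k T$ lowers $n\mapsto n+1$ and, because of the explicit $\hbar^2$, lowers $g\mapsto g-1$, producing the last ``non-separating'' term; and the quadratic term $c_i^{jk}(\partial_j T)(\partial_k T)$, again carrying $\hbar^2$, splits into $S_{g_1,\cdot}S_{g_2,\cdot}$ with $g_1+g_2=g-1$ and the index set partitioned, producing the middle ``separating'' sum — note the shift $g-1$ here matches the Euler-characteristic bookkeeping once we absorb the two new indices $j_1,j_2$. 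Matching the lowest orders: the coefficient of $\hbar^0$, degree-$2$ part forces $S_{0,3;ijk}=2a_{ijk}$, i.e. $S_{0,3}=2A$; the coefficient of $\hbar^1$, degree-$0$ part forces $S_{1,1;i}=\epsilon_i$; and the identities $S_{0,\le 2}=0$, $S_{g\ge0,0}=0$ drop out because $\partial_i$ kills constants and there are simply no terms of the required $(g,n)$ on the right. This reproduces exactly \eqref{tratr} together with its stated initial conditions.

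**Uniqueness and termination.** Having the recursion, uniqueness is immediate: order the pairs $(g,n)$ by the value of $2g-2+n$, and observe (as already remarked after \eqref{tratr}) that every term on the right-hand side involves only $S_{g',n'}$ with $2g'-2+n'<2g-2+n$; hence by induction on $2g-2+n$ all coefficients are determined, and the process never refers to an undefined quantity, so it terminates at each order. Conversely, defining $S_{g,n}$ by this recursion and running the computation of the previous paragraph backwards shows $\psi_L^{-1}\hat H_i(\psi_L)=0$ holds order by order, so the constructed $\psi_L$ is a solution; uniqueness shows it is the only one of the form \eqref{psiwavefunction}.

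**Main obstacle.** The genuinely delicate point — and the only place where the Airy-structure (rather than pre-Airy-structure) hypothesis is used — is \emph{consistency}: a priori, extracting the coefficient of every $(g,n,x^{i_2}\cdots x^{i_n})$ gives an \emph{overdetermined} system (one equation per $(i;i_2,\dots,i_n)$, but $S_{g,n}$ is symmetric, so many equations constrain the same unknown), and one must check these are compatible. This is where $\frac1\hbar[\hat H_i,\hat H_j]=\sum_k g_{ij}^k\hat H_k$ enters: it guarantees that if $\psi^{-1}\hat H_i(\psi)$ and $\psi^{-1}\hat H_j(\psi)$ agree with the recursion up to order $2g-2+n$, the commutation relation forces the ``error terms'' at the next order to be a symmetric tensor, so the symmetrization of \eqref{tratr} is automatically satisfied once its restriction to, say, a canonical ordering of indices is imposed. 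I would phrase this as: run the recursion to define $S_{g,n}$ by the canonically-ordered equations, set $R_i:=\psi_L^{-1}\hat H_i(\psi_L)$, and show by induction on $2g-2+n$ that $R_i\equiv 0$, using $\frac1\hbar[\hat H_i,\hat H_j](\psi_L)$ computed two ways — directly from the Lie-algebra relation (giving $\sum_k g_{ij}^k\hat H_k(\psi_L)$, hence lower order) and from $\hat H_i(R_j\psi_L)-\hat H_j(R_i\psi_L)$ — to control the leading term of $R_i$. Making this bookkeeping airtight, rather than the combinatorics of \eqref{tratr} itself, is the real content of the proof.
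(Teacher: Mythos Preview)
Your overall strategy is correct and matches the paper's: compute $\psi_L^{-1}\hat H_i\psi_L$ applied to $1$, read off the recursion, and use the Lie-algebra relation $\frac{1}{\hbar}[\hat H_i,\hat H_j]=\sum_k g^k_{ij}\hat H_k$ for the consistency (symmetry) step. However, your displayed formula has the wrong $\hbar$ powers. Since $\psi_L^{-1}(\hbar\partial_k)\psi_L=\hbar\partial_k+\partial_kT$, the conjugate of $2\hbar b^k_{ij}x^j\partial_k$ applied to $1$ is $2b^k_{ij}x^j\partial_kT$ (no extra $\hbar$), and that of $\hbar^2 c_i^{jk}\partial_j\partial_k$ is $c_i^{jk}\big(\hbar\,\partial_j\partial_kT+(\partial_jT)(\partial_kT)\big)$ (one $\hbar$ on the linear piece, none on the quadratic). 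With the corrected powers the separating term gives $g_1+g_2=g$, not $g-1$, exactly as in \eqref{tratr}; as written, your display and your verbal bookkeeping are mutually inconsistent, and neither matches the target recursion.

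On the consistency step, the paper's organization is a bit more direct than your plan of defining $S_{g,n}$ via a canonical ordering and then showing the residuals $R_i$ vanish. The paper conjugates by the partial solution $\psi_L^{(g_0,n_0)}$ to obtain operators $\hat H_i^{(g_0,n_0)}$ that still satisfy the same commutation relation; the lowest-bidegree piece of $\frac{1}{\hbar}\big[\hat H_i^{(g_0,n_0)},\hat H_j^{(g_0,n_0)}\big]$ contains $\partial_j H_i^{(g_0,n_0)}-\partial_i H_j^{(g_0,n_0)}$, while the right-hand side $\sum_k g^k_{ij}\hat H_k^{(g_0,n_0)}$ has no term of that bidegree. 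This forces $H_i^{(g_0,n_0)}=\hbar^{g_0}\partial_i S_{g_0,n_0+1}$ for a \emph{symmetric} $S_{g_0,n_0+1}$, so existence and symmetry come out in one stroke without ever choosing an ordering. Your route would also work once the $\hbar$ bookkeeping is repaired, but the paper's degree-counting argument avoids the extra step of checking that the canonically-ordered definition is independent of the choice.
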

\begin{proof}
First of all, we note that if $\psi := \exp\left(\sum_{g=0}^\infty \hbar^{g-1}\sum_{n=0}^\infty S_{g,n}\right)$ for an arbitrary $S_{g,n} \in Sym_n(V)$ then $\psi^{-1}\hat{H}_i\psi \in \mathcal{D}^\hbar(W)$. To verify this we need to check that for each $g$ and $n$, the degree $n$ term of the coefficient of $\hbar^g$ in $\psi^{-1}\hat{H}_i\psi(1)$ belongs to $Sym_n(V)$. These type of terms came from
\begin{equation}\label{hgnformula}
    H^{(g,n)}_i := -\hbar^g\partial_iS_{g,n+1} + 
    \hbar^g\sum_{j,k\in \mathbb{I}}b^k_{ij}x^j\partial_kS_{g,n} + \hbar^g\sum_{\substack{g_1+g_2=g\\n_1+n_2=n+2}}\sum_{j,k\in \mathbb{I}}c^{jk}_i\partial_jS_{g_1,n_1}\partial_kS_{g_2,n_2} + \hbar^g\sum_{j,k\in \mathbb{I}}c^{jk}_i\partial_j\partial_kS_{g-1,n+2}.
\end{equation}
The first term belongs to $Sym_n(V)\hbar^g$. Since $\sum_{k\in\mathbb{I}}b^k\partial_kS_{g,n}$ is the same as contracting the tensor $B$ with $S_{g,n}$, by Remark \ref{atrisfiniteremark} the second term also belongs to $Sym_n(V)\hbar^g$. Similarly, by Remark \ref{atrisfiniteremark} we also conclude that the third and the fourth terms also belong to $Sym_n(V)\hbar^g$. This verifies our claim that $\psi^{-1}\hat{H}_i\psi \in \mathcal{D}^\hbar(W)$. Remark \ref{atrisfiniteremark} tells us that $H^{(g,n)} := \sum_{i\in \mathbb{I}}H^{(g,n)}_ie^i \in V^{\otimes (n+1)}$.

The proof is by induction in $g$ and for each fixed $g$ we proceed with induction in $n$. Suppose that we have found the solution $S_g$ for all $g = 0,...,g_0-1$ and for $S_{g_1}$ up to degree $n_0$ term in the sense that if $\psi^{(g_0,n_0)}_L := \exp\left(\sum_{g=0}^{g_0-1}\hbar^{g-1}S_g + \hbar^{g_0-1}\sum_{n=1}^{n_0}S_{g_0,n}\right)$ then 
\begin{align*}\label{hhatg0n0}
    \hat{H}^{(g_0,n_0)}_i &:= (\psi^{(g_0,n_0)}_L)^{-1}\hat{H}_i\psi^{(g_0,n_0)}_L\\
    &= -\hbar\partial_i + H_i^{(g_0,n_0)} + \hbar^{g_0}x^{\geq n_0+1} + \hbar^{\geq g_0+1}x^{\geq 0} + 2\hbar\sum_{j,k\in \mathbb{I}}b^k_{ij}x^j\partial_k + \hbar^2\sum_{j,k\in \mathbb{I}}c^{jk}_i\partial_j\partial_k \in \mathcal{D}^\hbar(W),\numberthis
\end{align*}
where $H^{(g_0,n_0)}_i := \hbar^{g_0}x^{=n_0} \in Sym_n(V)\hbar^{g_0}$. Since $\{\hat{H}_{i\in \mathbb{I}}\}$ is a quantum Airy structure we can check that $\{\hat{H}_{i\in \mathbb{I}}^{(g_0,n_0-1)}\}$ satisfies the similar commutation relations 
\begin{align*}\label{commutationrelationforhhat}
    \frac{1}{\hbar}[\hat{H}_i^{(g_0,n_0)}, \hat{H}_j^{(g_0,n_0)}] &= \frac{1}{\hbar}\left(\hat{H}_i^{(g_0,n_0)}\hat{H}_j^{(g_0,n_0)} - \hat{H}_j^{(g_0,n_0)}\hat{H}_i^{(g_0,n_0)}\right)
    = \frac{1}{\hbar}(\psi_L^{(g_0,n_0)})^{-1}\left(\hat{H}_i\hat{H}_j - \hat{H}_j\hat{H}_i\right)\psi_L^{(g_0,n_0)}\\
    &= (\psi_L^{(g_0,n_0)})^{-1}\left(\frac{1}{\hbar}[\hat{H}_i, \hat{H}_j]\right)\psi_L^{(g_0,n_0)} = \sum_{k\in \mathbb{I}}g^{k}_{ij}\hat{H}_k^{(g_0,n_0)}.\numberthis
\end{align*}
The left hand side of (\ref{commutationrelationforhhat}) contains the term $\partial_j \left(H_i^{(g_0,n_0)}\right) - \partial_i \left(H_j^{(g_0,n_0)}\right) = \hbar^{g_0}x^{=n_0-1}$ but the right hand side only contain $\hbar^{\geq g_0}x^{\geq n_0}$, so it must be the case that $\partial_j \left(H_i^{(g_0,n_0)}\right) = \partial_i \left(H_j^{(g_0,n_0)}\right)$. In other words, there exists a symmetric tensor $S_{g_0,n_0+1} := \frac{1}{(n_0+1)!}\sum_{i_1,...,i_{n_0}\in\mathbb{I}}S_{g_0,n_0;i_1,...,i_{n_0+1}}x^{i_1}\cdots x^{i_{n_0+1}} \in Sym_{n_0+1}(V)$ such that $H_i^{(g_0,n_0)} = \hbar^{g_0}\partial_i S_{g_0,n_0+1}$. 

For the base case of the induction, $(g_0 = 0)$, we can look at the first non-trivial $S_{0,n_0}$ which is $S_{0,3} = 2A \in Sym_3(V)$. By setting $\psi^{(0,3)}_L = \exp\left(\frac{1}{\hbar}S_{0,3}\right)$ we can see that $(\psi_L^{(0,3)})^{-1}\hat{H}_i\psi_L^{(0,3)}$ takes the form of (\ref{hhatg0n0}) with $(g_0,n_0) = (0,3)$. 
\end{proof}

Taking the limit $\hbar\rightarrow 0$ of the expression $\psi^{-1}_L\hat{H}_i\psi_L(1) = 0$ is the same as replacing every $\hbar\partial_i$ in $\hat{H}_i$ with $\partial_iS_0$. Therefore, we obtain (\ref{s0isgeneratingfunction}), showing that $S_0 = \sum_{n=0}^\infty S_{0,n} \in \prod_{n=1}^\infty Sym_n(V)$ is the generating function of $L = \{H_i = 0\}$. It is in this sense that $\psi_L$ is considered a quantization of $L$. The following Corollary of Theorem \ref{quantizationofltheorem} makes the relationship between quantum Airy structures and ATR becomes clear:
\begin{corollary}\label{genfunccorollary}
If a quantum pre-Airy structure $(V,A,B,C,\epsilon)$ is a quantum Airy structure then the output $S_{g,n} \in V^{\otimes n}$ of the corresponding ATR are symmetric tensors. Moreover, if $\psi_L := \exp\left(\sum_{g=0}^\infty \hbar^{g-1}\sum_{n=0}^\infty S_{g,n}\right)$ then $\psi^{-1}_L\hat{H}_i(\psi_L) = 0$.
\end{corollary}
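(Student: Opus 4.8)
The plan is to identify the outputs of ATR for the pre-Airy structure $(V,A,B,C,\epsilon)$ with the symmetric tensors $S_{g,n}\in Sym_n(V)$ constructed in the proof of Theorem \ref{quantizationofltheorem}. Those tensors are symmetric by construction and assemble into the unique $\psi_L$ of the form (\ref{psiwavefunction}) with $\psi_L^{-1}\hat{H}_i(\psi_L)=0$, so both assertions of the Corollary follow at once as soon as the identification is established.

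First I would expand $\psi_L^{-1}\hat{H}_i(\psi_L)=0$ for $\psi_L=\exp\left(\sum_{g\geq 0}\hbar^{g-1}S_g\right)$, $S_g=\sum_{n\geq 0}S_{g,n}$. Writing $S:=\sum_{g\geq 0}\hbar^{g-1}S_g$ and using $\partial_i e^S=(\partial_i S)e^S$ together with $\partial_j\partial_k e^S=\left(\partial_j\partial_k S+(\partial_j S)(\partial_k S)\right)e^S$, the quantity $\psi_L^{-1}\hat{H}_i(\psi_L)$ is a formal series in $x^{i\in\mathbb{I}}$ and $\hbar$, so it vanishes iff all its coefficients do. Collecting the coefficient of $\hbar^g$ and of the monomials of degree $n$ in $x$, the vanishing condition reads
\[
\partial_i S_{g,n+1}=2\sum_{j,k\in\mathbb{I}}b^k_{ij}\,x^j\,\partial_k S_{g,n}+\sum_{j,k\in\mathbb{I}}c^{jk}_i\,\partial_j\partial_k S_{g-1,n+2}+\sum_{\substack{g_1+g_2=g\\ n_1+n_2=n+2}}\sum_{j,k\in\mathbb{I}}c^{jk}_i\,(\partial_j S_{g_1,n_1})(\partial_k S_{g_2,n_2}),
\]
together with the boundary contributions $\sum_{j,k}a_{ijk}x^jx^k$ at $(g,n)=(0,2)$ and $\epsilon_i$ at $(g,n)=(1,0)$; equivalently $S_{0,3}=2A$, $S_{1,1}=\epsilon$ and $S_{0,n\leq 2}=S_{g,0}=0$. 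This is (up to notation) the computation already carried out in the proof of Theorem \ref{quantizationofltheorem}, cf. (\ref{hgnformula}).

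Next I would apply $\partial_{i_2}\cdots\partial_{i_{n+1}}$ and evaluate at the origin, setting $m:=n+1$ and relabelling $i\mapsto i_1$. Because each $S_{g',n'}$ is a symmetric tensor, $\partial_{i_2}\cdots\partial_{i_m}|_0$ simply reads off its coefficients, and the Leibniz rule distributes the $m-1$ derivatives among the factors on the right-hand side in the only way that survives evaluation at $0$: exactly one $\partial_{i_a}$ hits the prefactor $x^j$ in the $b$-term, giving $2\sum_{a=2}^m\sum_k b^k_{i_1 i_a}S_{g,m-1;k\,i_{\{2,\dots,m\}\setminus\{a\}}}$; the pair $\partial_j\partial_k$ acting on $S_{g-1,n+2}$ contracts into two new indices, giving $\sum_{j,k}c^{jk}_{i_1}S_{g-1,m+1;jk\,i_2\cdots i_m}$; and in the bilinear term the remaining derivatives must split as $|I_1|=n_1-1$ onto the first factor and $|I_2|=n_2-1$ onto the second, producing the sum over ordered partitions $I_1\sqcup I_2=\{2,\dots,m\}$. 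The resulting identity for $S_{g,m;i_1\cdots i_m}$ is precisely the ATR recursion (\ref{tratr}), with the matching initial data $S_{0,3}=2A$, $S_{1,1}=\epsilon$.

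Finally, since the right-hand side of (\ref{tratr}) involves only $S_{g',n'}$ with $2g'-2+n'<2g-2+n$, the recursion together with its initial conditions has a unique solution, so the symmetric tensors of Theorem \ref{quantizationofltheorem} coincide with the ATR outputs. This shows the ATR outputs $S_{g,n}\in V^{\otimes n}$ are symmetric, and that $\psi_L=\exp\left(\sum_{g\geq 0}\hbar^{g-1}\sum_{n\geq 0}S_{g,n}\right)$ built from the ATR outputs is exactly the $\psi_L$ of Theorem \ref{quantizationofltheorem}, whence $\psi_L^{-1}\hat{H}_i(\psi_L)=0$. I expect the main obstacle to be the combinatorial bookkeeping in the third paragraph: matching the Leibniz distribution of derivatives against the ordered-partition sum $\sum_{I_1\sqcup I_2}$ in (\ref{tratr}) and tracking the $1/n!$ normalisations in the definition (\ref{sgnpolynomial}) of $S_{g,n}$. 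Once one observes that $\partial_{i_2}\cdots\partial_{i_m}|_0$ reads off coefficients of symmetric tensors, this becomes a routine verification.
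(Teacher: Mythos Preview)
Your proposal is correct and follows essentially the same approach as the paper: both invoke Theorem \ref{quantizationofltheorem} to obtain symmetric $S_{g,n}$ satisfying $\psi_L^{-1}\hat{H}_i(\psi_L)=0$, then extract the coefficients of this equation (via (\ref{hgnformula})) to recognise the ATR recursion (\ref{tratr}), and use the evident uniqueness of the ATR solution to conclude. You supply more detail on the coefficient extraction and the uniqueness step than the paper's terse proof, but the argument is the same.
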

\begin{proof}
Theorem \ref{quantizationofltheorem} tells us that the solution to $\psi_L^{-1}\hat{H}_i(\psi_L) = 0$ of the form $\psi_L := \exp\left(\sum_{g=0}^\infty \hbar^{g-1}\sum_{n=0}^\infty S_{g,n}\right)$ where $S_{g,n} \in Sym_n(V)$ exists. The equation $\psi^{-1}_L\hat{H}_i\psi_L(1) = 0$ implies that the right-hand side of (\ref{hgnformula}) must vanish. In particular, the coefficient of each $x^{i_1}\cdots x^{i_n}, i_k \in \mathbb{I}$ must vanish and this gives us the recursion relations (\ref{tratr}).
\end{proof}

Finally, the notion of sub-structures we introduced for (quantum) pre-Airy structures translates straightforwardly to the notion of sub-structures for (quantum) Airy structures:
\begin{definition}\theoremname{\cite[Section 2.6]{kontsevich2017airy}}
A classical Airy sub-structure $(U,A,B,C)$ of a classical Airy structure $(V,A,B,C)$ is a classical pre-Airy sub-structure of a pre-Airy structure $(V,A,B,C)$. A quantum Airy sub-structure $(U,A,B,C,\epsilon)$ of a quantum Airy structure $(V,A,B,C,\epsilon)$ is a quantum pre-Airy sub-structure of a quantum pre-Airy structure $(V,A,B,C,\epsilon)$.
\end{definition}
\begin{lemma}\theoremname{\cite[Section 2.6]{kontsevich2017airy}}
An Airy sub-structure of an Airy structure is an Airy structure. A quantum Airy sub-structure of a quantum Airy structure is a quantum Airy structure
\end{lemma}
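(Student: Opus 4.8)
The plan is to reduce both statements to a single structural observation: the generators of a (pre-)Airy sub-structure are nothing but the ambient generators, restricted to a symplectic subspace. Fix the Airy structure $(V,A,B,C)$ with generators $\{H_{i\in\mathbb{I}}\}$ of the form (\ref{airystructuredefn}), so $\{H_i,H_j\}=\sum_{k\in\mathbb{I}}g^{k}_{ij}H_k$, and let $(U,A,B,C)$ be a sub-structure; say $U$ is spanned by $\{e^{i}:i\in\mathbb{I}_U\}$ for a subset $\mathbb{I}_U\subseteq\mathbb{I}$. Then $W_U:=U\oplus U^{*}$ is a symplectic subspace of $W$ with canonical coordinates $\{x^{i},y_i:i\in\mathbb{I}_U\}$, and $\widehat{Sym}_{\leq2}(W_U)\hookrightarrow\widehat{Sym}_{\leq2}(W)$. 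First I would unwind the compatibility conditions $A\in U\otimes U\otimes U$, $B|_U:U\to U\otimes U$, $C|_{U\otimes U}:U\otimes U\to U$: these say exactly that, whenever the generator index $i$ lies in $\mathbb{I}_U$, every monomial occurring in $H_i$ in (\ref{airystructuredefn}) involves only the coordinates $x^{j},y_j$ with $j\in\mathbb{I}_U$. Hence for $i\in\mathbb{I}_U$ the ambient generator $H_i$ coincides with the sub-structure generator $H^{U}_i\in\widehat{Sym}_{\leq2}(W_U)$, and, using $\epsilon\in U$ in the quantum case, likewise $\hat H_i=\hat H^{U}_i\in\mathcal{D}^{\hbar}_{\leq2}(W_U)$.

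Granting this, the Lie-algebra closure of the sub-structure is automatic. For $i,j\in\mathbb{I}_U$, the Poisson bracket $\{H^{U}_i,H^{U}_j\}$ taken in $W_U$ equals $\{H_i,H_j\}$ taken in $W$, since both are polynomials in the $\mathbb{I}_U$-coordinates only and the bracket pairs only the coordinates actually present; by hypothesis this equals $\sum_{k\in\mathbb{I}}g^{k}_{ij}H_k$. The left-hand side is a polynomial in $\{x^{j},y_j:j\in\mathbb{I}_U\}$, while the linear parts $-y_k$ of the $H_k$ are linearly independent, so matching coefficients of $y_k$ forces $g^{k}_{ij}=0$ for $k\notin\mathbb{I}_U$. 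Therefore $\{H^{U}_i,H^{U}_j\}=\sum_{k\in\mathbb{I}_U}g^{k}_{ij}H^{U}_k\in\mathfrak{g}_U$, i.e. $(U,A,B,C)$ is a classical Airy structure. The quantum statement is the same argument with the commutator $\frac{1}{\hbar}[\cdot,\cdot]$ in place of the Poisson bracket: $\frac{1}{\hbar}[\hat H^{U}_i,\hat H^{U}_j]=\frac{1}{\hbar}[\hat H_i,\hat H_j]=\sum_{k\in\mathbb{I}}g^{k}_{ij}\hat H_k=\sum_{k\in\mathbb{I}_U}g^{k}_{ij}\hat H^{U}_k$ (the same structure constants vanish off $\mathbb{I}_U$, as already seen in the classical limit), so $\hat{\mathfrak g}_U$ is a Lie algebra.

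If one prefers not to commit to the precise index-slot conventions underlying the first step, a sturdier route goes through the generating function. By Remark \ref{atrandprestructuresremark}, applied to $(V,A,B,C,\epsilon)$ and $(U,A,B,C,\epsilon)$ --- both containing the common pre-Airy sub-structure $(U,A,B,C,\epsilon)$ --- the ATR outputs coincide, $S^{(U)}_{g,n}=S^{(V)}_{g,n}\in U^{\otimes n}$; and since $V$ is Airy, Theorem \ref{quantizationofltheorem} and Corollary \ref{genfunccorollary} give $\psi_L^{-1}\hat H_i(\psi_L)=0$ for all $i$, where $\psi_L=\exp(\sum_g\hbar^{g-1}\sum_n S^{(V)}_{g,n})$ and $S_0:=\sum_n S^{(V)}_{0,n}$. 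Because $\psi_L$ depends only on the $\mathbb{I}_U$-variables, each term of $\hat H_i$ absent from $\hat H^{U}_i$ either carries a $\partial_k$ with $k\notin\mathbb{I}_U$ or vanishes by compatibility, so it annihilates $\psi_L$; hence $\hat H^{U}_i$ annihilates $\psi_L$ for $i\in\mathbb{I}_U$, so $\frac{1}{\hbar}[\hat H^{U}_i,\hat H^{U}_j]$ does too, and passing to the classical limit $\{H^{U}_i,H^{U}_j\}$ vanishes on the Lagrangian graph $\{(x,\partial S_0(x))\}=\{H^{U}_i=0:i\in\mathbb{I}_U\}$. As in the proof of Lemma \ref{airystrandquadlagrangianlemma}, an at-most-quadratic polynomial vanishing on this quadratic Lagrangian lies in $\mathrm{span}\{H^{U}_k\}$; feeding this back, $\frac{1}{\hbar}[\hat H^{U}_i,\hat H^{U}_j]-\sum_k \tilde{g}^{k}_{ij}\hat H^{U}_k$ has vanishing classical limit, hence equals a central term $c_{ij}\hbar$, and evaluating on $\psi_L$ forces $c_{ij}=0$. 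The main obstacle is precisely this bookkeeping: checking that the three compatibility conditions match the index patterns appearing in (\ref{airystructuredefn})--(\ref{quantumairystructuredefn}), and, in the infinite-dimensional case, replacing the dimension count of Lemma \ref{airystrandquadlagrangianlemma} by the statement that an at-most-quadratic polynomial vanishing on the quadratic Lagrangian $L_U$ lies in the span of its defining generators.
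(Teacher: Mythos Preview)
Your primary argument has a real gap: the claim that the compatibility conditions force $H_i$, for $i\in\mathbb{I}_U$, to involve only $\mathbb{I}_U$-coordinates misreads the direction of the maps $B$ and $C$. By Remark~\ref{atrisfiniteremark}, $B(e^k)=\sum_{i,j}b^{k}_{ij}\,e^i\otimes e^j$, so $B|_U:U\to U\otimes U$ says that $k\in\mathbb{I}_U$ and $b^{k}_{ij}\neq 0$ force $i,j\in\mathbb{I}_U$. What you need for the $B$-term $2\sum_{j,k}b^{k}_{ij}x^jy_k$ in $H_i$ is the converse: $i\in\mathbb{I}_U$ and $b^{k}_{ij}\neq 0$ should force $j,k\in\mathbb{I}_U$. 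The condition $C|_{U\otimes U}:U\otimes U\to U$ likewise runs the wrong way for the $c$-term. A two-dimensional counterexample: $V=\mathrm{span}(e^1,e^2)$, $U=\mathrm{span}(e^1)$, $A=C=0$, and $b^{2}_{11}$ the only nonzero entry of $B$. Then $B(e^1)=0\in U\otimes U$, so $(U,0,0,0)$ is a valid sub-structure; $\{H_1,H_2\}=0$, so $(V,A,B,C)$ is Airy; yet $H_1=-y_1+2b^{2}_{11}x^1y_2$ contains $y_2$. Hence $H^U_1=-y_1\neq H_1$, and $\{H^U_i,H^U_j\}_{W_U}$ is \emph{not} the restriction of $\{H_i,H_j\}_W$.

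Your alternative route correctly establishes $\psi_L^{-1}\hat H^U_i(\psi_L)=0$ for $i\in\mathbb{I}_U$---indeed, every term of $\hat H_i-\hat H^U_i$ annihilates $\psi_L$ precisely because the compatibility conditions point the way they actually do. But the closing step you flag, that a degree-$\leq 2$ polynomial vanishing on the graph $y=\partial S_0$ lies in $\mathrm{span}\{H^U_k\}$, is not a formality: when $A=0$ one has $S_0=0$, and then any quadratic polynomial with no pure-$x$ part vanishes on $\{y=0\}$ without lying in the span. The paper's proof avoids both issues by a third route: by Remark~\ref{abcepsilonrelationsremark} the (quantum) Airy condition is equivalent to a list of identities built from compositions of the maps $A,B,C$ (and $\epsilon$); the sub-structure hypotheses say exactly that each of these maps restricts to $U$, hence so does every composite, and the identities survive restriction.
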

\begin{proof}
According to Remark \ref{abcepsilonrelationsremark}, certain relations among $A,B,C$ and $\epsilon$ are satisfied if and only if $A,B,C$ and $\epsilon$ define a (quantum) Airy structure. The fact that we can restrict these tensors to a subspace $U \subset V$ will not affect these relations.
\end{proof}

\subsubsection{Gauge transformation of (quantum) Airy structures}\label{gaugetransformationsubsection}
Given an Airy structure $(V,A,B,C)$, it defines a quadratic Lagrangian submanifold $L = \{H_{i\in \mathbb{I}} = 0\} \subset W = V\oplus V^*$. Suppose that $\{e^{i\in \mathbb{I}}, f_{i\in \mathbb{I}}\}$ and $\{x^{i\in\mathbb{I}}, y_{i\in\mathbb{I}}\}$ are the given canonical basis and canonical coordinates of $W$ respectively. As we have mentioned in Remark \ref{lagrangiancomplementremark}, the vector subspace $V \subset W$, with a basis $\{e^{i\in\mathbb{I}}\}$, is a Lagrangian complement of $T_0L = \left\{\sum_{i\in\mathbb{I}}x^kf_i\right\} \subset W$. But $V$ is not a unique Lagrangian complement of $T_0L$. Any other Lagrangian complement $\bar{V} \subset W$ of $T_0L$ corresponds to the change of a canonical basis $\{e^{i\in\mathbb{I}}, f_{i\in\mathbb{I}}\} \mapsto \{\bar{e}^{i\in\mathbb{I}}, \bar{f}_{i\in\mathbb{I}}\}$, $\bar{e}^i := e^i + \sum_{j\in \mathbb{I}}s^{ij}f_j, \bar{f}_i := f_i$ of $W$, where $(s^{ij}) : V\otimes V \rightarrow \mathbb{C}$ is an arbitrary symmetric tensor. Note that $\{\bar{e}^{i\in\mathbb{I}}\}$ is a new basis spanning the new Lagrangian complement $\bar{V}$ of $T_0L$ which remains fixed. The corresponding change of canonical coordinates on $W$ is given by $\bar{x}^i = x^i - \sum_{j \in \mathbb{I}}s^{ij}y_j, \bar{y}_i = y_i$. The symplectic form retains its canonical form $\Omega = \sum_{i \in \mathbb{I}}d\bar{x}^i\wedge d\bar{y}_i$ in this new coordinates and $T_0L \cong W/\bar{V} \cong \bar{V}^*$, therefore $W \cong \bar{V}\oplus \bar{V}^*$. Expressing the Airy structure $\{H_{i\in \mathbb{I}}\}$ using the new coordinates $\{\bar{x}^{i\in \mathbb{I}}, y_{i\in\mathbb{I}}\}$ in the usual form (\ref{airystructuredefn}) we obtain the new tensors $\bar{A} = (\bar{a}_{ijk}), \bar{B} = (\bar{b}_{ij}^k), \bar{C} = (\bar{c}_i^{jk})$.

Both Airy structures $(V,A,B,C)$ and $(\bar{V}, \bar{A}, \bar{B}, \bar{C})$ are the same in the sense that they define the same Lagrangian submanifold $L \subset W$, but we have chosen to described them differently using the different choices of Lagrangian complement $\bar{V}$. Hence we call $(V,A,B,C)\mapsto (\bar{V}, \bar{A}, \bar{B}, \bar{C})$ a \emph{gauge transformation} of Airy structures.

We can also explore gauge transformation associate with freedom in choosing a basis $\bar{e}^i := \sum_{j\in \mathbb{I}}d^i_je^j$ of $V$ and $\bar{f}_i := \sum_{j\in \mathbb{I}}c^j_if_j$ of $V^*$. The corresponding change of coordinates given by $\bar{x}^i = \sum_{j\in \mathbb{I}}d^i_jx^j$ and $\bar{y}_i = \sum_{j\in\mathbb{I}}c^j_iy_j$ respectively.
We require $(c^i_j) : V\rightarrow V$ and $(d^i_j) : V\rightarrow V$ to be the inverse of each other, i.e. $\sum_{k\in\mathbb{I}}d^k_ic_k^j = \delta_i^j$ and $\sum_{k\in\mathbb{I}}c^k_id^j_k = \delta^j_i$, so that the coordinates $\{\bar{x}^{i\in\mathbb{I}}, \bar{y}_{i\in\mathbb{I}}\}$ remain canonical for $W = V\oplus V^*$. Let us also redefine the basis for $\mathfrak{g} = \bigoplus_{i\in\mathbb{I}}\mathbb{C}\cdot H_i$ by $H_i \mapsto \bar{H}_i := \sum_{j\in\mathbb{I}}c^j_iH_j$. The commutation relations become 
\begin{equation*}
    \{\bar{H}_i, \bar{H}_j\} = \sum_{k\in\mathbb{I}}\bar{g}^k_{ij}\bar{H}_k, \qquad \text{where} \qquad \bar{g}_{ij}^k := g_{pq}^rc^p_ic^q_jd^k_r.
\end{equation*}
Here and the remaining of this section, Einstein summation notations will be used where appropriate. Note that $\bar{g}^k_{ij} := g_{pq}^rc^p_ic^q_jd^k_r$ only involve finite summations since $g^i_{jk}$ are components of the tensor $(g^i_{jk}) : V\rightarrow V\otimes V$ as explained in Remark \ref{symmetryabcremark} (see also Remark \ref{atrisfiniteremark}). Hence $\bar{g}^i_{jk}$ is well-defined and we have a tensor $(\bar{g}^i_{jk}) : V\rightarrow V\otimes V$.

\begin{remark}\label{cdsremark}
For a fixed $i \in \mathbb{I}$, $c^i_j$ will be non-zero for only finitely many $j \in \mathbb{I}$. It follows that the elements of the inverse matrix must satisfy the same properties. Namely, given an $i \in \mathbb{I}$, $d^i_j$ will be non-zero for only finitely many $j\in \mathbb{I}$. On the other hand, $s^{ij}$ can be non-zero for infinitely many pairs of $(i,j) \in \mathbb{I}^2$.
\end{remark}

As a summary, let us give an explicit formula for the gauge transformation $(V,A,B,C)\mapsto (\bar{V}, \bar{A}, \bar{B}, \bar{C})$ associating with the change of a canonical basis $\{e^{i\in\mathbb{I}}, f_{i\in\mathbb{I}}\} \mapsto \{\bar{e}^{i\in\mathbb{I}}, \bar{f}_{i\in\mathbb{I}}\}$ of $W$:
\begin{equation}\label{efcanonicalbasistransformation0}
    \bar{e}^i := \sum_{j\in\mathbb{I}}d_j^i\left(e^j + \sum_{k\in\mathbb{I}}s^{jk}f_k\right), \qquad \bar{f}_i := \sum_{j\in\mathbb{I}}c_i^jf_j,
\end{equation}
where $(c^i_j) : V\rightarrow V$ and $(d^i_j) : V\rightarrow V$ are the inverse of each other and $(s^{ij}) : V\times V \rightarrow \mathbb{C}$. It is easy to check that
\begin{equation*}
    \Omega(\bar{e}^i,\bar{e}^j) = \Omega(\bar{f}_i,\bar{f}_j) = 0, \qquad \Omega(\bar{e}^i,\bar{f}_j) = \delta^i_j.
\end{equation*}

The corresponding change of a canonical coordinates is given by:
\begin{equation}\label{xycanonicalcoordtransformation0}
    \bar{x}^i := \sum_{j=1}^\infty d^i_j\left(x^j 
    - \sum_{k=1}^\infty s^{jk}y_k\right), \qquad \bar{y}_i := \sum_{j=1}^\infty c^j_iy_j,
\end{equation}
and it is easy to check that 
\begin{equation*}
    \{\bar{x}^i, \bar{x}^j\} = \{\bar{y}_i, \bar{y}_j\} = 0, \qquad \{\bar{y}_i, \bar{x}^j\} = \delta^j_i.
\end{equation*}

Denote the transformation (\ref{efcanonicalbasistransformation0}) by $T : W \rightarrow W$. Let $\bar{V} \subset W$ be the vector subspace spanned by $\{\bar{e}^{i\in\mathbb{I}}\}$:
\begin{equation}\label{vgaugetransformation}
    \bar{V} := T(V).
\end{equation}

Substituting the inverse coordinates transformation $x^i = \sum_{j\in\mathbb{I}}\left(c^i_j\bar{x}^j + \sum_{k\in\mathbb{I}}\bar{y}_jd^j_ks^{ki}\right), y_i = \sum_{j\in\mathbb{I}}d^j_i\bar{y}_j$, into (\ref{airystructuredefn}) and collecting terms so that $\bar{H}_i := \sum_{j\in\mathbb{I}}c^j_iH_j$ takes the form
\begin{equation*}
    \bar{H}_i := -\bar{y}_i + \sum_{j,k \in \mathbb{I}}\bar{a}_{ijk}\bar{x}^j\bar{x}^k + 2\sum_{j,k \in \mathbb{I}}\bar{b}_{ij}^k\bar{x}^j\bar{y}_k + \sum_{j,k\in \mathbb{I}}\bar{c}^{jk}_i\bar{y}_j\bar{y}_k,
\end{equation*}
we find that $(V,A,B,C)\mapsto (\bar{V},\bar{A},\bar{B},\bar{C})$ is given by \cite[Section 2.1]{kontsevich2017airy}
\begin{align}
    \bar{a}_{i_1i_2i_3} &:= a_{j_1j_2j_3}c^{j_1}_{i_1}c^{j_2}_{i_2}c^{j_3}_{i_3}\label{agaugetransfromation}\\
    \bar{b}^{i_3}_{i_1i_2} &:= b^{j_3}_{j_1j_2}c^{j_1}_{i_1}c^{j_2}_{i_2}d^{i_3}_{j_3} + a_{j_1j_2p}s^{pj_3}c^{j_1}_{i_1}c^{j_2}_{i_2}d^{i_3}_{j_3}\label{bgaugetransfromation}\\
    \bar{c}^{i_2i_3}_{i_1} &:= c^{j_2j_3}_{j_1}c^{j_1}_{i_1}d^{i_2}_{j_2}d^{i_3}_{j_3} + b^{j_2}_{j_1p}s^{pj_3}c^{j_1}_{i_1}d^{i_2}_{j_2}d^{i_3}_{j_3} + b^{j_3}_{j_1q}s^{qj_2}c^{j_1}_{i_1}d^{i_2}_{j_2}d^{i_3}_{j_3} + a_{j_1pq}s^{pj_2}s^{qj_3}c^{j_1}_{i_1}d^{i_2}_{j_2}d^{i_3}_{j_3}\label{cgaugetransfromation}.
\end{align}

Similarly, we study \emph{gauge transformations} of quantum Airy structures $(V,A,B,C,\epsilon)\mapsto (\bar{V},\bar{A},\bar{B},\bar{C},\bar{\epsilon})$ by making the following substitutions:
\begin{equation*}
    x^i \mapsto \hat{x}^i := \sum_{j\in\mathbb{I}}\left(c^i_j\bar{x}^j + \sum_{k\in\mathbb{I}}s^{ik}d^j_k\hbar\bar{\partial}_j\right), \qquad \hbar \partial_i = \hbar\frac{\partial}{\partial x^i} \mapsto \hat{y}_i := \sum_{j\in \mathbb{I}}d^j_i\hbar\bar{\partial}_j = \sum_{j\in \mathbb{I}}d^j_i\hbar\frac{\partial}{\partial \bar{x}^j}
\end{equation*}
in (\ref{quantumairystructuredefn}) and collecting terms so that $\bar{\hat{H}} := \sum_{j\in\mathbb{I}}c^j_i\hat{H}_j$ takes the form: 
\begin{equation*}
    \bar{\hat{H}}_i := -\hbar\bar{\partial}_i + \sum_{j,k \in \mathbb{I}}\bar{a}_{ijk}\bar{x}^j\bar{x}^k + 2\hbar\sum_{j,k \in \mathbb{I}}\bar{b}^k_{ij}\bar{x}^j\bar{\partial}_k + \hbar^2\sum_{j,k \in \mathbb{I}}\bar{c}^{jk}_i\bar{\partial}_j\bar{\partial}_k + \hbar\bar{\epsilon}_i.
\end{equation*}
We find that the transformation $A\mapsto \bar{A},B \mapsto \bar{B},C \mapsto \bar{C}$ are given by (\ref{agaugetransfromation})-(\ref{cgaugetransfromation}) and $\epsilon \mapsto \bar{\epsilon}$ is given by \cite[Section 2.2]{kontsevich2017airy}
\begin{equation}\label{epsilongaugetransformation}
    \bar{\epsilon}_i := \epsilon_lc^l_i + a_{ljk}c^l_is^{jk}.
\end{equation}
Because the commutation relations of $\hat{x}^i$ and $\hat{y}_j$ in $\mathcal{D}^\hbar(W)$ are the same as those of $x^i$ and $\hbar\partial_j$ we conclude that $(\bar{V}, \bar{A}, \bar{B}, \bar{C}, \bar{\epsilon})$ is a quantization of $(\bar{V}, \bar{A}, \bar{B}, \bar{C})$.

\begin{remark}\label{gaugetransformationofpreairystrremark}
Suppose that $(V,A,B,C,\epsilon)$ is a quantum pre-Airy structure. Given $(c^i_j) : V\rightarrow V$, $(d^i_j) = (c^i_j)^{-1} : V\rightarrow V$ and $(s^{ij}) : V\times V \rightarrow \mathbb{C}$, then $(V, A, B, C, \epsilon) \mapsto (\bar{V}, \bar{A}, \bar{B}, \bar{C}, \bar{\epsilon})$ given by (\ref{vgaugetransformation})-(\ref{epsilongaugetransformation}) is also a quantum pre-Airy structure transformation. We call this a \emph{gauge transformation} of quantum pre-Airy structure.
\end{remark}

\begin{remark}\label{gaugetransfrespectssubstructuresremark}
The gauge transformation respects sub-structures in the following sense \cite[Section 2.6]{kontsevich2017airy}. We write $\bar{V} := T(V)$. Suppose that $(U,A,B,C,\epsilon)$ is a quantum (pre-)Airy sub-structure of $(V,A,B,C,\epsilon)$ then $(\bar{U},\bar{A}, \bar{B}, \bar{C},\bar{\epsilon})$ is a quantum (pre-)Airy sub-structure of $(\bar{V}, \bar{A}, \bar{B}, \bar{C}, \bar{\epsilon})$ where $\bar{U} := T(U)$ and $T : W \rightarrow W$ denotes the transformation (\ref{efcanonicalbasistransformation0}).
\end{remark}

\subsection{The Residue Constraints Airy Structure}\label{residueconstraintairystructuresection}
In this section, we are going to review the residue constraints Airy structure \cite[Section 3.3]{kontsevich2017airy}. It is the Airy structure that will be most interesting to us due to its connection with topological recursion as we will explain in Section \ref{relationstotrsection}.  Consider a countably infinite dimensional (let $\mathbb{I} := \mathbb{Z}_{>0}$) vector space
\begin{equation*}
    V_{Airy} := z^{-1}\mathbb{C}[z^{-1}]\frac{dz}{z}
\end{equation*}
of meromorphic differential forms in the neighborhood of $z = 0$ and equipped with the discrete topology. The set $\left\{e^{k\in\mathbb{I}} := z^{-k}\frac{dz}{z}\right\}$ is a basis of $V_{Airy}$. The continuous dual space is given by 
\begin{equation*}
    V^*_{Airy} := z\mathbb{C}[[z]]\frac{dz}{z}
\end{equation*}
where a vector $f \in V^*_{Airy}$ can be thought of as a linear functional $v\mapsto Res_{z=0}\left(f\int v\right) \in \mathbb{C}$ on $V_{Airy}$. The set $\left\{f_{k\in\mathbb{I}} := kz^{k}\frac{dz}{z}\right\}$ is a basis of $V^*_{Airy}$, we can check that $f_i(e^j) = \delta_i^j$. The corresponding Tate space is
\begin{equation*}
    W_{Airy} := V_{Airy}\oplus V^*_{Airy} = \left\{\eta \in \mathbb{C}((z))dz\ |\ Res_{z=0}\eta = 0\right\}
\end{equation*}
with the symplectic form 
\begin{equation*}
\Omega_{Airy}(f,g) := Res_{z=0}\left(f\int g\right).    
\end{equation*}
Any typical element $w$ of $W_{Airy}$ can be expressed as $w := \sum_{k\in \mathbb{I}}\left(x^kf_k + y_ke^k\right) = \sum_{k=1}^\infty\left(y_kz^{-k}\frac{dz}{z} + kx^kz^k\frac{dz}{z}\right)$, where $y_k \in \mathbb{C}$ is non-zero for an only finite number of $k$. The symbol $\int g$ in the definition of $\Omega_{Airy}(f,g)$ denotes a formal indefinite integral of $g$. For instance, let us write $g := \sum_{k=1}^\infty\left(y_kz^{-k}\frac{dz}{z} + kx^kz^k\frac{dz}{z}\right)$ then $\int g := \sum_{k=1}^\infty\left(-\frac{1}{k}y_kz^{-k} + x^kz^k\right) + const$. Note that the integration constant does not introduce any ambiguity in the expression of $\Omega_{Airy}(f,g)$ since the residue of $f$ is zero. It is easy to check that $f \mapsto \Omega_{Airy}(f,.)$ gives an isomorphism $W\cong W^*$ and in particular, $e^k\mapsto \Omega_{Airy}(e^k,.)$ gives an isomorphism $V_{Airy}\cong V^{**}_{Airy}$. 

Define the \emph{residue constraints Airy structure} $\{H_{i\in\mathbb{I}}\}$ on $V_{Airy}$ by 
\begin{align*}\label{resconstraintsairystructure}
    (H_{Airy})_{2n}(w) &= Res_{z = 0}\left(\left(z - \frac{w(z)}{2zdz}\right) z^{2n}d(z^2)\right),\\
    (H_{Airy})_{2n-1}(w) &= \frac{1}{2}Res_{z = 0}\left(\left(z - \frac{w(z)}{2zdz}\right)^2 z^{2n-2}d(z^2)\right),\qquad n = 1,2,3,...\numberthis
\end{align*}
where $w(z) := \sum_{k\in \mathbb{I}}\left(x^kf_k + y_ke^k\right) = \sum_{k=1}^\infty\left(y_kz^{-k}\frac{dz}{z} + kx^kz^k\frac{dz}{z}\right) \in W_{Airy}$. By expanding (\ref{resconstraintsairystructure}) into formal polynomials in $\widehat{Sym}_{\leq 2}(W_{Airy})$ of coordinates $\{x^{i\in\mathbb{I}}, y_{i\in\mathbb{I}}\}$ of $W_{Airy}$, we find that $\{(H_{Airy})_{i\in\mathbb{I}}\}$ can be expressed in the standard form (\ref{airystructuredefn}) as \cite[Section 3.3]{kontsevich2017airy}:
\begin{equation}\label{resconstraintsairyhamiltonian}
    (H_{Airy})_{2n} = -J_{2n},\qquad (H_{Airy})_{2n-1} = -J_{2n-1}+\frac{1}{4}\sum_{k=-\infty}^\infty J_{2k-1}J_{2n-2k-3}, \qquad n = 1,2,3,...
\end{equation}
where $J_{+k} := y_k, J_{-k} := kx^k$. The quantization of Airy structure $\{(H_{Airy})_{i\in\mathbb{I}}\}$ exists and it is given by \cite[Section 3.3]{kontsevich2017airy}
\begin{equation}\label{quantumresconstraintsairyhamiltonian}
    (\hat{H}_{Airy})_{2n} = -\hat{J}_{2n},\qquad (\hat{H}_{Airy})_{2n-1} = -\hat{J}_{2n-1}+\frac{1}{4}\sum_{k=-\infty}^\infty \hat{J}_{2k-1}\hat{J}_{2n-2k-3} + \frac{\hbar}{16}\delta_{n,2}, \qquad n = 1,2,3,...
\end{equation}
where $\hat{J}_{+k} := \hbar\frac{\partial}{\partial x^k}, \hat{J}_{-k} := kx^k$. By reading the coefficients of (\ref{quantumresconstraintsairyhamiltonian}), we found that the tensors $A_{Airy},B_{Airy},C_{Airy}$ and $\epsilon_{Airy}$ are given by \cite[Section 3.3]{kontsevich2017airy}
\begin{align}
    A_{Airy} = \frac{1}{4}e^1\otimes e^1\otimes e^1, &\qquad \text{or} \qquad (a_{Airy})_{ijk} = \frac{1}{4}\delta_{1i}\delta_{1j}\delta_{1k}\label{explicitaairy}\\
    B_{Airy} : e^n \mapsto \frac{1}{4}\sum_{\substack{n_1,n_2 \geq 1\\ n_1 + n_2 = n+3\\ n_1\ odd}}n_2e^{n_1}\otimes e^{n_2}, &\qquad \text{or} \qquad (b_{Airy})_{ij}^k = \frac{1}{4}\sum_{\substack{k_1,k_2 \geq 1\\ k_1 + k_2 = k+3\\ k_1\ odd}}k_2\delta_{k_1i}\delta_{k_2j}\label{explicitbairy}\\
    C_{Airy} : e^{n_1}\otimes e^{n_2} \mapsto \begin{cases}
    0, & n_1+n_2\ \text{odd}\\
    \frac{1}{4}e^{n_1+n_2+3}, & n_1+n_2\ \text{even} 
    \end{cases}&\qquad \text{or} \qquad (c_{Airy})_i^{jk} = \begin{cases}
    0, & j+k\ \text{odd}\label{explicitcairy}\\
    \frac{1}{4}\delta_{i,j+k+3}, & j+k\ \text{even} 
    \end{cases}\\
    \epsilon_{Airy} = \frac{1}{16}e^3, &\qquad \text{or} \qquad (\epsilon_{Airy})_i = \frac{1}{16}\delta_{i,3}\label{explicitepsilonairy}.
\end{align}
\begin{remark}\label{residueconstaintsabcremark}
We can also express $A_{Airy},B_{Airy},C_{Airy}$ tensors in term of a residue of meromorphic forms by substituting $w(z) := \sum_{k\in\mathbb{I}}(x^kf_k(z) + y_ke^k(z))$ into (\ref{resconstraintsairystructure}):
\begin{align*}
    (a_{Airy})_{ijk} &= Res_{z=0}\left(-\frac{1}{4i}\frac{1}{z(dz)^2}f_{i}(z)f_{j}(z)f_{k}(z)\right)\delta_{i,odd}\\
    (b_{Airy})_{ij}^{k} &= Res_{z=0}\left(-\frac{1}{4i}\frac{1}{z(dz)^2}f_{i}(z)f_{j}(z)e^{k}(z)\right)\delta_{i,odd}\\
    (c_{Airy})_{i}^{jk} &= Res_{z=0}\left(-\frac{1}{4i}\frac{1}{z(dz)^2}f_{i}(z)e^{j}(z)e^{k}(z)\right)\delta_{i,odd},
\end{align*}
where $\delta_{i,odd} = \sum_{n=1}^\infty\delta_{i,2n-1}$ is $1$ if $i$ is an odd integer and $0$ if $i$ is an even integer. Sometime it is easier to work with $A_{Airy},B_{Airy},C_{Airy}$ in this form. It is less straightforward to express $\epsilon_{Airy}$ as a residue.
\end{remark}
One last important point to note is that $(V_{Airy}, A_{Airy}, B_{Airy}, C_{Airy}, \epsilon_{Airy})$ contains a primitive quantum Airy sub-structure $((V_{Airy})_{odd}, A_{Airy}, B_{Airy}, C_{Airy}, \epsilon_{Airy})$ where 
\begin{equation*}
    (V_{Airy})_{odd} := z^{-1}\mathbb{C}[z^{-2}]\frac{dz}{z}.
\end{equation*}
\subsubsection{The Lagrangian `submanifold' $L_{Airy}\subset W_{Airy}$}
As we have mentioned in the last section, the interpretation of an Airy structure as a quadratic Lagrangian submanifold is less obvious when dimension is infinite. Let us show how $L_{Airy} := \{(H_{Airy})_{i\in \mathbb{I}} = 0\} \subset W_{Airy}$ highlights this difficulty. 
\begin{lemma}\label{lairyistrivialinwairylemma}
\begin{equation*}
    \{w \in W_{Airy}\ |\ (H_{Airy})_{i\in\mathbb{I}}(w) = 0\} = \{x^1 = y_1 = y_2 = y_3 = \cdots = 0\} \subset W_{Airy}.
\end{equation*}
\end{lemma}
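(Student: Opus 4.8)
The plan is to work directly with the explicit form of the residue constraints Airy structure given in (\ref{resconstraintsairyhamiltonian}), namely $(H_{Airy})_{2n} = -J_{2n}$ and $(H_{Airy})_{2n-1} = -J_{2n-1} + \frac{1}{4}\sum_{k}J_{2k-1}J_{2n-2k-3}$, where $J_{+k} = y_k$ and $J_{-k} = kx^k$. A point $w \in W_{Airy}$ corresponds to a choice of all coordinates $\{x^{i}, y_i\}_{i \geq 1}$ with only finitely many $y_i$ nonzero (since $y$ lives in $V_{Airy}$), and $w \in L_{Airy}$ means every $(H_{Airy})_i$ vanishes at $w$.

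First I would extract the even constraints: $(H_{Airy})_{2n}(w) = -y_{2n} = 0$ for all $n \geq 1$ immediately forces $y_2 = y_4 = y_6 = \cdots = 0$. So only the odd coordinates $y_{2n-1}$ and the $x^i$ remain to be constrained, and the odd constraints $(H_{Airy})_{2n-1} = 0$ read $y_{2n-1} = \frac{1}{4}\sum_{k=-\infty}^{\infty} J_{2k-1}J_{2n-2k-3}$. The key observation is that the sum on the right only involves $J$'s with odd index, i.e. only $y_{2j-1}$ (positive odd index) and $x^{2j-1}$-type terms $(2j-1)x^{2j-1}$ (negative odd index), together with the fact that $y_{2j-1}$ is nonzero for only finitely many $j$. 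Next I would run an induction/descent argument: for $n=1$, the constraint gives $y_1 = \frac14 \sum_k J_{2k-1}J_{-2k-1}$; I need to check which terms can actually contribute. The main step is to show that the whole tower of odd constraints is equivalent to $x^1 = 0$ together with $y_{2n-1} = 0$ for all $n$; this is where the argument about the grading / finiteness of nonzero $y$'s does the work.

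Concretely, I expect the cleanest route is a weight/degree argument. Assign weight $k$ to $x^k$ and weight $-k$ to $y_k$ (matching $J_{\pm k}$ having weight $\mp k$), so that $(H_{Airy})_{2n-1}$ is homogeneous: the linear piece $-y_{2n-1}$ has weight $2n-1$ and the quadratic piece $\frac14\sum J_{2k-1}J_{2n-2k-3}$ has weight $(1-2k) + (2k+3-2n) = 4 - 2n$... I would instead track weights more carefully on $L_{Airy}$ by evaluating at $w$ and arguing that if $x^1 \ne 0$ then no consistent solution exists — e.g. isolate the term $k$ giving $J_1 J_{2n-3}$ or $J_{-1}J_{2n-1}$, where $J_{-1} = x^1$, and feed the relation into itself. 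The identity $y_{2n-1} = \tfrac14\sum_k J_{2k-1}J_{2n-2k-3}$ with $x^1$ appearing forces, by comparing the highest nonzero $y_{2m-1}$, a contradiction unless $x^1 = 0$; once $x^1 = 0$, the quadratic terms involving $J_{-1}$ drop out and a downward induction on $n$ (starting from the fact that only finitely many $y_{2j-1}$ are nonzero, so there is a largest such index) shows all $y_{2n-1} = 0$.

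The main obstacle I anticipate is handling the bilinear sum $\sum_{k=-\infty}^{\infty} J_{2k-1}J_{2n-2k-3}$ carefully: I must be sure which products $y_a y_b$, $y_a x^b$, $x^a x^b$ actually appear for each $n$, keep straight that positive indices give $y$'s and negative give $(|\mathrm{index}|)x$'s, and use the constraint that $w$ has only finitely many nonzero $y$-components to terminate the recursion — the sum is a priori infinite, so convergence/finiteness bookkeeping is where care is needed. Once the interaction of the $x^1$ term with this sum is pinned down, the rest is a routine induction, and the reverse inclusion (that $x^1 = y_1 = y_2 = \cdots = 0$ indeed kills every $(H_{Airy})_i$) is immediate from (\ref{resconstraintsairyhamiltonian}).
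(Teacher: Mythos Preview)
Your overall shape is right (kill the even $y$'s from $(H_{Airy})_{2n}$, then use finiteness of the nonzero $y$'s to run a descent on the odd ones), and this matches the paper. But the specific descent you sketch has a gap.

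You propose to first force $x^1=0$ by ``comparing the highest nonzero $y_{2M-1}$'', and then run a downward induction on the odd $y$'s. Neither step works as written. If $y_{2M-1}$ is the top nonzero odd coordinate, the constraint $(H_{Airy})_{2M+1}=0$ gives
\[
0=\tfrac14\Big(2x^1 y_{2M-1}+\sum_{k=1}^{M-1} y_{2k-1}y_{2M-2k-1}\Big),
\]
which is a relation, not a contradiction; it does not force $x^1=0$. And even granting $x^1=0$, the constraint $(H_{Airy})_{2M-1}=0$ still contains terms like $\tfrac32 x^3\,y_{2M-1}$ (and more generally all odd $x^{2j-1}$ are unconstrained), so you cannot conclude $y_{2M-1}=0$ from it.

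The missing idea, which the paper uses, is to jump much higher in the tower: look at $(H_{Airy})_{4M+1}=0$. There the linear term $-y_{4M+1}$ vanishes (it lies above the top nonzero $y$), every mixed term $J_{-(2j-1)}J_{4M+2j-3}$ vanishes for the same reason, and the only surviving $y\cdot y$ product is $y_{2M-1}^2$. Hence $(H_{Airy})_{4M+1}=\tfrac14 y_{2M-1}^2=0$, forcing $y_{2M-1}=0$ outright, with no $x$'s involved. Iterating this kills all odd $y$'s; only then does $(H_{Airy})_1=\tfrac14 (x^1)^2=0$ give $x^1=0$. So the order is opposite to what you proposed, and the key is choosing the index $4M+1$ rather than $2M\pm1$.
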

In other words, the solution to $(H_{Airy})_{i\in\mathbb{I}} = 0$ in $W_{Airy}$ are the trivial ones. Therefore, by interpreting $L_{Airy}$ as a subset of $W_{Airy}$ we will have $L_{Airy}$ as a linear subspace of $W_{Airy}$ instead of a non-linear submanifold as the name \emph{quadratic} suggests.

\begin{proof}
Since $(H_{Airy})_{2n} = 0$ implies $y_{2n} = 0$, it remains for us to show that $x^1 = y_{2n-1} = 0$ for all $n = 1,2,3,...$. Since $w = \sum_{k=1}^\infty\left(y_kz^{-k}\frac{dz}{z} + kx^kz^k\frac{dz}{z}\right) \in W_{Airy}$ means there are finitely many non-zero $y_{i\in \mathbb{I}}$, let us assume that $y_{2n+1} = 0$ for all $n \geq N$ for some $N \in \mathbb{Z}_{>0}$. Then it is easy to check that $(H_{Airy})_{4n+1} = \frac{1}{4}y^2_{2n-1} = 0$, which implies $y_{2n-1} = 0$. Therefore $y_{2n-1} = 0$ for all $n = 1,2,3,...$. Finally, $y_1 = 0$ implies $(H_{Airy})_1 = \frac{1}{4}(x^1)^2 = 0$. Therefore $x^1 = 0$ and the lemma is proven.
\end{proof}

This Lemma tells us that any solution $w = \sum_{k=1}^\infty\left(y_kz^{-k}\frac{dz}{z} + kx^kz^k\frac{dz}{z}\right)$ to $(H_{Airy})_i(w) = 0$ with $x^1 \neq 0$ must consist of infinitely many non-zero $y_i$ and so $w$ does not belong to $W_{Airy}$. According to Theorem \ref{quantizationofltheorem} or Corollary \ref{genfunccorollary}, there exists $\psi_{L_{Airy}} := \exp\left(\sum_{g=0}^\infty \hbar^{g-1}(S_{Airy})_g\right)$ such that $(\psi_{L_{Airy}})^{-1}(\hat{H}_{Airy})_i(\psi_{L_{Airy}}) = 0$. In particular, we can still find the generating function $(S_{Airy})_0$ such that 
\begin{equation*}
(H_{Airy})_i\left(\{x^{i\in \mathbb{I}}\}, \left\{y_{i\in \mathbb{I}} := \frac{\partial (S_{Airy})_0}{\partial x^i}\right\}\right) = 0, \qquad i \in \mathbb{I},
\end{equation*}
although we will have $\frac{\partial S_0}{\partial x^i} \neq 0$ for infinitely many $i\in\mathbb{I}$. In this sense, $L_{Airy}$ can be thought of formally as a Lagrangian submanifold of $W_{Airy}$.

Nevertheless, we define $T_0L_{Airy} := z\mathbb{C}[[z]]\frac{dz}{z} \subset W_{Airy}$. It is clear that $T_0L_{Airy}$ is a Lagrangian subspace of $W_{Airy}$ and $V_{Airy}$ is a Lagrangian complement of $T_0L_{Airy}$.

\subsection{The Embedding of Discs and The Analytic Residue Constraints}\label{theembeddingofdiscssection}
Essentially, our work in this section is an analytic approach to \cite[Section 6]{kontsevich2017airy}. As we have seen, the quadratic Lagrangian submanifold $L_{Airy} \subset W_{Airy}$ defined by the residue constraints can only be interpreted formally and not as an actual submanifold. In this section, we will study the residue constraints from an analytic perspective. We show that there exists non-trivial solution $L^M_{Airy} := \{(H_{Airy})_{i=1,2,3,\cdots} = 0\}$ of residue constraints in a modified version of $W_{Airy}$, which will be introduced as $W^{\epsilon_,M}_{Airy}$. We also will see how the residue constraints are related to the deformation of \emph{discs} embedded in the foliated symplectic surface $(\mathbb{C}^2, dx\wedge dy, \{x = const\})$. Another motivation for us to take the analytic approach instead of the formal one is because it should make the connection between Airy structures and the prepotential $\mathfrak{F}_{\Sigma_0}$ more transparent as $\mathfrak{F}_{\Sigma_0}$ is a holomorphic function.

We organize the content of this section as follows. In Section \ref{spaceofdiscssubsection}, we introduce the space of analytic discs $Discs^M$ (similar to the space of formal discs $Discs$ in \cite[Section 6]{kontsevich2017airy}) and the vector bundle $W^{\epsilon, M}\rightarrow Discs^M$. Some interesting properties of $W^{\epsilon, M}$ will be noted. In Section \ref{analyticresconstraintssubsection}, we show how the residue constraints Airy structure can be considered analytically. For example, when writing $(H_{Airy})_i$ in the standard form (\ref{airystructuredefn}), all summations converge absolutely and there is no ambiguity in summation order. On the other hand, not all gauge transformations considered in Section \ref{gaugetransformationsubsection} are allowed in the analytic context, and we propose a more restricted class of gauge transformations. We also explore how some of the analytic solutions in $L^M_{Airy}$ can be expressed in terms of the genus zero part of the ATR with quantum Airy structure $(V_{Airy}, A_{Airy}, B_{Airy}, C_{Airy})$, which we have discussed previously as a formal power series. Finally, we define in Section \ref{embeddingofdiscssubsection} an embedding map $\Phi_{t_0} : Discs^{\epsilon, M}_{t_0}\rightarrow W^{\epsilon, M}_{t_0}$ analogous to $\pmb{\Phi}_{\Sigma_0} : \mathcal{B}_{\Sigma_0}\rightarrow \mathcal{H}_{\Sigma_0}$ defined in Section \ref{deformationofcurvessection} and show that the image of $\Phi_{t_0}$ is given by convergence Laurent series satisfying the residue constraints. 

\subsubsection{The space of discs}\label{spaceofdiscssubsection}
We begin by introducing the notion of analytic discs and the trivial weak symplectic vector bundle $(W^{\epsilon, M}\rightarrow Discs^M, \Omega,\nabla)$. Unlike the formal discs and symplectic vector bundle $W\rightarrow Discs$ introduced in \cite[Section 6]{kontsevich2017airy}, the convergence condition of series on some annulus will be enforced in our version. The upshot is that $(W^{\epsilon, M}\rightarrow Discs^M, \Omega, \nabla)$ is flat (Lemma \ref{wisflatlemma}), covariantly constant (Lemma \ref{symplecticformiscovariantlyconstantlemma}) and the parallel transport of $\xi_1 \in W^{\epsilon, M}_{t_1}$ to $W^{\epsilon, M}_{t_2}$, for any two points $t_1,t_2 \in Discs^M$ sufficiently close, is given by (Lemma \ref{paralleltransportinwlemma})
\begin{equation*}
    \xi_2 := \exp\left((a(t_2) - a(t_1))\mathcal{L}_{\frac{1}{2z}\partial_z}\right)(\xi_1) \in W^{\epsilon, M}_{t_2},
\end{equation*}
where $\exp(a\mathcal{L}_{\frac{1}{2z}\partial_z})$ is an operator exponential of the Lie derivative $\frac{1}{2z}\partial_z$. If $|a(t_2) - a(t_1)|$ is too large then $\xi_2$ may not have the desired convergence properties to be in $W^{\epsilon, M}_{t_2}$. This is why it is important for $t_1$ and $t_2$ to be sufficiently close. In fact, most of the contents from Lemma \ref{expoperatorlemma} until Lemma \ref{wisflatlemma} are dedicated to discussing the properties of $\exp(a\mathcal{L}_{\frac{1}{2z}\partial_z})$ and in particular, the precise notion for $t_1$ and $t_2$ to be considered `sufficiently close'. However, these details are technical and can be skipped in the first reading. On the other hand, Lemma \ref{paralleltransportinwlemma} is the key result that will be used in the construction of $\Phi_{t_0} : Discs^{\epsilon, M}_{t_0} \rightarrow W^{\epsilon, M}_{t_0}$ later in Section \ref{embeddingofdiscssubsection}. 

In the simplest terms, a disc is an embedding $t : \mathbb{D}\cong \mathbb{D}_t \hookrightarrow \mathbb{C}^2$ such that $\mathbb{D}_t$ has a tangency of order $1$ to a leaf of the foliation at $r := t(0)$, where $\mathbb{D} \subset \mathbb{C}$ is an open neighbourhood of $0 \in \mathbb{C}$.
Using the standard local coordinate $z := \sqrt{x - x(r)}$, in general, the embedding $t$ is a parameterization of $\mathbb{D}_t \subset \mathbb{C}^2$ which can be written in the form
\begin{equation*}
    t = (x = a + z^2, y = b_0 + b_1z + b_2z^2 + b_3z^3 + ...), \qquad a, b_k \in \mathbb{C}, 
\end{equation*}
where $b_1 \neq 0$. The study of the embedding of discs helps us understand at the local level the deformation of $\mathcal{F}$-transversal curves $\Sigma$ embedded inside a foliated symplectic surface $(S,\Omega_S,\mathcal{F})$. As explained in Section \ref{deformationofcurvessection}, given a curve $[\Sigma] \in \mathcal{B}$ then we can find a $(\mathcal{F},\Omega_S)$-chart $U_\alpha$ around any ramification point $r_\alpha, \alpha \in Ram$ of $\Sigma$. Because $U_\alpha$ is symplectomorphic to an open subset of the foliated symplectic surface $(\mathbb{C}^2, dx\wedge dy, \{x = const\})$, therefore $\Sigma \cap U_\alpha$ can be treated as a disc embedded in $(\mathbb{C}^2, dx\wedge dy, \{x = const\})$. We will compare the global deformation of $\Sigma \subset S$ to the deformation of discs embedded around each ramification point of $\Sigma$ later in Section \ref{localtoglobalsection}. 

Because discs are only defined locally, it follows that their deformation space is much larger than that of a $\mathcal{F}$-transversal curve $\Sigma$. Let us now define the space of discs:

\begin{definition}
    Let $\mathbb{D}_M := \{z \in \mathbb{C}\ |\ |z| < M\}$, then
    \begin{equation*}
        Discs^M := \left\{t : \mathbb{D}_M \xrightarrow{\cong}\mathbb{D}_{t,M}\hookrightarrow \mathbb{C}^2\ |\ t = \left(x = a(t) + z^2, y = \sum_{k=0}^\infty b_k(t)z^k\right), 
        \begin{array}{c}
             b_1(t) \neq 0 \text{ and } \\
             \sum_{k=0}^\infty b_k(t)z^k \text{ converges }\\
             \text{ for $|z| < \bar{M}$ for some $M < \bar{M}$.}
        \end{array}\right\}.
    \end{equation*}
\end{definition}
 
We can think of $Discs^M$ as a subset of the infinite dimensional vector space of infinite tuple of numbers $(a(t),b_0(t),b_1(t),\cdots)$. It is almost a vector subspace except for the condition $b_1(t) \neq 0$. Nevertheless, for $t_1, t_2 \in Discs^M$ and $\lambda_1, \lambda_2 \in \mathbb{C}$ such that $\lambda_1 b_1(t_1) + \lambda_2 b_1(t_2) \neq 0$ we define
\begin{equation*}
    \lambda_1 t_1 + \lambda_2 t_2 := \left(x = \lambda_1 a(t_1) + \lambda_2 a(t_2) + z^2, y = \sum_{k=0}^\infty (\lambda_1 b_k(t_1) + \lambda_2 b_k(t_2))z^k\right) \in Discs^M.
\end{equation*}
We define the \emph{kinematic tangent space} $T_tDiscs^M$ as guided by \cite{kriegl1997convenient}:
\begin{equation*}
    T_tDiscs^M := \left\{ v = A\frac{\partial}{\partial a} + \sum_{k=0}^\infty B_k\frac{\partial}{\partial b_k}\ |\ A, B_k \in \mathbb{C}, \begin{array}{c}
             \sum_{k=0}^\infty B_kz^k \text{ converges }\\
             \text{ for $|z| < \bar{M}$ for some $M < \bar{M}$.}
        \end{array}\right\}.
\end{equation*}
More precisely, $T_tDiscs^M$ is the set of $v \in Discs^M$ such that $\gamma(\tau) := t + \tau v$ defines a path $\gamma : (-\delta, \delta) \rightarrow Discs^M$ for some $\delta > 0$. The tangent space $T_tDiscs^M$ tells us information about the infinitesimal deformations of $\mathbb{D}_{t,M}$.

We are going to introduce the vector bundle $W^{\epsilon, M}\rightarrow Discs^M$ which can be thought of as a local counter-part of the vector bundle $\mathcal{H}\rightarrow \mathcal{B}$ defined in Section \ref{deformationofcurvessection}. A fiber of $W^{\epsilon, M}$ over each point $t \in Discs^M$ is given by $W^{\epsilon, M}_{Airy}$, the analytic version of $W_{Airy}$. Unlike $W_{Airy}$ which is a symplectic topological vector space, $W^{\epsilon, M}_{Airy}$ will be a \emph{weak symplectic vector space}.
\begin{definition}
A weak symplectic (topological) vector space $(W,\Omega)$ is a pair of a (topological) vector space $W$ and a (continuous) antisymmetric bilinear form $\Omega : W\times W \rightarrow \mathbb{C}$ such that $w \mapsto \Omega(w,.)$ is injective $W\hookrightarrow W^*$. We call $\Omega$ the \emph{weak symplectic form} of $W$.
\end{definition}
\begin{definition}
For any $\epsilon, M \in \mathbb{R}$ such that $0 < \epsilon < M$, the weak symplectic vector space $(W^{\epsilon,M}_{Airy},\Omega_{Airy})$ is given by
\begin{equation*}
    W_{Airy}^{\epsilon,M} :=  \left\{\xi \in z^{-1}\mathbb{C}[[z^{-1}]]\frac{dz}{z}\oplus z\mathbb{C}[[z]]\frac{dz}{z}\ |\ \begin{array}{c}
    \text{$\xi(z)$ converges when $\bar{\epsilon} < |z| < \bar{M}$}\\ 
    \text{for some $\bar{\epsilon} < \epsilon < M < \bar{M}$}\end{array}
    \right\}.
\end{equation*}
and the symplectic form $\Omega_{Airy}$ is given by:
\begin{equation*}
    \Omega_{Airy}(\xi_1,\xi_2) := Res_{z=0}\left(\xi_1\int\xi_2\right).
\end{equation*}
The definition of the symplectic form $\Omega_{Airy}$ is exactly the same as the one on $W_{Airy}$, hence we are denoting them using the same notation. Let $(W^{\epsilon, M}\rightarrow Discs^M, \Omega, \nabla)$ be the trivial weak symplectic vector bundle, $W^{\epsilon, M} := Discs^M\times W^{\epsilon, M}_{Airy}$. The fiber over each point $t \in Discs^M$ of $W^{\epsilon, M}$ is given by the weak symplectic vector space $(W^{\epsilon, M}_t, \Omega_t) := (W^{\epsilon, M}_{Airy}, \Omega_{Airy})$. The connection $\nabla$ is given by:
\begin{align*}\label{connectionofw}
    \nabla &:= d - (da)\mathcal{L}_{\frac{1}{2z}\partial_z} = da\frac{\partial}{\partial a} + \sum_{k=0}^\infty db_k\frac{\partial}{\partial b_k} - (da)\mathcal{L}_{\frac{1}{2z}\partial_z}\\
    &: \Gamma(Discs^M, W^{\epsilon,M})\rightarrow \Gamma(Discs^M, Hom(TDiscs^M,W^{\epsilon, M}))\numberthis
\end{align*}
where $\mathcal{L}_{\frac{1}{2z}\partial_z} \in End(W^{\epsilon,M}_t)$ denotes the Lie derivative operator.
\end{definition}

\begin{remark}
The trivial vector bundle $W^{\epsilon, M}\rightarrow Discs^M$ should be compared to the trivial vector bundle $W\rightarrow Discs$ introduced in \cite[Section 6]{kontsevich2017airy} where $Discs$ is the space of formal discs and each fiber of $W$ is given by $W_t := W_{Airy} = \{\eta \in \mathbb{C}((z))dz\ |\ Res_{z=0}\eta = 0\}$.
\end{remark}

\begin{remark}\label{laurentseriesgeneralremark}
The following are useful elementary facts when working with any $\xi \in W^{\epsilon, M}_{Airy}$. If a Laurent series $\sum_{k=-\infty}^{+\infty} a_kz^k$ converges for $\bar{\epsilon} < |z| < \bar{M}$ then it converges absolutely and uniformly for $\epsilon < |z| < M$ for any $\epsilon > \bar{\epsilon}$ and $M < \bar{M}$. From the uniform convergence, it follows that the integration of a Laurent series can be done term-by-term. Similarly, using the Cauchy integral formula, we find that the differentiation of a Laurent series can also be done term-by-term.
\end{remark}

\begin{remark}
Let us explain why the symplectic form $\Omega_{Airy} : W^{\epsilon, M}_{Airy}\times W^{\epsilon, M}_{Airy}\rightarrow \mathbb{C}$ is well-defined. If $\xi_2 = \sum_{k=1}^\infty\left((y_2)_kz^{-k}\frac{dz}{z} + (x_2)^kkz^k\frac{dz}{z}\right)$ converges to a holomorphic differential on an annulus $\bar{\epsilon} < |z| < \bar{M}$ with $\bar{\epsilon} < \epsilon < M <\bar{M}$ then the indefinite integral $\int \xi_2$ converges to a holomorphic function on the same annulus. It follows that $\xi_1\int \xi_2$ is holomorphic on some annulus containing $\epsilon < |z| < M$ and hence its residue is well-defined as a coefficient of $\frac{dz}{z}$ in the Laurent expansion of $\xi_1\int \xi_2$. The integration constant arising from $\int \xi_2$ will not introduce any ambiguity in $Res_{z=0}(\xi_1\int \xi_2)$ because the residue of $\xi_1$ is zero.
\end{remark}

\begin{remark}\label{topologyofwremark}
It is possible to assign a topology to $W^{\epsilon, M}$. For example, we can equip $Discs^M$ and $W^{\epsilon, M}$ with the topology induced from the following norms:
\begin{equation*}
    \Vert t \Vert_{Discs^M} := \sqrt{|a(t)|^2 + \left(\sup_{\epsilon < |z| < M}\left|\sum_{k=0}^\infty b_k(t)z^k\right|\right)^2}, \qquad \Vert \xi \Vert_{W^{\epsilon, M}} := \sup_{\epsilon < |z| < M}\left|\xi(z)\frac{z}{dz}\right|
\end{equation*}
and equip $W^{\epsilon, M} := Discs^M\times W^{\epsilon, M}_{Airy}$ with the product topology. The topology of $W^{\epsilon, M}$ will play no role later. Note that for $W^{\epsilon, M}_{Airy}$ to be a weak symplectic vector space, a topology is not needed. The stronger condition of symplectic requires a topology because an infinite-dimensional vector space is always strictly smaller than its algebraic dual, hence a topology is needed for the condition $W\cong W^{*}$ to be satisfied. The fact that $W^{\epsilon, M}_{Airy}$ is weak symplectic instead of symplectic will not lead to any major consequences. 
\end{remark}

\begin{remark}\label{discsglobalsectionremark}
We follow \cite{kriegl1997convenient, lang2012differential} in defining a connection $\nabla$ (as given in (\ref{connectionofw})) on an infinite-dimensional vector bundle over an infinite-dimensional manifold. Let us explain more precisely the meaning of $\Gamma(Discs^M, W^{\epsilon, M})$ and $\Gamma(Discs^M, Hom(TDiscs^M, W^{\epsilon, M}))$ as follows. 

We call a section $\xi$ of $W^{\epsilon, M}$ differentiable if $\iota_{v}d\xi = A\frac{\partial \xi}{\partial a} + \sum_{k=0}^\infty B_k\frac{\partial \xi}{\partial b_k} \in W^{\epsilon, M}$ for all $v := A\frac{\partial}{\partial a} + \sum_{k=0}^\infty B_k\frac{\partial}{\partial b_k} \in T_tDiscs^M$ and call $\xi$ smooth if it is infinitely differentiable. We define $\Gamma(Discs^M, W^{\epsilon, M})$ to be the set of smooth global sections of $W^{\epsilon, M}$. Let $\Gamma(Discs^M, TDiscs^M)$ be the set of sections $t \mapsto A(t)\frac{\partial}{\partial a} + \sum_{k=0}^\infty B_k(t)\frac{\partial}{\partial b_k} \in T_tDiscs^M$ such that $\sum_{k=0}^\infty B_k(t)z^kdz \in \Gamma(Discs^M, W^{\epsilon, M})$. Finally, we let $\Gamma(Discs^M, Hom(TDiscs^M, W^{\epsilon, M}))$ be a set of linear maps $\Gamma(Discs^M, TDiscs^M)\rightarrow \Gamma(Discs^M, W^{\epsilon, M})$.

The term $\mathcal{L}_{\frac{1}{2z}\partial_z}$ in the definition of $\nabla$ naturally arises from the deformation of discs via the variation of $a$ keeping the coordinate $x$ of $\mathbb{C}^2$ constant. This is because $x = z^2 + a$. Therefore, the connection $\nabla$ of $W^{\epsilon, M}$ is indeed the local version of $\nabla_{GM}$ on $\mathcal{H}$.
\end{remark}

Let us examine the operator $\mathcal{L}_{\frac{1}{2z}\partial_z}$ more closely. In general, we have that $\mathcal{L}_{h(z)\partial_z}\in End(W^{\epsilon,M}_t) = End(W^{\epsilon,M}_{Airy})$ for any $h(z) \in z^{-1}\mathbb{C}[[z^{-1}]]\oplus \mathbb{C}[[z]]$ which converges absolutely for $\bar{\epsilon} < |z| < \bar{M}$ for some $\bar{\epsilon} < \epsilon < M < \bar{M}$. To see this, let $\xi = f(z)\frac{dz}{z} \in W^{\epsilon,M}_{Airy}$ be given such that $f(z)$ converges absolutely for $\bar{\bar{\epsilon}} < |z| < \bar{\bar{M}}$ for some $\bar{\bar{\epsilon}} < \epsilon < M < \bar{\bar{M}}$. Then the Lie derivative
\begin{equation}\label{liederivativeexample}
    \mathcal{L}_{h(z)\partial_z}\xi(z) = (d\iota_{h(z)\partial_z} + \iota_{h(z)\partial_z}d)(f(z)dz) = d(h(z)f(z))
\end{equation}
is holomorphic for $\max(\bar{\epsilon},\bar{\bar{\epsilon}}) < |z| < \min(\bar{M}, \bar{\bar{M}})$. The Laurent series expansion of $\mathcal{L}_{h(z)\partial_z}\xi(z)$ also will not have the $\frac{dz}{z}$ term because $\mathcal{L}_{h(z)\partial_z}\xi(z) = d(h(z)f(z))$ is a total differential and we have $\oint_{|z| = \epsilon}\mathcal{L}_{h(z)\partial_z}\xi(z) = 0$. This shows us that $\mathcal{L}_{h(z)\partial_z}\xi(z) \in W^{\epsilon,M}_{Airy}$.

For any $a \in \mathbb{C}$, we also define an exponential operator $\exp(a\mathcal{L}_{h(z)\partial_z})$ by
\begin{equation}\label{expoperatordefn}
    \exp(a\mathcal{L}_{h(z)\partial_z})(\xi) := \sum_{k=0}^\infty \frac{a^k}{k!}(\mathcal{L}_{h(z)\partial_z})^k(\xi), \qquad \xi \in W^{\epsilon,M}_{Airy}.
\end{equation}
The operator $\exp(a\mathcal{L}_{h(z)\partial_z})$ can be interpreted as a flow after `time' $a \in \mathbb{C}$ generated by $\mathcal{L}_{h(z)\partial_z}$.
Note that although $\mathcal{L}_{h(z)\partial_z} \in End(W^{\epsilon,M}_{Airy})$, its exponential $\exp(a\mathcal{L}_{h(z)\partial_z})$ will not be in $End(W^{\epsilon,M}_{Airy})$ in general. For instance, the flow $\exp(a\mathcal{L}_{h(z)\partial_z})\xi$ of $\xi \in W^{\epsilon,M}_{Airy}$ can fail to have the desired convergence properties and `fall out of' $W^{\epsilon,M}_{Airy}$. Let us consider the case where $h(z) := z^k, k \in \mathbb{Z}$ in the following

\begin{lemma}\label{expoperatorlemma}
    Given $\xi = f(z)dg(z) \in W^{\epsilon,M}_{Airy}$ converges absolutely for $\bar{\epsilon} < |z| < \bar{M}$ where $\bar{\epsilon} < \epsilon < M < \bar{M}$. Then for $a \in \mathbb{C}$ with a sufficiently small $|a|$:
    \begin{equation*}
        |a| < \begin{cases}
        \displaystyle\min\left(\frac{\epsilon^{1-k} - \bar{\epsilon}^{1-k}}{1-k},\frac{\bar{M}^{1-k} - M^{1-k}}{1-k}\right), &\qquad k \neq 1, \\
        \displaystyle\min\left(\log\frac{\epsilon}{\bar{\epsilon}}, \log\frac{\bar{M}}{M}\right), &\qquad k = 1
        \end{cases}
    \end{equation*}
    we have
    \begin{equation*}
        \exp(a\mathcal{L}_{z^k\partial_z})\xi(z) = \left\{\begin{aligned}
        &f\left((z^{1-k} + (1-k)a)^{\frac{1}{1- k}}\right)dg\left((z^{1-k} + (1-k)a)^{\frac{1}{1-k}}\right) &, k \neq 1\\
        &f(e^az)dg(e^az) &, k = 1
        \end{aligned}\right\} \in W^{\epsilon,M}_{Airy}.
    \end{equation*}
    In fact, $\exp(a\mathcal{L}_{z^k\partial_z})\xi(z)$ converges absolutely for 
    \begin{equation*}
        \left\{\begin{aligned} (\bar{\epsilon}^{1-k} + (1-k)|a|)^{\frac{1}{1-k}} &< |z| < (\bar{M}^{1 - k} - (1-k)|a|)^{\frac{1}{1-k}} &, k \neq 1\\
        \bar{\epsilon}|e^a|^{-1} &< |z| < \bar{M}|e^a|^{-1} &, k = 1
        \end{aligned}\right\}.
    \end{equation*}
\end{lemma}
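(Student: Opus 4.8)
The plan is to verify the claimed closed form by recognizing $\mathcal{L}_{z^k\partial_z}$ as the infinitesimal generator of a change of the variable $z$, so that $\exp(a\mathcal{L}_{z^k\partial_z})$ is the corresponding finite substitution, and then to check that for small $|a|$ this substitution genuinely maps the annulus of convergence to a nearby annulus. First I would recall from \eqref{liederivativeexample} that for $\xi = f(z)\,dg(z)$ (a total-differential presentation, which by Remark~\ref{laurentseriesgeneralremark} every element of $W^{\epsilon,M}_{Airy}$ admits, since it is $\sum_k a_k z^k dz$ with no $\tfrac{dz}{z}$ term and can be integrated termwise) we have $\mathcal{L}_{z^k\partial_z}\xi = d\bigl(z^k f'(z)g'(z)\cdot(\text{stuff})\bigr)$; more efficiently, $\mathcal{L}_{z^k\partial_z}$ acts on the \emph{function} $h$ pulled back along $z$ by $h\mapsto z^k h'(z)$, i.e. it is the vector field $z^k\partial_z$ acting as a derivation. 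The key computational step is then: define $u = u(z)$ by $u^{1-k} = z^{1-k} + (1-k)a$ for $k\neq 1$ (resp. $u = e^a z$ for $k=1$); this is exactly the time-$a$ flow of the ODE $\dot z = z^k$. One checks that $\exp(a\mathcal{L}_{z^k\partial_z})$ applied to a function $\phi(z)$ equals $\phi(u(z))$ by noting both sides satisfy the transport equation $\partial_a F = z^k \partial_z F$ with $F|_{a=0} = \phi$, and invoking uniqueness of the analytic solution (equivalently, comparing Taylor coefficients in $a$, which is legitimate because the series \eqref{expoperatordefn} converges termwise on the relevant annulus). Applying this to $f$ and to $g$ separately and using that the Lie derivative is a derivation commuting with $d$ gives the stated formula for $\exp(a\mathcal{L}_{z^k\partial_z})\xi$.

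Next I would establish the convergence/annulus claim, which is really the analytic heart of the statement. The substitution $z\mapsto u(z) = (z^{1-k} + (1-k)a)^{1/(1-k)}$ must carry the closed annulus $\{\bar\epsilon' \le |z| \le \bar M'\}$ for suitable $\bar\epsilon < \bar\epsilon' < \epsilon$, $M < \bar M' < \bar M$ into the open annulus $\{\bar\epsilon < |z| < \bar M\}$ on which $f,g$ converge, so that the composition $f\circ u$, $g\circ u$ converge (absolutely, uniformly on compacta) there. The bound on $|a|$ is designed precisely so that $|u(z)|$ stays between $\bar\epsilon$ and $\bar M$: for the inner radius, $|u(z)|^{1-k} \le |z|^{1-k} + |1-k|\,|a|$, and requiring $\epsilon^{1-k} + |1-k|\,|a| < \bar M^{1-k}$ is (for $k<1$, where $1-k>0$) one of the listed inequalities, while the reversed inequality at the inner edge gives the other; the $k>1$ case and the $k=1$ logarithmic case are handled the same way after tracking which way the exponents point. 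I would spell out the two cases $k<1$ and $k>1$ so the direction of the inequalities $|z|^{1-k} \gtrless M^{1-k}$ is unambiguous, and note that the stated convergence annulus $(\bar\epsilon^{1-k} + (1-k)|a|)^{1/(1-k)} < |z| < (\bar M^{1-k} - (1-k)|a|)^{1/(1-k)}$ is exactly the preimage under $u$ of $\{\bar\epsilon<|z|<\bar M\}$, which contains $\{\epsilon<|z|<M\}$ by the smallness of $|a|$, so indeed $\exp(a\mathcal{L}_{z^k\partial_z})\xi \in W^{\epsilon,M}_{Airy}$. Finally I would remark that the residue of the result vanishes automatically, since $\exp(a\mathcal{L}_{z^k\partial_z})\xi$ is again a total differential $d(f\circ u \cdot g\circ u)$ minus correction terms that are themselves total differentials, hence has no $\tfrac{dz}{z}$ component, confirming membership in $W^{\epsilon,M}_{Airy}$ (no residue) rather than merely in $\mathbb{C}((z))dz$.

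The main obstacle, I expect, is the careful bookkeeping of which branch of $(\,\cdot\,)^{1/(1-k)}$ is meant and why the map $u$ is well-defined and single-valued on the relevant annulus: one needs $z^{1-k} + (1-k)a$ to stay away from $0$ and within a region where the power is holomorphic, and this is exactly what the smallness hypothesis on $|a|$ guarantees (it keeps $u$ close to the identity). A clean way to finesse this is to argue entirely at the level of the ODE flow $\dot z = z^k$, $z(0)=z_0$: standard existence/uniqueness for analytic ODEs gives a holomorphic flow $z_0 \mapsto z(a;z_0)$ defined for $|a|$ below the stated bounds and all $z_0$ in the annulus, with $z(a;z_0)$ having exactly the claimed modulus bounds; then $\exp(a\mathcal{L}_{z^k\partial_z})$ is pullback along this flow by the general correspondence between Lie derivatives along a vector field and its flow, and the explicit formula for $z(a;z_0)$ is obtained by separation of variables. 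I would present the proof in that order — (i) identify the flow, (ii) integrate it explicitly, (iii) bound its modulus to get the convergence annulus, (iv) conclude the formula for $\exp(a\mathcal{L}_{z^k\partial_z})\xi$ by naturality of Lie derivatives under the derivation/flow correspondence — since it isolates the one genuinely delicate point (the modulus estimate) and makes everything else formal.
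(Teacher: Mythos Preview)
Your proposal is correct and takes essentially the same approach as the paper: both identify $\exp(a\mathcal{L}_{z^k\partial_z})$ as pullback along the flow of the vector field $z^k\partial_z$, derive the explicit substitution $z\mapsto (z^{1-k}+(1-k)a)^{1/(1-k)}$, and then verify the annulus of convergence and the absence of a $\tfrac{dz}{z}$ term. The only cosmetic difference is that the paper linearizes first via the coordinate change $w=z^{1-k}$ (so the vector field becomes $(1-k)\partial_w$ and the exponential is the familiar shift operator), whereas you work directly with the ODE $\dot z=z^k$ and the transport equation; these are two phrasings of the same argument.
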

\begin{proof}
For $k \neq 1$ we consider a $z$-plane as a $|k-1|$-fold cover of a $w$-plane where $w := z^{1-k}$. We have $z^k\partial_z = (1-k)\partial_w$ and $\mathcal{L}_{z^k\partial_z} = \mathcal{L}_{(k-1)\partial_w} = (k-1)\mathcal{L}_{\partial_w}$. For any holomorphic functions $f_1 = f_1(w),f_2 = f_2(w)$ we have $\mathcal{L}_{\partial_w}(f_1(w)df_2(w)) = (\mathcal{L}_{\partial_w}f_1(w))df_2(w) + f_1(w)d(\mathcal{L}_{\partial_w}f_2(w)) = (\partial_w f_1(w))df_2(w) + f_1(w)d(\partial_wf_2(w))$. We recognize $\exp((k-1)a\mathcal{L}_{\partial_w})$ as the shift operator $\exp(\mathcal{L}_{(1-k)a\partial_w})(f_1(w)df_2(w)) = f_1(w + (1-k)a)df_2(w + (1-k)a)$. Choose a branch for $z$ and write $\xi = f(z)dg(z)$ as a differential form on $w$-plane $\xi = f(w^{\frac{1}{1 - k}})dg(w^{\frac{1}{1 - k}})$, then
\begin{align*}
    \exp(a\mathcal{L}_{z^k\partial_z})(f(z)dg(z)) &= f\left((w + (1-k)a)^\frac{1}{1-k}\right)dg\left((w + (1-k)a)^\frac{1}{1-k}\right) \\
    &= f\left((z^{1-k} + (1-k)a)^\frac{1}{1-k}\right)dg\left((z^{1-k} + (1-k)a)^\frac{1}{1-k}\right) =: f_a(z)dg_a(z).
\end{align*}
We have that $f_a(z)dg_a(z)$ would be holomorphic on the chosen branch-cut whenever $\bar{\epsilon}<|(z^{1-k}+(1-k)a)^{\frac{1}{1-k}}| < \bar{M}$ or equivalently $(\bar{\epsilon}^{1-k} + (1 - k)|a|)^{\frac{1}{1-k}} < |z| < (\bar{M}^{1 - k} - (1 - k)|a|)^{\frac{1}{1-k}}$. The condition $|a| < \min\left(\frac{\epsilon^{1-k} - \bar{\epsilon}^{1-k}}{1-k},\frac{\bar{M}^{1-k} - M^{1-k}}{1-k}\right)$ ensures that $(\bar{\epsilon}^{1-k} + (1-k)|a|)^{\frac{1}{1-k}} < \epsilon$ and $(\bar{M}^{1 - k} - (1-k)|a|)^{\frac{1}{1-k}} > M$. On the other hand, $(z^{1-k} + (1 - k)a)^{\frac{1}{1-k}} = z(1 + (1-k)az^{k-1})^{\frac{1}{1-k}}$ is a well-defined function and can be expanded as a Laurent series converges in $z$ for $(|1-k||a|)^\frac{1}{1-k} < |z|$ if $1 - k > 0$ and for $|z| < (|1-k||a|)^\frac{1}{1-k}$ if $1 - k < 0$. These conditions automatically follow if $(\bar{\epsilon}^{1-k} + (1-k)|a|)^{\frac{1}{1-k}} < |z| < (\bar{M}^{1 - k} - (1-k)|a|)^{\frac{1}{1-k}}$ since $(|1-k||a|)^{\frac{1}{1-k}} < (\bar{\epsilon}^{1-k} + (1-k)|a|)^{\frac{1}{1-k}}$ when $1 - k > 0$ and $(|1-k||a|)^{\frac{1}{1-k}} > (\bar{M}^{1-k} - (1-k)|a|)^{\frac{1}{1-k}}$ when $1 - k < 0$.

Since the series $\xi \in W^{\epsilon,M}_{Airy}$ doesn't contain the term $\frac{dz}{z}$, by substituting $z = w^{\frac{1}{1-k}}$ we also do not have the term $\frac{1}{1-k}\frac{dw}{w}$ in $f(w^{\frac{1}{1-k}})dg(w^{\frac{1}{1-k}})$. The operator $\exp(a\mathcal{L}_{z^k\partial_z}) = \exp((k-1)a\mathcal{L}_{\partial_w})$ shifts $w$ to $w + (k-1)a$ and will not create the term $\frac{dw}{w}$ therefore, we do not have the term $\frac{dz}{z}$ in the Laurent series of $\exp(a\mathcal{L}_{z^k\partial_z})\xi(z)$.

For $k = 1$, we note that $(\mathcal{L}_{z\partial_z})^k(z) = z$ and $\exp(a\mathcal{L}_{z\partial_z})(\xi(z)) = f(e^az)dg(e^az)$ follows directly for (\ref{expoperatordefn}). Then we can see that if $|a| < \min\left(\log\frac{\epsilon}{\bar{\epsilon}}. \log\frac{\bar{M}}{M}\right)$ we have $\bar{\epsilon} < |e^az| < \bar{M}$ whenever $\bar{\epsilon}|e^a|^{-1} < \epsilon < |z| < M < \bar{M}|e^a|^{-1}$
\end{proof}

As an immediate consequence of Lemma \ref{expoperatorlemma} we have the following useful statement:
\begin{corollary}\label{expoperatorcorollary}
Let $k\neq 1$, $a \in \mathbb{C}$ and define
 \begin{equation*}
    \epsilon(a) := (\epsilon^{1-k} - (1-k)|a|)^{\frac{1}{1-k}}, \qquad M(a) := (M^{1 - k} + (1-k)|a|)^{\frac{1}{1-k}}.
\end{equation*}
Then we have $\exp\left(a\mathcal{L}_{z^k\partial_z}\right) \in Hom_{\mathbb{C}}\left(W^{\epsilon(a), M(a)}_{Airy}, W^{\epsilon, M}_{Airy}\right)$.
\end{corollary}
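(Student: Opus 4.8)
The plan is to deduce Corollary \ref{expoperatorcorollary} directly from the convergence estimate in Lemma \ref{expoperatorlemma}, so essentially no new analysis is needed. First I would fix $k\neq 1$ and an arbitrary $a\in\mathbb{C}$, and take any $\xi\in W^{\epsilon(a),M(a)}_{Airy}$. By definition of $W^{\epsilon(a),M(a)}_{Airy}$, there exist $\bar\epsilon<\epsilon(a)<M(a)<\bar M$ such that $\xi$ converges absolutely on the annulus $\bar\epsilon<|z|<\bar M$. The idea is then to apply Lemma \ref{expoperatorlemma} with the roles of $\epsilon,M$ there played by $\epsilon(a),M(a)$: one checks that the smallness hypothesis on $|a|$ in Lemma \ref{expoperatorlemma} is automatically satisfied for this choice, because by the very definitions
\begin{equation*}
    \frac{\epsilon(a)^{1-k}-\bar\epsilon^{1-k}}{1-k} = \frac{\epsilon^{1-k}-(1-k)|a|-\bar\epsilon^{1-k}}{1-k} = |a| + \frac{\epsilon^{1-k}-\bar\epsilon^{1-k}}{1-k} > |a|,
\end{equation*}
and similarly $\frac{\bar M^{1-k}-M(a)^{1-k}}{1-k}=\frac{\bar M^{1-k}-M^{1-k}}{1-k}-(-|a|)$... — here I would be careful with signs, separating the cases $k<1$ (so $1-k>0$) and $k>1$ (so $1-k<0$), since the direction of all the inequalities $\epsilon^{1-k}$ vs.\ $\bar\epsilon^{1-k}$ flips. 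In both cases the conclusion is the same: $|a|$ lies strictly below the bound required by Lemma \ref{expoperatorlemma} relative to $(\bar\epsilon,\epsilon(a),M(a),\bar M)$.

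Having verified the hypothesis, Lemma \ref{expoperatorlemma} yields that $\exp(a\mathcal{L}_{z^k\partial_z})\xi$ is given by the explicit substitution formula and, more importantly for us, converges absolutely on
\begin{equation*}
    (\bar\epsilon^{1-k}+(1-k)|a|)^{\frac{1}{1-k}} < |z| < (\bar M^{1-k}-(1-k)|a|)^{\frac{1}{1-k}}.
\end{equation*}
The next step is to observe that $\epsilon(a)^{1-k}+(1-k)|a| = \epsilon^{1-k}$ and $M(a)^{1-k}-(1-k)|a| = M^{1-k}$, hence since $\bar\epsilon<\epsilon(a)$ and $M(a)<\bar M$ we get (again splitting into $k<1$ and $k>1$ to track whether $x\mapsto x^{1/(1-k)}$ is increasing or decreasing) the inner radius of the new annulus is strictly below $\epsilon$ and the outer radius strictly above $M$. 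Also, Lemma \ref{expoperatorlemma} already records that the $\frac{dz}{z}$ term is absent from $\exp(a\mathcal{L}_{z^k\partial_z})\xi$, so it genuinely lies in $z^{-1}\mathbb{C}[[z^{-1}]]\frac{dz}{z}\oplus z\mathbb{C}[[z]]\frac{dz}{z}$ with the required convergence on an annulus strictly containing $\{\epsilon\le|z|\le M\}$. This says precisely $\exp(a\mathcal{L}_{z^k\partial_z})\xi\in W^{\epsilon,M}_{Airy}$. Finally, $\mathbb{C}$-linearity of $\exp(a\mathcal{L}_{z^k\partial_z})$ is immediate from its definition as a power series in the linear operator $\mathcal{L}_{z^k\partial_z}$, so the map is a morphism in $\mathrm{Hom}_{\mathbb{C}}(W^{\epsilon(a),M(a)}_{Airy},W^{\epsilon,M}_{Airy})$.

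I expect the only real subtlety to be bookkeeping with the exponents: the map $t\mapsto t^{1/(1-k)}$ is order-reversing when $k>1$, so every inequality "inner radius $<\epsilon$" and "outer radius $>M$" must be rechecked in that regime, and one must make sure $\epsilon(a)$ and $M(a)$ are well-defined real numbers (i.e. that $\epsilon^{1-k}-(1-k)|a|$ and $M^{1-k}+(1-k)|a|$ are positive) — for $k<1$ the second is automatic but the first needs $|a|$ small, while for $k>1$ the situation is reversed; in either case the statement is implicitly understood for $|a|$ in the admissible range, matching the hypothesis of Lemma \ref{expoperatorlemma}. This is routine but sign-sensitive, and is where I would slow down and write out both cases explicitly rather than gesture at "by symmetry".
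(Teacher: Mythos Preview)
Your approach is correct and matches the paper's: this corollary is indeed nothing more than a direct application of Lemma~\ref{expoperatorlemma}. However, there is a sign slip and a slight mis-targeting in your first paragraph that is worth fixing.

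In the displayed equation you compute
\[
\frac{\epsilon(a)^{1-k}-\bar\epsilon^{1-k}}{1-k}=\frac{\epsilon^{1-k}-(1-k)|a|-\bar\epsilon^{1-k}}{1-k},
\]
but $\frac{-(1-k)|a|}{1-k}=-|a|$, not $+|a|$. So the right-hand side is $\frac{\epsilon^{1-k}-\bar\epsilon^{1-k}}{1-k}-|a|$, and there is no reason this exceeds $|a|$ in general. In other words, the smallness hypothesis of Lemma~\ref{expoperatorlemma} with $\epsilon,M$ replaced by $\epsilon(a),M(a)$ is \emph{not} automatically satisfied.

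The cleanest repair is not to substitute at all: apply Lemma~\ref{expoperatorlemma} with the \emph{original} $\epsilon,M$. Since $\epsilon(a)<\epsilon$ and $M(a)>M$ (check both signs of $1-k$), the assumption $\bar\epsilon<\epsilon(a)<M(a)<\bar M$ gives $\bar\epsilon<\epsilon<M<\bar M$. Then from $\bar\epsilon<\epsilon(a)$ one gets (again in both sign regimes) $\frac{\epsilon^{1-k}-\bar\epsilon^{1-k}}{1-k}>|a|$, and similarly for the $M$ side, so the smallness hypothesis holds and the lemma delivers $\exp(a\mathcal L_{z^k\partial_z})\xi\in W^{\epsilon,M}_{Airy}$ directly. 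Your second paragraph, which instead reads off the convergence annulus from the ``In fact'' clause and checks it strictly contains $\{\epsilon\le|z|\le M\}$, is also fine and reaches the same conclusion; it just relies on observing (from the proof of the lemma) that the annulus statement does not actually need the smallness bound.
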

\begin{lemma}\label{liederivativesplemma}
Let $k \in \mathbb{Z}$ then $\mathcal{L}_{z^k\partial_z} \in sp(W^{\epsilon,M}_{Airy})$. 
Moreover, if $\xi_1,\xi_2 \in W^{\epsilon,M}_{Airy}$ and $a \in \mathbb{C}$ are given such that $\exp(a\mathcal{L}_{z^k\partial_z})\xi_1, \exp(a\mathcal{L}_{z^k\partial_z})\xi_2 \in W^{\epsilon,M}_{Airy}$ then
\begin{equation*}
    \Omega_{Airy}\left(\exp(a\mathcal{L}_{z^k\partial_z})\xi_1, \exp(a\mathcal{L}_{z^k\partial_z})\xi_2\right) = \Omega_{Airy}(\xi_1,\xi_2).
\end{equation*}
\end{lemma}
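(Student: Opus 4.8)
The claim has two parts, and the plan is to prove the second (the invariance of $\Omega_{Airy}$ under $\exp(a\mathcal{L}_{z^k\partial_z})$) by first establishing the infinitesimal version, namely $\mathcal{L}_{z^k\partial_z} \in sp(W^{\epsilon,M}_{Airy})$, and then integrating it along the flow. For the infinitesimal statement, I would show that for all $\xi_1,\xi_2 \in W^{\epsilon,M}_{Airy}$,
\begin{equation*}
    \Omega_{Airy}\left(\mathcal{L}_{z^k\partial_z}\xi_1, \xi_2\right) + \Omega_{Airy}\left(\xi_1, \mathcal{L}_{z^k\partial_z}\xi_2\right) = 0.
\end{equation*}
The natural tool is Cartan's magic formula together with the fact that $\Omega_{Airy}$ is, up to the residue pairing, built from the exterior derivative. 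Concretely, using $\mathcal{L}_{z^k\partial_z}\xi = d(z^k f(z))$ when $\xi = f(z)\,dz$ (as computed in (\ref{liederivativeexample})), and writing $\int\xi_2$ for the term-by-term antiderivative, one rewrites $\Omega_{Airy}(\mathcal{L}_{z^k\partial_z}\xi_1,\xi_2) + \Omega_{Airy}(\xi_1,\mathcal{L}_{z^k\partial_z}\xi_2)$ as a residue of a total differential $d\big(z^k\, f_1(z)\!\int\!\xi_2\big)$ or similar, whose residue vanishes because the residue of any exact meromorphic one-form on the annulus is zero. Care is needed with the integration constant in $\int\xi_2$, but it drops out since $\mathrm{Res}_{z=0}\xi_1 = 0$; this is the same observation used repeatedly in the remarks preceding the lemma.

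For the global statement, I would argue as follows. Fix $\xi_1,\xi_2 \in W^{\epsilon,M}_{Airy}$ and $a\in\mathbb{C}$ with $\exp(a\mathcal{L}_{z^k\partial_z})\xi_i \in W^{\epsilon,M}_{Airy}$ for $i=1,2$. Consider the function $\tau\mapsto \Omega_{Airy}\big(\exp(\tau\mathcal{L}_{z^k\partial_z})\xi_1,\exp(\tau\mathcal{L}_{z^k\partial_z})\xi_2\big)$ for $\tau$ ranging over the segment $[0,a]$ (or a suitable complex path on which the flowed vectors stay in $W^{\epsilon,M}_{Airy}$; Lemma \ref{expoperatorlemma} and Corollary \ref{expoperatorcorollary} give explicit bounds). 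This function is holomorphic in $\tau$ because the flow acts holomorphically (from the explicit substitution formulas in Lemma \ref{expoperatorlemma}) and the residue pairing is continuous/holomorphic in the coefficients; its $\tau$-derivative is $\Omega_{Airy}(\mathcal{L}_{z^k\partial_z}\eta_1,\eta_2) + \Omega_{Airy}(\eta_1,\mathcal{L}_{z^k\partial_z}\eta_2)$ with $\eta_i = \exp(\tau\mathcal{L}_{z^k\partial_z})\xi_i$, which vanishes by the infinitesimal statement. Hence the function is constant, and evaluating at $\tau = 0$ versus $\tau = a$ gives the result. Alternatively, and perhaps more cleanly for all $k$ at once, one can use the explicit change-of-variable description: for $k\neq 1$ the flow is the pullback by the biholomorphism $z\mapsto (z^{1-k}+(1-k)a)^{1/(1-k)}$ of the annulus, and since $\Omega_{Airy}$ is defined via a residue (a contour integral of $\xi_1\int\xi_2$), invariance under pullback by a biholomorphism fixing the relevant homology class of the contour is immediate by the change-of-variables formula for contour integrals; the $k=1$ case is the rescaling $z\mapsto e^a z$, handled identically.

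The main obstacle is bookkeeping about domains of convergence rather than anything conceptual: one must ensure that at every point of the flow the intermediate objects $\exp(\tau\mathcal{L}_{z^k\partial_z})\xi_i$ genuinely lie in some $W^{\epsilon',M'}_{Airy}$ on which the residue pairing with the other flowed vector is defined, so that the holomorphy-in-$\tau$ argument (or the change-of-variables argument) is legitimate. This is exactly what Lemma \ref{expoperatorlemma} and Corollary \ref{expoperatorcorollary} are set up to control, so I would invoke them to pick a common annulus $\bar\epsilon < |z| < \bar M$ shrinking continuously with $\tau$ on which both flowed series converge, and note that the residue only ever needs a single fixed circle $|z| = \rho$ inside all these annuli. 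With that in hand the identity $\Omega_{Airy}(\exp(a\mathcal{L}_{z^k\partial_z})\xi_1,\exp(a\mathcal{L}_{z^k\partial_z})\xi_2) = \Omega_{Airy}(\xi_1,\xi_2)$ follows. The statement $\mathcal{L}_{z^k\partial_z}\in sp(W^{\epsilon,M}_{Airy})$ is then precisely the infinitesimal fact proved along the way, so both halves of the lemma are covered.
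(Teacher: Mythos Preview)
Your proposal is correct and follows essentially the same strategy as the paper: establish the infinitesimal identity $\Omega_{Airy}(\mathcal{L}_{z^k\partial_z}\xi_1,\xi_2)+\Omega_{Airy}(\xi_1,\mathcal{L}_{z^k\partial_z}\xi_2)=0$, then differentiate $\tau\mapsto\Omega_{Airy}(\exp(\tau\mathcal{L}_{z^k\partial_z})\xi_1,\exp(\tau\mathcal{L}_{z^k\partial_z})\xi_2)$ and conclude it is constant. The only minor difference is in how the infinitesimal step is carried out: the paper verifies it on monomials $z^m\frac{dz}{z}$, $z^n\frac{dz}{z}$ by direct computation and then passes to general $\xi_i$ via term-by-term differentiation, whereas you argue more structurally by recognising the sum of residues as $\mathrm{Res}_{z=0}\,d\big(z^k f_1(z)\int\xi_2\big)=0$; both arguments are equivalent and equally short.
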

\begin{proof}
Using (\ref{liederivativeexample}) it is straightforward to check that
\begin{align*}
    \Omega_{Airy}\left(\mathcal{L}_{z^k\partial_z}z^m\frac{dz}{z}, z^n\frac{dz}{z}\right) &+ \Omega_{Airy}\left(z^m\frac{dz}{z}, \mathcal{L}_{z^k\partial_z}z^n\frac{dz}{z}\right)\\
    &= \Omega_{Airy}\left((m + k - 1)z^{m+k-1}\frac{dz}{z}, z^n\frac{dz}{z}\right) + \Omega_{Airy}\left(z^m\frac{dz}{z}, (n + k - 1)z^{n+k-1}\frac{dz}{z}\right)\\
    &= \frac{m + k - 1}{n}\delta_{-m-k+1,n} + \delta_{-m,n+k-1} = 0.
\end{align*}
Since differentiation of a power series can be done term-by-term (Remark \ref{laurentseriesgeneralremark}), it follows that 
\begin{equation*}
    \Omega_{Airy}\left(\mathcal{L}_{z^k\partial_z}\xi_1, \xi_2\right) + \Omega_{Airy}\left(\xi_1, \mathcal{L}_{z^k\partial_z}\xi_2\right) = 0
\end{equation*}
for any $\xi_1,\xi_2 \in W^{\epsilon, M}_{Airy}$.
Therefore $\mathcal{L}_{z^k\partial_z} \in sp(W^{\epsilon,M}_{Airy})$, proving the first part of the Lemma. Using Lemma \ref{expoperatorlemma} we check that $\frac{d}{da}\exp(a\mathcal{L}_{z^k\partial_z})\xi_i = \mathcal{L}_{z^k\partial_z}\exp(a\mathcal{L}_{z^k\partial_z})\xi_i, i = 1,2$. It follows that
\begin{align*}
    \frac{d}{da}\Omega_{Airy}&\left(\exp(a\mathcal{L}_{z^k\partial_z})\xi_1, \exp(a\mathcal{L}_{z^k\partial_z})\xi_2\right)\\
    &= \Omega_{Airy}\left(\mathcal{L}_{z^k\partial_z}\exp(a\mathcal{L}_{z^k\partial_z})\xi_1, \exp(a\mathcal{L}_{z^k\partial_z})\xi_2\right) + \Omega_{Airy}\left(\exp(a\mathcal{L}_{z^k\partial_z})\xi_1, \mathcal{L}_{z^k\partial_z}\exp(a\mathcal{L}_{z^k\partial_z})\xi_2\right)\\
    &= 0.
\end{align*}
Which means $\Omega_{Airy}\left(\exp(a\mathcal{L}_{z^k\partial_z})\xi_1, \exp(a\mathcal{L}_{z^k\partial_z})\xi_2\right)$ is constant and must equal to its value at $a = 0$ which is $\Omega_{Airy}(\xi_1,\xi_2)$.
\end{proof}

For this section, we will only need $\mathcal{L}_{z^k\partial_z}$, and its exponentiation when $k = -1$. According to Lemma \ref{expoperatorlemma}, given $\xi \in W^{\epsilon,M}_{Airy}$ converges absolutely for $\bar{\epsilon} < |z| < \bar{M}, \bar{\epsilon} < \epsilon < M < \bar{M}$ and given $a \in \mathbb{C}, |a| < \min(\epsilon^2 - \bar{\epsilon}^2, \bar{M}^2 - M^2, \bar{\epsilon}^2, \bar{M}^2)$ we have $\exp(a\mathcal{L}_{\frac{1}{2z}\partial_z})\xi(z) = f(\sqrt{z^2 + a})dg(\sqrt{z^2 + a}) \in W^{\epsilon,M}_{Airy}$ since it is holomorphic for $\sqrt{\bar{\epsilon}^2 + |a|} < |z| < \sqrt{\bar{M}^2 - |a|}$. In particular, it is straightforward to check that
\begin{equation}
    \exp(a\mathcal{L}_{\frac{1}{2z}\partial_z})(z) = z + \frac{a}{2z} + \sum_{k = 2}^\infty \frac{a^k}{k!}\frac{(-1)^{k-1}(2k-3)!!}{2^{k}z^{2k-1}},
\end{equation}
which can be recognized as a Laurent series expansion of $\sqrt{z^2 + a} = z\sqrt{1 + \frac{a}{z^2}}$ for $|z| > \epsilon > \sqrt{|a|}$.

We will now show that the vector bundle $(W^{\epsilon, M}\rightarrow Discs^M, \Omega, \nabla)$ is flat and the symplectic form $\Omega$ is $\nabla$-covariantly constant.

\begin{lemma}\label{wisflatlemma}
The vector bundle $W^{\epsilon,M} \rightarrow Discs^M$ with the connection $\nabla$ is flat.
\end{lemma}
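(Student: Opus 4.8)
The plan is to show that the curvature $R_\nabla = d_\nabla \circ d_\nabla$ of the connection $\nabla = d - (da)\,\mathcal{L}_{\frac{1}{2z}\partial_z}$ vanishes identically on $W^{\epsilon,M}\to Discs^M$. Since the bundle is trivial, $\nabla = d + \omega$ where $\omega = -(da)\,\mathcal{L}_{\frac{1}{2z}\partial_z}$ is an $End(W^{\epsilon,M}_{Airy})$-valued $1$-form on $Discs^M$, and the standard formula gives $R_\nabla = d\omega + \omega\wedge\omega$. So the whole statement reduces to checking these two terms separately.

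First I would compute $d\omega$. The coefficient of $\omega$ is $-\mathcal{L}_{\frac{1}{2z}\partial_z}$, which is a \emph{constant} endomorphism-valued function on $Discs^M$ (it does not depend on the point $t$, only on the fixed operator $\mathcal{L}_{\frac{1}{2z}\partial_z} \in End(W^{\epsilon,M}_{Airy})$ established in the discussion preceding Lemma \ref{expoperatorlemma}). Hence $d\omega = -\big(d(da)\big)\,\mathcal{L}_{\frac{1}{2z}\partial_z} = 0$ because $d\circ d = 0$ on the scalar $1$-form $da$. Second, $\omega\wedge\omega = (da\wedge da)\,\mathcal{L}_{\frac{1}{2z}\partial_z}\circ\mathcal{L}_{\frac{1}{2z}\partial_z} = 0$ since $da\wedge da = 0$. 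Therefore $R_\nabla = 0$ and $\nabla$ is flat. I would phrase this carefully in the infinite-dimensional setting using the definition of $\Gamma(Discs^M, Hom(TDiscs^M,W^{\epsilon,M}))$ and differentiability of sections spelled out in Remark \ref{discsglobalsectionremark}: for a smooth section $\xi$, $\nabla\xi = d\xi - (da)\,\mathcal{L}_{\frac{1}{2z}\partial_z}\xi$, and applying $d_\nabla$ again, the only surviving terms are the ones just analyzed, all of which vanish.

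The one point that genuinely needs care — and which I expect to be the main (mild) obstacle — is justifying that the formal manipulation $d_\nabla\circ d_\nabla = d\omega + \omega\wedge\omega$ is legitimate here, i.e. that $\mathcal{L}_{\frac{1}{2z}\partial_z}$ really does map the relevant space of smooth sections to smooth sections and commutes with the exterior derivative $d$ on $Discs^M$ in the appropriate sense. This is where I would invoke that $\mathcal{L}_{\frac{1}{2z}\partial_z}$ acts term-by-term on Laurent series (Remark \ref{laurentseriesgeneralremark}) and is independent of the base point, so that $\partial/\partial a$, $\partial/\partial b_k$ all commute with it; consequently $d_\nabla(\iota_v\xi)$-type expressions expand exactly as in the finite-dimensional case and the two cross-terms in $d_\nabla^2$ cancel for the trivial reason that they each carry a factor $da\wedge da = 0$. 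With that dispatched, flatness follows, which is what the lemma asserts.
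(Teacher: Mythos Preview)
Your proposal is correct and takes essentially the same approach as the paper: both compute the curvature of $\nabla = d - (da)\,\mathcal{L}_{\frac{1}{2z}\partial_z}$ and observe it vanishes because $d(da)=0$ and $da\wedge da=0$, with the Lie derivative operator being constant over the base. The paper carries this out by applying $d_\nabla$ twice to a section of the form $s\xi$ and watching the terms cancel, whereas you package it via $R_\nabla = d\omega + \omega\wedge\omega$; these are the same computation.
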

\begin{proof}
We define the exterior covariant derivative 
\begin{equation*}
    d_{\nabla} : \Gamma\left(Discs^M, Hom\left(\bigwedge^rTDiscs^M, W^{\epsilon,M}\right)\right) \rightarrow \Gamma\left(Discs^M, Hom\left(\bigwedge^{r+1}TDiscs^M, W^{\epsilon,M}\right)\right)
\end{equation*}
by
\begin{equation*}
    d_\nabla \psi := (ds)\xi + (-1)^rs\wedge \nabla(\xi) = (ds)\xi - (-1)^rs\wedge da\mathcal{L}_{\frac{1}{2z}\partial_z}(\xi)
\end{equation*}
where $\psi = s\xi, s \in \Gamma(Discs^M, \Omega^r_{Discs^M})$ and $\xi \in W^{\epsilon,M}_{Airy}$, then extend by linearity. We note that $d_\nabla = \nabla$ for $r = 0$.
The condition that $\nabla$ is flat is the same as the vanishing of curvature tensor $R_\nabla = d_\nabla\circ d_\nabla = 0$ and it is easy to check that
\begin{equation*}
    d^2_\nabla \psi = (d^2s)\xi - (-1)^{r+1}(ds)\wedge da\mathcal{L}_{\frac{1}{2z}\partial_z}(\xi) - (-1)^rds\wedge da \mathcal{L}_{\frac{1}{2z}\partial_z}(\xi) + s\wedge da\wedge da (\mathcal{L}_{\frac{1}{2z}\partial_z})^2(\xi) = 0.
\end{equation*}
This concludes that $\nabla$ is flat.
\end{proof}

\begin{lemma}\label{paralleltransportinwlemma}
Pick $t_1, t_2 \in Discs^M$ such that $|a(t_1) - a(t_2)|$ is sufficiently small, then
the parallel transport of $\xi \in W^{\epsilon,M}_{t_1}$ to the fiber $W^{\epsilon,M}_{t_2}$ exists, path-independent and given by $\xi_{t_2} = \exp((a(t_2) - a(t_1))\mathcal{L}_{\frac{1}{2z}\partial_z})\xi$. In fact, $\xi_t = \exp((a(t) - a(t_1))\mathcal{L}_{\frac{1}{2z}\partial_z})(\xi)$ is a parallel vector field.
\end{lemma}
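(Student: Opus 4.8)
The plan is to verify directly that the proposed section $\xi_t := \exp\!\big((a(t)-a(t_1))\mathcal{L}_{\frac{1}{2z}\partial_z}\big)(\xi)$ is well-defined, smooth, lies in each fiber $W^{\epsilon,M}_t$, and is annihilated by $\nabla$; once $\nabla\xi = 0$ is established, parallel transport along any path is the flow of the corresponding linear ODE, so existence and path-independence are immediate from the uniqueness of solutions (and consistency with flatness, Lemma \ref{wisflatlemma}). First I would fix $t_1$ and choose $\bar\epsilon<\epsilon<M<\bar M$ so that $\xi$ converges absolutely on $\bar\epsilon<|z|<\bar M$. Shrinking the neighbourhood of $t_1$ in $Discs^M$ so that $|a(t)-a(t_1)|$ is smaller than the bound $\min(\epsilon^2-\bar\epsilon^2,\ \bar M^2-M^2,\ \bar\epsilon^2,\ \bar M^2)$ appearing after Lemma \ref{liederivativesplemma} (the $k=-1$ case of Lemma \ref{expoperatorlemma}), we get $\exp\!\big((a(t)-a(t_1))\mathcal{L}_{\frac{1}{2z}\partial_z}\big)(\xi)=f(\sqrt{z^2+(a(t)-a(t_1))})\,dg(\sqrt{z^2+(a(t)-a(t_1))})\in W^{\epsilon,M}_{Airy}=W^{\epsilon,M}_t$, so $\xi_t$ is genuinely a section over this neighbourhood.

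Next I would compute $\nabla\xi$. Writing $\nabla = d - (da)\mathcal{L}_{\frac{1}{2z}\partial_z}$ as in (\ref{connectionofw}) and noting that $\xi_t$ depends on $t$ only through $a(t)$, all derivatives $\partial/\partial b_k$ kill $\xi_t$, while $\tfrac{\partial}{\partial a}\xi_t = \tfrac{d}{da}\exp\!\big((a-a(t_1))\mathcal{L}_{\frac{1}{2z}\partial_z}\big)(\xi) = \mathcal{L}_{\frac{1}{2z}\partial_z}\,\xi_t$, where the last identity is exactly the computation $\tfrac{d}{da}\exp(a\mathcal{L}_{z^k\partial_z})\xi = \mathcal{L}_{z^k\partial_z}\exp(a\mathcal{L}_{z^k\partial_z})\xi$ used in the proof of Lemma \ref{liederivativesplemma}, specialized to $k=-1$ up to the factor $\tfrac12$. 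Hence
\[
\nabla\xi = da\,\frac{\partial\xi_t}{\partial a} - (da)\mathcal{L}_{\frac{1}{2z}\partial_z}(\xi_t) = da\,\mathcal{L}_{\frac{1}{2z}\partial_z}(\xi_t) - (da)\mathcal{L}_{\frac{1}{2z}\partial_z}(\xi_t) = 0,
\]
so $\xi_t$ is a parallel vector field, which is the last assertion of the statement.

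It remains to conclude the parallel-transport claims. Given any smooth path $\gamma:[0,1]\to Discs^M$ from $t_1$ to $t_2$ staying in the chosen neighbourhood, the parallel transport of $\xi$ along $\gamma$ is the unique solution of $\nabla_{\dot\gamma}\eta=0$ with $\eta(0)=\xi$; since $\xi_{\gamma(s)}$ solves this ODE (as $\nabla\xi=0$ implies $\nabla_{\dot\gamma}\xi=0$) and takes the value $\xi$ at $s=0$, uniqueness (Picard–Lindelöf, applied fibrewise as in Lemma \ref{flatvectorbundlepoincarelemma}) forces the transport to equal $\xi_{\gamma(1)}=\exp\!\big((a(t_2)-a(t_1))\mathcal{L}_{\frac{1}{2z}\partial_z}\big)(\xi)$, independent of the path. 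The existence half is just the observation that this explicit formula does define a solution, valid precisely because the smallness of $|a(t_1)-a(t_2)|$ keeps us inside the domain where Lemma \ref{expoperatorlemma} applies.

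The main obstacle I anticipate is not any deep idea but the bookkeeping in the infinite-dimensional setting: one must check that $\xi_t$ is a \emph{smooth} section in the sense of Remark \ref{discsglobalsectionremark} (i.e. $\iota_v d\xi_t\in W^{\epsilon,M}$ for all kinematic tangent vectors $v$, and similarly for all higher derivatives), and that the term-by-term differentiation implicit in $\tfrac{d}{da}\exp(a\mathcal{L}_{\frac{1}{2z}\partial_z})\xi = \mathcal{L}_{\frac{1}{2z}\partial_z}\exp(a\mathcal{L}_{\frac{1}{2z}\partial_z})\xi$ is legitimate on the annulus — both of which follow from the uniform convergence recorded in Remark \ref{laurentseriesgeneralremark} together with the convergence estimates of Lemma \ref{expoperatorlemma}, but need to be stated carefully. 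The subtle point worth spelling out is \emph{why} path-independence holds despite the domain restriction: flatness (Lemma \ref{wisflatlemma}) guarantees local path-independence, and since the explicit parallel field $\xi_t$ is defined on the whole small neighbourhood, any two paths therein give the same answer.
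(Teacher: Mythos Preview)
Your proposal is correct and follows essentially the same approach as the paper: use Lemma~\ref{expoperatorlemma} (the $k=-1$ case) to ensure $\xi_t\in W^{\epsilon,M}_t$ for small $|a(t)-a(t_1)|$, then verify $\nabla\xi_t=0$ via the identity $\tfrac{\partial}{\partial a}\exp((a-a(t_1))\mathcal{L}_{\frac{1}{2z}\partial_z})\xi=\mathcal{L}_{\frac{1}{2z}\partial_z}\xi_t$, and invoke flatness (Lemma~\ref{wisflatlemma}) for path-independence. The paper's proof is in fact terser than yours, omitting the smoothness bookkeeping and the Picard--Lindel\"of justification that you flag as obstacles; your more careful treatment of these points is fine but not strictly required at the level of detail the paper adopts.
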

\begin{proof}
The path-independence of the parallel transport follows from the fact that $\nabla$ is flat. The condition that $|a(t_1) - a(t_2)|$ being sufficiently small is needed to ensure that $\exp((a(t_2)-a(t_1))\mathcal{L}_{\frac{1}{2z}\partial_z})(\xi) \in W^{\epsilon,M}_{t_2}$ according to Lemma \ref{expoperatorlemma}.
Finally, we can check that 
\begin{equation*}
    \nabla \xi_t = da\frac{\partial}{\partial a}\exp((a(t) - a(t_1))\mathcal{L}_{\frac{1}{2z}\partial_z})\xi - da\mathcal{L}_{\frac{1}{2z}\partial_z}\exp((a(t)-a(t_1))\mathcal{L}_{\frac{1}{2z}\partial_z})(\xi) = 0.
\end{equation*}
\end{proof}

\begin{lemma}\label{symplecticformiscovariantlyconstantlemma}
The symplectic form $\Omega$ is $\nabla$-covariantly constant.
\end{lemma}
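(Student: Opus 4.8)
The plan is to reduce the covariant-constancy statement to the infinitesimal identity $\mathcal{L}_{\frac{1}{2z}\partial_z} \in sp(W^{\epsilon,M}_{Airy})$, which is the $k=-1$ case of Lemma \ref{liederivativesplemma}. Concretely, $\Omega$ being $\nabla$-covariantly constant means that for any two smooth sections $\xi_1,\xi_2 \in \Gamma(Discs^M, W^{\epsilon,M})$ and any $v\in T_t Discs^M$ one has
\begin{equation*}
    v\bigl(\Omega(\xi_1,\xi_2)\bigr) = \Omega(\nabla_v\xi_1,\xi_2) + \Omega(\xi_1,\nabla_v\xi_2).
\end{equation*}
Since $\Omega$ is the constant (fibrewise-identical) form $\Omega_{Airy}$ on the trivial bundle $W^{\epsilon,M} = Discs^M\times W^{\epsilon,M}_{Airy}$, the left-hand side is just $\Omega_{Airy}(\iota_v d\xi_1,\xi_2) + \Omega_{Airy}(\xi_1,\iota_v d\xi_2)$, where $d$ denotes the ordinary (flat, trivialization) derivative. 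Writing $\nabla = d - (da)\mathcal{L}_{\frac{1}{2z}\partial_z}$ from (\ref{connectionofw}), the difference between the two sides collapses to
\begin{equation*}
    -\,\iota_v(da)\Bigl(\Omega_{Airy}\bigl(\mathcal{L}_{\frac{1}{2z}\partial_z}\xi_1,\xi_2\bigr) + \Omega_{Airy}\bigl(\xi_1,\mathcal{L}_{\frac{1}{2z}\partial_z}\xi_2\bigr)\Bigr),
\end{equation*}
so it suffices to show the bracketed expression vanishes for all $\xi_1,\xi_2\in W^{\epsilon,M}_{Airy}$. But that is exactly the first assertion of Lemma \ref{liederivativesplemma} with $k=-1$, i.e. $\mathcal{L}_{\frac{1}{2z}\partial_z}\in sp(W^{\epsilon,M}_{Airy})$.

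First I would record that $\Omega$ is genuinely constant on the trivial bundle, so that $d_\nabla$ acts on $\Omega(\xi_1,\xi_2)$ (a $\mathbb{C}$-valued function on $Discs^M$) by the ordinary differential and the only correction to Leibniz comes from the $\mathcal{L}_{\frac{1}{2z}\partial_z}$ term in $\nabla$; then I would invoke Lemma \ref{liederivativesplemma}. An alternative, slightly more global route — which I would at least mention — is to use Lemma \ref{paralleltransportinwlemma}: parallel transport between nearby fibers is $\exp((a(t_2)-a(t_1))\mathcal{L}_{\frac{1}{2z}\partial_z})$, and the second half of Lemma \ref{liederivativesplemma} says precisely that this operator preserves $\Omega_{Airy}$ wherever it is defined; hence $\Omega$ is preserved under parallel transport, which is one standard characterization of being covariantly constant. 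Either way the substantive input is Lemma \ref{liederivativesplemma}, already proved.

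The only point requiring care — and the mild ``obstacle'' — is bookkeeping in the infinite-dimensional setting: one must make sure that for smooth sections $\xi_i$ the derivatives $\iota_v d\xi_i$ again lie in $W^{\epsilon,M}_{Airy}$ (so that $\Omega_{Airy}$ can be evaluated on them) and that term-by-term differentiation of the Laurent series is legitimate. Both are handled by the definition of smooth section in Remark \ref{discsglobalsectionremark} and by the uniform-convergence remark (Remark \ref{laurentseriesgeneralremark}), so no new analysis is needed; one simply cites these. I do not expect any genuine difficulty beyond assembling these ingredients.
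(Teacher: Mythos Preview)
Your proposal is correct, and in fact the paper's proof is precisely the ``alternative, slightly more global route'' you mention: it invokes Lemma \ref{paralleltransportinwlemma} to express parallel transport as $\exp((a(t_2)-a(t_1))\mathcal{L}_{\frac{1}{2z}\partial_z})$ and then cites the second part of Lemma \ref{liederivativesplemma} to conclude $\Omega_{t_2}(\xi_2,\eta_2)=\Omega_{t_1}(\xi_1,\eta_1)$. Your primary infinitesimal argument (reducing to $\mathcal{L}_{\frac{1}{2z}\partial_z}\in sp(W^{\epsilon,M}_{Airy})$ via the Leibniz rule) is equally valid and uses the same key input, Lemma \ref{liederivativesplemma}; the two are just the differentiated and integrated forms of the same statement.
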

\begin{proof}
For any $\xi_1, \eta_1 \in W^{\epsilon,M}_{t_1}$ we have from Lemma \ref{paralleltransportinwlemma} that their parallel transport to $W^{\epsilon,M}_{t_2}$ are $\xi_2 = \exp((a(t_2) - a(t_1))\mathcal{L}_{\frac{1}{2z}\partial_z})(\xi_1), \eta_2 = \exp((a(t_2) - a(t_1))\mathcal{L}_{\frac{1}{2z}\partial_z})(\eta_1) \in W^{\epsilon,M}_{t_2}$. Therefore, $\Omega_{t_2}(\xi_2, \eta_2) = \Omega_{t_1}(\xi_1,\eta_1)$ by Lemma \ref{liederivativesplemma}.
\end{proof}

\subsubsection{The analytic residue constraints}\label{analyticresconstraintssubsection}
Observe that the residue constraints Hamiltonians:
\begin{align*}
    (H_{Airy})_{2n}(w) &= Res_{z = 0}\left(\left(z - \frac{w(z)}{2zdz}\right) z^{2n}d(z^2)\right),\\
    (H_{Airy})_{2n-1}(w) &= \frac{1}{2}Res_{z = 0}\left(\left(z - \frac{w(z)}{2zdz}\right)^2 z^{2n-2}d(z^2)\right),\qquad n = 1,2,3,...
\end{align*}
as given in Section \ref{residueconstraintairystructuresection} is not only well-defined when $w(z) \in W_{Airy}$, but also when $w(z) \in W^{\epsilon, M}_{Airy}$ is a Laurent series converging on some annulus containing $\epsilon < |z| < M$. Define $L_{Airy}^M := \{(H_{Airy})_{i=1,2,3,\cdots} = 0\} \subset W_{Airy}^{\epsilon, M}$. As we will see later, unlike its counter-part $L_{Airy} \subset W_{Airy}$ (see Lemma \ref{lairyistrivialinwairylemma}), the submanifold $L_{Airy}^M$ contains many non-trivial solutions of $(H_{Airy})_{i=1,2,3,\cdots} = 0$ because an infinite number of terms with negative power in $z$ is allowed. Define the tangent space to $L_{Airy}^M$ at any point $p \in L_{Airy}^M$ by
\begin{equation*}
    T_pL_{Airy}^M := \left\{w \in W^{\epsilon, M}_{Airy}\ |\ \iota_wd(H_{Airy})_i = 0, i = 1,2,3,\cdots\right\} \subset W^{\epsilon, M}_{Airy}.
\end{equation*}
In particular,
\begin{equation*}
    T_0L_{Airy}^M := \left\{\xi \in z\mathbb{C}[[z]]\frac{dz}{z}\ |\ \begin{array}{c}
    \text{$\xi(z)$ converges for $|z| < \bar{M}$}\\ 
    \text{for some $\bar{M} > M$}\end{array} \right\} \subset W^{\epsilon, M}_{Airy}, \qquad T_0L^M_{Airy} \subset T_0L_{Airy} \cong V^*_{Airy}.
\end{equation*}
is a Lagrangian subspace of $W^{\epsilon, M}_{Airy}$. Let us also define 
\begin{equation*}
    V_{Airy}^\epsilon := \left\{\xi \in z^{-1}\mathbb{C}[[z^{-1}]]\frac{dz}{z}\ |\ \begin{array}{c}
    \text{$\xi(z)$ converges for $|z| > \bar{\epsilon}$}\\ 
    \text{for some $\bar{\epsilon} < \epsilon$}\end{array}\right\} \subset W^{\epsilon, M}_{Airy}\qquad V_{Airy} \subset V^\epsilon_{Airy}.
\end{equation*}
$V^\epsilon_{Airy}$ is also a Lagrangian subspace of $W^{\epsilon, M}_{Airy}$, in fact, it is a Lagrangian complement of $T_0L^M_{Airy}$ : $W^{\epsilon, M}_{Airy} = V^\epsilon_{Airy} \oplus T_0L^M_{Airy}$. This is similar to $W_{Airy} = V_{Airy}\oplus V_{Airy}^*$, but in this case, $T_0L_{Airy}^M \subsetneq (V^\epsilon_{Airy})^*$. We let $\{e^{k=1,2,3,\cdots}, f_{k=1,2,3,\cdots}\}$ to be a canonical basis of $W^{\epsilon, M}_{Airy}$:
\begin{equation*}
    \Omega_{Airy}(e^i,e^j) = \Omega_{Airy}(f_i,f_j) = 0, \qquad \Omega_{Airy}(e^i,f_j) = \delta^i_j,
\end{equation*}
where $\{e^{k = 1,2,3,\cdots} := z^{-k}\frac{dz}{z}\}$ is a basis of $V_{Airy}^\epsilon$ and $\{f_{k= 1,2,3,\cdots} := kz^k\frac{dz}{z}\}$ is a basis of $T_0L_{Airy}^M$.

Among many topics we have discussed regarding Airy structures in Section \ref{airystructuressection}, we will mostly be interested in the genus zero ATR output and the gauge transformation in the context of the quantum residue constraints Airy structure introduced in Section \ref{residueconstraintairystructuresection}. Since we only have considered these topics from the formal perspective in Section \ref{airystructuressection} and \ref{residueconstraintairystructuresection}, the goal of this section is to revisit these topics from an analytical perspective within the context of $\{(H_{Airy})_{i=1,2,3,\cdots}\}$. We will show that most things we have learned previously continue to hold, possibly with some minor adjustments. Most difficulties came from the following: In $W_{Airy}$, the Tate topology dictates that the contraction between vectors in $V_{Airy}$ and $V^*_{Airy}$ always result in finite sums since vectors in $V_{Airy}$ are finite linear combinations of $\{e^{i\in \mathbb{I}}\}$. On the other hand, $W^{\epsilon, M}_{Airy}$ is not a Tate space and $w(z) \in W^{\epsilon, M}_{Airy}$ in general can be written as $w(z) = \sum_{k=1}^\infty\left(y_kz^{-k}\frac{dz}{z} + kx^kz^k\frac{dz}{z}\right)$ with infinitely many non-zero $x^k$ and $y_k$. It turns out that the lack of finiteness can be replaced with absolute uniform convergence. Here are summaries of the key points:
\begin{enumerate}
\item By expressing $(H_{Airy})_i$ in the form (\ref{airystructuredefn}), we get multiple terms with double infinite summation which could lead to some issues such as divergences and ambiguity of summation order. It turns out that there is no problem because each $(H_{Airy})_i$ is given by the residue of some convergence Laurent series, hence each double infinite summation actually converges absolutely (see Lemma \ref{standardformanalyticresconstairystrlemma}).
\item In Section \ref{gaugetransformationsubsection}, gauge transformations are given by $(c^i_j)$, its inverse $(d^i_j)$, and $(s^{ij})$ where $c^i_j$ is non-zero for only finitely many $j = 1,2,3,\cdots$ for each fixed $i$. In the analytic setting, we need to check that canonical coordinates transformation (\ref{xycanonicalcoordtransformation0}) follows from the canonical basis transformation (\ref{efcanonicalbasistransformation0}). In Section \ref{gaugetransformationsubsection} this is easy because there are only finitely many terms in each summation, but now we have infinite sums and getting (\ref{xycanonicalcoordtransformation0}) from (\ref{efcanonicalbasistransformation0}) requires interchanging the orders of these infinite sums. Moreover, we also need to make sure that the summation of the type $\sum_{i=1}^\infty c^i_jy_i$ which appears in (\ref{xycanonicalcoordtransformation0}) will converge, as $y_i$ may be non-zero for an infinitely many $i=1,2,3,\cdots$. We will see that if $(c^i_j), (d^i_j)$ and $(s^{ij})$ are further restricted to those satisfying Condition \ref{analyticcdscondition}, then all of these concerns will be taken care of. In particular, expressing $(H_{Airy})_i$ in the transformed canonical coordinates will give us exactly the same gauge transformation of $A_{Airy}, B_{Airy}, C_{Airy}$ as given in Section \ref{gaugetransformationsubsection} (see Proposition \ref{analyticgaugetransformationproposition}).
\item We discussed in Section \ref{airystructuressection} that $(H_{Airy})_k(\{x^{i \in \mathbb{I}}\}, \{y_{i\in \mathbb{I}} := \partial S_0/\partial x^i\}) = 0$ where $S_0 \in \prod_{n=1}^\infty Sym_n(V_{Airy})$ is the genus zero output of the ATR using $(V_{Airy}, A_{Airy}, B_{Airy}, C_{Airy}, \epsilon_{Airy})$. Even though $S_0$ was introduced as a formal series, we will see in Proposition \ref{analyticatrproposition} that it can be interpreted analytically and that its derivatives are relevant to the solution of analytic residue constraints.
\end{enumerate}
For the remainder of this section, we present the technical detail leading to each point above. It is possible to skip these detail without losing any continuity to Section \ref{embeddingofdiscssubsection} where we will resume to discuss the embedding map $\Phi_{t_0} : Discs^{\epsilon, M}_{t_0}\rightarrow W^{\epsilon, M}_{t_0}$.

\begin{lemma}\label{standardformanalyticresconstairystrlemma}
Let $w(z) := \sum_{k=1}^\infty(y_ke^k(z) + x^kf_k(z)) = \sum_{k=1}^\infty\left(y_kz^{-k}\frac{dz}{z} + kx^kz^k\frac{dz}{z}\right) \in W^{\epsilon, M}_{Airy}$. Then
\begin{equation*}
    (H_{Airy})_i = -y_i + \sum_{j,k = 1}^\infty(a_{Airy})_{ijk}x^jx^k + 2\sum_{j,k = 1}^\infty (b_{Airy})_{ij}^kx^jy_k + \sum_{j,k = 1}^\infty (c_{Airy})^{jk}_iy_jy_k,
\end{equation*}
where $A_{Airy} = ((a_{Airy})_{ijk}), B_{Airy} = ((b_{Airy})_{ij}^k), C_{Airy} = ((c_{Airy})_i^{jk})$ are given by (\ref{explicitaairy})-(\ref{explicitcairy}) and all the summations converge absolutely. 
\end{lemma}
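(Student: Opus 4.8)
The statement is essentially a bookkeeping lemma: we need to (i) expand the residue formulas for $(H_{Airy})_i$ into monomials in the coordinates $\{x^{j},y_k\}$, (ii) identify the resulting coefficients with the tensors $A_{Airy},B_{Airy},C_{Airy}$ already recorded in (\ref{explicitaairy})--(\ref{explicitcairy}), and (iii) justify that the doubly-infinite sums appearing in the $B_{Airy}$ and $C_{Airy}$ terms converge absolutely when $w(z)\in W^{\epsilon,M}_{Airy}$. The first two points are purely formal and were already carried out in Section \ref{residueconstraintairystructuresection} at the level of formal power series; the genuinely new content is point (iii), so that is where I would spend the effort.

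\textbf{Step 1: reduce to residues of convergent Laurent series.} Write $u(z) := z - \dfrac{w(z)}{2z\,dz} = z - \dfrac{1}{2z\,dz}\sum_{k=1}^\infty\big(y_k z^{-k}\tfrac{dz}{z} + k x^k z^k \tfrac{dz}{z}\big)$. Since $w\in W^{\epsilon,M}_{Airy}$ converges absolutely and uniformly on the annulus $\epsilon<|z|<M$ (Remark \ref{laurentseriesgeneralremark}), the function $u(z)\,(z\,dz)^{-1}$ — i.e. the coefficient series $1 - \tfrac{1}{2z^2}\sum_k(y_kz^{-k}+kx^kz^k)$ — is a Laurent series convergent on that same annulus, hence so are $u(z)z^{2n}d(z^2)$ and $u(z)^2 z^{2n-2}d(z^2)$, the products being products of convergent Laurent series on the same annulus. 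Therefore $(H_{Airy})_{2n}(w)$ and $(H_{Airy})_{2n-1}(w)$ are each a well-defined residue of a convergent Laurent series, i.e. a genuine complex number obtained as the $\tfrac{dz}{z}$-coefficient, with no ordering ambiguity (the product of two absolutely convergent Laurent series on an annulus is absolutely convergent there, so the Cauchy product that computes any fixed coefficient is an absolutely convergent sum).

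\textbf{Step 2: extract the coefficients and match with $A_{Airy},B_{Airy},C_{Airy}$.} Substitute $e^k(z)=z^{-k}\tfrac{dz}{z}$, $f_k(z)=kz^k\tfrac{dz}{z}$ and use $\int e^k = -\tfrac1k z^{-k}$, $\int f_k = z^k$; then the residue formulas of Remark \ref{residueconstaintsabcremark} give $(a_{Airy})_{ijk}$, $(b_{Airy})^k_{ij}$, $(c_{Airy})^{jk}_i$ directly as residues of monomials in $z$, and the $-y_i$ term comes from the linear piece $Res_{z=0}\big(z\cdot(\cdots)\big)$ paired against $-\tfrac{w}{2zdz}$. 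This reproduces (\ref{explicitaairy})--(\ref{explicitcairy}) verbatim, so no recomputation is needed: one just observes that the coefficient of $x^jx^k$ (resp.\ $x^jy_k$, resp.\ $y_jy_k$) in the expansion equals the corresponding residue, because extracting a coefficient from a finite product commutes with the residue operation.

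\textbf{Step 3 (the main obstacle): absolute convergence of the doubly-infinite sums.} For $A_{Airy}$ there is nothing to prove since only $(i,j,k)=(1,1,1)$ contributes. For $B_{Airy}$: by (\ref{explicitbairy}), $(b_{Airy})^k_{ij}\ne0$ forces $i+j=k+3$ with $i$ odd, so $\sum_{j,k}(b_{Airy})^k_{ij}x^jy_k = \tfrac14\sum_{j\ge1}(i+j-3)\,x^j\,y_{i+j-3}$ (only the single index $k=i+j-3$ survives); this is a single sum, and it converges absolutely because $|x^j|$ and $|y_{i+j-3}|$ are, up to the fixed radii $\epsilon,M$, bounded by the Laurent coefficients of $w$, which decay geometrically — concretely $|x^j| \le C\,M^{-j}$ and $|y_\ell|\le C\,\epsilon^{\ell}$ for suitable $C$ by the Cauchy estimates on $w\in W^{\epsilon,M}_{Airy}$, so the summand is $O\big(j\,(\epsilon/M)^j\big)$ and $\epsilon<M$. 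For $C_{Airy}$: by (\ref{explicitcairy}), $(c_{Airy})^{jk}_i\ne0$ forces $i=j+k+3$, so the $C_{Airy}$-contribution to $(H_{Airy})_i$ is $\tfrac14\sum_{\substack{j,k\ge1\\ j+k=i-3\\ j+k\text{ even}}} y_jy_k$, a \emph{finite} sum of $i-4$ terms. Hence in fact every sum reduces to either a finite sum or a single geometrically convergent sum, and the only substantive remark is the geometric decay of the Laurent coefficients of an element of $W^{\epsilon,M}_{Airy}$, which follows from Remark \ref{laurentseriesgeneralremark} together with the standard Cauchy coefficient bounds $|x^j|\le \sup_{|z|=M}|w(z)z/dz|/(jM^j)$ and $|y_k|\le \epsilon^{k}\sup_{|z|=\epsilon}|w(z)z/dz|$. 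Writing these estimates out and combining them with $\epsilon<M$ to conclude $\sum_{j}j(\epsilon/M)^j<\infty$ completes the proof; this last elementary estimate is the only place where a small computation is genuinely required.
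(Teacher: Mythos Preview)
Your proposal is correct. In fact, your Step~1 already contains the paper's entire argument: the paper observes that $(H_{Airy})_{2n-1}(w)$ is the contour integral of an expression built from products of absolutely uniformly convergent Laurent series on $\epsilon<|z|<M$, so the double series $\sum_{j,k}x^jy_kf_j(z)e^k(z)$ (and its analogues) converge absolutely uniformly there, and term-by-term integration over $|z|=\epsilon$ then gives absolute convergence of $\sum_{j,k}(b_{Airy})^k_{ij}x^jy_k$ directly. Your parenthetical ``the Cauchy product that computes any fixed coefficient is an absolutely convergent sum'' is exactly this mechanism.

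Your Step~3 is therefore redundant, but it is a genuinely different argument: instead of appealing to term-by-term integration, you exploit the explicit sparsity of $A_{Airy},B_{Airy},C_{Airy}$ in (\ref{explicitaairy})--(\ref{explicitcairy}) to reduce each double sum to either a finite sum ($A$ and $C$) or a single geometrically convergent sum ($B$), and then invoke Cauchy coefficient estimates. This is more concrete and makes the convergence transparent, but it is tied to the specific form of these tensors; the paper's approach is preferable precisely because it generalizes without change to the gauge-transformed tensors $\bar A_{Airy},\bar B_{Airy},\bar C_{Airy}$ appearing in Proposition~\ref{analyticgaugetransformationproposition}, where the sparsity is lost. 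One small slip: in your $B$-sum the coefficient is $j$, not $i+j-3$, by (\ref{explicitbairy}); this does not affect your convergence estimate.
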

\begin{proof}
Note that there are possibly infinitely many non-zero $y_{k=1,2,3,\cdots}$ and $x^{k=1,2,3,\cdots}$ therefore each $\sum_{j,k = 1}^\infty$ are double infinite summations. To check that each double infinite summations in $(H_{Airy})_i$ converges absolutely we note that $(H_{Airy})_{2n} = -y_{2n}$ and
\begin{equation}\label{resconstrainsairystrintexpression}
    (H_{Airy})_{2n-1}(w) = \frac{1}{4\pi i}\oint_{|z| = \epsilon}\left(\left(z - \frac{\sum_{k=1}^\infty\left(y_ke^k(z) + x^kf_k(z)\right)}{2zdz}\right)^2z^{2n-2}d(z^2)\right)
\end{equation}
where $\sum_{k=1}^\infty(y_ke^k(z) + x^kf_k(z)) = \sum_{k=1}^\infty\left(y_kz^{-k}\frac{dz}{z} + kx^kz^k\frac{dz}{z}\right)$ converges absolutely uniformly for $\epsilon < |z| < M$. Adding or multiplying convergence Laurent series a finite number of times will produce another convergence Laurent series. We can expand (\ref{resconstrainsairystrintexpression}) and examine each term more closely. For example
\begin{align*}
     \frac{1}{2\pi i}\oint_{|z| = \epsilon}&\left(-\frac{1}{4i}\frac{1}{z(dz)^2}f_i(z)\left(\sum_{j=1}^\infty x^jf_j(z)\right)\left(\sum_{k=1}^\infty y_ke^k(z)\right)\right)\delta_{i,odd}\\
    &= \sum_{j,k=1}^\infty Res_{z=0}\left(-\frac{1}{4i}\frac{1}{z(dz)^2}f_i(z)f_j(z)e^k(z)\right)\delta_{i, odd}x^jy_k = \sum_{j,k=1}^\infty(b_{Airy})^k_{ij}x^jy_k,
\end{align*}
where we have used Remark \ref{residueconstaintsabcremark} to identify the residue with $(b_{Airy})^k_{ij}$. Note that $\sum_{j=1}^\infty\sum_{k=1}^\infty x^jy_kf_j(z)e^k(z)$ converges absolutely uniformly for $\epsilon < |z| < M$ as it is the product of $\sum_{j=1}^\infty x^jf_j(z)$ and $\sum_{k=1}^\infty y_ke^k(z)$. Therefore term-by-term integration is allowed and $\sum_{j,k=1}^\infty (b_{Airy})^k_{ij}x^jy_k$ converges absolutely. Similarly, the sum $\sum_{j,k = 1}^\infty (a_{Airy})_{ijk}x^jx^k$ and $\sum_{j,k=1}^\infty(c_{Airy})^{jk}_iy_jy_k$ also converges absolutely.
\end{proof}

\begin{remark}
Note that $-\sum_{i=1}^\infty\frac{\delta_{i,odd}}{4i}\frac{1}{z(dz)^2}f_i(z)e^i(z_1) = \frac{1}{4z(z^2 - z^2_1)}\frac{dz_1}{dz} =: K(z_1,z)$. It follows that
\begin{equation*}
    \sum_{i,j,k=1}^\infty(b_{Airy})^k_{ij}x^jy_ke^i(z_1) = \left(\frac{1}{2\pi i}\oint_{|z| = \bar{\epsilon}}\frac{\left(\sum_{j=1}^\infty x^jf_j(z)\right)\left(\sum_{k=1}^\infty y_ke^k(z)\right)}{4z(z^2 - z_1^2)dz}\right)dz_1 \in V^\epsilon_{Airy}.
\end{equation*}
Similarly, we have $\sum_{i,j,k=1}^\infty (a_{Airy})_{ijk}x^jx^ke^i(z_1), \sum_{i,j,k=1}^\infty (c_{Airy})_{i}^{jk}y_jy_ke^i(z_1) \in V^\epsilon_{Airy}$.
Therefore, we obtain the analytic analogues to the definition of tensors $A_{Airy},B_{Airy},C_{Airy}$:
\begin{equation*}
    A_{Airy} : T_0L_{Airy}^M\otimes T_0L_{Airy}^M \rightarrow V_{Airy}^\epsilon, \qquad B_{Airy} : T_0L_{Airy}^M\otimes V_{Airy}^\epsilon \rightarrow V_{Airy}^\epsilon,\qquad C_{Airy} : V^\epsilon_{Airy}\otimes V^\epsilon_{Airy}\rightarrow V^\epsilon_{Airy}.
\end{equation*}
We can recognise $K(z_1,z)$ to be the recursive kernel in the topological recursion of an Airy curve $y^2 = x$ with $\omega_{0,2}(z_1,z_2) = \frac{dz_1dz_2}{(z_1 - z_2)^2}$ and $\omega_{0,1}(z) = ydx$.
\end{remark}

Let us examine the analytic version of the gauge transformation introduced in Section \ref{gaugetransformationsubsection}. Consider the change of a canonical basis $\{e^{k=1,2,3,\cdots}, f_{k=1,2,3,\cdots}\}\mapsto \{\bar{e}^{k=1,2,3,\cdots}, \bar{f}_{k=1,2,3,\cdots}\}$ of $W^{\epsilon, M}_{Airy}$ where
\begin{equation}\label{efcanonicalbasistransformation}
    \bar{e}^i := \sum_{j=1}^\infty d^i_j\left(e^j 
    + \sum_{k=1}^\infty s^{jk}f_k\right), \qquad \bar{f}_i := \sum_{j=1}^\infty c^j_if_j,
\end{equation}
where $(c^j_i), (d^j_i)$ and $(s^{ij})$ are tensors satisfying the following conditions:
\begin{condition}\label{analyticcdscondition}~
\begin{description}
    \item[C\ref{analyticcdscondition}.1] $(c^j_i)$ and $(d^j_i)$ are inverse of each other: $\sum_{k=1}^\infty c^k_id^j_k = \delta_i^j$ and $\sum_{k=1}^\infty d^k_ic^j_k = \delta_i^j$.
    \item[C\ref{analyticcdscondition}.2] $\sum_{j=1}^\infty c^j_if_j = \sum_{j=1}^\infty jc^j_iz^j\frac{dz}{z} \in T_0L_{Airy}^M$ and $\sum_{j=1}^\infty d^j_if_j = \sum_{j=1}^\infty jd^j_iz^j\frac{dz}{z} \in T_0L^M_{Airy}$. 
    \item[C\ref{analyticcdscondition}.3] There exists $I \in \mathbb{Z}_{>0}$ such that for all $j = 1,2,3,\cdots$ we have $c^j_i = d^j_i = \delta^j_i$ for all $i > I$.
    \item[C\ref{analyticcdscondition}.4] $s^{ij}$ is symmetric and $\sum_{i,j=1}^\infty s^{ij}f_i(z_1)f_j(z_2) = \sum_{i,j=1}^\infty ijs^{ij}\frac{dz_1dz_2}{z_1z_2}$ is a power series in two variables $z_1, z_2$ converges for $|z_1|, |z_2| < \bar{M}$ for some $\bar{M} > M$ (hence a holomorphic symmetric bi-differential).
\end{description}
\end{condition}

\begin{remark}
Compare the conditions on $(c^i_j), (d^i_j)$ and $(s^{ij})$ defined here with those in Section \ref{gaugetransformationsubsection}, especially in Remark \ref{cdsremark}. We find that the analytic version of gauge transformation is more restrictive. This is because $V^\epsilon_{Airy}$ is a larger space than $V_{Airy}$, so loosely speaking there are more ways for things to go wrong. In other words, any analytic gauge transformation will be a gauge transformation as considered in Section \ref{gaugetransformationsubsection} but not the other way around. 

Note that, we do not need to specify $\sum_{j=1}^\infty d^i_je^j \in V^\epsilon_{Airy}$ and $\sum_{j=1}^\infty c^i_je^j \in V^\epsilon_{Airy}$ as another condition, because C\ref{analyticcdscondition}.3 ensures this. If one of $(c^i_j)$ or $(d^i_j)$ satisfies C\ref{analyticcdscondition}.2 or C\ref{analyticcdscondition}.3 then so will the other one because they are the inverse of each other. This can be seen using the formula for the inverse of block matrices.
\end{remark}

\begin{lemma}\label{cdsanalyticlemma}
Suppose that $(c^j_i), (d^j_i)$ and $(s^{ij})$ satisfy the Condition \ref{analyticcdscondition}, then we have the following
\begin{enumerate}
    \item Let $\sum_{k=1}^\infty x^kf_k \in T_0L^M_{Airy}$ and $\sum_{k=1}^\infty y_ke^k \in V^\epsilon_{Airy}$ then $\sum_{j=1}^\infty\left(\sum_{i=1}^\infty y_id^i_j\right)e^j, \sum_{j=1}^\infty\left(\sum_{i=1}^\infty y_ic^i_j\right)e^j \in V^\epsilon_{Airy}$ and $\sum_{j=1}^\infty\left(\sum_{i=1}^\infty x^id^j_i\right)f_j, \sum_{j=1}^\infty\left(\sum_{i=1}^\infty x^ic^j_i\right)f_j \in T_0L_{Airy}^M$ converges absolutely uniformly.
    \item Let $\sum_{k=1}^\infty y_ke^k \in V^\epsilon_{Airy}$ and Let $(v^i)$ be an arbitrary vector such that $\sum_{k=1}^\infty y_kv^k$ converges then\\ $\sum_{i=1}^\infty y_i\left(\sum_{j=1}^\infty d^i_jv^j\right) = \sum_{j=1}^\infty\left(\sum_{i=1}^\infty y_i d^i_j\right)v^j$ and $\sum_{i=1}^\infty y_i\left(\sum_{j=1}^\infty c^i_jv^j\right) = \sum_{j=1}^\infty\left(\sum_{i=1}^\infty y_i c^i_j\right)v^j$.
    \item Let $\sum_{k=1}^\infty u_ke^k \in V^\epsilon_{Airy}$ and $\sum_{k=1}^\infty v_ke^k \in V^\epsilon_{Airy}$ then $\sum_{i,j=1}^\infty s^{ij}u_iv_j$ converges absolutely.
    \item Let $\sum_{k=1}^\infty y_ke^k\in V^\epsilon_{Airy}$ then $\sum_{i=1}^\infty y_i\left(\sum_{j=1}^\infty s^{ij}f_j\right) = \sum_{i,j=1}^\infty y_is^{ij}f_j \in T_0L^M_{Airy}$ converges absolutely uniformly.
\end{enumerate}
\end{lemma}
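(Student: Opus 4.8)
The plan is to prove the four claims in sequence, since each is essentially an absolute-convergence estimate deduced from the fact that the relevant power series converge on an annulus strictly larger than $\epsilon < |z| < M$, together with the finiteness built into \textbf{C\ref{analyticcdscondition}.3}.

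\textbf{Claim (1).} First I would use \textbf{C\ref{analyticcdscondition}.3}: there is an $I$ so that $c^j_i = d^j_i = \delta^j_i$ for $i > I$. Hence for $\sum_k y_k e^k \in V^\epsilon_{Airy}$, write $\sum_{j}\bigl(\sum_i y_i d^i_j\bigr)e^j = \sum_k y_k e^k + \sum_{i\le I} y_i\bigl(\sum_j (d^i_j - \delta^i_j)e^j\bigr)$; the first term is in $V^\epsilon_{Airy}$ by hypothesis, and each of the finitely many correction terms is a finite scalar multiple of $\sum_j d^i_j e^j$, which lies in $V^\epsilon_{Airy}$ because (as noted in the remark after Condition \ref{analyticcdscondition}) \textbf{C\ref{analyticcdscondition}.3} forces $\sum_j d^i_j e^j$ to have only finitely many nonzero coefficients. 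So this is a finite sum of elements of $V^\epsilon_{Airy}$, hence in $V^\epsilon_{Airy}$ and absolutely uniformly convergent on $\epsilon < |z|$ by Remark \ref{laurentseriesgeneralremark}. The argument for $\sum_j(\sum_i y_i c^i_j)e^j$ is identical, and the two statements about $\sum_j(\sum_i x^i d^j_i)f_j,\ \sum_j(\sum_i x^i c^j_i)f_j \in T_0L^M_{Airy}$ follow the same way, this time using \textbf{C\ref{analyticcdscondition}.2} (which puts $\sum_j d^j_i f_j$ and $\sum_j c^j_i f_j$ in $T_0L^M_{Airy}$) and the fact that $\sum_k x^k f_k \in T_0L^M_{Airy}$, again with only finitely many $i$ contributing a nontrivial correction since $c^j_i = d^j_i = \delta^j_i$ for $i > I$.

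\textbf{Claims (2) and (4).} Claim (2) is the assertion that the double sum $\sum_i y_i (\sum_j d^i_j v^j)$ may be reordered as $\sum_j(\sum_i y_i d^i_j)v^j$. Because $d^i_j = \delta^i_j$ for $i > I$, the outer sum over $i$ splits as $\sum_{i > I} y_i v^i + \sum_{i \le I} y_i(\sum_j d^i_j v^j)$; the first piece converges by hypothesis on $(v^k)$, and the second piece is a \emph{finite} sum of convergent series (each $\sum_j d^i_j v^j$ converges because only finitely many $d^i_j$ are nonzero for fixed $i \le I$), so Fubini for series is trivially applicable and the reordering is legitimate; comparing with the analogous splitting of $\sum_j(\sum_i y_i d^i_j)v^j$ gives equality. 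The $c$-version is the same. For Claim (4), apply Claim (3) (proved next) with $u_i = y_i$ and $v_i$ ranging over a basis expansion: concretely, $\sum_i y_i(\sum_j s^{ij}f_j) = \sum_{i,j} y_i s^{ij} j z^j \frac{dz}{z}$, and I would bound $\bigl|\sum_{i,j} y_i s^{ij} j z^j\bigr|$ for $|z| < M$ by writing $\sum_i y_i\bigl(\sum_j i j s^{ij}\frac{dz_1 dz_2}{z_1 z_2}\bigr)$ evaluated at $z_1 = z$, $z_2$ on a contour; since the bi-differential $\sum_{i,j}ij s^{ij}\frac{dz_1 dz_2}{z_1 z_2}$ is holomorphic for $|z_1|,|z_2| < \bar M$ with $\bar M > M$, contracting one slot against $\sum_i y_i e^i(z_2)$ (convergent for $|z_2| > \bar\epsilon$) via a residue at $z_2 = 0$, as in the Remark following Lemma \ref{standardformanalyticresconstairystrlemma}, produces a Laurent series in $z_1$ convergent for $|z_1| < \bar M$, i.e. an element of $T_0L^M_{Airy}$, with the double sum converging absolutely uniformly.

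\textbf{Claim (3)} is the core estimate and the main obstacle, because it is the one case with genuinely no finiteness to fall back on — both $(u_k)$ and $(v_k)$ can have infinitely many nonzero entries. The plan is: by \textbf{C\ref{analyticcdscondition}.4}, $\Sigma(z_1,z_2) := \sum_{i,j} ij s^{ij} z_1^i z_2^j$ converges for $|z_1|,|z_2| \le M'$ for some $M < M' < \bar M$, so by the Cauchy estimates $|ij s^{ij}| \le C (M')^{-i-j}$ for a constant $C$. Meanwhile $\sum_k u_k e^k \in V^\epsilon_{Airy}$ means $\sum_k u_k z^{-k}$ converges for $|z| > \bar\epsilon$ with $\bar\epsilon < \epsilon$, hence $|u_k| \le C' \bar\epsilon^{\,k}$; similarly $|v_k| \le C'' \bar\epsilon^{\,k}$ (after shrinking so that a common $\bar\epsilon < \epsilon$ works). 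I will need to track the factors of $i,j$ from $f_i = i z^i \frac{dz}{z}$ versus $e^k = z^{-k}\frac{dz}{z}$: writing out $s^{ij} u_i v_j$ carefully, one gets $|s^{ij} u_i v_j| \le \frac{C}{ij} (M')^{-i-j}\cdot C'\bar\epsilon^{\,i}\cdot C''\bar\epsilon^{\,j}$, and since $\bar\epsilon < \epsilon < M < M'$ the geometric factor $(\bar\epsilon/M')^{i+j}$ forces $\sum_{i,j}|s^{ij}u_i v_j| \le C'''\sum_{i,j}(\bar\epsilon/M')^{i+j} < \infty$. The delicate point I expect to spend the most care on is confirming the exponents: whether the factors of $i$ and $j$ introduced by the forms $f_i$ (which carry a factor $i$) cancel or at worst add only polynomial growth that is absorbed by the geometric decay — this is a bookkeeping issue but it is where an off-by-a-factor-of-$i$ error would break absolute convergence, so I would do it explicitly rather than gesture at it.
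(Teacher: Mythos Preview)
Your overall strategy is right, but in parts (1) and (2) you have applied \textbf{C\ref{analyticcdscondition}.3} to the wrong index. The condition says $d^j_i=\delta^j_i$ whenever the \emph{lower} index exceeds $I$; in the expression $\sum_j\bigl(\sum_i y_i d^i_j\bigr)e^j$ the lower index of $d^i_j$ is $j$, so what you actually get is $d^i_j=\delta^i_j$ for $j>I$, \emph{not} for $i>I$. Your decomposition
\[
\sum_j\Bigl(\sum_i y_i d^i_j\Bigr)e^j \;=\; \sum_k y_k e^k \;+\; \sum_{i\le I} y_i\Bigl(\sum_j (d^i_j-\delta^i_j)e^j\Bigr)
\]
therefore fails in general: it would require $d^i_j=\delta^i_j$ for all $i>I$ and all $j$, but in block form the matrix $(d^i_j)$ only has its \emph{columns} $j>I$ equal to standard basis vectors, while the bottom-left block (rows $i>I$, columns $j\le I$) need not vanish. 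The same mis-application occurs in your argument for (2). The fix is immediate and is exactly what the paper does: split the sum over $j$ rather than $i$. For $j>I$ one has $\sum_i y_i d^i_j = y_j$, so
\[
\sum_{j}\Bigl(\sum_i y_i d^i_j\Bigr)e^j \;=\; \sum_{j\le I}\Bigl(\sum_i y_i d^i_j\Bigr)e^j \;+\; \sum_{j>I} y_j e^j,
\]
which is a finite sum of scalars times $e^j$ plus a tail of an element of $V^\epsilon_{Airy}$. The inner sums $\sum_i y_i d^i_j$ for $j\le I$ converge because \textbf{C\ref{analyticcdscondition}.2} makes $\sum_k d^k_j f_k$ a convergent power series that can be paired against $\sum_i y_i/z^i$ on the annulus. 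Part (2) is handled by the same $j$-split.

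Your argument for (3) via Cauchy estimates is correct and is a clean alternative to the paper's approach, which instead writes $\sum_{i,j}s^{ij}u_iv_j$ as a double contour integral of three uniformly convergent series on $|z_1|=|z_2|=\epsilon$ and reads off absolute convergence directly. Either route works; yours makes the geometric decay explicit, the paper's avoids tracking constants. Your (4) is fine and essentially matches the paper, which just remarks that the proof is identical to (3).
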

\begin{proof}
\begin{enumerate}
    \item Using C\ref{analyticcdscondition}.2, we have that $\sum_{i=1}^\infty y_id^i_j = \frac{1}{2\pi i}\oint_{|z| = \epsilon}\left(\left(\sum_{i=1}^\infty\frac{y_i}{z^i}\right)\int\left(\sum_{k=1}^\infty d^k_jf_k(z)\right)\right)$ because of the uniform convergences of each series in the integral. Therefore $\sum_{i=1}^\infty y_id^i_j$ converges for all $j = 1,2,3,\cdots$. By C\ref{analyticcdscondition}.3, there exists $J$ such that $d^i_j = \delta^i_j$ for all $j > J$, therefore $\sum_{j=1}^\infty\sum_{i=1}^\infty y_id^i_je^j = \sum_{j=1}^J\left(\sum_{i=1}^\infty y_id^i_j\right)e^j + \sum_{j=J+1}^\infty y_je^j \in V^\epsilon_{Airy}$. The remaining claims follow similarly.
    \item This is a straightforward consequence of C\ref{analyticcdscondition}.3:
    \begin{align*}
        \sum_{i=1}^\infty y_i\left(\sum_{j=1}^\infty d^i_jv^j\right) &= \sum_{i=1}^\infty y_i\left(\sum_{j=1}^Jd^i_jv^j\right) + \sum_{i=1}^\infty y_i\left(\sum_{j=J+1}^\infty\delta^i_jv^j\right) = \sum_{j=1}^J\left(\sum_{i=1}^\infty y_id^i_j\right)v^j + \sum_{i=J+1}^\infty y_iv^i\\
        &= \sum_{j=1}^J\left(\sum_{i=1}^\infty y_id^i_j\right)v^j + \sum_{j=J+1}^\infty \left(\sum_{i=1}^\infty y_id^i_j\right)v^j = \sum_{j=1}^\infty\left(\sum_{i=1}^\infty y_id^i_j\right)v^j.
    \end{align*}
    Similarly, $\sum_{i=1}^\infty y_i\left(\sum_{j=1}^\infty c^i_jv^j\right) = \sum_{j=1}^\infty\left(\sum_{i=1}^\infty y_i c^i_j\right)v^j$.
    \item Note that
    \begin{align}
        \sum_{i,j=1}^\infty s^{ij}u_iv_j = \left(\frac{1}{2\pi i}\right)^2\oint_{|z_1| = \epsilon}\oint_{|z_2| = \epsilon}\left(\sum_{k_1=1}^\infty \frac{u_{k_1}}{k_1z_1^{k_1}}\right)\left(\sum_{i,j=1}^\infty s^{ij}f_i(z_1)f_j(z_2)\right)\left(\sum_{k_2=1}^\infty \frac{v_{k_2}}{k_2z_2^{k_2}}\right).
    \end{align}
    To obtain the first equality, we have used the fact that each series in the integral converges uniformly for $\bar{\epsilon} < |z_1|, |z_2| < \bar{M}$ (see C\ref{analyticcdscondition}.4) for some $\bar{\epsilon} < \epsilon$ and $\bar{M} > M$. By symmetry, the second equality follows.
    \item The proof is mostly identical to the part 3.
\end{enumerate}
\end{proof}

Using Lemma \ref{cdsanalyticlemma} to help compute the symplectic pairings, we can verify that $\{\bar{e}^{k=1,2,3,\cdots}, \bar{f}_{k=1,2,3,\cdots}\}$ as given in (\ref{efcanonicalbasistransformation}) is a canonical basis of $W^{\epsilon, M}_{Airy}$:
\begin{equation*}
    \Omega_{Airy}(\bar{e}^i,\bar{e}^j) = \Omega_{Airy}(\bar{f}_i,\bar{f}_j) = 0, \qquad \Omega_{Airy}(\bar{e}^i,\bar{f}_j) = \delta^i_j.
\end{equation*}

\begin{proposition}\label{analyticgaugetransformationproposition}
Let us consider the change of a canonical basis $\{e^{k=1,2,3,\cdots}, f_{k=1,2,3,\cdots}\}\mapsto \{\bar{e}^{k=1,2,3,\cdots}, \bar{f}_{k=1,2,3,\cdots}\}$ of $W^{\epsilon, M}_{Airy}$ as given in (\ref{efcanonicalbasistransformation}) with $(c^j_i), (d^j_i)$ and $(s^{ij})$ satisfying Condition \ref{analyticcdscondition}. Given $w := \sum_{k=1}^\infty(y_ke^k + x^kf_k) \in W^{\epsilon, M}_{Airy}$, then by setting
\begin{equation}\label{xyanalyticcanonicalcoordinatestransformation}
    \bar{x}^i := \sum_{j=1}^\infty d^i_j\left(x^j 
    - \sum_{k=1}^\infty s^{jk}y_k\right), \qquad \bar{y}_i := \sum_{j=1}^\infty c^j_iy_j,
\end{equation}
we have $\sum_{k=1}^\infty (\bar{y}_k\bar{e}^k + \bar{x}^k\bar{f}_k)$ converges absolutely uniformly to $w$. Let $(\bar{H}_{Airy})_i := \sum_{j=1}^\infty c^j_i(H_{Airy})_j$, then
\begin{equation*}
    (\bar{H}_{Airy})_i = -\bar{y}_i + \sum_{j,k = 1}^\infty(\bar{a}_{Airy})_{ijk}\bar{x}^j\bar{x}^k + 2\sum_{j,k = 1}^\infty (\bar{b}_{Airy})_{ij}^k\bar{x}^j\bar{y}_k + \sum_{j,k = 1}^\infty (\bar{c}_{Airy})^{jk}_i\bar{y}_j\bar{y}_k,
\end{equation*}
where $\bar{A}_{Airy} = ((\bar{a}_{Airy})_{ijk}), \bar{B}_{Airy} = ((\bar{b}_{Airy})_{ij}^k), \bar{C}_{Airy} = ((\bar{c}_{Airy})_i^{jk})$ are given by the gauge transformation of $A_{Airy}, B_{Airy}, C_{Airy}$ according to (\ref{agaugetransfromation})-(\ref{cgaugetransfromation}).
\end{proposition}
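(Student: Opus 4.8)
The plan is to reduce the statement to two essentially separate claims and verify each analytically, leaning heavily on Lemma \ref{cdsanalyticlemma} to justify every interchange of infinite sums. First I would establish the decomposition $w = \sum_{k=1}^\infty(\bar y_k\bar e^k + \bar x^k\bar f_k)$ claimed after \eqref{xyanalyticcanonicalcoordinatestransformation}. Starting from $w = \sum_k(y_k e^k + x^k f_k)$ and substituting the definitions $\bar e^i = \sum_j d^i_j(e^j + \sum_k s^{jk}f_k)$, $\bar f_i = \sum_j c^j_i f_j$, together with $\bar x^i, \bar y_i$ as in \eqref{xyanalyticcanonicalcoordinatestransformation}, the target identity becomes a statement about rearranging the double series $\sum_{i,j}\bar y_i d^i_j e^j$, $\sum_{i,j,k}\bar x^i d^i_j s^{jk}f_k$ and $\sum_{i,j}\bar x^i c^j_i f_j$. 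Each of these rearrangements is licensed: parts (1) and (2) of Lemma \ref{cdsanalyticlemma} handle the $c,d$ contractions (using $\sum_i y_i c^i_j, \sum_i y_i d^i_j$ convergent and the stabilisation $c^j_i = d^j_i = \delta^j_i$ for $i > I$ from C\ref{analyticcdscondition}.3), while parts (3) and (4) handle the $s^{ij}$ contractions. Collecting the $e^j$-coefficients and the $f_j$-coefficients and using $\sum_k c^k_i d^j_k = \delta^j_i$ (C\ref{analyticcdscondition}.1) then recovers exactly $y_j$ and $x^j$; the absolute uniform convergence is inherited from the convergence of the constituent series on a common annulus, as in Remark \ref{laurentseriesgeneralremark}.

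The second claim is that $(\bar H_{Airy})_i := \sum_j c^j_i (H_{Airy})_j$, re-expressed in the coordinates $\{\bar x^{k}, \bar y_{k}\}$, takes the standard form \eqref{airystructuredefn} with coefficients $\bar A_{Airy}, \bar B_{Airy}, \bar C_{Airy}$ given by the formal gauge transformation formulas \eqref{agaugetransfromation}--\eqref{cgaugetransfromation}. Here I would \emph{not} redo the algebraic rearrangement that produced \eqref{agaugetransfromation}--\eqref{cgaugetransfromation} in Section \ref{gaugetransformationsubsection}; instead I would invoke Lemma \ref{standardformanalyticresconstairystrlemma} to write $(H_{Airy})_j = -y_j + \sum(a_{Airy})_{jkl}x^kx^l + 2\sum(b_{Airy})_{jk}^l x^k y_l + \sum(c_{Airy})_j^{kl}y_k y_l$ with all sums absolutely convergent, multiply by $c^j_i$ and sum over $j$ (using C\ref{analyticcdscondition}.3 so only finitely many $j$ contribute a non-trivial $c^j_i$, hence no convergence issue in that outer sum), and then substitute the inverse of \eqref{xyanalyticcanonicalcoordinatestransformation}, namely $x^j = \sum_k(c^j_k\bar x^k + \sum_l \bar y_k d^k_l s^{lj})$ and $y_j = \sum_k d^k_j\bar y_k$. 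The point is that every monomial rearrangement needed to collect the result into the standard form is one of the four types controlled by Lemma \ref{cdsanalyticlemma} (or a product of two such, handled by Fubini for absolutely convergent series, which one can make rigorous via the contour-integral representations as in the proof of Lemma \ref{standardformanalyticresconstairystrlemma}). Once every interchange is justified, the resulting coefficients are \emph{formally identical} to those in Section \ref{gaugetransformationsubsection}, so they are given by \eqref{agaugetransfromation}--\eqref{cgaugetransfromation} by inspection.

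The main obstacle I anticipate is bookkeeping the convergence of the \emph{composite} rearrangements — in particular the $\bar C_{Airy}$ term, which after substitution involves triple and quadruple sums over $A_{Airy}$ contracted against two copies of $s$ and against $c,d$ factors, i.e. expressions like $\sum (a_{Airy})_{j_1 pq}s^{p j_2}s^{q j_3}c^{j_1}_{i_1}d^{i_2}_{j_2}d^{i_3}_{j_3}$. To keep this clean I would first record a short auxiliary step: that for $\sum u_k e^k, \sum v_k e^k \in V^\epsilon_{Airy}$, the "double-$s$" contraction $\sum_{i,j} s^{i j}u_i\big(\sum_l s^{j l}v_l\big)$ again converges absolutely and the inner sum lies in the appropriate space — this follows from C\ref{analyticcdscondition}.4 (the bi-differential $\sum ij s^{ij}\frac{dz_1dz_2}{z_1z_2}$ is holomorphic on $|z_1|,|z_2| < \bar M$, so iterating Lemma \ref{cdsanalyticlemma}(4) keeps one inside $T_0L^M_{Airy}$, which one then pairs again). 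Given that lemma, the remaining work is routine term-collection; since $A_{Airy}$ is supported on a single index triple $(1,1,1)$ by \eqref{explicitaairy} and $C_{Airy}$ has the explicit support of \eqref{explicitcairy}, the actual sums are even more constrained than the general case, so divergence genuinely cannot occur — the only real content is verifying one is allowed to reorder, which Lemma \ref{cdsanalyticlemma} and the contour-integral trick supply. I would therefore present the proof as: (i) decomposition of $w$ via Lemma \ref{cdsanalyticlemma}; (ii) the auxiliary double-$s$ convergence lemma; (iii) substitution into Lemma \ref{standardformanalyticresconstairystrlemma}'s expansion and collection of terms, noting that the algebra matches \eqref{agaugetransfromation}--\eqref{cgaugetransfromation} verbatim.
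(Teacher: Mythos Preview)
Your plan for the first claim (the decomposition $w=\sum_k(\bar y_k\bar e^k+\bar x^k\bar f_k)$) is essentially the paper's argument and is fine.

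There is a genuine gap in your second claim. You assert that after expanding each $(H_{Airy})_j$ in standard form you may ``multiply by $c^j_i$ and sum over $j$ (using C\ref{analyticcdscondition}.3 so only finitely many $j$ contribute a non-trivial $c^j_i$, hence no convergence issue in that outer sum)''. This is not what C\ref{analyticcdscondition}.3 says. That condition asserts $c^j_i=\delta^j_i$ for all $i>I$; it gives no finiteness for fixed $i\le I$ as $j$ varies. Indeed C\ref{analyticcdscondition}.2 explicitly allows $\sum_j c^j_i f_j$ to be a genuinely infinite convergent power series, so for small $i$ the outer sum $\sum_j c^j_i(H_{Airy})_j$ is an honest infinite sum of quadratic expressions, and your justification collapses. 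Your proposed auxiliary ``double-$s$'' lemma does not help here, since the problem is the sum over the Hamiltonian index $j$, not the $s$-contractions.

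The paper avoids this difficulty by \emph{not} expanding $(H_{Airy})_j$ into standard form before summing. It keeps the contour-integral representation \eqref{resconstrainsairystrintexpression}: having already shown $w=\sum_k(\bar y_k\bar e^k+\bar x^k\bar f_k)$ absolutely uniformly, one substitutes this expression for $w(z)$ directly into the integrand of $(H_{Airy})_{2n-1}$. The outer sum $\sum_j c^j_i(H_{Airy})_j$ then becomes $\sum_n c^{2n-1}_i\oint(\cdots)z^{2n-2}d(z^2)$ plus an easy even part; by C\ref{analyticcdscondition}.2 the series $\sum_n c^{2n-1}_i z^{2n-1}dz$ converges absolutely uniformly on $|z|<\bar M$, so one may interchange $\sum_j c^j_i$ with the contour integral. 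Only after this interchange does one expand the resulting single residue into the $\bar x,\bar y$ standard form, and the coefficients are then read off as \eqref{agaugetransfromation}--\eqref{cgaugetransfromation} exactly as in the formal case. This contour-integral route also renders your auxiliary double-$s$ step unnecessary: the squaring happens inside the integrand where everything is already an absolutely uniformly convergent Laurent series.
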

\begin{proof}
Substitute $e^i = \sum_{j=1}^\infty\left(c^i_j\bar{e}^j - \sum_{k=1}^\infty\bar{f}_jd^j_ks^{ki}\right)$ and $f_i = \sum_{j=1}^\infty d^j_i\bar{f}_j$ into $\sum_{k=1}^\infty(y_ke^k + x^kf_k)$. Then using Lemma \ref{cdsanalyticlemma} to help interchange infinite summations when needed and look at coefficients of $\bar{e}^{i=1,2,3,\cdots}$, $\bar{f}_{i=1,2,3,\cdots}$, we have (\ref{xyanalyticcanonicalcoordinatestransformation}). The fact that $\sum_{k=1}^\infty (\bar{y}_k\bar{e}^k + \bar{x}^k\bar{f}_k)$ converges absolutely uniformly follows from part 1 and part 4 of Lemma \ref{cdsanalyticlemma}.

Substitute the inverse of (\ref{xyanalyticcanonicalcoordinatestransformation}): $x^i = \sum_{j=1}^\infty\left(c^i_j\bar{x}^j + \sum_{k=1}^\infty\bar{y}_jd^j_ks^{ki}\right)$ and $y_i = \sum_{j=1}^\infty d^j_i\bar{y}_j$ into $(H_{Airy})_i$. Using Lemma \ref{cdsanalyticlemma}, we find that $(H_{Airy})_{2n} = \sum_{j=1}^\infty d^j_{2n}\bar{y}_j$ and
\begin{equation*}
    (H_{Airy})_{2n-1} = \frac{1}{4\pi i}\oint_{|z| = \epsilon}\left(\left(z - \frac{\sum_{k=1}^\infty(\bar{y}_k\bar{e}^k(z) + \bar{x}^k\bar{f}_k(z))}{2zdz}\right)^2 z^{2n-2}d(z^2)\right).
\end{equation*}
From C\ref{analyticcdscondition}.2, $\sum_{j=1}^\infty c^j_i\delta_{j,odd}z^jdz = \sum_{n=1}^\infty c^{2n-1}_iz^{2n-1}dz$ converges absolutely uniformly for $|z| < \bar{M}$ for some $\bar{M} > M$. Therefore, we can commute the summation in $(\bar{H}_{Airy})_i = \sum_{j=1}^\infty c^j_i(H_{Airy})_j$ with the integral $\oint_{|z| = \epsilon}$. Expand $(\bar{H}_{Airy})_i$ into the standard form, the proposition is proven. 
\end{proof}

Let $\bar{V}^\epsilon_{Airy}$ be the vector space containing all $\sum_{k=1}^\infty\bar{y}_k\bar{e}^k \in W^{\epsilon, M}_{Airy}$ such that $\sum_{k=1}^\infty y_ke^k \in V^\epsilon_{Airy}$, where $\bar{y}_i = \sum_{j=1}^\infty c^j_iy_j$. Equivalently by Lemma \ref{cdsanalyticlemma}, we have that $\sum_{k=1}^\infty y_k\bar{e}^k \in \bar{V}^\epsilon_{Airy}$ if and only if $\sum_{k=1}^\infty y_ke^k \in V^\epsilon_{Airy}$. It follows from Proposition \ref{analyticgaugetransformationproposition} that $W^{\epsilon, M}_{Airy} = \bar{V}^\epsilon_{Airy} \oplus T_0L^M_{Airy}$. In particular, $\bar{V}^\epsilon_{Airy}$ is the new Lagrangian complement of $T_0L_{Airy}^M$ corresponding to the new canonical basis $\{\bar{e}^{k=1,2,3,\cdots}, \bar{f}_{k=1,2,3,\cdots}\}$ of $W^{\epsilon, M}_{Airy}$. 

Finally, let us compare the formal solution $L_{Airy}\subset W_{Airy}$ of $(H_{Airy})_{i=1,2,3,\cdots} = 0$ with the analytic solution $L^M_{Airy}\subset W^{\epsilon, M}_{Airy}$. In particular, we would like to understand how to interpret the generating function $S_0$ of $L_{Airy}$ analytically. The following proposition is written for the residue constraints Airy structure $\{(H_{Airy})_{i=1,2,3,\cdots}\}$ to simplify the notations but its statement and the proof presented are equally valid for any of its gauge transformations $\{(\bar{H}_{Airy})_{i=1,2,3,\cdots}\}$ due to Proposition \ref{analyticgaugetransformationproposition}.
\begin{proposition}\label{analyticatrproposition}
Suppose that we have a set of functions $\{y_{i=1,2,3,\cdots} : T_0L^M_{Airy} \rightarrow \mathbb{C}\}$ such that
\begin{equation*}
    \sum_{k=1}^\infty y_k(\{x^{j=1,2,3,\cdots}\})e^k \in V^\epsilon_{Airy},\qquad \text{and} \qquad (H_{Airy})_k(\{x^{i=1,2,3,\cdots}\}, \{y_{i=1,2,3,\cdots}(\{x^{j=1,2,3,\cdots}\})\}) = 0.
\end{equation*}
Suppose further that $\sum_{k=1}^\infty\mathbf{y}_i(x^1,...,x^K)e^k(z)$, where $\mathbf{y}_i(x^1,...,x^K) := y_i(\{x^1,...,x^K,x^{k > K} = 0\})$, is holomorphic in $K+1$ variables $\{x^1,\cdots,x^K,z^{-1}\}$ on some small open neighbourhoods of $(x^1,\cdots ,x^K) = (0,\cdots,0)$ and $|z| > \epsilon$, for any $K \in \mathbb{Z}_{>0}$. Let $S_0 = \sum_{n=1}^\infty S_{0,n}\in \prod_{n=1}^\infty Sym_n(V_{Airy})$ be the formal power series we obtained from the $g=0$ part of the ATR of the quantum Airy structure $(V_{Airy}, A_{Airy}, B_{Airy}, C_{Airy}, \epsilon_{Airy})$. Then the functions $(\partial_iS_0)|_{x^{k>K}=0}, i = 1,2,3,\cdots$ and $\mathcal{S}_0 = \mathcal{S}_0(x^1,\cdots, x^K) := S_0(\{x^1,\cdots,x^K,x^{k>K}=0\})$ are holomorphic on some small open neighbourhoods of $(x^1,\cdots ,x^K) =(0,\cdots,0)$, for any $K \in \mathbb{Z}_{>0}$. In particular, we have $(\partial_iS_0)|_{x^{k>K}=0} = \mathbf{y}_i, i=1,2,3,\cdots$.
\end{proposition}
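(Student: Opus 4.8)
The strategy is to leverage the uniqueness statement in Theorem \ref{quantizationofltheorem}, which guarantees that the formal series $S_0$ with $y_i := \partial_i S_0$ is the \emph{unique} formal solution to the residue constraints $(H_{Airy})_k = 0$ (as formal power series). The hypothesis hands us a second solution, given by the functions $\mathbf{y}_i$, which by assumption are genuinely holomorphic in $\{x^1,\dots,x^K, z^{-1}\}$ near the origin for every $K$. First I would expand each $\mathbf{y}_i(x^1,\dots,x^K)$ as a convergent power series in $x^1,\dots,x^K$; since it is holomorphic near $0$, it has a Taylor expansion, and I would argue that the collection of these expansions (as $K\to\infty$) assembles into a well-defined formal power series $\tilde S_0 \in \prod_n Sym_n(V_{Airy})$ with $\partial_i \tilde S_0 = \mathbf{y}_i$ at the level of formal series. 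The key point here is consistency: because $\mathbf{y}_i(x^1,\dots,x^{K'}) = \mathbf{y}_i(x^1,\dots,x^K, x^{K+1}=\dots=0)$ for $K' > K$ by the very definition of $\mathbf{y}_i$, the Taylor coefficients stabilize, and one can also check the symmetry $\partial_j \mathbf{y}_i = \partial_i \mathbf{y}_j$ (this follows from the residue constraints, exactly as in the proof of Theorem \ref{quantizationofltheorem}, where commuting $(H_{Airy})_i$ with $(H_{Airy})_j$ forces $\partial_j H_i^{(g,n)} = \partial_i H_j^{(g,n)}$), so that a formal potential $\tilde S_0$ exists, unique up to a constant which we normalize away.

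Next I would observe that $\tilde S_0$ satisfies $(H_{Airy})_k(\{x^i\}, \{\partial_i \tilde S_0\}) = 0$ as a formal identity: each such equation, after substituting the convergent series, is an equality of convergent (hence formal) power series, so the equality of the formal expansions holds. By the uniqueness clause of Theorem \ref{quantizationofltheorem} applied to the genus-zero part, $\tilde S_0 = S_0$ as formal power series, whence $\partial_i S_0 = \mathbf{y}_i$ in the formal sense, and in particular the formal series $(\partial_i S_0)|_{x^{k>K}=0}$ coincides with the convergent series $\mathbf{y}_i(x^1,\dots,x^K)$. Since the latter is holomorphic on a neighbourhood of the origin by hypothesis, $(\partial_i S_0)|_{x^{k>K}=0}$ is holomorphic there. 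For $\mathcal{S}_0 = S_0(\{x^1,\dots,x^K, x^{k>K}=0\})$ itself, I would integrate: $\mathcal{S}_0$ is the formal potential of the finite collection $\{\mathbf{y}_1,\dots,\mathbf{y}_K\}$ (restricted to $x^{k>K}=0$, the constraints $\partial_j \mathcal{S}_0 = \mathbf{y}_j$ for $j \le K$ determine $\mathcal{S}_0$ up to constant), and since each $\mathbf{y}_j$ is holomorphic near $(0,\dots,0)\in\mathbb{C}^K$ and the compatibility $\partial_i \mathbf{y}_j = \partial_j \mathbf{y}_i$ holds there, the holomorphic Poincaré lemma produces a holomorphic $\mathcal{S}_0$ on a (possibly smaller) polydisc around the origin whose Taylor series is the formal restriction $S_0(\{x^1,\dots,x^K,0,\dots\})$.

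The main obstacle I anticipate is the bookkeeping in the first paragraph: making precise that the family $\{\mathbf{y}_i(x^1,\dots,x^K)\}_{K}$ assembles into a single element of $\prod_n Sym_n(V_{Airy})$ in the correct sense — i.e. that for each fixed monomial $x^{i_1}\cdots x^{i_n}$ its coefficient in $\mathbf{y}_i$ is eventually independent of $K$ and that the resulting tensor $S_{0,n}$ lies in the (non-completed) $Sym_n(V_{Airy})$ with the right finiteness property (for each choice of upper indices only finitely many lower-index tuples contribute), so that the output genuinely matches the ATR output. This requires carefully tracking the definition of $\mathbf{y}_i$ as a function only of $x^1,\dots,x^K$ together with the convergence hypothesis on $z^{-1}$, and then invoking Remark \ref{atrisfiniteremark} to identify the finiteness structure. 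Once that identification is in place, the uniqueness of the formal solution does all the remaining work, and the holomorphy transfers for free from the hypothesis and the Poincaré lemma.
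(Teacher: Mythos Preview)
Your approach is correct and closely parallels the paper's, but there is one simplification worth noting. The paper does not assemble the family $\{\mathbf{y}_i(x^1,\dots,x^K)\}_K$ across $K$ into a single formal series $\tilde S_0 \in \prod_n Sym_n(V_{Airy})$ and then invoke uniqueness. Instead it works at a \emph{fixed} $K$ throughout: one substitutes the convergent power-series expansion of $\mathbf{y}_i(x^1,\dots,x^K)$ into $(H_{Airy})_i = 0$, observes (using the residue integral form (\ref{resconstrainsairystrintexpression}) to justify rearranging terms) that the coefficient of each monomial $x^{i_1}\cdots x^{i_n}$ with $i_k \le K$ must vanish, and recognises the resulting equations as exactly the degree-by-degree recursion (\ref{hhatg0n0}) from the proof of Theorem~\ref{quantizationofltheorem} at $g_0 = 0$, restricted to $x^{k>K}=0$. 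This yields $\mathbf{y}_i^{(n)} = (\partial_i S_{0,n+1})|_{x^{k>K}=0}$ directly, without ever constructing an intermediate potential or checking $\partial_j\mathbf{y}_i = \partial_i\mathbf{y}_j$ separately.

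The payoff is that the ``main obstacle'' you flag --- the $K\to\infty$ bookkeeping and the finiteness structure needed to land in $Sym_n(V_{Airy})$ --- simply does not arise: everything happens for one $K$ at a time, and the conclusion $(\partial_i S_0)|_{x^{k>K}=0} = \mathbf{y}_i$ is exactly what the statement asks for. Your route via uniqueness is a valid repackaging of the same induction, but the paper's direct coefficient-matching is shorter and sidesteps the assembly issue entirely. Your final step (holomorphy of $\mathcal{S}_0$ from that of its partials $\partial_i\mathcal{S}_0 = \mathbf{y}_i$ for $i\le K$) matches the paper's.
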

\begin{proof}
We express $\mathbf{y}_i$ as a power series:
\begin{equation*}
    \mathbf{y}_i(x^1,\cdots,x^K) = \sum_{n=0}^\infty \mathbf{y}^{(n)}_i(x^1,\cdots,x^K), \qquad 
    \mathbf{y}^{(n)}_i(x^1,\cdots,x^K) = \sum_{i_1,\cdots,i_n = 1}^K\frac{1}{n!} \mathbf{y}_{i;i_1,\cdots,i_n}x^{i_1}\cdots x^{i_n}.
\end{equation*}
By assumption, $\sum_{k=1}^\infty\mathbf{y}_k(x^1,\cdots,x^K)e^k(z)\in V^\epsilon_{Airy}$ is a power series in $\{x^1,\cdots,x^K,z^{-1}\}$ which converges absolutely uniformly for $|z| > \epsilon$ and $|x^{i=1,\cdots,K}| < r_K$ for some $r_K > 0$, hence we are allowed to arbitrarily rearrange its terms. According to (\ref{resconstrainsairystrintexpression}), the left-hand-side of
\begin{equation}\label{hairyiszero}
    (H_{Airy})_i\left(\{x^1,\cdots,x^K,x^{k>K} = 0\}, \{\mathbf{y}_{j=1,2,3,\cdots}(x^1,\cdots,x^K)\}\right) = 0.
\end{equation}
is a residue of a simple expression involving $\sum_{k=1}^\infty\mathbf{y}_k(x^1,\cdots,x^K)e^k$, hence we can also freely rearrange its terms. Therefore, we can solve for $\mathbf{y}_i$ from (\ref{hairyiszero}) by substituting in its power series expression and collecting like terms exactly as we have done in Theorem \ref{quantizationofltheorem} for $g_0 = 0$, even though formal power series has been replaced by an absolutely convergent power series. The Equation (\ref{hairyiszero}) must hold for all $|x^{i=1,\cdots, K}| < r_K$, so the coefficient of each monomial $x^{i_1}\cdots x^{i_n}, i_k = 1,\cdots, K$ must vanish independently. In particular, we have $\mathbf{y}_i^{(n_0)} = (H_{Airy})_i^{(0,n_0)}|_{x^{k > K} = 0} = (\partial_iS_{0,n_0+1})|_{x^{k > K} = 0}$ (see (\ref{hhatg0n0})). Therefore, $(\partial_iS_0)|_{x^{k > K} = 0} = \sum_{n=0}^\infty (\partial_iS_{0,n+1})|_{x^{k>K}=0} = \sum_{n=0}^\infty \mathbf{y}_i^{(n)} = \mathbf{y}_i$ is a holomorphic function in $x^1,\cdots, x^K$ for all $i = 1,2,3,\cdots$. Furthermore, for $i = 1,\cdots, K$ we have $\partial_i\mathcal{S}_0 = (\partial_iS_0)|_{x^{k>K}=0}$. Since $\partial_i\mathcal{S}_0 = \mathbf{y}_i$ are holomorphic for $i=1,\cdots, K$, we conclude that the function $\mathcal{S}_0$ is holomorphic for all $(x^1,\cdots,x^K)$ such that $|x^{i=1,\cdots, K}| < r_K$.
\end{proof}

\subsubsection{The embedding of $Discs^{\epsilon, M}_{t_0}$ into $W^{\epsilon, M}_{t_0}$}\label{embeddingofdiscssubsection}
In this section, the procedure we performed in Section \ref{embeddingofbinhsubsection} with be repeated with the covariantly constant weak symplectic flat vector bundle $(W^{\epsilon, M}\rightarrow Discs^M, \Omega, \nabla)$ instead of $(\mathcal{H}\rightarrow \mathcal{B}_{\Sigma_0}, \Omega_{\mathcal{H}}, \nabla_{GM})$.  Following \cite[Section 6.1]{kontsevich2017airy}, we will define
\begin{equation*}
    \phi \in \Gamma(Discs^{\epsilon,M}_{t_0}, Hom(TDiscs_{t_0}^{\epsilon,M}, W^{\epsilon, M})),
\end{equation*}
and use it to construct the map $\Phi_{t_0}: Discs^{\epsilon, M}_{t_0} \rightarrow W^{\epsilon, M}_{t_0}$ embedding the `neighbourhood' $Discs^{\epsilon, M}_{t_0}$ of $t_0 \in Discs^M$ into the fiber $W^{\epsilon, M}_{t_0}$. Finally, we show that the image of the embedding satisfies the analytic residue constraints. In other words, $\text{im}\Phi_{t_0}$ is contained inside $L^M_{Airy} \subset W^{\epsilon, M}_{t_0} \cong W^{\epsilon, M}_{Airy}$ (see Proposition \ref{phiembeddiscsinwproposition}). 

Let us fix the reference point $t_0 := (x = z^2, y = z) \in Discs^M$ and define the subspace $Discs^{\epsilon, M}_{t_0} \subset Discs^M$ containing $t_0$ given by
\begin{equation*}
    Discs^{\epsilon,M}_{t_0} := \left\{t = \left(x = a(t) + z^2, y = \sum_{k=0}^\infty b_k(t)z^k\right) \in Discs^M\ |\ \begin{array}{c}
             \sum_{k=0}^\infty b_k(t)z^k \text{ converges for }\\
             \text{ $|z| < \bar{M}$ for some $\bar{M} > M$ }\\
             \text{and $\min(\bar{M}^2 - M^2, \epsilon^2) > |a(t) - a(t_0)|$}
        \end{array}\right\}.
\end{equation*}
Conceptually, we think of $Discs^{\epsilon, M}_{t_0}$ as a small neighbourhood of $t_0$ inside $Discs^M$ (although, it is not a neighbourhood according to the topology given in Remark \ref{topologyofwremark}). Similar to before, for any $t \in Discs^{\epsilon, M}_{t_0}$, the kinematic tangent space $T_tDiscs^{\epsilon, M}_{t_0}$ is defined to be the set of $v \in Discs^M$ such that $\gamma(\tau) := t + \tau v$ defines a path $\gamma : (-\delta, \delta) \rightarrow Discs^{\epsilon, M}_{t_0}$ for some $\delta > 0$. In other words, 
\begin{equation*}
    T_tDiscs^{\epsilon, M}_{t_0} := \left\{v = A\frac{\partial}{\partial a} + \sum_{k=0}^\infty B_k\frac{\partial}{\partial b_k} \in T_tDiscs^M\ |\ \begin{array}{c}
             \sum_{k=0}^\infty B_kz^k \text{ converges for }\\
             \text{ $|z| < \bar{M}$ for some $\bar{M} > M$ }\\
             \text{and $\min(\bar{M}^2 - M^2, \epsilon^2) > |a(t) - a(t_0)|$.}
        \end{array}\right\}.
\end{equation*}

\begin{remark}
We will let the definition of $\Gamma(Discs^{\epsilon, M}_{t_0}, W^{\epsilon, M})$, $\Gamma(Discs^{\epsilon, M}_{t_0}, TDiscs^{\epsilon, M}_{t_0})$ and\\ $\Gamma(Discs^{\epsilon,M}_{t_0}, Hom(TDiscs_{t_0}^{\epsilon,M}, W^{\epsilon, M}))$ to be exactly as given in Remark \ref{discsglobalsectionremark} with $Discs^M$ replaced by $Discs^{\epsilon, M}_{t_0}$.
\end{remark}

At any point $t \in Discs^{\epsilon,M}_{t_0}$, the map $\phi_t$ arises from the consideration of the infinitesimal deformations within the moduli space $Discs^{\epsilon, M}_{t_0}$ of the disc $\mathbb{D}_{t,M}\hookrightarrow \mathbb{C}^2$ embedded inside the foliated symplectic surface $(\mathbb{C}^2, \Omega_{\mathbb{C}^2} := dx\wedge dy, \{x=const\})$. It is given by the following diagram:
\begin{equation*}
    \begin{tikzcd}
    T_tDiscs^{\epsilon,M}_{t_0} \arrow{r}{\cong}\arrow[bend left=15]{rrr}{\phi_t} & \displaystyle\varinjlim_{\bar{M} > M(t)}\Gamma(\mathbb{D}_{t,\bar{M}},  T\mathbb{C}^2|_{\mathbb{D}_{t,\bar{M}}}/T\mathbb{D}_{t,\bar{M}}) \arrow{r}{\cong} & \displaystyle\varinjlim_{\bar{M} > M(t)}\Gamma(\mathbb{D}_{t,\bar{M}}, \Omega^1_{\mathbb{D}_{t,\bar{M}}}) \arrow[hookrightarrow]{r} & W_t^{\epsilon(t),M(t)} \subset W_t^{\epsilon, M}.
    \end{tikzcd}
\end{equation*}
Where $M(t)$ and $\epsilon(t)$ are given as follows
\begin{equation*}
    \epsilon(t) := \sqrt{\epsilon^2 - |a(t) - a_0(t)|} \leq \epsilon < M \leq M(t) := \sqrt{M^2 + |a(t) - a_0(t)|}.
\end{equation*}
Note that for all $t \in Discs^{\epsilon, M}_{t_0}$, $\sum_{k=0}^\infty b_k(t)z^k$ converges for $|z| < \bar{M}$ for some $\bar{M}$ where $\bar{M} > M(t) > M$. The isomorphism $T_tDiscs^{\epsilon,M}_{t_0} \xrightarrow{\cong} \varinjlim_{\bar{M} > M(t)}\Gamma(\mathbb{D}_{t,\bar{M}},T\mathbb{C}^2|_{\mathbb{D}_{t,\bar{M}}}/T\mathbb{D}_{t,\bar{M}})$ follows because a tangent vector $v \in T_tDiscs^{\epsilon,M}_{t_0}$ gives an infinitesimal deformation of $\mathbb{D}_{t,\bar{M}}$, $\bar{M} > M(t)$ which can be thought of as a global section $n_v$ of the normal bundle $\nu_{\mathbb{D}_{t,\bar{M}}/\mathbb{C}^2} \cong T\mathbb{C}^2|_{\mathbb{D}_{t,\bar{M}}}/T\mathbb{D}_{t,\bar{M}}$. The map $\Gamma(\mathbb{D}_{t,\bar{M}},T\mathbb{C}^2|_{\mathbb{D}_{t,\bar{M}}}/T\mathbb{D}_{t,\bar{M}}) \xrightarrow{\cong} \Gamma(\mathbb{D}_{t,\bar{M}}, \Omega^1_{\mathbb{D}_{t,\bar{M}}})$ is given by $n_v \mapsto \Omega_{\mathbb{C}^2}(n_v,.)|_{\mathbb{D}_{t,\bar{M}}}$. It is an isomorphism because $\Omega_{\mathbb{C}^2}$ is non-degenerates by definition and $\mathbb{D}_{t,\bar{M}} \subset \mathbb{C}^2$ is obviously a Lagrangian submanifold. This induces the isomorphism of the direct limit 
\begin{equation*}
    \varinjlim_{\bar{M} > M(t)}\Gamma(\mathbb{D}_{t,\bar{M}},  T\mathbb{C}^2|_{\mathbb{D}_{t,\bar{M}}}/T\mathbb{D}_{t,\bar{M}}) \xrightarrow{\cong} \varinjlim_{\bar{M} > M(t)}\Gamma(\mathbb{D}_{t,\bar{M}}, \Omega^1_{\mathbb{D}_{t,\bar{M}}}).
\end{equation*}
Using the standard local coordinates $z$ on $\mathbb{D}_{t,\bar{M}}$, we find that
\begin{equation*}
    \varinjlim_{\bar{M} > M(t)}\Gamma(\mathbb{D}_{t,\bar{M}}, \Omega^1_{\mathbb{D}_{t,\bar{M}}}) \cong \left\{\xi \in z\mathbb{C}[[z]]\frac{dz}{z}\ |\ \begin{array}{c}
    \text{$\xi(z)$ converges }\\ 
    \text{ for $|z| < \bar{M}$ for some $\bar{M} > M(t)$}\end{array}\right\}\hookrightarrow W^{\epsilon(t),M(t)}_t \subset W^{\epsilon, M}_t.
\end{equation*}
\begin{remark}
Observe that, $\varinjlim_{\bar{M} > M(t)}\Gamma(\mathbb{D}_{t,\bar{M}}, \Omega^1_{\mathbb{D}_{t,\bar{M}}})$ is embedded as a Lagrangian subspace of $W^{\epsilon(t),M(t)}_t$, i.e. it is a maximal subspace of $W^{\epsilon(t),M(t)}_t$ such that if $\xi_1,\xi_2 \in \varinjlim_{\bar{M} > M(t)}\Gamma(\mathbb{D}_{t,\bar{M}}, \Omega^1_{\mathbb{D}_{t,\bar{M}}})$ then $\Omega_{Airy}(\xi_1,\xi_2) = 0$. In other words, $\phi_t$ embeds $T_tDiscs^{\epsilon, M}_{t_0}$ as a Lagrangian subspace of $W^{\epsilon(t),M(t)}_t$. On the other hand, it is not always true that $\phi_t$ embeds $T_tDiscs^{\epsilon,M}_{t_0}$ as a Lagrangian subspace of $W^{\epsilon, M}_t$, in fact $\varinjlim_{\bar{M} > M(t)}\Gamma(\mathbb{D}_{t,\bar{M}}, \Omega^1_{\mathbb{D}_{t,\bar{M}}}) \subseteq T_0L^M_{Airy}$ with an equality if and only if $a(t) = a(t_0)$.
\end{remark}
Let us write down $\phi_t$ explicitly. Consider a tangent vector
\begin{equation}\label{tangentvectdiscs}
    v = A \frac{\partial}{\partial a} + \sum_{k=0}^\infty B_k\frac{\partial}{\partial b_k} \in T_tDiscs^{\epsilon,M}_{t_0}
\end{equation}
where $\sum_{k=0}^\infty B_kz^k$ converges absolutely for $|z| < \bar{M}$ for some $\bar{M} > M(t) > M$.
The vector $v$ maps to 
\begin{equation*}
    n_v = A\frac{\partial x}{\partial a}\partial_x + \sum_{k = 0}^\infty B_k\frac{\partial y}{\partial b_k}\partial_y = A\partial_x + \sum_{k=0}^\infty B_kz^k\partial_y \in \varinjlim_{\bar{M} > M(t)}\Gamma(\mathbb{D}_{t,\bar{M}},  TS|_{\mathbb{D}_{t,\bar{M}}}/T\mathbb{D}_{t,\bar{M}}).
\end{equation*}
Therefore, the value of $\phi_t$ on $v$ is given by 
\begin{align*}\label{phioftangentvectdiscs}
    \phi_t(v) &= \Omega_S(n_v,.)|_{\mathbb{D}_{t,\bar{M}}} = \left(Ady - \sum_{k=0}^\infty B_kz^kdx\right)\Big|_{\mathbb{D}_{t,\bar{M}}}\\
    &= \left(A\sum_{k=1}^\infty kb_kz^{k-1} - 2\sum_{k = 0}^\infty B_kz^{k+1}\right)dz \in \varinjlim_{\bar{M} > M(t)}\Gamma(\mathbb{D}_{t,\bar{M}}, \Omega^1_{\mathbb{D}_{t,\bar{M}}}) \hookrightarrow W^{\epsilon(t),M(t)}_t \subset W^{\epsilon, M}_t.\numberthis 
\end{align*}
In other words, 
\begin{equation*}
    \phi_t = \left(\sum_{k=1}^\infty kb_kz^{k-1}dz\right)da - 2\sum_{k=0}^\infty (z^{k+1}dz)db_k,
\end{equation*}
and it is easy to check that $d_\nabla \phi = 0$.

Let us now construct the map $\Phi_{t_0} : Discs^{\epsilon,M}_{t_0}\hookrightarrow W^{\epsilon,M}_{t_0}$ embedding the neighbourhood $Discs^{\epsilon,M}_{t_0}$ of $t_0 = (x = z^2, y = z) \in Discs^M$ into the fiber $W_{t_0}^{\epsilon,M}$ in the same way we did in Section \ref{embeddingofbinhsubsection}. We send the zero vector $0_{t_1} \in W^{\epsilon,M}_{t_1}$ at any point $t_1 \in Discs^{\epsilon,M}_{t_0}$ into $W^{\epsilon,M}_{t_0}$ at $t_0$ via parallel transport using the connection $\nabla + \phi$. Where $(\nabla + \phi)\xi := \nabla\xi + \phi$ for $\xi \in \Gamma(Discs^M, W^{\epsilon, M})$. Since $d_\nabla\phi = 0$, there exists a section $\theta\in \Gamma(Discs^{\epsilon,M}_{t_0}, W^{\epsilon,M})$ such that $d_\nabla \theta = \phi$. We can write down $\theta_t$ explicitly for any $t = \left(x = z^2 + a(t), y = \sum_{k=0}^\infty b_k(t)z^k\right) \in Discs^{\epsilon,M}_{t_0}$:
\begin{equation}\label{definingthetainw}
    \theta_t = -ydx = -\sum_{k=0}^\infty b_k(t)z^k d(z^2) = -2\sum_{k=0}^\infty b_k(t)z^{k+1} dz.
\end{equation}
In general, given $t \in Discs^M$, the parallel transport using $\nabla + \phi$ of $0_{t_1} \in W^{\epsilon,M}_{t_1}$ from $t_1 \in Discs^{\epsilon,M}_t$ to $W^{\epsilon, M}_{t}$ at $t$ is path-independent and it is given by 
\begin{equation*}
    v_{t_1}(t) = -\theta_t + \exp((a(t) - a(t_1))\mathcal{L}_{\frac{1}{2z}\partial_z})\theta_{t_1}.
\end{equation*}
We note that $v_{t_1}(t)$ is well-defined for all $t_1 \in Discs^{\epsilon,M}_t$ because $|a(t) - a(t_1)| < \min(\bar{M}^2 - M^2, \epsilon^2)$ by definition, hence $\exp((a(t) - a(t_1))\mathcal{L}_{\frac{1}{2z}\partial_z})\theta_{t_1} \in W^{\epsilon,M}_{t}$. It follows that
\begin{equation*}
    \Phi_{t_0}(t) := v_t(t_0) = -\theta_{t_0} + \exp((a(t_0) - a(t))\mathcal{L}_{\frac{1}{2z}\partial_z})\theta_{t} \in W^{\epsilon,M}_{t_0},
\end{equation*}
for all $t \in Discs^{\epsilon,M}_{t_0}$.
\begin{proposition}\theoremname{\cite[Section 6.3]{kontsevich2017airy}}\label{phiembeddiscsinwproposition}
Let $t_0 = (x=z^2, y=z) \in Discs^M$. The image of the map $\Phi_{t_0}:Discs^{\epsilon,M}_{t_0} \hookrightarrow W^{\epsilon,M}_{t_0}$ satisfies the residue constraints (\ref{resconstraintsairystructure}), i.e. $\text{\emph{im}}\Phi_{t_0} \subseteq L_{Airy}^M \subset W^{\epsilon, M}_{t_0} \cong W^{\epsilon, M}_{Airy}$.
\end{proposition}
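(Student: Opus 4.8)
The plan is to compute $\Phi_{t_0}(t)$ explicitly as a Laurent series and verify directly that it satisfies the residue constraints $(H_{Airy})_{i}=0$, exploiting the fact that both $\theta_t$ and the operator $\exp(a\mathcal{L}_{\frac{1}{2z}\partial_z})$ have been made completely concrete earlier. Fix $t=\left(x=z^2+a(t),\,y=\sum_{k\ge 0}b_k(t)z^k\right)\in Discs^{\epsilon,M}_{t_0}$ and write $\delta a := a(t_0)-a(t) = -a(t)$ (recall $a(t_0)=0$). By (\ref{definingthetainw}) and (\ref{definingthetainw}) applied at $t_0$ we have $\theta_t=-y\,dx|_{\mathbb{D}_{t,M}}$ and $\theta_{t_0}=-z\,d(z^2)=-2z^2\,dz$. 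The key observation, which I would state as a lemma, is that $\exp\bigl((a(t_0)-a(t))\mathcal{L}_{\frac{1}{2z}\partial_z}\bigr)$ acting on a form $f(z)\,dg(z)$ gives $f(\sqrt{z^2+\delta a})\,dg(\sqrt{z^2+\delta a})$ (this is Lemma~\ref{expoperatorlemma} with $k=-1$, $a=\delta a$, which applies since $|\delta a|<\min(\bar M^2-M^2,\epsilon^2)$ by the definition of $Discs^{\epsilon,M}_{t_0}$). Hence
\begin{equation*}
    \exp\bigl((a(t_0)-a(t))\mathcal{L}_{\tfrac{1}{2z}\partial_z}\bigr)\theta_t = -\,Y\bigl(\sqrt{z^2+\delta a}\bigr)\,d(z^2+\delta a) = -2z\,Y\bigl(\sqrt{z^2+\delta a}\bigr)\,dz,
\end{equation*}
where $Y(w):=\sum_{k\ge 0}b_k(t)w^k$. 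Setting $\tilde z := \sqrt{z^2+\delta a}$, this is the pullback to the $z$-coordinate of the tautological form $-y\,dx$ restricted to the disc $t$, now re-expressed in terms of the coordinate $\tilde z$ which is precisely the standard local coordinate adapted to $t_0$ after parallel transport; i.e.\ $\Phi_{t_0}(t) = 2z^2\,dz - 2z\,Y(\tilde z)\,dz$, a well-defined element of $W^{\epsilon,M}_{t_0}$.

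Next I would recognize this expression as coming from a \emph{reparameterized Airy curve}. The crucial point is that $x = \tilde z^2 + a(t)$ and $y = Y(\tilde z)$ jointly describe the disc $\mathbb{D}_{t,M}\subset\mathbb{C}^2$ with $\tilde z$ as the standard local coordinate in the sense of Definition~\ref{standardlocalcoordinatesdefinition0}, but relative to the shifted chart so that the ramification point sits at $\tilde z = 0$ after the parallel transport. Now I invoke Lemma~\ref{picoveringmaplemma} (or rather its proof via (\ref{findingf})): there is a function $F$, locally biholomorphic near $a(t)$, together with $G$, so that the $(\mathcal{F},\Omega_S)$-coordinate change $\bar x = F(x)$, $\bar y = y/F'(x) + G'(x)$ brings the disc into the normalized form $(\bar x = \bar z^2,\ \bar y = \bar z)$, and this change of coordinates is exactly a gauge transformation of the form studied in Section~\ref{gaugetransformationsubsection}. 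The residue constraints $(H_{Airy})_i$, by Proposition~\ref{analyticgaugetransformationproposition}, are covariant under analytic gauge transformations (those satisfying Condition~\ref{analyticcdscondition}), so $\Phi_{t_0}(t)$ satisfies $(H_{Airy})_i=0$ if and only if its image under this gauge transformation does. But the image is $-\bar z\,d(\bar z^2) = -2\bar z^2\,d\bar z$, i.e.\ $w(\bar z)/(2\bar z\,d\bar z) = \bar z - \Phi_{t_0}(t)\text{-image}/(2\bar z\,d\bar z)$ yields $z - w/(2z\,dz) = 0$ identically in the normalized coordinate, so every residue in (\ref{resconstraintsairystructure}) vanishes trivially.

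The main obstacle, and where I would spend the bulk of the careful work, is the bookkeeping in the previous paragraph: I must check that the $F,G$ produced by Lemma~\ref{picoveringmaplemma} genuinely induce a gauge transformation satisfying Condition~\ref{analyticcdscondition} (finiteness/convergence of the $(c^i_j),(d^i_j),(s^{ij})$ tensors, condition \textbf{C\ref{analyticcdscondition}.3} in particular), and that the change of standard local coordinate $z\mapsto\tilde z\mapsto\bar z$ is compatible with the canonical-basis transformation (\ref{efcanonicalbasistransformation}) on $W^{\epsilon,M}_{Airy}$. An alternative, more hands-on route that avoids the gauge-transformation machinery would be to compute $w(z):=\Phi_{t_0}(t)$ directly, form $z - \frac{w(z)}{2z\,dz}$, and observe it equals $-\tilde z\cdot(\text{something regular})$ up to the substitution $\tilde z = \sqrt{z^2+\delta a}$ — but since the residue constraints are sensitive to the choice of coordinate, one still needs to argue that the residue of each $\left(z-\frac{w}{2z\,dz}\right)^2 z^{2n-2}d(z^2)$-type expression vanishes, which ultimately reduces to the same coordinate-change argument. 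I would therefore present the gauge-covariance argument as the clean proof and relegate the explicit-substitution verification to a remark. A final consistency check: the case $a(t)=a(t_0)$, where $\Phi_{t_0}(t)\in T_0L^M_{Airy}$ lies in the Lagrangian complement and the constraints hold by the observation that $\frac{\partial S_0}{\partial x^i}$ furnishes the generating function (Proposition~\ref{analyticatrproposition}), should match the general formula in the limit $\delta a\to 0$.
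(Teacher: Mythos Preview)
Your main line of argument has a genuine gap. The change of $(\mathcal{F},\Omega_S)$-chart $(x,y)\mapsto(\bar x,\bar y)$ from Lemma~\ref{picoveringmaplemma} is \emph{not} a gauge transformation in the sense of Section~\ref{gaugetransformationsubsection} or Proposition~\ref{analyticgaugetransformationproposition}. Those gauge transformations are linear symplectomorphisms of $W^{\epsilon,M}_{Airy}$ of the very specific form (\ref{efcanonicalbasistransformation}), with $(c^i_j),(d^i_j)$ satisfying Condition~\ref{analyticcdscondition} (in particular C\ref{analyticcdscondition}.3, which forces $c^i_j=d^i_j=\delta^i_j$ for $i>I$). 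They fix the underlying coordinate $z$ and only change the choice of Lagrangian complement to $T_0L^M_{Airy}$. By contrast, your coordinate change shifts the ramification point and replaces the standard coordinate $z$ by $\bar z=\sqrt{F(z^2)-F(a(t))}$, which when $a(t)\neq 0$ is not even a local biholomorphism at $z=0$, and certainly does not satisfy C\ref{analyticcdscondition}.3. The Hamiltonians $(H_{Airy})_i$ are tied to the fixed reference point $t_0=(x=z^2,y=z)$; pushing $t$ to normalized form changes the reference point and produces \emph{different} residue constraints, not a gauge-equivalent set of the same ones. So ``$(H_{Airy})_i(\Phi_{t_0}(t))=0$ iff its image under this gauge transformation vanishes'' is not justified.

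The ``hands-on route'' you dismiss is in fact the clean proof, and it does \emph{not} reduce to a coordinate-change argument. The paper observes that
\[
z-\frac{\Phi_{t_0}(t)}{2z\,dz}=\frac{\exp(-a(t)\mathcal{L}_{\frac{1}{2z}\partial_z})(y\,d(z^2))}{d(z^2)}=\exp(-a(t)\mathcal{L}_{\frac{1}{2z}\partial_z})(y),
\]
so for any $m,n\ge 0$,
\[
\Bigl(z-\tfrac{\Phi_{t_0}(t)}{2z\,dz}\Bigr)^m z^{2n}\,d(z^2)
=\exp(-a(t)\mathcal{L}_{\frac{1}{2z}\partial_z})\bigl(y^m (z^2+a(t))^n\,d(z^2)\bigr).
\]
The argument $y^m(z^2+a(t))^n\,d(z^2)$ is holomorphic for $|z|<\sqrt{M^2+|a(t)|}$, hence lies in $W^{\epsilon,M}_{Airy}$ with no $\tfrac{dz}{z}$ term; by Lemma~\ref{expoperatorlemma} the operator $\exp(-a(t)\mathcal{L}_{\frac{1}{2z}\partial_z})$ preserves this property, so the residue at $z=0$ vanishes. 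Taking $m=1,2$ gives all the residue constraints. You had already written down the correct expression for $\Phi_{t_0}(t)$; what you missed is that the combination $z-\frac{w}{2z\,dz}$ collapses to a single exponential-operator image of a holomorphic form, making the residues vanish for free.
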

\begin{proof}
At the point $t = (x = z^2 + a(t), y = \sum_{k=0}^\infty b_k(t)z^k) \in Discs^{\epsilon,M}_{t_0}$, we have
\begin{equation*}
    \Phi_{t_0}(t) = zd(z^2) - \exp((a(t_0) - a(t))\mathcal{L}_{\frac{1}{2z}\partial_z})(yd(z^2)).
\end{equation*}
We will show that $\Phi_{t_0}(t) \in W^{\epsilon, M}_{t_0}$ satisfies the residue constraints. In fact, for any $m,n \in \mathbb{Z}_{\geq 0}$ we have
\begin{align*}
    Res_{z = 0}\left(\left(z - \frac{\Phi_{t_0}(t)}{2zdz}\right)^m z^{2n}d(z^2)\right) &= Res_{z = 0}\left(\left(\frac{\exp(-a(t)\mathcal{L}_{\frac{1}{2z}\partial_z})(y)d(z^2)}{d(z^2)}\right)^m z^{2n}d(z^2)\right)\\
    &= Res_{z = 0}\left(\exp(-a(t)\mathcal{L}_{\frac{1}{2z}\partial_z})(y^m) z^{2n}d(z^2)\right)\\
    &= Res_{z = 0}\left(\exp(-a(t)\mathcal{L}_{\frac{1}{2z}\partial_z})\left(y^m (z^2+a(t))^{n}d(z^2)\right)\right) = 0.
\end{align*}
We justify the vanishing of the residue as follows. Since $t \in Discs^{\epsilon, M}_{t_0}$ and $a(t_0) = 0$, we have that $|a(t)| < \epsilon^2$ and $y^m (z^2+a(t))^{n}d(z^2)$ is holomorphic for $|z| < \sqrt{M^2 + |a(t)|}$. In particular it contains no $\frac{dz}{z}$ and it belongs to $W^{0, M(t)}_{Airy} \subset W^{\epsilon, M}_{Airy}$. Therefore
\begin{equation*}
    \exp(-a(t)\mathcal{L}_{\frac{1}{2z}\partial_z})\left(y^m (z^2+a(t))^{n}d(z^2)\right) \in W^{\epsilon,M}_{Airy}
\end{equation*}
by Lemma \ref{expoperatorlemma}, which also contains no $\frac{dz}{z}$ term. In the case where $m = 1,2$, we can see that $\Phi_{t_0}(t)$ satisfies the residue constraints for all $t \in Discs^{\epsilon,M}_{t_0}$ as claimed.
\end{proof}

\begin{example}
Let $t_0 = (x=z^2,y=z)$ and $t = (x = z^2 + a, y = z)$, then
\begin{align*}
    \Phi_{t_0}(t) &= zd(z^2) - \exp(-a\mathcal{L}_{\frac{1}{2z}\partial_z})(zd(z^2))\\
    &= zd(z^2) - zd(z^2) + a\frac{d(z^2)}{2z} + \sum_{k = 2}^\infty \frac{a^k}{k!}\frac{(2k-3)!!}{2^{k}z^{2k-1}}d(z^2) = az\frac{dz}{z} + \sum_{k = 2}^\infty \frac{a^k}{k!}\frac{(2k-3)!!}{2^{k-1}z^{2k-3}}\frac{dz}{z}
\end{align*}
In $J$ coordinates we would have
\begin{equation*}
    J_{-1} = a, \qquad J_{2k-3} = \frac{a^k(2k-3)!!}{k!2^{k-1}},\qquad k > 1
\end{equation*}
and the rest of $J_n$ are zero. We can check that $\{J_n\}$ we have found satisfies the set of equations $\{(H_{Airy})_{i=1,2,3,\cdots} = 0\}$ defining $L_{Airy}^M \subset W^{\epsilon, M}_{Airy}$.
\end{example}

\begin{remark}
Since $(W^{\epsilon, M}\rightarrow Discs^{\epsilon, M}_{t_0},\Omega, \nabla, \phi)$ is a weak symplectic flat vector bundle with $\nabla$-covariantly constant $\Omega$ and $d_{\nabla}\phi = 0$, it would be nice to be able to conclude that $\text{im}\Phi_{t_0}$ is a Lagrangian submanifold using Proposition \ref{generalembeddingtofiberproposition}. Unfortunately, Proposition \ref{generalembeddingtofiberproposition} does not apply because $W^{\epsilon, M}$ has an infinite rank. However, we can show that $\text{im}\Phi_{t_0}$ is isotropic in the sense that 
\begin{equation*}
    T_{\Phi_{t_0}(t)}\text{\emph{im}}\Phi_{t_0} := \left\{\iota_vd\Phi_{t_0}(t) = 0\ |\ v \in T_tDiscs^{\epsilon, M}_{t_0}\right\} \subset W^{\epsilon, M}_{t_0}
\end{equation*}
is an isotropic subspace where $d\Phi_{t_0}(t) = \exp((a(t_0)-a(t))\mathcal{L}_{\frac{1}{2z}\partial_z})\phi_t$. For any $v_1, v_2 \in T_tDiscs^{\epsilon, M}_{t_0}$ we have by Lemma \ref{symplecticformiscovariantlyconstantlemma} and the fact that $\phi_t$ embeds $T_tDiscs^{\epsilon,M}_{t_0}$ as a Lagrangian subspace of $W^{\epsilon(t), M(t)}_t \subset W^{\epsilon,M}_t$ that $\Omega_{t_0}(\iota_{v_1}d\Phi_{t_0}(t), \iota_{v_2}d\Phi_{t_0}(t)) = \Omega_t(\phi_t(v_1), \phi_t(v_2)) = 0$. 
\end{remark}

\subsection{From Local To Global}\label{localtoglobalsection}
In Section \ref{deformationofcurvessection} we studied the complex analytic moduli space $\mathcal{B}$ of $\mathcal{F}$-transversal genus $g$ curves in a foliated symplectic surface $(S,\Omega_S,\mathcal{F})$ and the embedding of an open neighbourhood $\mathcal{B}_{\Sigma_0}$ into a fiber $\mathcal{H}_{\Sigma_0} = H^1(\Sigma_0,\mathbb{C})$ of $\mathcal{H}\rightarrow \mathcal{B}$. All considerations were done at the level of cohomology which gives a global picture of the deformations. In Section \ref{theembeddingofdiscssection} we studied what deformations of $\Sigma$ look like near a ramification point and the embedding of $Discs^{\epsilon, M}_{t_0}$ into the fiber $W^{\epsilon, M}_{t_0}$ of $W^{\epsilon, M}\rightarrow Discs^M$, which is a local picture of the deformations. The idea of this section is to connect the two pictures by lifting each cohomology class in Section \ref{deformationofcurvessection} to a differential form with Laurent series expansions near each ramification point given by a series introduced in Section \ref{theembeddingofdiscssection}.

First of all, since there are more than one ramification points on $\Sigma$ in general, instead of $W^{\epsilon, M}$ we will consider $W^{Ram} := \prod_{\alpha\in Ram}W^{\epsilon_\alpha, M_\alpha}$, a vector bundle of differential forms locally defined on the disjoint union of some annulus on $\Sigma$, each centered at a ramification point. We are going to introduce the vector bundle $G\rightarrow \mathcal{B}_{\Sigma_0} \subseteq \mathcal{B}$ where the fibers $G_\Sigma$ are given by the space of elements of $W^{Ram}$ which can be extended to a holomorphic form on the entire $\Sigma$ with some discs removed $= \Sigma\setminus \cup_{\alpha \in Ram}\mathbb{D}_{\alpha, \epsilon_\alpha}(\Sigma)$. As we about to see, the removal of discs is essential in order to make sense of parallel transports on $G$. There is a map from $G$ to $W^{Ram}$ given by Laurent expansion around ramification points and a map from $G$ to $\mathcal{H}$ given by taking cohomology classes. Therefore, $G$ is an `intermediate' vector bundle between $W^{Ram}$ and $\mathcal{H}$ in this sense. 

We equip $G$ with the connection given by differentiation along a foliation leaf $\nabla_{\mathcal{F}}$. In fact, this is the unique choice of connection for $G$, since applying $\nabla_{\mathcal{F}}$ on any element of $G_{\Sigma}$ can create poles only at ramification points. It follows that the parallel transport on $G$ from $[\Sigma] \in \mathcal{B}_{\Sigma_0}$ to $[\Sigma'] \in \mathcal{B}_{\Sigma_0}$ is given by pulling-back a differential form on $\Sigma$ to $\Sigma'$ along the foliation leaves. Clearly, the pull-back will only be well-defined away from ramification points. For this reason, we are removing a small disc $\mathbb{D}_{\alpha,\epsilon_\alpha}(\Sigma)$ near each ramification point of each $\Sigma$ to avoid the ambiguity of the pull-back between two near-by points $[\Sigma], [\Sigma']$. In other words, a parallel transport will only be possible if $[\Sigma'] \in \mathcal{B}$ is sufficiently close to $[\Sigma] \in \mathcal{B}$ such that each ramification point $r_\alpha(\Sigma')$ (or rather, the point on $\Sigma$ connected to $r_\alpha(\Sigma')$ via a foliation leaf) does not move too far from $r_\alpha(\Sigma)$ and it remains contained inside the discs $\mathbb{D}_{\alpha, \epsilon_\alpha}(\Sigma)$ to be removed from $\Sigma$. This adds another reason why we need $\mathcal{B}_{\Sigma_0}$ to be sufficiently small, a condition first mentioned in Section \ref{deformationofcurvessection} and we have delayed making it precise until this section. We may compare this to the treatment in \cite{chaimanowong2020airystructures} where no discs were removed from $\Sigma$, instead, the \emph{formal neighbourhood} $\hat{\mathcal{B}}_{\Sigma_0}$ of $[\Sigma_0]$ was considered.

Since Section \ref{theembeddingofdiscssection} is an analytic approach to \cite[Section 6]{kontsevich2017airy}, we may think of this section as an analytic approach to \cite[Section 7]{kontsevich2017airy}. We outline the content as follows. After fixing terminologies in Section \ref{basicsetupsanddefinitionssubsection}, we discuss precisely how the open neighbourhood $\mathcal{B}_{\Sigma_0} \subseteq \mathcal{B}$ should be selected. Not surprisingly, the condition $\mathcal{B}_{\Sigma_0}$ needs to satisfy also depends on the sizes of annulus $\{\epsilon_{\alpha \in Ram}\}$, $\{M_{\alpha in Ram}\}$, and the choice of $(\mathcal{F},\Omega_S)$-charts $U_\alpha$ around each ramification point. Then we will introduce the vector bundle $G$ and discuss some of its properties in Section \ref{vectorbundlegsubsection}. Among other things, we will show that $G$ is a coisotropic sub-bundle of $W^{Ram}$. In Section \ref{ramproductresconstraintssubsection}, the residue constraints Airy structure on $W^{Ram}$ will be introduced as the product of residue constraints Airy structure. Then, we will discuss the connection $\nabla_{\mathcal{F}}$ and parallel transport on $G$ in detail in Section \ref{connectionandparalleltransportongsubsection}. The comparison between connection and parallel transport on $\mathcal{H}\rightarrow \mathcal{B}$, $W^{Ram}\rightarrow Discs^{Ram}$, and $G\rightarrow \mathcal{B}_{\Sigma_0}$ will be studied in Section \ref{comparisonofvectorbundlessubsection}.
Finally, in Section \ref{relationtoresconstraintssubsection}, we will define the embedding map $\Phi_{\Sigma_0} : \mathcal{B}_{\Sigma_0} \rightarrow G_{\Sigma_0}$ analogous to $\pmb{\Phi}_{\Sigma_0} : \mathcal{B}_{\Sigma_0}\rightarrow \mathcal{H}_{\Sigma_0}$ defined in Section \ref{embeddingofbinhsubsection} and $\Phi_{t_0} : Discs_{t_0}^{\epsilon, M} \rightarrow W^{\epsilon, M}_{t_0}$ defined in Section \ref{embeddingofdiscssubsection} and argue that the image of $\Phi_{\Sigma_0}$ satisfies the residue constraints. The main result of this section, Proposition \ref{localvsglobalproposition} \cite[Proposition 7.1.2]{kontsevich2017airy} summarizes the relationship between $\Phi_{\Sigma_0}, \pmb{\Phi}_{\Sigma_0}$ and $\Phi_{t_0}$. In fact, since we are working entirely in an analytic framework, we are going to see the relationship between the Lagrangian submanifold $\mathcal{L}_{\Sigma_0} := \text{im}\pmb{\Phi}_{\Sigma_0} \subset \mathcal{H}_{\Sigma_0}$ and $L^{Ram}_{Airy}$, treated as an actual subset of $W^{Ram}_{Airy}$.

\subsubsection{Basic setup and definitions}\label{basicsetupsanddefinitionssubsection}
Let us fix a reference point $[\Sigma_0] \in \mathcal{B}$ and a collection of $(\mathcal{F}, \Omega_S)$-charts $\mathcal{U}_{Ram} := \{(U_{\alpha\in Ram}, x_{\alpha \in Ram}, y_{\alpha \in Ram})\}$ such that each $U_\alpha, \alpha \in Ram$ contains the ramification point $r_\alpha(\Sigma_0)$ of $\Sigma_0$ and $\Sigma_0\cap U_\alpha$ is bi-holomorphic to a simply-connected open subset of $\mathbb{C}$ with the standard local coordinate $z_\alpha := \sqrt{x_\alpha - x_\alpha(r_\alpha)}$. We choose $\mathcal{B}_{\Sigma_0}$ to be sufficiently small such that for all $[\Sigma] \in \mathcal{B}_{\Sigma_0}$, $\Sigma \cap U_\alpha$ contains the ramification point $r_\alpha(\Sigma)$ of $\Sigma$ and it is bi-holomorphic to a simply-connected open subset of $\mathbb{C}$ with the standard local coordinate $z_\alpha := \sqrt{x_\alpha - x_\alpha(r_\alpha(\Sigma))}$. For each $[\Sigma] \in \mathcal{B}_{\Sigma_0}$ the parameterization of $\Sigma\cap U_\alpha$ takes the form
\begin{equation*}
    t_\alpha(\Sigma) = (x_\alpha = a_\alpha(\Sigma) + z^2_\alpha,\ y_\alpha = b_{0\alpha}(\Sigma) + b_{1\alpha}(\Sigma)z_\alpha + b_{2\alpha}(\Sigma)z^2_\alpha + ...).
\end{equation*}

Let us emphasize that coefficients $\{a_\alpha(\Sigma), b_{0\alpha}(\Sigma), b_{1\alpha}(\Sigma), b_{2\alpha}(\Sigma),...\}$ depends on our choice of the $(\mathcal{F},\Omega_S)$-local coordinates $(x_\alpha, y_\alpha)$. In particular, we often choose $(x_\alpha, y_\alpha)$ such that a reference point $[\Sigma_0] \in \mathcal{B}$ is parameterized by $t_{\alpha}(\Sigma_0) = (x_\alpha = z^2_\alpha, y_\alpha = z_\alpha)$.

Pick $\{\epsilon_{\alpha \in Ram} \in \mathbb{R}\}$ and $\{M_{\alpha \in Ram} \in \mathbb{R}\}$ such that $0 < \epsilon_\alpha < M_\alpha$ and define the following discs and annulus on $\Sigma$ for $[\Sigma] \in \mathcal{B}_{\Sigma_0}$ :
\begin{align*}
    \mathbb{D}_{\alpha, \epsilon_\alpha}(\Sigma) &:= \{p \in \Sigma\cap U_\alpha\ |\ |z_\alpha(p)| < \epsilon_\alpha\} \subset \Sigma\cap U_\alpha\\
    \mathbb{D}_{\alpha, M_\alpha}(\Sigma) &:= \{p \in \Sigma\cap U_\alpha\ |\ |z_\alpha(p)| < M_\alpha\} \subset \Sigma\cap U_\alpha\\
    \mathbb{A}_{\alpha, \epsilon_\alpha, M_\alpha}(\Sigma) &:= \mathbb{D}_{\alpha, M_\alpha}(\Sigma) - \bar{\mathbb{D}}_{\alpha, \epsilon_\alpha}(\Sigma).
\end{align*}
We suppose that $\mathcal{B}_{\Sigma_0}$ is chosen to be sufficiently small such that $\bar{\mathbb{D}}_{\alpha,M_\alpha}(\Sigma) \subset \Sigma\cap U_\alpha \subset \Sigma$ for all $[\Sigma] \in \mathcal{B}_{\Sigma_0}$. In other words, $\mathbb{D}_{\alpha,M_\alpha}(\Sigma)$ is properly contained inside the coordinate chart $\Sigma\cap U_\alpha$ for all $[\Sigma]\in \mathcal{B}_{\Sigma_0}$. Consequently, $\sum_{k=0}^\infty b_{k\alpha}(\Sigma)z^k_\alpha$ converges in an open set containing $\mathbb{D}_{\alpha, M_\alpha}(\Sigma)$ for all $[\Sigma] \in \mathcal{B}_{\Sigma_0}$ and so we have the map $\gamma_\alpha : \mathcal{B}_{\Sigma_0} \hookrightarrow Discs^{M_{\alpha}}$ given by $[\Sigma]\mapsto t_\alpha(\Sigma) \in Discs^{M_\alpha}$.

Over $Discs^{M_\alpha}$ for each $\alpha \in Ram$, we have the vector bundle $W^{\epsilon_\alpha, M_\alpha} \rightarrow Discs^{M_\alpha}$ as defined in Section \ref{theembeddingofdiscssection}. Locally, we can study a curve $\Sigma \subset S$ as an embedding of $|Ram|$ discs $\mathbb{D}_{\alpha, M_\alpha}(\Sigma) \subset \Sigma\cap U_\alpha \cong \mathbb{C}^2$ indexed by the ramification index set $Ram$. Therefore, it makes sense to introduce 
\begin{equation*}
    Discs^{Ram} := \prod_{\alpha \in Ram}Discs^{M_\alpha}, \qquad Discs^{Ram}_{t} := \prod_{\alpha \in Ram}Discs^{\epsilon_\alpha, M_\alpha}_{t_{\alpha}}, \qquad W^{Ram} := \prod_{\alpha \in Ram}W^{\epsilon_\alpha, M_\alpha}.
\end{equation*}
In other words, $W^{Ram}\rightarrow Discs^{Ram}$ is a trivial vector bundle where each fiber over $t = (t_{\alpha \in Ram}) \in Discs^{Ram}$ is $W^{Ram}_t = \prod_{\alpha \in Ram}W^{\epsilon_\alpha, M_\alpha}_{t_\alpha} = \prod_{\alpha \in Ram}W^{\epsilon_\alpha, M_\alpha}_{Airy} =: W^{Ram}_{Airy}$. A typical element $\xi \in W^{Ram}_{t}$ can be written as $\xi = \sum_{\alpha \in Ram} [\alpha]\otimes\xi_\alpha$ where $\xi_\alpha \in W^{\epsilon_\alpha, M_\alpha}_{t_\alpha}$. We naturally define the multiplication $W^{Ram}_{Airy}\times W^{Ram}_{Airy} \rightarrow W^{Ram}_{Airy}$ as follows: let $\xi_i = \sum_{\alpha \in Ram}[\alpha]\otimes \xi_{i\alpha}, i = 1,2$ then
\begin{equation*}
    \xi_1\xi_2 := \sum_{\alpha \in Ram}[\alpha]\otimes \xi_{1\alpha}\xi_{2\alpha}.
\end{equation*}
Likewise, for our convenience, if $\xi = \sum_{\alpha \in Ram}[\alpha]\otimes \xi_\alpha$ where $\xi_\alpha \in z^{-1}\mathbb{C}[[z]]dz\oplus\mathbb{C}[[z]]dz$ then $Res_{z=0}\xi := \sum_{\alpha \in Ram}Res_{z=0}\xi_\alpha$. We define the symplectic form $\Omega$ and the connection $\nabla$ for $W^{Ram}$ in an obvious way: $\Omega_t := \Omega_{Airy}$ where
\begin{equation*}
    \Omega_{Airy}(\xi_1,\xi_2) := Res_{z=0}\left(\xi_1\int\xi_2\right) = \sum_{\alpha \in Ram}Res_{z=0}\left(\xi_{1\alpha}\int \xi_{2\alpha}\right)
\end{equation*}
and $\nabla \xi := \sum_{\alpha\in Ram}[\alpha]\otimes \nabla_\alpha \xi_\alpha$ where $\nabla_\alpha$ is a connection for $W^{\epsilon_\alpha, M_\alpha}$ as defined in Section \ref{theembeddingofdiscssection}. It is clear that $(W^{Ram}\rightarrow Discs^{Ram}, \Omega, \nabla)$ is a weak symplectic vector bundle and $\Omega_{Airy}$ is $\nabla$-covariantly constant. The collection of maps $\gamma_\alpha : \mathcal{B}_{\Sigma_0} \hookrightarrow Discs^{M_{\alpha}}$ induces
\begin{equation}\label{gammamapbtodiscs}
    \gamma := \prod_{\alpha\in Ram}\gamma_\alpha : \mathcal{B}_{\Sigma_0} \hookrightarrow Discs^{Ram}.
\end{equation}
Typically, we denote the point $(t_{\alpha\in Ram}(\Sigma_0)) = \gamma([\Sigma_0]) \in Discs^{Ram}$ simply by $t_0 = (t_{0\alpha\in Ram}) \in Discs^{Ram}$ where $t_{0\alpha} := t_\alpha(\Sigma_0) \in Discs^{M_\alpha}$. Moreover, if $\mathcal{B}_{\Sigma_0}$ is sufficiently small then the image of $\gamma$ will even be in $Discs^{Ram}_{t_{0}}$. This fact will be stated more precisely in Lemma \ref{gammamaplemma}.

Using the map $\gamma$ we obtain the pull-back bundle $\gamma^*W^{Ram}\rightarrow \mathcal{B}_{\Sigma_0}$ with the connection $\gamma^*\nabla$. We define the pull-back connection $\gamma^*\nabla$ as follows. For any $\xi \in \Gamma(Discs^{Ram}, W^{Ram})$ we have $\gamma^*\xi \in \Gamma(\mathcal{B}_{\Sigma_0},\gamma^*W^{Ram})$ with $(\gamma^*\xi)_{\Sigma} := \xi_{\gamma(\Sigma)} = \xi_{t(\Sigma)} = \sum_{\alpha \in Ram}[\alpha]\otimes (\xi_{\alpha})_{t_\alpha(\Sigma)}$ and $\gamma^*\nabla$ is defined via the properties $(\gamma^*\nabla)(\gamma^*\xi) := \gamma^*(\nabla\xi)$. Writing this down explicitly, we have
\begin{align*}
    (\gamma^*\nabla)(\gamma^*\xi)_\Sigma &:= \sum_{k = 1}^g\sum_{\alpha \in Ram, l}du^k[\alpha]\otimes\left(\frac{\partial a_\alpha(\Sigma)}{\partial u^k}\frac{\partial}{\partial a_\alpha} + \frac{\partial b_{l\alpha}(\Sigma)}{\partial u^k}\frac{\partial}{\partial b_{l\alpha}} - \frac{\partial a_\alpha(\Sigma)}{\partial u^k}\mathcal{L}_{\frac{1}{2z_\alpha}\partial_{z_\alpha}}\right)(\xi_{\alpha})_{t_\alpha(\Sigma)}\\
    &= \sum_{k=1}^gdu^k\left(\frac{\partial}{\partial u^k} - \frac{\partial a_\alpha(\Sigma)}{\partial u^k}\sum_{\alpha \in Ram}[\alpha]\otimes\mathcal{L}_{\frac{1}{2z_\alpha}\partial_{z_\alpha}}\right)(\gamma^*\xi)_\Sigma.
\end{align*}

\subsubsection{Choosing $\{\epsilon_{\alpha\in Ram}\}, \{M_{\alpha\in Ram}\}, \mathcal{U}_{Ram}$ and $\mathcal{B}_{\Sigma_0}$}\label{choosingepsilonmubsubsection}
We will now write down a precise summary of how $\{\epsilon_{\alpha\in Ram}\}, \{M_{\alpha\in Ram}\}, \mathcal{U}_{Ram} = \{(U_{\alpha\in Ram}, x_{\alpha \in Ram}, y_{\alpha \in Ram})\}$ and $\mathcal{B}_{\Sigma_0}$ can be selected. Once they are selected, they should remain fixed for the entire calculation. Let $[\Sigma_0] \in \mathcal{B}$ be the fixed reference point. For each $\alpha \in Ram$, we pick $\epsilon_\alpha, M_\alpha \in \mathbb{R}, 0 < \epsilon_\alpha < M_\alpha$, a collection of $(\mathcal{F},\Omega_S)$-charts $\mathcal{U}_{Ram} = \{(U_{\alpha\in Ram}, x_{\alpha \in Ram}, y_{\alpha \in Ram})\}$ with $r_\alpha(\Sigma_0) \in U_\alpha$ and an open neighbourhood $\mathcal{B}_{\Sigma_0}$ of $[\Sigma_0]$ satisfying Condition \ref{howtochoosebsigma0condition} below:

\begin{remark}
These conditions are technical and can be skipped in the first reading. They are sufficient but not necessary conditions for the results of this section to hold. It is usually more practical to pick an appropriate $\mathcal{U}_{Ram}$ then select $\{\epsilon_{\alpha \in Ram}\}, \{M_{\alpha \in Ram}\}$ such that $\bar{\mathbb{D}}_{\alpha, M_\alpha}(\Sigma_0) \subset \Sigma_0\cap U_\alpha$ before picking $\mathcal{B}_{\Sigma_0}$ and directly check that it is small enough for all the needed objects, such as $\Phi : \mathcal{B}_{\Sigma_0} \rightarrow G_{\Sigma_0}$, to be defined.
\end{remark}

\begin{condition}\label{howtochoosebsigma0condition}~
\begin{description}
    \item[C\ref{howtochoosebsigma0condition}.1] $\mathcal{B}_{\Sigma_0}$ is a contractible open neighbourhood of $[\Sigma_0]$ which can be covered by a single coordinate chart $(\mathcal{B}_{\Sigma_0}, u^1,\cdots, u^g)$.
    \item[C\ref{howtochoosebsigma0condition}.2] For all $[\Sigma] \in \mathcal{B}_{\Sigma_0}$, the open subset $\Sigma\cap U_\alpha \subset \Sigma$ is bi-holomorphic to a simply-connected open subset of $\mathbb{C}$ with the standard local coordinate $z_\alpha := \sqrt{x_\alpha - x_\alpha(r_\alpha(\Sigma))}$ and $\bar{\mathbb{D}}_{\alpha, M_\alpha}(\Sigma) \subset \Sigma\cap U_\alpha$.
    \item[C\ref{howtochoosebsigma0condition}.3] There exists a collection of $(\mathcal{F},\Omega_S)$-charts $\mathcal{U}_0 = \{(U_{0,p\in\sigma_0},x_{p\in\sigma_0}, y_{p\in\sigma_0})\}$ such that for all $[\Sigma] \in \mathcal{B}_{\Sigma_0}$, the parallel transport on the vector bundle $(G\rightarrow \mathcal{B}_{\Sigma_0}, \nabla_{\mathcal{F}})$ from the fiber $G_{\Sigma_0}$ to the fiber $G_\Sigma$ is possible for any holomorphic form $\xi \in \Gamma(\Sigma_0,\Omega^1_{\Sigma_0}) \subset G_{\Sigma_0}$ according to Lemma \ref{paralleltransportinglemma}. Additionally, the image of $\mathbb{A}_{\alpha, \epsilon_\alpha, M_\alpha}(\Sigma)$ via the map $s_{\Sigma, \Sigma_0}$  (see Definition \ref{ssigmadefinition}) is contained in $\Sigma_0\cap U_\alpha$.
    \item[C\ref{howtochoosebsigma0condition}.4] Let $\mathcal{U}_0 = \{(U_{0,p\in\sigma_0},x_{p\in\sigma_0}, y_{p\in\sigma_0})\}$ be the collection of $(\mathcal{F},\Omega_S)$-charts from C\ref{howtochoosebsigma0condition}.3. For all $[\Sigma] \in \mathcal{B}_{\Sigma_0}$ the parallel transport on the vector bundle $(G\rightarrow \mathcal{B}_{\Sigma_0}, \nabla_{\mathcal{F}})$ from the fiber $G_{\Sigma}$ to the fiber $G_{\Sigma_0}$ is possible according to Lemma \ref{paralleltransportinglemma} for any $\xi \in G_\Sigma$ holomorphic on $\cup_{p \in \sigma_0} \Sigma \cap U_{0,p} \supset \Sigma \setminus \cup_{\alpha \in Ram}\mathbb{D}_{\alpha, \epsilon_\alpha}(\Sigma)$. Additionally, the image of $\mathbb{A}_{\alpha, \epsilon_\alpha, M_\alpha}(\Sigma_0)$ via the map $s_{\Sigma_0, \Sigma}$ is contained in $\Sigma\cap U_\alpha$.
\end{description}
\end{condition}

\begin{remark}
We can summarize the basic idea behind each part in Condition \ref{howtochoosebsigma0condition} as follows:
\begin{description}
\item[C\ref{howtochoosebsigma0condition}.1] Simplifies our discussions by avoiding any complications arises from coordinates transformations, transition functions, and possible monodromies of parallel transport when $\mathcal{B}_{\Sigma_0}$ wrap around a singularity in $\mathcal{B}$.
\item[C\ref{howtochoosebsigma0condition}.2] Helps us deal with neighbourhoods of ramification points of each $\Sigma\in\mathcal{B}_{\Sigma_0}$. In particular, this helps fix the definition of the map $\gamma : \mathcal{B}_{\Sigma_0}\hookrightarrow Discs^{Ram}$, the map $i : G \hookrightarrow W^{Ram}$ (see Equation (\ref{imapdefinition})), the discs $\mathbb{D}_{\alpha,\epsilon_\alpha}(\Sigma), \mathbb{D}_{\alpha,M_\alpha}(\Sigma)$ and annulus $\mathbb{A}_{\alpha, \epsilon_\alpha,M_\alpha}(\Sigma)$.
\item[C\ref{howtochoosebsigma0condition}.3] Helps us define the section $\theta \in \Gamma(\mathcal{B}_{\Sigma_0}, G)$ which will be important in the construction of $\Phi : \mathcal{B}_{\Sigma_0} \rightarrow G_{\Sigma_0}$ embedding the neighbourhood $\mathcal{B}_{\Sigma_0}$ into the fiber $G_{\Sigma_0}$ analogous to what we did in Section \ref{deformationofcurvessection} and Section \ref{theembeddingofdiscssection}.
\item[C\ref{howtochoosebsigma0condition}.4] Makes sure that we are allowed to perform a parallel transport of $\theta_{\Sigma}$ necessary to define the embedding map $\Phi : \mathcal{B}_{\Sigma_0}\rightarrow G_{\Sigma_0}$.
\end{description}
\end{remark}

The following Lemma guarantees that given a reference point $[\Sigma_0] \in \mathcal{B}$, it is always possible to find $\{\epsilon_{\alpha\in Ram}\}, \{M_{\alpha\in Ram}\}, \mathcal{U}_{Ram}$ and $\mathcal{B}_{\Sigma_0}$ that satisfy Condition \ref{howtochoosebsigma0condition}.

\begin{lemma}
For any given $\{\epsilon_{\alpha\in Ram}\}, \{M_{\alpha\in Ram}\}, \mathcal{U}_{Ram}$, there exists an open neighbourhood $\mathcal{B}_{\Sigma_0}$ of $[\Sigma_0]$ small enough such that $\{\epsilon_{\alpha\in Ram}\}, \{M_{\alpha\in Ram}\}, \mathcal{U}_{Ram}$ and $\mathcal{B}_{\Sigma_0}$ satisfy Condition \ref{howtochoosebsigma0condition}.
\end{lemma}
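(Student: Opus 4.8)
The plan is to check the four clauses of Condition \ref{howtochoosebsigma0condition} separately, each time shrinking the candidate neighbourhood of $[\Sigma_0]$, and then to take the intersection of the finitely many neighbourhoods obtained. Since $Ram$ is a finite index set and $\mathcal{B}$ is a complex manifold, a finite intersection of open neighbourhoods of $[\Sigma_0]$ is still an open neighbourhood, so combining the clauses causes no difficulty; the content is entirely a continuity-plus-compactness argument applied to the universal family $Z \to \mathcal{B}$.

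For C\ref{howtochoosebsigma0condition}.1 one simply picks a coordinate chart around $[\Sigma_0]$ in the complex manifold $\mathcal{B}$ and takes $\mathcal{B}_{\Sigma_0}$ to be a small coordinate ball inside it, which is contractible and covered by the single chart $(\mathcal{B}_{\Sigma_0}, u^1,\dots,u^g)$. For C\ref{howtochoosebsigma0condition}.2, I would use that $r_\alpha(\Sigma)$ is the unique simple zero of $dx_\alpha|_\Sigma$ in $U_\alpha$ and hence varies holomorphically with $[\Sigma]$, so that the standard local coordinate $z_\alpha = \sqrt{x_\alpha - x_\alpha(r_\alpha(\Sigma))}$ of Definition \ref{standardlocalcoordinatesdefinition0} is defined on a neighbourhood of $r_\alpha(\Sigma)$ in $\Sigma\cap U_\alpha$ and depends holomorphically on the curve. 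The closed disc $\bar{\mathbb{D}}_{\alpha, M_\alpha}(\Sigma_0)$ is compact and contained in the open set $\Sigma_0\cap U_\alpha$; by continuity of the family $Z\to\mathcal{B}$ together with compactness, there is an open neighbourhood of $[\Sigma_0]$ over which $\bar{\mathbb{D}}_{\alpha, M_\alpha}(\Sigma)$ still lies inside $\Sigma\cap U_\alpha$ and $\Sigma\cap U_\alpha$ is still bi-holomorphic to a simply connected subset of $\mathbb{C}$ via $z_\alpha$. Intersecting over $\alpha \in Ram$ settles C\ref{howtochoosebsigma0condition}.2.

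The substantive clauses are C\ref{howtochoosebsigma0condition}.3 and C\ref{howtochoosebsigma0condition}.4, which concern the existence of the parallel transport on $(G\to\mathcal{B}_{\Sigma_0},\nabla_{\mathcal{F}})$ described in Lemma \ref{paralleltransportinglemma}, namely pulling back a differential form along the leaves of $\mathcal{F}$ via the map $s_{\Sigma,\Sigma_0}$ of Definition \ref{ssigmadefinition}. First I would choose a finite collection $\mathcal{U}_0$ of $(\mathcal{F},\Omega_S)$-charts covering the compact set $\Sigma_0\setminus\bigcup_{\alpha\in Ram}\mathbb{D}_{\alpha,\epsilon_\alpha}(\Sigma_0)$; after shrinking $\mathcal{B}_{\Sigma_0}$, the same finite collection covers $\Sigma\setminus\bigcup_{\alpha}\mathbb{D}_{\alpha,\epsilon_\alpha}(\Sigma)$ for all nearby $\Sigma$, since that set varies continuously. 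Second, the displacement of $r_\alpha(\Sigma)$ away from $r_\alpha(\Sigma_0)$ measured along the leaf through their common value of $x_\alpha$ tends to $0$ as $[\Sigma]\to[\Sigma_0]$; as $\epsilon_\alpha>0$ is fixed, there is a neighbourhood of $[\Sigma_0]$ on which this displacement is $<\epsilon_\alpha$, which is exactly what is needed for $s_{\Sigma,\Sigma_0}$ and $s_{\Sigma_0,\Sigma}$ to be defined on the relevant regions and for the containments $s_{\Sigma,\Sigma_0}\big(\mathbb{A}_{\alpha,\epsilon_\alpha,M_\alpha}(\Sigma)\big)\subset\Sigma_0\cap U_\alpha$ and $s_{\Sigma_0,\Sigma}\big(\mathbb{A}_{\alpha,\epsilon_\alpha,M_\alpha}(\Sigma_0)\big)\subset\Sigma\cap U_\alpha$ to hold, the latter again by a compactness argument on the (compact) annuli. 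Taking the intersection of all neighbourhoods produced in the preceding steps yields a $\mathcal{B}_{\Sigma_0}$ satisfying all of Condition \ref{howtochoosebsigma0condition}.

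The main obstacle is not any estimate but rather that clauses C\ref{howtochoosebsigma0condition}.3 and C\ref{howtochoosebsigma0condition}.4 are phrased in terms of objects ($G$, $\nabla_{\mathcal{F}}$, $s_{\Sigma,\Sigma_0}$, Lemma \ref{paralleltransportinglemma}) introduced later in the section, so the proof is necessarily somewhat circular in exposition; the clean way to present it is to defer this lemma until after those objects are in place and then observe that, granting Lemma \ref{paralleltransportinglemma}, the only thing that can fail is a leaf carrying a ramification point out of a fixed-radius punctured disc, and that is an open condition vacuously satisfied at $[\Sigma_0]$ itself. Everything else reduces to the remark, standard in this setting, that a compact subset of a curve contained in an open chart remains so under a sufficiently small deformation.
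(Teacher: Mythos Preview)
Your proposal is correct and follows the same approach as the paper, which simply states that the result ``follows from the fact that C\ref{howtochoosebsigma0condition}.1--C\ref{howtochoosebsigma0condition}.4 are all open conditions.'' You have supplied the continuity-and-compactness details that the paper leaves implicit, and your observation about the forward references in C\ref{howtochoosebsigma0condition}.3--C\ref{howtochoosebsigma0condition}.4 is a fair expository remark but not a mathematical obstacle.
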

\begin{proof}
This follows from the fact that C\ref{howtochoosebsigma0condition}.1-C\ref{howtochoosebsigma0condition}.4 are all open conditions.
\end{proof}

\subsubsection{The vector bundle $G\rightarrow \mathcal{B}_{\Sigma_0}$ and its properties}\label{vectorbundlegsubsection}
We are going to define the vector bundle $(G\rightarrow \mathcal{B}_{\Sigma_0}, \nabla_{\mathcal{F}})$ in this section (see Definition \ref{gvectorbundledefinition}). The fiber of $G$ over any point $[\Sigma] \in \mathcal{B}_{\Sigma_0}$ is the infinite-dimensional vector space $G_{\Sigma}$ given by Definition \ref{gdefinition} below. By taking a Laurent expansion in the vicinity of each ramification point $r_\alpha, \alpha \in Ram$ we obtain a vector bundle morphism $i: G\hookrightarrow W^{Ram}$, embedding $G$ as a vector sub-bundle of $W^{Ram}$. It will follow from Proposition \ref{gperpproposition} that $G$ is, in fact, a coisotropic vector sub-bundle of $W^{Ram}$. Consider $G^\perp_{\Sigma} \subset G_{\Sigma}$ as a group acting on $W^{Ram}_{t(\Sigma)}$ by vector addition, then we will see in Proposition \ref{sympreductionproposition} that the symplectic reduction of $(W^{Ram}_{t(\Sigma)}, \Omega_{Airy})$ is given by $(\mathcal{H}_{\Sigma}, \Omega_{\mathcal{H}})$. In particular, we have the surjective vector bundle morphism $[.] : G\rightarrow \mathcal{H}$ given by taking the cohomology class. Together, the map $i$ and $[.]$ establish $G \rightarrow \mathcal{B}_{\Sigma_0}$ as an intermediate vector bundle connecting $W^{Ram} \rightarrow Discs^{Ram}$ and $\mathcal{H}\rightarrow \mathcal{B}$ as we will discuss further in Section \ref{comparisonofvectorbundlessubsection} and Section \ref{relationtoresconstraintssubsection}.

\begin{definition}\label{gdefinition}
For any $[\Sigma] \in \mathcal{B}$, the infinite-dimensional vector space $G_{\Sigma}$ is given by
\begin{equation*}
    G_{\Sigma} := \left\{\xi \in \varinjlim_{\bar{\epsilon}_\alpha < \epsilon_\alpha}\Gamma(\Sigma \setminus \cup_{\alpha \in Ram}\bar{\mathbb{D}}_{\alpha,\bar{\epsilon}_\alpha}, \Omega^1_\Sigma)\ |\ \oint_{\partial \bar{\mathbb{D}}_{\alpha,\epsilon_\alpha}}\xi = 0,\ \alpha \in Ram\right\}. 
\end{equation*}
\end{definition}




What we meant is that if $\xi \in G_\Sigma$ then $\xi$ is a holomorphic differential on some open neighbourhood of $\Sigma\setminus \cup_{\alpha \in Ram}\mathbb{D}_{\alpha, \epsilon_\alpha}(\Sigma)$. In particular, $\xi$ is holomorphic on some open set containing $\bar{\mathbb{A}}_{\alpha, \epsilon_\alpha, M_\alpha}(\Sigma)$. 

\begin{remark}\label{gmerremark}
Definition \ref{gdefinition} of $G_{\Sigma}$ should be compared to the original definition of $G_{\Sigma}$ in Equation (38) of \cite{kontsevich2017airy}. We denote the version of $G_{\Sigma}$ in \cite{kontsevich2017airy} as $G^{mer}_\Sigma$ and recall that it is given by
\begin{equation*}
    G^{mer}_\Sigma := \left\{\xi \in \Gamma\left(\Sigma, \Omega^1_\Sigma\left(\sum_{\alpha \in Ram}nr_\alpha\right)\right)\ |\ \text{ for some } n \in \mathbb{Z}_{\geq 0} \text{ and } Res_{r_\alpha}\xi = 0,\ \alpha \in Ram\right\}.
\end{equation*}
In other words, $G^{mer}_{\Sigma}$ is a vector space of residueless meromorphic differential forms on $\Sigma$ with poles at ramification points.
\end{remark}


For each $\alpha \in Ram$, the Laurent series representation of $\xi \in G_\Sigma$ in the standard local coordinate center at $z_\alpha = 0$ converges absolutely for $\bar{\epsilon}_\alpha < |z_\alpha| < \bar{M}_\alpha$ for some $\bar{\epsilon}_\alpha < \epsilon_\alpha < M_\alpha < \bar{M}_\alpha$ and contains no $\frac{dz_\alpha}{z_\alpha}$ term because $\oint_{\partial \bar{\mathbb{D}}_{\alpha, \epsilon_\alpha}}\xi = 0$. Therefore, this define the map $i_\alpha : G_\Sigma \hookrightarrow W^{\epsilon_\alpha, M_\alpha}_{t_\alpha(\Sigma)}$. Performing the Laurent expansion at every ramification point of $\Sigma$, we obtain the map
\begin{equation}\label{imapdefinition}
    i := \sum_{\alpha \in Ram}[\alpha]\otimes i_\alpha : G_\Sigma \hookrightarrow W^{Ram}_{t(\Sigma)}
\end{equation}
for any $[\Sigma] \in \mathcal{B}_{\Sigma_0}$. Hence, instead of writing $i(G_{\Sigma}) \subset W^{Ram}_{t(\Sigma)}$ we will sometime write $G_{\Sigma} \subset W^{Ram}_{t(\Sigma)}$.
\begin{remark}
Let us emphasize that the definition of the map $i$ depends on our choice of the collection of coordinate charts $\mathcal{U}_{Ram}$. Choosing a different $(\mathcal{F},\Omega_S)$-local coordinate system $(x_\alpha, y_\alpha)$ on $U_\alpha$ for any $\alpha \in Ram$ will result in a different map $i$ and hence, a different embedding $G_\Sigma \hookrightarrow W^{Ram}_{t(\Sigma)}$.
\end{remark}
We can apply the symplectic form $\Omega_{Airy}$ on vectors in $G_{\Sigma}$. In fact, using Riemann-bilinear identity we can obtain a nice alternative expression. For $\xi_1, \xi_2 \in G_\Sigma$ we have
\begin{align*}\label{omegaairyong}
    \Omega_{Airy}(i(\xi_1), i(\xi_2)) &= \sum_{\alpha \in Ram}Res_{z_\alpha =0}\left(i_\alpha(\xi_1)\int i_\alpha(\xi_2)\right) = \frac{1}{2\pi i}\sum_{\alpha \in Ram}\oint_{\partial \bar{\mathbb{D}}_{\alpha,\epsilon_\alpha}}\left(\xi_1\int \xi_2\right)\\
    &= \frac{1}{2\pi i} \sum_{k=1}^g\left(\oint_{A_k}\xi_2\oint_{B_k}\xi_1 - \oint_{B_k}\xi_2\oint_{A_k}\xi_1\right).\numberthis
\end{align*}

For each $[\Sigma]\in \mathcal{B}_{\Sigma_0}$, we are going to show that $G_{\Sigma}\subset W^{Ram}_{t(\Sigma)}$ is a coisotropic subspace and that $G_{\Sigma}/G^\perp_{\Sigma} \cong \mathcal{H}_{\Sigma}$. Then, we will define the vector bundle $G\rightarrow \mathcal{B}$. But first, we need some preliminary definitions and results.

\begin{definition}\label{vdefinition}
For any $[\Sigma] \in \mathcal{B}$, the infinite-dimensional vector space $V_\Sigma$ is given by
\begin{equation*}
    V_\Sigma := \left\{\xi \in G_\Sigma\ |\ \oint_{A_i} \xi = 0, i = 1,...,g\right\} \subset G_\Sigma.
\end{equation*}
\end{definition}

Let us quickly recall the following definition of Bergman kernels:

\begin{definition}\label{bergmankerneldefinition}
A \emph{Bergman kernel} is a meromorphic bi-differential $B = B(p,q)$ on $\Sigma\times \Sigma$ with zero residue poles of order $2$ along the diagonal $p = q$ and holomorphic elsewhere. We say that the Bergman kernel $B = B(p,q)$ is \emph{normalized} with respect to the chosen $A$-periods if
\begin{equation*}
    B(p,q) =_{p\sim q} \frac{dz(p)dz(q)}{(z(p)-z(q))^2} + \text{holomorphic}, \qquad \oint_{p \in A_i}B(p,q) = 0, i = 1,\cdots,g.
\end{equation*}
\end{definition}
\begin{remark}
Given a curve $\Sigma$ and a choice of $A,B$ symplectic basis cycles, the normalized Bergman kernel is unique. This is because if $B$ and $B'$ are normalized Bergman kernels then $(B-B')(p,q)$ is holomorphic in $p$ with zero $A$-periods for all $q$. By the explicit construction via Riemann Theta function \cite{eynard2008algebraic, eynard2007invariants} we find that the normalized Bergman kernels also satisfy the following properties:
\begin{equation*}
    B(p,q) = B(q,p), \qquad \oint_{p \in B_i}B(p,q) = 2\pi i \omega_i(q)
\end{equation*}
where $\omega_i$ are the normalized holomorphic forms on $\Sigma$.
\end{remark}

\begin{definition}\label{endifferentialdefinition}
Let $B(p,q)$ be the normalized Bergman kernel on $\Sigma$.
For any $k \in \mathbb{Z}_{\geq 1}$ and $\alpha \in Ram$ we define
the meromorphic differential form on $\Sigma$:
\begin{equation*}
    \bar{e}^{k,\alpha} := \frac{1}{2\pi ik}\oint_{z_\alpha = 0}\frac{B(.,q(z_\alpha))}{z_\alpha^k}.
\end{equation*}
\end{definition}
Using the standard local coordinates $z_\alpha$ and $z_\beta$ of $\Sigma \cap U_\alpha$ and $\Sigma \cap U_\beta$ respectively, $\alpha, \beta \in Ram$, the local series expansion of $B(p,q)$ for $p \in \Sigma\cap U_\alpha$ and $q \in \Sigma\cap U_\beta$ is given by 
\begin{equation}\label{bergmankernelexpansion}
    B(z_\alpha(p), z_\beta(q)) = \frac{\delta_{\alpha\beta} dz_\alpha(p) dz_\beta(q)}{(z_\alpha(p) - z_\beta(q))^2} + \sum_{i,j = 1}^\infty P^{(i,\alpha)(j,\beta)}z_\alpha^{i-1}(p)z_\beta^{j-1}(q)dz_\alpha(p) dz_\beta(q)
\end{equation}
for some symmetric tensor $P^{(i,\alpha)(j,\beta)} = P^{(j,\beta)(i,\alpha)}$. We obtain the following easy fact about $\bar{e}^{k,\alpha}$:

\begin{lemma}
Let $\bar{e}^{k,\alpha}$ be a meromorphic differential form on $\Sigma$ as given by Definition \ref{endifferentialdefinition}. Then $\bar{e}^{k,\alpha} \in V_{\Sigma}$ and 
\begin{equation}\label{ebarintermofef}
    i(\bar{e}^{k,\alpha}) = [\alpha]\otimes z^{-k}_\alpha\frac{dz_\alpha}{z_\alpha} + \sum_{k' \geq 1, \alpha' \in Ram}[\alpha']\otimes \frac{1}{k}P^{(k,\alpha)(k',\alpha')}z_{\alpha'}^{k'-1}\frac{dz_{\alpha'}}{z_{\alpha'}} \in W^{Ram}_{t(\Sigma)}
\end{equation}
\end{lemma}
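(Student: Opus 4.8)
The plan is to prove the two assertions — that $\bar{e}^{k,\alpha}$ lies in $V_\Sigma$, and that its image under $i$ is given by the stated formula — essentially by unwinding Definition \ref{endifferentialdefinition} and using the known pole structure and period properties of the normalized Bergman kernel $B(p,q)$.

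First I would establish that $\bar{e}^{k,\alpha}$ is a meromorphic differential on $\Sigma$ whose only poles lie at the ramification points, and locate those poles. Since $B(\cdot,q)$ is meromorphic in its first argument with a double pole only along the diagonal, the contour integral $\frac{1}{2\pi i k}\oint_{z_\alpha=0}\frac{B(\cdot,q(z_\alpha))}{z_\alpha^k}$ produces a form that is holomorphic in $p$ away from $r_\alpha$: indeed, for $p$ outside a neighbourhood of $r_\alpha$ the integrand has no pole in the $q$-variable on the contour $|z_\alpha|=0^+$, so the integral is holomorphic there, while near $r_\alpha$ the pinching of the contour against the diagonal singularity produces a pole of order at most $k+1$ at $r_\alpha$. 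In particular $\bar{e}^{k,\alpha}$ is holomorphic on $\Sigma\setminus\{r_\alpha\}$, hence a fortiori on some open neighbourhood of $\Sigma\setminus\cup_{\beta\in Ram}\bar{\mathbb{D}}_{\beta,\bar\epsilon_\beta}$ for suitable $\bar\epsilon_\beta<\epsilon_\beta$, so it represents an element of $\varinjlim_{\bar\epsilon_\beta<\epsilon_\beta}\Gamma(\Sigma\setminus\cup\bar{\mathbb{D}}_{\beta,\bar\epsilon_\beta},\Omega^1_\Sigma)$.

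Next I would compute the local Laurent expansion of $\bar{e}^{k,\alpha}$ at each ramification point $r_{\alpha'}$, which simultaneously gives formula (\ref{ebarintermofef}) and verifies the residue/period constraints. Substituting the expansion (\ref{bergmankernelexpansion}) of $B$ into the definition: for $\alpha'\neq\alpha$ only the regular part $\sum_{i,j}P^{(i,\alpha')(j,\alpha)}z_{\alpha'}^{i-1}z_\alpha^{j-1}dz_{\alpha'}dz_\alpha$ contributes, and $\frac{1}{2\pi i k}\oint_{z_\alpha=0}z_\alpha^{j-1-k}\,dz_\alpha = \frac{1}{k}\delta_{j,k}$, yielding the sum $\sum_{k'\geq 1}\frac{1}{k}P^{(k,\alpha)(k',\alpha')}z_{\alpha'}^{k'-1}\frac{dz_{\alpha'}}{z_{\alpha'}}$; for $\alpha'=\alpha$ the diagonal term $\frac{dz_\alpha(p)dz_\alpha(q)}{(z_\alpha(p)-z_\alpha(q))^2}$ contributes the residue of $\frac{dz_\alpha(p)}{(z_\alpha(p)-z_\alpha(q))^2}$ expanded in $z_\alpha(q)$, and the contour integral $\frac{1}{2\pi i k}\oint \frac{dz_\alpha(p)\,dz_\alpha(q)}{(z_\alpha(p)-z_\alpha(q))^2 z_\alpha(q)^k}$ picks out, by Cauchy's formula applied to $q\mapsto z_\alpha(q)^{-k}$, the term $z_\alpha(p)^{-k}\frac{dz_\alpha(p)}{z_\alpha(p)}$ (using $\frac{d}{dw}w^{-k}\big|$ combinatorics with the $1/k$ normalization), plus the regular $P$-contribution. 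This gives exactly (\ref{ebarintermofef}). From this expansion one reads off directly that there is no $\frac{dz_{\alpha'}}{z_{\alpha'}}$-free obstruction: the coefficient of $\frac{dz_{\alpha'}}{z_{\alpha'}}$ (i.e. the residue at $r_{\alpha'}$) vanishes for every $\alpha'$, so $\oint_{\partial\bar{\mathbb{D}}_{\alpha',\epsilon_{\alpha'}}}\bar{e}^{k,\alpha}=0$, placing $\bar{e}^{k,\alpha}$ in $G_\Sigma$. Finally, $\oint_{A_i}\bar{e}^{k,\alpha} = \frac{1}{2\pi i k}\oint_{z_\alpha=0}\frac{1}{z_\alpha^k}\oint_{p\in A_i}B(p,q(z_\alpha)) = 0$ by the normalization $\oint_{p\in A_i}B(p,q)=0$, so in fact $\bar{e}^{k,\alpha}\in V_\Sigma$.

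The main obstacle I anticipate is the careful bookkeeping of the diagonal-term contribution when $\alpha'=\alpha$: one must correctly interchange the order of the contour integral $\oint_{z_\alpha=0}$ with the Laurent expansion of $\frac{1}{(z_\alpha(p)-z_\alpha(q))^2}$ in the two regimes $|z_\alpha(q)|<|z_\alpha(p)|$ versus the reverse, track the factor $\frac{1}{k}$ against the derivative $\frac{d}{dw}(w^{-k})=-k w^{-k-1}$, and confirm the signs and normalization so that the leading term comes out as precisely $z_\alpha^{-k}\frac{dz_\alpha}{z_\alpha}$ with coefficient $1$. Everything else — holomorphy away from ramification points, vanishing of $A$-periods, vanishing residues — is a routine consequence of the defining properties of the normalized Bergman kernel already recorded in the excerpt.
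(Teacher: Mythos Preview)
Your proposal is correct and follows essentially the same approach as the paper: substitute the local expansion (\ref{bergmankernelexpansion}) of the Bergman kernel into the defining contour integral to obtain the Laurent expansion (\ref{ebarintermofef}), read off from it that $\bar{e}^{k,\alpha}$ is holomorphic away from $r_\alpha$ with a residue-free pole of order $k+1$ there, and then swap the order of integration with $\oint_{A_i}$ to get vanishing $A$-periods from the normalization of $B$. Your extra care with the diagonal contribution when $\alpha'=\alpha$ is exactly the bookkeeping that the paper's one-line ``obtained from integrating the series expansion'' suppresses, and your Cauchy-formula computation of that term is correct.
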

\begin{proof}
The expansion (\ref{ebarintermofef}) is obtained from integrating the series expansion (\ref{bergmankernelexpansion}). This shows that $\bar{e}^{k,\alpha}$ is holomorphic everywhere except for a pole of order $k+1$ at $r_\alpha$ with zero residue. Deform $A$-cycles to avoid intersection with $r_\alpha$, then $\oint_{p \in A_i}\bar{e}^{k,\alpha}(p) = \frac{1}{2\pi ik}\oint_{z_\alpha = 0}\frac{1}{z_\alpha^k}\oint_{p \in A_i}B(p,q(z_\alpha)) = 0$ because the Bergman kernel is normalized.
\end{proof}

\begin{remark}
Note that the definition of $e^{k,\alpha}$ implicitly depends on our choice of the Bergman kernel (since we are using the normalized Bergman kernel this is equivalent to our choice of $A,B$ symplectic basis cycles) and the standard local coordinate $z_\alpha$ (which is equivalent to our choice of the collection of $(\mathcal{F},\Omega_S)$-charts $\mathcal{U}_{Ram}$).
\end{remark}

\begin{lemma}\label{fromlocalvtoglobalvlemma}
Let $[\Sigma] \in \mathcal{B}$.
If $\xi_\alpha = \sum_{k=1}^\infty \xi_{\alpha,k}z_\alpha^{-k}\frac{dz_\alpha}{z_\alpha} \in V^{\epsilon_\alpha}_{Airy}$ then $\bar{\xi}_\alpha = \sum_{k=1}^\infty \xi_{\alpha,k}\bar{e}^{k,\alpha} \in V_{\Sigma}$. The principal part of $i_{\alpha'}(\bar{\xi}_\alpha)$ is $\delta_{\alpha\alpha'}\xi_\alpha$ and for any path $C \subset \Sigma\setminus \cup_{\alpha \in Ram}\mathbb{D}_{\alpha, \epsilon_\alpha}(\Sigma)$, we have 
\begin{equation*}
    \int_C\bar{\xi}_\alpha = \int_C \sum_{k=1}^\infty\xi_{\alpha,k}\bar{e}^{k,\alpha} = \sum_{k=1}^\infty\xi_{\alpha,k}\int_C \bar{e}^{k,\alpha}.
\end{equation*}
\end{lemma}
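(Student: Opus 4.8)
The plan is to prove the three assertions in turn, each being essentially a statement about interchanging an infinite sum with an operation (Laurent expansion, taking principal parts, path integration), all of which are justified by the uniform convergence built into the definition of $V^{\epsilon_\alpha}_{Airy}$ and the hypotheses on $\mathcal{B}_{\Sigma_0}$ (Condition \ref{howtochoosebsigma0condition}). First I would unwind the convergence of $\xi_\alpha$: since $\xi_\alpha \in V^{\epsilon_\alpha}_{Airy}$, the series $\sum_{k=1}^\infty \xi_{\alpha,k} z_\alpha^{-k}$ converges absolutely and uniformly on $\{|z_\alpha| \geq \bar\epsilon_\alpha\}$ for some $\bar\epsilon_\alpha < \epsilon_\alpha$, in particular on the closure of $\mathbb{A}_{\alpha,\epsilon_\alpha,M_\alpha}(\Sigma)$. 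Each $\bar{e}^{k,\alpha} \in V_\Sigma$ by the preceding lemma, with the explicit expansion (\ref{ebarintermofef}). The key quantitative input I would extract is a bound on the holomorphic tail of $\bar{e}^{k,\alpha}$: because $B(p,q)$ is a fixed holomorphic bi-differential on $(\Sigma \setminus \text{diagonal})$, the coefficients $P^{(k,\alpha)(k',\alpha')}$ grow at most like $\bar\epsilon_\alpha^{k}$ times geometric factors, so $\bar{e}^{k,\alpha}$ restricted to any compact $K \subset \Sigma \setminus \cup_\alpha \mathbb{D}_{\alpha,\epsilon_\alpha}(\Sigma)$ is $O(\bar\epsilon_\alpha^{k})$ uniformly on $K$, matching the decay of $\xi_{\alpha,k}$.

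With that bound in hand, the series $\sum_{k=1}^\infty \xi_{\alpha,k}\bar{e}^{k,\alpha}$ converges absolutely and uniformly on compact subsets of $\Sigma \setminus \cup_\alpha \bar{\mathbb{D}}_{\alpha,\bar\epsilon_\alpha}$, hence defines a holomorphic differential $\bar\xi_\alpha$ there, so $\bar\xi_\alpha \in \varinjlim \Gamma(\Sigma\setminus\cup\bar{\mathbb{D}}_{\alpha,\bar\epsilon_\alpha},\Omega^1_\Sigma)$. To see $\bar\xi_\alpha \in V_\Sigma$ I would check the two defining conditions: $\oint_{\partial\bar{\mathbb{D}}_{\beta,\epsilon_\beta}}\bar\xi_\alpha = \sum_k \xi_{\alpha,k}\oint_{\partial\bar{\mathbb{D}}_{\beta,\epsilon_\beta}}\bar{e}^{k,\alpha} = 0$ (term-by-term integration is legitimate by uniform convergence on a neighbourhood of the compact cycle $\partial\bar{\mathbb{D}}_{\beta,\epsilon_\beta}$, and each term vanishes since $\bar{e}^{k,\alpha}$ is residueless), and similarly $\oint_{A_i}\bar\xi_\alpha = \sum_k \xi_{\alpha,k}\oint_{A_i}\bar{e}^{k,\alpha} = 0$ by the lemma. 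For the principal part claim, I would apply $i_{\alpha'}$, i.e. take the Laurent expansion at $r_{\alpha'}$; uniform convergence near the circle $|z_{\alpha'}| = \epsilon_{\alpha'}$ lets me expand term-by-term, and summing (\ref{ebarintermofef}) against $\xi_{\alpha,k}$ gives $i_{\alpha'}(\bar\xi_\alpha) = \delta_{\alpha\alpha'}\xi_\alpha + (\text{holomorphic part})$, so the principal part is exactly $\delta_{\alpha\alpha'}\xi_\alpha$. The final displayed identity $\int_C \bar\xi_\alpha = \sum_k \xi_{\alpha,k}\int_C \bar{e}^{k,\alpha}$ is then again just interchanging sum and integral over the compact path $C$, which is permitted because $C \subset \Sigma\setminus\cup_\alpha\mathbb{D}_{\alpha,\epsilon_\alpha}(\Sigma)$ lies in the region of uniform convergence.

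The main obstacle I anticipate is making the uniform-decay estimate on $\bar{e}^{k,\alpha}$ on compact subsets of $\Sigma\setminus\cup_\alpha\mathbb{D}_{\alpha,\epsilon_\alpha}(\Sigma)$ rigorous, since $\Sigma$ is a global object and one must control $\bar{e}^{k,\alpha}$ not just near $r_\alpha$ but everywhere away from the removed discs — including near the other ramification points and around the $A,B$ cycles. The cleanest route is to use the integral representation $\bar{e}^{k,\alpha} = \frac{1}{2\pi i k}\oint_{|z_\alpha|=\bar\epsilon_\alpha}\frac{B(\cdot,q(z_\alpha))}{z_\alpha^k}$ directly: for a fixed compact $K$, $B(p,q(z_\alpha))$ is bounded uniformly for $p \in K$ and $|z_\alpha| = \bar\epsilon_\alpha$ (the diagonal singularity is avoided since $K$ is disjoint from $\mathbb{D}_{\alpha,\epsilon_\alpha}(\Sigma)$), so $\|\bar{e}^{k,\alpha}\|_K \leq \frac{C_K}{k}\bar\epsilon_\alpha^{-k}\cdot\bar\epsilon_\alpha = O(\bar\epsilon_\alpha^{1-k})$, while $|\xi_{\alpha,k}| = O(\bar\epsilon_\alpha'^{\,k})$ for any $\bar\epsilon_\alpha < \bar\epsilon_\alpha' < \epsilon_\alpha$; choosing $\bar\epsilon_\alpha$ small enough relative to the radius of convergence of $\xi_\alpha$ makes $\sum_k |\xi_{\alpha,k}|\,\|\bar{e}^{k,\alpha}\|_K < \infty$. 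Everything else is bookkeeping with Fubini/Weierstrass for uniformly convergent series of holomorphic forms, so once this estimate is stated the three conclusions follow immediately.
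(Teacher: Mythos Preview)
Your approach is essentially the paper's: both use the integral representation $\bar{e}^{k,\alpha}(p) = \frac{1}{2\pi i k}\oint \frac{B(p,q(z_\alpha))}{z_\alpha^k}$ against the Bergman kernel to pass from the local datum $\xi_\alpha$ to a global form, and both justify the interchanges via uniform convergence on the contour. There is, however, a slip in your final estimate. With the contour at $|z_\alpha| = \bar\epsilon_\alpha$ you have $\|\bar e^{k,\alpha}\|_K = O(\bar\epsilon_\alpha^{1-k})$ and $|\xi_{\alpha,k}| = O((\bar\epsilon_\alpha')^k)$, so for $\sum_k (\bar\epsilon_\alpha'/\bar\epsilon_\alpha)^k$ to converge you need $\bar\epsilon_\alpha' < \bar\epsilon_\alpha$, not $\bar\epsilon_\alpha < \bar\epsilon_\alpha'$ as you wrote; correspondingly the contour radius $\bar\epsilon_\alpha$ must be taken \emph{larger} than the inner radius of convergence of $\xi_\alpha$ (anything in that open interval up to $\epsilon_\alpha$ works), not ``small enough''. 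Once the inequality is reversed your argument goes through.

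The paper sidesteps this per-term bookkeeping altogether by first summing inside the integral: since $\sum_k \frac{\xi_{\alpha,k}}{k z_\alpha^k}$ converges absolutely uniformly on $|z_\alpha| \geq \bar\epsilon_\alpha$, one defines directly
\[
\bar\xi_\alpha(p) := \frac{1}{2\pi i}\oint_{|z_\alpha|=\bar\epsilon_\alpha}\Bigl(\sum_{k\geq 1} \frac{\xi_{\alpha,k}}{kz_\alpha^k}\Bigr)B(p,q(z_\alpha)),
\]
which is manifestly holomorphic on $\Sigma\setminus\bar{\mathbb{D}}_{\alpha,\bar\epsilon_\alpha}$. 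Term-by-term integration over the compact contour then identifies this with $\sum_k\xi_{\alpha,k}\bar e^{k,\alpha}$, and the interchange $\int_C\bar\xi_\alpha = \sum_k\xi_{\alpha,k}\int_C\bar e^{k,\alpha}$ follows by Fubini on the two compact integrals, with no need for separate estimates on $\|\bar e^{k,\alpha}\|_K$.
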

\begin{proof}
We have that $\xi_\alpha = \sum_{k=1}^\infty \xi_{\alpha,k}z_\alpha^{-k}\frac{dz_\alpha}{z_\alpha}$ converges absolutely uniformly for $|z_\alpha| > \bar{\epsilon}_\alpha$ for some $\bar{\epsilon}_\alpha < \epsilon_\alpha$. Therefore, 
\begin{equation}
    \sum_{k=1}^\infty \frac{\xi_{\alpha, k}}{kz^{k}_\alpha} = -\sum_{k=1}^\infty\int \xi_{\alpha,k}z^{-k}_\alpha\frac{dz_\alpha}{z_\alpha} = -\int\sum_{k=1}^\infty \xi_{\alpha,k}z_\alpha^{-k}\frac{dz_\alpha}{z_\alpha}
\end{equation}
converges absolutely uniformly for $|z_\alpha| > \bar{\epsilon}_{\alpha}$, because integration of a power series does not change its radius of convergence. Then by using the Bergman kernel $B(p,q)$ on $\Sigma$ we define
\begin{align*}
    \bar{\xi}_{\alpha}(p) := \frac{1}{2\pi i}\oint_{z_\alpha \in \partial \bar{\mathbb{D}}_{\alpha,\bar{\epsilon}_\alpha}}\left(\sum_{k=1}^\infty \frac{\xi_{\alpha, k}}{kz^{k}_\alpha}\right)B(p,q(z_\alpha))
    = \sum_{k=1}^\infty\frac{\xi_{\alpha,k}}{2\pi i k}\oint_{z_\alpha = 0}\frac{B(p,q(z_\alpha))}{z_\alpha^k} = \sum_{k = 1}^\infty\xi_{\alpha,k}\bar{e}^{k,\alpha}(p),
\end{align*}
where the term-by-term integration is allowed due to the absolute uniform convergence. Therefore, $\bar{\xi}_{\alpha}$ is holomorphic for all $p \in \Sigma \setminus \bar{\mathbb{D}}_{\alpha,\bar{\epsilon}_\alpha} \supset \Sigma \setminus \cup_{\alpha' \in Ram}\mathbb{D}_{\alpha',\epsilon_{\alpha'}}$ but diverges as $p$ approaches the integration path $\partial \bar{\mathbb{D}}_{\alpha,\bar{\epsilon}_\alpha}$. The principal part of $\bar{e}^{k,\alpha}$ is $z^{-k}_\alpha\frac{dz_\alpha}{z_\alpha}$ so the principal part of $i_{\alpha'}(\bar{\xi}_\alpha)$ is $\xi_{\alpha}$ if $\alpha' = \alpha$ and zero otherwise. Given any path $C\subset \Sigma \setminus \cup_{\alpha\in Ram}\mathbb{D}_{\alpha, \epsilon_\alpha}(\Sigma)$ we have 
\begin{align*}
    \int_{C}\bar{\xi}_\alpha(p) &= \int_{p\in C} \left( \frac{1}{2\pi i}\oint_{z_\alpha \in \partial\bar{\mathbb{D}}_{\alpha, \bar{\epsilon}_\alpha}}\left(\sum_{k=1}^\infty \frac{\xi_{\alpha, k}}{kz^k_\alpha}\right)B(p,q(z_\alpha))\right) = \frac{1}{2\pi i}\oint_{z_\alpha \in \partial\bar{\mathbb{D}}_{\alpha, \bar{\epsilon}_\alpha}}\left(\sum_{k=1}^\infty \frac{\xi_{\alpha, k}}{kz^k_\alpha}\right)\int_{p\in C}B(p,q(z_\alpha))\\
    &= \sum_{k=1}^\infty\frac{\xi_{\alpha,k}}{2\pi i k}\oint_{z_\alpha \in \partial\bar{\mathbb{D}}_{\alpha, \bar{\epsilon}_\alpha}}\int_{p\in C}\frac{B(p,q(z_\alpha))}{z_\alpha^k} = \sum_{k=1}^\infty \xi_{\alpha,k}\int_{p\in C}\frac{1}{2\pi i k}\oint_{z_\alpha \in \partial\bar{\mathbb{D}}_{\alpha, \bar{\epsilon}_\alpha}}\frac{B(p,q(z_\alpha))}{z_\alpha^k}\\
    &= \sum_{k=1}^\infty \xi_{\alpha,k}\int_{p\in C}\bar{e}^{k,\alpha}(p).
\end{align*}
We have used the fact that $C$ do not intersect with $\partial \bar{\mathbb{D}}_{\epsilon, \bar{\epsilon}_\alpha}$, so the order of integrations can be permuted. The integration can be done term-by-term because $\sum_{k=1}^\infty \xi_{\alpha,k}z^{-k}_\alpha \frac{dz_\alpha}{z_\alpha}$ converges absolutely uniformly for $|z_\alpha| > \bar{\epsilon}_\alpha$. Let $C = \partial\bar{\mathbb{D}}_{\alpha,\epsilon_\alpha}$ then $\oint_{\partial\bar{\mathbb{D}}_{\alpha,\epsilon_\alpha}}\bar{\xi}_\alpha = \sum_{k=1}^\infty \xi_{\alpha,k}\oint_{\partial\bar{\mathbb{D}}_{\alpha,\epsilon_\alpha}}\bar{e}^k = 0$. On the other hand, let $C = A_i$ then $\oint_{A_i}\bar{\xi}_\alpha = \sum_{k=1}^\infty\xi_{\alpha,k}\oint_{A_i}\bar{e}^{k,\alpha} = 0$. Therefore, we conclude that $\bar{\xi}_\alpha \in V_\Sigma$.
\end{proof}

Recall the definition of $L^M_{Airy} \subset W^{\epsilon, M}_{Airy}$ from Section \ref{analyticresconstraintssubsection} and define $L^{Ram}_{Airy} := \prod_{\alpha \in Ram}L^{M_\alpha}_{Airy} \subset W^{Ram}_{Airy}$. We note that $T_0L^{Ram}_{Airy} = \prod_{\alpha \in Ram}T_0L^{M_\alpha}_{Airy}$, or more explicitly:
\begin{equation*}
    T_0L^{Ram}_{Airy} = \left\{\xi = \sum_{\alpha \in Ram}[\alpha]\otimes \xi_\alpha\ |\ \xi_\alpha(z_\alpha) \in z_\alpha\mathbb{C}[[z_\alpha]]\frac{dz_\alpha}{z_\alpha}, \begin{array}{c}
    \text{$\xi_\alpha(z_\alpha)$ converges for $|z_\alpha| < \bar{M}_\alpha$}\\ 
    \text{for some $\bar{M}_\alpha > M_\alpha$}\end{array}\right\} \subset W^{Ram}_{Airy},
\end{equation*}
is a Lagrangian subspace of $W^{Ram}_{Airy}$.

\begin{lemma}\label{visgplusllemma}
For any $[\Sigma] \in \mathcal{B}$ we have
\begin{equation*}
     W^{Ram}_{t(\Sigma)} = W^{Ram}_{Airy} = V_\Sigma \oplus T_0L^{Ram}_{Airy}.
\end{equation*}
Additionally, if $\xi \in V_\Sigma$ and the principal part of $i_\alpha(\xi)$ is $\sum_{k=1}^\infty \xi_{\alpha, k}z^{-k}_\alpha \frac{dz_\alpha}{z_\alpha} \in V^{\epsilon_\alpha}_{Airy}$ then it can be written as $\xi = \sum_{\alpha\in Ram}\sum_{k=1}^\infty \xi_{\alpha, k}\bar{e}^{k,\alpha}$.
\end{lemma}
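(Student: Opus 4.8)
The plan is to prove the two assertions in sequence, both of which ultimately rest on the Riemann bilinear pairing formula \eqref{omegaairyong} and the differentials $\bar e^{k,\alpha}$ constructed in Lemma \ref{fromlocalvtoglobalvlemma}. First I would show that the sum $V_\Sigma + T_0L^{Ram}_{Airy}$ is direct, i.e.\ $V_\Sigma \cap T_0L^{Ram}_{Airy} = \{0\}$: if $\xi \in V_\Sigma$ has image $i(\xi) \in T_0L^{Ram}_{Airy}$, then $i_\alpha(\xi)$ has no principal part at any $r_\alpha$, so $\xi$ extends to a \emph{holomorphic} differential on all of $\Sigma$, and since $\xi \in V_\Sigma$ it has vanishing $A$-periods; a holomorphic form with zero $A$-periods is zero. (Here I am implicitly using that $i$ is injective, which is part of the definition of $i$ in \eqref{imapdefinition}.)

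Next I would establish $W^{Ram}_{Airy} = V_\Sigma + T_0L^{Ram}_{Airy}$. Given an arbitrary $\xi = \sum_{\alpha\in Ram}[\alpha]\otimes\xi_\alpha \in W^{Ram}_{Airy}$, decompose each $\xi_\alpha = \xi_\alpha^- + \xi_\alpha^+$ where $\xi_\alpha^- \in V^{\epsilon_\alpha}_{Airy}$ is the principal (negative-power) part and $\xi_\alpha^+ \in z_\alpha\mathbb C[[z_\alpha]]\frac{dz_\alpha}{z_\alpha}$ is the regular part; note $\xi_\alpha^+$ converges on $|z_\alpha| < \bar M_\alpha$ for some $\bar M_\alpha > M_\alpha$, so $\sum_\alpha [\alpha]\otimes\xi_\alpha^+ \in T_0L^{Ram}_{Airy}$. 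By Lemma \ref{fromlocalvtoglobalvlemma}, for each $\alpha$ the element $\bar\xi_\alpha := \sum_{k\ge 1}\xi_{\alpha,k}\bar e^{k,\alpha} \in V_\Sigma$ has principal part $\delta_{\alpha\alpha'}\xi_\alpha^-$ at $r_{\alpha'}$. Set $\bar\xi := \sum_{\alpha\in Ram}\bar\xi_\alpha \in V_\Sigma$. Then $i(\xi) - i(\bar\xi)$ has no principal part at any ramification point, hence lies in $T_0L^{Ram}_{Airy}$, which gives $\xi = \bar\xi + (\xi - \bar\xi)$ with $\bar\xi \in V_\Sigma$ and $\xi - \bar\xi \in T_0L^{Ram}_{Airy}$. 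The first identity $W^{Ram}_{t(\Sigma)} = W^{Ram}_{Airy}$ is just the definition of the fiber of $W^{Ram}$.

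For the final clause: if $\xi \in V_\Sigma$ with principal part of $i_\alpha(\xi)$ equal to $\sum_{k\ge1}\xi_{\alpha,k}z_\alpha^{-k}\frac{dz_\alpha}{z_\alpha}$, form $\bar\xi := \sum_{\alpha\in Ram}\sum_{k\ge1}\xi_{\alpha,k}\bar e^{k,\alpha} \in V_\Sigma$ as above. Then $\xi - \bar\xi \in V_\Sigma$ has no principal part at any $r_\alpha$, so it extends holomorphically to all of $\Sigma$ and has zero $A$-periods, hence $\xi - \bar\xi = 0$; this is exactly the directness argument reused. I expect the main obstacle to be purely bookkeeping rather than conceptual: one must be careful that the double series $\sum_{\alpha,k}\xi_{\alpha,k}\bar e^{k,\alpha}$ genuinely lands in $V_\Sigma$ with the stated convergence on $\Sigma\setminus\cup_\alpha \mathbb D_{\alpha,\epsilon_\alpha}(\Sigma)$ — but this is precisely the content of Lemma \ref{fromlocalvtoglobalvlemma}, so I would simply cite it. The only subtlety worth a sentence is verifying that subtracting $\bar\xi$ really kills \emph{all} principal parts simultaneously (the off-diagonal tails of $i_{\alpha'}(\bar e^{k,\alpha})$ in \eqref{ebarintermofef} are regular, so they do not reintroduce principal parts), which follows directly from the "principal part of $i_{\alpha'}(\bar\xi_\alpha)$ is $\delta_{\alpha\alpha'}\xi_\alpha$" statement in Lemma \ref{fromlocalvtoglobalvlemma}.
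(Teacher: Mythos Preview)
Your proof is correct and follows essentially the same approach as the paper: both prove directness via the ``holomorphic form with zero $A$-periods is zero'' argument, then span by subtracting off $\bar\xi = \sum_{\alpha,k}\xi_{\alpha,k}\bar e^{k,\alpha}$ built from Lemma~\ref{fromlocalvtoglobalvlemma}, and finally reuse directness for the last clause. The paper makes slightly more explicit that $i_\alpha(\bar\xi)$ converges on a disc of radius strictly greater than $M_\alpha$ (because $\bar{\mathbb D}_{\alpha,M_\alpha}(\Sigma)\subset\Sigma\cap U_\alpha$), ensuring the difference genuinely lands in $T_0L^{Ram}_{Airy}$, but you have this covered by your appeal to Lemma~\ref{fromlocalvtoglobalvlemma}.
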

\begin{proof}
If $\xi \in V_\Sigma$ is such that $i_\alpha(\xi)$ has no principal part for all $\alpha \in Ram$ then it must be holomorphic on $\Sigma$. Therefore, $\oint_{A_i}\eta = 0, i = 1,\cdots,g$ which implies that $\eta = 0$. This shows $V_\Sigma \cap T_0L^{Ram}_{Airy} = \{0\}$.
To show that $V_\Sigma + T_0L^{Ram}_{Airy} = W^{Ram}_{Airy}$ we consider any given $\xi = (\xi_{\alpha\in Ram}) \in W^{Ram}_{Airy}$ where $\xi_\alpha \in z_\alpha^{-1}\mathbb{C}[[z_\alpha^{-1}]]\frac{dz_\alpha}{z_\alpha}\oplus z_\alpha\mathbb{C}[[z_\alpha]]\frac{dz_\alpha}{z_\alpha}$ converges absolutely for $\bar{\epsilon}_\alpha < |z_\alpha| < \bar{M}_\alpha$ for some $\bar{\epsilon}_\alpha < \epsilon_\alpha < M_\alpha < \bar{M}_\alpha$. Let $\xi_{\alpha,P} = \sum_{k=1}^\infty z^{-k}_\alpha\xi_{\alpha,k}\frac{dz_\alpha}{z_\alpha} \in V^{\epsilon_\alpha}_{Airy}$ be the principal part of $\xi_\alpha$. By Lemma \ref{fromlocalvtoglobalvlemma} we have $\bar{\xi}_P := \sum_{\alpha \in Ram}\bar{\xi}_{\alpha,P} = \sum_{\alpha \in Ram}\sum_{\alpha,k}\xi_{k,\alpha}\bar{e}^{k,\alpha} \in V_\Sigma$ and the principal part of $i_\alpha(\bar{\xi})$ is $\xi_{\alpha,P}$. On the other hand, because $\Sigma \in \mathcal{B}_{\Sigma_0}$ we have that $i_\alpha(\bar{\xi})$ converges absolutely for $\bar{\epsilon}_\alpha < |z| < \bar{\bar{M}}_\alpha$ where $\bar{\bar{M}}_\alpha$ is such that $\mathbb{D}_{\alpha, M_\alpha}(\Sigma) \subset \mathbb{D}_{\alpha, \bar{\bar{M}}_\alpha}(\Sigma) \subset \Sigma\cap U_\alpha$. It follows that $\xi_\alpha - i_\alpha(\bar{\xi}_P)$ has zero principal part and converges absolutely for  $|z_\alpha| < \min(\bar{M}_\alpha, \bar{\bar{M}}_\alpha)$. Therefore $\xi - i(\bar{\xi}_P) \in T_0L^{Ram}_{Airy}$ which proves that $W^{Ram}_{Airy} = V_{\Sigma}\oplus T_0L^{Ram}_{Airy}$. Besides, if $\xi \in V_\Sigma$, then $\xi - \bar{\xi}_P$ is holomorphic on $\Sigma$ with $\oint_{A_i}(\xi - \bar{\xi}_P) = 0$ for all $i = 1,...,g$ which means $\xi = \bar{\xi}_P = \sum_{\alpha \in Ram}\sum_{k=1}^\infty \xi_{\alpha, k}\bar{e}^{k,\alpha}$.
\end{proof}

\begin{corollary}
For any $[\Sigma] \in \mathcal{B}$ we have
\begin{equation*}
    G_{\Sigma} + T_0L^{Ram}_{Airy} = W^{Ram}_{Airy}.
\end{equation*}
\end{corollary}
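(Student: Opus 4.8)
The statement to prove is the corollary
\begin{equation*}
    G_{\Sigma} + T_0L^{Ram}_{Airy} = W^{Ram}_{Airy}
\end{equation*}
for any $[\Sigma] \in \mathcal{B}$. The plan is to deduce this immediately from Lemma \ref{visgplusllemma}, which already gives the stronger decomposition $W^{Ram}_{Airy} = V_\Sigma \oplus T_0L^{Ram}_{Airy}$, combined with the containment $V_\Sigma \subseteq G_\Sigma$.

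First I would note that $V_\Sigma \subseteq G_\Sigma$ holds directly from Definition \ref{vdefinition}, which defines $V_\Sigma$ as the subspace of $G_\Sigma$ consisting of those $\xi$ with vanishing $A$-periods; so every element of $V_\Sigma$ is in particular an element of $G_\Sigma$. Next, by Lemma \ref{visgplusllemma} we have $W^{Ram}_{Airy} = V_\Sigma \oplus T_0L^{Ram}_{Airy}$, so in particular $W^{Ram}_{Airy} = V_\Sigma + T_0L^{Ram}_{Airy}$. Since $V_\Sigma \subseteq G_\Sigma$, we get
\begin{equation*}
    W^{Ram}_{Airy} = V_\Sigma + T_0L^{Ram}_{Airy} \subseteq G_\Sigma + T_0L^{Ram}_{Airy} \subseteq W^{Ram}_{Airy},
\end{equation*}
where the last containment uses that both $G_\Sigma$ (via the embedding $i$ of \eqref{imapdefinition}) and $T_0L^{Ram}_{Airy}$ are subspaces of $W^{Ram}_{Airy}$. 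This forces equality throughout, proving the corollary.

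There is essentially no obstacle here: the corollary is a one-line consequence of the already-established direct sum decomposition once one observes the trivial inclusion $V_\Sigma \subseteq G_\Sigma$. The only thing worth a sentence of care is making explicit that we are viewing $G_\Sigma$ inside $W^{Ram}_{t(\Sigma)} = W^{Ram}_{Airy}$ via the Laurent-expansion embedding $i$, as remarked after \eqref{imapdefinition}, so that the sum $G_\Sigma + T_0L^{Ram}_{Airy}$ makes sense as a sum of subspaces of $W^{Ram}_{Airy}$.
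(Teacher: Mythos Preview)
Your proof is correct and takes essentially the same approach as the paper's own proof, which simply states that the result follows from Lemma~\ref{visgplusllemma} together with the inclusion $V_\Sigma \subset G_\Sigma$. Your version just spells out this one-line argument in slightly more detail.
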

\begin{proof}
This follows from Lemma \ref{visgplusllemma} and the fact that $V_\Sigma \subset G_\Sigma$.
\end{proof}

\begin{corollary}\label{gisvoplush0corollary}
For any $[\Sigma] \in \mathcal{B}$, we have,
\begin{equation*}
    G_\Sigma = V_\Sigma \oplus \Gamma(\Sigma, \Omega^1_\Sigma).
\end{equation*}
In particular, any $\xi \in G_\Sigma$ with the principal part of $i_\alpha(\xi)$ given by $\sum_{k=1}^\infty \xi_{\alpha, k}z_\alpha^{-k}\frac{dz_\alpha}{z_\alpha}$ and $a^i := \oint_{A_i}\xi$ can be written as $\xi = \sum_{\alpha \in Ram}\sum_{k=1}^\infty \xi_{\alpha, k}\bar{e}^{k,\alpha} + \sum_{k=1}^g a^k\omega_k$, where $\omega_i$ are the normalized holomorphic forms on $\Sigma$.
\end{corollary}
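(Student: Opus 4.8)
The plan is to deduce this directly from Lemma \ref{visgplusllemma}, using the normalized holomorphic differentials $\{\omega_{i=1,\dots,g}\}$ on $\Sigma$ as an explicit complement to $V_\Sigma$ inside $G_\Sigma$.

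First I would observe that $\Gamma(\Sigma,\Omega^1_\Sigma) \subseteq G_\Sigma$: a global holomorphic form is in particular holomorphic on a neighbourhood of $\Sigma \setminus \cup_{\alpha\in Ram}\mathbb{D}_{\alpha,\epsilon_\alpha}(\Sigma)$, and since it also extends holomorphically across each disc $\mathbb{D}_{\alpha,\epsilon_\alpha}(\Sigma)$, its integral over the contractible loop $\partial\bar{\mathbb{D}}_{\alpha,\epsilon_\alpha}$ vanishes by Cauchy's theorem, so the defining condition of $G_\Sigma$ is met. Next, the intersection $V_\Sigma \cap \Gamma(\Sigma,\Omega^1_\Sigma)$ is trivial: an element of it is a holomorphic one-form on $\Sigma$ all of whose $A$-periods vanish, hence is zero, which is the standard fact that a holomorphic differential is determined by its $A$-periods (already used in the excerpt).

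Then I would establish $G_\Sigma = V_\Sigma + \Gamma(\Sigma,\Omega^1_\Sigma)$. Given $\xi \in G_\Sigma$, put $a^i := \oint_{A_i}\xi$ and set $\eta := \xi - \sum_{k=1}^g a^k\omega_k$. Since $\Gamma(\Sigma,\Omega^1_\Sigma)\subseteq G_\Sigma$ and $G_\Sigma$ is a vector space, $\eta\in G_\Sigma$; and by the normalization $\oint_{A_j}\omega_i = \delta^i_j$ we obtain $\oint_{A_i}\eta = 0$ for all $i$, so $\eta\in V_\Sigma$. Combined with the triviality of the intersection, this gives the direct sum decomposition $G_\Sigma = V_\Sigma \oplus \Gamma(\Sigma,\Omega^1_\Sigma)$.

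Finally, for the explicit formula I would note that each $\omega_k$ is holomorphic on all of $\Sigma$, so $i_\alpha(\omega_k)$ has no principal part; hence the principal part of $i_\alpha(\eta)$ coincides with that of $i_\alpha(\xi)$, namely $\sum_{k\geq 1}\xi_{\alpha,k}z_\alpha^{-k}\frac{dz_\alpha}{z_\alpha}$. Applying the last sentence of Lemma \ref{visgplusllemma} to $\eta\in V_\Sigma$ yields $\eta = \sum_{\alpha\in Ram}\sum_{k=1}^\infty \xi_{\alpha,k}\bar{e}^{k,\alpha}$, and therefore $\xi = \sum_{\alpha\in Ram}\sum_{k=1}^\infty \xi_{\alpha,k}\bar{e}^{k,\alpha} + \sum_{k=1}^g a^k\omega_k$. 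There is essentially no obstacle; the only point requiring a little care is the verification $\Gamma(\Sigma,\Omega^1_\Sigma)\subseteq G_\Sigma$ with the correct vanishing of the $\partial\bar{\mathbb{D}}_{\alpha,\epsilon_\alpha}$-periods, which is immediate from Cauchy's theorem.
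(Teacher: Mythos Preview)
Your proof is correct and follows essentially the same approach as the paper: subtract the normalized holomorphic combination $\sum_{k=1}^g a^k\omega_k$ to land in $V_\Sigma$, note that $V_\Sigma\cap\Gamma(\Sigma,\Omega^1_\Sigma)=\{0\}$ since a holomorphic form with vanishing $A$-periods is zero, and then invoke Lemma~\ref{visgplusllemma} for the explicit expansion. Your write-up is slightly more explicit than the paper's (e.g.\ you spell out $\Gamma(\Sigma,\Omega^1_\Sigma)\subseteq G_\Sigma$ and the fact that $\omega_k$ contributes no principal part), but the argument is the same.
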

\begin{proof}
In fact, $G_\Sigma = V_\Sigma \oplus \Gamma(\Sigma, \Omega^1_\Sigma)$ follows directly from the Definition \ref{gdefinition} and Definition \ref{vdefinition} of $G_\Sigma$ and $V_\Sigma$. This is because for any $\xi \in G_{\Sigma}$, we have $\bar{\xi} := \xi - \sum_{k=1}^g a^k\omega_k \in V_\Sigma$, hence $V_\Sigma + \Gamma(\Sigma, \Omega^1_\Sigma) = G_\Sigma$. We also know that $V_\Sigma\cap \Gamma(\Sigma, \Omega^1_\Sigma) = \{0\}$ because any global holomorphic form with zero $A$-periods is zero. The rest of the Corollary follows from applying Lemma \ref{visgplusllemma} to $\bar{\xi} \in V_\Sigma$.
\end{proof}

\begin{corollary}\label{vislagrangiancomplementcorollary}
$V_\Sigma$ is a Lagrangian complement of $T_0L^{Ram}_{Airy}$ in $W^{Ram}_{Airy}$. 
\end{corollary}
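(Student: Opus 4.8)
The plan is to reduce the statement to the two facts already established, namely Lemma \ref{visgplusllemma} (which gives $W^{Ram}_{Airy} = V_\Sigma \oplus T_0L^{Ram}_{Airy}$ as a direct sum decomposition) and the alternative formula (\ref{omegaairyong}) for $\Omega_{Airy}$ on $G_\Sigma \supseteq V_\Sigma$. The definition of ``Lagrangian complement of $T_0L^{Ram}_{Airy}$'' requires two things: first that $V_\Sigma$ is itself a Lagrangian (equivalently, isotropic and maximal among isotropic subspaces) subspace of $W^{Ram}_{Airy}$, and second that $V_\Sigma$ is an algebraic complement of $T_0L^{Ram}_{Airy}$. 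The second point is exactly the content of Lemma \ref{visgplusllemma}, so nothing new is needed there; I would simply cite it. The work therefore concentrates on showing that $V_\Sigma$ is isotropic, and then on upgrading isotropy to maximality.

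First I would verify isotropy. Given $\xi_1, \xi_2 \in V_\Sigma$, both have vanishing $A$-periods by Definition \ref{vdefinition}, so in the Riemann-bilinear expression (\ref{omegaairyong}),
\begin{equation*}
    \Omega_{Airy}(i(\xi_1), i(\xi_2)) = \frac{1}{2\pi i}\sum_{k=1}^g\left(\oint_{A_k}\xi_2\oint_{B_k}\xi_1 - \oint_{B_k}\xi_2\oint_{A_k}\xi_1\right) = 0,
\end{equation*}
since every term contains a factor $\oint_{A_k}\xi_1$ or $\oint_{A_k}\xi_2$, both of which vanish. This shows $\Omega_{Airy}|_{V_\Sigma} = 0$, i.e. $V_\Sigma$ is isotropic. (Here I am implicitly using the embedding $i : G_\Sigma \hookrightarrow W^{Ram}_{t(\Sigma)} = W^{Ram}_{Airy}$ and suppressing it in the notation as the text already does.)

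The main obstacle is maximality: I must check there is no isotropic subspace of $W^{Ram}_{Airy}$ strictly containing $V_\Sigma$. The cleanest route uses the complement structure. Since $T_0L^{Ram}_{Airy}$ is a Lagrangian (hence maximal isotropic) subspace of $W^{Ram}_{Airy}$ and $W^{Ram}_{Airy} = V_\Sigma \oplus T_0L^{Ram}_{Airy}$, any vector $w \in W^{Ram}_{Airy}$ that pairs trivially with all of $V_\Sigma$ can be written $w = v + \ell$ with $v \in V_\Sigma$, $\ell \in T_0L^{Ram}_{Airy}$; then $\Omega_{Airy}(w, v') = \Omega_{Airy}(\ell, v')$ for all $v' \in V_\Sigma$ by isotropy of $V_\Sigma$, so $\ell$ pairs trivially with all of $V_\Sigma$. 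I would then argue that $\ell$ also pairs trivially with all of $T_0L^{Ram}_{Airy}$ (since that space is isotropic), hence $\ell$ lies in the kernel of $\Omega_{Airy}$ on $W^{Ram}_{Airy}$; but $\Omega_{Airy}$ is nondegenerate on $W^{Ram}_{Airy}$ — here one must be slightly careful because $W^{Ram}_{Airy}$ is only \emph{weak} symplectic, so ``nondegenerate'' means $w \mapsto \Omega_{Airy}(w,\cdot)$ is injective, which is precisely what is needed. Thus $\ell = 0$ and $w = v \in V_\Sigma$, proving $V_\Sigma$ equals its own $\Omega_{Airy}$-orthogonal complement, i.e. $V_\Sigma$ is Lagrangian. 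Combined with Lemma \ref{visgplusllemma} this gives the corollary. If the weak-symplectic subtlety causes trouble, an alternative is to argue directly with the principal-part description: by Lemma \ref{visgplusllemma} the orthogonal complement of $V_\Sigma$ inside $W^{Ram}_{Airy}$ consists of elements orthogonal to every $\bar{e}^{k,\alpha}$, and one computes $\Omega_{Airy}(\bar{e}^{k,\alpha}, \cdot)$ explicitly using (\ref{ebarintermofef}) to see that orthogonality forces all principal-part and $A$-period data to vanish, landing back in $V_\Sigma$.
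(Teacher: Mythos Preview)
Your proof is correct and follows essentially the same approach as the paper: cite Lemma \ref{visgplusllemma} for the complement, use the Riemann bilinear identity (\ref{omegaairyong}) for isotropy, and for maximality decompose $w = v + \ell$ and use isotropy of both $V_\Sigma$ and $T_0L^{Ram}_{Airy}$ together with nondegeneracy of $\Omega_{Airy}$ to force $\ell = 0$. Your explicit attention to the weak-symplectic point (that injectivity of $w \mapsto \Omega_{Airy}(w,\cdot)$ is exactly what is needed) is a useful clarification that the paper leaves implicit.
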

\begin{proof}
We already know from Lemma \ref{visgplusllemma} that $V_\Sigma$ is a complement of $T_0L^{Ram}_{Airy}$.
Using (\ref{omegaairyong}), it is clear that for any $\xi_1, \xi_2 \in V_{\Sigma}$ we would have $\Omega_{Airy}(\xi_1,\xi_2) = 0$ because $\oint_{A_i}\xi_j = 0, i = 1,...,g, j = 1,2$. Suppose there exists $\xi_2 \in W^{Ram}_{Airy}$ such that $\Omega_{Airy}(\xi_1, \xi_2) = 0$ for all $\xi_1 \in V_{\Sigma}$. Then by Lemma \ref{visgplusllemma} we can write $\xi_2 = v + l$ where $v \in V_\Sigma, l \in T_0L^{Ram}_{Airy}$, but then $\Omega_{Airy}(\xi_1,l) = \Omega_{Airy}(\xi_1, v+l) = \Omega_{Airy}(\xi_1, \xi_2) = 0, \forall \xi_1 \in V_{\Sigma}$ because $\Omega_{Airy}(\xi_1, v) = 0$. Since we also have $\Omega_{Airy}(l',l) = 0$ for all $l' \in T_0L^{Ram}_{Airy}$, this can only be true if $l = 0$, i.e. $\xi_2 \in V_\Sigma$, because $\Omega_{Airy}$ is non-degenerates. So we have that $V_\Sigma$ is a Lagrangian subspace of $W^{Ram}_{Airy}$ i.e. $V^\perp_\Sigma = V_\Sigma$. 
\end{proof} 

\begin{proposition}\theoremname{\cite[Proposition 7.1.3]{kontsevich2017airy}}\label{gperpproposition}
For any $[\Sigma]\in \mathcal{B}$, the vector space $G_\Sigma$ is a coisotropic subspace of $W^{Ram}_{t(\Sigma)}$ with the perpendicular subspace $G^\perp_\Sigma$ given by
\begin{equation*}
    G^\perp_{\Sigma} = \varinjlim_{\bar{\epsilon}_\alpha < \epsilon_\alpha}d\Gamma(\Sigma\setminus \cup_{\alpha \in Ram}\bar{\mathbb{D}}_{\alpha, \bar{\epsilon}_\alpha}, \mathcal{O}_{\Sigma}) \subseteq G_\Sigma.
\end{equation*}
\end{proposition}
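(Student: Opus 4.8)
The plan is to show the two inclusions $G_\Sigma^\perp \subseteq \varinjlim d\Gamma(\Sigma\setminus\cup\bar{\mathbb{D}}_{\alpha,\bar\epsilon_\alpha},\mathcal{O}_\Sigma)$ and its reverse, where the symplectic orthogonal is taken inside $W^{Ram}_{t(\Sigma)} = W^{Ram}_{Airy}$. The coisotropy $G_\Sigma^\perp \subseteq G_\Sigma$ then follows immediately once the reverse inclusion is established, since an exact form $df$ with $f$ holomorphic on a neighbourhood of $\Sigma\setminus\cup\bar{\mathbb{D}}_{\alpha,\bar\epsilon_\alpha}$ is certainly in $G_\Sigma$ (it is holomorphic there and $\oint_{\partial\bar{\mathbb{D}}_{\alpha,\epsilon_\alpha}}df = 0$). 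Throughout I will use the decomposition $G_\Sigma = V_\Sigma \oplus \Gamma(\Sigma,\Omega^1_\Sigma)$ from Corollary \ref{gisvoplush0corollary}, the fact that $V_\Sigma$ is Lagrangian in $W^{Ram}_{Airy}$ (Corollary \ref{vislagrangiancomplementcorollary}), and the Riemann-bilinear expression (\ref{omegaairyong}) for $\Omega_{Airy}$ on vectors coming from $G_\Sigma$.

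First I would prove the easy inclusion: if $\eta = df$ with $f$ holomorphic on some open neighbourhood of $\Sigma\setminus\cup_{\alpha}\bar{\mathbb{D}}_{\alpha,\bar\epsilon_\alpha}$, then for any $\xi\in G_\Sigma$ we compute $\Omega_{Airy}(i(\xi),i(\eta))$ using the middle expression in (\ref{omegaairyong}), namely $\frac{1}{2\pi i}\sum_\alpha \oint_{\partial\bar{\mathbb{D}}_{\alpha,\epsilon_\alpha}}(\xi\int\eta)$. Since $\int\eta = f$ (up to an irrelevant additive constant, killed by $\oint\xi = 0$), this is $\frac{1}{2\pi i}\sum_\alpha\oint_{\partial\bar{\mathbb{D}}_{\alpha,\epsilon_\alpha}} f\xi$, and $f\xi$ is a holomorphic $1$-form on a neighbourhood of $\Sigma\setminus\cup_\alpha\bar{\mathbb{D}}_{\alpha,\bar\epsilon_\alpha}$; the sum of residue-type integrals around the boundary circles of the removed discs equals (by Stokes on the complementary region, which is where $f\xi$ is holomorphic) the negative of the sum over the $A$- and $B$-cycle contributions, but actually the cleanest route is: $\oint_{\partial(\cup_\alpha\bar{\mathbb{D}}_{\alpha,\epsilon_\alpha})} f\xi = \iint_{\Sigma\setminus\cup_\alpha\bar{\mathbb{D}}_{\alpha,\epsilon_\alpha}} d(f\xi) = 0$ since $d(f\xi) = df\wedge\xi + f\,d\xi = 0$ (both $df\wedge\xi$ vanishes as a $(2,0)$-form wedge, i.e. $\eta\wedge\xi$ is a product of $(1,0)$-forms hence zero on a Riemann surface, and $d\xi=0$). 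Hence $\Omega_{Airy}(\xi,\eta)=0$ for all $\xi\in G_\Sigma$, i.e. $\eta\in G_\Sigma^\perp$.

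For the reverse inclusion, take $\eta\in G_\Sigma^\perp\subseteq W^{Ram}_{Airy}$. Using Lemma \ref{visgplusllemma} write $\eta = v + \ell$ with $v\in V_\Sigma$ and $\ell\in T_0L^{Ram}_{Airy}$. Since $V_\Sigma\subseteq G_\Sigma$ and $V_\Sigma$ is Lagrangian, $\Omega_{Airy}(\xi,v)=0$ for all $\xi\in V_\Sigma$; combined with $\eta\perp G_\Sigma\supseteq V_\Sigma$ this forces $\Omega_{Airy}(\xi,\ell)=0$ for all $\xi\in V_\Sigma$, so $\ell\in V_\Sigma^\perp\cap T_0L^{Ram}_{Airy} = V_\Sigma\cap T_0L^{Ram}_{Airy} = \{0\}$, giving $\eta = v\in V_\Sigma$. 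Now I must use the stronger orthogonality: $\eta\perp \Gamma(\Sigma,\Omega^1_\Sigma)$ as well. For a normalized holomorphic form $\omega_i$, the expression (\ref{omegaairyong}) gives $0 = \Omega_{Airy}(\eta,\omega_i) \propto \sum_k(\oint_{A_k}\omega_i\oint_{B_k}\eta - \oint_{B_k}\omega_i\oint_{A_k}\eta) = \oint_{B_i}\eta - \sum_k\tau_{ik}\oint_{A_k}\eta$, and since $\eta\in V_\Sigma$ has all $A$-periods zero, we get $\oint_{B_i}\eta = 0$ for all $i$. Thus $\eta$ is a residueless form (it lies in $W^{Ram}_{Airy}$) holomorphic on a neighbourhood of $\Sigma\setminus\cup_\alpha\bar{\mathbb{D}}_{\alpha,\bar\epsilon_\alpha}$ with all $A$- and $B$-periods vanishing, hence exact on that (homotopically "full-genus") domain: $\eta = df$ for a single-valued holomorphic $f$ there. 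This is exactly an element of $\varinjlim_{\bar\epsilon_\alpha<\epsilon_\alpha}d\Gamma(\Sigma\setminus\cup_\alpha\bar{\mathbb{D}}_{\alpha,\bar\epsilon_\alpha},\mathcal{O}_\Sigma)$.

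The main obstacle I anticipate is the bookkeeping around the direct-limit domains and the precise meaning of "exact" on $\Sigma\setminus\cup_\alpha\bar{\mathbb{D}}_{\alpha,\bar\epsilon_\alpha}$: one must check that vanishing of all $A$- and $B$-periods of a closed $1$-form on this domain actually produces a \emph{single-valued} primitive, which is true because the inclusion of this domain into $\Sigma$ induces a surjection on $H_1$ (removing open discs does not kill any cycles) so $H^1$ of the domain is generated by the $A,B$ classes together with the small loops $\partial\bar{\mathbb{D}}_{\alpha,\epsilon_\alpha}$ — and the latter pairings vanish by the residuelessness $\oint_{\partial\bar{\mathbb{D}}_{\alpha,\epsilon_\alpha}}\eta = 0$ built into $W^{Ram}_{Airy}$ (no $\frac{dz_\alpha}{z_\alpha}$ term). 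A second, more routine point to get right is that the Stokes/Riemann-bilinear manipulations in the first inclusion are legitimate despite the domain being $\Sigma$ with discs removed rather than all of $\Sigma$; here one deforms the $A,B$ cycles to lie in $\Sigma\setminus\cup_\alpha\bar{\mathbb{D}}_{\alpha,\bar\epsilon_\alpha}$ (possible by C\ref{howtochoosebsigma0condition}) and argues on the standard cut-open fundamental polygon with the disc boundaries as extra boundary components.
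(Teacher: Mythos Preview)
Your proposal is correct and follows essentially the same route as the paper. The paper's proof is terser: it obtains coisotropy in one line via $V_\Sigma \subset G_\Sigma \Rightarrow G_\Sigma^\perp \subset V_\Sigma^\perp = V_\Sigma \subset G_\Sigma$ (rather than your explicit decomposition $\eta = v + \ell$), and it only writes out the inclusion $G_\Sigma^\perp \subseteq \{\text{exact forms}\}$, taking the reverse inclusion (your ``easy'' direction via Stokes) as understood.
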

\begin{proof}
From $V_\Sigma \subset G_\Sigma$ and Corollary \ref{vislagrangiancomplementcorollary} we have that $G^\perp_{\Sigma} \subset V^\perp_\Sigma = V_\Sigma \subset G_\Sigma$. 
Let us find $G^\perp_{\Sigma}$. If $\xi \in G^\perp_\Sigma$ then $\xi \in G_\Sigma$ which means $\oint_{\partial \bar{\mathbb{D}}_{\alpha, \epsilon_\alpha}}\xi = 0$. Using (\ref{omegaairyong}) we can see that the only way $\Omega_{Airy}(\xi, \eta) = 0, \forall \eta \in G_\Sigma$ can be possible is if $\oint_{A_i}\xi = 0, \oint_{B_i}\xi = 0, i = 1,...,g$. We can now define $f(p) := \int_{p_0}^p\xi$ which is a holomorphic function in an open neighbourhood of $\Sigma \setminus \cup_{\alpha \in Ram}\mathbb{D}_{\alpha, \epsilon_\alpha}$.
\end{proof}

\begin{proposition}\theoremname{\cite[Proposition 7.1.3]{kontsevich2017airy}}\label{sympreductionproposition}
For any $[\Sigma]\in \mathcal{B}$, the sympectic reduction of $W^{Ram}_{t(\Sigma)}$ is given by $W^{Ram}_{t(\Sigma)}//G^\perp_{\Sigma} := G_{\Sigma}/G^\perp_{\Sigma} \cong \mathcal{H}_{\Sigma} = H^1(\Sigma, \mathbb{C})$. Furthermore, the induced symplectic form $\tilde{\Omega}_{Airy}$ on the quotient $G_\Sigma/G^\perp_\Sigma$ given by
\begin{equation}
    \tilde{\Omega}_{Airy}([\xi_1],[\xi_2]) := \Omega_{Airy}(\xi_1, \xi_2)
\end{equation}
coincides with the symplectic form $\Omega_{\mathcal{H}}$ on $\mathcal{H}_\Sigma$ as given in (\ref{omegahdefinition}).
\end{proposition}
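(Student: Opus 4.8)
The plan is to establish the isomorphism $G_\Sigma/G^\perp_\Sigma \cong H^1(\Sigma,\mathbb{C})$ by exhibiting an explicit map and its inverse, and then checking the symplectic forms agree. First I would define the map $\pi : G_\Sigma \to H^1(\Sigma,\mathbb{C})$ by sending $\xi \in G_\Sigma$ to its de Rham cohomology class $[\xi]$. This makes sense: although $\xi$ is only defined (and holomorphic) on an open neighbourhood of $\Sigma \setminus \cup_\alpha \mathbb{D}_{\alpha,\epsilon_\alpha}(\Sigma)$, the vanishing condition $\oint_{\partial\bar{\mathbb{D}}_{\alpha,\epsilon_\alpha}}\xi = 0$ means $\xi$ has no residues, so using Definition \ref{endifferentialdefinition} and Corollary \ref{gisvoplush0corollary} we can replace $\xi$ by a cohomologous meromorphic form on all of $\Sigma$ with only poles at ramification points, whose cohomology class is well-defined. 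Concretely, by Corollary \ref{gisvoplush0corollary}, $\xi = \sum_{\alpha,k}\xi_{\alpha,k}\bar{e}^{k,\alpha} + \sum_{k=1}^g a^k\omega_k$; the $\bar{e}^{k,\alpha}$ are residueless meromorphic on $\Sigma$ and hence exact away from their poles, so define $\pi(\xi) := [\xi]$ to be the class determined by periods $\oint_{A_i}\xi, \oint_{B_i}\xi$ (which are finite since the cycles can be deformed off the removed discs).

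Next I would show $\pi$ is surjective with kernel exactly $G^\perp_\Sigma$. Surjectivity follows from Corollary \ref{gisvoplush0corollary}: given a target class, write it in terms of the normalized holomorphic forms $\omega_i$ (which span $\Gamma(\Sigma,\Omega^1_\Sigma) \cong \mathcal{H}_\Sigma$'s Lagrangian half) plus elements of $V_\Sigma$, all of which lie in $G_\Sigma$ — more precisely every class in $H^1(\Sigma,\mathbb{C})$ has a representative that is a residueless meromorphic form with poles only at ramification points (a holomorphic form plus a combination of the $\bar{e}^{k,\alpha}$'s), and such a form restricts to an element of $G_\Sigma$. For the kernel: if $\pi(\xi) = 0$ then all $A$- and $B$-periods of $\xi$ vanish, so by Proposition \ref{gperpproposition}'s argument $\xi = df$ for a holomorphic $f$ on a neighbourhood of $\Sigma\setminus\cup_\alpha\bar{\mathbb{D}}_{\alpha,\bar{\epsilon}_\alpha}$, i.e. $\xi \in G^\perp_\Sigma$; conversely any such $df$ is exact hence has trivial class. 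Thus $G_\Sigma/G^\perp_\Sigma \xrightarrow{\ \sim\ } H^1(\Sigma,\mathbb{C})$.

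Finally I would verify that $\tilde{\Omega}_{Airy}$ matches $\Omega_{\mathcal{H}}$. This is immediate from the computation (\ref{omegaairyong}): for $\xi_1,\xi_2 \in G_\Sigma$,
\begin{equation*}
    \tilde{\Omega}_{Airy}([\xi_1],[\xi_2]) = \Omega_{Airy}(i(\xi_1),i(\xi_2)) = \frac{1}{2\pi i}\sum_{k=1}^g\left(\oint_{A_k}\xi_2\oint_{B_k}\xi_1 - \oint_{B_k}\xi_2\oint_{A_k}\xi_1\right),
\end{equation*}
and by Lemma \ref{symplecticformonhlemma} (the Riemann bilinear identity) the right-hand side equals $\iint_\Sigma \xi_1\wedge\xi_2 = \Omega_{\mathcal{H}}([\xi_1],[\xi_2])$, using that closed meromorphic-but-residueless representatives can be smoothed to honest closed forms representing the same class without changing periods. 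One must also check $\tilde{\Omega}_{Airy}$ is well-defined on the quotient, but this is automatic: adding an element of $G^\perp_\Sigma$ to $\xi_1$ or $\xi_2$ changes $\Omega_{Airy}$ by zero by definition of the perpendicular subspace. The main obstacle I anticipate is being careful about the passage between the partially-defined forms in $G_\Sigma$ (holomorphic only off the small discs) and genuine cohomology classes on the compact $\Sigma$ — one needs Corollary \ref{gisvoplush0corollary} to produce a global meromorphic representative, and then needs that the de Rham class of a residueless meromorphic form (with its periods computed by contour integrals avoiding the poles) is the object being matched; once that bookkeeping is set up, the symplectic comparison is a one-line consequence of the Riemann bilinear identity already proved as Lemma \ref{symplecticformonhlemma}.
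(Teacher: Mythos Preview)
Your argument is correct in outline, but it takes a genuinely different route from the paper's proof, and the surjectivity step is where the difference shows most clearly.

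The paper does not construct an explicit period map and check surjectivity and kernel separately. Instead it identifies $G_\Sigma/G^\perp_\Sigma$ with $H^1(\Sigma,\mathbb{C})$ in one stroke via sheaf cohomology: Mayer--Vietoris gives $H^1(\Sigma,\mathbb{C}) \cong \{\xi \in H^1(\Sigma \setminus \cup_\alpha \bar{\mathbb{D}}_{\alpha,\bar{\epsilon}_\alpha},\mathbb{C}) : \oint_{\partial\bar{\mathbb{D}}_{\alpha,\epsilon_\alpha}}\xi = 0\}$, and then, because the open surface $\Sigma \setminus \cup_\alpha \bar{\mathbb{D}}_{\alpha,\bar{\epsilon}_\alpha}$ is Stein (Behnke--Stein), Cartan's Theorem~B lets one compute its de Rham cohomology using the holomorphic de Rham complex, yielding exactly the presentation~(\ref{hsigmapresentation}). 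Passing to the direct limit over $\bar{\epsilon}_\alpha < \epsilon_\alpha$ matches this with Definition~\ref{gdefinition} and Proposition~\ref{gperpproposition}. The symplectic comparison is then done as you do it, via~(\ref{omegaairyong}) and Lemma~\ref{symplecticformonhlemma}.

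Your approach trades the Stein/Cartan~B input for a different nontrivial input: the assertion that \emph{every} class in $H^1(\Sigma,\mathbb{C})$ is represented by a residueless meromorphic form with poles only at the ramification points. Corollary~\ref{gisvoplush0corollary} does not give you this; it only decomposes $G_\Sigma$, saying nothing about whether $G_\Sigma$ surjects onto $H^1$. What you are invoking is the classical theorem that differentials of the second kind modulo exact ones compute $H^1_{dR}$ of a compact Riemann surface (equivalently, that for any nonempty divisor $D$ the algebraic de Rham cohomology $H^1(\Sigma,\Omega^\bullet_\Sigma(\ast D))$ agrees with $H^1(\Sigma,\mathbb{C})$). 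This is true and well known, but you should name it explicitly rather than derive it from Corollary~\ref{gisvoplush0corollary}, which it does not follow from. With that input supplied, your argument is complete; the kernel computation and the symplectic-form comparison are handled correctly and coincide with the paper's treatment.
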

\begin{proof}
First, we will show that $G_\Sigma/G^\perp_\Sigma$ is isomorphic to the deRham cohomology group $\mathcal{H}_\Sigma = H^1(\Sigma, \mathbb{C})$.
For any set $\{\bar{\epsilon}_{\alpha\in Ram}\}$ such that $\bar{\epsilon}_\alpha < \epsilon_\alpha, \alpha\in Ram$ we have the presentation 
\begin{equation}\label{hsigmapresentation}
    \mathcal{H}_\Sigma = H^1(\Sigma,\mathbb{C}) \cong \frac{\left\{\xi \in \Gamma\left(\Sigma \setminus \cup_{\alpha \in Ram}\bar{\mathbb{D}}_{\alpha,\bar{\epsilon}_\alpha},\Omega^1_\Sigma\right)\ |\ \oint_{\partial \bar{\mathbb{D}}_{\alpha, \epsilon_\alpha}}\xi = 0\right\}}{d\Gamma(\Sigma\setminus \cup_{\alpha \in Ram}\bar{\mathbb{D}}_{\alpha, \bar{\epsilon}_\alpha}, \mathcal{O}_{\Sigma})}.
\end{equation}
To obtain this, we use Mayer–Vietoris sequence to get $H^1(\Sigma, \mathbb{C}) \cong \{\xi \in H^1(\Sigma \setminus \cup_{\alpha \in Ram}\bar{\mathbb{D}}_{\alpha, \bar{\epsilon}_\alpha}, \mathbb{C})\ |\ \oint_{\partial \bar{\mathbb{D}}_{\alpha, \epsilon_\alpha}}\xi = 0\}$. Then we use the fact that non-compact Riemann surfaces such as $\Sigma \setminus \cup_{\alpha \in Ram}\bar{\mathbb{D}}_{\alpha, \bar{\epsilon}_\alpha}$ is a Stein manifold \cite[Behnke and Stein, Section 2.2]{forstnerivc2011stein}, hence coherent sheaves are $\Gamma$-acyclic on $\Sigma \setminus \cup_{\alpha \in Ram}\bar{\mathbb{D}}_{\alpha, \bar{\epsilon}_\alpha}$ by Cartan's theorem B \cite[Theorem 2.4.1]{forstnerivc2011stein}. Therefore, $H^1(\Sigma \setminus \cup_{\alpha \in Ram}\bar{\mathbb{D}}_{\alpha, \bar{\epsilon}_\alpha}, \mathbb{C}) \cong H^1\Gamma(\Sigma \setminus \cup_{\alpha \in Ram}\bar{\mathbb{D}}_{\alpha, \bar{\epsilon}_\alpha}, \Omega^\bullet_\Sigma)$, i.e. deRham cohomology can be computed as a sheaf cohomology of $\mathbb{C}$ using the $\Gamma$-acyclic resolution $0\rightarrow \mathbb{C}\rightarrow \Omega^\bullet_\Sigma$ (see, for example, \cite[Corrollary 2.5.3]{cattani2014hodge}), which implies (\ref{hsigmapresentation}). Recall that the direct limit functor preserves short exact sequences. Therefore, taking the direct limit of (\ref{hsigmapresentation}) and compare the result to the explicit expression of $G_\Sigma$ and $G^\perp_\Sigma$ in Definition \ref{gdefinition} and Proposition \ref{gperpproposition}, we have $H^1(\Sigma, \mathbb{C}) \cong G_\Sigma/G^\perp_\Sigma$ as claimed.

Next, we show that $\tilde{\Omega}_{Airy}$ is a symplectic form on $\mathcal{H}_\Sigma$. If $\xi_1' = \xi_1 + df_1, \xi_2' = \xi_2 + df_2$ then $\tilde{\Omega}_{Airy}([\xi'_1], [\xi'_2]) = \Omega_{Airy}(\xi_1 + df_1, \xi_2 + df_2) = \Omega_{Airy}(\xi_1, \xi_2) = \tilde{\Omega}_{Airy}([\xi_1],[\xi_2])$, so $\tilde{\Omega}_{Airy}$ is well-defined. It is non-degenerates because if $[\xi] \in \mathcal{H}_\Sigma$ is such that $\tilde{\Omega}_{Airy}([\xi], [\eta]) = 0$ for all $[\eta] \in \mathcal{H}_\Sigma$ then $\Omega_{Airy}(\xi, \eta) = 0$ for all $\eta \in G_{\Sigma}$ implying that $\xi \in G^\perp_\Sigma$ and hence $[\xi] = 0$. This proves that $\mathcal{H}_\Sigma$ is a symplectic vector space with the induced symplectic form $\tilde{\Omega}_{Airy}$. Using (\ref{omegaairyong}) and Lemma \ref{symplecticformonhlemma} we can see that $\tilde{\Omega}_{Airy}$ coincides with $\Omega_{\mathcal{H}}$ as claimed. 
\end{proof}

Given $\xi \in G_\Sigma$, the cohomology class $[\xi] \in H^1(\Sigma,\mathbb{C})$ is completely determined by the $A$ and $B$ periods of $\xi$. In other words, we have a map $[.] : G_\Sigma\rightarrow \mathcal{H}_\Sigma$ given by $\xi \mapsto (H_1(\Sigma, \mathbb{C})\ni [C] \mapsto \oint_C\xi)$. Proposition \ref{sympreductionproposition} tells us that this map is surjective, in other words, there are enough differential forms on $G_\Sigma$ to represent every cohomology classes in $H^1(\Sigma, \mathbb{C})$.

\begin{remark}
Similarly, it has been shown in \cite[Proposition 7.1.3]{kontsevich2017airy} that $G^{mer}_\Sigma$ is a coisotropic subspace of $W^{|Ram|}_{Airy}$ (see Section \ref{ramproductresconstraintssubsection} for the definition of $W^{|Ram|}_{Airy}$) with
\begin{equation*}
    (G^{mer}_\Sigma)^{\perp} = \left\{df\ |\ f \in \Gamma\left(\Sigma, \mathcal{O}_{\Sigma}\left(\sum_{\alpha \in Ram}nr_\alpha\right)\right), \text{ for some } n \in \mathbb{Z}_{\geq 0}\right\} \subseteq G^{mer}_\Sigma
\end{equation*}
and we have $G^{mer}_\Sigma/(G^{mer}_\Sigma)^\perp \cong \mathcal{H}_\Sigma$.
\end{remark}

At last, let us define the vector bundle $G\rightarrow \mathcal{B}_{\Sigma_0}$ with fiber over each point $[\Sigma] \in \mathcal{B}_{\Sigma_0}$ given by $G_\Sigma$, where $\mathcal{B}_{\Sigma_0} \subseteq \mathcal{B}$ along with $\{\epsilon_{\alpha \in Ram}\}, \{M_{\alpha \in Ram}\}$ and $\mathcal{U}_{Ram}$ are chosen to satisfy Condition \ref{howtochoosebsigma0condition}. In fact, let $\mathcal{B}_{\Sigma_0}^\circ \supseteq \mathcal{B}$ be the maximal open neighbourhood of $[\Sigma_0]$, $\mathcal{B}_{\Sigma_0} \subseteq \mathcal{B}^\circ_{\Sigma_0}$ such that $\mathcal{B}^\circ_{\Sigma_0}$, $\{\epsilon_{\alpha \in Ram}\}, \{M_{\alpha \in Ram}\}$ and $\mathcal{U}_{Ram}$ satisfy C\ref{howtochoosebsigma0condition}.2 then
we will define the vector bundle $G\rightarrow \mathcal{B}^\circ_{\Sigma_0}$. Note that it is not possible to extend the definition of $G$ as a vector bundle on a larger base than $\mathcal{B}^\circ_{\Sigma_0}$ because any different choice of the collection of $(\mathcal{F}, \Omega_S)$-charts $\mathcal{U}_{Ram}$ will lead to a different definition for each fiber $G_\Sigma$.

For any given point $[\Sigma] \in \mathcal{B}^\circ_{\Sigma_0}$, there exists an open neighbourhood $\mathcal{B}_\Sigma \subset \mathcal{B}^\circ_{\Sigma_0}$ of $[\Sigma]$ such that $A, B$ symplectic basis cycles can be chosen consistently for all $[\Sigma'] \in \mathcal{B}_{\Sigma}$ (see Remark \ref{htrivializationtremark}). It follows that the normalized holomorphic forms $\{\omega_{i=1,\cdots,g}\}$ and the normalized Bergman kernel $B(p,q)$ are defined on every curve $[\Sigma'] \in \mathcal{B}_{\Sigma}$ consistently throughout $\mathcal{B}_{\Sigma}$. For each $[\Sigma'] \in\mathcal{B}_{\Sigma}$, let us denote by $\{\omega_i^{\Sigma,\Sigma'}\}$, $\{\bar{e}^{k,\alpha}_{\Sigma,\Sigma'}\}$ the set of normalized holomorphic forms and the set of meromorphic differentials on $\Sigma'$ defined by the normalized Bergman kernel (Definition \ref{endifferentialdefinition}) corresponding to the given consistent choice of $A,B$ symplectic basis cycles on $\mathcal{B}_{\Sigma}$. Let the topology on $G_\Sigma$ be induced from the norm
\begin{equation*}
    \Vert \xi \Vert_{G_\Sigma} := \sqrt{\sum_{\alpha \in Ram}\left(\sup_{p \in \mathbb{A}_{\alpha, \epsilon, M_\alpha}(\Sigma)}\left|\xi(p)\frac{z_\alpha(p)}{dz_\alpha(p)}\right|\right)^2}.
\end{equation*}
and recall from Corollary \ref{gisvoplush0corollary}, any $\xi \in G_{\Sigma'}$ can be written as $\xi = \sum_{\alpha \in Ram}\sum_{k=1}^\infty \xi_{k,\alpha}\bar{e}^{k,\alpha}_{\Sigma,\Sigma'} + \sum_{k=1}^ga^k\omega^{\Sigma, \Sigma'}_k$. Then, given $[\Sigma_1], [\Sigma_2] \in \mathcal{B}_{\Sigma}$ we define a homeomorphism $G_{\Sigma_1}\cong G_{\Sigma_2}$ by
\begin{equation}\label{homeomorphismgsigma1gsigma2}
    \sum_{\alpha \in Ram}\sum_{k=1}^\infty \xi_{k,\alpha}\bar{e}^{k,\alpha}_{\Sigma,\Sigma_1} + \sum_{k=1}^ga^k\omega^{\Sigma, \Sigma_1}_k \xleftrightarrow{\hspace{1cm}} \sum_{\alpha \in Ram}\sum_{k=1}^\infty \xi_{k,\alpha}\bar{e}^{k,\alpha}_{\Sigma,\Sigma_2} + \sum_{k=1}^ga^k\omega^{\Sigma, \Sigma_2}_k.
\end{equation}

Equip $\mathcal{B}_{\Sigma}\times G_\Sigma$ with the product topology and assign a topology to the set $G|_{\mathcal{B}_{\Sigma}} := \{([\Sigma'],\xi)\ |\ [\Sigma'] \in \mathcal{B}_{\Sigma}, \xi \in G_{\Sigma'}\}$ such that $G|_{\mathcal{B}_{\Sigma}} \cong \mathcal{B}_{\Sigma}\times G_\Sigma$, induced from (\ref{homeomorphismgsigma1gsigma2}), is a homeomorphism over the base $\mathcal{B}_{\Sigma}$. We can repeat the above construction for any sufficiently small neighbourhood $\mathcal{B}_{\tilde{\Sigma}}$ of $[\tilde{\Sigma}] \in \mathcal{B}^\circ_{\Sigma_0}$, possibly with different consistent choice of $A,B$ symplectic basis cycles. The transition function
\begin{equation*}
    \mathcal{B}_{\Sigma}\cap \mathcal{B}_{\tilde{\Sigma}}\times G_\Sigma \xrightarrow{\cong} \left(G|_{\mathcal{B}_{\Sigma}}\right)|_{\mathcal{B}_{\tilde{\Sigma}}} = \left(G|_{\mathcal{B}_{\tilde{\Sigma}}}\right)|_{\mathcal{B}_\Sigma} \xrightarrow{\cong} \mathcal{B}_{\Sigma}\cap \mathcal{B}_{\tilde{\Sigma}}\times G_{\tilde{\Sigma}}
\end{equation*}
induced from the identity on $G|_{\mathcal{B}_{\Sigma}\cap\mathcal{B}_{\tilde{\Sigma}}}$ can be seen to be continuous. It follows that $G := \bigcup_{[\Sigma] \in \mathcal{B}^\circ_{\Sigma_0}}G|_{\mathcal{B}_{\Sigma}}$ with the canonical projection $\pi : G\rightarrow \mathcal{B}^\circ_{\Sigma_0}$ is a vector bundle with fiber over each point $[\Sigma] \in \mathcal{B}^\circ_{\Sigma_0}$ given by $\pi^{-1}([\Sigma]) = G_\Sigma$ and $G|_{\mathcal{B}_{\Sigma}}\cong \mathcal{B}_{\Sigma}\times G_\Sigma$ giving a local trivialization over each $\mathcal{B}_{\Sigma}$.

\begin{definition}\label{gvectorbundledefinition}
Given a choice of $\{\epsilon_{\alpha \in Ram}\}, \{M_{\alpha \in Ram}\}$, $\mathcal{U}_{Ram}$ and let $\mathcal{B}^\circ_{\Sigma_0}$ be the maximal open neighbourhood of $[\Sigma_0]$ for which C\ref{howtochoosebsigma0condition}.2 is satisfied. We define $(G\rightarrow \mathcal{B}^\circ_{\Sigma_0}, \nabla_{\mathcal{F}})$ to be the vector bundle $G := \bigcup_{[\Sigma] \in \mathcal{B}^\circ_{\Sigma_0}}G|_{\mathcal{B}_{\Sigma}}$ with fiber $G_\Sigma$ over each point $[\Sigma] \in \mathcal{B}^\circ_{\Sigma_0}$ and equipped with the connection $\nabla_{\mathcal{F}} : \Gamma(\mathcal{B}^\circ_{\Sigma_0}, G)\rightarrow \Gamma(\mathcal{B}^\circ_{\Sigma_0}, T^*\mathcal{B}\otimes G)$ given by the differentiation along a leaf of the foliation $\mathcal{F}$.
\end{definition}

For any open subset $\mathcal{B}' \subseteq \mathcal{B}^\circ_{\Sigma_0}$ we denote by $\Gamma(\mathcal{B}', G)$ the set of holomorphic sections of $G$ on $\mathcal{B}'$.

We will mostly be interested in the small deformations of the curve $\Sigma_0$, therefore, it is sufficient to consider the restriction of $G$ to the open neighbourhood $\mathcal{B}_{\Sigma_0} \subseteq \mathcal{B}^\circ_{\Sigma_0}$ of $[\Sigma_0]$, where $\mathcal{B}_{\Sigma_0}$ along with $\{\epsilon_{\alpha \in Ram}\}, \{M_{\alpha \in Ram}\}$ and $\mathcal{U}_{Ram}$ are chosen to satisfy Condition \ref{howtochoosebsigma0condition}. Since $\mathcal{B}_{\Sigma_0}$ is contractible by C\ref{howtochoosebsigma0condition}.1, the restriction of $G$ to $\mathcal{B}_{\Sigma_0}$ is a trivial vector bundle which we denote by $(G\rightarrow \mathcal{B}_{\Sigma_0}, \nabla_{\mathcal{F}})$. Similarly, the trivial vector sub-bundles of $G\rightarrow \mathcal{B}_{\Sigma_0}$:  $G^\perp\rightarrow \mathcal{B}_{\Sigma_0}$ with fiber $G^\perp_{\Sigma}$ and $V\rightarrow \mathcal{B}_{\Sigma_0}$ with fiber $V_\Sigma$ over $[\Sigma] \in \mathcal{B}_{\Sigma_0}$ may be constructed.

Let us recast some definitions and results in this section as follows. We have an injective vector bundle morphism $i : G\hookrightarrow W^{Ram}$ covering the map $\gamma : \mathcal{B}_{\Sigma_0}\hookrightarrow Discs^{Ram}$ given by (\ref{gammamapbtodiscs}), such that $i$ restricted to the fiber over any point $[\Sigma] \in \mathcal{B}_{\Sigma_0}$ is $i : G_{\Sigma}\hookrightarrow W^{Ram}_{t(\Sigma)}$ given by (\ref{imapdefinition}) (see Figure \ref{hgwvectorbundlesrelation}). Therefore, $G$ is a vector sub-bundle of $W^{Ram}$, in fact by Proposition \ref{gperpproposition} it is a coisotropic vector sub-bundle. Proposition \ref{sympreductionproposition} can be restated as an isomorphism of vector bundles $G/G^\perp \cong \mathcal{H}$. We define $[.] : G\rightarrow \mathcal{H}$ to be the morphism of vector bundle given fiber-wise by the natural projection $G_{\Sigma}\rightarrow G_{\Sigma}/G^\perp_{\Sigma} \cong \mathcal{H}_\Sigma \cong H^1(\Sigma, \mathbb{C})$ which sends any differential form $\xi \in G_{\Sigma}$ to its cohomology class $[\xi] \in H^1(\Sigma,\mathbb{C})$ (see Figure \ref{hgwvectorbundlesrelation}).

\subsubsection{The $Ram$ product residue constraints}\label{ramproductresconstraintssubsection}
So far, the residue constraints were only introduced on a disc with only a single ramification point. Since there are more than one ramification points on $\Sigma$, we will need to consider a $|Ram|$ copies of the residue constraints. The purpose of this section is to fix the needed notations regarding the $Ram$ product residue constraints Airy structure.

Consider the Airy structure obtained as a product of the residue constraints Airy structures introduced in Section \ref{residueconstraintairystructuresection}. Set $\mathbb{I} := \mathbb{Z}_{>0}\times Ram$ and let $V_{Airy}^{|Ram|} := \prod_{\alpha \in Ram}V_{Airy} = \sum_{\alpha \in Ram}[\alpha]\otimes z^{-1}\mathbb{C}[z^{-1}]\frac{dz}{z}$ be a discrete vector space with a basis $\{e^{k=1,2,3,\cdots, \alpha \in Ram} := [\alpha]\otimes z^{-k}\frac{dz}{z}\}$. The continuous dual space is given by $(V^{|Ram|}_{Airy})^* := \prod_{\alpha \in Ram}V^*_{Airy} = \sum_{\alpha \in Ram}[\alpha]\otimes z\mathbb{C}[[z]]\frac{dz}{z}$ with a basis $\{f_{k=1,2,3,\cdots, \alpha \in Ram} := [\alpha]\otimes kz^{k}\frac{dz}{z}\}$. Define the Tate space
\begin{equation*}
    W_{Airy}^{|Ram|} := \left\{\eta = \sum_{\alpha \in Ram}[\alpha]\otimes \eta_\alpha \ |\ \eta_\alpha \in \mathbb{C}((z))dz, Res_{z=0}\eta_\alpha = 0, \alpha \in Ram\right\} = V^{|Ram|}_{Airy}\oplus (V^{|Ram|}_{Airy})^*
\end{equation*}
with the symplectic form $\Omega_{Airy}([\alpha_1]\otimes\xi_1, [\alpha_2]\otimes \xi_2) = \delta_{\alpha_1,\alpha_2}Res_{z=0}(\xi_1\int\xi_2)$.  We define the $|Ram|$ product residue constraints Airy structure $\{(H_{Airy})_{k=1,2,3,\cdots, \alpha \in Ram}\}$ on $V^{|Ram|}_{Airy}$ by 
\begin{align*}\label{productresconstraintsairystructure}
    (H_{Airy})_{2n,\alpha}(w) &= Res_{z = 0}\left(\left(z - \frac{w(z)}{2zdz}\right)[\alpha]\otimes z^{2n}d(z^2)\right),\\
    (H_{Airy})_{2n-1,\alpha}(w) &= \frac{1}{2}Res_{z = 0}\left(\left(z - \frac{w(z)}{2zdz}\right)^2 [\alpha]\otimes z^{2n-2}d(z^2)\right),\qquad n = 1,2,3,...\numberthis
\end{align*}
where $w := \sum_{\alpha \in Ram}\sum_{k=1}^\infty(x^{k,\alpha}f_{k,\alpha} + y_{k,\alpha}e^{k,\alpha}) \in W^{|Ram|}_{Airy}$. We may write (\ref{productresconstraintsairystructure}) in the standard form (\ref{airystructuredefn}):
\begin{align*}
    (H_{Airy})_{i,\alpha} &= -y_{i,\alpha} + \sum_{\beta, \gamma \in Ram}\sum_{j,k=1}^\infty(a_{Airy})_{(i,\alpha)(j,\beta)(k,\gamma)}x^{j,\beta}x^{k,\gamma}\\
    &\qquad + \sum_{\beta, \gamma \in Ram}\sum_{j,k=1}^\infty 2(b_{Airy})_{(i,\alpha)(j,\beta)}^{k,\gamma}x^{j,\beta}y_{k,\gamma} + \sum_{\beta, \gamma \in Ram}\sum_{j,k=1}^\infty(c_{Airy})_{i,\alpha}^{(j,\beta)(k,\gamma)}y_{j,\beta}y_{k,\gamma}.
\end{align*}
and express the residue constraints Airy structure as $(V_{Airy}^{|Ram|}, A_{Airy}, B_{Airy}, C_{Airy})$ where
\begin{equation*}
    A_{Airy} := ((a_{Airy})_{(i,\alpha)(j,\beta)(k,\gamma)}),\qquad B_{Airy} := ((b_{Airy})^{k,\gamma}_{(i,\alpha)(j,\beta)}), \qquad C_{Airy} := ((c_{Airy})^{(j,\beta)(k,\gamma)}_{i,\alpha}).
\end{equation*}
The corresponding quantum Airy structure is $(V^{|Ram|}_{Airy}, A_{Airy}, B_{Airy}, C_{Airy}, \epsilon_{Airy})$ where $(\epsilon_{Airy})_{i,\alpha} := \frac{1}{16}\delta_{i,3}$ and $\epsilon_{Airy} = ((\epsilon_{Airy})_{i,\alpha})$. Define the quadratic Lagrangian submanifold $L^{|Ram|}_{Airy} := \{(H_{Airy})_{i=1,2,3,\cdots,\alpha \in Ram} = 0\} \subset W^{|Ram|}_{Airy}$ with $T_0L^{|Ram|}_{Airy} := \sum_{\alpha \in Ram}[\alpha] \otimes z\mathbb{C}[[z]]\frac{dz}{z} = (V^{|Ram|}_{Airy})^*$. It is clear that $T_0L^{|Ram|}_{Airy}$ is a Lagrangian subspace of $W^{|Ram|}_{Airy}$ and that $V^{|Ram|}_{Airy}$ is a Lagrangian complement of $T_0L^{|Ram|}_{Airy}$.

\begin{remark}
Our definitions of $W^{|Ram|}_{Airy}$ and $L^{|Ram|}_{Airy}$ are exactly the same as how $W^{Ram}_{Airy}$ and $L^{Ram}_{Airy}$ were defined in \cite[Section 7.1]{kontsevich2017airy}. Our choice of symbols came from the fact that $W^{|Ram|}_{Airy}$ and $L^{|Ram|}_{Airy}$ are the products of $|Ram|$ copies of $W_{Airy}$ and $L_{Airy}$ (as given in Section \ref{residueconstraintairystructuresection} or \cite[Section 3.3]{kontsevich2017airy}) respectively, where $|Ram|$ denotes the number of ramification points. Finally, note that by taking Laurent expansion of any $\xi \in G^{mer}_{\Sigma}$ we obtain the inclusion $i: G^{mer}_{\Sigma}\hookrightarrow W^{|Ram|}_{Airy}$ can also be defined exactly as we did in (\ref{imapdefinition}).
\end{remark}

Let us consider a new Lagrangian complement of $T_0L^{|Ram|}_{Airy}$:
\begin{equation*}
    V^{mer}_{\Sigma} := \left\{\xi \in G^{mer}_{\Sigma}\ |\ \oint_{A_i}\xi = 0, i = 1,\cdots,g\right\}
\end{equation*}
where $G^{mer}_{\Sigma}$ was given in Remark \ref{gmerremark}. Essentially, $V^{mer}_{\Sigma} \subseteq V_\Sigma$ is the subspace of meromorphic differential forms in $V_\Sigma$ with basis $\{\bar{e}^{i=1,2,3,\cdots,\alpha \in Ram}\}$ where $\bar{e}^{i,\alpha}$ is defined in Definition \ref{endifferentialdefinition} and note that (\ref{ebarintermofef}) can be written as
\begin{equation*}
    \bar{e}^{k,\alpha} = e^{k,\alpha} + \sum_{\alpha'\in Ram}\sum_{k'=1}^\infty \frac{1}{kk'}P^{(k,\alpha)(k',\alpha')}f_{k',\alpha'}.
\end{equation*}
The gauge transformation $(V^{|Ram|}_{Airy}, A_{Airy}, B_{Airy}, C_{Airy}, \epsilon_{Airy}) \mapsto (V^{mer}_\Sigma, \bar{A}_{Airy}, \bar{B}_{Airy}, \bar{C}_{Airy}, \bar{\epsilon}_{Airy})$ associating with the choice of basis $\{\eta^{i=1,2,3,\cdots} := \sum_{\beta \in Ram}\sum_{j=1}^\infty d^i_{j,\beta}\bar{e}^{j,\beta}\}$ for $V^{mer}_\Sigma$ and $\{\omega_{i=1,2,3,\cdots} := \sum_{\alpha\in Ram}\sum_{i=1}^\infty c^{i,\alpha}_jf_{i,\alpha}\}$ for $T_0L^{|Ram|}_{Airy}$ for some $(c^{i,\alpha}_j) : V^{|Ram|}_{Airy} \rightarrow V^{|Ram|}_{Airy}$ and its inverse $(d^i_{j,\beta}) : V^{|Ram|}_{Airy} \rightarrow V^{|Ram|}_{Airy}$, is given by 
\begin{align*}\label{resconstraintsatgaugetranfs}
    \numberthis
    (\bar{a}_{Airy})_{i_1i_2i_3} &= (a_{Airy})_{(j_1,\alpha_1)(j_2,\alpha_2)(j_3,\alpha_3)}c^{j_1,\alpha_1}_{i_1}c^{j_2,\alpha_2}_{i_2}c^{j_3,\alpha_3}_{i_3}\\
    (\bar{b}_{Airy})_{i_1i_2}^{i_3} &= \left((b_{Airy})_{(j_1,\alpha_1)(j_2,\alpha_2)}^{j_3,\alpha_3} + (a_{Airy})_{(j_1,\alpha_1)(j_2,\alpha_2)(k,\gamma)}s^{(k,\gamma)(j_3,\alpha_3)}\right)c^{j_1,\alpha_1}_{i_1}c^{j_2,\alpha_2}_{i_2}d^{i_3}_{j_3,\alpha_3}\\
    (\bar{c}_{Airy})_{i_1}^{i_2i_3} &= \Big((c_{Airy})_{(j_1,\alpha_1)}^{(j_2,\alpha_2)(j_3,\alpha_3)} + (b_{Airy})_{(j_1,\alpha_1)(k_1,\gamma_1)}^{(j_3,\alpha_3)}s^{(k_1,\gamma_1)(j_2,\alpha_2)}\\
    &\qquad + (b_{Airy})_{(j_1,\alpha_1)(k_2,\gamma_2)}^{(j_2,\alpha_2)}s^{(k_2,\gamma_2)(j_3,\alpha_3)} + (a_{TR})_{(j_1,\alpha_1)(k_1,\gamma_1)(k_2,\gamma_2)}s^{(k_1,\gamma_1)(j_2,\gamma_2)}s^{(k_2,\gamma_2)(j_3,\gamma_3)}\Big)\\
    &\qquad\qquad\qquad\times c^{j_1,\alpha_1}_{i_1}d_{j_2,\alpha_2}^{i_2}d_{j_3,\alpha_3}^{i_3}\\
    (\bar{\epsilon}_{Airy})_i &= \left((\epsilon_{Airy})_{j_1,\alpha_1} + (a_{Airy})_{(j_1,\alpha_1)(k_1,\gamma_1)(k_2,\gamma_2)}s^{(k_1,\gamma_1)(k_2,\gamma_2)} \right)c^{j_1,\alpha_1}_{i_1}.
\end{align*}
where $s^{(i,\alpha)(j,\beta)} = \frac{1}{ij}P^{(i,\alpha)(j,\beta)}$, $(s^{(i,\alpha)(j,\beta)}) : V^{|Ram|}_{Airy}\times V^{|Ram|}_{Airy} \rightarrow \mathbb{C}$.

\begin{remark}\label{resconstraintssubairystructureremark}
It is easy to see that $((V_{Airy})^{|Ram|}_{odd}, A_{Airy}, B_{Airy}, C_{Airy}, \epsilon_{Airy})$ is a quantum Airy sub-structure of $(V_{Airy}^{|Ram|}, A_{Airy}, B_{Airy}, C_{Airy}, \epsilon_{Airy})$ where $(V_{Airy})^{|Ram|}_{odd} := \prod_{\alpha \in Ram}(V_{Airy})_{odd} \subset V_{Airy}^{|Ram|}$. Likewise, let $(V^{mer}_{\Sigma})_{odd} \subset V^{mer}_\Sigma$ be the vector space spanned by finite linear combinations of vectors $\{\bar{e}^{k=odd, \alpha \in Ram}\}$ then $((V^{mer}_\Sigma)_{odd}, \bar{A}_{Airy}, \bar{B}_{Airy}, \bar{C}_{Airy}, \bar{\epsilon}_{Airy})$ is a quantum Airy sub-structure of
$(V^{mer}_{\Sigma}, \bar{A}_{Airy}, \bar{B}_{Airy}, \bar{C}_{Airy}, \bar{\epsilon}_{Airy})$ according to Remark \ref{gaugetransfrespectssubstructuresremark}.
\end{remark}

The consideration of the analytic residue constraints Airy structure in Section \ref{analyticresconstraintssubsection} straightforwardly generalizes to the case of the $Ram$ product residue constraints Airy structure because $Ram$ is a finite set. In particular, we treat (\ref{productresconstraintsairystructure}) analytically as functions $(H_{Airy})_{i,\alpha} : W^{Ram}_{Airy} \rightarrow \mathbb{C}$ and we also have that $L^{Ram}_{Airy} = \{(H_{Airy})_{i=1,2,3,\cdots, \alpha \in Ram} = 0\} \subset W^{Ram}_{Airy}$. The analytic counter-part of $V^{|Ram|}_{Airy}$ is obviously given by $V_{Airy}^{Ram} := \prod_{\alpha \in Ram}V^{\epsilon_\alpha}_{Airy}$. It is clear that $V^{Ram}_{Airy}$ is a Lagrangian complement of $T_0L^{Ram}_{Airy}$ in $W^{Ram}_{Airy} = V^{Ram}_{Airy} \oplus T_0L^{Ram}_{Airy}$. 

According to Corollary \ref{vislagrangiancomplementcorollary}, $V_\Sigma$ is another Lagrangian complement of $T_0L^{Ram}_{Airy}$. Let us use $\{\eta^{i=1,2,3,\cdots} := \sum_{\beta \in Ram}\sum_{j=1}^\infty d^i_{j,\beta}\bar{e}^{j,\beta}\}$ as a basis of $V_\Sigma$ and $\{\omega_{i=1,2,3,\cdots} := \sum_{\alpha\in Ram}\sum_{i=1}^\infty c^{i,\alpha}_jf_{i,\alpha}\}$ as a basis of $T_0L^{Ram}_{Airy}$ for some $(c^{i,\alpha}_j), (d^i_{j,\beta})$ satisfying Condition \ref{analyticcdscondition}. Note that $(s^{(i,\alpha)(j,\beta)})$ where $s^{(i,\alpha)(j,\beta)} = \frac{1}{ij}P^{(i,\alpha)(j,\beta)}$ also satisfies Condition \ref{analyticcdscondition} since it came from the series expansion of a Bergman kernel. Therefore, the change of the canonical basis $\{e^{k,\alpha}, f_{k,\alpha}\} \mapsto \{\eta^k, \omega_k\}$ of $W^{Ram}_{Airy}$ is of the same type as considered in Section \ref{analyticresconstraintssubsection}. Suppose that $w = \sum_{\alpha \in Ram}\sum_{k=1}^\infty (x^{k,\alpha}f_{k,\alpha} + y_{k,\alpha}e^{k,\alpha}) \in W^{Ram}_{Airy}$, then by Proposition \ref{analyticgaugetransformationproposition} we have $w = \sum_{k=1}^\infty(\alpha^k\omega_k + \beta_k\eta^k)$ where
\begin{equation}\label{bermangaugecoordtransf}
    \alpha^i := \sum_{\beta \in Ram}\sum_{j=1}^\infty d^i_{j,\beta}\left(x^{j,\beta} - \sum_{\gamma\in Ram}\sum_{k=1}^\infty s^{(j,\beta),(k,\gamma)}y_{k,\gamma}\right), \qquad \beta_i := \sum_{\beta\in Ram}\sum_{j=1}^\infty c^{j,\beta}_iy_{j,\beta}
\end{equation}
are the new canonical coordinates of $W^{Ram}_{Airy}$. Also, by Proposition \ref{analyticgaugetransformationproposition} we have $(\bar{H}_{Airy})_i := \sum_{\beta\in Ram}\sum_{j=1}^\infty c^{j,\beta}_i(H_{Airy})_{j,\beta}$ given by
\begin{equation}\label{resconstraintshamiltonianinbergmangauge}
    (\bar{H}_{Airy})_i := -\beta_i + \sum_{j,k=1}^\infty (\bar{a}_{Airy})_{ijk}\alpha^j\alpha^k + 2\sum_{j,k=1}^\infty (\bar{b}_{Airy})_{ij}^k\alpha^i\beta_k + \sum_{j,k=1}^\infty(\bar{c}_{Airy})_i^{jk}\beta_j\beta_k
\end{equation}
where $\bar{A}_{Airy} = ((\bar{a}_{Airy})_{ijk}), \bar{B}_{Airy} = ((\bar{b}_{Airy})_{ij}^{k}), \bar{C}_{Airy} = ((\bar{c}_{Airy})_{i}^{jk})$ are given by (\ref{resconstraintsatgaugetranfs}). 

\begin{remark}\label{bergmangaugeremark}
Moreover, it is straightforward to generalize Proposition \ref{analyticatrproposition} to the case of the $Ram$ product residue constraints Airy structure. We conclude that if $i_\alpha\left(\sum_{i=1}^\infty\pmb{\beta}_i(\alpha^1,\cdots,\alpha^K)\eta^i\right)(z_\alpha) \in V^{\epsilon_\alpha}_{Airy}$, where $\pmb{\beta}_i(\alpha^1,\cdots,\alpha^K) := \beta_i(\{\alpha^1,\cdots,\alpha^K, \alpha^{k>K}=0\})$, is holomorphic in $\{\alpha^1,\cdots,\alpha^K, z^{-1}_\alpha\}$ on some open neighbourhood of $(\alpha^1,\cdots,\alpha^K) = (0,\cdots,0)$, $|z_\alpha| > \epsilon_\alpha$ and 
\begin{equation*}
    (\bar{H}_{Airy})_i\left(\{\alpha^1,\cdots,\alpha^K,\alpha^{k > K}=0\}, \{\pmb{\beta}_{k=1,2,3,\cdots}(\alpha^1,\cdots,\alpha^K)\}\right) = 0
\end{equation*}
then $\pmb{\beta}_i = (\partial_i S_0)|_{\alpha^{k>K} = 0}$. Where $S_0 = \sum_{n=1}^\infty S_{0,n} \in \prod_{n=1}^\infty Sym_n(V^{mer}_{\Sigma})$ is the genus zero part of the ATR output using the quantum Airy structure $(V^{mer}_{\Sigma},\bar{A}_{Airy}, \bar{B}_{Airy}, \bar{C}_{Airy}, \bar{\epsilon}_{Airy})$.
\end{remark}

\subsubsection{Connection $\nabla_{\mathcal{F}}$ and parallel transport on $G\rightarrow \mathcal{B}_{\Sigma_0}$}\label{connectionandparalleltransportongsubsection}
In this section, we are going to more carefully examine the connection $\nabla_{\mathcal{F}}$ and discuss the parallel transport on $G\rightarrow \mathcal{B}_{\Sigma_0}$. For now, let $\{\epsilon_{\alpha\in Ram}\}$, $\{M_{\alpha \in Ram}\}$, $\mathcal{U}_{Ram}$ and an open neighbourhood $\mathcal{B}_{\Sigma_0} \subseteq \mathcal{B}$ of $[\Sigma_0] \in \mathcal{B}$ be chosen such that C\ref{howtochoosebsigma0condition}.1 and C\ref{howtochoosebsigma0condition}.2 are satisfied. Recall from Definition \ref{gvectorbundledefinition} that $\nabla_{\mathcal{F}}$ is given by the differentiation along a leaf of the foliation $\mathcal{F}$. Let us verify that this indeed defines a connection on $G\rightarrow \mathcal{B}_{\Sigma_0}$ which is flat.

\begin{lemma}
For any section $\xi \in \Gamma(\mathcal{B}_{\Sigma_0}, G)$, we have $\nabla_{\mathcal{F}}\xi \in \Gamma(\mathcal{B}_{\Sigma_0}, T^*\mathcal{B}\otimes G)$.
\end{lemma}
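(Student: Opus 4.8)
The claim is that $\nabla_{\mathcal{F}}$ genuinely lands in $\Gamma(\mathcal{B}_{\Sigma_0}, T^*\mathcal{B}\otimes G)$, i.e. that differentiating a section of $G$ along the foliation leaves produces another section of $G$ (tensored with a $1$-form on the base). The plan is to work locally on the base using the coordinate chart $(\mathcal{B}_{\Sigma_0}, u^1,\dots,u^g)$ guaranteed by C\ref{howtochoosebsigma0condition}.1, and locally on $\Sigma$ using a cover by $(\mathcal{F},\Omega_S)$-charts. Fix $\xi \in \Gamma(\mathcal{B}_{\Sigma_0}, G)$ and a base direction $\partial/\partial u^k$. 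We must check two things: (i) on the part of $\Sigma$ away from the removed discs $\bigcup_{\alpha}\mathbb{D}_{\alpha,\epsilon_\alpha}(\Sigma)$, the leafwise derivative $\frac{\partial}{\partial u^k}\big|_{x_p=\text{const}}\xi$ is still holomorphic there, and (ii) the contour integrals $\oint_{\partial\bar{\mathbb{D}}_{\alpha,\epsilon_\alpha}}\big(\frac{\partial}{\partial u^k}\xi\big)$ still vanish, so that the result lies in $G_\Sigma$ fiberwise.

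For (i), I would argue exactly as in the Remark following Lemma \ref{normallocalsectionslemma} and in the proof of Lemma \ref{integralandderivativelemma}: cover a neighbourhood of $\Sigma\setminus\bigcup_\alpha\mathbb{D}_{\alpha,\epsilon_\alpha}(\Sigma)$ by $(\mathcal{F},\Omega_S)$-charts $(U_p,x_p,y_p)$ in which $x_p$ is a local coordinate (possible since we are away from ramification points, where $dx_p|_\Sigma$ has no zeros). Writing $\xi|_{\Sigma\cap U_p} = \xi_p(x_p,u)\,dx_p$, the leafwise derivative is $\big(\frac{\partial}{\partial u^k}\xi\big)\big|_{\Sigma\cap U_p} = \frac{\partial \xi_p}{\partial u^k}\,dx_p$, which is again holomorphic on $\Sigma\cap U_p$ since $\xi$ depends holomorphically on both $x_p$ and $u$ and differentiation of a convergent power series is again convergent on the same region (Remark \ref{laurentseriesgeneralremark}). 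Patching over the charts in the cover (the transition computation is the same exactness computation as in Lemma \ref{integralandderivativelemma}) gives a holomorphic form on a neighbourhood of $\Sigma\setminus\bigcup_\alpha\mathbb{D}_{\alpha,\epsilon_\alpha}(\Sigma)$, hence an element of $\varinjlim_{\bar\epsilon_\alpha<\epsilon_\alpha}\Gamma(\Sigma\setminus\bigcup_\alpha\bar{\mathbb{D}}_{\alpha,\bar\epsilon_\alpha},\Omega^1_\Sigma)$. Crucially, the contour $\partial\bar{\mathbb{D}}_{\alpha,\epsilon_\alpha}$ sits in this region, so it makes sense to test the residue-type condition.

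For (ii), since $\xi\in G_\Sigma$ we have $\oint_{\partial\bar{\mathbb{D}}_{\alpha,\epsilon_\alpha}(\Sigma)}\xi = 0$ for every $[\Sigma]\in\mathcal{B}_{\Sigma_0}$. The contour $\partial\bar{\mathbb{D}}_{\alpha,\epsilon_\alpha}(\Sigma) = \{|z_\alpha|=\epsilon_\alpha\}$ is defined in terms of the standard local coordinate $z_\alpha = \sqrt{x_\alpha - x_\alpha(r_\alpha(\Sigma))}$, so as $u^k$ varies the contour does move; but it moves only by a leafwise translation, and by C\ref{howtochoosebsigma0condition} it stays inside the annulus $\mathbb{A}_{\alpha,\epsilon_\alpha,M_\alpha}(\Sigma)$ where $\xi$ is holomorphic. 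Applying Lemma \ref{integralandderivativelemma} (the version of the Leibniz rule for a closed contour deforming homotopically while avoiding poles) to the closed contour $C = \partial\bar{\mathbb{D}}_{\alpha,\epsilon_\alpha}$ gives $\frac{\partial}{\partial u^k}\oint_{\partial\bar{\mathbb{D}}_{\alpha,\epsilon_\alpha}}\xi = \oint_{\partial\bar{\mathbb{D}}_{\alpha,\epsilon_\alpha}}\big(\frac{\partial}{\partial u^k}\xi\big)_{\nabla_{\mathcal{F}}}$, and the left-hand side is $\frac{\partial}{\partial u^k}(0) = 0$. Hence $\big(\nabla_{\mathcal{F}}\xi\big)(\partial/\partial u^k)\in G_\Sigma$ for each $\Sigma$, and assembling over $k$ and over the base gives $\nabla_{\mathcal{F}}\xi\in\Gamma(\mathcal{B}_{\Sigma_0},T^*\mathcal{B}\otimes G)$. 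I would then briefly note the $\mathbb{C}$-linearity and Leibniz property that make $\nabla_{\mathcal{F}}$ a bona fide connection, which are immediate from the same local description.

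The main obstacle, and the point that deserves genuine care rather than a hand-wave, is bookkeeping the moving annuli: one must make sure that differentiating along the leaves does not create a pole of $\xi$ \emph{on or outside} the fixed contour $\partial\bar{\mathbb{D}}_{\alpha,\epsilon_\alpha}$ — this is precisely why the discs are removed and why $\xi$ is assumed holomorphic on a neighbourhood of $\bar{\mathbb{A}}_{\alpha,\epsilon_\alpha,M_\alpha}(\Sigma)$ strictly larger than the closed annulus, and it is exactly the role of Condition \ref{howtochoosebsigma0condition} in keeping $\mathcal{B}_{\Sigma_0}$ small enough. Once that neighbourhood structure is in place, the argument is essentially a localized replay of Lemma \ref{integralandderivativelemma}, so I do not expect any further difficulty.
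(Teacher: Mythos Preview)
Your argument is correct. Part (i) is essentially identical to the paper's: cover the complement of the discs by $(\mathcal{F},\Omega_S)$-charts with $x_p$ as local coordinate, write $\xi|_{\Sigma\cap U_p}=\xi_p(x_p,u)\,dx_p$, and observe that $\partial_{u^k}\xi_p\,dx_p$ is again holomorphic there.

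For part (ii) you take a genuinely different route. The paper verifies $\oint_{\partial\bar{\mathbb{D}}_{\alpha,\epsilon_\alpha}}\nabla_{\mathcal{F}}\xi=0$ by a direct Laurent computation in the standard coordinate $z_\alpha$: using $x_\alpha=a_\alpha(\Sigma)+z_\alpha^2$ one finds $\nabla_{\mathcal{F}}(z_\alpha^k\,dz_\alpha)\propto(1-k)z_\alpha^{k-2}\,dz_\alpha$, so the only way to produce a $\frac{dz_\alpha}{z_\alpha}$ term is from the monomial with $k=1$ in $z_\alpha^k\frac{dz_\alpha}{z_\alpha}$-notation, and the prefactor kills it. You instead invoke Lemma~\ref{integralandderivativelemma} to interchange $\partial_{u^k}$ with $\oint_{\partial\bar{\mathbb{D}}_{\alpha,\epsilon_\alpha}}$, using that the integral vanishes identically on $\mathcal{B}_{\Sigma_0}$. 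Your approach is cleaner and reuses an already-proved result; the paper's computation, while more hands-on, has the side benefit of exhibiting explicitly that $\nabla_{\mathcal{F}}$ acts on the Laurent expansion as $-\frac{\partial a_\alpha}{\partial u^k}\mathcal{L}_{\frac{1}{2z_\alpha}\partial_{z_\alpha}}$, a fact that is reused later in Proposition~\ref{ghwproposition} to compare $\nabla_{\mathcal{F}}$ with the connection $\nabla$ on $W^{Ram}$.
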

\begin{proof}
Consider $[\Sigma] \in \mathcal{B}_{\Sigma_0}$. Let $\mathcal{U} := \{(U_{p\in\sigma}, x_{p\in\sigma}, y_{p\in\sigma})\}$ be a collection of $(\mathcal{F}, \Omega_S)$-charts covering $\Sigma\setminus \cup_{\alpha\in Ram}\mathbb{D}_{\alpha, \epsilon_\alpha}(\Sigma)$ where $\Sigma\cap U_p$ are bi-holomorphic to a simply-connected open subset of $\mathbb{C}$ with $x_p$ as a local coordinate and $\sigma$ is some finite index set. It follows that $dx_p|_{\Sigma \cap U_p}\neq 0$ and so $r_\alpha(\Sigma) \notin \cup_{p \in \sigma}\Sigma \cap U_p$ for all $\alpha \in Ram$.

Let $\{u^1,...,u^g\}$ be coordinates of $\mathcal{B}_{\Sigma_0}$ (this exists due to the condition C\ref{howtochoosebsigma0condition}.1).
Given a section $\xi \in \Gamma(\mathcal{B}_{\Sigma_0}, G)$, $\xi_\Sigma \in G_\Sigma$ and a tangent vector $v = \sum_{k=1}^gv^k\frac{\partial}{\partial u^k} \in T_{[\Sigma]}\mathcal{B}_{\Sigma_0}$ then for each $p \in \sigma$ we have $\iota_{v}\nabla_{\mathcal{F}}\xi|_{\Sigma \cap U_{p}} = \sum_{k=1}^gv^k\frac{\partial}{\partial u_k}|_{x_p = const}(\xi|_{\Sigma \cap U_p})$. Using $x_p$ as a local coordinate on $\Sigma \cap U_p$ and write $\xi|_{\Sigma\cap U_p} = \xi_p(x_p,u)dx$. It follows that $\iota_v\nabla_{\mathcal{F}}\xi|_{\Sigma \cap U_{p}} = \sum_{k=1}^g v^k\frac{\partial}{\partial u^k}(\xi_p(x_p,u))dx_p$. We patch $\iota_v\nabla_{\mathcal{F}}\xi|_{\Sigma\cap U_p}$ for each $p \in\sigma$ together to obtain $\iota_v\nabla_{\mathcal{F}}\xi$ which is holomorphic on $\Sigma \setminus \cup_{\alpha \in Ram}\bar{\mathbb{D}}_{\alpha, \bar{\epsilon}_\alpha}(\Sigma)\subset \cup_{p\in \sigma}\Sigma \cap U_p$ for some $\bar{\epsilon}_\alpha < \epsilon_\alpha, \forall \alpha \in Ram$. 

Let us show that $\oint_{\partial \bar{\mathbb{D}}_{\alpha, \epsilon_\alpha}(\Sigma)}\nabla_{\mathcal{F}}\xi = 0$. Write $\xi$ on $\mathbb{A}_{\alpha, \epsilon_\alpha, M_\alpha}(\Sigma)$ in the standard local coordinate as $\xi(z_\alpha) = \sum_{k\neq 0} \xi_{\alpha,k}z^k_\alpha \frac{dz_\alpha}{z_\alpha}$. Since $x_\alpha = a_\alpha(\Sigma) + z^2_\alpha$, we have $\nabla_{\mathcal{F}}(z_\alpha) = \sum_{k=1}^gdu^k\frac{\partial a_\alpha(\Sigma)}{\partial u^k}\frac{\partial}{\partial a_\alpha}|_{x_\alpha = const}z_\alpha = -\frac{1}{2z_\alpha}\sum_{k=1}^gdu^k\frac{\partial a_\alpha(\Sigma)}{\partial u^k}$. It follows that $\nabla_{\mathcal{F}}(z_\alpha^kdz_\alpha) = \frac{1}{2}\sum_{k=1}^gdu^k\frac{\partial a_\alpha(\Sigma)}{\partial u^k}(1-k)z^{k-2}_\alpha dz_\alpha$, therefore $\iota_v\nabla_{\mathcal{F}}\xi$ is free of $\frac{dz_\alpha}{z_\alpha}$ term. Hence $\iota_v\nabla_{\mathcal{F}}\xi_\Sigma \in G_{\Sigma}$, in other words,  $\nabla_{\mathcal{F}}\xi \in \Gamma(\mathcal{B}_{\Sigma_0},T^*\mathcal{B}\otimes G)$, as required.
\end{proof}

\begin{lemma}
The vector bundle $G\rightarrow \mathcal{B}_{\Sigma_0}$ with the connection $\nabla_{\mathcal{F}}$ is flat.
\end{lemma}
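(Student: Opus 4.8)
The statement to prove is that the vector bundle $(G\rightarrow\mathcal{B}_{\Sigma_0},\nabla_{\mathcal{F}})$ is flat, i.e. its curvature $R_{\nabla_{\mathcal{F}}} = d_{\nabla_{\mathcal{F}}}\circ d_{\nabla_{\mathcal{F}}}$ vanishes. The plan is to mimic the argument already used in Lemma \ref{hisaflatbundlelemma} for $(\mathcal{H},\nabla_{GM})$ and in Lemma \ref{wisflatlemma} for $(W^{\epsilon,M},\nabla)$: work in the single coordinate chart $(\mathcal{B}_{\Sigma_0},u^1,\dots,u^g)$ guaranteed by C\ref{howtochoosebsigma0condition}.1, write out $d_{\nabla_{\mathcal{F}}}$ explicitly on $\Gamma(\mathcal{B}_{\Sigma_0},\Omega^r_{\mathcal{B}}\otimes G)$, and compute its square directly. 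The key point that makes this work is that $\nabla_{\mathcal{F}}$ is the connection of differentiation along a foliation leaf, which on each $(\mathcal{F},\Omega_S)$-chart $(U_p,x_p,y_p)$ covering $\Sigma\setminus\cup_{\alpha}\mathbb{D}_{\alpha,\epsilon_\alpha}(\Sigma)$ acts simply as $\iota_v\nabla_{\mathcal{F}}\xi|_{\Sigma\cap U_p} = \sum_k v^k\frac{\partial}{\partial u^k}(\xi_p(x_p,u))\,dx_p$ with $x_p$ held constant. Since on such a chart $\nabla_{\mathcal{F}}$ reduces to the ordinary partial differentiation of the coefficient function $\xi_p(x_p,u)$ with respect to the $u^k$, the mixed second derivatives commute: $\frac{\partial^2}{\partial u^k\partial u^l}\xi_p = \frac{\partial^2}{\partial u^l\partial u^k}\xi_p$.

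First I would fix $[\Sigma]\in\mathcal{B}_{\Sigma_0}$, cover $\Sigma\setminus\cup_\alpha\mathbb{D}_{\alpha,\epsilon_\alpha}(\Sigma)$ by $(\mathcal{F},\Omega_S)$-charts $\{(U_{p\in\sigma},x_p,y_p)\}$ with $dx_p|_{\Sigma\cap U_p}\neq 0$ (as in the previous lemma), and note that it suffices by linearity and the Leibniz rule to check $d_{\nabla_{\mathcal{F}}}^2\psi = 0$ on a section of the form $\psi = s\,\xi$ with $s\in\Gamma(\mathcal{B}_{\Sigma_0},\Omega^r_{\mathcal{B}})$ and $\xi\in\Gamma(\mathcal{B}_{\Sigma_0},G)$. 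Then I would write
\[
d_{\nabla_{\mathcal{F}}}\psi = (ds)\xi + (-1)^r s\wedge\nabla_{\mathcal{F}}\xi = (ds)\xi + (-1)^r s\wedge\sum_{k=1}^g\Big(\iota_{\partial/\partial u^k}\nabla_{\mathcal{F}}\xi\Big)du^k,
\]
which is legitimate because the preceding lemma shows $\nabla_{\mathcal{F}}\xi\in\Gamma(\mathcal{B}_{\Sigma_0},T^*\mathcal{B}\otimes G)$, so each $\iota_{\partial/\partial u^k}\nabla_{\mathcal{F}}\xi$ is again a genuine section of $G$ and the expressions stay in the relevant spaces. Applying $d_{\nabla_{\mathcal{F}}}$ a second time, the $(d^2s)\xi$ term vanishes since $d^2 = 0$, the two cross terms involving $ds\wedge du^k$ cancel with opposite signs exactly as in Lemma \ref{wisflatlemma}, and the remaining term is $(-1)^r s\wedge\sum_{k,l}\big[\iota_{\partial/\partial u^l}\nabla_{\mathcal{F}}\iota_{\partial/\partial u^k}\nabla_{\mathcal{F}}\xi\big]du^k\wedge du^l$, which is zero because the bracketed coefficient is symmetric in $k,l$ (on each chart it is $\frac{\partial^2\xi_p}{\partial u^k\partial u^l}dx_p$) while $du^k\wedge du^l$ is antisymmetric.

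The only genuine subtlety — and the step I expect to require the most care rather than the most computation — is making sure that the iterated covariant derivative $\iota_{\partial/\partial u^l}\nabla_{\mathcal{F}}\iota_{\partial/\partial u^k}\nabla_{\mathcal{F}}\xi$ is well-defined as an element of $G_\Sigma$, i.e. that applying $\nabla_{\mathcal{F}}$ twice does not introduce poles anywhere except at ramification points and that the resulting form is free of the $\frac{dz_\alpha}{z_\alpha}$ term on each annulus $\mathbb{A}_{\alpha,\epsilon_\alpha,M_\alpha}(\Sigma)$. This follows by rerunning the residue argument from the previous lemma one more order: since $x_\alpha = a_\alpha(\Sigma)+z_\alpha^2$ gives $\nabla_{\mathcal{F}}(z_\alpha^k dz_\alpha) = \tfrac{1}{2}\sum_m du^m\frac{\partial a_\alpha(\Sigma)}{\partial u^m}(1-k)z_\alpha^{k-2}dz_\alpha$, a term $z_\alpha^k\tfrac{dz_\alpha}{z_\alpha}$ with $k\neq 0$ maps under $\nabla_{\mathcal{F}}$ to a combination of $z_\alpha^{k-2}\tfrac{dz_\alpha}{z_\alpha}$ terms with coefficient proportional to $(1-k)$, and iterating once more the coefficient picks up a factor $(1-(k-2)) = (3-k)$, so the power $k=1$ (which would produce a forbidden $\tfrac{dz_\alpha}{z_\alpha}$) is never generated from a term with $k\neq 1$ present in $\xi$; and $\xi\in G_\Sigma$ already has no $k=1$, hence no $k=0$ term arises at any stage. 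Combined with the fact that $\oint_{\partial\bar{\mathbb{D}}_{\alpha,\epsilon_\alpha}}\nabla_{\mathcal{F}}\xi = 0$ (total $u$-derivative of $\oint_{\partial\bar{\mathbb{D}}_{\alpha,\epsilon_\alpha}}\xi=0$, using Lemma \ref{integralandderivativelemma}), this confirms $\iota_{\partial/\partial u^l}\nabla_{\mathcal{F}}\iota_{\partial/\partial u^k}\nabla_{\mathcal{F}}\xi\in G_\Sigma$ and closes the argument.
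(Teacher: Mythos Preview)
Your proposal is correct and follows essentially the same route as the paper: reduce to a $(\mathcal{F},\Omega_S)$-chart where $\nabla_{\mathcal{F}}$ acts as ordinary partial differentiation of the coefficient function $\xi_p(x_p,u)$ with $x_p$ held constant, and then $d_{\nabla_{\mathcal{F}}}^2$ vanishes by the symmetry of mixed partials against the antisymmetry of $du^k\wedge du^l$. The paper writes this directly for a general $\xi\in\Gamma(\mathcal{B}_{\Sigma_0},\Omega^r_{\mathcal{B}}\otimes G)$ rather than decomposing as $s\,\xi$, but the content is identical. Your final paragraph is unnecessary: once the preceding lemma establishes $\nabla_{\mathcal{F}}:\Gamma(G)\to\Gamma(T^*\mathcal{B}\otimes G)$, iterating gives $\iota_{\partial/\partial u^l}\nabla_{\mathcal{F}}\iota_{\partial/\partial u^k}\nabla_{\mathcal{F}}\xi\in G_\Sigma$ automatically, so there is no need to rerun the residue check (and your index bookkeeping there is slightly off---the vanishing coefficient is $(2-k)$, not $(1-k)$, in the $z_\alpha^k\tfrac{dz_\alpha}{z_\alpha}$ convention---though this does not affect the main argument).
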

\begin{proof}
This is mostly identical to the proof of Lemma \ref{hisaflatbundlelemma}, but let us write it more concretely this time. Let $d_{\nabla_{\mathcal{F}}} : \Gamma(\mathcal{B}_{\Sigma_0}, \Omega^r_{\mathcal{B}}\otimes G) \rightarrow \Gamma(\mathcal{B}_{\Sigma_0}, \Omega^{r+1}_{\mathcal{B}}\otimes G)$ be the exterior covariant derivative. Suppose that $\xi \in \Gamma(\mathcal{B}_{\Sigma_0}, \Omega^r_{\mathcal{B}}\otimes G)$. On each $\Sigma \cap U_p$ we have $\xi|_{\Sigma\cap U_p} = \sum_{i_1,\cdots, i_r = 1}^g(\xi_{i_1\cdots i_r; p}(x_p;u)dx_p)du^{i_1}\wedge \cdots du^{i_r}$, therefore
\begin{equation*}
d_{\nabla_{\mathcal{F}}} \xi|_{\Sigma\cap U_p} = \nabla_{\mathcal{F}}\xi|_{\Sigma\cap U_p} = \sum_{k=1}^g\sum_{i_1,\cdots, i_r = 1}^g\left(\frac{\partial}{\partial u^k}\Big|_{x_p = const}(\xi_{i_1\cdots i_r;p}(x_p,u))dx_p\right)du^k\wedge du^{i_1}\wedge \cdots \wedge du^{i_r}    
\end{equation*}
 and
 \begin{equation*}
     d^2_{\nabla_{\mathcal{F}}} \xi|_{\Sigma\cap U_p} = \sum_{k,l=1}^g\sum_{i_1,\cdots, i_r = 1}^g\left(\frac{\partial^2}{\partial u^l\partial u^k}\Big|_{x_p = const}(\xi_{i_1\cdots i_r;p}(x_p,u))dx_p\right)du^l\wedge du^k\wedge du^{i_1}\wedge \cdots \wedge du^{i_r} = 0.
 \end{equation*}
Therefore, $d^2_{\nabla_{\mathcal{F}}} = 0$, so $\nabla_{\mathcal{F}}$ is flat.
\end{proof}

Next, we turn our attention to the parallel transport on $G$ using the connection $\nabla_{\mathcal{F}}$. Since the connection is flat, parallel transport is path-independent. The parallel transport of $\xi \in G_{\Sigma}$ to the fiber $G_{\Sigma'}$ only exists if $[\Sigma] \in \mathcal{B}$ is close enough to $[\Sigma'] \in \mathcal{B}$. This is much like the situation in Section \ref{theembeddingofdiscssection}: given $\xi\in W^{\epsilon, M}_{Airy}$ then $\exp(a\mathcal{L}_{\frac{1}{2z}\partial_z})\xi \in W^{\epsilon, M}_{Airy}$ only if $|a|$ is sufficiently small. The idea is to first construct a map $s_{\Sigma',\Sigma}$ sending points on $\Sigma'$ to $\Sigma$ along the foliation leaves, then the parallel transport of $\xi \in G_{\Sigma}$ to $G_{\Sigma'}$ is given by the pull-back $s^*_{\Sigma',\Sigma}\xi$. 

Let us discuss this in more detail. Suppose that $[\Sigma], [\Sigma'] \in \mathcal{B}_{\Sigma_0}$, then they are path-connected since $\mathcal{B}_{\Sigma_0}$ is contractible as required by C\ref{howtochoosebsigma0condition}. Let $\pmb{\Sigma} : [0,1] \rightarrow \mathcal{B}_{\Sigma_0}$ be one such path, $\pmb{\Sigma}(0) = [\Sigma']$, and $\pmb{\Sigma}(1) = [\Sigma]$. For some $\bar{\epsilon}'_\alpha < \epsilon_\alpha$ let us send a point $q' \in \Sigma'\setminus \cup_{\alpha \in Ram}\bar{\mathbb{D}}_{\alpha, \bar{\epsilon}'_\alpha}(\Sigma')$ traveling along a leaf of the foliation $\mathcal{F}$. If $[\Sigma']$ is sufficiently close to $\pmb{\Sigma}(t) \in \mathcal{B}_{\Sigma_0}$ for any $t \in [0,1]$, then the point $q'$ will intersect $\pmb{\Sigma}(t)$, potentially multiple times. As we move continuously along the path $\pmb{\Sigma}$, these intersection points will also move continuously in $S$. Therefore, for all $t \in [0,1]$ sufficiently small, we can define $q(t) \in \pmb{\Sigma}(t)$ to be the intersection point which moves continuously from $q(0) = q'$. Now, suppose that $[\Sigma]$, $[\Sigma']$ and every points along the path $\pmb{\Sigma}$ are sufficiently close to $[\Sigma']$, then we define the following map

\begin{definition}\label{ssigmadefinition}
The map $s_{\Sigma', \Sigma}:\Sigma'\setminus \cup_{\alpha \in Ram}\bar{\mathbb{D}}_{\alpha, \bar{\epsilon}'_\alpha}(\Sigma')\rightarrow \Sigma$ is given by sending any point $q' \in \Sigma'\setminus \cup_{\alpha \in Ram}\bar{\mathbb{D}}_{\alpha, \bar{\epsilon}'_\alpha}(\Sigma')$ along a leaf of the foliation to $q(1) \in \Sigma$.
\end{definition}

\begin{remark}
Technically, the map $s_{\Sigma', \Sigma}$ should be denoted by $s_{\Sigma', \Sigma, \{\bar{\epsilon}'_\alpha\}}$ since the set of parameters $\{\bar{\epsilon}'_{\alpha\in Ram}\}$ is a part of the definition. We are going to stick with the notation $s_{\Sigma', \Sigma}$ for simplicity, keeping in mind that $s_{\Sigma', \Sigma}$ is defined on some open subset in $\Sigma'$ containing $\Sigma'\setminus \cup_{\alpha \in Ram}\bar{\mathbb{D}}_{\alpha, \epsilon_\alpha}(\Sigma')$. 
\end{remark}

Suppose further that $[\Sigma], [\Sigma'] \in \mathcal{B}_{\Sigma_0}$ are sufficiently close such that the image of $s_{\Sigma',\Sigma}$ is contained in $\Sigma\setminus \cup_{\alpha\in Ram}\bar{\mathbb{D}}_{\alpha, \bar{\epsilon}_\alpha}(\Sigma)$ for some $\bar{\epsilon}_{\alpha} < \epsilon_\alpha$. Then given any holomorphic differential form $\xi$ on $\Sigma\setminus \cup_{\alpha\in Ram}\bar{\mathbb{D}}_{\alpha, \bar{\epsilon}_\alpha}(\Sigma)$, the pull-back $s^*_{\Sigma',\Sigma}\xi$ is a holomorphic differential form on $\Sigma'\setminus \cup_{\alpha \in Ram}\bar{\mathbb{D}}_{\alpha, \bar{\epsilon}'_\alpha}(\Sigma')$. The following lemma gives a more concrete sufficient condition for $[\Sigma], [\Sigma'] \in \mathcal{B}$ to be considered `sufficiently close' and shows that if $\xi \in G_{\Sigma}$ then $s^*_{\Sigma', \Sigma}\xi \in G_{\Sigma'}$. 

\begin{lemma}\label{sstaroperatorlemma}
Let $[\Sigma], [\Sigma'] \in \mathcal{B}$. Given $\xi \in G_{\Sigma}$, where $\xi$ is holomorphic on $\Sigma\setminus \cup_{\alpha\in Ram}\bar{\mathbb{D}}_{\alpha,\bar{\epsilon}_\alpha}(\Sigma)$ for some $\bar{\epsilon}_\alpha < \epsilon_\alpha, \forall\alpha \in Ram$. If we could cover $\Sigma\setminus \cup_{\alpha \in Ram}\mathbb{D}_{\alpha,\epsilon_\alpha}(\Sigma)$ with a collection of $(\mathcal{F},\Omega_S)$-charts $\mathcal{U} = \{(U_{p\in \sigma}, x_{p\in\sigma}, y_{p\in\sigma})\}$ such that $\Sigma\setminus \cup_{\alpha \in Ram}\mathbb{D}_{\alpha, \epsilon_\alpha}(\Sigma) \subset \cup_{p\in\sigma}\Sigma\cap U_p \subset \Sigma\setminus \cup_{\alpha \in Ram}\bar{\mathbb{D}}_{\alpha, \bar{\epsilon}_\alpha}(\Sigma)$ and the following properties are satisfied:
\begin{enumerate}
    \item For each $p \in \sigma, \Sigma\cap U_p$ is bi-holomorphic to a simply-connected open subset of $\mathbb{C}$ with $x_p$ as a local coordinate.
    \item Any $q' \in U_p$ is path-connected in $U_p$ via a leaf of the foliation $\mathcal{F}$ to a unique point $q$ on $\Sigma\cap U_p$.
    \item For each $p \in \sigma$, $\Sigma'\cap U_p$ is bi-holomorphic to a simply-connected open subset of $\mathbb{C}$ with $x_p$ as a local coordinate and $\Sigma'\setminus \cup_{\alpha \in Ram}\mathbb{D}_{\alpha,\epsilon_\alpha}(\Sigma') \subset \cup_{p\in\sigma}\Sigma'\cap U_p$.
\end{enumerate}
then $s^*_{\Sigma',\Sigma}\xi \in G_{\Sigma'}$.
\end{lemma}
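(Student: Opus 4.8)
The plan is to verify directly that $s^*_{\Sigma',\Sigma}\xi$ lands in $G_{\Sigma'}$, which by Definition \ref{gdefinition} means two things: (i) $s^*_{\Sigma',\Sigma}\xi$ extends to a holomorphic $1$-form on some open neighbourhood of $\Sigma'\setminus\cup_{\alpha\in Ram}\bar{\mathbb{D}}_{\alpha,\bar\epsilon'_\alpha}(\Sigma')$, and (ii) $\oint_{\partial\bar{\mathbb{D}}_{\alpha,\epsilon_\alpha}(\Sigma')}s^*_{\Sigma',\Sigma}\xi = 0$ for every $\alpha\in Ram$. Property (i) is essentially built into the set-up: the map $s_{\Sigma',\Sigma}$ of Definition \ref{ssigmadefinition} is constructed as the composition, along continuously varying foliation leaves, of the local identifications $q'\mapsto q$ furnished by hypothesis (2); on each $\Sigma'\cap U_p$ both $\Sigma$ and $\Sigma'$ are graphs over the common coordinate $x_p$ by hypotheses (1) and (3), so $s_{\Sigma',\Sigma}$ is in these charts simply $x_p\mapsto x_p$ followed by the graph maps, hence holomorphic. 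Therefore $s^*_{\Sigma',\Sigma}\xi$, being the pull-back of a holomorphic form under a holomorphic map whose image lies in the locus where $\xi$ is holomorphic, is holomorphic on $\cup_{p\in\sigma}\Sigma'\cap U_p \supseteq \Sigma'\setminus\cup_{\alpha\in Ram}\mathbb{D}_{\alpha,\epsilon_\alpha}(\Sigma')$.

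The remaining point, the vanishing of the periods $\oint_{\partial\bar{\mathbb{D}}_{\alpha,\epsilon_\alpha}(\Sigma')}s^*_{\Sigma',\Sigma}\xi$, is where the argument has to be made carefully, and I expect it to be the main obstacle. The natural approach is to observe that $s_{\Sigma',\Sigma}$ carries the loop $\partial\bar{\mathbb{D}}_{\alpha,\epsilon_\alpha}(\Sigma')$ — which in the chart $U_\alpha$ is the circle $|z_\alpha|=\epsilon_\alpha$ in the standard local coordinate of $\Sigma'$ — to a closed loop on $\Sigma$ that is contained in the annular region where $\xi$ is holomorphic, and that is homologous in $\Sigma\setminus\cup_{\beta\in Ram}\bar{\mathbb{D}}_{\beta,\bar\epsilon_\beta}(\Sigma)$ to $\partial\bar{\mathbb{D}}_{\alpha,\epsilon_\alpha}(\Sigma)$. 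Then by the change-of-variables formula for integrals of differential forms,
\begin{equation*}
    \oint_{\partial\bar{\mathbb{D}}_{\alpha,\epsilon_\alpha}(\Sigma')}s^*_{\Sigma',\Sigma}\xi = \oint_{s_{\Sigma',\Sigma}\left(\partial\bar{\mathbb{D}}_{\alpha,\epsilon_\alpha}(\Sigma')\right)}\xi = \oint_{\partial\bar{\mathbb{D}}_{\alpha,\epsilon_\alpha}(\Sigma)}\xi = 0,
\end{equation*}
the last equality because $\xi\in G_\Sigma$. The subtlety is justifying the homology statement: one must know that the image loop does not wind around any other ramification point and is genuinely a small loop around $r_\alpha(\Sigma)$, which is exactly what the "sufficiently close" hypotheses and the containments $\Sigma\setminus\cup_\alpha\mathbb{D}_{\alpha,\epsilon_\alpha}(\Sigma)\subset\cup_p\Sigma\cap U_p\subset\Sigma\setminus\cup_\alpha\bar{\mathbb{D}}_{\alpha,\bar\epsilon_\alpha}(\Sigma)$ are designed to guarantee. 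Concretely, I would argue that $s_{\Sigma',\Sigma}$ restricted to a neighbourhood of the annulus $\mathbb{A}_{\alpha,\epsilon_\alpha,M_\alpha}(\Sigma')$ is, in the $x_\alpha$ coordinate, the identity composed with graph maps, hence an orientation-preserving homeomorphism onto a neighbourhood of the corresponding annulus in $\Sigma$ sending the circle $\{|z_\alpha|=\epsilon_\alpha\}$ on $\Sigma'$ to a loop freely homotopic in that annulus to $\{|z_\alpha|=\epsilon_\alpha\}$ on $\Sigma$.

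Finally I would note that since the choice of $\{\bar\epsilon'_\alpha\}$ is immaterial (any $\bar\epsilon'_\alpha<\epsilon_\alpha$ works and the direct limit in Definition \ref{gdefinition} absorbs the choice), and since $\xi$ being holomorphic past $\bar\epsilon_\alpha<\epsilon_\alpha$ on $\Sigma$ gives room to keep the image of $s_{\Sigma',\Sigma}$ inside the holomorphic locus, the element $s^*_{\Sigma',\Sigma}\xi$ is well-defined as a class in $\varinjlim_{\bar\epsilon'_\alpha<\epsilon_\alpha}\Gamma(\Sigma'\setminus\cup_\alpha\bar{\mathbb{D}}_{\alpha,\bar\epsilon'_\alpha},\Omega^1_{\Sigma'})$ with vanishing periods around the $\partial\bar{\mathbb{D}}_{\alpha,\epsilon_\alpha}(\Sigma')$, i.e. $s^*_{\Sigma',\Sigma}\xi\in G_{\Sigma'}$. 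The only genuinely technical ingredient, as flagged above, is the homological comparison of the two boundary loops, and the cleanest way to nail it down is to work chart-by-chart in $U_\alpha$ using the standard local coordinate, where everything reduces to the statement that a holomorphic family of graphs over a fixed annulus in the $x_\alpha$-plane induces the identity on the homology of that annulus.
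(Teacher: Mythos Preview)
Your proposal is correct and follows essentially the same route as the paper: holomorphicity via the observation that $s_{\Sigma',\Sigma}$ is the identity in the $x_p$ coordinate on each chart, then the change-of-variables identity $\oint_{C}s^*_{\Sigma',\Sigma}\xi=\oint_{(s_{\Sigma',\Sigma})_*C}\xi$ (which the paper isolates as a separate sub-lemma, proved by dissecting $C$ into chart-sized segments), and finally the homotopy of $(s_{\Sigma',\Sigma})_*\partial\bar{\mathbb{D}}_{\alpha,\epsilon_\alpha}(\Sigma')$ with $\partial\bar{\mathbb{D}}_{\alpha,\epsilon_\alpha}(\Sigma)$. The only minor difference is in justifying that last homotopy: the paper argues by continuously moving along a path $\pmb{\Sigma}$ from $[\Sigma']$ to $[\Sigma]$ in $\mathcal{B}$, whereas you propose working in the chart $U_\alpha$ and using that a holomorphic family of graphs over a fixed $x_\alpha$-annulus induces the identity on its homology; both arguments are valid and amount to the same topological fact.
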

\begin{proof}
Let $\xi \in G_{\Sigma}$ be a holomorphic form defined on $\Sigma\setminus \cup_{\alpha \in Ram}\bar{\mathbb{D}}_{\alpha,\bar{\epsilon}_\alpha}(\Sigma)$ for some $\bar{\epsilon}_\alpha < \epsilon_\alpha$ such that $\cup_{p\in \sigma}\Sigma\cap U_p\subset \Sigma\setminus \cup_{\alpha \in Ram}\bar{\mathbb{D}}_{\alpha,\bar{\epsilon}_\alpha}(\Sigma)$. By the properties of the covering $\mathcal{U}$, Definition \ref{ssigmadefinition} gives a well-defined map 
\begin{equation*}
    s_{\Sigma',\Sigma}: \Sigma'\setminus \cup_{\alpha \in Ram}\bar{\mathbb{D}}_{\alpha, \bar{\epsilon}'_\alpha}(\Sigma') \subseteq \cup_{p \in \sigma}\Sigma'\cap U_p\rightarrow \cup_{p\in\sigma}\Sigma\cap U_p \subseteq \Sigma\setminus \cup_{\alpha \in Ram}\bar{\mathbb{D}}_{\alpha, \bar{\epsilon}_\alpha}(\Sigma)
\end{equation*} 
for some $\bar{\epsilon}'_\alpha < \epsilon_\alpha$. Let us verify that the pull-back $s_{\Sigma',\Sigma}^*\xi$ is in $G_{\Sigma'}$. We note that $x_p$ is a local coordinate for both $\Sigma\cap U_p$ and $\Sigma'\cap U_p$. In $x_p$ local coordinate, we can express $s_{\Sigma',\Sigma}$ as
\begin{equation*}
x_p \circ s_{\Sigma',\Sigma}|_{\Sigma'\cap U_p} \circ x_p^{-1} = id_{\mathbb{C}}|_{x_p(\Sigma'\cap U_p)}: x_p(\Sigma'\cap U_p)\rightarrow x_p(\Sigma'\cap U_p)\subset x_p(\Sigma\cap U_p) \subset \mathbb{C}.
\end{equation*}
In particular, $s_{\Sigma',\Sigma}$ is locally bi-holomorphic from each $\Sigma'\cap U_p$ to its image. It follows that $s^*_{\Sigma',\Sigma}\xi|_{\Sigma'\cap U_p}$ is holomorphic on $\Sigma'\cap U_p$ for all $p \in \sigma$, hence $s^*_{\Sigma', \Sigma}\xi$ is holomorphic on $\Sigma'\setminus \cup_{\alpha \in Ram}\mathbb{D}_{\alpha,\bar{\epsilon}'_\alpha}(\Sigma') \subset \cup_{p\in \sigma}\Sigma'\cap U_p$. Lastly, we need to check that $\oint_{\partial\bar{\mathbb{D}}_{\alpha, \epsilon_\alpha}(\Sigma')}s^*_{\Sigma',\Sigma}\xi = 0$, but this follows from the lemma below:
\begin{lemma}\label{integralofparalleltransportlemma}
For any $\xi \in G_{\Sigma}$ holomorphic on $\Sigma\setminus \cup_{\alpha \in Ram}\bar{\mathbb{D}}_{\alpha, \bar{\epsilon}_\alpha}(\Sigma) \subset \cup_{p\in\sigma}\Sigma\cap U_p$ and a path $C \subset\Sigma'\setminus \cup_{\alpha \in Ram}\bar{\mathbb{D}}_{\alpha,\epsilon_\alpha}(\Sigma')$, we have
\begin{equation*}
    \int_{C}s^*_{\Sigma', \Sigma}\xi = \int_{(s_{\Sigma',\Sigma})_*C}\xi,
\end{equation*}
where $(s_{\Sigma',\Sigma})_*C \subset \Sigma\setminus \cup_{\alpha\in Ram}\bar{\mathbb{D}}_{\alpha, \bar{\epsilon}_\alpha}(\Sigma)$ is the image of $C$ under the map $s_{\Sigma',\Sigma}$.
\end{lemma}
\begin{proof}
We dissect $C$ into closed segments $C_i, i = 1,...,N$ such that $C_i \subset \Sigma'\cap U_{p_i}$ for some $p_i \in \sigma$. As we have seen, $s_{\Sigma', \Sigma}|_{\Sigma'\cap U_{p_i}}$ is a restriction of the identity, hence it is biholomorphic. So the Lemma holds for each segment $C_i$. Summing the integral over all segments $C_i$ proving the lemma.
\end{proof}

It follows from the construction of the map $s_{\Sigma', \Sigma}$ that $(s_{\Sigma',\Sigma})_*(\partial \bar{\mathbb{D}}_{\alpha,\epsilon_\alpha}(\Sigma'))$ is homotopic to $\partial \bar{\mathbb{D}}_{\alpha,\epsilon_\alpha}(\Sigma)$ because we can deform one to another by moving along a path $\pmb{\Sigma}$ linking $[\Sigma]$ and $[\Sigma']$. We conclude that $\oint_{\partial \bar{\mathbb{D}}_{\alpha,\epsilon_\alpha}(\Sigma')}s^*_{\Sigma',\Sigma}\xi = \oint_{\partial \bar{\mathbb{D}}_{\alpha,\epsilon_\alpha}(\Sigma)}\xi = 0$ because $\xi \in G_\Sigma$ which shows $s^*_{\Sigma',\Sigma}\xi \in G_{\Sigma'}$ as claimed.
\end{proof}

Next, we show that the pull-back via $s_{\Sigma',\Sigma}$ is the parallel transport on $G$. If Lemma \ref{sstaroperatorlemma} can be compared to Lemma \ref{expoperatorlemma} then the following is analogous to Lemma \ref{paralleltransportinwlemma}:
\begin{lemma}\label{paralleltransportinglemma}
Let $[\Sigma]\in \mathcal{B}_{\Sigma_0}$ and $\xi\in G_{\Sigma}$ be given, then $[\Sigma'] \mapsto \xi_{\Sigma'} := s^*_{\Sigma',\Sigma}\xi$ is a parallel vector field, defined over the subset of all $[\Sigma']\in \mathcal{B}_{\Sigma_0}$ such that the conditions of Lemma \ref{sstaroperatorlemma} are satisfied. In other words, $\xi_{\Sigma'} \in G_{\Sigma'}$ is the parallel transport of $\xi \in G_{\Sigma}$ from $[\Sigma]$ to $[\Sigma']$ on $G\rightarrow \mathcal{B}_{\Sigma_0}$ using the connection $\nabla_{\mathcal{F}}$.
\end{lemma}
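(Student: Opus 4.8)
The plan is to show that $\xi_{\Sigma'} := s^*_{\Sigma',\Sigma}\xi$ is covariantly constant, i.e.\ $\nabla_{\mathcal{F}}\xi_{\Sigma'} = 0$, and then invoke uniqueness of parallel transport. Since the connection $\nabla_{\mathcal{F}}$ is flat, parallel transport is path-independent, so it suffices to check the covariant-constancy condition pointwise; by Lemma \ref{sstaroperatorlemma} we already know that $\xi_{\Sigma'}$ lands in $G_{\Sigma'}$ on the relevant subset of $\mathcal{B}_{\Sigma_0}$, so the vector field is well-defined there.

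First I would work in a local coordinate chart: fix $[\Sigma']\in\mathcal{B}_{\Sigma_0}$, pick a collection of $(\mathcal{F},\Omega_S)$-charts $\mathcal{U} = \{(U_{p\in\sigma}, x_{p\in\sigma},y_{p\in\sigma})\}$ as in Lemma \ref{sstaroperatorlemma} covering $\Sigma'\setminus\cup_{\alpha}\mathbb{D}_{\alpha,\epsilon_\alpha}(\Sigma')$, and recall from the proof of that lemma that in each chart $s_{\Sigma',\Sigma}$ is simply the identity on the common $x_p$-coordinate. Writing $\xi|_{\Sigma\cap U_p} = \xi_p(x_p)\,dx_p$, we then have $s^*_{\Sigma',\Sigma}\xi|_{\Sigma'\cap U_p} = \xi_p(x_p)\,dx_p$ as well --- the coefficient function $\xi_p$ does not depend on which curve $[\Sigma']$ we are on, because the pull-back along a foliation leaf keeping $x_p$ constant is exactly what defines the $(\mathcal{F},\Omega_S)$-chart representation. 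Hence $\iota_v\nabla_{\mathcal{F}}\xi_{\Sigma'}|_{\Sigma'\cap U_p} = \sum_{k=1}^g v^k\frac{\partial}{\partial u^k}\big|_{x_p=const}\big(\xi_p(x_p)\big)\,dx_p = 0$ for every tangent vector $v\in T_{[\Sigma']}\mathcal{B}_{\Sigma_0}$, since $\xi_p(x_p)$ has no $u$-dependence. Patching over $p\in\sigma$ gives $\nabla_{\mathcal{F}}\xi_{\Sigma'} = 0$ on an open set containing $\Sigma'\setminus\cup_\alpha\bar{\mathbb{D}}_{\alpha,\bar\epsilon'_\alpha}(\Sigma')$, which is all of $G_{\Sigma'}$.

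Finally I would argue that this identifies $s^*_{\Sigma',\Sigma}\xi$ with the $\nabla_{\mathcal{F}}$-parallel transport: given the path $\pmb{\Sigma}:[0,1]\to\mathcal{B}_{\Sigma_0}$ with $\pmb{\Sigma}(0)=[\Sigma]$, $\pmb{\Sigma}(1)=[\Sigma']$, the family $t\mapsto s^*_{\pmb{\Sigma}(t),\Sigma}\xi$ is a section along the path that starts at $\xi$ and satisfies the parallel-transport ODE $\nabla_{\mathcal{F}}(\cdot)=0$ at each $t$ (the computation above applies verbatim at every point of the path, provided each $\pmb{\Sigma}(t)$ is close enough to $[\Sigma]$ for $s_{\pmb{\Sigma}(t),\Sigma}$ to be defined, which holds by the hypotheses). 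By uniqueness of solutions (Picard--Lindel\"of, as in Lemma \ref{flatvectorbundlepoincarelemma}), this is the parallel transport, and flatness makes it path-independent. The main obstacle is not the covariant-constancy computation itself, which is genuinely routine, but rather the bookkeeping needed to ensure that the map $s_{\Sigma',\Sigma}$ and its pull-back are consistently defined along an entire connecting path and that the domains $\Sigma'\cap U_p$ shrink compatibly --- this is precisely what Condition \ref{howtochoosebsigma0condition} and the smallness of $\mathcal{B}_{\Sigma_0}$ are engineered to guarantee, so the proof should explicitly cite those hypotheses rather than re-deriving them.
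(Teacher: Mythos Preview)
Your proposal is correct and follows essentially the same approach as the paper: write $\xi|_{\Sigma\cap U_p} = f(x_p)\,dx_p$, observe that $s^*_{\Sigma',\Sigma}x_p = x_p$ so the pulled-back form has the same coefficient function with no $u$-dependence, and conclude $\nabla_{\mathcal{F}}s^*_{\Sigma',\Sigma}\xi = 0$ on each chart. The paper's proof is more terse and omits the explicit uniqueness-of-parallel-transport discussion you include, but the core computation is identical.
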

\begin{proof}
Since $x_p$ is a local coordinate of $\Sigma$ on $\Sigma \cap U_p$ we can write $\xi|_{\Sigma\cap U_p} = f(x_p)dx_p$ for some holomorphic function $f$ which is constant with respect to the coordinates of $\mathcal{B}_{\Sigma_0}$. Because $s^*_{\Sigma',\Sigma}x_p = x_p$ by definition, we have $s^*_{\Sigma',\Sigma}\xi|_{\Sigma'\cap U_p} = f(x_p)dx_p$. Therefore, $\nabla_{\mathcal{F}}s^*_{\Sigma',\Sigma}\xi|_{\Sigma'\cap U_p} = 0$ for all $p \in \sigma$ and so $\nabla_{\mathcal{F}}s^*_{\Sigma',\Sigma}\xi = 0$. 
\end{proof}

Now that all the relevant results have been established, let us assume that $\{\epsilon_{\alpha\in Ram}\}, \{M_{\alpha\in Ram}\}, \mathcal{U}_{Ram}$ and $\mathcal{B}_{\Sigma_0}$ also satisfy C\ref{howtochoosebsigma0condition}.3 and C\ref{howtochoosebsigma0condition}.4 in addition to C\ref{howtochoosebsigma0condition}.1 and C\ref{howtochoosebsigma0condition}.2. Let us show in the following that the image of $\mathcal{B}_{\Sigma_0}$ under $\gamma$ defined in (\ref{gammamapbtodiscs}) is contained in $Discs^{Ram}_{t_0}$. This fact will be useful in comparing parallel transport on $W^{Ram}\rightarrow Discs^{Ram}_{t_0}$ with parallel transport on $G\rightarrow \mathcal{B}_{\Sigma_0}$.
\begin{lemma}\label{gammamaplemma}
Suppose that $\{\epsilon_{\alpha\in Ram}\}, \{M_{\alpha\in Ram}\}, \mathcal{U}_{Ram}$ and $\mathcal{B}_{\Sigma_0}$ satisfies the Condition \ref{howtochoosebsigma0condition} then the image of $\gamma : \mathcal{B}_{\Sigma_0} \hookrightarrow Discs^{Ram}$ is contained in $Discs^{Ram}_{t_0}$. In other words, we have
\begin{equation}\label{gammamapdefinition}
    \gamma : \mathcal{B}_{\Sigma_0} \hookrightarrow Discs^{Ram}_{t_{0}} \hookrightarrow Discs^{Ram}.
\end{equation}
\end{lemma}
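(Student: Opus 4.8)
The plan is to unpack the definition of $Discs^{Ram}_{t_0} = \prod_{\alpha\in Ram} Discs^{\epsilon_\alpha, M_\alpha}_{t_{0\alpha}}$ and verify, for each $\alpha \in Ram$ and each $[\Sigma] \in \mathcal{B}_{\Sigma_0}$, that $t_\alpha(\Sigma) = \gamma_\alpha([\Sigma])$ lies in $Discs^{\epsilon_\alpha, M_\alpha}_{t_{0\alpha}}$. Recalling that
\begin{equation*}
    Discs^{\epsilon_\alpha,M_\alpha}_{t_{0\alpha}} = \left\{t \in Discs^{M_\alpha}\ \middle|\ \begin{array}{c} \sum_{k\geq 0}b_k(t)z^k \text{ converges for } |z|<\bar{M}_\alpha \text{ for some } \bar{M}_\alpha>M_\alpha\\ \text{and } \min(\bar{M}_\alpha^2 - M_\alpha^2, \epsilon_\alpha^2) > |a(t) - a(t_{0\alpha})| \end{array}\right\},
\end{equation*}
there are two things to check: the convergence of $\sum_k b_{k\alpha}(\Sigma)z_\alpha^k$ on a disc strictly larger than $|z_\alpha| < M_\alpha$, and the smallness estimate on $|a_\alpha(\Sigma) - a_\alpha(\Sigma_0)|$.

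First I would handle convergence: by C\ref{howtochoosebsigma0condition}.2, for every $[\Sigma] \in \mathcal{B}_{\Sigma_0}$ we have $\bar{\mathbb{D}}_{\alpha, M_\alpha}(\Sigma) \subset \Sigma \cap U_\alpha$, so $\mathbb{D}_{\alpha, M_\alpha}(\Sigma)$ is compactly contained in the coordinate chart $\Sigma \cap U_\alpha$ on which $y_\alpha$ is given by the holomorphic function $\sum_k b_{k\alpha}(\Sigma) z_\alpha^k$ of the standard local coordinate. Hence this series actually converges on an open neighbourhood of $\bar{\mathbb{D}}_{\alpha,M_\alpha}(\Sigma)$, i.e. on some disc $|z_\alpha| < \bar{M}_\alpha$ with $\bar{M}_\alpha > M_\alpha$; this is exactly the first bullet, and moreover shows $t_\alpha(\Sigma) \in Discs^{M_\alpha}$ in the first place (which is implicit in $\gamma$ being defined). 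For the second condition, $a_\alpha(\Sigma) = x_\alpha(r_\alpha(\Sigma))$ is a holomorphic — hence continuous — function of $[\Sigma] \in \mathcal{B}_{\Sigma_0}$, and at $[\Sigma_0]$ it equals $a_\alpha(\Sigma_0) = a(t_{0\alpha})$. The quantity $\min(\bar{M}_\alpha^2 - M_\alpha^2, \epsilon_\alpha^2)$ is bounded below by a fixed positive number on $\mathcal{B}_{\Sigma_0}$ once $\bar{M}_\alpha$ can be chosen uniformly (again by compactness of $\bar{\mathbb{D}}_{\alpha,M_\alpha}(\Sigma_0)$ in the chart and an openness argument), so after possibly shrinking $\mathcal{B}_{\Sigma_0}$ the required strict inequality $\min(\bar{M}_\alpha^2 - M_\alpha^2, \epsilon_\alpha^2) > |a_\alpha(\Sigma) - a_\alpha(\Sigma_0)|$ holds. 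Since $Ram$ is finite, doing this for each $\alpha$ and intersecting the finitely many shrunk neighbourhoods gives the claim for $\gamma = \prod_{\alpha\in Ram}\gamma_\alpha$.

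The cleanest way to phrase all of this is to observe that ``$t_\alpha(\Sigma) \in Discs^{\epsilon_\alpha,M_\alpha}_{t_{0\alpha}}$ for all $\alpha$'' is an open condition on $[\Sigma]$ that holds at $[\Sigma_0]$ — indeed $t_\alpha(\Sigma_0) = (x_\alpha = z_\alpha^2, y_\alpha = z_\alpha)$ trivially lies in $Discs^{\epsilon_\alpha,M_\alpha}_{t_{0\alpha}}$ since $a_\alpha(\Sigma_0) - a(t_{0\alpha}) = 0$ — and is precisely of the form already absorbed into Condition \ref{howtochoosebsigma0condition} when we demanded $\mathcal{B}_{\Sigma_0}$ be ``sufficiently small''. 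In fact C\ref{howtochoosebsigma0condition}.3 already presupposes that $\gamma(\Sigma)$ makes sense as a point near $t_0$ for the section $\theta$ and the map $\gamma$ to behave, so strictly speaking the lemma records a consequence of the standing hypotheses rather than requiring new work. I would therefore present the proof as: unravel the two defining inequalities of $Discs^{\epsilon_\alpha,M_\alpha}_{t_{0\alpha}}$, cite C\ref{howtochoosebsigma0condition}.2 for the convergence-on-a-larger-disc statement and for a uniform choice of $\bar{M}_\alpha$, use continuity of $a_\alpha$ together with $a_\alpha(\Sigma_0) = a(t_{0\alpha})$ for the smallness estimate, and invoke finiteness of $Ram$ to conclude. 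The only mild subtlety — and the one place I would be careful — is making sure $\bar{M}_\alpha$ may be taken independent of $[\Sigma]$ in $\mathcal{B}_{\Sigma_0}$ so that $\bar{M}_\alpha^2 - M_\alpha^2$ has a positive lower bound; this again follows from $\bar{\mathbb{D}}_{\alpha,M_\alpha}(\Sigma)$ sitting strictly inside $\Sigma\cap U_\alpha$ uniformly, which is built into C\ref{howtochoosebsigma0condition}.2, possibly after one final shrinking of $\mathcal{B}_{\Sigma_0}$.
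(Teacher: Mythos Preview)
Your argument has a genuine gap. The lemma asserts that Condition~\ref{howtochoosebsigma0condition} \emph{as stated} already forces $\gamma(\mathcal{B}_{\Sigma_0}) \subset Discs^{Ram}_{t_0}$; you are not permitted to shrink $\mathcal{B}_{\Sigma_0}$ further. Your proof, by contrast, only shows that membership in $Discs^{\epsilon_\alpha,M_\alpha}_{t_{0\alpha}}$ is an open condition holding at $[\Sigma_0]$, and then invokes ``possibly shrinking $\mathcal{B}_{\Sigma_0}$'' or claims the condition is ``absorbed'' into Condition~\ref{howtochoosebsigma0condition}. But nowhere in C\ref{howtochoosebsigma0condition}.1--4 is the inequality $\min(\bar{M}_\alpha^2 - M_\alpha^2, \epsilon_\alpha^2) > |a_\alpha(\Sigma) - a_\alpha(\Sigma_0)|$ assumed directly; it must be \emph{derived}.

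The paper's proof supplies exactly the missing deduction, and the mechanism is not just continuity. The key input is C\ref{howtochoosebsigma0condition}.3, which guarantees that the parallel transport $s^*_{\Sigma,\Sigma_0}\omega$ of any holomorphic form $\omega \in \Gamma(\Sigma_0,\Omega^1_{\Sigma_0})$ lands in $G_\Sigma$, in particular is holomorphic on $\mathbb{A}_{\alpha,\epsilon_\alpha,M_\alpha}(\Sigma)$. Choosing $\omega$ with $\omega(r_\alpha) \neq 0$ (possible since holomorphic differentials on a compact Riemann surface have no common zero), one writes $s^*_{\Sigma,\Sigma_0}\omega$ explicitly in the standard local coordinate as a series in $\sqrt{z_\alpha^2 + a_\alpha(\Sigma) - a_\alpha(\Sigma_0)}$. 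For this to be single-valued and holomorphic on the annulus $\epsilon_\alpha < |z_\alpha| < M_\alpha$ one needs the branch point to lie inside the inner boundary, i.e.\ $|a_\alpha(\Sigma) - a_\alpha(\Sigma_0)| < \epsilon_\alpha^2$, and the convergence radius to exceed the outer boundary, i.e.\ $\bar{M}_\alpha^2 > M_\alpha^2 + |a_\alpha(\Sigma) - a_\alpha(\Sigma_0)|$. The non-vanishing $\omega(r_\alpha) \neq 0$ rules out the degenerate case where all odd coefficients vanish and the square root disappears. Thus the required estimate is a \emph{consequence} of C\ref{howtochoosebsigma0condition}.3, not an additional smallness hypothesis. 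Your proof misses this argument entirely.
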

\begin{proof}
C\ref{howtochoosebsigma0condition}.1 and C\ref{howtochoosebsigma0condition}.2 ensure that the map $\gamma : \mathcal{B}_{\Sigma_0} \rightarrow Discs^{Ram}$ is well-defined. We are going to show that C\ref{howtochoosebsigma0condition}.3 implies that if $\sum_{k=0}^\infty b_{k\alpha}(\Sigma)z_\alpha^k$ has a radius of convergence $\bar{M}_\alpha$, then $\min(\bar{M}_\alpha^2-M_\alpha^2, \epsilon_\alpha^2) > |a_\alpha(\Sigma) - a_\alpha(\Sigma_0)|$ for all $[\Sigma] \in \mathcal{B}_{\Sigma_0}$.

It is well-known that there is no point on a compact Riemann surface where all holomorphic differential forms vanish simultaneously \cite{griffiths2014principles}. In particular, for any ramification point, $r_\alpha, \alpha \in Ram$ there exists a holomorphic differential form $\omega\in \Gamma(\Sigma_0, \Omega^1_{\Sigma_0}) \subset G_{\Sigma_0}$ such that $\omega(r_\alpha) \neq 0$. For clarity, let us denote the standard local coordinate on $\Sigma_0\cap U_\alpha$ by $z_{\alpha, \Sigma_0}$ and the standard local coordinate on $\Sigma\cap U_\alpha$ by $z_\alpha$. Let us write $i_\alpha(\omega) = \sum_{k=1}^\infty\omega_{\alpha, k}z^k_{\alpha,\Sigma_0} \frac{dz_{\alpha,\Sigma_0}}{z_{\alpha,\Sigma_0}} \in T_0L^{M_\alpha}_{Airy}\subset W^{\epsilon_\alpha, M_\alpha}_{t_\alpha(\Sigma_0)}$ where $\sum_{k=1}^\infty$ converges for $|z_{\alpha,\Sigma_0}| < \bar{M}_\alpha$ for some $\bar{M}_\alpha > M_\alpha$. Using the fact that $x_\alpha = a_\alpha(\Sigma) + z^2_\alpha$ on $\Sigma\cap U_\alpha$ and $x_\alpha = a_\alpha(\Sigma_0) + z^2_{\alpha,\Sigma_0}$ on $\Sigma_0\cap U_\alpha$, it follows that 
\begin{equation*}
    \omega(z_{\alpha,\Sigma_0})|_{\mathbb{A}_{\alpha, \epsilon_\alpha, M_\alpha}(\Sigma_0)} = \sum_{k=1}^\infty \omega_{\alpha, k}\left(\sqrt{x_\alpha - a_\alpha(\Sigma_0)}\right)^{k-1}d\sqrt{x_\alpha - a_\alpha(\Sigma_0)}
\end{equation*}
implies 
\begin{equation}\label{sstarofxionsigma}
    s^*_{\Sigma, \Sigma_0}\omega(z_\alpha)|_{\mathbb{A}_{\alpha, \epsilon_\alpha, M_\alpha}(\Sigma)} = \sum_{k=1}^\infty \omega_{\alpha, k}\left(\sqrt{z^2_\alpha + a_\alpha(\Sigma) - a_\alpha(\Sigma_0)}\right)^{k-1}d\sqrt{z^2_\alpha + a_\alpha(\Sigma) - a_\alpha(\Sigma_0)}.
\end{equation}
By C\ref{howtochoosebsigma0condition}.3 we have that $s^*_{\Sigma, \Sigma_0}\omega$ is holomorphic on $\mathbb{A}_{\alpha, \epsilon_\alpha, M_\alpha}(\Sigma)$. For the expression in (\ref{sstarofxionsigma}) to be holomorphic on $\mathbb{A}_{\alpha, \epsilon_\alpha, M_\alpha}(\Sigma)$ we must have $\bar{M}^2_\alpha > M_\alpha^2 + |a_\alpha(\Sigma) - a_\alpha(\Sigma_0)|$ and either $\epsilon_\alpha^2 > |a_\alpha(\Sigma) - a_\alpha(\Sigma_0)|$ or that $\omega_{\alpha, k} = 0$ for all $k$ odd and all $\alpha \in Ram$. But the latter option is not possible since $\omega(r_\alpha) \neq 0$.
\end{proof} 

\subsubsection{Comparison of vector bundles $G\rightarrow \mathcal{B}_{\Sigma_0}$, $\mathcal{H}\rightarrow \mathcal{B}$, and $W^{Ram}\rightarrow Discs^{Ram}$}\label{comparisonofvectorbundlessubsection}
So far we have introduced covariantly constant flat (weak) symplectic vector bundles $(\mathcal{H}\rightarrow \mathcal{B},\Omega_{\mathcal{H}}, \nabla_{GM})$, $(W^{Ram}\rightarrow Discs^{Ram}, \Omega, \nabla)$, and a flat coisotropic vector bundle $(G\rightarrow \mathcal{B}_{\Sigma_0}, \nabla_{\mathcal{F}})$. We are now in a position to summarize how they are related.

\begin{proposition}\label{ghwproposition}
The morphisms of vector bundles $[.] : G\rightarrow \mathcal{H}$ given by taking the cohomology class and $i : G\rightarrow W^{Ram}$ given by Laurent series expansions are compatible with the connections in the sense that
\begin{enumerate}
    \item $i(\nabla_{\mathcal{F}}\xi) = \gamma^*\nabla i(\xi)$
    \item $[\nabla_{\mathcal{F}}\xi] = \nabla_{GM}[\xi]$
\end{enumerate}
for any sections $\xi \in \Gamma(\mathcal{B}_{\Sigma_0}, G)$. Moreover, they are compatible with parallel transport in the sense that
\begin{enumerate}
    \setcounter{enumi}{2}
    \item $i(s^*_{\Sigma',\Sigma}\xi_{\Sigma}) = \sum_{\alpha \in Ram}[\alpha]\otimes\exp\left((a(\Sigma') - a(\Sigma))\mathcal{L}_{\frac{1}{2z_\alpha}\partial_{z_\alpha}}\right)i_\alpha(\xi_{\Sigma})$
    \item $[s^*_{\Sigma',\Sigma}\xi_\Sigma] = \Gamma_{[\Sigma]}^{[\Sigma']}[\xi_{\Sigma}]$
\end{enumerate}
for any vectors in the fiber $\xi_\Sigma \in G_\Sigma$ given that $[\Sigma], [\Sigma'] \in \mathcal{B}_{\Sigma_0}$ and $\xi_\Sigma$ satisfies the conditions of Lemma \ref{sstaroperatorlemma}. 
\end{proposition}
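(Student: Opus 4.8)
The plan is to verify the four compatibility statements essentially by unwinding the definitions of the connections $\nabla_{\mathcal{F}}$, $\nabla_{GM}$, $\nabla$ (on $W^{Ram}$) and of the parallel transport maps, using the local expressions we have already derived. Items (1) and (3) are the genuinely local statements near the ramification points, while (2) and (4) are their cohomological shadows and follow once (1) and (3) are in hand together with the isomorphism $G/G^\perp \cong \mathcal{H}$ of Proposition \ref{sympreductionproposition}.

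First I would prove (3), which is the key computation. Fix $[\Sigma] \in \mathcal{B}_{\Sigma_0}$, a vector $\xi_\Sigma \in G_\Sigma$ satisfying the hypotheses of Lemma \ref{sstaroperatorlemma}, and a ramification index $\alpha \in Ram$. On the annulus $\mathbb{A}_{\alpha,\epsilon_\alpha,M_\alpha}(\Sigma)$ use the standard local coordinate $z_\alpha$ with $x_\alpha = a_\alpha(\Sigma) + z_\alpha^2$; likewise on $\Sigma'$ write $x_\alpha = a_\alpha(\Sigma') + z_\alpha'^2$. Since $s_{\Sigma',\Sigma}$ is, in the chart $(U_\alpha,x_\alpha,y_\alpha)$, simply the identity on $x_\alpha$ (this is exactly how the map was constructed — it follows the foliation leaves $x_\alpha = const$), the pull-back $s^*_{\Sigma',\Sigma}$ acts on a form $f(x_\alpha)\,dx_\alpha$ by leaving it unchanged as a function of $x_\alpha$ and then re-expanding in $z_\alpha'$. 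Substituting $x_\alpha = z_\alpha'^2 + a_\alpha(\Sigma') = z_\alpha'^2 + (a_\alpha(\Sigma') - a_\alpha(\Sigma)) + a_\alpha(\Sigma)$ and comparing with the explicit formula for $\exp(a\mathcal{L}_{\frac{1}{2z}\partial_z})$ from Lemma \ref{expoperatorlemma} (which is precisely the operator that implements $z \mapsto \sqrt{z^2 + a}$ on Laurent series) gives $i_\alpha(s^*_{\Sigma',\Sigma}\xi_\Sigma) = \exp((a_\alpha(\Sigma') - a_\alpha(\Sigma))\mathcal{L}_{\frac{1}{2z_\alpha}\partial_{z_\alpha}})\,i_\alpha(\xi_\Sigma)$; summing over $\alpha$ yields (3). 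For (1), I would differentiate (3) in $[\Sigma']$ at $[\Sigma']=[\Sigma]$: by Lemma \ref{paralleltransportinglemma} the left side of (3) is the parallel vector field for $\nabla_{\mathcal{F}}$, by Lemma \ref{paralleltransportinwlemma} the right side is the parallel vector field for $\nabla$ along $\gamma$, and matching their covariant derivatives (both of which vanish on these parallel fields) against the explicit forms of $\nabla_{\mathcal{F}}$ and $\gamma^*\nabla$ written out in Section \ref{basicsetupsanddefinitionssubsection} gives $i(\nabla_{\mathcal{F}}\xi) = \gamma^*\nabla\, i(\xi)$. Alternatively and more directly, one computes $\nabla_{\mathcal{F}}\xi$ in the chart $U_\alpha$: since $\nabla_{\mathcal{F}}(z_\alpha) = -\frac{1}{2z_\alpha}\sum_k du^k \frac{\partial a_\alpha}{\partial u^k}$, the term $-\frac{\partial a_\alpha}{\partial u^k}\mathcal{L}_{\frac{1}{2z_\alpha}\partial_{z_\alpha}}$ appears exactly as in the formula for $\gamma^*\nabla$, while the $\partial/\partial u^k$ acting on the $b$-coefficients matches the $\partial/\partial b_{l\alpha}$ terms, and the $\partial/\partial a_\alpha$ term is supplied by $\frac{\partial a_\alpha}{\partial u^k}\partial/\partial a_\alpha$.

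For (2), recall that $[\xi] \in \mathcal{H}_\Sigma = H^1(\Sigma,\mathbb{C})$ is determined by the $A$- and $B$-periods of $\xi$, and that $\nabla_{GM}[\xi] = \sum_k [\partial \xi/\partial u^k]\,du^k$ is independent of the connection chosen to compute $\partial \xi/\partial u^k$ by Lemma \ref{integralandderivativelemma}. Choosing the connection $\nabla_{\mathcal{F}}$ to compute the derivative, and noting that $\nabla_{\mathcal{F}}\xi$ differs from the naive $\partial \xi/\partial u^k$ only by an exact form on the region where it is holomorphic (this is the same argument as in Lemma \ref{integralandderivativelemma}), gives $[\nabla_{\mathcal{F}}\xi] = \nabla_{GM}[\xi]$ on $G$-valued sections. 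Item (4) follows from (2) in the same way (1) follows from (3): the map $[\Sigma'] \mapsto s^*_{\Sigma',\Sigma}\xi_\Sigma$ is $\nabla_{\mathcal{F}}$-parallel by Lemma \ref{paralleltransportinglemma}, so $[\Sigma'] \mapsto [s^*_{\Sigma',\Sigma}\xi_\Sigma]$ is $\nabla_{GM}$-parallel by (2) and takes the value $[\xi_\Sigma]$ at $[\Sigma']=[\Sigma]$; by uniqueness of parallel transport (Picard–Lindelöf, as in Lemma \ref{flatvectorbundlepoincarelemma}) it equals $\Gamma^{[\Sigma']}_{[\Sigma]}[\xi_\Sigma]$. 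Equivalently one can verify directly that pulling back along $s_{\Sigma',\Sigma}$ preserves $A$- and $B$-periods, using Lemma \ref{integralofparalleltransportlemma} together with the fact that $(s_{\Sigma',\Sigma})_*$ sends a cycle on $\Sigma'$ to a homotopic cycle on $\Sigma$.

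The main obstacle is (3): one must be careful that the identification of the pull-back $s^*_{\Sigma',\Sigma}$ with the operator exponential $\exp((a_\alpha(\Sigma')-a_\alpha(\Sigma))\mathcal{L}_{\frac{1}{2z_\alpha}\partial_{z_\alpha}})$ is legitimate, i.e. that all the Laurent series in play converge on the relevant annuli and that re-expanding $\sqrt{z_\alpha'^2 + (a_\alpha(\Sigma')-a_\alpha(\Sigma)) }$ is valid — this is precisely what Lemma \ref{expoperatorlemma} and the choice of $\mathcal{B}_{\Sigma_0}$ via Condition \ref{howtochoosebsigma0condition} (and Lemma \ref{gammamaplemma}, ensuring $\gamma(\mathcal{B}_{\Sigma_0}) \subseteq Discs^{Ram}_{t_0}$, so $|a_\alpha(\Sigma')-a_\alpha(\Sigma)|$ is small enough) are designed to guarantee. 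Once the bookkeeping of which annulus each series converges on is handled, the identity is a direct substitution; the remaining items are then formal consequences of the uniqueness of parallel transport on flat bundles.
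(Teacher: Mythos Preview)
Your proposal is correct and follows essentially the same approach as the paper: the paper proves (1) by the direct local computation you give as your ``alternative'' (expanding $\nabla_{\mathcal{F}}\xi$ in the chart $U_\alpha$ using $\partial z_\alpha/\partial u^k|_{x_\alpha=const}=-\tfrac{1}{2z_\alpha}\partial a_\alpha/\partial u^k$), proves (2) by invoking Lemma \ref{integralandderivativelemma}, proves (3) by the substitution $x_\alpha = z_{\alpha,\Sigma'}^2 + a_\alpha(\Sigma')$ exactly as you describe, and proves (4) directly from Lemma \ref{integralofparalleltransportlemma} and the fact that $(s_{\Sigma',\Sigma})_*$ identifies $A,B$-cycles. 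Your deduction of (4) from (2) via uniqueness of parallel transport is a clean alternative the paper does not spell out, but both routes are immediate once (3) and Lemma \ref{paralleltransportinglemma} are in place.
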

\begin{proof}
\begin{enumerate}
\item
Let $\xi \in \Gamma(\mathcal{B}_{\Sigma_0},G)$ and let us examine $\nabla_{\mathcal{F}}\xi$ on $\Sigma\cap U_\alpha \subset \Sigma$. Working in the standard local coordinates we can write $\xi_\Sigma = \xi_\alpha(z_\alpha,u)dz_\alpha$. Using the fact that $x_\alpha = a_\alpha(\Sigma) + z^2_\alpha$, we have $\frac{\partial z_\alpha}{\partial u^k}|_{x_\alpha = const} = -\frac{1}{2z_\alpha}\frac{\partial a_\alpha(\Sigma)}{\partial u^k}$ and so
\begin{align*}
    i_\alpha (\nabla_{\mathcal{F}}\xi_\Sigma)(z_\alpha) &= i_\alpha\left(\sum_{k=1}^gdu^k\left(\left(\frac{\partial}{\partial u^k}\Big|_{z_\alpha - const}\xi_\alpha - \frac{1}{2z_\alpha}\frac{\partial a_\alpha(\Sigma)}{\partial u^k}\frac{\partial}{\partial z_\alpha}\xi_\alpha\right)dz_\alpha + \xi_\alpha d\left(\frac{1}{2z_\alpha}\frac{\partial a_\alpha(\Sigma)}{\partial u^k}\right)\right)\right)\\
    &= \sum_{k=1}^g du^k\left(\frac{\partial}{\partial u^k} -  \frac{\partial a_\alpha(\Sigma)}{\partial u^k}\mathcal{L}_{\frac{1}{2z_\alpha}\partial_{z_\alpha}}\right)i_\alpha(\xi)_{t_\alpha(\Sigma)} = \gamma^*\nabla i_\alpha(\xi)_{t_\alpha(\Sigma)}(z_\alpha)
\end{align*}
for every $\alpha \in Ram$. It follows that $i(\nabla_{\mathcal{F}}\xi) = \gamma^*\nabla i(\xi)$ as claimed.
\item
This immediately follows from Lemma \ref{integralandderivativelemma} with $D_\Sigma := \Sigma\setminus\cup_{p\in\sigma}\bar{\mathbb{D}}_{\alpha, \bar{\epsilon}_\alpha}(\Sigma)$ for some $\bar{\epsilon}_\alpha < \epsilon_\alpha, \alpha \in Ram$.
\item Let $\xi_\Sigma \in G_\Sigma$ and let us write $\xi_\Sigma$ on $\Sigma \cap U_\alpha \subset \Sigma$ in the standard local coordinates as $\xi_\Sigma = \xi_\alpha(z_\alpha)dz_\alpha$. Recall that the local coordinates of $S$ on $U_\alpha$ are $(x_\alpha, y_\alpha)$ and the foliation $\mathcal{F}$ is given by $x_\alpha = const$, so we have $s^*_{\Sigma',\Sigma}x_\alpha = x_\alpha$. For clarity, let us denote the standard local coordinate on $\Sigma'\cap U_\alpha$ by $z_{\alpha,\Sigma'}$ whereas $z_\alpha$ denotes the standard local coordinate on $\Sigma\cap U_\alpha$. Using the fact that $x_\alpha = a_\alpha(\Sigma) + z^2_\alpha$ on $\Sigma\cap U_\alpha$ and $x_\alpha = a_\alpha(\Sigma') + z^2_{\alpha, \Sigma'}$ on $\Sigma'\cap U_\alpha$, we have
\begin{equation*}
    \xi_\Sigma(z_\alpha) = \xi_\alpha(z_\alpha)dz_\alpha = \xi_\alpha\left(\sqrt{x_\alpha - a_\alpha(\Sigma)}\right)d\sqrt{x_\alpha - a_\alpha(\Sigma)}
\end{equation*}
and
\begin{align*}
    s^*_{\Sigma',\Sigma}&\xi_{\Sigma}(z_\alpha) = \xi_\alpha\left(\sqrt{x_\alpha - a_\alpha(\Sigma)}\right)d\sqrt{x_\alpha - a_\alpha(\Sigma)}\\
    &= \xi_\alpha\left(\sqrt{z^2_{\alpha, \Sigma'} + a_\alpha(\Sigma') - a_\alpha(\Sigma)}\right)d\sqrt{z^2_{\alpha, \Sigma'} + a_\alpha(\Sigma') - a_\alpha(\Sigma)} = \exp\left((a_\alpha(\Sigma') - a_\alpha(\Sigma))\mathcal{L}_{\frac{1}{2z_\alpha}\partial_{z_\alpha}}\right)(\xi_\Sigma)
\end{align*}
By construction, $s^*_{\Sigma',\Sigma}\xi_{\Sigma}$ is holomorphic and it can be represented by a Laurent series converging on $\mathbb{A}_{\alpha,\bar{\epsilon}_\alpha,\bar{M}_\alpha}(\Sigma')$ for some $\bar{\epsilon}_\alpha < \epsilon_\alpha < M_\alpha < \bar{M}_\alpha$. Repeating the above for each ramification point $\alpha \in Ram$ and perform a Laurent series expansion, the claimed formula follows.
\item Since $(s_{\Sigma',\Sigma})_* : H_1(\Sigma', \mathbb{C})\rightarrow H_1(\Sigma,\mathbb{C})$ is an isomorphism, mapping $A, B$-periods on $\Sigma'$ to $A, B$-periods on $\Sigma$, then $[s^*_{\Sigma', \Sigma}\xi_{\Sigma}] = \Gamma^{[\Sigma']}_{[\Sigma]}[\xi_\Sigma]$ follows straightforwardly from Lemma \ref{integralofparalleltransportlemma}.
\end{enumerate}
\end{proof}

\begin{figure}[h]
\begin{center}
\tikzset{every picture/.style={line width=0.75pt}} 

\begin{tikzpicture}[x=0.75pt,y=0.75pt,yscale=-1,xscale=1]
\draw  [color={rgb, 255:red, 0; green, 0; blue, 0 }  ,draw opacity=0 ][fill={rgb, 255:red, 0; green, 0; blue, 0 }  ,fill opacity=0.26 ] (60.13,217.25) .. controls (60.13,216.67) and (60.6,216.2) .. (61.18,216.2) -- (163.83,216.2) .. controls (164.4,216.2) and (164.87,216.67) .. (164.87,217.25) -- (164.87,220.38) .. controls (164.87,220.96) and (164.4,221.42) .. (163.83,221.42) -- (61.18,221.42) .. controls (60.6,221.42) and (60.13,220.96) .. (60.13,220.38) -- cycle ;
\draw    (187.85,218.81) -- (40.56,219.03) ;
\draw  [color={rgb, 255:red, 0; green, 0; blue, 0 }  ,draw opacity=0 ][fill={rgb, 255:red, 0; green, 0; blue, 0 }  ,fill opacity=0.11 ] (108.61,61.23) -- (117.52,61.23) -- (117.52,208.86) -- (108.61,208.86) -- cycle ;
\draw  [fill={rgb, 255:red, 0; green, 0; blue, 0 }  ,fill opacity=1 ] (110.94,218.81) .. controls (110.94,217.6) and (111.65,216.62) .. (112.53,216.62) .. controls (113.41,216.62) and (114.13,217.6) .. (114.13,218.81) .. controls (114.13,220.02) and (113.41,221.01) .. (112.53,221.01) .. controls (111.65,221.01) and (110.94,220.02) .. (110.94,218.81) -- cycle ;
\draw  [fill={rgb, 255:red, 0; green, 0; blue, 0 }  ,fill opacity=1 ] (150.39,218.81) .. controls (150.39,217.6) and (151.1,216.62) .. (151.98,216.62) .. controls (152.86,216.62) and (153.57,217.6) .. (153.57,218.81) .. controls (153.57,220.02) and (152.86,221.01) .. (151.98,221.01) .. controls (151.1,221.01) and (150.39,220.02) .. (150.39,218.81) -- cycle ;
\draw    (108.1,142.93) .. controls (110.27,132.05) and (115.22,144.2) .. (117.75,131.2) ;
\draw  [fill={rgb, 255:red, 0; green, 0; blue, 0 }  ,fill opacity=1 ] (113.11,137.49) .. controls (113.11,136.27) and (113.82,135.29) .. (114.7,135.29) .. controls (115.58,135.29) and (116.3,136.27) .. (116.3,137.49) .. controls (116.3,138.7) and (115.58,139.68) .. (114.7,139.68) .. controls (113.82,139.68) and (113.11,138.7) .. (113.11,137.49) -- cycle ;
\draw    (151.98,218.81) .. controls (148.9,182.98) and (122.3,172.23) .. (116.57,142.25) ;
\draw [shift={(116.25,140.4)}, rotate = 441.03] [color={rgb, 255:red, 0; green, 0; blue, 0 }  ][line width=0.75]    (10.93,-4.9) .. controls (6.95,-2.3) and (3.31,-0.67) .. (0,0) .. controls (3.31,0.67) and (6.95,2.3) .. (10.93,4.9)   ;
\draw    (28.3,40) -- (27.31,204) ;
\draw [shift={(27.3,206)}, rotate = 270.35] [color={rgb, 255:red, 0; green, 0; blue, 0 }  ][line width=0.75]    (10.93,-3.29) .. controls (6.95,-1.4) and (3.31,-0.3) .. (0,0) .. controls (3.31,0.3) and (6.95,1.4) .. (10.93,3.29)   ;
\draw  [color={rgb, 255:red, 0; green, 0; blue, 0 }  ,draw opacity=0 ][fill={rgb, 255:red, 0; green, 0; blue, 0 }  ,fill opacity=0.26 ] (488.91,217.25) .. controls (488.91,216.67) and (489.38,216.2) .. (489.96,216.2) -- (605.46,216.2) .. controls (606.03,216.2) and (606.5,216.67) .. (606.5,217.25) -- (606.5,220.38) .. controls (606.5,220.96) and (606.03,221.42) .. (605.46,221.42) -- (489.96,221.42) .. controls (489.38,221.42) and (488.91,220.96) .. (488.91,220.38) -- cycle ;
\draw    (632.3,218.81) -- (463.18,218.81) ;
\draw  [color={rgb, 255:red, 0; green, 0; blue, 0 }  ,draw opacity=0 ][fill={rgb, 255:red, 0; green, 0; blue, 0 }  ,fill opacity=0.11 ] (543.34,61.23) -- (553.34,61.23) -- (553.34,208.86) -- (543.34,208.86) -- cycle ;
\draw  [fill={rgb, 255:red, 0; green, 0; blue, 0 }  ,fill opacity=1 ] (545.95,218.81) .. controls (545.95,217.6) and (546.75,216.62) .. (547.74,216.62) .. controls (548.73,216.62) and (549.53,217.6) .. (549.53,218.81) .. controls (549.53,220.02) and (548.73,221.01) .. (547.74,221.01) .. controls (546.75,221.01) and (545.95,220.02) .. (545.95,218.81) -- cycle ;
\draw  [fill={rgb, 255:red, 0; green, 0; blue, 0 }  ,fill opacity=1 ] (590.24,218.81) .. controls (590.24,217.6) and (591.04,216.62) .. (592.03,216.62) .. controls (593.02,216.62) and (593.82,217.6) .. (593.82,218.81) .. controls (593.82,220.02) and (593.02,221.01) .. (592.03,221.01) .. controls (591.04,221.01) and (590.24,220.02) .. (590.24,218.81) -- cycle ;
\draw    (542.77,142.93) .. controls (545.2,132.05) and (550.76,144.2) .. (553.59,131.2) ;
\draw  [fill={rgb, 255:red, 0; green, 0; blue, 0 }  ,fill opacity=1 ] (548.38,137.49) .. controls (548.38,136.27) and (549.19,135.29) .. (550.17,135.29) .. controls (551.16,135.29) and (551.97,136.27) .. (551.97,137.49) .. controls (551.97,138.7) and (551.16,139.68) .. (550.17,139.68) .. controls (549.19,139.68) and (548.38,138.7) .. (548.38,137.49) -- cycle ;
\draw    (592.03,218.81) .. controls (588.57,182.98) and (558.7,172.23) .. (552.27,142.25) ;
\draw [shift={(551.91,140.4)}, rotate = 439.95] [color={rgb, 255:red, 0; green, 0; blue, 0 }  ][line width=0.75]    (10.93,-4.9) .. controls (6.95,-2.3) and (3.31,-0.67) .. (0,0) .. controls (3.31,0.67) and (6.95,2.3) .. (10.93,4.9)   ;
\draw    (211.3,232) .. controls (166.52,255.88) and (97.99,257.98) .. (37.22,229.43) ;
\draw [shift={(36.3,229)}, rotate = 385.43] [color={rgb, 255:red, 0; green, 0; blue, 0 }  ][line width=0.75]    (10.93,-3.29) .. controls (6.95,-1.4) and (3.31,-0.3) .. (0,0) .. controls (3.31,0.3) and (6.95,1.4) .. (10.93,3.29)   ;
\draw [shift={(211.3,232)}, rotate = 151.93] [color={rgb, 255:red, 0; green, 0; blue, 0 }  ][line width=0.75]      (0,0) .. controls (-3.09,0) and (-5.59,2.5) .. (-5.59,5.59) .. controls (-5.59,8.68) and (-3.09,11.18) .. (0,11.18) ;
\draw    (251.44,231.82) .. controls (303.51,260.56) and (360.79,255.16) .. (393.81,230.16) ;
\draw [shift={(395.3,229)}, rotate = 501.59] [color={rgb, 255:red, 0; green, 0; blue, 0 }  ][line width=0.75]    (10.93,-3.29) .. controls (6.95,-1.4) and (3.31,-0.3) .. (0,0) .. controls (3.31,0.3) and (6.95,1.4) .. (10.93,3.29)   ;
\draw  [color={rgb, 255:red, 0; green, 0; blue, 0 }  ,draw opacity=0 ][fill={rgb, 255:red, 0; green, 0; blue, 0 }  ,fill opacity=0.26 ] (255.96,217.25) .. controls (255.96,216.67) and (256.43,216.2) .. (257,216.2) -- (359.65,216.2) .. controls (360.23,216.2) and (360.7,216.67) .. (360.7,217.25) -- (360.7,220.38) .. controls (360.7,220.96) and (360.23,221.42) .. (359.65,221.42) -- (257,221.42) .. controls (256.43,221.42) and (255.96,220.96) .. (255.96,220.38) -- cycle ;
\draw    (360.44,218.85) -- (255.67,218.85) ;
\draw  [color={rgb, 255:red, 0; green, 0; blue, 0 }  ,draw opacity=0 ][fill={rgb, 255:red, 0; green, 0; blue, 0 }  ,fill opacity=0.11 ] (304.44,61.23) -- (313.34,61.23) -- (313.34,208.86) -- (304.44,208.86) -- cycle ;
\draw  [fill={rgb, 255:red, 0; green, 0; blue, 0 }  ,fill opacity=1 ] (306.76,218.81) .. controls (306.76,217.6) and (307.48,216.62) .. (308.36,216.62) .. controls (309.24,216.62) and (309.95,217.6) .. (309.95,218.81) .. controls (309.95,220.02) and (309.24,221.01) .. (308.36,221.01) .. controls (307.48,221.01) and (306.76,220.02) .. (306.76,218.81) -- cycle ;
\draw  [fill={rgb, 255:red, 0; green, 0; blue, 0 }  ,fill opacity=1 ] (346.21,218.81) .. controls (346.21,217.6) and (346.93,216.62) .. (347.81,216.62) .. controls (348.69,216.62) and (349.4,217.6) .. (349.4,218.81) .. controls (349.4,220.02) and (348.69,221.01) .. (347.81,221.01) .. controls (346.93,221.01) and (346.21,220.02) .. (346.21,218.81) -- cycle ;
\draw    (303.93,142.93) .. controls (306.1,132.05) and (311.05,144.2) .. (313.57,131.2) ;
\draw  [fill={rgb, 255:red, 0; green, 0; blue, 0 }  ,fill opacity=1 ] (308.93,137.49) .. controls (308.93,136.27) and (309.65,135.29) .. (310.53,135.29) .. controls (311.41,135.29) and (312.12,136.27) .. (312.12,137.49) .. controls (312.12,138.7) and (311.41,139.68) .. (310.53,139.68) .. controls (309.65,139.68) and (308.93,138.7) .. (308.93,137.49) -- cycle ;
\draw    (347.81,218.81) .. controls (344.73,182.98) and (318.12,172.23) .. (312.4,142.25) ;
\draw [shift={(312.07,140.4)}, rotate = 441.03] [color={rgb, 255:red, 0; green, 0; blue, 0 }  ][line width=0.75]    (10.93,-4.9) .. controls (6.95,-2.3) and (3.31,-0.67) .. (0,0) .. controls (3.31,0.67) and (6.95,2.3) .. (10.93,4.9)   ;
\draw    (228.37,43.52) -- (227.31,204) ;
\draw [shift={(227.3,206)}, rotate = 270.38] [color={rgb, 255:red, 0; green, 0; blue, 0 }  ][line width=0.75]    (10.93,-3.29) .. controls (6.95,-1.4) and (3.31,-0.3) .. (0,0) .. controls (3.31,0.3) and (6.95,1.4) .. (10.93,3.29)   ;
\draw    (419.98,43.52) -- (419.31,205) ;
\draw [shift={(419.3,207)}, rotate = 270.24] [color={rgb, 255:red, 0; green, 0; blue, 0 }  ][line width=0.75]    (10.93,-3.29) .. controls (6.95,-1.4) and (3.31,-0.3) .. (0,0) .. controls (3.31,0.3) and (6.95,1.4) .. (10.93,3.29)   ;
\draw    (246.53,26.88) -- (402.52,26.2) ;
\draw [shift={(404.52,26.19)}, rotate = 539.75] [color={rgb, 255:red, 0; green, 0; blue, 0 }  ][line width=0.75]    (10.93,-3.29) .. controls (6.95,-1.4) and (3.31,-0.3) .. (0,0) .. controls (3.31,0.3) and (6.95,1.4) .. (10.93,3.29)   ;
\draw    (209.89,26.69) -- (46.3,26.01) ;
\draw [shift={(44.3,26)}, rotate = 360.24] [color={rgb, 255:red, 0; green, 0; blue, 0 }  ][line width=0.75]    (10.93,-3.29) .. controls (6.95,-1.4) and (3.31,-0.3) .. (0,0) .. controls (3.31,0.3) and (6.95,1.4) .. (10.93,3.29)   ;

\draw (92.26,139.39) node   [align=left] {$\displaystyle \mathcal{L}_{\Sigma _{0}}$};
\draw (114.71,47.12) node   [align=left] {$\displaystyle \mathcal{H}_{\Sigma _{0}}$};
\draw (72.27,203.61) node   [align=left] {$\displaystyle \mathcal{B}_{\Sigma _{0}}$};
\draw (26.86,219.15) node   [align=left] {$\displaystyle \mathcal{B}$};
\draw (27.86,25.36) node   [align=left] {$\displaystyle \mathcal{H}$};
\draw (147.36,161.59) node   [align=left] {$\displaystyle \mathbf{\Phi }_{\Sigma _{0}}$};
\draw (115.86,230.35) node   [align=left] {$\displaystyle [ \Sigma _{0}]$};
\draw (156.3,230.31) node   [align=left] {$\displaystyle [ \Sigma ]$};
\draw (309.84,48.08) node   [align=left] {$\displaystyle G_{\Sigma _{0}}$};
\draw (521.77,140.36) node   [align=left] {$\displaystyle L^{Ram}_{Airy}$};
\draw (550.18,47.12) node   [align=left] {$\displaystyle W^{Ram}_{t_{0}}$};
\draw (428.36,218.63) node   [align=left] {$\displaystyle Discs^{Ram}$};
\draw (434.74,27.92) node   [align=left] {$\displaystyle W^{Ram}$};
\draw (583.6,162) node   [align=left] {$\displaystyle \Phi _{t_{0}}$};
\draw (550.92,230.35) node   [align=left] {$\displaystyle [ \Sigma _{0}]$};
\draw (596.32,230.31) node   [align=left] {$\displaystyle [ \Sigma ]$};
\draw (518.23,205.64) node   [align=left] {$\displaystyle Discs^{Ram}_{t_{0}}$};
\draw (121.74,14.4) node    {$[ .]$};
\draw (323.69,15.02) node    {$i$};
\draw (326.34,258.91) node    {$\gamma $};
\draw (233.91,219.1) node   [align=left] {$\displaystyle \mathcal{B}_{\Sigma _{0}}$};
\draw (342.07,161.59) node   [align=left] {$\displaystyle \Phi 
_{\Sigma _{0}}$};
\draw (311.69,230.35) node   [align=left] {$\displaystyle [ \Sigma _{0}]$};
\draw (352.12,230.31) node   [align=left] {$\displaystyle [ \Sigma ]$};
\draw (228.37,26.4) node   [align=left] {$\displaystyle G$};

\end{tikzpicture}
\end{center}
\begin{caption}
\text{Comparison of vector bundles $G\rightarrow \mathcal{B}_{\Sigma_0}$, $\mathcal{H}\rightarrow \mathcal{B}$, and $W^{Ram}\rightarrow Discs^{Ram}$. The vector bundle morphism $[.] : G\rightarrow \mathcal{H}$ covering the inclusion map $\mathcal{B}_{\Sigma_0}\hookrightarrow \mathcal{B}$ is given fiber-wise by the natural projection $G_{\Sigma}\rightarrow G_{\Sigma}/G^\perp_{\Sigma} \cong \mathcal{H}_{\Sigma}$, which is the same as taking a cohomology class. The vector bundle morphism $i : G\rightarrow W^{Ram}$ covering the map $\gamma : \mathcal{B}_{\Sigma_0}\hookrightarrow Discs^{Ram}$ is given by taking Laurent expansions around ramification points. In this sense, the vector bundle $G$ bridges the local point of view given by $W^{Ram}$ with the global point of view given by $\mathcal{H}$. This Figure also illustrates the parallel construction of the map $\pmb{\Phi}_{\Sigma_0} : \mathcal{B}_{\Sigma_0} \rightarrow \mathcal{H}_{\Sigma_0}$, $\Phi_{\Sigma_0} : \mathcal{B}_{\Sigma_0} \rightarrow G_{\Sigma_0}$, and $\Phi_{t_0} : Discs^{Ram}_{t_0} \rightarrow W^{Ram}_{t_0}$. Each map embeds some neighbourhood $\mathcal{B}_{\Sigma_0} \ni [\Sigma_0]$ or $Discs^{Ram}_{t_0} \ni t_0$ into the fiber $\mathcal{H}_{\Sigma_0}$, $G_{\Sigma_0}$ or $W^{Ram}_{t_0}$ over the reference point by sending the zero vector at an arbitrary point to the reference point via parallel transport using the affine connection $\nabla_{GM} + \pmb{\phi}$, $\nabla_{\mathcal{F}} + \phi$ or $\nabla + \phi$.}
\end{caption}
\label{hgwvectorbundlesrelation}
\end{figure}

\subsubsection{The embedding of $\mathcal{B}_{\Sigma_0}$ into $G_{\Sigma_0}$ and the residue constraints}\label{relationtoresconstraintssubsection}
In this section, we will construct the embedding map $\Phi_{\Sigma_0} : \mathcal{B}_{\Sigma_0} \rightarrow G_{\Sigma_0}$ analogous to $\pmb{\Phi}_{\Sigma_0} : \mathcal{B}_{\Sigma_0}\rightarrow \mathcal{H}_{\Sigma_0}$ defined in Section \ref{embeddingofbinhsubsection} and $\Phi_{t_0} : Discs_{t_0}^{\epsilon, M} \rightarrow W^{\epsilon, M}_{t_0}$ defined in Section \ref{embeddingofdiscssubsection}. The main outcome of this section is Proposition \ref{localvsglobalproposition} comparing $\Phi_{\Sigma_0}$ to $\Phi_{t_0}$ and $\pmb{\Phi}_{\Sigma_0}$, which is the key result to be used in proving Theorem \ref{prepotentialvsomega0ntheorem} later. To construct $\Phi_{\Sigma_0}$, we are going to need $\phi \in \Gamma(\mathcal{B}_{\Sigma_0},T^*\mathcal{B}\otimes G)$ and find $\theta \in \Gamma(\mathcal{B}_{\Sigma_0}, G)$ such that $\nabla_{\mathcal{F}}\theta = \phi$. Therefore, most of this section leading up to Proposition \ref{localvsglobalproposition} will be allocated to discuss the construction and properties of $\phi$ and $\theta$. We are also going to assume $\{\epsilon_{\alpha\in Ram}\}, \{M_{\alpha \in Ram}\}$, $\mathcal{U}_{Ram}$ and $\mathcal{B}_{\Sigma_0}$ are chosen such that Condition \ref{howtochoosebsigma0condition} is satisfied.

Let us begin by discussing $\phi \in \Gamma(\mathcal{B}_{\Sigma_0}, T^*\mathcal{B}\otimes G)$ as follows. At any point $[\Sigma] \in \mathcal{B}_{\Sigma_0}$, the map $\phi_\Sigma : T_{[\Sigma]}\mathcal{B}_{\Sigma_0} \xrightarrow{\cong} \Gamma(\Sigma, \Omega^1_\Sigma)\hookrightarrow G_{\Sigma}$ has already been defined previously in Section \ref{deformationofcurvessection}, but let us recall it here. Any given vector $v \in T_{[\Sigma]}\mathcal{B}_{\Sigma_0}$ can be mapped into a global holomorphic section of normal bundle $n_v \in \Gamma(\Sigma, \nu_\Sigma)$. Then $\phi_\Sigma(v) := \Omega_S(n_v,.)|_\Sigma \in \Gamma(\Sigma, \Omega^1_\Sigma)$. Let us cover $\Sigma$ with a collection of $(\mathcal{F},\Omega_S)$-charts $\mathcal{U} = \{(U_{p\in\sigma}, x_{p\in\sigma}, y_{p\in\sigma})\}$ then $\phi_{\Sigma}(v)$ is written locally on each $\Sigma\cap U_p$ as
\begin{equation}\label{phiBvalue}
    \phi_\Sigma(v)|_{\Sigma\cap U_p} = -\iota_v\nabla_{\mathcal{F}}(y_pdx_p)|_{\Sigma\cap U_p}.
\end{equation}
Now, let us compare this to $\pmb{\phi} \in \Gamma(\mathcal{B}_{\Sigma_0}, T^*\mathcal{B}\otimes\mathcal{H})$ from Section \ref{deformationofcurvesinfamilysubsection} and
\begin{equation*}
    \phi := \sum_{\alpha \in Ram}[\alpha]\otimes \phi_\alpha \in \Gamma(Discs^{Ram}_{t_0}, Hom(TDiscs^{Ram}_{t_0},W^{Ram})),
\end{equation*}
where $\phi_{\alpha} \in \Gamma(Discs^{\epsilon_\alpha, M_\alpha}_{t_{0\alpha}}, Hom(TDiscs^{\epsilon_\alpha, M_\alpha}_{t_{0\alpha}},W^{\epsilon_\alpha,M_\alpha}))$ are as given in Section \ref{theembeddingofdiscssection}. For example, let $t = (t_{\alpha \in Ram}) \in Discs^{Ram}_{t_0}, t_\alpha \in Discs^{\epsilon_\alpha, M_\alpha}_{t_{0\alpha}}$ and $v = (v_{\alpha \in Ram}) \in T_{t}Discs^{Ram}_{t_0}, v_\alpha \in T_{t_\alpha}Discs^{\epsilon_\alpha, M_\alpha}_{t_{0\alpha}}$, we define $\phi_t(v) := \sum_{\alpha \in Ram}[\alpha]\otimes \phi_{t_\alpha}(v_\alpha)$. We have the following lemma compare $\phi_\Sigma$ to $\phi_{t(\Sigma)}$ and $\pmb{\phi}_{\Sigma}$:

\begin{lemma}\label{phiongtophionhandwlemma}
Let $\phi \in \Gamma(\mathcal{B}_{\Sigma_0}, T^*\mathcal{B}\otimes G)$ be given as above, then
\begin{enumerate}
    \item $[\phi_\Sigma] = \pmb{\phi}_{\Sigma}$
    \item $i(\phi_\Sigma) = \phi_{t(\Sigma)}\circ d\gamma$.
\end{enumerate}
We summarize these relationships by the following commutative diagram
\begin{equation*}
    \begin{tikzcd}
    G_{\Sigma}/G^\perp_{\Sigma} & & & & W^{Ram}_{Airy}\\
    \mathcal{H}_{\Sigma}\arrow[rightarrow]{u}{\cong}\arrow[leftarrow]{rr}{[.]} & & G_{\Sigma}\arrow[rightarrow]{rr}{i} & & W^{Ram}_{t(\Sigma)}\arrow[rightarrow]{u}{\cong}\\
    \Gamma(\Sigma,\Omega^1_\Sigma)\arrow[hookrightarrow]{u} & & \Gamma(\Sigma,\Omega^1_\Sigma)\arrow[hookrightarrow]{u} & & T_0L^{Ram}_{Airy}\arrow[hookrightarrow]{u}\\
    T_{[\Sigma]}\mathcal{B}_{\Sigma_0}\arrow[rightarrow]{u}{\cong}\arrow[leftarrow]{rr}{\cong}\arrow[bend left=60, hookrightarrow]{uu}{\pmb{\phi}_{\Sigma}} & & T_{[\Sigma]}\mathcal{B}_{\Sigma_0}\arrow[rightarrow]{u}{\cong}\arrow[rightarrow]{rr}{d\gamma}\arrow[bend left=60, hookrightarrow]{uu}{\phi_{\Sigma}} & & T_{t(\Sigma)}Discs^{Ram}_{t_0}\arrow[rightarrow]{u}\arrow[bend left=60, hookrightarrow]{uu}{\phi_{t(\Sigma)}}
    \end{tikzcd}
\end{equation*}
\end{lemma}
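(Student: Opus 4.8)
The plan is to verify the two claimed identities directly from the definitions, using the local descriptions of $\phi_\Sigma$, $\pmb{\phi}_\Sigma$, and $\phi_{t(\Sigma)}$ that have already been assembled in the earlier sections. Both statements are local in nature: $[\phi_\Sigma] = \pmb{\phi}_\Sigma$ is an equality of cohomology classes in $\mathcal{H}_\Sigma$, and $i(\phi_\Sigma) = \phi_{t(\Sigma)}\circ d\gamma$ is an equality of Laurent series expansions around each ramification point. So I would fix $[\Sigma]\in\mathcal{B}_{\Sigma_0}$ and a tangent vector $v = \sum_{k=1}^g v^k\frac{\partial}{\partial u^k}\in T_{[\Sigma]}\mathcal{B}_{\Sigma_0}$ throughout.

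For part (1): both $\phi_\Sigma(v)$ and $\phi_\Sigma(v)$-as-a-representative-of-$\pmb{\phi}_\Sigma(v)$ arise from exactly the same construction, namely $v\mapsto n_v\in\Gamma(\Sigma,\nu_\Sigma)$ followed by $n_v\mapsto \Omega_S(n_v,\cdot)|_\Sigma\in\Gamma(\Sigma,\Omega^1_\Sigma)$; this was recalled verbatim in Section \ref{relationtoresconstraintssubsection} for $\phi_\Sigma$ and is precisely the definition of $\phi_\Sigma$ in \eqref{definingphi} whose class is $\pmb{\phi}_\Sigma$. The only subtlety is that the definition of $n_v$ involves a choice of connection (equivalently, how $\frac{\partial x_p}{\partial u^k}, \frac{\partial y_p}{\partial u^k}$ are taken), and Lemma \ref{normallocalsectionslemma} guarantees this choice does not affect the resulting element of $\Gamma(\Sigma,\Omega^1_\Sigma)$. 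Hence $\phi_\Sigma(v)$ (as a holomorphic form in $G_\Sigma$) and the representative of $\pmb{\phi}_\Sigma(v)$ coincide as honest differential forms, so in particular $[\phi_\Sigma(v)] = \pmb{\phi}_\Sigma(v)$ in $\mathcal{H}_\Sigma = G_\Sigma/G^\perp_\Sigma$. I would also note that using the $\nabla_{\mathcal{F}}$-connection gives the explicit local form $\phi_\Sigma(v)|_{\Sigma\cap U_p} = -\iota_v\nabla_{\mathcal{F}}(y_p\,dx_p)|_{\Sigma\cap U_p}$ from \eqref{phiBvalue}, which will be needed for part (2).

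For part (2): I would compute $i_\alpha(\phi_\Sigma(v))$ near a ramification point $r_\alpha$ using the standard local coordinate $z_\alpha$, where $x_\alpha = a_\alpha(\Sigma) + z_\alpha^2$ and $y_\alpha = \sum_{k\geq 0} b_{k\alpha}(\Sigma) z_\alpha^k$. Applying $\nabla_{\mathcal{F}}$ (differentiation keeping $x_\alpha$ constant, so $\frac{\partial z_\alpha}{\partial u^k}\big|_{x_\alpha=\text{const}} = -\frac{1}{2z_\alpha}\frac{\partial a_\alpha}{\partial u^k}$) to $-y_\alpha\,dx_\alpha = -\sum_k b_{k\alpha}z_\alpha^k\,d(z_\alpha^2)$ and contracting with $v$, I expect to land on exactly the expression $\phi_{t_\alpha(\Sigma)}(v_\alpha)$ of \eqref{phioftangentvectdiscs}, where $v_\alpha = d\gamma_\alpha(v)$ has components $A = \sum_k v^k\frac{\partial a_\alpha}{\partial u^k}$ and $B_l = \sum_k v^k\frac{\partial b_{l\alpha}}{\partial u^k}$. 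This is a direct termwise calculation; the point worth checking carefully, parallel to the Remark following \eqref{ppequation} and the argument in Lemma \ref{integralandderivativelemma}, is that the simple pole in $\frac{\partial z_\alpha}{\partial u^k}$ is exactly cancelled by the simple zero of $dx_\alpha|_\Sigma = 2z_\alpha\,dz_\alpha$, so the result is genuinely holomorphic (lies in $\varinjlim_{\bar M > M(t)}\Gamma(\mathbb{D}_{t,\bar M},\Omega^1_{\mathbb{D}_{t,\bar M}})$, hence in $W^{\epsilon_\alpha,M_\alpha}_{t_\alpha(\Sigma)}$), and that the Laurent expansion so obtained contains no $\frac{dz_\alpha}{z_\alpha}$ term — this is automatic since $\phi_\Sigma(v)$ is a global holomorphic form, so its periods around $\partial\bar{\mathbb{D}}_{\alpha,\epsilon_\alpha}$ vanish. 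Summing over $\alpha\in Ram$ and matching with $i = \sum_\alpha [\alpha]\otimes i_\alpha$ gives $i(\phi_\Sigma) = \phi_{t(\Sigma)}\circ d\gamma$.

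The main obstacle I anticipate is purely bookkeeping rather than conceptual: getting the termwise differentiation and the pole cancellation in part (2) to land precisely on the form displayed in \eqref{phioftangentvectdiscs}, including the correct coefficients $A\sum_k k b_k z^{k-1} - 2\sum_k B_k z^{k+1}$, and confirming that the chain-rule identity $d\gamma = \prod_\alpha d\gamma_\alpha$ with $\gamma_\alpha([\Sigma]) = t_\alpha(\Sigma)$ sends $\frac{\partial}{\partial u^k}$ to the tangent vector with the above components. Finally, the commutative diagram simply repackages (1) and (2) together with the identifications $\mathcal{H}_\Sigma \cong G_\Sigma/G^\perp_\Sigma$ (Proposition \ref{sympreductionproposition}), $W^{Ram}_{t(\Sigma)} \cong W^{Ram}_{Airy}$, and the fact that $\text{im}\,\phi_\Sigma = \Gamma(\Sigma,\Omega^1_\Sigma)$ sits inside $T_0L^{Ram}_{Airy}$ via $i$, so no further argument is needed beyond noting that each square and triangle commutes by the two identities just proved.
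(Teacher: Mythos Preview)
Your proposal is correct and follows essentially the same approach as the paper: part (1) is immediate from the definition $\pmb{\phi}_\Sigma = [\phi_\Sigma]$, and part (2) is the same local computation in the standard coordinate $z_\alpha$ using $\frac{\partial z_\alpha}{\partial u^k}\big|_{x_\alpha=\text{const}} = -\frac{1}{2z_\alpha}\frac{\partial a_\alpha}{\partial u^k}$ to recover the formula \eqref{phioftangentvectdiscs} with $v_\alpha = d\gamma_\alpha(v)$. The paper's proof is slightly more terse and frames part (2) as recomputing $\phi_{\alpha,t_\alpha}(v_\alpha)$ via $\nabla_{\mathcal{F}}$ rather than expanding $i_\alpha(\phi_\Sigma(v))$, but the two directions of this computation are identical; your extra remarks on pole cancellation and the absence of a $\frac{dz_\alpha}{z_\alpha}$ term are sound but not strictly needed here since $\phi_\Sigma(v)\in\Gamma(\Sigma,\Omega^1_\Sigma)$ was already established.
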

\begin{proof}
\begin{enumerate}
    \item This is obvious since $\pmb{\phi}_{\Sigma} = \sum_{k=1}^g[\Omega_S(n_{v_k},.)|_{\Sigma}]du^k$ by definition.
    \item Given $t_\alpha \in Discs^{\epsilon_\alpha, M_\alpha}_{t_0\alpha}, t_\alpha = (x_\alpha = z^2_\alpha + a_\alpha, y_\alpha = \sum_{k=0}^\infty b_{k\alpha}z_\alpha^k)$ and vector $v_\alpha = A_\alpha\frac{\partial}{\partial a_\alpha} + \sum_{k=0}^\infty B_{k\alpha}\frac{\partial}{\partial b_{k\alpha}} \in T_{t_\alpha}Discs^{\epsilon_\alpha, M_\alpha}_{t_{0\alpha}}$ we can also compute $\phi_{\alpha, t_\alpha}$ defined in Section \ref{theembeddingofdiscssection} by keeping $x_\alpha = const$ when taking derivatives and use $\frac{\partial z_\alpha}{\partial a_\alpha}|_{x_\alpha = const} = -\frac{1}{2z_\alpha}$. We have
    \begin{align*}
    \phi_{\alpha,t_\alpha}(v_\alpha) &= \Omega_S\left(\left(A_\alpha\frac{\partial y_\alpha}{\partial a_\alpha}\Big|_{x_\alpha = const} + \sum_{k=0}^\infty B_{k\alpha}\frac{\partial y_\alpha}{\partial b_{k\alpha}}\Big|_{x_\alpha = const}\right)\frac{\partial}{\partial y_\alpha},.\right)\Big|_{\mathbb{D}_{t,M_\alpha}}\\
    &= -\left(\sum_{k = 0}^\infty B_{k\alpha}z^k_\alpha - \frac{A_\alpha}{2z_\alpha}\sum_{k=1}^\infty kb_{k\alpha}z^{k-1}_\alpha\right)dx_\alpha\Big|_{\mathbb{D}_{t,M_\alpha}} = \left(A_\alpha\sum_{k=1}^\infty kb_kz^{k-1}_\alpha - 2\sum_{k = 0}^\infty B_{k\alpha}z^{k+1}_\alpha\right)dz_\alpha
    \end{align*}
    which should be compared to (\ref{phioftangentvectdiscs}). Setting $t_\alpha = t_\alpha(\Sigma) = \gamma_\alpha(\Sigma)$ and
    \begin{equation*}
    v_\alpha = d\gamma_\alpha(v) = \sum_{k=1}^gv^k\left(\frac{\partial a_\alpha(\Sigma)}{\partial u^k}\frac{\partial}{\partial a_\alpha} + \sum_{l=0}^\infty \frac{\partial b_{l\alpha}(\Sigma)}{\partial u^k}\frac{\partial}{\partial b_{l\alpha}}\right)    
    \end{equation*}
    shows that $i_\alpha \phi_\Sigma = \gamma^*_\alpha\phi_\alpha = \phi_\alpha \circ d\gamma_\alpha$.
\end{enumerate}
\end{proof}

Since $\mathcal{B}_{\Sigma_0}$ is chosen to be contractible (C\ref{howtochoosebsigma0condition}.1), $G\rightarrow \mathcal{B}_{\Sigma_0}$ is a flat vector bundle and $d_{\nabla_{\mathcal{F}}}\phi = 0$, therefore Poincar\'{e} Lemma suggests the existence of $\theta \in \Gamma(\mathcal{B}_{\Sigma_0}, G)$ such that $\nabla_{\mathcal{F}} \theta = \phi$. $\theta_\Sigma$ is a holomorphic differential form on $\Sigma\setminus \cup_{\alpha \in Ram}\bar{\mathbb{D}}_{\alpha, \bar{\epsilon}_\alpha}(\Sigma)$ for some $\bar{\epsilon}_\alpha < \epsilon_\alpha$, $\oint_{\partial\bar{\mathbb{D}}_{\alpha, \epsilon_\alpha}}\theta_\Sigma = 0$ and any infinitesimal variation of $\theta_\Sigma$ in the moduli space directions produce a global holomorphic form on $\Sigma$, i.e. a holomorphic form on $\Sigma\setminus \cup_{\alpha \in Ram}\bar{\mathbb{D}}_{\alpha, \bar{\epsilon}_\alpha}(\Sigma)$ which can be analytically extended to the entire curve $\Sigma$. 

\begin{lemma}\label{thetaingtothetaonhandwlemma}
Let $\theta \in \Gamma(\mathcal{B}_{\Sigma_0}, G)$ be a section satisfying $\nabla_{\mathcal{F}}\theta = \phi$,  then 
\begin{enumerate}
    \item $[\theta_{\Sigma}] = \pmb{\theta}_{\Sigma} + \Gamma_{[\Sigma_0]}^{[\Sigma]}[\xi_0] \in \mathcal{H}_{\Sigma}$ for some $[\xi_0] \in \mathcal{H}_{\Sigma_0}$.
    \item $i(\theta_{\Sigma}) = \sum_{\alpha \in Ram}[\alpha]\otimes\left(\theta_{t_\alpha(\Sigma)} + \exp\left((a_\alpha(\Sigma) - a_\alpha(\Sigma_0))\mathcal{L}_{\frac{1}{2z_\alpha}\partial_{z_\alpha}}\right)(\xi_{0\alpha})\right) \in W^{Ram}_{t(\Sigma)}$ for some $\xi_{0\alpha} \in W^{\bar{\epsilon}_\alpha, \bar{M}_\alpha}_{t_{0\alpha}} \subset W^{\epsilon_\alpha, M_\alpha}_{t_{0\alpha}}$ where 
    \begin{equation*}
        \bar{\epsilon}_\alpha := \sqrt{\epsilon^2_\alpha - \sup_{[\Sigma]\in \mathcal{B}_{\Sigma_0}}|a_\alpha(\Sigma) - a_\alpha(\Sigma_0)|}, \qquad \bar{M}_\alpha := \sqrt{M^2_\alpha + \sup_{[\Sigma]\in \mathcal{B}_{\Sigma_0}}|a_\alpha(\Sigma) - a_\alpha(\Sigma_0)|}.
    \end{equation*}
\end{enumerate}
where $\pmb{\theta}_\Sigma \in \mathcal{H}_\Sigma$ is given by (\ref{definingthetainh}) and $\theta_{t_\alpha(\Sigma)} \in W^{\epsilon_\alpha, M_\alpha}_{t_\alpha(\Sigma)}$ is given by (\ref{definingthetainw}).
\end{lemma}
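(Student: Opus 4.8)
The plan is to deduce both statements from the relation $\nabla_{\mathcal{F}}\theta = \phi = \nabla_{\mathcal{F}}\theta$ together with the compatibility results of Proposition \ref{ghwproposition} and Lemma \ref{phiongtophionhandwlemma}, by comparing $\theta$ with the sections $\pmb{\theta}$ and $\theta_{t_\alpha}$ already constructed on $\mathcal{H}$ and $W^{\epsilon_\alpha,M_\alpha}$ respectively. First I would apply the morphism $[.] : G \to \mathcal{H}$ to the equation $\nabla_{\mathcal{F}}\theta = \phi$: by part (2) of Proposition \ref{ghwproposition} the left side becomes $\nabla_{GM}[\theta]$, and by part (1) of Lemma \ref{phiongtophionhandwlemma} the right side becomes $\pmb{\phi}$. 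Hence $[\theta]$ and $\pmb{\theta}$ (which satisfies $\nabla_{GM}\pmb{\theta} = \pmb{\phi}$ by \eqref{definingthetainh}) both solve the same equation $\nabla_{GM}(\cdot) = \pmb{\phi}$ on $\mathcal{H}\to\mathcal{B}_{\Sigma_0}$, so their difference $[\theta] - \pmb{\theta}$ is a $\nabla_{GM}$-flat section of $\mathcal{H}$. Since $\mathcal{B}_{\Sigma_0}$ is contractible (C\ref{howtochoosebsigma0condition}.1) and $\mathcal{H}$ is flat, a flat section is determined by its value at $[\Sigma_0]$ via parallel transport; writing $[\xi_0] := ([\theta_{\Sigma_0}] - \pmb{\theta}_{\Sigma_0}) \in \mathcal{H}_{\Sigma_0}$ gives $[\theta_\Sigma] = \pmb{\theta}_\Sigma + \Gamma_{[\Sigma_0]}^{[\Sigma]}[\xi_0]$, which is part (1).

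For part (2) I would run the analogous argument after applying $i : G \to W^{Ram}$ to $\nabla_{\mathcal{F}}\theta = \phi$. By part (1) of Proposition \ref{ghwproposition} the left side is $\gamma^*\nabla\, i(\theta)$, and by part (2) of Lemma \ref{phiongtophionhandwlemma} the right side is $i(\phi) = \phi_{t(\Sigma)}\circ d\gamma = \gamma^*\phi$. So $i(\theta)$ is a section of $\gamma^*W^{Ram}$ with $\gamma^*\nabla\, i(\theta) = \gamma^*\phi$. On the other hand the section $t \mapsto \sum_{\alpha}[\alpha]\otimes\theta_{t_\alpha}$ of $W^{Ram}$, with $\theta_{t_\alpha}$ as in \eqref{definingthetainw}, satisfies $\nabla(\sum_\alpha[\alpha]\otimes\theta_{t_\alpha}) = \phi$ componentwise (this is verified in Section \ref{embeddingofdiscssubsection}); pulling back along $\gamma$, the section $\sum_\alpha[\alpha]\otimes\theta_{t_\alpha(\Sigma)}$ solves the same equation. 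Therefore the difference $i(\theta_\Sigma) - \sum_\alpha[\alpha]\otimes\theta_{t_\alpha(\Sigma)}$ is $\gamma^*\nabla$-flat. Now I use Lemma \ref{paralleltransportinwlemma}: a flat section of $W^{\epsilon_\alpha,M_\alpha}$ along the path $a_\alpha(\Sigma_0) \rightsquigarrow a_\alpha(\Sigma)$ is obtained from its value $\xi_{0\alpha}$ at $t_{0\alpha}$ by applying $\exp\big((a_\alpha(\Sigma) - a_\alpha(\Sigma_0))\mathcal{L}_{\frac{1}{2z_\alpha}\partial_{z_\alpha}}\big)$, which yields the claimed formula provided this exponential is defined, i.e. provided $\xi_{0\alpha}$ converges on a wide enough annulus; this is where $\bar\epsilon_\alpha, \bar M_\alpha$ enter. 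Since $\theta_{\Sigma_0}$ is a single fixed differential form holomorphic on $\Sigma_0\setminus\cup_\alpha\bar{\mathbb{D}}_{\alpha,\bar\epsilon_\alpha}(\Sigma_0)$, its Laurent expansion $i_\alpha(\theta_{\Sigma_0})$ converges on $\bar\epsilon_\alpha < |z_\alpha| < \bar M_\alpha$, and likewise $\theta_{t_{0\alpha}}$ is holomorphic near $t_{0\alpha}$, so $\xi_{0\alpha} := i_\alpha(\theta_{\Sigma_0}) - \theta_{t_{0\alpha}} \in W^{\bar\epsilon_\alpha,\bar M_\alpha}_{t_{0\alpha}}$, and the choice of $\bar\epsilon_\alpha, \bar M_\alpha$ as the sup over $[\Sigma]\in\mathcal{B}_{\Sigma_0}$ of $|a_\alpha(\Sigma) - a_\alpha(\Sigma_0)|$ guarantees, via Corollary \ref{expoperatorcorollary} (with $k = -1$), that $\exp\big((a_\alpha(\Sigma) - a_\alpha(\Sigma_0))\mathcal{L}_{\frac{1}{2z_\alpha}\partial_{z_\alpha}}\big)(\xi_{0\alpha})$ lands in $W^{\epsilon_\alpha,M_\alpha}_{t_\alpha(\Sigma)}$ for every $[\Sigma]\in\mathcal{B}_{\Sigma_0}$.

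The main obstacle I anticipate is bookkeeping the annulus sizes so that every exponential operator appearing is legitimately defined on the relevant space — in particular checking that $\xi_{0\alpha}$ really lies in $W^{\bar\epsilon_\alpha,\bar M_\alpha}_{t_{0\alpha}}$ with the stated $\bar\epsilon_\alpha,\bar M_\alpha$, and that these are consistent with the hypotheses on $\mathcal{B}_{\Sigma_0}$ from Condition \ref{howtochoosebsigma0condition} (especially C\ref{howtochoosebsigma0condition}.3 and C\ref{howtochoosebsigma0condition}.4, which were precisely designed so the needed parallel transports exist). A secondary point to handle carefully is the compatibility of the two $[\xi_0]$'s: the cohomology class $[\xi_0]$ appearing in part (1) must equal the class of $\sum_\alpha \xi_{0\alpha}$ under the identification $G_{\Sigma_0}/G^\perp_{\Sigma_0}\cong\mathcal{H}_{\Sigma_0}$ of Proposition \ref{sympreductionproposition} — but this is automatic since both arise by applying $[.]$, respectively $i$, to the single element $\theta_{\Sigma_0} - (\text{the lift of }\pmb\theta_{\Sigma_0})$ of $G_{\Sigma_0}$, and the diagram in Lemma \ref{phiongtophionhandwlemma} commutes. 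Everything else is a routine application of uniqueness of solutions to $\nabla(\cdot) = \phi$ on flat bundles over a contractible base.
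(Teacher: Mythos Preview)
Your proposal is correct and follows essentially the same approach as the paper: apply $[.]$ (resp.\ $i$) to $\nabla_{\mathcal{F}}\theta=\phi$, use Proposition~\ref{ghwproposition} and Lemma~\ref{phiongtophionhandwlemma} to identify the result with the defining equation for $\pmb{\theta}$ (resp.\ $\theta_{t_\alpha}$), and conclude that the difference is a parallel section, hence given by parallel transport of its value at $[\Sigma_0]$. Your side remark about compatibility of $[\xi_0]$ with the $\xi_{0\alpha}$'s is not part of the lemma and not needed; and the annulus bookkeeping you flag as the main obstacle is indeed glossed over in the paper's proof as well (it simply asserts ``it is not hard to check'' the parallel-section formula), so your caution there is appropriate rather than a gap.
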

\begin{proof}
\begin{enumerate}
    \item By taking cohomology class, apply Proposition \ref{ghwproposition} and Lemma \ref{phiongtophionhandwlemma}, we find that $\nabla_{GM}[\theta_{\Sigma}] = [\nabla_{\mathcal{F}}\theta_\Sigma] = [\phi_\Sigma] = \pmb{\phi}_{\Sigma}$. Therefore we have $[\theta] = \pmb{\theta} + \pmb{c}$ for some parallel section $\pmb{c} \in \Gamma(\mathcal{B}_{\Sigma_0}, \mathcal{H})$, $\nabla_{GM}\pmb{c} = 0$. But then we we can write $\pmb{c}_\Sigma = \Gamma_{[\Sigma_0]}^{[\Sigma]}[\xi_0]$ where $[\xi_0] := \pmb{c}_{\Sigma_0} \in \mathcal{H}_{\Sigma_0}$.
    \item By applying the map $i : G_\Sigma \rightarrow W^{Ram}_{t(\Sigma)}$ follows by Proposition \ref{ghwproposition} and Lemma \ref{phiongtophionhandwlemma} we have $\gamma^*\nabla i(\theta_\Sigma) = i(\nabla_{\mathcal{F}}\theta_\Sigma) = i(\phi_\Sigma) = \phi_{t(\Sigma)}\circ d\gamma$. Therefore, we have $i(\theta_\Sigma) = \sum_{\alpha \in Ram}[\alpha]\otimes\theta_{t_\alpha(\Sigma)} + c$ for some parallel section $c \in \Gamma(\mathcal{B}_{\Sigma_0}, \gamma^*W^{Ram})$, $\gamma^*\nabla c = 0$. It is not hard to check that
    \begin{equation*}
        c_{t(\Sigma)} = \sum_{\alpha \in Ram}[\alpha]\otimes\exp\left((a_\alpha(\Sigma) - a_\alpha(\Sigma_0))\mathcal{L}_{\frac{1}{2z_\alpha}\partial_{z_\alpha}}\right)(\xi_{0\alpha})
    \end{equation*}
    where $\sum_{\alpha\in Ram}[\alpha]\otimes\xi_{0\alpha} = c_{t_0} \in W^{Ram}_{t_0}$.
\end{enumerate}
\end{proof}

Let us show explicitly how $\theta \in \Gamma(\mathcal{B}_{\Sigma_0}, G)$ such that $\nabla_{\mathcal{F}}\theta = \phi$ can be constructed. We choose a collection of $(\mathcal{F},\Omega_S)$-charts $\mathcal{U}_0 = \{(U_{0,p\in \sigma_0}, x_{p\in\sigma_0}, y_{p\in \sigma_0})\}$ which allows the parallel transport of any holomorphic form $\xi \in \Gamma(\Sigma_0,\Omega^1_{\Sigma_0}) \subset G_{\Sigma_0}$ to $G_{\Sigma}$ for any $[\Sigma]\in\mathcal{B}_{\Sigma_0}$ according to Lemma \ref{paralleltransportinglemma}. Such a collection $\mathcal{U}_0$ exists due to C\ref{howtochoosebsigma0condition}.3. From (\ref{phiBvalue}) we know that $\theta_\Sigma|_{\Sigma\cap U_{0,p}}$ must take the form $-(y_p + f_p(x_p))dx_p|_{\Sigma\cap U_{0,p}}$ for us to have $\nabla_{\mathcal{F}}\theta_\Sigma|_{\Sigma\cap U_{0,p}} = \phi_\Sigma|_{\Sigma\cap U_{0,p}}$, where $f_p(x_p)dx_p$ is an integration constant. Write $y_p(x_p;\Sigma)dx_p := y_pdx_p|_{\Sigma\cap U_{0,p}}$ for each $p \in \sigma$. In other words, $\Sigma \cap U_{0,p} = \{(x_p, y_p(x_p;\Sigma)) \in U_p \subset S\ |\ x_p \in x_p(\Sigma\cap U_{0,p})\}$ is the graph of $y_p = y_p(x_p;\Sigma)$. Then, on each open subset $\Sigma\cap U_{0,p}$, let us define
\begin{equation*}
    \theta_\Sigma|_{\Sigma\cap U_{0,p}} := -y_p(x_p;\Sigma)dx_p + y_p(x_p;\Sigma_0)dx_p.
\end{equation*}
If each $\theta_\Sigma|_{\Sigma\cap U_{0,p}}$ can be patched together to get $\theta_\Sigma \in G_\Sigma$, then it will automatically follows that $\nabla_{\mathcal{F}}\theta_\Sigma = \phi_\Sigma$ as this relation holds on every $\Sigma\cap U_{0,p}$ and we are done. The following guarantees that this is the case:
\begin{lemma}\label{patchingthetalemma}
Each $\theta_\Sigma|_{\Sigma\cap U_{0,p}}, p \in \sigma_0$ can be patched together as $\theta_\Sigma \in G_\Sigma$.
\end{lemma}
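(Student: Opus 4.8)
## Proof plan for Lemma \ref{patchingthetalemma}

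The plan is to show that the locally defined forms $\theta_\Sigma|_{\Sigma\cap U_{0,p}}$ agree on overlaps and that the glued object lies in $G_\Sigma$. First I would compare, for two charts $(U_{0,p},x_p,y_p)$ and $(U_{0,p'},x_{p'},y_{p'})$, the two expressions on $\Sigma\cap U_{0,p}\cap U_{0,p'}$. By (\ref{coordtransformationpreservingomegaandfoliation}) the transition between $(\mathcal{F},\Omega_S)$-coordinates has the form $x_{p'} = F(x_p)$, $y_{p'} = y_p/F'(x_p) + G'(x_p)$, so $y_{p'}dx_{p'} = y_pdx_p + d(G(x_p))$; in other words the $1$-form $y_pdx_p$ on $\Sigma$ changes by an exact differential $d(G\circ x_p)$ between charts. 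Hence
\begin{equation*}
    \theta_\Sigma|_{\Sigma\cap U_{0,p}} - \theta_\Sigma|_{\Sigma\cap U_{0,p'}}
    = \Bigl(-y_p(x_p;\Sigma)dx_p + y_p(x_p;\Sigma_0)dx_p\Bigr) - \Bigl(-y_{p'}(x_{p'};\Sigma)dx_{p'} + y_{p'}(x_{p'};\Sigma_0)dx_{p'}\Bigr),
\end{equation*}
and the two exact correction terms $d(G\circ x_p)$ — one coming from $\Sigma$, one from $\Sigma_0$ — are \emph{the same}, because $F,G$ are fixed transition data independent of which $\mathcal{F}$-transversal curve we restrict to (they are determined by the surface $S$, not by $\Sigma$). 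Therefore the difference vanishes on the overlap and the local pieces patch to a well-defined form $\theta_\Sigma$ on $\cup_{p\in\sigma_0}\Sigma\cap U_{0,p}$.

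Next I would verify $\theta_\Sigma \in G_\Sigma$ using Definition \ref{gdefinition}. The collection $\mathcal{U}_0$ covers $\cup_{p\in\sigma_0}\Sigma\cap U_{0,p} \supset \Sigma\setminus\cup_{\alpha\in Ram}\mathbb{D}_{\alpha,\epsilon_\alpha}(\Sigma)$ by C\ref{howtochoosebsigma0condition}.3; on each $\Sigma\cap U_{0,p}$ the coordinate $x_p$ is a local coordinate (so no ramification point lies in any $U_{0,p}$), hence $\theta_\Sigma|_{\Sigma\cap U_{0,p}} = -(y_p(x_p;\Sigma) - y_p(x_p;\Sigma_0))dx_p$ is manifestly holomorphic there — both $y_p(\cdot;\Sigma)$ and $y_p(\cdot;\Sigma_0)$ are holomorphic, being the graph functions of $\Sigma$ and $\Sigma_0$ respectively. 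Thus $\theta_\Sigma$ is a holomorphic $1$-form on some open neighbourhood of $\Sigma\setminus\cup_{\alpha\in Ram}\bar{\mathbb{D}}_{\alpha,\bar\epsilon_\alpha}(\Sigma)$ for suitable $\bar\epsilon_\alpha<\epsilon_\alpha$. It remains to check the residue-type condition $\oint_{\partial\bar{\mathbb{D}}_{\alpha,\epsilon_\alpha}}\theta_\Sigma = 0$: I would recognise that $\theta_\Sigma$ differs from $s^*_{\Sigma,\Sigma_0}(-y_p(x_p;\Sigma_0)dx_p)$-type pullbacks only by the globally-defined term $-ydx$ on $\Sigma$ (whose restriction near $r_\alpha$ in the standard local coordinate is $-2\sum_k b_{k\alpha}(\Sigma)z_\alpha^{k+1}dz_\alpha$, which has no $\tfrac{dz_\alpha}{z_\alpha}$ term), and that the $y_p(x_p;\Sigma_0)dx_p$ piece, being pulled back along the foliation from $\Sigma_0$, has cycle integrals governed by Lemma \ref{integralofparalleltransportlemma}; since $(s_{\Sigma,\Sigma_0})_*\partial\bar{\mathbb{D}}_{\alpha,\epsilon_\alpha}(\Sigma)$ is homotopic to $\partial\bar{\mathbb{D}}_{\alpha,\epsilon_\alpha}(\Sigma_0)$ in $\Sigma_0\setminus r_\alpha$ and $y_p(x_p;\Sigma_0)dx_p$ is (locally) exact off ramification points, the $A$-period contour integral vanishes.

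I expect the main obstacle to be the overlap-consistency argument — more precisely, being careful that the integration-constant ambiguity $f_p(x_p)dx_p$ mentioned before the lemma is pinned down exactly by the prescription $\theta_\Sigma|_{\Sigma\cap U_{0,p}} := -y_p(x_p;\Sigma)dx_p + y_p(x_p;\Sigma_0)dx_p$, and that this specific choice is the one that makes the cocycle vanish. The key point to get right is that the \emph{same} transition function $(F,G)$ in (\ref{coordtransformationpreservingomegaandfoliation}) governs both $\Sigma$ and $\Sigma_0$ because it is intrinsic to the pair of charts $(U_{0,p},U_{0,p'})$ on $S$; once that is clear, the difference of correction terms telescopes to zero and the remaining verifications ($\theta_\Sigma$ holomorphic away from the discs, vanishing periods around $\partial\bar{\mathbb{D}}_{\alpha,\epsilon_\alpha}$, and $\nabla_{\mathcal{F}}\theta_\Sigma = \phi_\Sigma$ which holds chart-by-chart by construction and (\ref{phiBvalue})) are routine. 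I would also remark that the resulting $\theta$ agrees with the construction in Lemma \ref{thetaingtothetaonhandwlemma} up to the parallel-transport ambiguity recorded there.
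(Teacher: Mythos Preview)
Your overlap-consistency argument is exactly the paper's: use (\ref{coordtransformationpreservingomegaandfoliation}) with the \emph{same} transition data $(F,G)$ for both $\Sigma$ and $\Sigma_0$, so the $G'$-corrections cancel and the local pieces agree on overlaps. That part is fine.

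The residue check $\oint_{\partial\bar{\mathbb{D}}_{\alpha,\epsilon_\alpha}(\Sigma)}\theta_\Sigma=0$ is where your reasoning slips. You split $\theta_\Sigma$ into a ``$-ydx$ on $\Sigma$'' piece and a ``$ydx$ on $\Sigma_0$ pulled back'' piece and argue each has zero period. For the second piece you push the contour to $\Sigma_0$ via Lemma~\ref{integralofparalleltransportlemma} and then say the period vanishes because the form is ``locally exact off ramification points''. That justification is wrong: local exactness on simply-connected charts tells you nothing about periods (e.g.\ $dz/z$ is locally exact on $\mathbb{C}^*$ but has nonzero period). The correct reason is that $y_\alpha dx_\alpha|_{\Sigma_0}=2\sum_k b_{k\alpha}(\Sigma_0)z_{\alpha,\Sigma_0}^{k+1}dz_{\alpha,\Sigma_0}$ extends holomorphically \emph{across} $r_\alpha(\Sigma_0)$, so the integral around any small loop is zero by Cauchy. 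With that one-line fix your argument goes through.

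The paper handles this step differently and more cleanly: it works directly in the $U_\alpha$ chart on the annulus $\mathbb{A}_{\alpha,\epsilon_\alpha,M_\alpha}(\Sigma)$ and writes
\[
i_\alpha\theta_\Sigma(z_\alpha)=\theta_{t_\alpha(\Sigma)}(z_\alpha)-\exp\!\big((a_\alpha(\Sigma)-a_\alpha(\Sigma_0))\mathcal{L}_{\frac{1}{2z_\alpha}\partial_{z_\alpha}}\big)\theta_{t_\alpha(\Sigma_0)}(z_\alpha),
\]
then invokes Lemma~\ref{expoperatorlemma} to conclude neither term contributes a $\tfrac{dz_\alpha}{z_\alpha}$. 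This has the advantage of simultaneously establishing the link $i(\theta_\Sigma)=\theta_{t(\Sigma)}+\text{(parallel-transported constant)}$ needed later in Lemma~\ref{thetaingtothetaonhandwlemma} and Proposition~\ref{localvsglobalproposition}, whereas your contour argument only gives the vanishing integral and you would have to redo the Laurent computation anyway. Your final remark about agreement with Lemma~\ref{thetaingtothetaonhandwlemma} up to parallel-transport ambiguity is correct but somewhat circular, since the paper uses the present lemma as input to that one.
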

\begin{proof}
First, let us show that each $\theta_{\Sigma}|_{\Sigma\cap U_{0,p}}, p \in \sigma_0$ can be patched together to form a global holomorphic form $\theta_\Sigma$ on $\Sigma\setminus \cup_{\alpha \in Ram}\bar{\mathbb{D}}_{\alpha, \bar{\epsilon}_\alpha}(\Sigma)$ for some $\bar{\epsilon}_\alpha < \epsilon_\alpha$. Using the coordinates transformation on $U_{p,0}\cap U_{0,p'}$ between $(x_p, y_p)$ and $(x_{p'}, y_{p'})$ as given in (\ref{coordtransformationpreservingomegaandfoliation}), we find that $x_{p'} = F_{p'}(x_p)$ and
\begin{equation*}
    y_{p'}(x_{p'};\Sigma) = \frac{y_{p}(F^{-1}_{p'}(x_{p'});\Sigma)}{F'_{p'}(F^{-1}(x_{p'}))} - G'_{p'}(F^{-1}(x_{p'})), \qquad y_{p'}(x_{p'};\Sigma_0) = \frac{y_{p}(F^{-1}_{p'}(x_{p'});\Sigma_0)}{F'_{p'}(F^{-1}(x_{p'}))} - G'_{p'}(F^{-1}(x_{p'})).
\end{equation*}
This means
\begin{align*}
    \left(\theta_{\Sigma}|_{\Sigma\cap U_{0,p'}}\right)&|_{\Sigma\cap U_{0,p}\cap U_{0,p'}} = -y_{p'}(x_{p'};\Sigma)dx_{p'} + y_{p'}(x_{p'};\Sigma_0)dx_{p'}\\
    &= -\left(\frac{y_{p}(F^{-1}_{p'}(x_{p'});\Sigma)}{F'_{p'}(F^{-1}(x_{p'}))} - G'_{p'}(F^{-1}(x_{p'}))\right)dx_{p'} + \left(\frac{y_{p}(F^{-1}_{p'}(x_{p'});\Sigma_0)}{F'_{p'}(F^{-1}(x_{p'}))} - G'_{p'}(F^{-1}(x_{p'}))\right)dx_{p'}\\
    &= -\left(\frac{y_{p}(x_p;\Sigma)}{F'_{p'}(x_p)} - G'_{p'}(x_p)\right)dF(x_p) + \left(\frac{y_{p}(x_p;\Sigma_0)}{F'_{p'}(x_p)} - G'_{p'}(x_p)\right)dF(x_p)\\
    &= -y_p(x_p;\Sigma)dx_p + y_p(x_p;\Sigma_0)dx_p\\
    &= \left(\theta_{\Sigma}|_{\Sigma\cap U_{0,p}}\right)|_{\Sigma\cap U_{0,p}\cap U_{0,p'}}.
\end{align*}
Therefore, we can patch together each $\theta_{\Sigma}|_{\Sigma\cap U_{0,p}}$ for $p \in \sigma_0$ to get a well-defined holomorphic form $\theta_\Sigma$ on $\cup_{p \in \sigma_0}\Sigma \cap U_{0,p} \supset \Sigma\setminus \cup_{\alpha \in Ram}\mathbb{D}_{\alpha, \epsilon_\alpha}(\Sigma)$.

To finish showing that $\theta_\Sigma \in G_\Sigma$ we need to check that $\oint_{\partial\bar{\mathbb{D}}_{\alpha,\epsilon_\alpha}}\theta_\Sigma = 0$. Since $\theta_\Sigma$ we have so far is well-defined on the annulus $\mathbb{A}_{\alpha, \epsilon_\alpha, M_\alpha}(\Sigma)$, it can be written in the standard local coordinate $z_\alpha$ of $\Sigma \cap U_\alpha$ as 
\begin{align*}
    i_\alpha\theta_\Sigma(z_\alpha) &= -y_\alpha(z_\alpha;\Sigma)dx_\alpha(z_\alpha) + y_\alpha(z_{\alpha,\Sigma_0};\Sigma_0)dx_\alpha(z_{\alpha,\Sigma_0})\\
    &= \theta_{t_\alpha(\Sigma)}(z_\alpha) - \exp\left((a_\alpha(\Sigma) - a_\alpha(\Sigma_0))\mathcal{L}_{\frac{1}{2z_\alpha}\partial_{z_\alpha}}\right)\theta_{t_\alpha(\Sigma_0)}(z_\alpha).
\end{align*}
For clarity, we have denoted by $z_{\alpha, \Sigma_0}$ the standard local coordinate on $\Sigma_0\cap U_\alpha$ which is related to $z_\alpha$ by $a_\alpha(\Sigma) + z_\alpha^2 = x_\alpha = a_\alpha(\Sigma_0) + z_{\alpha, \Sigma_0}^2$. Let us give more explanation to the second equality. By (\ref{definingthetainw}) we have $\theta_{t_\alpha(\Sigma)} = -y_\alpha(z_\alpha, \Sigma)dx_\alpha(z_\alpha) \in W^{\epsilon_\alpha, M_\alpha}_{t_\alpha(\Sigma)}$ and $\theta_{t_\alpha(\Sigma_0)} = -y_\alpha(z_{\alpha, \Sigma_0}; \Sigma_0)dx_\alpha(z_{\alpha, \Sigma_0}) \in W^{\epsilon_\alpha, M_\alpha}_{t_\alpha(\Sigma_0)}$. In particular,
\begin{equation*}
    y_\alpha(z_{\alpha,\Sigma_0};\Sigma_0)dx_\alpha(z_{\alpha,\Sigma_0}) = \exp\left((a_\alpha(\Sigma) - a_\alpha(\Sigma_0))\mathcal{L}_{\frac{1}{2z_{\alpha,\Sigma_0}}\partial_{z_{\alpha, \Sigma_0}}}\right)\left(y_\alpha(z_{\alpha, \Sigma_0}; \Sigma_0)dx_\alpha(z_{\alpha, \Sigma_0})\right)\Big|_{z_{\alpha, \Sigma_0}\mapsto z_\alpha}.
\end{equation*}
Clearly, $i_\alpha\theta_\Sigma$ contains no $\frac{dz_\alpha}{z_\alpha}$ term according to Lemma \ref{expoperatorlemma} and so $\oint_{\partial\bar{\mathbb{D}}_{\alpha,\epsilon_\alpha}}\theta_\Sigma = 0$. 
\end{proof}

Let us embed the neighbourhood $\mathcal{B}_{\Sigma_0}$ of $[\Sigma_0] \in \mathcal{B}$ into the fiber $G_{\Sigma_0}$, using the same procedure we did to embed $\mathcal{B}_{\Sigma_0}$ into $\mathcal{H}_{\Sigma_0}$ in Section \ref{deformationofcurvessection} and $Discs^{\epsilon,M}_{t_0}$ into $W^{\epsilon, M}_{t_0}$ in Section \ref{theembeddingofdiscssection} but this time with the vector bundle $(G\rightarrow \mathcal{B}_{\Sigma_0}, \nabla_{\mathcal{F}})$. The embedding map $\Phi_{\Sigma_0} : \mathcal{B}_{\Sigma_0} \rightarrow G_{\Sigma_0}$ is given by sending the zero vector $0_{\Sigma} \in G_{\Sigma}$ at $[\Sigma] \in \mathcal{B}_{\Sigma_0}$ into $G_{\Sigma_0}$ at $[\Sigma_0]\in \mathcal{B}_{\Sigma_0}$ via parallel transport using the connection $\nabla_{\mathcal{F}} + \phi$. Since $\nabla_{\mathcal{F}}\theta = \phi$, the parallel transport of $0_\Sigma$ from $[\Sigma] \in \mathcal{B}_{\Sigma'}$ to $[\Sigma'] \in \mathcal{B}$ using $\nabla_{\mathcal{F}} + \phi$ is path-independent and it is uniquely given by 
\begin{equation*}
    v_{\Sigma}(\Sigma') = -\theta_{\Sigma'} + s^*_{\Sigma',\Sigma}\theta_{\Sigma}.
\end{equation*}
Therefore,
\begin{equation*}
    \Phi_{\Sigma_0}(\Sigma) := v_{\Sigma}(\Sigma_0) = -\theta_{\Sigma_0} + s^*_{\Sigma_0, \Sigma}\theta_\Sigma \in G_{\Sigma_0},
\end{equation*}
where the existence of parallel transport $s^*_{\Sigma_0, \Sigma}\theta_{\Sigma} \in G_{\Sigma_0}$ is due to C\ref{howtochoosebsigma0condition}.4 as $\theta_\Sigma$ is holomorphic on $\cup_{p \in \sigma_0}\Sigma\cap U_{0,p} \supset \Sigma \setminus \cup_{\alpha \in Ram}\mathbb{D}_{\alpha,\epsilon_\alpha}(\Sigma)$. 

Given $t_0 = (t_{0\alpha}) \in Discs^{Ram}$, let us define the $Ram$ product version of $\Phi_{t_0} : Discs^{Ram}_{t_0}\rightarrow W^{Ram}_{t_0}$ by $\Phi_{t_0}(t) := \sum_{\alpha \in Ram}[\alpha]\otimes \Phi_{\alpha, t_{0\alpha}}(t_\alpha)$ for any $t = (t_\alpha) \in Discs^{Ram}_{t_0}$ where $\Phi_\alpha : Discs^{\epsilon_\alpha, M_\alpha}_{t_{0\alpha}}\rightarrow W^{\epsilon_\alpha, M_\alpha}_{t_{0\alpha}}$ is as given in Section \ref{theembeddingofdiscssection}:
\begin{equation*}
    \Phi_{\alpha, t_{0\alpha}}(t_\alpha) := -\theta_{t_{0\alpha}} + \exp((a_\alpha(t_{0\alpha}) - a_\alpha(t_\alpha))\mathcal{L}_{\frac{1}{2z_\alpha}\partial_{z_\alpha}})\theta_{t_\alpha} \in W^{\epsilon_\alpha, M_\alpha}_{t_{0\alpha}}.
\end{equation*}

Finally, we have the following proposition:
\begin{proposition}\theoremname{\cite[Proposition 7.1.2]{kontsevich2017airy}}\label{localvsglobalproposition}
The embedding $\Phi_{\Sigma_0} : \mathcal{B}_{\Sigma_0} \rightarrow G_{\Sigma_0}$ satisfies $[\Phi_{\Sigma_0}(\Sigma)] = \pmb{\Phi}_{\Sigma_0}(\Sigma)$ and $i(\Phi_{\Sigma_0}(\Sigma)) = \Phi_{t_0}\circ\gamma(\Sigma) = \Phi_{t_0}(t(\Sigma))$, so $(\mathbf{a})$ and $(\mathbf{b})$ squares in the diagram are commutative.
Moreover, if each $(\mathcal{F},\Omega_S)$-charts $(U_\alpha, x_\alpha, y_\alpha)$ in $\mathcal{U}_{Ram}$ are chosen such that $\gamma_\alpha(\Sigma_0) = (x_\alpha = z_\alpha^2, y_\alpha = z_\alpha)$, then $\Phi_{\Sigma_0}$ factors through $L^{Ram}_{Airy}$ and the triangle $(\mathbf{c})$ is also commutative.
\begin{equation*}
    \begin{tikzcd}
    & & G_{\Sigma_0}/G^\perp_{\Sigma_0} & & & & W^{Ram}_{Airy}\\
    & & \mathcal{H}_{\Sigma_0}\arrow[rightarrow]{u}{\cong}\arrow[leftarrow]{rr}{[.]} & & G_{\Sigma_0}\arrow[rightarrow]{rr}{i} & & W^{Ram}_{t_0}\arrow[rightarrow]{u}{\cong}\arrow[hookleftarrow]{drr} \\
    \mathcal{L}_{\Sigma_0}\arrow[hookrightarrow]{urr} & & & (\mathbf{a}) & & (\mathbf{b}) & & (\mathbf{c}) & L^{Ram}_{Airy}\\
    & & \mathcal{B}_{\Sigma_0}\arrow[hookrightarrow]{uu}{\pmb{\Phi}_{\Sigma_0}}\arrow[rightarrow]{ull}{\cong}\arrow[leftarrow]{rr}{\cong} & & \mathcal{B}_{\Sigma_0}\arrow[hookrightarrow]{uu}{\Phi_{\Sigma_0}}\arrow[rightarrow]{rr}{\gamma} & & Discs^{Ram}_{t_0}\arrow[hookrightarrow]{uu}{\Phi_{t_0}}\arrow[rightarrow]{urr}
    \end{tikzcd}
\end{equation*}
\end{proposition}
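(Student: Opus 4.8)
The plan is to prove the three compatibility statements by direct application of the results already established about the section $\theta \in \Gamma(\mathcal{B}_{\Sigma_0}, G)$ with $\nabla_{\mathcal{F}}\theta = \phi$, namely Lemma \ref{thetaingtothetaonhandwlemma}, Proposition \ref{ghwproposition}, and Lemma \ref{phiongtophionhandwlemma}. The key point is that all three maps $\pmb{\Phi}_{\Sigma_0}, \Phi_{\Sigma_0}, \Phi_{t_0}$ are constructed by the same recipe: parallel transport of the zero vector using an affine connection of the form (flat connection)$+$(the relevant $\phi$), and in each case this transport equals $-\theta_{\text{ref}} + (\text{parallel transport of }\theta)$. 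So the content is really just that $[.]$ and $i$ intertwine $\theta$ with $\pmb{\theta}$ and $\theta_{t_\alpha}$ respectively (up to parallel corrections), and that they intertwine the parallel transport operators $s^*_{\Sigma_0,\Sigma}$, $\Gamma^{[\Sigma_0]}_{[\Sigma]}$, and $\exp((a(\Sigma_0)-a(\Sigma))\mathcal{L}_{\frac{1}{2z}\partial_z})$.

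First I would prove $[\Phi_{\Sigma_0}(\Sigma)] = \pmb{\Phi}_{\Sigma_0}(\Sigma)$. Applying $[.]$ to $\Phi_{\Sigma_0}(\Sigma) = -\theta_{\Sigma_0} + s^*_{\Sigma_0,\Sigma}\theta_\Sigma$ and using linearity of $[.]$, part (4) of Proposition \ref{ghwproposition} (which gives $[s^*_{\Sigma_0,\Sigma}\theta_\Sigma] = \Gamma^{[\Sigma_0]}_{[\Sigma]}[\theta_\Sigma]$), and part (1) of Lemma \ref{thetaingtothetaonhandwlemma} (which gives $[\theta_\Sigma] = \pmb{\theta}_\Sigma + \Gamma^{[\Sigma]}_{[\Sigma_0]}[\xi_0]$), one gets $[\Phi_{\Sigma_0}(\Sigma)] = -\pmb{\theta}_{\Sigma_0} - [\xi_0] + \Gamma^{[\Sigma_0]}_{[\Sigma]}\pmb{\theta}_\Sigma + [\xi_0] = -\pmb{\theta}_{\Sigma_0} + \Gamma^{[\Sigma_0]}_{[\Sigma]}\pmb{\theta}_\Sigma = \pmb{\Phi}_{\Sigma_0}(\Sigma)$, where the last equality is the definition of $\pmb{\Phi}_{\Sigma_0}$ from Section \ref{embeddingofbinhsubsection}. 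Notice the parallel-transport of $[\xi_0]$ from $[\Sigma_0]$ to $[\Sigma]$ and back cancels exactly because transport is path-independent by flatness. Next, for $i(\Phi_{\Sigma_0}(\Sigma)) = \Phi_{t_0}(t(\Sigma))$, I would apply $i$ to the same expression, use part (3) of Proposition \ref{ghwproposition} to turn $i(s^*_{\Sigma_0,\Sigma}\theta_\Sigma)$ into $\sum_\alpha [\alpha]\otimes \exp((a_\alpha(\Sigma_0)-a_\alpha(\Sigma))\mathcal{L}_{\frac{1}{2z_\alpha}\partial_{z_\alpha}}) i_\alpha(\theta_\Sigma)$, and use part (2) of Lemma \ref{thetaingtothetaonhandwlemma} for $i_\alpha(\theta_\Sigma)$. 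After expanding, the $\xi_{0\alpha}$ corrections again cancel (using the group property of the exponential flow, Lemma \ref{expoperatorlemma}), leaving $\sum_\alpha [\alpha]\otimes(-\theta_{t_{0\alpha}} + \exp((a_\alpha(t_{0\alpha})-a_\alpha(t_\alpha))\mathcal{L}_{\frac{1}{2z_\alpha}\partial_{z_\alpha}})\theta_{t_\alpha}) = \sum_\alpha[\alpha]\otimes\Phi_{\alpha,t_{0\alpha}}(t_\alpha) = \Phi_{t_0}(t(\Sigma))$. Commutativity of $(\mathbf{a})$ and $(\mathbf{b})$ is then just a restatement of these two identities together with Lemma \ref{phiongtophionhandwlemma}.

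For the final claim — that when $\gamma_\alpha(\Sigma_0) = (x_\alpha = z_\alpha^2, y_\alpha = z_\alpha)$ the map $\Phi_{\Sigma_0}$ factors through $L^{Ram}_{Airy}$, giving commutativity of $(\mathbf{c})$ — I would argue as follows. By the identity just proved, $i(\Phi_{\Sigma_0}(\Sigma)) = \Phi_{t_0}(t(\Sigma))$ with $t_0 = (t_{0\alpha})$ and each $t_{0\alpha} = (x_\alpha = z_\alpha^2, y_\alpha = z_\alpha)$, which is precisely the reference point for which Proposition \ref{phiembeddiscsinwproposition} guarantees $\text{im}\,\Phi_{\alpha, t_{0\alpha}} \subseteq L^{M_\alpha}_{Airy}$. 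Taking the product over $\alpha \in Ram$ and recalling $L^{Ram}_{Airy} = \prod_{\alpha \in Ram} L^{M_\alpha}_{Airy} \subset W^{Ram}_{Airy}$, we get $i(\Phi_{\Sigma_0}(\Sigma)) \in L^{Ram}_{Airy}$ for all $[\Sigma] \in \mathcal{B}_{\Sigma_0}$; since $i$ is injective this exhibits the factorization $\mathcal{B}_{\Sigma_0} \xrightarrow{\Phi_{\Sigma_0}} G_{\Sigma_0} \xrightarrow{i} W^{Ram}_{t_0}$ through $L^{Ram}_{Airy} \hookrightarrow W^{Ram}_{t_0}$, which is the statement that triangle $(\mathbf{c})$ commutes.

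I expect the main obstacle to be bookkeeping rather than any conceptual difficulty: one must track carefully that the two ``constant of integration'' ambiguities — the parallel section $[\xi_0]$ at the cohomology level and the parallel section $\xi_{0\alpha}$ at the $W^{Ram}$ level — are mutually consistent and cancel in the final expressions, and that the exponential flow operators compose correctly (i.e.\ $\exp((a_\alpha(\Sigma_0)-a_\alpha(\Sigma))\mathcal{L})\exp((a_\alpha(\Sigma)-a_\alpha(\Sigma_0))\mathcal{L}) = \mathrm{id}$ on the relevant subspace, which needs the smallness hypotheses built into Condition \ref{howtochoosebsigma0condition} so that all intermediate elements stay in $W^{\epsilon_\alpha,M_\alpha}_{Airy}$). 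A secondary subtlety worth a sentence is that the choice of $\theta$ satisfying $\nabla_{\mathcal{F}}\theta = \phi$ is the same $\theta$ used in defining $\Phi_{\Sigma_0}$ and whose images under $[.]$ and $i$ are computed in Lemma \ref{thetaingtothetaonhandwlemma}; consistency of that single choice across all three pictures is exactly what makes the diagram commute, and this is guaranteed by the construction in Lemma \ref{patchingthetalemma} together with parts (1)--(2) of Lemma \ref{thetaingtothetaonhandwlemma}.
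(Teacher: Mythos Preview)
Your proposal is correct and follows essentially the same approach as the paper's proof: both apply Proposition \ref{ghwproposition} (parts 3 and 4) and Lemma \ref{thetaingtothetaonhandwlemma} (parts 1 and 2) to the expression $\Phi_{\Sigma_0}(\Sigma) = -\theta_{\Sigma_0} + s^*_{\Sigma_0,\Sigma}\theta_\Sigma$, observe that the parallel ``integration constant'' corrections $[\xi_0]$ and $\xi_{0\alpha}$ cancel, and then invoke Proposition \ref{phiembeddiscsinwproposition} for the triangle $(\mathbf{c})$. The only minor difference is that you explicitly flag the group property of the exponential flow and the smallness hypotheses from Condition \ref{howtochoosebsigma0condition} needed for the cancellation in $(\mathbf{b})$, whereas the paper leaves this implicit in its computation.
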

We note that the left-most triangle is merely a restatement of the definition $\mathcal{L}_{\Sigma_0} := \text{im}\pmb{\Phi}_{\Sigma_0} \subset \mathcal{H}_{\Sigma_0}$ as given in Section \ref{embeddingofbinhsubsection} and we include it here for completeness.

\begin{proof}
The proof is mostly an application of Proposition \ref{ghwproposition} and Lemma \ref{thetaingtothetaonhandwlemma}. From Lemma \ref{thetaingtothetaonhandwlemma} we know that $[\theta_{\Sigma}] = \pmb{\theta}_{\Sigma} + \Gamma^{[\Sigma]}_{[\Sigma_0]}[\xi_0]$ for some $[\xi_0] \in \mathcal{H}_{\Sigma_0}$ and 
\begin{equation*}
    i_\alpha(\theta_{\Sigma}) = \theta_{t_\alpha(\Sigma)} + \exp\left((a_\alpha(\Sigma) - a_\alpha(\Sigma_0))\mathcal{L}_{\frac{1}{2z_\alpha}\partial_{z_\alpha}}\right)(\xi_{0\alpha})
\end{equation*}
for some $\xi_{0\alpha} \in W^{\epsilon_\alpha, M_\alpha}_{t_{0\alpha}}$. Therefore, we have
\begin{align*}
    [\Phi_{\Sigma_0}(\Sigma)] &= -[\theta_{\Sigma_0}] + [s^*_{\Sigma_0, \Sigma}\theta_{\Sigma}] = -[\theta_{\Sigma_0}] + \Gamma^{[\Sigma_0]}_{[\Sigma]}[\theta_{\Sigma}]\\
    &= -(\pmb{\theta}_{\Sigma_0} + [\xi_0]) + \Gamma^{[\Sigma_0]}_{[\Sigma]}(\pmb{\theta}_{\Sigma} + \Gamma_{[\Sigma_0]}^{[\Sigma]}[\xi_0]) = -\pmb{\theta}_{\Sigma_0} - [\xi_0] + \Gamma_{[\Sigma]}^{[\Sigma_0]}\pmb{\theta}_{\Sigma} + [\xi_0] = -\pmb{\theta}_{\Sigma_0} + \Gamma_{[\Sigma]}^{[\Sigma_0]}\pmb{\theta}_{\Sigma} = \pmb{\Phi}_{\Sigma_0}(\Sigma)
\end{align*}
and so $(\mathbf{a})$ is commutative.
Similarly, 
\begin{align*}
    i(\Phi_{\Sigma_0}(\Sigma)) &= -i(\theta_{\Sigma_0}) + i(s^*_{\Sigma_0, \Sigma}\theta_{\Sigma})\\
    &= -i(\theta_{\Sigma_0}) + \sum_{\alpha \in Ram}[\alpha] \otimes \exp\left((a_\alpha(\Sigma_0) - a_\alpha(\Sigma))\mathcal{L}_{\frac{1}{2z_\alpha}\partial_{z_\alpha}}\right)i_\alpha(\theta_\Sigma)\\
    &= -\sum_{\alpha \in Ram}[\alpha]\otimes \left(\theta_{t_\alpha(\Sigma_0)} + \xi_{0\alpha}\right)\\
    &\qquad + \sum_{\alpha \in Ram}[\alpha]\otimes \exp\left((a_\alpha(\Sigma_0) - a_\alpha(\Sigma))\mathcal{L}_{\frac{1}{2z_\alpha}\partial_{z_\alpha}}\right)\left(\theta_{t_\alpha(\Sigma)} + \exp\left((a_\alpha(\Sigma) - a_\alpha(\Sigma_0))\mathcal{L}_{\frac{1}{2z_\alpha}\partial_{z_\alpha}}\right)(\xi_{0\alpha})\right)\\
    &= \sum_{\alpha \in Ram}[\alpha]\otimes \left(-\theta_{t_\alpha(\Sigma_0)} - \xi_{0\alpha} + \exp\left((a_\alpha(\Sigma_0) - a_\alpha(\Sigma))\mathcal{L}_{\frac{1}{2z_\alpha}\partial_{z_\alpha}}\right)\theta_{t_\alpha(\Sigma)} + \xi_{0\alpha}\right)\\
    &= \sum_{\alpha \in Ram}[\alpha]\otimes \left(-\theta_{t_\alpha(\Sigma_0)} + \exp\left((a_\alpha(\Sigma_0) - a_\alpha(\Sigma))\mathcal{L}_{\frac{1}{2z_\alpha}\partial_{z_\alpha}}\right)\theta_{t_\alpha(\Sigma)}\right)\\
    &= \sum_{\alpha \in Ram} [\alpha]\otimes \Phi_{\alpha, t_\alpha(\Sigma_0)}(t_\alpha(\Sigma)) = \Phi_{t_0}(t(\Sigma)) = \Phi_{t_0}\circ\gamma(\Sigma)
\end{align*}
and so $(\mathbf{b})$ is commutative. Lastly, if $\gamma_\alpha(\Sigma_0) = (x_\alpha = z^2_\alpha, y_\alpha = z_\alpha)$ then it follows from Proposition \ref{phiembeddiscsinwproposition} that $\text{im}\Phi_{\alpha, t_{0\alpha}} \subseteq L^{M_\alpha}_{Airy}$ and therefore $\text{im} \Phi_{t_0} \subseteq L^{Ram}_{Airy}$ which shows that $(\mathbf{c})$ is commutative.
\end{proof} 

\subsection{From Airy Structures to Topological Recursion}\label{relationstotrsection}
As we have mentioned earlier in Section \ref{residueconstraintairystructuresection}, the main reason for our interest in the residue constraints Airy structure is due to its connection to topological recursion. This connection is well-known and has been studied in various places such as \cite{borot2018higher, andersen2017abcd, kontsevich2017airy}, however we include this review here for completeness. 
\subsubsection{Review of Topological Recursion}
We recall the following basic facts and setup of the original Eynard-Orantin topological recursion \cite{eynard2007invariants,eynard2008algebraic}. A \emph{spectral curve} $(\Sigma_0, x, y, B)$ is given by a smooth compact Riemann surface $\Sigma_0$, global functions $x,y : \Sigma_0 \rightarrow \mathbb{P}^1$ and a choice of Bergman kernel $B = B(p,q)$ on $\Sigma_0\times \Sigma_0$. We assume that $dx(p)$ has simple zeros at a finite number of points $p \in \{r_{\alpha \in Ram} \in \Sigma_0\}$ called \emph{ramification points} and that $y \sim y(r_\alpha) + C\sqrt{x - x(r_\alpha)}$ near each $r_\alpha$ (it follows that $dy(r_\alpha) \neq 0$). We equip to each ramification point the \emph{involution}  map $\sigma_\alpha$ defined locally in some neighbourhood of each $r_\alpha$ satisfying $\sigma_\alpha(r_\alpha) = r_\alpha$, $x\circ \sigma_\alpha = x$ and $d\sigma_\alpha|_{r_\alpha} = -id : T_{r_\alpha}\Sigma_0 \rightarrow T_{r_\alpha}\Sigma_0$. We set the initial condition of the recursion to be
\begin{equation*}
    \omega_{0,1}(p) := ydx, \qquad \omega_{0,2}(p,q) = B(p,q).
\end{equation*}
For $g\geq 0, n\geq 3$ or $g\geq 1, n\geq 1$ we compute the multi-differential $\omega_{g,n} \in \Gamma\left((\Sigma_0\setminus \cup_{\alpha\in Ram} r_\alpha)^n, (\Omega_{\Sigma_0}^1)^{\boxtimes n}\right)$ using the following recursion formula
\begin{align*}\label{trrecursion}
    \omega_{g,n}(p_1,...,p_n) &= \sum_{\alpha\in Ram}Res_{p = r_\alpha}K(p_1,p)\omega_{g-1,n+1}(p,\sigma_\alpha(p),p_2,...,p_n)\\
    &\qquad + \sum_{\alpha \in Ram}\sum^*_{\substack{g_1 + g_2 = g\\I_1\coprod I_2 = \{2,...,n\}}}Res_{p=r_\alpha}K(p_1,p)\omega_{g_1,1 + |I_1|}(p,p_{I_1})\omega_{g_2,1+|I_2|}(\sigma_\alpha(p),p_{I_2})\numberthis
\end{align*}
where $\sum^*$ indicates that we exclude all terms involving $\omega_{0,1}$ from the summation. We also define the \emph{recursion kernel} $K = K(p_1,p)$ for $p_1 \in \Sigma$ and $p$ in the vicinity of a ramification point by
\begin{equation*}
    K(p_1,p) := -\frac{1}{2}\frac{\int_{p' = \sigma_\alpha(p)}^{p' = p}\omega_{0,2}(p_1,p')}{\omega_{0,1}(p) - \omega_{0,1}(\sigma_\alpha(p))}.
\end{equation*}
We note that $K(p_1,p)$ is a global meromorphic differential on $\Sigma_0$ in $p_1$ with simple poles at $p_1 = p, \sigma_\alpha(p)$. On the other hand, $K(p_1,p)$ is only defined as an inverse of differential locally for $p$ close to $r_\alpha$ with simple poles at $p = r_\alpha$ for any $\alpha \in Ram$. The formula (\ref{trrecursion}) expresses $\omega_{g,n}$ only in terms of $\omega_{g',n'}$ for $2g'-2+n' < 2g-2+n$ and therefore the recursion is guaranteed to terminate.

\begin{remark}\label{omegagnpropertiesremark}
It is known that $\omega_{g,n}(p_1,\cdots,p_n)$ is a meromorphic differential on $\Sigma_0$ symmetric in each of its variable $p_i$ with poles only at ramification points of $\Sigma_0$ \cite{eynard2007invariants, eynard2008algebraic} and zero residues $\oint_{p_i = r_\alpha}\omega_{g,n}(p_1,\cdots,p_n) = 0$ \cite[Corollary 4.1]{eynard2007invariants}. Moreover, if we have chosen $B = B(p,q)$ to be the normalized Bergman kernel, then $\oint_{p_i\in A_k}\omega_{g,n}(p_1,\cdots,p_n) = 0$ \cite[Theorem 4.3]{eynard2007invariants}. 
\end{remark}

We observe that we have only used local information of $\omega_{0,1}$ in the vicinity of each $r_\alpha$. This suggests a slight generalization of the original version of topological recursion. We consider a smooth compact Riemann surface $\Sigma_0$ equipped with a choice of Bergman kernel $B = B(p,q)$ and a set of distinct points $\{r_{\alpha \in Ram}\}$ indexed by the set $Ram$. We let $x$ and $y$ be locally defined functions on some open neighbourhood $D_\alpha$ of each $r_\alpha \in Ram$ such that $dx(r_\alpha) = 0$ and $\tau_\alpha := \sqrt{x - x(r_\alpha)}$ is a local coordinate on $D_\alpha$ of $r_\alpha$. Then in general, we have
\begin{equation*}
    x|_{D_\alpha} = \tau_\alpha^2 + a_\alpha, \qquad y|_{D_\alpha} = b_{0\alpha} + b_{1\alpha}\tau_\alpha + b_{2\alpha}\tau_\alpha^2 + \cdots
\end{equation*}
and the involution is given by $\sigma_\alpha(\tau_\alpha) = -\tau_\alpha$. As before, we set the initial condition of the recursion to be $\omega_{0,1}(p) = ydx$ and $\omega_{0,2}(p,q) = B(p,q)$, notice that $\omega_{0,1}$ is now only a locally defined differential form in the vicinity of each $r_\alpha$. The rest of the recursion is the same as above. This variance of topological recursion sometimes called \emph{local topological recursion} \cite{dunin2014identification} and it is the version we are going to be interested in.

\subsubsection{Relations to Airy structures}
Let us introduce the following pre-Airy structure $\{(H_{TR})_{i=1,2,3,\cdots,\alpha \in Ram}\}$ on $V^{|Ram|}_{Airy}$ which is a slight modification of $\{(H_{Airy})_{n,\alpha}\}$ given in  (\ref{productresconstraintsairystructure}):
\begin{align*}\label{resconstraintstr}
    (H_{TR})_{2n,\alpha}(w) &= Res_{z=0}\left(\left(z-\frac{w(z)}{2zdz}\right)[\alpha]\otimes z^{2n-2}d(z^2)\right), \\ (H_{TR})_{2n-1,\alpha}(w) &= \frac{1}{2}Res_{z=0}\left(\left(z-\frac{w(z)}{2zdz}\right)\left(z+\frac{w(-z)}{2zdz}\right)[\alpha]\otimes z^{2n-2}d(z^2)\right)\numberthis
\end{align*}
for $n\geq 1$ where $w := \sum_{\alpha \in Ram}\sum_{k=1}^\infty(x^{k,\alpha}f_{k,\alpha} + y_{k,\alpha}e^{k,\alpha}) \in W^{|Ram|}_{Airy}$. We note the only change from $\{(H_{n,\alpha})_{Airy}\}$ is that $w(z)$ in one of the factor of $(H_{TR})_{2n-1,\alpha}$ has been replaced by $-w(-z)$. Expanding (\ref{resconstraintstr}) we get
\begin{equation}\label{resconstraintsairystructuretr}
    (H_{TR})_{i,\alpha} = -y_{i,\alpha} + (a_{TR})_{(i,\alpha)(j,\beta)(k,\gamma)}x^{j,\beta}x^{k,\gamma} + 2(b_{TR})_{(i,\alpha)(j,\beta)}^{k,\gamma}x^{j,\beta}y_{k,\gamma} + (c_{TR})_{i,\alpha}^{(j,\beta)(k,\gamma)}y_{j,\beta}y_{k,\gamma}
\end{equation}
where
\begin{align*}
    (a_{TR})_{(i,\alpha)(j,\beta)(k,\gamma)} &= Res_{z=0}\left(-\frac{1}{4i}\frac{1}{z(dz)^2}f_{i,\alpha}(z)f_{j,\beta}(z)f_{k,\gamma}(-z)\right)\delta_{i,odd}, \qquad A_{TR} := ((a_{TR})_{(i,\alpha)(j,\beta)(k,\gamma)})\\
    (b_{TR})_{(i,\alpha)(j,\beta)}^{k,\gamma} &= Res_{z=0}\left(-\frac{1}{4i}\frac{1}{z(dz)^2}f_{i,\alpha}(z)f_{j,\beta}(z)e^{k,\gamma}(-z)\right)\delta_{i,odd}, \qquad B_{TR} := ((b_{TR})_{(i,\alpha)(j,\beta)}^{k,\gamma})\\
    (c_{TR})_{i,\alpha}^{(j,\beta)(k,\gamma)} &= Res_{z=0}\left(-\frac{1}{4i}\frac{1}{z(dz)^2}f_{i,\alpha}(z)e^{j,\beta}(z)e^{k,\gamma}(-z)\right)\delta_{i,odd},\qquad C_{TR} := ((c_{TR})_{i,\alpha}^{(j,\beta)(k,\gamma)}).
\end{align*}
The expression (\ref{resconstraintsairystructuretr}) should be compared to (\ref{resconstraintsairystructure}).
Additionally, we define $(\epsilon_{TR})_{i,\alpha} = \frac{1}{16}\delta_{i,3}, \epsilon_{TR} = ((\epsilon_{TR})_{i,\alpha}) = \epsilon_{Airy}$. The key point is that: $(V_{Airy}^{|Ram|}, A_{TR}, B_{TR}, C_{TR}, \epsilon_{TR})$ is a quantum pre-Airy structure. Additionally, since $-w(-z) = w(z)$ if $x^{i=even, \alpha \in Ram} = y_{i=even, \alpha \in Ram} = 0$, it is clear from (\ref{resconstraintstr}) and Remark \ref{resconstraintssubairystructureremark} that both $(V^{|Ram|}_{Airy}, A_{TR}, B_{TR}, C_{TR}, \epsilon_{TR})$ and $(V^{|Ram|}_{Airy}, A_{Airy}, B_{Airy}, C_{Airy}, \epsilon_{Airy})$ contain the same quantum Airy sub-structure $((V_{Airy})^{|Ram|}_{odd}, A_{Airy}, B_{Airy}, C_{Airy}, \epsilon_{Airy})$.

We will now present the main result of this section.

\begin{proposition}\theoremname{\cite[Theorem G]{borot2018higher},\cite[Lemma 9.1]{andersen2017abcd},\cite[Section 3.1]{kontsevich2017airy}}\label{atrvstrproposition}
Consider a smooth compact Riemann surface $\Sigma_0$ with a set of distinct points $\{r_{\alpha \in Ram}\}$ and a neighbourhood $D_\alpha \ni r_\alpha$ for each $\alpha \in Ram$ with local coordinate $z_\alpha$. Let $\left\{\omega_{g,n} \in \Gamma\left((\Sigma_0\setminus \cup_{\alpha\in Ram} r_\alpha)^n, (\Omega_{\Sigma_0}^1)^{\boxtimes n}\right)\right\}$ be the multi-differentials produced from topological recursion on $\Sigma_0$ using:
\begin{enumerate}
    \item The involution $\sigma_\alpha(z_\alpha) := -z_\alpha$.
    \item $\omega_{0,1}(p)$ locally defined on each $D_\alpha$ for $\alpha \in Ram$ such that $\omega_{0,1}(z_\alpha) - \omega_{0,1}(\sigma_\alpha(z_\alpha)) = 4z_\alpha^2dz_\alpha$.
    \item $\omega_{0,2}(p,q) = B(p,q)$ the normalized Bergman kernel.
\end{enumerate}
Suppose that $V_{\Sigma_0}^{mer} \subset W^{|Ram|}_{Airy}$ is a Lagrangian complement of $T_0L_{Airy}^{|Ram|}$ with basis $\{\eta^{i=1,2,3,\cdots} := \sum_{\beta \in Ram}\sum_{j=1}^\infty d^i_{j,\beta}\bar{e}^{j,\beta}\}$, for some $(d^i_{j,\beta}) : V^{|Ram|}_{Airy}\rightarrow V^{|Ram|}_{Airy}$ and  $\{\bar{e}^{k=1,2,3,\cdots,\alpha \in Ram} = e^{k,\alpha} + \sum_{j,\beta}s^{(k,\alpha)(j,\beta)}f_{j,\beta}\}$ are given by Definition \ref{endifferentialdefinition}. Let $(V^{mer}_{\Sigma_0},\bar{A}_{Airy}, \bar{B}_{Airy}, \bar{C}_{Airy}, \bar{\epsilon}_{Airy})$ be the quantum Airy structure obtained from the gauge transformation of the quantum Airy structure $(V^{|Ram|}_{Airy}, A_{Airy}, B_{Airy}, C_{Airy}, \epsilon_{Airy})$ (see Section \ref{ramproductresconstraintssubsection}) corresponding to the change of the canonical basis $\{e^{k,\alpha}, f_{k,\alpha}\} \mapsto \{\eta^k, \omega_k\}$ of $W^{|Ram|}_{Airy}$. Also, let $\{S_{g,n} \in Sym_n(V^{mer}_{\Sigma_0})\}$ be the output of the ATR using $(V^{mer}_{\Sigma_0},\bar{A}_{Airy}, \bar{B}_{Airy}, \bar{C}_{Airy}, \bar{\epsilon}_{Airy})$. Then for $g\geq 0,n\geq 3$ or $g\geq 1, n\geq 1$, we have:
\begin{equation}\label{omegagnandsgn}
    \omega_{g,n}(p_1,...,p_n) = \sum_{i_1,\cdots, i_n = 1}^\infty S_{g,n;i_1,\cdots,i_n}\eta^{i_1}(p_1)\cdots\eta^{i_n}(p_n),
\end{equation}
where $\eta^{i_1}(p_1)\cdots\eta^{i_n}(p_n)$ denotes symmetric tensor product $\sum_{\sigma \in S_n}\frac{1}{n!}\eta^{i_{\sigma(1)}}(p_{\sigma(1)})\otimes ... \otimes \eta^{i_{\sigma(n)}}(p_{\sigma(n)})$. 
\end{proposition}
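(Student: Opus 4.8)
The plan is to derive the identity (\ref{omegagnandsgn}) by comparing the topological recursion (\ref{trrecursion}) to the abstract topological recursion (\ref{tratr}) term by term, after expanding both sides near the ramification points. First I would observe that the essential input is purely local: the recursion kernel $K(p_1,p)$ and the recursion (\ref{trrecursion}) only use $\omega_{0,1}$ and $\omega_{0,2}$ in a neighbourhood of each $r_\alpha$. Using the normalized Bergman kernel and the differentials $\bar e^{k,\alpha}$ from Definition \ref{endifferentialdefinition}, together with the expansion (\ref{bergmankernelexpansion}) and (\ref{ebarintermofef}), I would expand $K(p_1,p)$ in the standard local coordinate $z_\alpha$ at $p$ and in the basis $\{\bar e^{k,\alpha}\}$ (equivalently $\{\eta^i\}$) at $p_1$. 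The factor $\omega_{0,1}(p)-\omega_{0,1}(\sigma_\alpha(p)) = 4z_\alpha^2 dz_\alpha$ is prescribed by hypothesis 2, which is exactly the normalization making the coefficients of $K$ match the tensors $A_{TR},B_{TR},C_{TR}$ as written via residues in the displayed formulas for $(a_{TR}),(b_{TR}),(c_{TR})$; the $\frac{1}{4z_\alpha}$ appearing there is precisely the $K(z_1,z)$ kernel identified in the remark after Lemma \ref{standardformanalyticresconstairystrlemma}.

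The second step is to set up the induction on $2g-2+n$. For the base cases I would check $\omega_{0,3}$ against $S_{0,3} = 2A_{TR}$ and $\omega_{1,1}$ against $S_{1,1} = \epsilon_{TR} = \epsilon_{Airy}$ directly from (\ref{trrecursion}), (\ref{tratr}) and the explicit residue formulas; the $\frac{1}{16}$ in $\epsilon_{TR}$ comes out of the $\omega_{1,1}$ computation on the Airy curve and matches $(\epsilon_{Airy})_{i,\alpha} = \frac{1}{16}\delta_{i,3}$. For the inductive step, assuming (\ref{omegagnandsgn}) holds for all $(g',n')$ with $2g'-2+n' < 2g-2+n$, I would substitute the inductive expressions for $\omega_{g-1,n+1}$ and the $\omega_{g_1,\cdot}\omega_{g_2,\cdot}$ products into the right-hand side of (\ref{trrecursion}), interchange the (finite) sum over $Ram$ and the residue with the (convergent) expansions, and compute the residue at $p = r_\alpha$ against the expansion of $K(p_1,p)$. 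The residue picks out finitely many coefficients, producing exactly the three sums in (\ref{tratr}) with the tensors $B_{TR}, C_{TR}$ contracted against the $S_{g',n'}$, with the $p_1$-dependence carried by $\eta^{i_1}(p_1)$. This is where hypothesis 1 (the involution $\sigma_\alpha(z_\alpha) = -z_\alpha$) enters: it produces the $z \mapsto -z$ in one factor of each term, which is exactly the modification distinguishing $\{(H_{TR})\}$ from $\{(H_{Airy})\}$ in (\ref{resconstraintstr}).

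The final step is to invoke Corollary \ref{genfunccorollary} and Remark \ref{atrandprestructuresremark}: although $(V^{|Ram|}_{Airy}, A_{TR}, B_{TR}, C_{TR}, \epsilon_{TR})$ is only a quantum \emph{pre}-Airy structure, it contains the quantum Airy sub-structure $((V_{Airy})^{|Ram|}_{odd}, A_{Airy}, B_{Airy}, C_{Airy}, \epsilon_{Airy})$, which also sits inside $(V^{|Ram|}_{Airy}, A_{Airy}, B_{Airy}, C_{Airy}, \epsilon_{Airy})$ — and after the gauge transformation by $\{e^{k,\alpha}, f_{k,\alpha}\} \mapsto \{\eta^k, \omega_k\}$ (which respects sub-structures by Remark \ref{gaugetransfrespectssubstructuresremark}), the ATR outputs agree and are symmetric tensors in $Sym_n(V^{mer}_{\Sigma_0})$. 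Symmetry of $\omega_{g,n}$ is known (Remark \ref{omegagnpropertiesremark}), so both sides of (\ref{omegagnandsgn}) are symmetric and it suffices to match the coefficient of $\eta^{i_1}(p_1)$, which is exactly what the recursion comparison yields. I expect the main obstacle to be the bookkeeping in the residue computation of the inductive step — correctly tracking which coefficient of the Bergman-kernel expansion (\ref{bergmankernelexpansion}) the residue extracts, verifying that the kernel expansion of $K(p_1,p)$ reproduces the gauge-transformed tensors $\bar B_{Airy}, \bar C_{Airy}$ of (\ref{resconstraintsatgaugetranfs}) rather than the untransformed ones, and confirming that the $\sum^*$ exclusion of $\omega_{0,1}$ in (\ref{trrecursion}) corresponds precisely to the index ranges in (\ref{tratr}). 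A secondary technical point is justifying all interchanges of sums, residues, and integrals, which is handled by the absolute uniform convergence established in Lemma \ref{standardformanalyticresconstairystrlemma} and Lemma \ref{cdsanalyticlemma}.
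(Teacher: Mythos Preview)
Your proposal is correct and follows essentially the same approach as the paper's proof: expand $\omega_{0,2}$ and $K(p_1,p)$ locally in the $\{\bar e^{k,\alpha}\}$/$\{\eta^i,\omega_i\}$ basis, verify the base cases $\omega_{0,3}$ and $\omega_{1,1}$ directly, substitute into (\ref{trrecursion}) to recognise the ATR relation (\ref{tratr}) for a gauge-transformed pre-Airy structure, and then invoke the common odd sub-structure (Remarks \ref{resconstraintssubairystructureremark}, \ref{gaugetransfrespectssubstructuresremark}, \ref{atrandprestructuresremark}) to identify the output with $S_{g,n}$. One small correction: the residue computation yields the gauge-transformed \emph{TR} tensors $\bar A_{TR},\bar B_{TR},\bar C_{TR},\bar\epsilon_{TR}$ of (\ref{tratrgaugetranfs}), not $\bar B_{Airy},\bar C_{Airy}$ directly --- the passage to the Airy tensors is exactly the sub-structure step you describe afterwards, so your logical flow is right but the label in your ``main obstacle'' sentence should read $\bar B_{TR},\bar C_{TR}$.
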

\begin{proof}
We let the basis of $T_0L_{Airy}^{|Ram|}$ be given by $\{\omega_{i=1,2,3,\cdots} := \sum_{\beta \in Ram}\sum_{j=1}^\infty c^{j,\beta}_if_{j,\beta}\}$, where $(c^{j,\beta}_i) = (d^i_{j,\beta})^{-1}$.
From Remark \ref{omegagnpropertiesremark}, we may write for $g\geq 0, n\geq 3$ or $g\geq 1, n\geq 1$:
\begin{equation*}
    \omega_{g,n}(p_1,...,p_n) = \sum_{i_1,\cdots,i_n=1}^\infty(S_{TR})_{g,n;i_1,...,i_n}\eta^{i_1}(p_1)\cdots\eta^{i_n}(p_n).
\end{equation*}
In the following let us assume that $p$ is close to one of the ramification point but $p_1$ is further away in the sense that if $p, p_1 \in U_\alpha \cap \Sigma_0$ were in the neighbourhood of $r_\alpha, \alpha \in Ram$ then in standard local coordinates we have $|z_\alpha(p)| << |z_\alpha(p_1)|$. 
The Laurent expansion of $\omega_{0,2}(p_1,p)$ when $p$ is close to a ramification point in the standard local coordinates can be written as
\begin{equation*}
    \omega_{0,2}(p_1, p) = \sum_{\alpha \in Ram, k > 0}\bar{e}^{k,\alpha}(p_1)f_{k,\alpha}(z_\alpha) = \sum_{k > 0}\eta^k(p_1)\omega_k(z_\alpha),
\end{equation*}
where $z$ is the standard local coordinate corresponding to $p$. Similarly, for $K(p_1,p)$ we have
\begin{equation*}
    K(p_1,p) = -\frac{1}{4}\frac{1}{z_\alpha(dz_\alpha)^2}\sum_{\substack{\alpha \in Ram, k > 0\\ k\ odd}} \frac{1}{k}\bar{e}^{k,\alpha}(p_1)f_{k,\alpha}(z_\alpha).
\end{equation*}
Now we have everything we need to evaluate the recursion formula. For $g=0,n=3$ we have
\begin{align*}
    \omega_{0,3}(p_1,p_2,p_3) = 2\sum_{\alpha \in Ram}Res_{p=r_\alpha}\left(K(p_1,p)\omega_{0,2}(p,p_2)\omega_{0,2}(\sigma_{\alpha}(p),p_3)\right)
\end{align*}
or,
\begin{align*}
    \sum_{i_1,i_2,i_3>0}&(S_{TR})_{0,3;i_1i_2i_3}\eta_0^{i_1}(p_1)\eta_0^{i_2}(p_2)\eta_0^{i_3}(p_3)\\
    &= 2\sum_{\substack{\alpha_1\in Ram, i_1,i_2,i_3 > 0\\i_1\ odd}}Res_{z_\alpha=0}\left(-\frac{1}{4i_1}\frac{1}{z_\alpha(dz_\alpha)^2}f_{i_1,\alpha_1}(z_\alpha)\omega_{i_2}(z_\alpha)\omega_{i_3}(-z_\alpha)\right)\bar{e}^{i_1,\alpha_1}(p_1)\eta^{i_2}(p_2)\eta^{i_3}(p_3).
\end{align*}
Comparing the coefficients, we have
\begin{align*}
    (S_{TR})_{0,3;i_1i_2i_3} &= 2\sum_{\substack{\alpha_1,\alpha_2,\alpha_3\in Ram, j_1,j_2,j_3 > 0\\j_1\ odd}}Res_{z_\alpha=0}\left(-\frac{1}{4j_1}f_{j_1,\alpha_1}(z_\alpha)f_{j_2,\alpha_2}(z_\alpha)f_{j_3,\alpha_3}(-z_\alpha)\right)c^{j_1,\alpha_1}_{i_1}c^{j_2,\alpha_2}_{i_2}c^{j_3,\alpha_3}_{i_3}\\
    &= 2\sum_{\substack{\alpha_1,\alpha_2,\alpha_3\in Ram, j_1,j_2,j_3 > 0\\j_1\ odd}} (a_{TR})_{(j_1,\alpha_1)(j_2,\alpha_2)(j_3,\alpha_3)}c^{j_1,\alpha_1}_{i_1}c^{j_2,\alpha_2}_{i_2}c^{j_3,\alpha_3}_{i_3}.
\end{align*}
For $g = 1, n = 1$ we have
\begin{align*}
    \omega_{1,1}&(p_1) = \sum_{\alpha \in Ram}Res_{p = r_\alpha}(K(p_1,p)\omega_{0,2}(p,\sigma_{\alpha}(p)))\\
    \sum_{i_1 > 0}(S_{TR})_{1,1;i_1}&\eta^{i_1}_0(p_1) = \lim_{z'_\alpha \rightarrow z_\alpha}\sum_{\alpha \in Ram}Res_{z_\alpha = r_\alpha}(K(p_1,p)\omega_{0,2}(z_\alpha,-z'_\alpha))\\
    =& \lim_{z'_\alpha \rightarrow z_\alpha}\sum_{\substack{\alpha_1,\alpha_2\in Ram, i_1,i_2 > 0\\i_1\ odd}}Res_{z_\alpha=0}\left(-\frac{1}{4i_1}\frac{1}{z_\alpha(dz_\alpha)^2}f_{i_1,\alpha_1}(z_\alpha)f_{i_2,\alpha_2}(z_\alpha)\bar{e}^{i_2,\alpha_2}(-z'_\alpha)\right)\bar{e}^{i_1,\alpha_1}(p_1)\\
    =& \lim_{z'_\alpha \rightarrow z_\alpha}\sum_{\substack{\alpha_1,\alpha_2\in Ram, i_1,i_2 > 0\\i_1\ odd}}Res_{z_\alpha=0}\left(-\frac{1}{4i_1}\frac{1}{z_\alpha(dz_\alpha)^2}f_{i_1,\alpha_1}(z_\alpha)f_{i_2,\alpha_2}(z_\alpha)e^{i_2,\alpha_2}(-z'_\alpha)\right)\bar{e}^{i_1,\alpha_1}(p_1)\\
    +\lim_{z'_\alpha \rightarrow z_\alpha}&\sum_{\substack{\alpha_1,\alpha_2\in Ram, i_1,i_2 > 0\\i_1\ odd}}s^{(i_2,\alpha_2)(j_2,\beta_2)}Res_{z_\alpha=0}\left(-\frac{1}{4i_1}\frac{1}{z_\alpha(dz_\alpha)^2}f_{i_1,\alpha_1}(z_\alpha)f_{i_2,\alpha_2}(z_\alpha)f_{j_2,\beta_2}(-z'_\alpha)\right)\bar{e}^{i_1,\alpha_1}(p_1)\\
    =& \sum_{\substack{\alpha_1\in Ram,i_1 > 0\\i_1\ odd}}Res_{z_\alpha = 0}\left(\frac{1}{4z_\alpha}\left(z^{i_1}_\alpha\frac{dz_\alpha}{z_\alpha}\right)\frac{1}{4z^2_\alpha}\right)\bar{e}^{i_1,\alpha_1}(p_1)\\
    &+ \sum_{\substack{\alpha_1,\alpha_2\in Ram, i_1,i_2 > 0\\i_1\ odd}}s^{(i_2,\alpha_2)(j_2,\beta_2)}Res_{z=0}\left(-\frac{1}{4i_1}\frac{1}{z_\alpha(dz_\alpha)^2}f_{i_1,\alpha_1}(z_\alpha)f_{i_2,\alpha_2}(z_\alpha)f_{j_2,\beta_2}(-z_\alpha)\right)\bar{e}^{i_1,\alpha_1}(p_1)
\end{align*}
Where the limit $\lim_{z'_\alpha \rightarrow z_\alpha}$ is taken from the direction $|z'_\alpha| > |z_\alpha|$. We can easily see that the first term of the last line is simply $\sum_{\alpha \in Ram}\frac{1}{16}\bar{e}^{3,\alpha}(p_1) = \epsilon_{TR}$ and the residue in the second term can be recognized as $A_{TR}$. Comparing the coefficients, we have
\begin{align*}
    (S_{TR})_{1,1;i_1} &= \sum_{\substack{\alpha_1 \in Ram, j_1>0\\
    j_1\ odd}}\left((\epsilon_{TR})_{j_1,\alpha_1} + \sum_{\alpha_2,\alpha_3\in Ram,  j_2, j_3 > 0} (a_{TR})_{(j_1,\alpha_1)(j_2,\alpha_2)(j_3,\alpha_3)}s^{(j_2,\alpha_2)(j_3,\alpha_3)}\right)c^{j_1,\alpha_1}_{i_1}.
\end{align*}
For $g\geq 0, n\geq 4$ or $g\geq 1, n\geq 2$ or $g\geq 2, n\geq 1$ we have from the recursion formula
\begin{align*}
    &\sum_{i_1,\cdots,i_n > 0}(S_{TR})_{g,n;i_1,...,i_n}\eta^{i_1}(p_1)\cdots\eta^{i_n}(p_n)\\
    &= \sum_{\substack{\alpha_1\in Ram, i_1,\cdots, i_n > 0\\i_1\ odd}}\Bigg[\sum_{j_1,j_2 > 0}Res_{z_\alpha=0}\left(-\frac{1}{4i_1}\frac{1}{z_\alpha(dz_\alpha)^2}f_{i_1,\alpha_1}(z_\alpha)\eta^{j_1}(z_\alpha)\eta^{j_2}(-z_\alpha)\right)(S_{TR})_{g-1,n+1;j_1j_2i_2...i_n}\\
    &\qquad + 2\sum_{k=2}^n\sum_{j_1>0}Res_{z_\alpha=0}\left(-\frac{1}{4i_1}\frac{1}{z_\alpha(dz_\alpha)^2}f_{i_1,\alpha_1}(z_\alpha)\omega_{i_k}(z_\alpha)\eta^{j_1}(-z_\alpha)\right)(S_{TR})_{g,n-1;j_1i_{\{2,..,n\}\setminus \{k\}}}\\
    &+\sum_{\substack{g_1+g_2=g\\I_1\coprod I_2 = \{2,...,n\}}}\sum_{j_1,j_2 > 0}Res_{z_\alpha=0}\left(-\frac{1}{4i_1}\frac{1}{z_\alpha(dz_\alpha)^2}f_{i_1,\alpha_1}(z_\alpha)\eta^{j_1}(z_\alpha)\eta^{j_2}(z_\alpha)\right)(S_{TR})_{g_1,1+|I_1|;j_1i_{I_1}}(S_{TR})_{g_2,1+|I_2|;j_2i_{I_2}}\Bigg]\\
    &\qquad\qquad\qquad\qquad\qquad\qquad\times\bar{e}^{i_1,\alpha_1}(p_1)\eta^{i_2}(p_2)...\eta^{i_n}(p_n).
\end{align*}
Comparing the coefficients, we find that the recursion relations (\ref{trrecursion}) becomes
\begin{align*}
    (S_{TR})_{g,n;i_1,...,i_n} &= 2\sum_{k=2}^n\sum_{j>0}(\bar{b}_{TR})_{i_1i_k}^{j}(S_{TR})_{g,n-1;ji_{\{2,...,n\}\setminus\{k\}}} + \numberthis\\
    &\qquad + \sum_{\substack{g_1+g_2=g\\I_1\coprod I_2 = \{2,...,n\}}}\sum_{j_1,j_2 > 0}(\bar{c}_{TR})_{i_1}^{j_1j_2}(S_{TR})_{g_1,1+|I_1|;j_1i_{I_1}}(S_{TR})_{g_2,1+|I_2|;j_2i_{I_2}}\\
    &\qquad + \sum_{j_1,j_2 > 0}(\bar{c}_{TR})_{i_1}^{j_1j_2}(S_{TR})_{g-1,n+1;j_1j_2i_2...i_n},
\end{align*}
for $g\geq 0, n\geq 4$ or $g\geq 2,n\geq 1$ or $g\geq 1, n\geq 2$ with the initial condition
\begin{equation}\label{tratrinitialconditions}
    (S_{TR})_{0,3;i_1i_2i_3} = 2(\bar{a}_{TR})_{i_1i_2i_3},\qquad (S_{TR})_{1,1;i} = (\bar{\epsilon}_{TR})_i, \qquad (S_{TR})_{0,n\leq 2} = (S_{TR})_{g\geq 0, 0} = 0
\end{equation}
where
\begin{align*}\label{tratrgaugetranfs}
    \numberthis
    (\bar{a}_{TR})_{i_1i_2i_3} &= (a_{TR})_{(j_1,\alpha_1)(j_2,\alpha_2)(j_3,\alpha_3)}c^{j_1,\alpha_1}_{i_1}c^{j_2,\alpha_2}_{i_2}c^{j_3,\alpha_3}_{i_3}\\
    (\bar{b}_{TR})_{i_1i_2}^{i_3} &= \left((b_{TR})_{(j_1,\alpha_1)(j_2,\alpha_2)}^{j_3,\alpha_3} + (a_{TR})_{(j_1,\alpha_1)(j_2,\alpha_2)(k,\gamma)}s^{(k,\gamma)(j_3,\alpha_3)}\right)c^{j_1,\alpha_1}_{i_1}c^{j_2,\alpha_2}_{i_2}d^{i_3}_{j_3,\alpha_3}\\
    (\bar{c}_{TR})_{i_1}^{i_2i_3} &= \Big((c_{TR})_{(j_1,\alpha_1)}^{(j_2,\alpha_2)(j_3,\alpha_3)} + (b_{TR})_{(j_1,\alpha_1)(k_1,\gamma_1)}^{(j_3,\alpha_3)}s^{(k_1,\gamma_1)(j_2,\alpha_2)}\\
    &\qquad + (b_{TR})_{(j_1,\alpha_1)(k_2,\gamma_2)}^{(j_2,\alpha_2)}s^{(k_2,\gamma_2)(j_3,\alpha_3)} + (a_{TR})_{(j_1,\alpha_1)(k_1,\gamma_1)(k_2,\gamma_2)}s^{(k_1,\gamma_1)(j_2,\gamma_2)}s^{(k_2,\gamma_2)(j_3,\gamma_3)}\Big)\\
    &\qquad\qquad\qquad\times c^{j_1,\alpha_1}_{i_1}d_{j_2,\alpha_2}^{i_2}d_{j_3,\alpha_3}^{i_3}\\
    (\bar{\epsilon}_{TR})_i &= \left((\epsilon_{TR})_{j_1,\alpha_1} + (a_{TR})_{(j_1,\alpha_1)(k_1,\gamma_1)(k_2,\gamma_2)}s^{(k_1,\gamma_1)(k_2,\gamma_2)} \right)c^{j_1,\alpha_1}_{i_1}.
\end{align*}
In (\ref{tratrgaugetranfs}) the summation is over positive odd integers $j_1$ and over all positive integers for all other indices. Equivalently, we may take all summations in (\ref{tratrgaugetranfs}) to be over all positive integers since components of $A_{TR}, B_{TR}, C_{TR}, \epsilon_{TR}$ with $j_1 = even$ are zero. Let $\bar{A}_{TR} = ((\bar{a}_{TR})_{i_1i_2i_3}), \bar{B}_{TR} = ((\bar{b}_{TR})_{i_1i_2}^{i_3}), \bar{C}_{TR} = ((\bar{c}_{TR})_{i_1}^{i_2i_3})$ and $\bar{\epsilon}_{TR} = ((\bar{\epsilon}_{TR})_i)$.
We recognize (\ref{tratr}) together with (\ref{tratrinitialconditions}) as the ATR of quantum pre-Airy structure $(V^{mer}_{\Sigma_0},\bar{A}_{TR},\bar{B}_{TR},\bar{C}_{TR},\bar{\epsilon}_{TR})$ which is the gauge transform of the quantum pre-Airy structure $(V^{|Ram|}_{Airy}, A_{TR}, B_{TR}, C_{TR}, \epsilon_{TR})$ corresponding to the change of the canonical basis $\{e^{k,\alpha}, f_{k,\alpha}\} \mapsto \{\eta^k, \omega_k\}$ of $W^{|Ram|}_{Airy}$. Using Remark \ref{gaugetransfrespectssubstructuresremark} and Remark \ref{resconstraintssubairystructureremark} we have that $(V^{mer}_{\Sigma_0},\bar{A}_{TR},\bar{B}_{TR},\bar{C}_{TR},\bar{\epsilon}_{TR})$ and $(V^{mer}_{\Sigma_0}, \bar{A}_{Airy}, \bar{B}_{Airy}, \bar{C}_{Airy}, \bar{\epsilon}_{Airy})$ both contain the quantum Airy sub-structure $((V^{mer}_{\Sigma_0})_{odd}, \bar{A}_{Airy}, \bar{B}_{Airy}, \bar{C}_{Airy}, \bar{\epsilon}_{Airy})$. It follows that $(S_{TR})_{g,n} = S_{g,n} \in Sym_n((V^{mer}_{\Sigma_0})_{odd})$ (see Remark \ref{atrandprestructuresremark}), which completes the proof. 
\end{proof}

\subsection{Topological Recursion and Prepotentials}\label{prepotentialandtrsection}
In this section, we are going to prove the main theorem which relates the prepotential $\mathfrak{F}_{\Sigma_0}$ (see Definition \ref{prepotentialdefinition}), defined on some open neighbourhood $\mathcal{B}_{\Sigma_0} \ni [\Sigma_0]$ of the moduli space $\mathcal{B}$ of $\mathcal{F}$-transversal genus $g$ curves in $(S, \Omega_S,\mathcal{F})$, to the genus zero part of topological recursion on $\Sigma_0$. The basic idea is as follows. The map $\pmb{\Phi}_{\Sigma_0} : \mathcal{B}_{\Sigma_0}\rightarrow \mathcal{H}_{\Sigma_0}$ embeds $\mathcal{B}_{\Sigma_0}$ into the fiber $\mathcal{H}_{\Sigma_0}$ as the Lagrangian submanifold $\mathcal{L}_{\Sigma_0}$ with $\mathfrak{F}_{\Sigma_0}$ as the generating function. The image of $\Phi_{t_0} : Discs^{Ram}_{t_0}\rightarrow W^{Ram}_{t_0}$ is contained inside the Lagrangian submanifold $L^{Ram}_{Airy}$ with $S_0$, the $g=0$ part of the ATR output of the $Ram$ product residue constraints Airy structure $\{(H_{Airy})_{i=1,2,3,\cdots, \alpha \in Ram}\}$, as the generating function. In Section \ref{localtoglobalsection} we constructed the map $\Phi_{\Sigma_0} : \mathcal{B}_{\Sigma_0}\rightarrow G_{\Sigma_0}$ relating $\pmb{\Phi}_{\Sigma_0}$ to $\Phi_{t_0}$, therefore also relating $\mathfrak{F}_{\Sigma_0}$ to $S_0$. Finally, we use Proposition \ref{atrvstrproposition} to relate $S_0$ to $\omega_{0,n}$.

The important step in simplifying the proof of this result is to use the right gauge when working with the Airy structure $\{(H_{Airy})_{i=1,2,3,\cdots, \alpha \in Ram}\}$. For example, the canonical basis 
\begin{equation*}
    \left\{e^{k=1,2,3,\cdots,\alpha \in Ram} := [\alpha]\otimes z^{-k}_\alpha\frac{dz_\alpha}{z_\alpha}, f_{k=1,2,3,\cdots,\alpha \in Ram} := [\alpha]\otimes kz^k_\alpha\frac{dz_\alpha}{z_\alpha}\right\}
\end{equation*}
for $W^{Ram}_{Airy}$ will not make the comparison between $S_0 = S_0(\{x^{k=1,2,3,\cdots,\alpha \in Ram}\})$ and $\mathfrak{F}_{\Sigma_0} = \mathfrak{F}_{\Sigma_0}(\mathbf{a}^1,\cdots,\mathbf{a}^g)$ easy. Each $x^{k,\alpha}$ is a coefficient of the Laurent series expansion of $\Phi_{\Sigma_0}(\Sigma)$ whereas each $\mathbf{a}^i$ relates to $A$-periods of $\Phi_{\Sigma_0}(\Sigma)$. Therefore, we start by choosing the new canonical basis for $W^{Ram}_{Airy}$ which makes the cohomological information of $\Phi_{\Sigma_0}(\Sigma)$ becomes more visible from the point of view of $W^{Ram}_{Airy}$.

\begin{lemma}\label{goodbasislemma}
Let $[\Sigma_0] \in \mathcal{B}$ and let $\{\bar{e}^{k=1,2,3,\cdots,\alpha \in Ram}\}$ be the basis of the Lagrangian complement $V_{\Sigma_0}$ as given in Definition \ref{endifferentialdefinition}. There exists a canonical basis 
\begin{equation*}
    \left\{\eta^{i=1,2,3,\cdots} := \sum_{\beta \in Ram}\sum_{j=1}^\infty d^i_{j,\beta}\bar{e}^{j,\beta}, \omega_{i=1,2,3,\cdots} := \sum_{\beta \in Ram}\sum_{j=1}^\infty c^{j,\beta}_if_{j,\beta}\right\}
\end{equation*}
of $W^{Ram}_{t(\Sigma_0)} \cong W^{Ram}_{Airy}$,  
\begin{equation*}
    \Omega_{Airy}(\omega_i,\omega_j) = \Omega_{Airy}(\eta^i,\eta^j) = 0, \qquad \Omega_{Airy}(\eta^i,\omega_j) = \delta^i_j,
\end{equation*}
corresponding to the Lagrangian complement $V_{\Sigma_0}$ such that $\omega_1,\cdots,\omega_g \in \Gamma(\Sigma_0, \Omega^1_{\Sigma_0})$, $\eta^{i=1,2,3,\cdots} \in V_{\Sigma_0}$ and $\omega_{i > g}$ are locally defined holomorphic forms on $\sqcup_{\alpha \in Ram}\mathbb{D}_{\alpha, \bar{M}_\alpha}(\Sigma_0)\subset \Sigma_0$ for some $\bar{M}_\alpha > M_\alpha$. Furthermore, we have that for $i,j = 1,\cdots,g$:
\begin{equation}\label{abperiodsofgoodbasis}
    \oint_{A_i}\omega_j = \delta_j^i, \qquad \oint_{B_i}\omega_j = \tau_{ij}(\Sigma_0), \qquad \oint_{A_i}\eta^j = 0, \qquad \oint_{B_i}\eta^j = 2\pi i\delta^j_i,
\end{equation}
and all $A,B$-periods of $\eta^{i>g}$ vanish. Lastly, $(c^{i,\alpha}_j) : V^{|Ram|}_{Airy} \rightarrow V^{|Ram|}_{Airy}$, $(d^i_{j,\beta}) : V^{|Ram|}_{Airy}\rightarrow V^{|Ram|}_{Airy}$ both satisfy Condition \ref{analyticcdscondition} and $\{\bar{e}^{i=1,2,3,\cdots,\alpha \in Ram}\}$ is given by Definition \ref{endifferentialdefinition}.
\end{lemma}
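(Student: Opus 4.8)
\textbf{Proof proposal for Lemma \ref{goodbasislemma}.}

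The plan is to construct the canonical basis $\{\eta^{i}, \omega_{i}\}$ by hand, starting from the distinguished subspace of global holomorphic forms and extending it to a full Lagrangian pair, while keeping careful track of the analytic constraints in Condition \ref{analyticcdscondition}. First I would set $\omega_{1},\dots,\omega_{g} \in \Gamma(\Sigma_0,\Omega^1_{\Sigma_0})$ to be the normalized holomorphic forms, so that $\oint_{A_i}\omega_j = \delta_j^i$ and, by the standard properties of the normalized Bergman kernel recalled just before Lemma \ref{symplecticformonhlemma} and in the remark after Definition \ref{bergmankerneldefinition}, $\oint_{B_i}\omega_j = \tau_{ij}(\Sigma_0)$. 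These lie in $T_0L^{Ram}_{Airy}$ via the map $i$ because each $\omega_j$ is holomorphic on all of $\Sigma_0$, hence in particular on $\sqcup_{\alpha}\mathbb{D}_{\alpha,\bar M_\alpha}(\Sigma_0)$ for some $\bar M_\alpha > M_\alpha$ by C\ref{howtochoosebsigma0condition}.2. Dually I would like the first $g$ of the $\eta^i$ to be the elements of $V_{\Sigma_0}$ that pair correctly: using $\bar e^{k,\alpha}$ from Definition \ref{endifferentialdefinition}, which satisfy $\oint_{A_i}\bar e^{k,\alpha}=0$ and $\oint_{p\in B_i}\bar e^{k,\alpha} = \frac{1}{k}\oint_{z_\alpha=0}\omega_i(q(z_\alpha))/z_\alpha^k \cdot (\text{const})$ coming from $\oint_{p\in B_i}B(p,q) = 2\pi i\,\omega_i(q)$, one sees that a suitable finite linear combination of the $\bar e^{k,\alpha}$ produces forms $\eta^1,\dots,\eta^g \in V_{\Sigma_0}$ with $\oint_{B_i}\eta^j = 2\pi i\,\delta^j_i$. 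This is the non-degeneracy of the period pairing between $H^{1,0}$ and the $B$-cycles on $\Sigma_0$, so such a combination exists and is unique.

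Next I would complete to a full symplectic basis. Since $V_{\Sigma_0}$ is a Lagrangian complement of $T_0L^{Ram}_{Airy}$ by Corollary \ref{vislagrangiancomplementcorollary}, the space $W^{Ram}_{Airy} = V_{\Sigma_0}\oplus T_0L^{Ram}_{Airy}$ is a symplectic direct sum with $\Omega_{Airy}$ restricting to the canonical pairing between the two summands. Having fixed $\omega_1,\dots,\omega_g$ in $T_0L^{Ram}_{Airy}$ and $\eta^1,\dots,\eta^g$ in $V_{\Sigma_0}$ pairing as $\Omega_{Airy}(\eta^i,\omega_j)=\delta^i_j$ (checking this pairing against $(\ref{omegaairyong})$ and $(\ref{abperiodsofgoodbasis})$ reduces to the Riemann bilinear identity), I would extend $\{\omega_1,\dots,\omega_g\}$ to a basis $\{\omega_{i=1,2,3,\dots}\}$ of $T_0L^{Ram}_{Airy}$ by adjoining $\omega_{g+1},\omega_{g+2},\dots$, each required to be $\Omega_{Airy}$-orthogonal to $\eta^1,\dots,\eta^g$; concretely one can take the $\omega_{i>g}$ to be $T_0L^{Ram}_{Airy}$-elements supported on $\sqcup_\alpha\mathbb{D}_{\alpha,\bar M_\alpha}(\Sigma_0)$ whose $B$-periods vanish (this is an open condition and is achievable after subtracting off a holomorphic form, since the $\eta^j$-pairing measures precisely a combination of periods). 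Then $V_{\Sigma_0}$ acquires the dual basis $\{\eta^{i=1,2,3,\dots}\}$, automatically satisfying $\Omega_{Airy}(\eta^i,\omega_j)=\delta^i_j$ and $\Omega_{Airy}(\eta^i,\eta^j)=\Omega_{Airy}(\omega_i,\omega_j)=0$; and since $\eta^i \in V_{\Sigma_0}$ for all $i$, they have vanishing $A$-periods, while for $i>g$ the vanishing of $B$-periods of $\eta^i$ follows by the same duality bookkeeping: $\oint_{B_k}\eta^i$ is read off from $\Omega_{Airy}(\eta^i,\omega_k)$-type pairings, which are $0$ for $i>g$. The transition tensors $(c^{i,\alpha}_j)$ and $(d^i_{j,\beta})$ are then defined by expanding $\omega_i = \sum_{\beta}\sum_j c^{j,\beta}_i f_{j,\beta}$ and $\eta^i = \sum_\beta\sum_j d^i_{j,\beta}\bar e^{j,\beta}$, and they are mutually inverse because $\{\eta^i,\omega_i\}$ and $\{\bar e^{k,\alpha},f_{k,\alpha}\}$ are both canonical bases of the same symplectic space adapted to the splitting $V_{\Sigma_0}\oplus T_0L^{Ram}_{Airy}$.

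The remaining task, and the one I expect to be the main obstacle, is verifying that $(c^{i,\alpha}_j)$ and $(d^i_{j,\beta})$ satisfy \emph{all four parts} of Condition \ref{analyticcdscondition} — not merely that they are algebraic inverses. Part C\ref{analyticcdscondition}.2 requires $\sum_j c^{j,\beta}_i f_{j,\beta} \in T_0L^{Ram}_{Airy}$, i.e. the relevant power series converges on a disc strictly larger than radius $M_\alpha$; this is exactly the statement that each $\omega_i$ is holomorphic on $\sqcup_\alpha\mathbb{D}_{\alpha,\bar M_\alpha}(\Sigma_0)$, which we arranged, but one must also check the $\bar e^{k,\alpha}$-expansion $(\ref{ebarintermofef})$ is compatible with the convergence of $\sum_j d^i_{j,\beta}\bar e^{j,\beta}$ as an element of $V^{\epsilon_\alpha}_{Airy}$. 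Part C\ref{analyticcdscondition}.3, the requirement that $c^{j}_i = d^j_i = \delta^j_i$ for $i > I$ for some fixed $I$, forces us to choose the $\omega_{i>g}$ and $\eta^{i>g}$ as a \emph{finite} modification of the tautological basis $\{f_{j,\beta}\}$, $\{\bar e^{j,\beta}\}$ — here I would index the pair $(k,\alpha)$ by $\mathbb{Z}_{>0}$, put the $g$ "global" directions at the bottom, and set all higher basis vectors equal to the corresponding $f_{k,\alpha}$, $\bar e^{k,\alpha}$ unchanged; then only finitely many rows of $(c^i_j)$ differ from the identity, giving C\ref{analyticcdscondition}.3, and C\ref{analyticcdscondition}.1 follows from the block-triangular structure. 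Part C\ref{analyticcdscondition}.4 concerns $s^{(i,\alpha)(j,\beta)} = \frac{1}{ij}P^{(i,\alpha)(j,\beta)}$, which is handled exactly as in the text after Remark \ref{resconstraintssubairystructureremark}: the $P^{(i,\alpha)(j,\beta)}$ are Taylor coefficients of the holomorphic symmetric bi-differential $B(p,q)$ near the ramification points, so the associated two-variable series converges on $|z_1|,|z_2|<\bar M$. Assembling these convergence checks with the algebraic facts above completes the proof.
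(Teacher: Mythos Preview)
Your overall strategy matches the paper's: take $\omega_{1},\dots,\omega_{g}$ to be the normalized holomorphic differentials, extend to a basis of $T_0L^{Ram}_{Airy}$ that eventually agrees with $\{f_{k,\alpha}\}$, and obtain $\{\eta^i\}$ as the symplectic dual in $V_{\Sigma_0}$; the period identities then follow from $\Omega_{Airy}(\eta^i,\omega_j)=\delta^i_j$ combined with (\ref{omegaairyong}).

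There is, however, a real gap in your treatment of C\ref{analyticcdscondition}.3. You propose to set \emph{both} $\omega_i=f_{k,\alpha}$ and $\eta^i=\bar e^{k,\alpha}$ for $i>I$, but the second choice is incompatible with the canonical pairing. For such $i$ and any $j\le g$,
\[
\Omega_{Airy}\bigl(\bar e^{k_i,\alpha_i},\omega_j\bigr)
=\Omega_{Airy}\Bigl(e^{k_i,\alpha_i}+\textstyle\sum_{l,\beta}s^{(k_i,\alpha_i)(l,\beta)}f_{l,\beta},\ \sum_{m,\gamma}c^{m,\gamma}_jf_{m,\gamma}\Bigr)
=c^{k_i,\alpha_i}_j,
\]
the Taylor coefficient of the global form $\omega_j$ at order $k_i$ near $r_{\alpha_i}$. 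Since $\omega_j$ is not a polynomial in $z_{\alpha_i}$ this is nonzero for arbitrarily large $k_i$, so $\Omega_{Airy}(\eta^i,\omega_j)\neq 0$ and the basis is not canonical. The misstep is a misreading of C\ref{analyticcdscondition}.3: the condition $d^j_i=\delta^j_i$ for $i>I$ constrains the \emph{columns} of $(d^i_{j,\beta})$ (for large $(k,\alpha)$ the vector $\bar e^{k,\alpha}$ occurs only in the corresponding $\eta$), not its rows (which would say $\eta^i=\bar e^{k,\alpha}$). The paper avoids the issue by building only the $\omega_i$ directly---completing the image of $\omega_{1},\dots,\omega_{g}$ in the finite-dimensional quotient $T_0L^{Ram}_{Airy}/(T_0L^{Ram}_{Airy})_{>n}$ to a basis and setting $\omega_i=f_{k,\alpha}$ beyond---and then \emph{defining} $\eta^i$ as the dual; since the inverse of a block-triangular matrix is block-triangular, each $\eta^i$ is automatically a finite sum of $\bar e^{j,\beta}$, though for $i>I$ it generally differs from a single $\bar e^{k,\alpha}$ by a finite low-index correction. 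With this fix your argument goes through, and the separate early construction of $\eta^{1},\dots,\eta^{g}$ becomes unnecessary (the uniqueness claim there is in any case false): the first $g$ duals already satisfy $\oint_{B_i}\eta^j=2\pi i\,\Omega_{Airy}(\eta^j,\omega_i)=2\pi i\,\delta^j_i$ by (\ref{omegaairyong}).
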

\begin{remark}
Equivalently, we describe the canonical basis in \ref{goodbasislemma} as follows. Choose $\{\omega_1,\cdots,\omega_g\}$ to be the set of normalized holomorphic differentials which is a basis for $\Gamma(\Sigma_0,\Omega^1_{\Sigma_0})$. The set $\{\omega_1,\cdots,\omega_g,\omega_{i>g}\}$ is a basis for $T_0L^{Ram}_{Airy}$, where $\omega_{i>g}$ are locally defined. The set $\{\eta^1,\cdots,\eta^g,\eta^{i>g}\}$ is a basis for $V_{\Sigma_0}$ because $\oint_{A_i}\eta^j = 0$ for all $j = 1,2,3,\cdots$ and $\{\eta^{i>g}\}$ is a basis for $G^{\perp}_{\Sigma_0}$ because all $A,B$-periods of $\eta^{i>g}$ vanish.

It follows that $\{\omega_1,\cdots,\omega_g,\eta^1,\cdots,\eta^g,\eta^{i>g}\}$ is a basis of $G_{\Sigma_0}$ and $\{\omega_1,\cdots,\omega_g, \eta^1,\cdots,\eta^g\}$ is a basis for $\mathcal{H}_{\Sigma_0} = H^1(\Sigma_0,\mathbb{C}) = G_{\Sigma_0}/G^{\perp}_{\Sigma_0}$. 
Finally, $\oint_{B_i}\eta^j = 2\pi i\delta_{i}^j$ for $j = 1,\cdots,g$ can be deduced from $\Omega_{Airy}(\eta^j,\omega_i) = \delta^i_j$.
\end{remark}
\begin{proof}
Let $\{\omega_1,\cdots,\omega_g\}$ be the normalized holomorphic differentials on $\Sigma_0$, $\oint_{A_i}\omega_j = \delta^i_j, \oint_{B_i}\omega_j = \tau_{ij}(\Sigma_0)$. It is clear that $\{\omega_1,\cdots,\omega_g\}$ is linearly independent in $T_0L^{Ram}_{Airy} \subset W^{Ram}_{Airy}$. Define 
\begin{equation*}
    (T_0L_{Airy}^{Ram})_{> n} := \left\{\xi = \sum_{\alpha\in Ram}[\alpha]\otimes \xi_\alpha\ |\ \xi_\alpha(z) \in z^{n+1}_\alpha \mathbb{C}[[z_\alpha]]\frac{dz_\alpha}{z_\alpha}, \begin{array}{c}
    \text{$\xi_\alpha(z)$ converges for $|z_\alpha| < \bar{M}_\alpha$}\\ 
    \text{for some $\bar{M}_\alpha > M_\alpha$}\end{array}\right\} \subset T_0L^{Ram}_{Airy}.
\end{equation*}
We take $\{f^{i>n,\alpha \in Ram}\}$ to be the basis of $(T_0L^{Ram}_{Airy})_{>n}$. It follows that $T_0L^{Ram}_{Airy}/(T_0L^{Ram}_{Airy})_{>n}$ is an $n|Ram|$-dimensional vector space. By choosing a large enough $n$, the image of $\{\omega_1,\cdots,\omega_g\}$ in $T_0L^{Ram}_{Airy}/(T_0L^{Ram}_{Airy})_{>n}$ is linearly independent and we can complement it with $\{\omega_{g+1},\cdots,\omega_{n|Ram|}\}$ such that the image of $\{\omega_1,\cdots,\omega_{n|Ram|}\}$ in $T_0L^{Ram}_{Airy}/(T_0L^{Ram}_{Airy})_{>n}$ is a basis for $T_0L^{Ram}_{Airy}/(T_0L^{Ram}_{Airy})_{>n}$. Let $\{\omega_{i > n|Ram|}\}$ be given by $\{f_{i>n,\alpha \in Ram}\}$ rearranged in some order, for example, let $r:\{0,\cdots,|Ram|-1\}\rightarrow Ram$ be an arbitrary bijection then we define $\omega_i := f_{\lfloor i/|Ram|\rfloor, r(i\bmod{|Ram|})}$. Then $\{\omega_{i=1,2,3,\cdots}\}$ is a basis of $(T_0L^{Ram}_{Airy}/(T_0L^{Ram}_{Airy})_{>n})\oplus ((T_0L^{Ram}_{Airy})_{>n} \cong T_0L^{Ram}_{Airy}$ and $i(\omega_k) = \sum_{\beta\in Ram}\sum_{j=1}^\infty c^{j,\beta}_kf_{j,\beta}$ for some $(c^{j,\beta}_i)$ satisfying Condition \ref{analyticcdscondition}. In particular, $(c^{j,\beta}_i)$ satisfies C\ref{analyticcdscondition}.3. 

It follows that its inverse $(d^i_{j,\beta})$ also satisfies Condition \ref{analyticcdscondition} and that $\{\eta^{i=1,2,3,\cdots}, \omega_{i=1,2,3,\cdots}\}$ gives a canonical basis for $W^{Ram}_{Airy}$. Since $\eta^i = \sum_{\beta\in Ram}\sum_{j=1}^\infty d^i_{j,\beta}\bar{e}^{j,\beta}$ is a finite summation for each $i$, we have $\eta^i \in V_{\Sigma_0}$ and in particular that $\oint_{A_i}\eta^j = 0$. Suppose that $i > g$ and $j = 1,\cdots, g$ then by using (\ref{omegaairyong}) we have $\oint_{B_j}\eta^i = 2\pi i\Omega_{Airy}(\eta^i, \omega_j) = 0$. Similarly, by using (\ref{omegaairyong}) for $i,j = 1,\cdots,g$ we have $\oint_{B_j}\eta^i = 2\pi i\Omega_{Airy}(\eta^i,\omega_j) = 2\pi i\delta^i_j$. 
\end{proof}

The main theorem is a joint work with Paul Norbury, Michael Swaddle and Mehdi Tavakol which was proven in \cite{chaimanowong2020airystructures} using a different approach. Here we are stating the theorem and prove it using an analytical framework:

\begin{theorem}\label{prepotentialvsomega0ntheorem}
Let $\mathcal{B}$ be the moduli space of $\mathcal{F}$-transversal genus $g$ curves in a foliated symplectic surface $(S,\Omega_S,\mathcal{F})$. Then for some small open neighbourhood $\mathcal{B}_{\Sigma_0} \subset \mathcal{B}$ of the reference point $[\Sigma_0] \in \mathcal{B}$, the prepotential $\mathfrak{F}_{\Sigma_0} = \mathfrak{F}_{\Sigma_0}(\mathbf{a}^1,\cdots, \mathbf{a}^g)$ such that 
\begin{equation*}
    \mathbf{a}^i := \mathbf{a}^i(\Sigma) = \oint_{A_i}\pmb{\theta}_{\Sigma}, \qquad \mathbf{b}_i := \mathbf{b}_i(\Sigma) = \oint_{B_i}\pmb{\theta}_\Sigma = \frac{\partial \mathfrak{F}_{\Sigma_0}(\{\mathbf{a}^k\})}{\partial\mathbf{a}^i}, \qquad i = 1,\cdots, g.
\end{equation*}
can be expressed as
\begin{align*}
    \mathfrak{F}_{\Sigma_0}&(\mathbf{a}^1,\cdots,\mathbf{a}^g) = \mathfrak{F}_{\Sigma_0}(\mathbf{a}^1_0,\cdots,\mathbf{a}^g_0)\\ 
    &+ \sum_{i=1}^g(\mathbf{a}^i - \mathbf{a}^i_0)\frac{\partial \mathfrak{F}_{\Sigma_0}}{\partial \mathbf{a}^i}(\mathbf{a}^1_0,\cdots,\mathbf{a}^g_0) + \frac{1}{2}\sum_{i,j=1}^g(\mathbf{a}^i - \mathbf{a}^i_0)(\mathbf{a}^j - \mathbf{a}^j_0)\frac{\partial^2\mathfrak{F}_{\Sigma_0}}{\partial\mathbf{a}^i\partial\mathbf{a}^j}(\mathbf{a}^1_0,\cdots,\mathbf{a}^g_0)\\
    &+ \sum_{n=3}^\infty\frac{1}{n!}\left(\frac{1}{2\pi i}\right)^{n-1}\sum_{i_1,\cdots,i_n = 1}^g\left(\oint_{p_1\in B_{i_1}}\cdots \oint_{p_n \in B_{i_n}}\omega_{0,n}(p_1,\cdots,p_n)\right)(\mathbf{a}^{i_1}-\mathbf{a}^{i_1}_0)\cdots (\mathbf{a}^{i_n}-\mathbf{a}^{i_n}_0)\label{prepotentialvsomega0nformula}\numberthis
\end{align*}
for all $(\mathbf{a}^1,\cdots, \mathbf{a}^g) = [\Sigma]\in \mathcal{B}_{\Sigma_0}$. Where $\mathbf{a}_0^k := \mathbf{a}^k(\Sigma_0)$ and the multi-differentials $\left\{\omega_{g,n} \in \Gamma\left((\Sigma_0\setminus \cup_{\alpha\in Ram} r_\alpha)^n, (\Omega_{\Sigma_0}^1)^{\boxtimes n}\right)\right\}$ are produced from topological recursion on $\Sigma_0$ using: 
\begin{enumerate}
    \item The involution $\sigma_\alpha(z_\alpha) := -z_\alpha$.
    \item $\omega_{0,1}(p)$ locally defined on each $\Sigma_0\cap U_\alpha$ for $\alpha \in Ram$ such that $\omega_{0,1}(z_\alpha) - \omega_{0,1}(\sigma_\alpha(z_\alpha)) = 4z_\alpha^2dz_\alpha$.
    \item $\omega_{0,2}(p,q) = B(p,q)$ the normalized Bergman kernel.
\end{enumerate}
with $z_\alpha$ for $\alpha \in Ram$ denotes the standard local coordinates corresponding to the collection of $(\mathcal{F},\Omega_S)$-charts $\mathcal{U}_{Ram} = \{(U_{\alpha \in Ram}, x_{\alpha \in Ram}, y_{\alpha \in Ram})\}$, $r_\alpha(\Sigma_0) \in \Sigma_0\cap U_\alpha$ such that $\gamma_\alpha(\Sigma_0) = (x_\alpha = z^2_\alpha, y_\alpha = z_\alpha)$.
\end{theorem}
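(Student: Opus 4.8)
The plan is to chase the commutative diagram of Proposition \ref{localvsglobalproposition} and translate the statement $\text{im}\,\Phi_{t_0} \subseteq L^{Ram}_{Airy}$ (with generating function $S_0$) into a statement about the generating function $\mathfrak{F}_{\Sigma_0}$ of $\mathcal{L}_{\Sigma_0} = \text{im}\,\pmb{\Phi}_{\Sigma_0}$, and then use Proposition \ref{atrvstrproposition} to replace the ATR data $S_{0,n}$ by the topological recursion multi-differentials $\omega_{0,n}$. Concretely, I would first fix the canonical basis $\{\eta^{i},\omega_i\}$ of $W^{Ram}_{Airy}$ produced by Lemma \ref{goodbasislemma}, so that $\omega_1,\dots,\omega_g$ are the normalized holomorphic forms on $\Sigma_0$, $\eta^1,\dots,\eta^g$ have $A$-periods zero and $B$-periods $2\pi i\,\delta^j_i$, and the remaining $\omega_{i>g},\eta^{i>g}$ span the locally-defined/coisotropic directions. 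In the associated canonical coordinates $(x^i,y_i)$ on $W^{Ram}_{Airy}$, for $i=1,\dots,g$ the coordinate $x^i$ of a class $\xi\in G_{\Sigma_0}$ equals $\tfrac{1}{2\pi i}\oint_{B_i}\xi$ and $y_i$ equals $\oint_{A_i}\xi$ (up to the normalization inherited from $\Omega_{Airy}(\eta^i,\omega_j)=\delta^i_j$ and (\ref{abperiodsofgoodbasis})); this is the computation that makes the cohomological content of $\Phi_{\Sigma_0}(\Sigma)$ visible. I would then evaluate $i(\Phi_{\Sigma_0}(\Sigma))$ in these coordinates: by Proposition \ref{localvsglobalproposition}, $[\Phi_{\Sigma_0}(\Sigma)] = \pmb{\Phi}_{\Sigma_0}(\Sigma) = (\mathbf{a}^1-\mathbf{a}^1_0,\dots,\mathbf{a}^g-\mathbf{a}^g_0,\mathbf{b}_1-\mathbf{b}_{1,0},\dots)$, so $x^i(\Sigma) = \tfrac{1}{2\pi i}(\mathbf{b}_i(\Sigma)-\mathbf{b}_i(\Sigma_0))$ and $y_i(\Sigma) = \mathbf{a}^i(\Sigma)-\mathbf{a}^i_0$ for $i=1,\dots,g$, while $x^{i>g}(\Sigma) = 0$ because $\Phi_{\Sigma_0}(\Sigma)$ is (the cohomology lift of) a genuine global holomorphic-type form with no $\eta^{i>g}$-component outside the removed discs. (The vanishing $x^{i>g}=0$ is the analytic incarnation of "$\mathcal{L}_{\Sigma_0}$ is the cross-section of $L^{Ram}_{Airy}$ over $\mathcal{B}_{\Sigma_0}$" from the introduction, and I would justify it via Corollary \ref{gisvoplush0corollary} together with Lemma \ref{thetaingtothetaonhandwlemma}.)

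Next, since $\gamma_\alpha(\Sigma_0) = (x_\alpha = z_\alpha^2, y_\alpha = z_\alpha)$ by hypothesis, Proposition \ref{localvsglobalproposition}(\textbf{c}) gives $i(\Phi_{\Sigma_0}(\Sigma)) \in L^{Ram}_{Airy}$, and hence by the analytic discussion of Section \ref{analyticresconstraintssubsection} (Proposition \ref{analyticatrproposition} in the $Ram$-product form, Remark \ref{bergmangaugeremark}) the coordinates satisfy $y_i = (\partial_i S_0)(\{x^j\})$ for all $i$, where $S_0 = \sum_n S_{0,n}\in\prod_n Sym_n(V^{mer}_{\Sigma_0})$ is the genus-zero ATR output of $(V^{mer}_{\Sigma_0},\bar A_{Airy},\bar B_{Airy},\bar C_{Airy},\bar\epsilon_{Airy})$. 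Restricting to the cross-section $x^{i>g}=0$ and using the last sentence of Proposition \ref{analyticatrproposition}, I obtain for $i=1,\dots,g$:
\begin{equation*}
    \mathbf{a}^i(\Sigma)-\mathbf{a}^i_0 = (\partial_i S_0)\big|_{x^{k>g}=0}\big(\tfrac{1}{2\pi i}(\mathbf{b}_1-\mathbf{b}_{1,0}),\dots,\tfrac{1}{2\pi i}(\mathbf{b}_g-\mathbf{b}_{g,0})\big).
\end{equation*}
This exhibits $\mathcal{L}_{\Sigma_0}$, in the $(x^{1..g},y_{1..g})$ chart, as the graph of $\nabla(\tfrac12\sum x^iy_i$-type Legendre dual$)$; equivalently, the restricted function $\mathcal{S}_0(x^1,\dots,x^g) := S_0(\{x^1,\dots,x^g,x^{k>g}=0\})$ is a generating function for $\mathcal{L}_{\Sigma_0}$ with the roles of $\mathbf{a}$ and $\mathbf{b}$ as in the Legendre transform. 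Comparing this with the defining relation $\mathbf{b}_i = \partial\mathfrak{F}_{\Sigma_0}/\partial\mathbf{a}^i$ from (\ref{abperiodsandprepotential}), a short Legendre-transform bookkeeping identifies $\mathfrak{F}_{\Sigma_0}$, up to the quadratic-and-lower ambiguity, in terms of $\mathcal{S}_0$ and hence in terms of the coefficients $S_{0,n;i_1\cdots i_n}$ for $i_k\le g$; the precise statement is that the degree-$n$ Taylor coefficient of $\mathfrak{F}_{\Sigma_0}$ at $(\mathbf{a}^1_0,\dots,\mathbf{a}^g_0)$ for $n\ge 3$ equals $\tfrac{1}{n!}(\tfrac{1}{2\pi i})^{n-1}$ times $S_{0,n;i_1\cdots i_n}$ summed appropriately, while the constant, linear and quadratic terms are left as $\mathfrak{F}_{\Sigma_0}(\mathbf{a}_0)$, $\partial_i\mathfrak{F}_{\Sigma_0}(\mathbf{a}_0)$ and $\tau_{ij}(\Sigma_0) = \partial_i\partial_j\mathfrak{F}_{\Sigma_0}(\mathbf{a}_0)$.

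Finally I would invoke Proposition \ref{atrvstrproposition}: with the stated choices (involution $\sigma_\alpha(z_\alpha)=-z_\alpha$, $\omega_{0,1}(z_\alpha)-\omega_{0,1}(-z_\alpha)=4z_\alpha^2 dz_\alpha$, $\omega_{0,2}=B$) and with the $\eta^{i}$-basis as in Lemma \ref{goodbasislemma}, the topological recursion output satisfies $\omega_{0,n}(p_1,\dots,p_n) = \sum_{i_1,\dots,i_n} S_{0,n;i_1\cdots i_n}\,\eta^{i_1}(p_1)\cdots\eta^{i_n}(p_n)$. Integrating over $B_{i_1},\dots,B_{i_n}$ with $i_k\le g$ and using $\oint_{B_i}\eta^j = 2\pi i\,\delta^j_i$ (for $j\le g$) together with the vanishing of the $B$-periods of $\eta^{i>g}$, I get $\oint_{p_1\in B_{i_1}}\cdots\oint_{p_n\in B_{i_n}}\omega_{0,n} = (2\pi i)^n\, S_{0,n;i_1\cdots i_n}$, which turns the $S_{0,n}$-expression for $\mathfrak{F}_{\Sigma_0}$ into exactly (\ref{prepotentialvsomega0nformula}). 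The convergence of the resulting series for $(\mathbf{a}^1,\dots,\mathbf{a}^g)\in\mathcal{B}_{\Sigma_0}$ follows from the holomorphy of $\mathcal{S}_0$ on a neighbourhood of the origin asserted in Proposition \ref{analyticatrproposition} (this is precisely where the analytic, rather than formal, framework pays off), after possibly shrinking $\mathcal{B}_{\Sigma_0}$.

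\textbf{Main obstacle.} The delicate point is the bookkeeping of periods versus Laurent coefficients in the good basis — establishing cleanly that, in the canonical coordinates of Lemma \ref{goodbasislemma}, the first $g$ of the $x^i$ are $\tfrac{1}{2\pi i}$ times $B$-periods, the first $g$ of the $y_i$ are $A$-periods, and the cross-section $x^{i>g}=0$ is genuinely cut out — and then threading the Legendre transform so that the combinatorial factor $\tfrac{1}{n!}(\tfrac{1}{2\pi i})^{n-1}$ and the special treatment of $n\le 2$ come out correctly. Everything else is an assembly of Propositions \ref{localvsglobalproposition}, \ref{analyticatrproposition}, \ref{atrvstrproposition} and Lemma \ref{goodbasislemma}.
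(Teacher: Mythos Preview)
You have the right ingredients (Lemma \ref{goodbasislemma}, Propositions \ref{localvsglobalproposition}, \ref{analyticatrproposition}, \ref{atrvstrproposition}) and the overall flow matches the paper, but your coordinate identification is reversed, and this is not a cosmetic issue.

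In the basis of Lemma \ref{goodbasislemma}, write $\Phi_{\Sigma_0}(\Sigma) = \sum_{k\ge 1}\beta_k\,\eta^k + \sum_{k\ge 1}\alpha^k\,\omega_k$ as in Proposition \ref{analyticgaugetransformationproposition}. Here $\omega_k$ plays the role of $f_k$ (the $T_0L$-basis) and $\eta^k$ the role of $e^k$ (the $V$-basis), so $\alpha^k$ are the ``$x$''-coordinates and $\beta_k$ are the ``$y$''-coordinates in the standard Airy form (\ref{airystructuredefn}); consequently the generating function of $L^{Ram}_{Airy}$ satisfies $\beta_i = \partial_i S_0(\alpha)$, not the other way around. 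Taking $A$-periods of $\Phi_{\Sigma_0}(\Sigma)$ using (\ref{abperiodsofgoodbasis}) gives $\alpha^i = \mathbf{a}^i-\mathbf{a}^i_0$ for $i\le g$, while the $B$-periods give
\[
2\pi i\,\beta_i + \sum_{k=1}^g \tau_{ik}(\Sigma_0)\,\alpha^k \;=\; \mathbf{b}_i(\Sigma)-\mathbf{b}_i(\Sigma_0),
\]
so $\beta_i$ is \emph{not} simply $\tfrac{1}{2\pi i}(\mathbf{b}_i-\mathbf{b}_{i,0})$; the $\tau_{ij}$ term is exactly what produces the quadratic piece of (\ref{prepotentialvsomega0nformula}). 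Moreover the vanishing is $\alpha^{i>g}=0$, and the reason is that $\omega_{i>g}\notin G_{\Sigma_0}$ (Corollary \ref{gisvoplush0corollary}), not anything about $\eta^{i>g}$; in fact $\eta^{i>g}\in G^\perp_{\Sigma_0}$ and the components $\beta_{i>g}$ are generically nonzero.

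With the correct assignment there is no Legendre transform at all: Proposition \ref{analyticatrproposition} gives directly $\beta_i = \partial_i\mathcal{S}_0(\alpha^1,\dots,\alpha^g)$ with $\mathcal{S}_0 := S_0|_{\alpha^{k>g}=0}$, which combined with the $B$-period identity and $\mathbf{b}_i=\partial_i\mathfrak{F}_{\Sigma_0}$ reads
\[
2\pi i\,\partial_i\mathcal{S}_0(\mathbf{a}-\mathbf{a}_0) + \sum_k(\mathbf{a}^k-\mathbf{a}^k_0)\,\partial_i\partial_k\mathfrak{F}_{\Sigma_0}(\mathbf{a}_0) \;=\; \partial_i\mathfrak{F}_{\Sigma_0}(\mathbf{a}) - \partial_i\mathfrak{F}_{\Sigma_0}(\mathbf{a}_0),
\]
and a single integration in $\mathbf{a}$ yields (\ref{prepotentialvsomega0nformula}) once you substitute $S_{0,n;i_1\cdots i_n} = (2\pi i)^{-n}\oint_{B_{i_1}}\!\cdots\!\oint_{B_{i_n}}\omega_{0,n}$ from Proposition \ref{atrvstrproposition} and (\ref{abperiodsofgoodbasis}). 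Your proposed route via an inverse/Legendre step would, if carried out, require inverting the series $\beta\mapsto\alpha$ and would not produce the clean Taylor-coefficient identification for $n\ge 3$.
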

\begin{proof}
Let $\{\epsilon_{\alpha\in Ram}\}, \{M_{\alpha \in Ram}\}$, $\mathcal{U}_{Ram}$ and $\mathcal{B}_{\Sigma_0}$ be chosen such that Condition \ref{howtochoosebsigma0condition} is satisfied. In particular, the collection of $(\mathcal{F}, \Omega_S)$-charts $\mathcal{U}_{Ram}$ is chosen such that $\gamma_\alpha(\Sigma_0) = (x_\alpha = z_\alpha^2, y_\alpha = z_\alpha)$. Consider the embedding map $\Phi_{\Sigma_0} : \mathcal{B}_{\Sigma_0}\rightarrow G_{\Sigma_0}$ given by $\Phi_{\Sigma_0}(\Sigma) := -\theta_{\Sigma_0} + s^*_{\Sigma_0}\theta_\Sigma$ (see Section \ref{relationtoresconstraintssubsection}). It follows from Proposition \ref{localvsglobalproposition} that $i(\Phi_{\Sigma_0}(\Sigma))$ satisfies the $Ram$ product analytic residue constraints:
\begin{equation}\label{iphisatisfiesramprodresconstraints}
    i(\Phi_{\Sigma_0}(\Sigma)) = \Phi_{t(\Sigma_0)}(\gamma(\Sigma)) = \sum_{\alpha \in Ram}\sum_{k=1}^\infty(x^{k,\alpha}f_{k,\alpha} + y_{k,\alpha}e^{k,\alpha}) \in L^{Ram}_{Airy} \subset W^{Ram}_{Airy}.
\end{equation}
Let us change the canonical basis of $W^{Ram}_{Airy}$ to $\{\eta^{i=1,2,3,\cdots}, \omega_{i=1,2,3,\cdots}\}$ as provided by Lemma \ref{goodbasislemma}. Then,
\begin{equation}\label{phisigma0innewbasis}
\Phi_{\Sigma_0}(\Sigma) = \sum_{k=1}^\infty \beta_k\eta^k + \sum_{k=1}^g\alpha^k\omega_k,    
\end{equation}
by Proposition \ref{analyticgaugetransformationproposition} where $\{\alpha^{k=1,2,3,\cdots}\}, \{\beta_{k=1,2,3,\cdots}\}$ are given by (\ref{bermangaugecoordtransf}) and $\alpha^{i>g} = 0$ since $\omega_{i>g}\notin G_{\Sigma_0}$ (alternatively, we could apply Corollary \ref{gisvoplush0corollary}). Computing the $A$-periods of $\Phi_{\Sigma_0}(\Sigma)$ using (\ref{abperiodsofgoodbasis}), we obtain for $i = 1,\cdots, g$:
\begin{align*}
    \alpha^i &= \oint_{A_i}\left(\sum_{k=1}^\infty \beta_k\eta^k + \sum_{k=1}^g\alpha^k\omega_k\right)\\
    &= \oint_{A_i}\Phi_{\Sigma_0}(\Sigma) = \oint_{A_i}\pmb{\Phi}_{\Sigma_0}(\Sigma) = -\oint_{A_i}\pmb{\theta}_{\Sigma_0} + \oint_{A_i}\pmb{\theta}_{\Sigma} = \mathbf{a}^i(\Sigma) - \mathbf{a}^i(\Sigma_0)\label{aperiodsofphi}\numberthis.
\end{align*}
Similarly, computing $B$-periods of $\Phi_{\Sigma_0}(\Sigma)$ using (\ref{abperiodsofgoodbasis}), we obtain for $i = 1,\cdots,g$:
\begin{align*}
    2\pi i\beta_i + \sum_{k=1}^\infty\tau_{ik}(\Sigma_0)\alpha^k &= \oint_{B_i}\left(\sum_{k=1}^\infty \beta_k\eta^k + \sum_{k=1}^g\alpha^k\omega_k\right)\\
    &= \oint_{B_i}\Phi_{\Sigma_0}(\Sigma) = \oint_{B_i}\pmb{\Phi}_{\Sigma_0}(\Sigma) = -\oint_{B_i}\pmb{\theta}_{\Sigma_0} + \oint_{B_i}\pmb{\theta}_{\Sigma} = \mathbf{b}_i(\Sigma) - \mathbf{b}_i(\Sigma_0)\label{bperiodsofphi}\numberthis.
\end{align*}
Where we have used Lemma \ref{fromlocalvtoglobalvlemma} in conjunction with the fact that $(d^i_{j,\beta})$ satisfies C\ref{analyticcdscondition}.3 to justify the term-by-term integration. In particular for each $i = 1,2,3,\cdots$, $\eta^i$ is a finite linear combination of $\bar{e}^{i,\alpha}$. We also have used the relation $[\Phi_{\Sigma_0}(\Sigma)] = \pmb{\Phi}_{\Sigma_0}(\Sigma)$ from Proposition \ref{localvsglobalproposition}. From now on we will write $\mathbf{a}^i := \mathbf{a}^i(\Sigma), \mathbf{b}_i := \mathbf{b}_i(\Sigma)$ and $\mathbf{a}^i_0 := \mathbf{a}^i(\Sigma_0), \mathbf{b}^0_i := \mathbf{b}_i(\Sigma_0)$. 

Let us cover $\mathcal{B}_{\Sigma_0}$ with a coordinates chart $(\mathcal{B}_{\Sigma_0}, u^1,\cdots,u^g)$. Then it is clear from construction that for each $\alpha \in Ram$, $i_\alpha(\Phi_{\Sigma_0}(\Sigma))$ is holomorphic in $\{u^1,\cdots,u^g, z_\alpha\}$ for all $(u^1,\cdots,u^g,z_\alpha) \in \mathcal{B}_{\Sigma_0}\times\mathbb{A}_{\alpha,\epsilon_\alpha, M_\alpha}$. Recall from Lemma \ref{aiscoordforblemma} that $\{\mathbf{a}^1,\cdots,\mathbf{a}^g\}$ is also a coordinates system of $\mathcal{B}_{\Sigma_0}$. From (\ref{phisigma0innewbasis}) and (\ref{aperiodsofphi}), it follows that $\beta_i$ are holomorphic functions in $\{\alpha^1,\cdots,\alpha^g\}$. It follows that $i_\alpha\left(\sum_{k=1}^\infty\beta_k(\alpha^1,\cdots,\alpha^g)\eta^k\right)(z_\alpha)$ is holomorphic in $\{\alpha^1,\cdots,\alpha^g, z_\alpha^{-1}\}$ on the open neighbourhood $\mathcal{B}_{\Sigma_0}$ of $(\alpha^1,\cdots,\alpha^g) = (0,\cdots,0)$ and $|z_\alpha| > \epsilon_\alpha$. Using (\ref{iphisatisfiesramprodresconstraints}) together with Proposition \ref{analyticgaugetransformationproposition} implies that
\begin{equation*}
    (\bar{H}_{Airy})_i\left(\{\alpha^1,\cdots,\alpha^g,\alpha^{k > g} = 0\}, \{\beta_{k=1,2,3,\cdots}(\alpha^1,\cdots,\alpha^g)\}\right) = 0,
\end{equation*}
where $(\bar{H}_{Airy})_i$ is the gauge transformed $Ram$ product residue constraints Airy structure as given in (\ref{resconstraintshamiltonianinbergmangauge}). Using the obvious generalization of Proposition \ref{analyticatrproposition} (see also Remark \ref{bergmangaugeremark}), we conclude that $\beta_i(\alpha^1,\cdots,\alpha^g) = (\partial_iS_0)|_{\alpha^{k > g}=0}$ where $S_0 = \sum_{n=1}^\infty S_{0,n} \in \prod_{n=1}^\infty Sym_n(V^{mer}_{\Sigma_0})$ is the output of the ATR using the quantum Airy structure $(V^{mer}_{\Sigma_0},\bar{A}_{Airy}, \bar{B}_{Airy}, \bar{C}_{Airy}, \bar{\epsilon}_{Airy})$. Here, $(V^{mer}_{\Sigma_0},\bar{A}_{Airy}, \bar{B}_{Airy}, \bar{C}_{Airy}, \bar{\epsilon}_{Airy})$ is the quantum Airy structure obtained from the gauge transformation of the quantum Airy structure $(V^{|Ram|}_{Airy}, A_{Airy}, B_{Airy}, C_{Airy}, \epsilon_{Airy})$, corresponding to the change of a canonical basis of $W^{|Ram|}_{Airy}$ from $\{e^{k=1,2,3,\cdots,\alpha \in Ram}, f_{k=1,2,3,\cdots,\alpha \in Ram}\}$ to $\{\eta^{k=1,2,3\cdots}, \omega_{k=1,2,3,\cdots}\}$ (see Section \ref{ramproductresconstraintssubsection}). In particular, let $\mathcal{S}_0(\alpha^1,\cdots,\alpha^g) := S_0(\{\alpha^1,\cdots,\alpha^g, \alpha^{k>g}=0\})$ then $\mathcal{S}_0$ is holomorphic on $\mathcal{B}_{\Sigma_0}$ and $\beta_i(\alpha^1,\cdots,\alpha^k) = \frac{\partial \mathcal{S}_0}{\partial \alpha^i}(\alpha^1,\cdots,\alpha^k)$ for $i = 1,\cdots,g$. Equivalently, $\beta_i = \frac{\partial \mathcal{S}_0}{\partial \mathbf{a}^i}(\mathbf{a}^1-\mathbf{a}^1_0,\cdots, \mathbf{a}^g-\mathbf{a}^g_0)$ for $i = 1,\cdots,g$. Using the fact that $\mathbf{b}_i = \frac{\partial \mathfrak{F}_{\Sigma_0}}{\partial \mathbf{a}^i}(\mathbf{a}^1,\cdots,\mathbf{a}^g)$ and $\tau_{ij}(\Sigma) = \frac{\partial^2\mathfrak{F}_{\Sigma_0}}{\partial\mathbf{a}^i\partial\mathbf{a}^j}(\mathbf{a}^1,\cdots,\mathbf{a}^g)$ then (\ref{aperiodsofphi}) and (\ref{bperiodsofphi}) implies
\begin{equation*}
    2\pi i\frac{\partial \mathcal{S}_0}{\partial \mathbf{a}^i}(\mathbf{a}^1-\mathbf{a}^1_0,\cdots, \mathbf{a}^g-\mathbf{a}^g_0) + \sum_{k=1}^g (\mathbf{a}^k-\mathbf{a}^k_0)\frac{\partial^2\mathfrak{F}_{\Sigma_0}}{\partial\mathbf{a}^k\partial\mathbf{a}^i}(\mathbf{a}^1_0,\cdots,\mathbf{a}^g_0) = \frac{\partial\mathfrak{F}_{\Sigma_0}}{\partial\mathbf{a}^i}(\mathbf{a}^1,\cdots,\mathbf{a}^g) - \frac{\partial\mathfrak{F}_{\Sigma_0}}{\partial\mathbf{a}^i}(\mathbf{a}^1_0,\cdots,\mathbf{a}^g_0)
\end{equation*}
Integrating both sides, we find that for all $(\mathbf{a}^1,\cdots,\mathbf{a}^g) = [\Sigma] \in \mathcal{B}_{\Sigma_0}$ we have
\begin{align*}
    \mathfrak{F}_{\Sigma_0}(\mathbf{a}^1,\cdots,\mathbf{a}^g) &= \mathfrak{F}_{\Sigma_0}(\mathbf{a}^1_0,\cdots,\mathbf{a}^g_0) + \sum_{i=1}^g(\mathbf{a}^i-\mathbf{a}^i_0)\frac{\partial\mathfrak{F}_{\Sigma_0}}{\partial\mathbf{a}^i}(\mathbf{a}^1_0,\cdots,\mathbf{a}^g_0)\\
    &\qquad + \sum_{i,j=1}^g (\mathbf{a}^i-\mathbf{a}^i_0)(\mathbf{a}^j-\mathbf{a}^j_0)\frac{\partial^2\mathfrak{F}_{\Sigma_0}}{\partial\mathbf{a}^i\partial\mathbf{a}^j}(\mathbf{a}^1_0,\cdots,\mathbf{a}^g_0) + 2\pi i\mathcal{S}_0(\mathbf{a}^1-\mathbf{a}_0^1,\cdots,\mathbf{a}^g-\mathbf{a}^g_0)\label{prepotentialvsmathcals0}\numberthis.
\end{align*}
From (\ref{sgnpolynomial}), $S_0 = \sum_{n=1}^\infty \frac{1}{n!}\sum_{i_1,\cdots,i_n = 1}^\infty S_{0,n;i_1\cdots i_n}\alpha^{i_1}\cdots\alpha^{i_n}$ it follows that
\begin{equation}\label{mathcals0}
    \mathcal{S}_0(\mathbf{a}^1-\mathbf{a}^1_0,\cdots,\mathbf{a}^g-\mathbf{a}^g_0) = \sum_{n=1}^\infty\frac{1}{n!}\sum_{i_1,\cdots,i_n}^gS_{0,n;i_1,\cdots,i_n}(\mathbf{a}^{i_1}-\mathbf{a}^{i_1}_0)\cdots(\mathbf{a}^{i_n}-\mathbf{a}^{i_n}_0).
\end{equation}
On the other hand, Proposition \ref{atrvstrproposition} and the fact that $\oint_{B_i}\eta^j = 0$ if $j > g$ and $\oint_{B_i}\eta^j = 2\pi i\delta_{i}^j$ if $j \leq g$ implies that for $i_1,\cdots,i_n = 1,\cdots,g$ we have
\begin{equation}\label{s0vsomegagn}
    S_{0,n;i_1,\cdots,i_n} = \left(\frac{1}{2\pi i}\right)^n\oint_{p_1 \in B_{i_1}}\cdots \oint_{p_n \in B_{i_n}}\omega_{0,n}(p_1,\cdots,p_n).
\end{equation}
Combining (\ref{prepotentialvsmathcals0}), (\ref{mathcals0}) and (\ref{s0vsomegagn}) the theorem is proven.
\end{proof}

\begin{remark}
In Theorem \ref{prepotentialvsomega0ntheorem}, we can take $\omega_{0,1}(z_\alpha) = y_\alpha dx_\alpha$ but other choices for $\omega_{0,1}(p)$ are equally valid as long as $\omega_{0,1}(z_\alpha) - \omega_{0,1}(-z_\alpha) = 4z_\alpha^2dz_\alpha$ is satisfied. Note that although $\omega_{0,1}(z_\alpha) = y_\alpha dx_\alpha$ looks very similar to $-\theta_{\Sigma_0}$ which locally takes the form $\theta_{\Sigma_0}|_{\Sigma_0\cap U_p} = -(y_p + f_p(x_p))dx_p$, in general $\omega_{0,1}$ and $-\theta_{\Sigma_0}$ cannot be the same. Firstly, $\omega_{1,0}$ is usually defined only locally near ramification points whereas $\theta_{\Sigma_0}$ is defined on $\Sigma_0\setminus \cup_{\alpha \in Ram}\mathbb{D}_{\alpha,\epsilon_\alpha}(\Sigma_0)$. Moreover, it is generally not true that $\theta_{\Sigma_0}(-z_\alpha) - \theta_{\Sigma_0}(z_\alpha) = 4z_\alpha^2dz_\alpha$ so we cannot choose $\omega_{1,0} = -\theta_{\Sigma_0}$. For example, let $[\Sigma_1] \in \mathcal{B}_{\Sigma_0}$ be a nearby curve of $\Sigma_0$ with the local parameterization:
\begin{equation*}
    \gamma_\alpha(\Sigma_1) = \left(x_\alpha = z_\alpha^2+a_\alpha(\Sigma_1), y_\alpha = b_{0\alpha}(\Sigma_1) + b_{1\alpha}(\Sigma_1)z_\alpha + b_{2\alpha}(\Sigma_1)z^2_\alpha + \cdots\right).
\end{equation*}
Let us construct $\theta \in \Gamma(\mathcal{B}_{\Sigma_0}, G)$ by patching together $\theta_{\Sigma}|_{\Sigma\cap U_p} := -y_p(x_p;\Sigma)dx_p + y_p(x_p;\Sigma_1)dx_p$ as explained in Section \ref{relationtoresconstraintssubsection} (replacing $\mathcal{B}_{\Sigma_0}$ by $\mathcal{B}_{\Sigma_0}\cap \mathcal{B}_{\Sigma_1}$ if necessary). As in the proof of Lemma \ref{patchingthetalemma}, on the annulus $\mathbb{A}_{\alpha, \epsilon_\alpha, M_\alpha}(\Sigma_0) \subset \Sigma_0\cap U_\alpha$, we have
\begin{align*}
    \theta_{\Sigma_0}(z_\alpha) &= \theta_{t_\alpha(\Sigma_0)}(z_\alpha) - \exp\left(-a_\alpha(\Sigma_1)\mathcal{L}_{\frac{1}{2z_\alpha}\partial_{z_\alpha}}\right)\theta_{t_\alpha(\Sigma_1)}(z_\alpha)\\
    &= -z_\alpha d(z^2_\alpha) + \sum_{k=0}^\infty b_{k\alpha}(\Sigma_1)z^k_\alpha\left(\sqrt{1 - \frac{a_\alpha(\Sigma_1)}{z_\alpha^2}}\right)^kd(z_\alpha^2).
\end{align*}
From which it follows that
\begin{equation*}
    \theta_{\Sigma_0}(-z_\alpha) - \theta_{\Sigma_0}(z_\alpha) = 4z^2_\alpha dz_\alpha - 2\sum_{k=0}^\infty b_{2k+1,\alpha}(\Sigma_1)\left(\sqrt{z_\alpha^2 - a_\alpha(\Sigma_1)}\right)^{2k+1}d(z^2_\alpha) \neq 4z_\alpha^2 dz_\alpha.
\end{equation*}
However, in some exceptional cases such as those we are going to examine in the examples below, we can technically set $\omega_{1,0} := -\theta_{\Sigma_0}$. Suppose that there exists a meromorphic differential form $dS(\Sigma)$ on each $[\Sigma] \in \mathcal{B}_{\Sigma_0}$, varying holomorphically over $\mathcal{B}_{\Sigma_0}$, such that $\nabla_{\mathcal{F}}dS(\Sigma) = \phi_\Sigma$ and $dS(\Sigma)|_{\Sigma\cap U_\alpha} = -(y_\alpha + f_\alpha(x_\alpha))dx_\alpha|_{\Sigma\cap U_\alpha}$ for some holomorphic function $f_\alpha(x_\alpha)$ for all $\alpha \in Ram$. Then we can define the map $\Phi_{\Sigma_0} : \mathcal{B}_{\Sigma_0}\rightarrow G_{\Sigma_0}$ by $\Phi_{\Sigma_0}(\Sigma) := -dS(\Sigma_0) + s^*_{\Sigma_0, \Sigma}dS(\Sigma)$. If $dS(\Sigma)$ has no residues, then it effectively plays the role of $\theta_{\Sigma}$, even though $dS(\Sigma)$ might not be in $G_{\Sigma}$ because it may have poles at non-ramification points on $\Sigma$. In particular, we can set $\pmb{\theta}_{\Sigma_0} := [dS(\Sigma_0)] \in\mathcal{H}_{\Sigma_0}$. Since $(dS(\Sigma_0))(-z_\alpha) - dS(\Sigma_0))(z_\alpha) = 4z_\alpha^2dz_\alpha$ we can apply Theorem \ref{prepotentialvsomega0ntheorem} with $\omega_{1,0} := dS(\Sigma_0)$. An example of this case is the Seiberg-Witten family of curves given in Example \ref{swexample}. If $dS(\Sigma)$ turns out to be holomorphic, then we can define $\theta_{\Sigma} = dS(\Sigma) \in G_{\Sigma}$ and apply Theorem \ref{prepotentialvsomega0ntheorem} with $\omega_{0,1} := -\theta_{\Sigma_0}$. Example \ref{hitchinexample} below provides an example of this scenario.
\end{remark}

\begin{example}\label{hitchinexample}
Let us consider the family of curves arising from the \emph{Hitchin systems} \cite{hitchin1987stable, baraglia2017special}. Let $\mathcal{M}_{n,d}$ be the moduli space of rank $n$ degree $d$ semi-stable \emph{Higgs Bundles} on a compact genus $\bar{g}$ Riemann surface $\mathcal{C}$. Given a holomorphic vector bundle $E$, we define the \emph{degree} by $\deg E := \int_{\mathcal{C}}c_1(E)$. A rank $n$ degree $d$ semi-stable Higgs Bundle is a pair $(E,\Phi)$, where $E$ is a rank $n$ holomorphic vector bundle on $\mathcal{C}$ and $\Phi : E\rightarrow E\otimes \Omega_{\mathcal{C}}^1$ is a bundle map, such that
    \begin{equation*}
        \frac{\deg F}{\rank F} \leq \frac{\deg E}{\rank E}
    \end{equation*}
for every sub-bundle $F \subset E$ such that $\Phi|_F : F \rightarrow F\otimes \Omega^1_{\mathcal{C}}$. We say that $(E,\Phi)$ is stable if the inequality is strict.
Let $\mathcal{B} := \bigoplus_{k=1}^n \Gamma(\mathcal{C},\Omega^{\otimes k}_{\mathcal{C}})$ be the base of the \emph{Hitchin map} $h : \mathcal{M}_{n,d} \rightarrow \mathcal{B}$ given by $h(E,\Phi) = (p_1(\Phi),\cdots,p_n(\Phi))$ where $x^n + p_1(\Phi)x^{n-1} + \cdots + p_n(\Phi) = \det(x\cdot Id - \Phi)$. A fiber of the Hitchin map over $b = (b_1,\cdots,b_n) \in \mathcal{B}$ can be described by a spectral curve $\Sigma_b \subset T^*\mathcal{C}$ given by the zero divisor of the following global section:
\begin{equation*}
    \lambda^n + \pi^*b_1 \lambda^{n-1} + \cdots + \pi^*b_n \in \Gamma(T^*\mathcal{C}, \pi^*(\Omega^1_{\mathcal{C}})^{\otimes n}).
\end{equation*}
Where $\pi : T^*\mathcal{C}\rightarrow \mathcal{C}$ is the projection morphism and $\lambda \in \Gamma(T^*\mathcal{C},\pi^*\Omega^1_{\mathcal{C}})$ denotes the tautological section: $\lambda(p) = p \in (\pi^*\Omega_{\mathcal{C}}^1)_p$. 

It follows that $\mathcal{B}$ is a family of genus $g := 2n^2(\bar{g}-1)+2$ compact Riemann surfaces $\Sigma$ embedded inside the foliated symplectic surface $(S,\Omega_S,\mathcal{F})$ where $S := T^*\mathcal{C}$, $\mathcal{F}$ is given by the cotangent fibers and $\Omega_S := -d\Theta$ where $\Theta \in \Gamma(T^*\mathcal{C},\pi^*\Omega^1_{\mathcal{C}})$ is the tautological $1$-form. Denote by $\mathcal{B}^{reg}$, the regular locus of $\mathcal{B}$ where $\Sigma$ are smooth for all $[\Sigma] \in \mathcal{B}^{reg}$. For each $[\Sigma] \in \mathcal{B}^{reg}$ we define $\theta_{\Sigma} := -\Theta|_{\Sigma} \in \Gamma(\Sigma,\Omega^1_{\Sigma})$. Let $U\subset \mathcal{C}$ be an open subset with local coordinate $q$, then $(T^*U,q,p), T^*U \subset T^*\mathcal{C}$ is a $(\mathcal{F},\Omega_S)$-chart because $\Theta|_{T^*U} = pdq$ means $\Omega_S|_{T^*U} = dq\wedge dp$ and the foliation is given by $p = const$. Moreover, we have $\theta_{\Sigma}|_{\Sigma \cap T^*U} = -pdq|_{\Sigma \cap T^*U}$ which implies $\nabla_{\mathcal{F}}\theta_{\Sigma} = \phi_{\Sigma}$. Let
\begin{equation*}
    z^i := \oint_{A_i}\theta_\Sigma, \qquad w_i := \oint_{B_i}\theta_\Sigma = \frac{\partial\mathfrak{F}_{\Sigma_0}}{\partial z^i}(z^1,\cdots,z^g)
\end{equation*}
where $\mathfrak{F}_{\Sigma_0}$ is the prepotential. In fact, in this case the prepotential is simple enough to be written down explicitly: $\mathfrak{F}_{\Sigma_0}(z^1,\cdots,z^g) = \sum_{i=1}^gz^iw_i = \frac{1}{2}\sum_{i,j=1}^g\tau_{ij}(z^1,\cdots,z^g)z^iz^j$ \cite[Proposition 5.11]{baraglia2017special}. 

Now, consider the topological recursion on $[\Sigma_0] \in \mathcal{B}^{reg}$ with $\omega_{0,1} := -\theta_{\Sigma_0}$ and $\omega_{0,2} = B$ the normalized Bergman kernel. If we picked a $(\mathcal{F},\Omega_S)$-chart $(T^*U_\alpha, p_\alpha,q_\alpha)$ such that $r_\alpha(\Sigma_0) \in T^*U_\alpha$ and $\gamma_\alpha(\Sigma_0) = (q_\alpha = z_\alpha^2, p_\alpha = z_\alpha)$, then $\theta_{\Sigma_0}|_{\Sigma_0\cap T^*U_\alpha} = -p_\alpha dq_\alpha|_{\Sigma\cap T^*U_\alpha} = -z_\alpha d(z_\alpha^2)$ and therefore $\omega_{0,1}(z_\alpha) - \omega_{0,1}(-z_\alpha) = 4z_\alpha^2dz_\alpha$. It follows from Theorem \ref{prepotentialvsomega0ntheorem} that formula (\ref{prepotentialvsomega0nformula}) relating $\mathfrak{F}_{\Sigma_0} = \mathfrak{F}_{\Sigma_0}(z^1,\cdots,z^g)$ and $\omega_{0,n}$ holds in this setting over some open neighbourhood $\mathcal{B}_{\Sigma_0} \subset \mathcal{B}^{reg}$ of $[\Sigma_0]$. The exact same result was obtained earlier by Baraglia-Huang \cite[Theorem 7.4]{baraglia2017special} using a different technique. Note that in comparing our (\ref{prepotentialvsomega0nformula}) with \cite[Theorem 7.4]{baraglia2017special}, we need to replace $z^i$ by $\bar{z}^i := -z^i$ and $\mathfrak{F}_{\Sigma_0} = \mathfrak{F}_{\Sigma_0}(z^1,\cdots,z^g)$ by $\bar{\mathfrak{F}}_{\Sigma_0} = \bar{\mathfrak{F}}_{\Sigma_0}(\bar{z}^1,\cdots,\bar{z}^g) := \mathfrak{F}_{\Sigma_0}(-\bar{z}^1,\cdots,-\bar{z}^g)$ because $\theta_\Sigma$ in \cite{baraglia2017special} was defined to be $+\Theta|_\Sigma$ instead of $-\Theta|_\Sigma$.
\end{example}

\begin{example}\label{swexample}
Let $\mathcal{B}$ be the family of smooth Seiberg-Witten curves (see Section \ref{intro.swsection}):
\begin{equation*}
    \Sigma(u) : \Lambda^{g+1}\left(w + \frac{1}{w}\right) = z^{g+1} + u^gz^{g-1} + ... + u^1 =: P(z;u).
\end{equation*}
It can be shown that each Seiberg-Witten curve $\Sigma(u)$ can be embedded inside a foliated symplectic surface $(S,\Omega_S,\mathcal{F})$ where $S$ is a certain compactification of $\mathbb{C}^2$. 
Let us consider the dense subset $U := (\mathbb{C}\setminus (-\infty,0])\times \mathbb{C} \subset S$ with coordinates $(\log w, z)$ and suppose without loss of generality that $\Sigma_0 = \Sigma(u_0)$ is chosen such that all ramification points $r_\alpha(\Sigma_0)$ are contained in $U$ (we simply pick a different branch-cut for $\log$ otherwise). On $U$, the symplectic form is $\Omega_S|_U = \frac{dw}{w}\wedge dz$ and the foliation is given by $w = const$, hence $(U,\log w, z)$ is a $(\mathcal{F},\Omega_S)$-chart. The Seiberg-Witten differential is given by $dS_{SW} := z\frac{dw}{w}$, or $dS_{SW}(\Sigma(u)) = z\frac{dw}{w}|_{\Sigma(u)}$ if we would like to emphasize that we are considering $dS_{SW}$ as a differential form on $\Sigma(u)$. We define $A, B$-periods and the Seiberg-Witten prepotential $\mathfrak{F}^{SW}_{\Sigma_0} = \mathfrak{F}^{SW}_{\Sigma_0}(a^1,\cdots,a^g)$ by 
\begin{equation*}
    a^i := \oint_{A_i}dS_{SW}(\Sigma(u)), \qquad b_i := \oint_{B_i}dS_{SW}(\Sigma(u)) = \frac{\partial \mathfrak{F}^{SW}_{\Sigma_0}}{\partial a^i}(a^1,\cdots, a^g),
\end{equation*}
over some open neighbourhood $\mathcal{B}_{\Sigma_0}$ of $[\Sigma_0]\in \mathcal{B}$. By definition of the symplectic form $\Omega_S$, we can see that $\nabla_{\mathcal{F}}dS_{SW}(\Sigma(u)) = -\phi_{\Sigma(u)}$ and hence $\pmb{\theta}_{\Sigma(u)} := -[dS_{SW}(\Sigma(u))] \in \mathcal{H}_{\Sigma(u)}$. The negative sign means that we will need to identify $\mathbf{a}^i := -a^i$ and $\mathfrak{F}^{SW}_{\Sigma_0}(a^1,\cdots,a^g) = \mathfrak{F}_{\Sigma_0}(\mathbf{a}^1,\cdots,\mathbf{a}^g)$ after our application of Theorem \ref{prepotentialvsomega0ntheorem} later on. Choose a new $(\mathcal{F},\Omega_S)$-chart $(U_\alpha, x_\alpha, y_\alpha)$ around every ramification points $r_\alpha$ of $\Sigma_0$ such that $\gamma_\alpha(\Sigma_0) = (x_\alpha = z^2_\alpha, y_\alpha = z_\alpha)$. Then $(\log w, z)$ and $(x_\alpha, y_\alpha)$ must be related by the coordinate transformation of the form (\ref{coordtransformationpreservingomegaandfoliation}):
\begin{equation*}
    \log w = F_\alpha(x_\alpha), \qquad z = \frac{y_\alpha}{F'(x_\alpha)} + G'(x_\alpha).
\end{equation*}
Therefore, 
\begin{align*}
    (dS_{SW}(\Sigma_0))(z_\alpha) - (dS_{SW}(\Sigma_0))(-z_\alpha) &= \left(\frac{y_\alpha(z_\alpha)}{F'(x_\alpha(z_\alpha))} + G'(x_\alpha(z_\alpha))\right)F'(x_\alpha(z_\alpha))dx_\alpha(z_\alpha)\\
    &\qquad - \left(\frac{y_\alpha(-z_\alpha)}{F'(x_\alpha(-z_\alpha))} + G'(x_\alpha(-z_\alpha))\right)F'(x_\alpha(-z_\alpha))dx_\alpha(-z_\alpha)\\
    &= y_\alpha(z_\alpha)dx_\alpha(z_\alpha) - y_\alpha(-z_\alpha)dx_\alpha(-z_\alpha) = 4z_\alpha^2dz_\alpha.
\end{align*}
Using Theorem \ref{prepotentialvsomega0ntheorem}, we find that on some $\mathcal{B}_{\Sigma_0}\subseteq \mathcal{B}$ the relation (\ref{prepotentialvsomega0nformula}) holds between the Seiberg-Witten prepotential $\mathfrak{F}^{SW}_{\Sigma_0}$ and $\omega_{0,n}$ produced from topological recursion on $\Sigma_0$ using $\omega_{0,1} := dS_{SW}(\Sigma_0)$ and $\omega_{0,2} = B$, where $B = B(p,q)$ is the normalized Bergman kernel on $\Sigma_0$. In Section \ref{swexamplechapter} we are going to study this example in more detail. In particular, we will construct the needed foliated symplectic surface $(S,\Omega_S,\mathcal{F})$ and explicitly show the calculations leading to (\ref{prepotentialvsomega0nformula}) without referring to Theorem \ref{prepotentialvsomega0ntheorem}.
\end{example}

\section{Seiberg-Witten Prepotential and Topological Recursion}\label{swexamplechapter}

In this section we demonstrate the Kontsevich-Soibelman approach to topological recursion we have studied in Section \ref{airystructurechapter} using an explicit example of the genus $g\geq 1$ Seiberg-Witten family of curves, parameterized by $u = (u_1,\cdots,u_g)$:
\begin{equation*}
    \Lambda^{g+1}\left(w + \frac{1}{w}\right) = z^{g+1} + u_gz^{g-1} + \cdots + u_1 =: P(z;u).
\end{equation*} 
In the end, we obtain the formula (\ref{swprepotentialvsomega0nformula}) relating the Seiberg-Witten prepotential $\mathfrak{F}^{SW}_{\Sigma_0} = \mathfrak{F}^{SW}_{\Sigma_0}(\{a^i\})$ to the genus zero part of topological recursion on a Seiberg-Witten curve with $\omega_{0,1} = dS_{SW}(\Sigma)$ on some open neighbourhood $\mathcal{B}_{\Sigma_0}\subseteq \mathcal{B}$ of $[\Sigma_0] \in \mathcal{B}$.
Note that the energy scale parameter $\Lambda$ plays no role here and we will set $\Lambda = 1$. Alternatively, we can simply absorb $\Lambda$ into $u_k$ and $z$ by re-defining $\frac{u_k}{\Lambda^{g+2-k}}\mapsto u_k$ and $\frac{z}{\Lambda} \mapsto z$. First,  we will show in Section \ref{embeddingofsigmaswinssection} that the family of smooth Seiberg-Witten curves
\begin{equation*}
    \mathcal{B} := \left\{u = (u_1,\cdots,u_g) \in \mathbb{C}^g\ |\ \text{$P^2(z;u) - 4 = 0$ has $2g+2$ distinct roots}\right\}
\end{equation*}
is the moduli space of $\mathcal{F}$-transversal genus $g$ curves in some foliated symplectic surface $(S,\Omega_S,\mathcal{F})$. Once this is done, using Theorem \ref{prepotentialvsomega0ntheorem}, the formula (\ref{swprepotentialvsomega0nformula}) immediately follows. However, the purpose of this section is to demonstrate various calculations in Section \ref{airystructurechapter} in the light of the Seiberg-Witten family of curves. Therefore, in the remainder of this section, we will not refer to Theorem \ref{prepotentialvsomega0ntheorem}, in fact, we will re-prove the Seiberg-Witten case of Theorem \ref{prepotentialvsomega0ntheorem} in Section \ref{swprepotentialandtrsection}. Some other relevant details will be presented in Section \ref{fromswfamilytoairysection}, such as the choice of $\{\epsilon_{\alpha \in Ram}\}, \{M_{\alpha\in Ram}\}, \mathcal{U}_{Ram}, \mathcal{B}_{\Sigma_0}$ and the verification of Proposition \ref{localvsglobalproposition} in the context of Seiberg-Witten family of curves.

\subsection{Embedding of Seiberg-Witten Family of Curves into a Foliated Symplectic Surface}\label{embeddingofsigmaswinssection}

\subsubsection{Outline of the construction of the foliated symplectic surface $(S, \Omega_S, \mathcal{F})$}
To apply the technique we have discussed in Section \ref{airystructurechapter} we need to find a foliated symplectic surface $(S,\Omega_S,\mathcal{F})$, such that any smooth genus $g\geq 1$ Seiberg-Witten curve $\Sigma(u)$, in the family $\mathcal{B}$, can be embedded into $S$ as an $\mathcal{F}$-transversal curve. We require that $S$ contains a dense open subset $U = \mathbb{C}^*\times \mathbb{C}$ where $\mathbb{C}^* := \mathbb{C}\setminus \{0\}$, which can be covered using two $(\mathcal{F},\Omega_S)$-charts $(U_i, \log w, z), i = 1,2$ given by any two different branch-cuts of $\log$, $\Omega_S|_U = \frac{dw}{w}\wedge dz$ and the foliation given by $w = const$. This is so that the section $\theta \in \Gamma(\mathcal{B}_{\Sigma_0}, G)$ introduced in Section \ref{relationtoresconstraintssubsection} will be related to the Seiberg-Witten differential $dS_{SW} = z\frac{dw}{w}$. In other words: if $dS_{SW}$ is treated as a differential form on $U$ then $d(dS_{SW})|_{U} = \Omega_S|_{U}$.

\begin{remark}
We cannot simply set $S = \mathbb{C}^*\times\mathbb{C}$ or $\mathbb{C}^2$ because Seiberg-Witten curves $\Sigma(u)$ are non-compact as curves in $\mathbb{C}^*\times\mathbb{C}$ or $\mathbb{C}^2$.
\end{remark}

There are many ways to construct $(S, \Omega_S, \mathcal{F})$ but let us now describe one of the possible ways. We embed Seiberg-Witten curves $\Sigma(u)$ into $\mathbb{P}^2$ as follows 
\begin{equation*}
    \Sigma(u) := \left\{[w:z:x] \in \mathbb{P}^2\ |\ w^2x^g - w(z^{g+1}+u_gx^2z^{g-1}+ \cdots +u_1x^{g+1}) + x^{g+2} = 0\right\} \subset \mathbb{P}^2.
\end{equation*}
When $g > 1$ the curve $\Sigma(u)$ embeds non-smoothly into $\mathbb{P}^2$ with a singular point of multiplicity $g$ at $[w:z:x] = [1:0:0]$. There also exists no globally defined holomorphic symplectic forms on $\mathbb{P}^2$ because the canonical divisor of $\mathbb{P}^2$ is $K_{\mathbb{P}^2} \sim -3H$, where $H$ is the hyperplane divisor. In other words, any symplectic form on $\mathbb{P}^2$ is meromorphic with pole divisor a degree 3 curve. If $\Sigma(u)$ intersects $K_{\mathbb{P}^2}$ trivially we could still consider $S$ to be an open subset of $\mathbb{P}^2$ containing $\Sigma(u)$ but not $K_{\mathbb{P}^2}$, hence it is symplectic. However, Bézout's theorem guarantees that $\Sigma(u)$ intersects $K_{\mathbb{P}^2}$ at $3(g+2)$ points counting multiplicity. For these reasons, we cannot simply take $S$ to be $\mathbb{P}^2$ or any open subset of $\mathbb{P}^2$. 

In particular, let us consider $\Omega_{\mathbb{P}^2} := \frac{d(w/x)}{w/x}\wedge d\left(\frac{z}{x}\right)$. We define divisors $H_w := \{w = 0\}, H_x := \{x = 0\}$ and $H_z := \{z = 0\}$ on $\mathbb{P}^2$. Then it is clear that $\Omega_{\mathbb{P}^2}$ reduces to $dz\wedge \frac{dw}{w}$ on $\mathbb{C}^2 = \mathbb{P}^2\setminus H_x$ where we can set $x = 1$. We can see that $\Omega_{\mathbb{P}^2}$ has order $1$ pole along the hyperplane $H_w \subset \mathbb{P}^2$ and order $2$ pole along $H_x \subset \mathbb{P}^2$. Hence the canonical divisor is
\begin{equation}\label{canonicaldivisorforsw}
K_{\mathbb{P}^2} = (\Omega_{\mathbb{P}^2}) = -H_w - 2H_x
\end{equation}
and $\Sigma(u)$ intersects $K_{\mathbb{P}^2}$ at $[w:z:x] = [0:1:0]$ and $[w:z:x] = [1:0:0]$.

We are going to show that the symplectic surface $S$ can be obtained by blowing-up $\mathbb{P}^2$ repeatedly starting from points $[0:1:0]$ and $[1:0:0]$ until $\Sigma(u)$ no longer intersects the canonical divisor. 

\begin{figure}[h]
\begin{center}
\begin{tikzpicture}[x=0.75pt,y=0.75pt,yscale=-0.8,xscale=0.8]
\draw    (180,50) -- (520,50) ;
\draw    (500,20) -- (315.83,301) ;
\draw    (190,19) -- (209.61,51.41) -- (360,300) ;

\draw[thick]   (160,40) .. controls (175.3,32.2) and (197.3,28.2) .. (209.61,51.41) .. controls (221.91,74.62) and (274.7,127.8) .. (300,100) .. controls (325.3,72.2) and (393.3,122.2) .. (410,90) .. controls (426.7,57.8) and (464.7,49.8) .. (480,50) .. controls (495.3,50.2) and (513.3,58.2) .. (510,70) ;

\draw (343.5,32) node   {$H_{x}$};
\draw (422,181) node   {$H_{z}$};
\draw (263.5,179) node   {$H_{w}$};
\draw (335.83,222.73) node  [align=left] {};
\draw (342.25,63.25) node   {$2$};
\draw (290,157) node   {$1$};
\draw (397,160) node   {$0$};
\draw (331.5,110) node   {$\Sigma ( u)$};
\end{tikzpicture}
\end{center}
\begin{caption}
\text{$\mathbb{P}^2$ with divisors $H_w := \{w = 0\}, H_x := \{x = 0\}$ and $H_z := \{z = 0\}$. The numbers are order of poles of $\Omega_{\mathbb{P}^2}$ on each respective divisor. We notice that $\Sigma(u)$ intersects the point $[w:z:x] = [1:0:0]$ tangentially to $H_x$ and intersects $[w:z:x] = [0:1:0]$ tangentially to $H_w$.}
\end{caption}
\label{p2beforeblowup}
\end{figure}
 
\subsubsection{Some algebraic geometry background and intuition}
The adjunction formula states that for a non-singular complex curve $C$ embedded in a complex surface $X$ then 
\begin{equation}\label{adjuctionformula}
    \Omega_C^1 \cong \Omega_X^2|_C \otimes \nu_C
\end{equation}
where $\nu_C$ is the holomorphic normal bundle of $C$, $\Omega^1_C$ is the holomorphic canonical line bundle of $C$ and $\Omega^2_X$ is the holomorphic canonical sheaf of $X$. Therefore, if a genus $g$ Seiberg-Witten curve $\Sigma(u)\subset \mathbb{P}^2$ was smooth and intersects $K_{\mathbb{P}^2}$ trivially, then $\Omega^2_{\mathbb{P}^2}|_{\Sigma(u)} \cong \mathcal{O}_{\Sigma(u)}$ and (\ref{adjuctionformula}) implies that $\nu_{\Sigma(u)} \cong \Omega_{\Sigma(u)}^1$.
Instead, $\Sigma(u)$ is singular and intersects non-trivially with $K_{\mathbb{P}^2}$ and we have from Bézout's theorem that
\begin{equation*}
\deg \nu_{\Sigma(u)} = \Sigma(u)\cdot\Sigma(u) = (\deg \Sigma(u))^2 = (g+2)^2 > 2g - 2 = \deg K_{\Sigma(u)}
\end{equation*}
where $N_{\Sigma(u)}$ denotes the normal bundle divisor, and so $\nu_{\Sigma(u)} \ncong \Omega^1_{\Sigma(u)}$.
From Riemann-Roch, the deformation space of $\Sigma(u)$ inside $\mathbb{P}^2$ has dimension
\begin{align*}
\dim \Gamma(\Sigma(u),\nu_{\Sigma(u)}) &= \dim \Gamma(\Sigma(u), \Omega^1_{\Sigma(u)}\otimes \nu^*_{\Sigma(u)}) + \deg \nu_{\Sigma(u)} - g + 1\\
&= 0 + (g+2)^2 - g + 1 = g^2 + 3g + 5 > g
\end{align*}
whereas the Seiberg-Witten family has dimension $g$ = number of parameters $\{u_1,...,u_g\}$. So the problem is that the deformation of $\Sigma(u)$ within the Seiberg-Witten family is more restrictive. In particular, $\Sigma(u)$ will always pass through the point $[w:z:x] = [0:1:0]$ and $[w:z:x] = [1:0:0]$ for every value of $\{u_1,...,u_g\}$, whereas if we are allowed to deform $\Sigma(u)$ freely it will not be fixed at these two points. 

It suggests that the problem can be solved if we construct $S$ by performing a series of blow-ups starting from $\mathbb{P}^2$ until no such fix points exists for the Seiberg-Witten family and each curve $\Sigma(u)$ is non-singular. Conceptually, after each blow-up, the neighbourhood of $\Sigma(u)$ will become more restrictive and the dimension of its deformation space will be reduced. This can be seen as follows. Given a curve $C$ inside a surface $X$ and let $p \in X$. Then blowing up $p$ we have $\pi : \tilde{X}\rightarrow X$ and 
\begin{equation*}
\pi^* C = \tilde{C} + mE, \qquad \pi^* C \cdot E = 0.
\end{equation*}
Where $\tilde{C}$ is the proper transform of $C$, $E$ is the exceptional divisor and $m$ is the multiplicity of the point $p$. In particular, $m = 0$ if $p \notin C$, $m = 1$ if $p\in C$ is a non-singular point and $m > 1$ if $p \in C$ is a singular point of multiplicity $m$. The important thing for us is that
\begin{equation}\label{blowupdeg} 
\deg \nu_{\tilde{C}} = \tilde{C}\cdot \tilde{C} = (\pi^* C - mE)\cdot (\pi^* C - mE) = C\cdot C - m^2 = \deg\nu_{C} - m^2.
\end{equation}
Therefore, we have $\dim \Gamma(\tilde{C}, \nu_{\tilde{C}}) \leq \dim \Gamma(C,\nu_C)$ if $\deg \Omega^1_{\tilde{C}} \leq \deg \nu_{\tilde{C}}$ by the application of Riemann-Roch.

It is also well-known what happens to the canonical class after the blow-up. We have
\begin{equation}\label{blowupcanonical}
    K_{\tilde{X}} = \pi^*K_X + E.
\end{equation}
We will apply (\ref{blowupcanonical}) successively to $K_{\mathbb{P}^2}$ as given in (\ref{canonicaldivisorforsw}) and show that eventually, the canonical class will intersect the proper transform of the Seiberg-Witten curve $\Sigma(u)$ trivially. Then we define $S$ to be $\mathbb{P}^2$, after a series of blow-ups, excluding the canonical divisor. We will also check for consistency using (\ref{blowupdeg}) that the degree of $\nu_{\Sigma(u)}$ drops to $2g-2$. In the end, we conclude that the moduli space $\mathcal{B}$ of $\mathcal{F}$-transversal genus $g$ curves in the resulting foliated symplectic surface $(S,\Omega_S,\mathcal{F})$ coincides with the family of smooth Seiberg-Witten curves.

\begin{figure}[h]
\begin{center}
\begin{tikzpicture}[x=0.75pt,y=0.75pt,yscale=-1,xscale=1]

\draw    (230,38) -- (434,38) ;

\draw    (570,374) -- (290,534) ;

\draw    (90,470) -- (380,520) ;

\draw    (400,20) -- (470,130) ;

\draw    (430,100) -- (570,140) ;

\draw    (460,354) -- (564,409) ;

\draw    (480,254) -- (480,384) ;

\draw    (460,284) -- (580,230) ;

\draw    (370,320) -- (500,320) ;

\draw    (370,170) -- (400,340) ;

\draw    (200,130) -- (270,20) ;

\draw    (130,130) -- (240,100) ;

\draw    (140,270) -- (70,380) ;

\draw    (140,320) -- (80,200) ;

\draw    (80,330) -- (120,500) ;

\draw    (80,421.1) -- (230,350) ;

\draw    (220,190) -- (200,390) ;

\draw[thick]    (190,250) .. controls (200.45,244.48) and (224.27,213.27) .. (250,214.79) .. controls (263.24,215.56) and (275.21,221.24) .. (285.53,226.51) .. controls (295.26,231.48) and (303.53,236.1) .. (310,235.91) .. controls (323.34,235.53) and (340.68,203.91) .. (360,200.7) .. controls (379.32,197.49) and (418.6,206.72) .. (430,200.7) ;

\draw (332.5,19) node   {$H_{x}$};
\draw (458,461) node   {$H_{z}$};
\draw (203.5,509) node   {$H_{w}$};
\draw (523,371) node   {$E_{w,1}$};
\draw (457,71) node   {$E_{w,2}$};
\draw (517,109) node   {$E_{w,3}$};
\draw (543,269) node   {$E_{w,g}$};
\draw (524,319) node   {$E_{w,g+1}$};
\draw (436.5,305) node   {$E_{w,g+2}$};
\draw (413.5,251) node   {$E_{w,g+3}$};
\draw (215,69) node   {$E_{z,1}$};
\draw (165,101) node   {$E_{z,2}$};
\draw (55,421) node   {$E_{z,g+2}$};
\draw (88,309) node   {$E_{z,g+1}$};
\draw (82,251) node   {$E_{z,g}$};
\draw (142,371) node   {$E_{z,g+3}$};
\draw (182,299) node   {$E_{z,g+4}$};
\draw (301.5,210) node   {$\Sigma ( u)$};
\draw (333,50) node   {$2$};
\draw (253,70) node   {$2$};
\draw (173,130) node   {$2$};
\draw (117,240) node   {$2$};
\draw (117,330) node   {$2$};
\draw (113,420) node   {$2$};
\draw (216,477) node   {$1$};
\draw (157,400) node   {$1$};
\draw (223,280) node   {$0$};
\draw (417,440) node   {$0$};
\draw (503,390) node   {$1$};
\draw (473,330) node   {$2$};
\draw (513,240) node   {$2$};
\draw (505,134) node   {$2$};
\draw (425,77) node   {$2$};
\draw (433,330) node   {$1$};
\draw (373,250) node   {$0$};
\draw (102,159) node [rotate=-302.48]  {$...$};
\draw (573.81,182.19) node [rotate=-82.8]  {$...$};
\end{tikzpicture}
\end{center}
\begin{caption}
\text{Summary of the construction of the foliated symplectic surface $(S,\Omega_S,\mathcal{F})$. We notice that $\Omega_S$ is holomorphic on $E_{z,g+4}$ and $E_{w,g+3}$ which $\Sigma(u)$ intersects with transversely.}
\end{caption}
\label{p2afterblowup}
\end{figure}

\subsubsection{Constructing $S$. Part 1: Blowing up $[w:z:x] = [1:0:0]$}
Let us start by considering the blow-up starting from $[w:z:x] = [1:0:0]$.
We consider the open set $U_w := \{w \neq 0\} = \mathbb{A}^2 \subset \mathbb{P}^2$. Set $w = 1$, the local coordinates on $U_w$ are $(z,x)$ and we have
\begin{equation}\label{sigmainw1set}
\Sigma(u)\cap U_w = \left\{(z,x) \in U_w\ |\ x^g - z^{g+1} - u_gx^2z^{g-1} - ... - u_1x^{g+1} + x^{g+2} = 0\right\} \subset U_w.
\end{equation}
Let $\tilde{\mathbb{P}}^2_1\rightarrow \mathbb{P}^2$ be the blow-up of $\mathbb{P}^2$ at $[w:z:x] = [1:0:0]$. Locally on $U_w$ we write this as $\tilde{U}_{w}\rightarrow U_w$ where
\begin{equation*}
    \tilde{U}_{w} = \left\{(z,x,[\tilde{z}:\tilde{x}]) \in U_w\times \mathbb{P}^1\ |\ z\tilde{x} - \tilde{z}x = 0\right\} \subset U_w\times \mathbb{P}^1.
\end{equation*}
Let $\tilde{\Sigma}_1(u)$ be the proper transform of $\Sigma(u)$. Consider the open subset $U_{w,1} := \{\tilde{z}\neq 0\} \subset \tilde{U}_w$, i.e. we set $\tilde{z} = 1$, so $x = z\tilde{x}$ and we can take $(z_1,x_1) := (z,\tilde{x})$ to be the local coordinates on $U_{w,1}$. Then
\begin{equation*}
\tilde{\Sigma}_1(u)\cap U_{w,1} = \left\{(z_1,x_1)\in U_{w,1}\ |\ x^g_1 - z_1 - u_gx^2_1z_1 - ... - u_1x^{g+1}_1z_1 + x^{g+2}_1z^2_1 = 0\right\} \subset U_{w,1}.
\end{equation*}
We can see that for any choices of $u$ the curve $\tilde{\Sigma}_1(u)$ always passes through the point $(z_1,x_1) = (0,0)$. We can check that there is no such basepoint on the other open set $V_{w,1} := \{\tilde{x} \neq 0\} \subset \tilde{U}_{w}$. This confirms that $\Sigma(u)$ intersects $[w:z:x] = [1:0:0]$ tangentially to $H_x$ as illustrated by Figure \ref{p2beforeblowup}. To continue we need to repeat the process by blowing up again at $(z_1,x_1) = (0,0)$. Let us remark that the blow-up has removed the singular point and $\tilde{\Sigma}_1(u)$ is a non-singular curve. Let $E_{w,1} := \{z = 0, x = 0\} \subset \tilde{U}_{w}$ be the exceptional divisor. Using (\ref{blowupcanonical}) we have found that
\begin{equation*}
    K_{\tilde{\mathbb{P}}^2_1} = \pi^*K_{\mathbb{P}^2} + E_{w,1} = \pi^*(-H_w - 2H_x) + E_{w,1} = -H_w - 2(H_x + E_{w,1}) + E_{w,1} = -H_w - 2H_x - E_{w,1},
\end{equation*}
therefore $\tilde{\Sigma}_1(u)$ still intersects $K_{\tilde{\mathbb{P}}^2_1}$ non-trivially.
Moreover, $\deg N_{\tilde{\Sigma}_1(u)} = \deg N_{\Sigma(u)} - g^2$. Since the remaining blow-ups will occur on non-singular points, the degree of normal bundle will always go down by $1$ for each blow-up.

Let $\tilde{\mathbb{P}}^2_2\rightarrow \tilde{\mathbb{P}}^2_1$ be the blow-up of $\tilde{\mathbb{P}}^2_1$ at $(z_1,x_1) = (0,0) \in U_{w,1} \subset \tilde{\mathbb{P}}^2_1$. Locally on $U_{w,1}$ we write this as $\tilde{U}_{w,1}\rightarrow U_{w,1}$ where
\begin{equation*}
    \tilde{U}_{w,1} = \left\{(z_1,x_1,[\tilde{z}_1,\tilde{x}_1]) \in U_{w,1}\times \mathbb{P}^1\ |\ z_1\tilde{x}_1 - \tilde{z}_1x_1 = 0\right\} \subset U_{w,1}\times \mathbb{P}^1.
\end{equation*}
Let $\tilde{\Sigma}_2(u)$ be the proper transform of $\tilde{\Sigma}_1(u)$. Consider the open subset $U_{w,2} := \{\tilde{x}_1 \neq 0\} \subset \tilde{U}_{w,1}$ with coordinates $(z_2,x_2) := (\tilde{z}_1,x_1)$. For any choice of $u$ the curve $\tilde{\Sigma}_2(u)$ always pass the point $(z_2,x_2) = (0,0)$. We can check that there is no such fixed point on the other open set $V_{w,1} := \{\tilde{z}_1 \neq 0\}$. We repeat this process $g$ times. In more detail, for $i = 1,...,g$, we let $\tilde{\mathbb{P}}^2_{i+1}\rightarrow \tilde{\mathbb{P}}^2_{i}$ be the blow-up of $\tilde{\mathbb{P}}^2_{i}$ at $(z_{i},x_{i}) = (0,0) \in U_{w,i}\subset \tilde{\mathbb{P}}^2_{i}$. Locally on $U_{w,i}$ we write this as $\tilde{U}_{w,i} \rightarrow U_{w,i}$. Let $\tilde{\Sigma}_{i+1}(u)$ be the proper transform of $\tilde{\Sigma}_{i}(u)$. Set $U_{w,i+1} := \{\tilde{x}_{i} \neq 0\} \subset \tilde{U}_{w,i}$ with coordinates $(z_{i+1},x_{i+1}) := (\tilde{z}_{i},x_{i})$. We find that
\begin{equation*}
    \tilde{\Sigma}_{g+1}(u)\cap U_{w,g+1} = \left\{(z_{g+1},x_{g+1})\in U_{w,{g+1}}\ |\ 1 - z_{g+1} - u_gx^2_{g+1}z_{g+1} - ... - u_1x^{g+1}_{g+1}z_{g+1} + x^{2g+2}_{g+1}z^2_{g+1} = 0\right\}.
\end{equation*}
On the other hand, repeated application of (\ref{blowupcanonical}) gives
\begin{equation*}
    K_{\tilde{\mathbb{P}}^2_{g+1}} = -H_w - 2H_x - E_{w,1} - 2\sum_{i=1}^gE_{w,i+1}
\end{equation*}
where $E_{w,i+1} := \{z_i = 0, x_i = 0\}\subset \tilde{U}_{w,i}$ are exceptional divisors.
This time we observe that $\tilde{\Sigma}_{g+1}(u)$ passes through the point $(z_{g+1},x_{g+1}) = (1,0)$ for any choice of $u$. Let us define the new coordinates by $(z'_{g+1}, x'_{g+1}) := (z_{g+1} - 1, x_{g+1})$. We continue by letting $\tilde{\mathbb{P}}^2_{g+2} \rightarrow \tilde{\mathbb{P}}^2_{g+1}$ to be the blow-up at $(z'_{g+1}, x'_{g+1}) = (0,0)$. Locally, we write this as $\tilde{U}_{w,g+1} \rightarrow U_{w,g+1}$ where
\begin{equation*}
    \tilde{U}_{w,g+1} = \left\{(z'_{g+1},x'_{g+1},[\tilde{z}'_{g+1},\tilde{x}'_{g+1}]) \in U_{w,g+1}\times \mathbb{P}^1\ |\ z'_{g+1}\tilde{x}'_{g+1} - \tilde{z}'_{g+1}x'_{g+1} = 0\right\} \subset U_{w,g+1}\times \mathbb{P}^1.
\end{equation*}
Let $\tilde{\Sigma}_{g+2}(u)$ be the proper transform of $\tilde{\Sigma}_{g+1}(u)$ and let $E_{w,g+2} := \{z'_{g+1} = 0, x'_{g+1} = 0\} \subset \tilde{U}_{w,g+1}$ be the exceptional divisor.
Consider the open set $U_{w,g+2} := \{\tilde{x}'_{g+1} \neq 0\}\subset \tilde{U}_{w,g+1}$ and let the local coordinates be $(z_{g+2},x_{g+2}) := (\tilde{z}'_{g+1}, x'_{g+1})$. For any choices of $u$, the curve $\tilde{\Sigma}_{g+2}(u)$ always passes through the point $(z_{g+2},x_{g+2}) = (0,0)$. Blow up at the point $(z_{g+2},x_{g+2}) = (0,0)$ to get $\tilde{\mathbb{P}}^2_{g+3} \rightarrow \tilde{\mathbb{P}}^2_{g+2}$. Let $\tilde{\Sigma}_{g+3}(u)$ be the proper transform of $\tilde{\Sigma}_{g+2}(u)$. We can check that we no longer have any fixed points. Consider the open set $U_{w,g+3} := \{\tilde{x}_{g+2} \neq 0\} \subset \tilde{U}_{w,g+2}$ and let $(z_{g+3}, x_{g+3}) := (\tilde{z}_{g+2}, x_{g+2})$ be the local coordinates. Then
\begin{align*}
    \tilde{\Sigma}_{g+3}(u)\cap U_{w,g+3} &= \Big\{(z_{g+3},x_{g+3}) \in U_{w, g+3}\ |\\
    &\qquad-z_{g+3} - (u_g + u_{g-1}x_{g+3} + ... + u_1x^{g-1}_{g+3})(x^2_{g+3}z_{g+3} + 1) + x^{2g}_{g+3}(x^2_{g+3}z_{g+3}+1)^2 = 0\Big\}
\end{align*}
intersects the exceptional divisor $E_{w,g+3} := \{z_{g+2} = 0, x_{g+2} = 0\} \subset \tilde{U}_{w,g+2}$ at $(z_{g+3},x_{g+3}) = (-u_g,0)$ which varies with $u$. Using (\ref{blowupcanonical}) we find that the canonical divisor of $\tilde{\mathbb{P}}^2_{g+3}$ is 
\begin{equation*}
    K_{\tilde{\mathbb{P}}^2_{g+3}} = -H_w - 2H_x - E_{w,1} - 2\sum_{i=2}^{g+1}E_{w,i} - E_{w,g+2}.
\end{equation*}
Let $U_z := \{z\neq 0\} \subset \tilde{\mathbb{P}}^2_{g+3}$ be an open set with coordinates $(w,x)$. We notice that $\tilde{\Sigma}_{g+3}(u) \subset \tilde{\mathbb{P}}^2_{g+3}$ only intersects $K_{\tilde{\mathbb{P}}^2_{g+3}}$ on $U_z$ at $(w,x) = 0$. This is the point $[w:z:x] = [0:1:0] \in \mathbb{P}^2$ if we blow-down $\tilde{\mathbb{P}}^2_{g+3}$ to $\mathbb{P}^2$. So the next step is to perform another series of blow-ups starting from $(w,x) = (0,0)$. 

\subsubsection{Constructing $S$. Part 2: Blowing up $[w:z:x] = [0:1:0]$}
On the open set $U_z = \{z \neq 0\} \subset \tilde{\mathbb{P}}^2_{g+3}$ we set $z = 1$ and it follows that
\begin{equation}
\tilde{\Sigma}_{g+3}(u)\cap U_z = \left\{(w,x) \in U_z\ |\ w^2x^g - w(1 + u_gx^2 + ... + u_1x^{g+1}) + x^{g+2} = 0\right\}.
\end{equation}
Let $\tilde{\mathbb{P}}^2_{g+3;1}\rightarrow \tilde{\mathbb{P}}^2_{g+3}$ be the blow-up of $\tilde{\mathbb{P}}^2_{g+3}$ at $(w,x)=(0,0) \in U_z$. Locally on $U_z$ we write this as $\tilde{U}_z\rightarrow U_z$ where
\begin{equation*}
    \tilde{U}_z = \left\{(w,x,[\tilde{w},\tilde{x}]) \in U_z\times \mathbb{P}^1\ |\ w\tilde{x} - \tilde{w}x = 0\right\}.
\end{equation*}
Let $\tilde{\Sigma}_{g+3;1}(u)$ be the proper transform of $\tilde{\Sigma}_{g+3}(u)$ and $E_{z,1} := \{w = 0, x = 0\} \subset \tilde{U}_z$ be the exceptional divisor. Consider the open subset $U_{z,1} := \{\tilde{x} \neq 0\} \subset \tilde{U}_z$ with coordinates $(w_1,x_1) := (\tilde{w},x)$. For any choice of $u$, the curve $\tilde{\Sigma}_{g+3;1}(u)$ passes through the point $(w_1,x_1) = (0,0)$. This confirms that $\Sigma(u)$ intersects $[w:z:x] = [0:1:0]$ tangentially to $H_x$ as illustrated by Figure \ref{p2beforeblowup}. We repeat this process $g+2$ times. In more detail, for $i = 1,...,g+1$, we let $\tilde{\mathbb{P}}^2_{g+3;i+1} \rightarrow \tilde{\mathbb{P}}^2_{g+3;i}$ be the blow-up at $(w_i,x_i) = (0,0) \in U_{z,i} \subset \tilde{\mathbb{P}}^2_{g+3;i}$. Locally on $U_{z,i}$ we write this as $\tilde{U}_{z,i} \rightarrow U_{z,i}$. Let $\tilde{\Sigma}_{g+3;i+1}(u)$ be the proper transform of $\tilde{\Sigma}_{g+3;i}(u)$ and let $E_{z,i+1} := \{w_i=0,x_i=0\} \subset \tilde{U}_{z,i}$. Set $U_{z,i+1} := \{\tilde{x}_i \neq 0\} \subset \tilde{U}_{z,i}$ with coordinates $(w_{i+1},x_{i+1}) := (\tilde{w}_i,x_i)$. Then we find that
\begin{equation*}
    \tilde{\Sigma}_{g+3;g+2}(u)\cap U_{z,g+2}=\left\{(w_{g+2},x_{g+2})\in U_{z,g+2}\ |\ w^2_{g+2}x^{2g+2}_{g+2} - w_{g+2}(1 + u_gx^2_{g+2} + ... + u_1x^{g+1}_{g+2}) + 1 = 0\right\}.
\end{equation*}
Therefore $\tilde{\Sigma}_{g+3;g+2}(u)$ passes through the point $(w_{g+2},x_{g+2}) = (1,0)$ for all $u$.  We define new coordinates $(w'_{g+2}, x'_{g+2}) := (w_{g+2} - 1, x_{g+2})$ and continue by letting $\tilde{\mathbb{P}}^2_{g+3;g+3}\rightarrow \tilde{\mathbb{P}}^2_{g+3;g+2}$ be the blow-up at $(w'_{g+2}, x'_{g+2}) = (0,0)$. 
Locally, we write this as $\tilde{U}_{z,g+2} \rightarrow U_{z,g+2}$ where
\begin{equation*}
    \tilde{U}_{z,g+2} = \left\{(w'_{g+2},x'_{g+2},[\tilde{w}'_{g+2},\tilde{x}'_{g+2}]) \in U_{z,g+2}\times \mathbb{P}^1\ |\ w'_{g+2}\tilde{x}'_{g+2} - \tilde{w}'_{g+2}x'_{g+2} = 0\right\} \subset U_{z,g+2}\times \mathbb{P}^1.
\end{equation*}
Let $\tilde{\Sigma}_{g+3;g+3}(u)$ be the proper transform of $\tilde{\Sigma}_{g+3;g+2}(u)$ and let $E_{z,g+3} := \{w'_{g+2} = 0, x'_{g+2} = 0\} \subset \tilde{U}_{z,g+2}$ be the exceptional divisor. Consider the open set $U_{z, g+3} := \{\tilde{x}_{g+2}'\neq 0\} \subset \tilde{U}_{z,g+2}$ and let the local coordinates be $(w_{g+3},x_{g+3}) := (\tilde{w}'_{g+2},x'_{g+2})$. For any choice of $u$, the curve $\tilde{\Sigma}_{g+3;g+3}(u)$ always passes through the point $(w_{g+3}, x_{g+3}) = (0,0)$. Blow up at the point $(w_{g+3}, x_{g+3}) = (0,0)$ to get $\tilde{\mathbb{P}}^2_{g+3;g+4}\rightarrow \tilde{\mathbb{P}}^2_{g+3;g+3}$. Let $\tilde{\Sigma}_{g+3;g+4}(u)$ be the proper transform of $\tilde{\Sigma}_{g+3;g+3}(u)$. We can check that we no longer have any basepoint. Consider the open set $U_{z,g+4} := \{\tilde{x}_{g+3}\neq 0\} \subset \tilde{U}_{z,g+3}$ and let $(w_{g+4}, x_{g+4}) := (\tilde{w}_{g+3}, x_{g+3})$ be the local coordinates. Then
\begin{align*}
    \tilde{\Sigma}_{g+3;g+4}(u)&\cap U_{z,g+4} = \Big\{(w_{g+4},x_{g+4}) \in U_{z, g+4}\ |\\
    &(x^2_{g+4}w_{g+4} + 1)^2x^{2g}_{g+4} - w_{g+4} - w_{g+4}(u_gx^2_{g+4} + ... + u_1x^{g+1}_{g+4}) - (u_g + u_{g-1}x_{g+4}... + u_1x^{g-1}_{g+4}) = 0\Big\}
\end{align*}
intersects the exceptional divisor $E_{z,g+4} := \{w_{g+3} = 0, x_{g+3} = 0\} \subset \tilde{U}_{z,g+3}$ at $(w_{g+4},x_{g+4}) = (-u_g,0)$ which varies with $u$.

Finally, using (\ref{blowupcanonical}) we find that the canonical divisor of $\tilde{\mathbb{P}}^2_{g+3;g+4}$ is
\begin{equation*}
    K_{\tilde{\mathbb{P}}^2_{g+3;g+4}} = -H_w - 2H_x - E_{w,1} - 2\sum_{i=2}^{g+1}E_{w,i} - E_{w,g+2} - 2\sum_{i=1}^{g+2}E_{z,i} - E_{z,g+3}.
\end{equation*}
Evidently, the curve $\tilde{\Sigma}_{g+3;g+4}(u)$ intersects $K_{\tilde{\mathbb{P}}^2_{g+3;g+4}}$ trivally and therefore, we have $\nu_{\tilde{\Sigma}_{g+3;g+4}} \cong \Omega^1_{\tilde{\Sigma}_{g+3;g+4}}$ by (\ref{adjuctionformula}). In particular, the moduli space now has the correct dimension 
\begin{equation*}
    \dim \Gamma(\tilde{\Sigma}_{g+3;g+4}(u), \nu_{\tilde{\Sigma}_{g+3;g+4}(u)}) = \dim \Gamma(\tilde{\Sigma}_{g+3;g+4}(u), \Omega^1_{\tilde{\Sigma}_{g+3;g+4}(u)}) = g.
\end{equation*}
We can check the consistency by calculating the degree of $\nu_{\tilde{\Sigma}_{g+3;g+4}(u)}$ using (\ref{blowupdeg}):
\begin{equation*}
    \deg \nu_{\tilde{\Sigma}_{g+3;g+4}(u)} = \deg \nu_{\Sigma(u)} - g^2 - (g+2) - (g+4) = (g+2)^2 - (g^2 + 2g + 6) = 2g-2.
\end{equation*}
\subsubsection{Summary}
The Figure \ref{p2afterblowup} assists the visualization of this construction. We define $S := \tilde{\mathbb{P}}^2_{g+3;g+4}\setminus K_{\tilde{\mathbb{P}}^2_{g+3;g+4}}$. We take $\tilde{\Sigma}_{g+3;g+4}(u)$ to be the embedding of the Seiberg-Witten curve $\Sigma(u)$ in $S$ which we will simply denote by $\Sigma(u) \subset S$ from now on. It is clear that $\mathbb{C}^*\times \mathbb{C}$ is an open dense subset of $S$.
The symplectic form is $\Omega_S := \frac{d(w/x)}{w/x}\wedge d\left(\frac{z}{x}\right)$ where we have extended the domain of functions $\frac{z}{x}$ and $\frac{w}{x}$ from $\mathbb{P}^2$ to $S \subset \tilde{\mathbb{P}}^2_{g+3;g+4}$ in the obvious way. The foliation $\mathcal{F}$ is given by $\frac{w}{x} = const$. On the open set $U_{w,g+3}$, the foliation $\mathcal{F}$ is given in local coordinates $(z_{g+3},x_{g+3})$ by 
\begin{equation*}
    (z_{g+3}x^2_{g+3} + 1)x^{g+1}_{g+3} = const.
\end{equation*}
We find that $E_{w, g+3}\cap U_{w,g+3} = \{x_{g+3} = 0\}$ is a leaf of the foliation with $const = 0$ and so $\Sigma(u)$ is transverse to the foliation at the intersection $\Sigma(u) \cap E_{w,g+3} = \{(z_{g+3}, x_{g+3}) = (-u_g,0)\}$. Similarly, on the open set $U_{z,g+4}$ for foliation $\mathcal{F}$ is given in local coordinates $(w_{g+4}, x_{g+4})$ by 
\begin{equation*}
    (w_{g+4}x^2_{g+4} + 1)x^{g+1}_{g+4} = const.
\end{equation*}
We find that $E_{z,g+4}\cap U_{z,g+4} = \{x_{g+4} = 0\}$ is a leaf of the foliation with $const = 0$ and so $\Sigma(u)$ is transverse to the foliation at the intersection $\Sigma(u)\cap E_{z, g+4} = \{(w_{g+4},x_{g+4} = (-u_g,0)\}$.

It follows that all ramification points of $\Sigma(u) \subset S$ are contained inside the dense subset $\mathbb{C}^*\times \mathbb{C} \subset S$.

\subsection{From the Seiberg-Witten Family of Curves to Airy Structures}\label{fromswfamilytoairysection}
Let us consider the genus $g$ family $\mathcal{B}$ of smooth Seiberg-Witten curves $\Sigma(u)$ embedded in $(S,\Omega_S,\mathcal{F})$ as constructed in the last section. Since all ramification points of $\Sigma(u)$ are contained inside $\mathbb{C}^*\times \mathbb{C} \subset S \rightarrow \mathbb{P}^2$, for all practical purposes we can work exclusively in this subset and set $x = 1$. We now have $\Omega_S = \frac{dw}{w}\wedge dz$ and $\Sigma(u)$ is defined by the algebraic equation $w + \frac{1}{w} = P(z;u)$. Throughout this section, we will denote by  
\begin{equation*}
z_i(u),\qquad i = 1,...,g    
\end{equation*}
the roots of $P'(z;u) = 0$ and we will fix the reference point to be $[\Sigma_0] := [\Sigma(u_0)] \in \mathcal{B}$. 

We begin in Section \ref{swrampointssubsection} by locating the ramification points $\{r_{\alpha \in Ram}\}$ of $\Sigma(u)$ and write down the local parameterization of $\Sigma(u)$ in some neighbourhood of each $r_\alpha$. Recall that the image $\gamma(\Sigma_0)$ of $\Sigma_0$ in $Discs^{Ram}$ needs to take the form $(x_\alpha = z_\alpha^2, y_\alpha = z_\alpha), \alpha \in Ram$ for the image of the embedding $\Phi_{\Sigma_0} : \mathcal{B}_{\Sigma_0} \rightarrow G_{\Sigma_0}$ to satisfies the $Ram$ product residue constraints (see Proposition \ref{localvsglobalproposition}). Therefore, in Section \ref{swchoosinglocalcoordsubsection}, we explicitly find the $(\mathcal{F},\Omega_S)$-local coordinates transformation which brings $\gamma(\Sigma_0)$ into the needed form. This gives us a collection of $(\mathcal{F},\Omega_S)$-charts $\mathcal{U}_{Ram}$. Then in Section \ref{swchoosingepsilonmubsubsection}, we will also explain how the parameters $\{\epsilon_{\alpha \in Ram}\}$, $\{M_{\alpha \in Ram}\}$ and an open neighbourhood $\mathcal{B}_{\Sigma_0}$ of $[\Sigma_0]\in\mathcal{B}$ can be chosen. Now that $\{\epsilon_{\alpha \in Ram}\}, \{M_{\alpha\in Ram}\}, \mathcal{U}_{Ram}$ and $\mathcal{B}_{\Sigma_0}$ has been selected, we are going to write down the embedding $\gamma : \mathcal{B}_{\Sigma_0} \hookrightarrow Discs^{Ram}_{t(\Sigma_0)}$ follows by $Discs^{Ram}_{t(\Sigma_0)}\hookrightarrow W^{Ram}_{t(\Sigma_0)}$ as given in Section \ref{embeddingofdiscssubsection} and the embedding $\mathcal{B}_{\Sigma_0}\hookrightarrow G_{\Sigma_0}$ in Section \ref{swtheembeddingbtowsubsection} and Section \ref{swtheembeddingbtogsubsection} respectively. These two embeddings will be compared, giving a proof of Proposition \ref{localvsglobalproposition} in the specific example of the Seiberg-Witten family of curves.

\subsubsection{Ramification points and local parameterization of Seiberg-Witten curves}\label{swrampointssubsection}
Ramification points of $\Sigma_0$ are 
\begin{equation}
    Ram = \left\{r_{i\pm} = \left(w = \frac{-P\left(z_i(u_0);u_0\right) \pm \sqrt{P\left(z_i(u_0);u_0\right)^2 - 4}}{2}, z = z_i(u_0)\right)\ \Big|\ i = 1,...,g\right\}.
\end{equation}
Let us choose the local coordinates $(w_{i\pm}, z_{i\pm})$ of $S$ around $r_\pm$ to be 
\begin{equation}\label{coordtransformationwztounbar}
    (w_{i\pm},z_{i\pm}) = \left(w + \frac{1}{w}, \frac{z}{w - \frac{1}{w}}\right).
\end{equation}
We have $\Omega_S = dw_{i\pm}\wedge dz_{i\pm}$ and the foliation is $w_{i\pm} = const$. Of course, this is not a unique choice, alternatively we can also use $(\log w, z)$ but it will be harder to write down the standard local coordinates. Given a curve $[\Sigma(u)] \in \mathcal{B}_{\Sigma_0}$, we find a standard local coordinates $\eta^2_{i\pm} := P(z;u) - P(z_i(u);u) = \frac{1}{2}P''(z_i(u);u)(z - z_i(u))^2 + ...$ of $\Sigma(u)$ around $r_{i\pm}(\Sigma(u))$. The parameterization of $\Sigma(u)$ is given in the same $(w_{i\pm},z_{i\pm})$ coordinates of $S$ by 
\begin{equation}\label{discatsigmau}
    t_{i\pm}(\Sigma(u)) = \left(w_{i\pm} = \eta_{i\pm}^2 + P(z_i(u);u), z_{i\pm} = \pm\frac{z(\eta_{i\pm};u)}{\sqrt{(\eta_{i\pm}^2 + P(z_i(u);u))^2 - 4}}\right).
\end{equation}
Where $z(\eta_{i\pm};u) = z_i(u) + \left(\frac{2}{P''(z_i(u);u)}\right)^{\frac{1}{2}}\eta_{i\pm} + ...$ is the inverse of $\eta_{i\pm}^2 = P(z;u) - P(z_i(u);u)$ for $z \approx z_i(u)$ or $\eta_{i\pm} \approx 0$. When $u = u_0$ the parameterization of $\Sigma_0$ near $r_{i\pm}(\Sigma_0)$ using the standard local coordinates $\eta_{i\pm}$ is given in the $(w_{i\pm},z_{i\pm})$ coordinates by
\begin{equation}\label{discatsigma0}
    t_{i\pm}(\Sigma_0) = \left(w_{i\pm} = \eta_{i\pm}^2 + P(z_i(u_0);u_0), z_{i\pm} = \pm\frac{z(\eta_{i\pm};u_0)}{\sqrt{(\eta_{i\pm}^2 + P(z_i(u_0);u_0))^2 - 4}}\right).
\end{equation}
The problem is that in $(w_{i\pm},z_{i\pm})$ local coordinates $t_{i\pm}(\Sigma_0)$ does not equal to $(w_{i\pm} = \eta^2_{i\pm}, z_{i\pm} = \eta_{i\pm})$. 

\subsubsection{Choosing the $(\mathcal{F}, \Omega_S)$-local coordinates}\label{swchoosinglocalcoordsubsection}
Lemma \ref{picoveringmaplemma} tells us that we can transform into a new local coordinates 
\begin{equation}\label{coordtransformationunbartobar}
    (\bar{w}_{i\pm}, \bar{z}_{i\pm}) = \left(F_{i\pm}(w_{i\pm}), \frac{z_{i\pm}}{F'_{i\pm}(w_{i\pm})} - G'_{i\pm}(w_{i\pm})\right)
\end{equation}
of $S$ around $r_{i\pm}(\Sigma_0)$ such that $\Omega_S = \bar{w}_{i\pm}\wedge \bar{z}_{i\pm}$ and the foliation is $\bar{w}_{i\pm} = const$ and $t_{i\pm}(\Sigma_0) = (\bar{w}_{i\pm} = \bar{\eta}_{i\pm}^2, \bar{z}_{i\pm} = \bar{\eta}_{i\pm})$ where the new standard local coordinates $\bar{\eta}_{i\pm}$ is given by $\bar{\eta}_{i\pm}^2 := F_{i\pm}(\eta_{i\pm}^2 + F^{-1}_{i\pm}(0))$. Comparing (\ref{discafterchangecoordinates}) with (\ref{discatsigma0}) determines what function $F_{i\pm}$ and $G_{i\pm}$ have to be:
\begin{align}
F^{-1}_{i\pm}(0) &= P(z_i(u_0);u_0)\label{findfgeqn1}\\
F'_{i\pm}\left(\eta^2_{i\pm} + F^{-1}_{i\pm}(0)\right)\left(\left(F_{i\pm}(\eta^2_{i\pm} + F^{-1}_{i\pm}(0)\right)^{1/2} + G'_{i\pm}\left(\eta^2_{i\pm} + F^{-1}_{i\pm}(0)\right)\right) &= \pm\frac{z(\eta_{i\pm};u_0)}{\sqrt{(\eta_{i\pm}^2 + P(z_i(u_0);u_0))^2 - 4}}\label{findfgeqn2}.
\end{align}
We separate $z(\eta_{i\pm};u_0)$ into the sum of its odd and even components $z(\eta_{i\pm};u_0) = z_{odd}(\eta_{i\pm};u_0) + z_{even}(\eta_{i\pm};u_0)$ where
\begin{equation*}
    z_{odd}(\eta_{i\pm};u_0) := \frac{z(+\eta_{i\pm};u_0) - z(-\eta_{i\pm};u_0)}{2}, \qquad z_{even}(\eta_{i\pm};u_0) := \frac{z(+\eta_{i\pm};u_0) + z(-\eta_{i\pm};u_0)}{2}.
\end{equation*}
Comparing the odd components on both sides of (\ref{findfgeqn2}), we have
\begin{align*}
    \frac{1}{2\eta_{i\pm}}\left(F_{i\pm}\left(\eta^2_{i\pm} + F^{-1}_{i\pm}(0)\right)\right)^{1/2}\frac{d}{d\eta_{i\pm}}F_{i\pm}\left(\eta^2_{i\pm} + F^{-1}_{i\pm}(0)\right) &= \pm\frac{z_{odd}(\eta_{i\pm};u_0)}{\sqrt{(\eta^2_{i\pm} + P(z_i(u_0);u_0))^2 - 4}}\\
    \implies \frac{1}{3}\frac{d}{d\eta_{i\pm}}\left(F_{i\pm}(\eta^2_{i\pm} + F^{-1}_{i\pm}(0))\right)^{3/2} &= \pm\frac{\eta_{i\pm}z_{odd}(\eta_{i\pm};u_0)}{\sqrt{(\eta^2_{i\pm} + P(z_i(u_0);u_0))^2 - 4}}.
\end{align*}
Integrating both sides gives us
\begin{equation}\label{fsoln}
    \left(F_{i\pm}(\eta^2_{i\pm} + F^{-1}_{i\pm}(0))\right)^{3/2} = \pm 3 \int_{0}^{\eta_{i\pm}}\frac{\tau z_{odd}(\tau;u_0)}{\sqrt{(\tau^2 + P(z_i(u_0);u_0))^2 - 4}}d\tau.
\end{equation}
We note that the right-hand-side of (\ref{fsoln}) takes the form $\sim\eta_{i\pm}^3(1 + O(\eta^2_{i\pm}))$ because the integrand is a function of $\tau^2$ without a constant term. It follows after raising (\ref{fsoln}) to the power of $2/3$ that $F_{i\pm}(\eta^2_{i\pm} + F^{-1}(0)) \sim \eta^2_{i\pm}(1 + O(\eta^2_{i\pm}))$. For each ramification point $i\pm \in Ram$, there are $3$ choices of $F_{i\pm}$ corresponding to each third root of $(\pm 1)$. For simplicity, let us choose a third root so that $(\pm 1)^{2/3} = +1$. Combining (\ref{fsoln}), (\ref{findfgeqn1}) we have
\begin{equation}\label{fsoln2}
    F_{i\pm}(w_{i\pm}) = \left(3 \int_{0}^{\sqrt{w_{i\pm} - P(z_i(u_0);u_0)}}\frac{\tau z_{odd}(\tau;u_0)}{\sqrt{(\tau^2 + P(z_i(u_0);u_0))^2 - 4}}d\tau\right)^{2/3}.
\end{equation}
With our choice of the third-root we have $F_{i+} = F_{i-}$, similar to how we have $w_{i+} = w + \frac{1}{w} = w_{i-}$. Using the expansion $z(\eta_{i\pm};u) = z_i(u) + \left(\frac{2}{P''(z_i(u);u)}\right)^{\frac{1}{2}} + ...$, the expansion of $F_{i\pm}(w_{i\pm})$ around the ramification point $r_{i\pm}$ is given by 
\begin{equation}\label{fexpansion}
    F_{i\pm}(w_{i\pm}) = (w_{i\pm} - P(z_i(u_0);u_0))\left(\left(\frac{2}{P''(z_i(u_0);u_0)}\right)^{\frac{1}{2}} + O\left((w_{i\pm} - P(z_i(u_0);u_0))^2\right)\right)^{\frac{2}{3}}
\end{equation}
Next, we compare the even components on both sides of (\ref{findfgeqn2}). We have
\begin{equation*}
    F'_{i\pm}\left(\eta^2_{i\pm} + F^{-1}_{i\pm}(0)\right) G'_{i\pm}\left(\eta^2_{i\pm} + F^{-1}_{i\pm}(0)\right) = \pm\frac{z_{even}(\eta_{i\pm};u_0)}{\sqrt{(\eta^2_{i\pm} + P(z_i(u_0);u_0))^2 - 4}}
\end{equation*}
Substituting the expression for $F_{i\pm}$ from (\ref{fsoln2}) we have
\begin{equation}\label{gsoln}
    G'_{i\pm}\left(w_{i\pm}\right) = \frac{z_{even}(\sqrt{w_{i\pm} - P(z_i(u_0);u_0)};u_0)}{z_{odd}(\sqrt{w_{i\pm} - P(z_i(u_0);u_0)};u_0)}\left(F_{i\pm}(w_{i\pm})\right)^{1/2}.
\end{equation}
Finally, using (\ref{coordtransformationwztounbar}) and (\ref{coordtransformationunbartobar}) we can write down the coordinates transformation from $(\log w, z)$ to $(\bar{w}_{i\pm}, \bar{z}_{i\pm})$ in terms of the function $F_{i\pm}$ we have found in (\ref{fsoln2}):
\begin{align*}\label{coordstransfunbartobar}
    \bar{w}_{i\pm} &= F_{i\pm}\left(w + \frac{1}{w}\right)\\
    \bar{z}_{i\pm} &= \left( \frac{z - z_{even}\left(\sqrt{w + \frac{1}{w} - P(z_i(u_0);u_0)};u_0\right)}{z_{odd}\left(\sqrt{w + \frac{1}{w} - P(z_i(u_0);u_0)};u_0\right)}\right)\left(F_{i\pm}\left(w + \frac{1}{w}\right)\right)^{1/2}\numberthis.
\end{align*}
Using the standard local coordinate
\begin{equation}\label{stdlocalcoordinatesswformula}
    \bar{\eta}_{i\pm} := \sqrt{F_{i\pm}(\eta^2_{i\pm} + P(z_i(u);u)) - F_{i\pm}(P(z_i(u_0);u_0))}
\end{equation}
corresponding to the local coordinates $(\bar{w}_{i\pm}, \bar{z}_{i\pm})$, we find that $t_{i\pm}(\Sigma(u))$ as given in (\ref{discatsigmau}) becomes 
\begin{align*}
t_{i\pm}(\Sigma(u)) = \Bigg(\bar{w}_{i\pm} &= \bar{\eta}_{i\pm}^2 + F_{i\pm}(P(z_i(u);u)), \\
\bar{z}_{i\pm} &= \frac{z\left(\sqrt{F^{-1}_{i\pm}\left(\bar{\eta}_{i\pm}^2 + F_{i\pm}\left(P(z_i(u);u)\right)\right) - P(z_i(u);u)};u\right)}{z_{odd}\left(\sqrt{F^{-1}_{i\pm}\left(\bar{\eta}_{i\pm}^2 + F_{i\pm}\left(P(z_i(u);u)\right)\right) - P(z_i(u_0);u_0)}; u_0\right)}\sqrt{\bar{\eta}_{i\pm}^2 + F_{i\pm}\left(P(z_i(u);u)\right)}\\
&\qquad - \frac{z_{even}\left(\sqrt{F^{-1}_{i\pm}\left(\bar{\eta}_{i\pm}^2 + F_{i\pm}\left(P(z_i(u);u)\right)\right) - P(z_i(u_0);u_0)}; u_0\right)}{z_{odd}\left(\sqrt{F^{-1}_{i\pm}\left(\bar{\eta}_{i\pm}^2 + F_{i\pm}\left(P(z_i(u);u)\right)\right) - P(z_i(u_0);u_0)}; u_0\right)}\sqrt{\bar{\eta}_{i\pm}^2 + F_{i\pm}\left(P(z_i(u);u)\right)}\\
&=: \sum_{k=0}^\infty b_{i\pm,k}(\Sigma)\bar{\eta}_{i\pm}^k\Bigg)\in Discs^{M_{i\pm}}
\end{align*}
When $u=u_0$, we have $F_{i\pm}(P(z_i(u);u)) = 0$ and evidently
\begin{equation*}
    t_{i\pm}(\Sigma(u = u_0)) = t_{i\pm}(\Sigma_0) = (\bar{w}_{i\pm} = \bar{\eta}_{i\pm}^2, \bar{z}_{i\pm} = \bar{\eta}_{i\pm}) \in Discs^{M_{i\pm}}
\end{equation*}
as expected. We note that $\bar{z}_{i\pm}$ is a well-defined holomorphic function of $\bar{\eta}_{i\pm}$ for all $|\bar{\eta}_{i\pm}| < M_{i\pm}$ (for some constant $M_{i\pm}$ which we are going to discuss soon) and for all $u$ close to $u_0$ (or $[\Sigma] \in \mathcal{B}_{\Sigma_0}$ close to $[\Sigma_0]$) despite all of appearance of square-roots. This is because $z_{odd}$ factors are $\sim\sqrt{\bar{\eta}_{i\pm}^2 + F_{i\pm}\left(P(z_i(u);u)\right)}\left(1 + O\left(\bar{\eta}_{i\pm}^2 + F_{i\pm}\left(P(z_i(u);u)\right)\right)\right)$ as $F^{-1}_{i\pm}(\bar{w}_{i\pm}) = P(z_i(u_0);u_0) + O(\bar{w}_{i\pm})$ and the factor $\sqrt{\bar{\eta}_{i\pm}^2 + F_{i\pm}\left(P(z_i(u);u)\right)}$ cancels with the same factor in the numerator. On the other hand, $z_{even}$ is free of square-roots because it only contains the even power terms. Lastly, $F^{-1}_{i\pm}\left(\bar{\eta}_{i\pm}^2 + F_{i\pm}\left(P(z_i(u);u)\right)\right) - P(z_i(u);u)$ vanishes when $\bar{\eta}^2_{i\pm} = 0$ and so it is $\sim \bar{\eta}_{i\pm}^2\left(1 + O(\bar{\eta}^2_{i\pm})\right)$. 

\subsubsection{Choosing $\{\epsilon_{i\pm}\}$, $\{M_{i\pm}\}$, $\mathcal{U}_{Ram}$ and $\mathcal{B}_{\Sigma_0}$}\label{swchoosingepsilonmubsubsection}
So far we have not chosen explicitly what $\{\epsilon_{i\pm}\}$, $\{M_{i\pm}\}$, $\mathcal{U}_{Ram} := \{(U_{i\pm}, \bar{w}_{i\pm}, \bar{z}_{i\pm})\}$ and $\mathcal{B}_{\Sigma_0}$ are. As we have mentioned, Condition \ref{howtochoosebsigma0condition} merely gave a list of sufficient conditions on how to choose $\{\epsilon_{i\pm}\}$, $\{M_{i\pm}\}$, $\mathcal{U}_{Ram}$, and $\mathcal{B}_{\Sigma_0}$. It is often easier to deal with the situation on a case-by-case basis and therefore in the following we will not refer ourselves to Condition \ref{howtochoosebsigma0condition}. 
From (\ref{fexpansion}) we can see that $F'_{i\pm}(P(z_i(u_0);u_0)) = \left(\frac{2}{P''(z_i(u_0);u_0)}\right)^{\frac{1}{3}} \neq 0$. Therefore, by the Inverse Function Theorem, there exists $R_{i\pm} > 0$ such that $F_{i\pm}$ is bi-holomorphic for all $w_{i\pm}$ whenever $|w_{i\pm} - P(z_i(u_0);u_0)| < R_{i\pm}$. 

Let us choose
\begin{equation*}
U_{i\pm} := \left\{(w,z) \in \mathbb{C}\times \mathbb{C} \subset \mathbb{P}^2\ |\ \left|w + \frac{1}{w} - P(z_i(u_0);u_0)\right| < R_{i\pm}\right\} \ni r_{i\pm}(\Sigma_0).
\end{equation*}
In particular, the open set $U_{i\pm}$ does not contain the ramification points of $z : \Sigma_0 \rightarrow \mathbb{P}^1$ given by $w + \frac{1}{w} = \pm 2$ as $F_{i\pm}(w_{i\pm})$ is singular at $w_{i\pm} = w + \frac{1}{w} = \pm 2$. Let $\bar{R}_{i\pm} > 0$ be such that $F^{-1}_{i\pm}(\bar{w}_{i\pm}) \in U_{i\pm}$ for all $|\bar{w}_{i\pm}| < \bar{R}_{i\pm}$. Choose $\epsilon_{i\pm}, M_{i\pm}$ to be any real numbers such that $0 < \epsilon_{i\pm} < M_{i\pm}$ and $M_{i\pm}^2 + \epsilon_{i\pm} < \bar{R}_{i\pm}$. Finally, we let $\mathcal{B}_{\Sigma_0}$ to be any contractible space such that 
\begin{equation*}
    \mathcal{B}_{\Sigma_0} \subset \left\{[\Sigma(u)] \in \mathcal{B}\ |\ |F_{i\pm}(P(z_i(u);u))| < \epsilon_{i\pm}, \forall i = 1,...,g\right\}.
\end{equation*}
It is clear that $[\Sigma_0]\in\mathcal{B}_{\Sigma_0}$ since $F_{i\pm}(P(z_i(u_0);u_0)) = 0$. Finally, we note that for any $\Sigma \in \mathcal{B}_{\Sigma_0}$ we have $\bar{w}_{i\pm} = \bar{\eta}_{i\pm}^2 + F_{i\pm}(P(z_i(u);u))$ and so $|\bar{w}_{i\pm}| < |\bar{\eta}_{i\pm}|^2 + |F_{i\pm}(P(z_i(u);u))| = M^2_{i\pm} + \epsilon_{i\pm} < \bar{R}_{i\pm}$ which implies 
\begin{equation*}
    \bar{\mathbb{D}}_{i\pm,M_{i\pm}}(\Sigma(u)) := \{p \in \Sigma(u) \cap U_{i\pm}\ |\ |\bar{\eta}_{i\pm}(p)| < M_{i\pm}\} \subset \Sigma\cap U_{i\pm}
\end{equation*}
for all $[\Sigma(u)] \in \mathcal{B}_{\Sigma_0}$. This allows us to define the map $\gamma : \mathcal{B}_{\Sigma_0} \rightarrow Discs^{Ram}_{t_0}$ and $i : G\rightarrow W^{Ram}$. As we are about to see, this is also sufficient for us to define the section $\theta \in \Gamma(\mathcal{B}_{\Sigma_0}, G)$ and to perform any relevant parallel transports in $G$.
\subsubsection{The embedding $\mathcal{B}_{\Sigma_0}\hookrightarrow Discs^{M_{i\pm}}_{t_{i\pm}(\Sigma_0)} \hookrightarrow W^{\epsilon_{i\pm}, M_{i\pm}}_{t_{i\pm}(\Sigma_0)}$}\label{swtheembeddingbtowsubsection}
Let $\theta_{t_{i\pm}(\Sigma)} = -\bar{z}_{i\pm}d\bar{w}_{i\pm} = -\sum_{k=0}^\infty b_{i\pm,k}(\Sigma)\bar{\eta}^k_{i\pm}d\left(\bar{\eta}^2_{i\pm} + F_{i\pm}\left(P(z_i(u);u)\right)\right) \in T_0L_{Airy}^{M_{i\pm}} \subset W^{\epsilon_{i\pm},M_{i\pm}}_{t_{i\pm}(\Sigma)}$.
The embedding $\Phi_{t_{i\pm}(\Sigma_0)}\circ \gamma : \mathcal{B}_{\Sigma_0} \hookrightarrow Discs^{M_{i\pm}}_{t_{i\pm}(\Sigma_0)} \hookrightarrow W^{\epsilon_{i\pm}, M_{i\pm}}_{t_{i\pm}(\Sigma_0)} \cong W^{\epsilon_{i\pm}, M_{i\pm}}_{Airy}$ is given by
\begin{align*}
    \Phi_{t_{i\pm}(\Sigma_0)}(t_{i\pm}(\Sigma)) &= -\theta_{t_{i\pm}(\Sigma_0)} + \exp\left(-F_{i\pm}\left(P(z_i(u);u)\right)\mathcal{L}_{\frac{1}{2\bar{\eta}_{i\pm}}\partial_{\bar{\eta}_{i\pm}}}\right)\theta_{t_{i\pm}(\Sigma)}\\
    &= \bar{\eta}_{i\pm}d\bar{\eta}_{i\pm}^2 - \exp\left(-F_{i\pm}\left(P(z_i(u);u)\right)\mathcal{L}_{\frac{1}{2\bar{\eta}_{i\pm}}\partial_{\bar{\eta}_{i\pm}}}\right)\left(\sum_{k=0}^\infty b_{i\pm,k}(\Sigma)\bar{\eta}^k_{i\pm} d(\bar{\eta}^2_{i\pm})\right)\\
    &:= \sum_{k\neq 0}J^{i\pm}_{k}(u)\bar{\eta}_{i\pm}^k\frac{d\bar{\eta}_{i\pm}}{\bar{\eta}_{i\pm}}\in L^{M_{i\pm}}_{Airy} \subset W^{\epsilon_{i\pm}, M_{i\pm}}_{Airy} = W^{\epsilon_{i\pm},M_{i\pm}}_{t_{i\pm}(\Sigma_0)}.
\end{align*}
We note that $\exp\left(-F_{i\pm}\left(P(z_i(u);u)\right)\mathcal{L}_{\frac{1}{2\bar{\eta}_{i\pm}}\partial_{\bar{\eta}_{i\pm}}}\right)\theta_{t_{i\pm}(\Sigma)} \in W^{\epsilon_{i\pm},M_{i\pm}}_{t_{i\pm}(\Sigma_0)}$ according to Lemma \ref{expoperatorlemma} because\\ $|F_{i\pm}\left(P(z_i(u);u)\right)| < \epsilon_{i\pm}$ for all $[\Sigma(u)] \in \mathcal{B}_{\Sigma_0}$.

\subsubsection{The embedding $\mathcal{B}_{\Sigma_0}\hookrightarrow G_{\Sigma_0}$}\label{swtheembeddingbtogsubsection}
To obtain the embedding $\Phi_{\Sigma_0}:\mathcal{B}_{\Sigma_0}\hookrightarrow G_{\Sigma_0}$ we will first define $\theta \in \Gamma(\mathcal{B}_{\Sigma_0}, G)$ by
\begin{equation*}
    \theta_{\Sigma(u)} := dS_{SW}(\Sigma(u)) - s^*_{\Sigma(u), \Sigma_0}dS_{SW}(\Sigma_0) \in G_{\Sigma(u)}, \qquad \forall [\Sigma(u)] \in \mathcal{B}_{\Sigma_0}.
\end{equation*}
Let us elaborate on the definition of $s^*_{\Sigma(u), \Sigma_0}dS_{SW}(\Sigma_0)$ in this context.
Let $p \in \Sigma(u) \setminus \{r_{\alpha \in Ram}\}$, then $dw_{i\pm}(p) \neq 0$. Hence, there exists an open neighbourhood $V_p \subset \Sigma(u)$ of $p$ such that $w_{i\pm}$ is a local coordinate of $V_p$. 
Then we define $s^*_{\Sigma(u), \Sigma_0}dS_{SW}(\Sigma_0)|_{V_p\cap \Sigma} := z_p\left(w + \frac{1}{w};u_0\right)\frac{dw}{w}$.
Where $z_p\left(w + \frac{1}{w};u_0\right) = z_p(w_{i\pm};u_0)$ is the root of $P(z;u_0) - w_{i\pm} = 0$ such that $\left(w,z_p\left(w + \frac{1}{w};u_0\right)\right) \in \Sigma_0$ varies continuously to $\left(w,z_p\left(w+\frac{1}{w};u\right)\right) \in V_p \subset \Sigma(u)$ as we move continuously from $u_0$ to $u$, where $z_p\left(w_{i\pm};u\right)$ is a root of $P(z;u) - w_{i\pm} = 0$. It follows that on $V_p$ we have $\theta_{\Sigma(u)}$ given by
\begin{equation*}
    \theta_{\Sigma(u)}|_{V_p} = z_p\left(w + \frac{1}{w}; u\right)\frac{dw}{w} - z_p\left(w + \frac{1}{w};u_0\right)\frac{dw}{w}.
\end{equation*}
Of course, $\theta_{\Sigma(u)}$ is not a single-valued differential form on $\Sigma(u)$ because if $V_p$ contains a ramification point $r_{i\pm}(\Sigma(u)) \in \Sigma(u)$ then
\begin{align*}
    z_p\left(w + \frac{1}{w};u_0\right) &= z_i(u_0) + \left(\frac{2}{P''(z_i(u_0);u_0)}\right)^{\frac{1}{2}}\sqrt{w + \frac{1}{w} - P(z_i(u_0);u_0)} + ...\\
    &= z_i(u_0) + \left(\frac{2}{P''(z_i(u_0);u_0)}\right)^{\frac{1}{2}}\sqrt{\eta^2_{i\pm} + P(z_i(u);u) - P(z_i(u_0);u_0)} + ....
\end{align*}
Where $\eta_{i\pm}$ is the standard local coordinates on $\Sigma\cap U_{i\pm}$ corresponding to the local coordinates $(w_{i\pm}, z_{i\pm})$ and $w_{i\pm} = \eta^2_{i\pm} + P(z_i(u);u)$ (see (\ref{discatsigma0})).
The term $\sqrt{\eta^2_{i\pm} + P(z_i(u);u) - P(z_i(u_0);u_0)}$ acquires a negative sign whenever $\eta_{i\pm}$ moves around the point $\pm\sqrt{P(z_i(u);u) - P(z_i(u_0);u_0)}$.

What happening is, for each fixed $w$ the value of $z$ can be any one of $g+1$ distinct roots of $P(z;u_0) - w - \frac{1}{w} = 0$ except when $w + \frac{1}{w}$ is a critical value of $P(z;u_0)$ two of the roots will coincide and we are left with only $g$ distinct roots. As $w + \frac{1}{w}$ moves around the critical value $z(w;u_0)$ will switch between the two roots of $P(z;u_0) - w - \frac{1}{w} = 0$ that ramify when $w + \frac{1}{w}$ is the critical value. However, $\theta_{\Sigma(u)}$ becomes single-valued once we remove the discs $\mathbb{D}_{i\pm,\epsilon_{i\pm}}(\Sigma(u))$ from the curve $\Sigma(u)$ i.e. $\theta_{\Sigma(u)}$ is a single-valued differential form on $\Sigma(u) \setminus \cup_{i = 1}^g\left(\bar{\mathbb{D}}_{i+,\bar{\epsilon}_{i+}} \cup \bar{\mathbb{D}}_{i-,\bar{\epsilon}_{i-}}\right)$ for some $\bar{\epsilon}_{i\pm} < \epsilon_{i\pm}$.

Note that although $dS_{SW}(\Sigma_0) \notin G_{\Sigma_0}$, we indeed have $\theta_{\Sigma(u)} \in G_{\Sigma(u)}$.
Before the blow-ups, $dS_{SW}(\Sigma_0)$ and $dS_{SW}(\Sigma(u))$ both have poles of order $2$ at $\infty_{+} := (w = 0, z = \infty), \infty_{-} := (w = \infty, z = \infty)$, on $\Sigma_0$ and $\Sigma(u)$ respectively, without residues and they are holomorphic elsewhere. The poles at $\infty_{\pm}$ from $dS_{SW}(\Sigma_0)$ and $dS_{SW}(\Sigma(u))$ will cancel out in the expression of $\theta_{\Sigma(u)}$. The same remains true after blow-ups. So $\theta_{\Sigma(u)}$ is holomorphic on $\Sigma(u) \setminus \cup_{i = 1}^g\left(\bar{\mathbb{D}}_{i+,\bar{\epsilon}_{i+}} \cup \bar{\mathbb{D}}_{i-,\bar{\epsilon}_{i-}}\right)$ for some $\bar{\epsilon}_\alpha < \epsilon_\alpha$. Lastly, we also have 
\begin{equation*}
    \oint_{\partial \bar{\mathbb{D}}_{i\pm, \epsilon_{i\pm}}}\theta_{\Sigma(u)} = \oint_{\partial \bar{\mathbb{D}}_{i\pm, \epsilon_{i\pm}}}dS_{SW}(\Sigma(u)) - \oint_{\partial \bar{\mathbb{D}}_{i\pm, \epsilon_{i\pm}}}s^*_{\Sigma(u), \Sigma_0}dS_{SW}(\Sigma_0) = 0
\end{equation*}
using the same argument presented in the proof of Lemma \ref{integralofparalleltransportlemma} and the fact that $dS_{SW}$ has no poles around the ramification points $\{r_{\alpha \in Ram}\}$. Therefore, we conclude that $\theta_\Sigma \in G_\Sigma$. Now we can define $\Phi_{\Sigma_0}$ by
\begin{equation}\label{phisw}
    \Phi_{\Sigma_0}(\Sigma(u)) := \theta_{\Sigma_0} - s^*_{\Sigma_0, \Sigma(u)}\theta_{\Sigma(u)} = dS_{SW}(\Sigma_0) - s^*_{\Sigma_0, \Sigma(u)}dS_{SW}(\Sigma(u)) \in G_{\Sigma_0}.
\end{equation}
\begin{example}
In the Elliptic case $g = 1$, $\Sigma : w + \frac{1}{w} = z^2 + u$, the equation (\ref{phisw}) simplifies to 
\begin{equation*}
    \Phi_{\Sigma_0}(\Sigma(u)) = \sqrt{w + \frac{1}{w} - u_0}\frac{dw}{w} - \sqrt{w + \frac{1}{w} - u}\frac{dw}{w}.
\end{equation*}
\end{example}

It is clear by choosing $A$ and $B$-cycles on $\Sigma_0$ avoiding the discs $\bar{\mathbb{D}}_{i\pm, \epsilon_{i\pm}}(\Sigma_0)$ we have
\begin{equation*}
    \oint_{A_i}\Phi_{\Sigma_0}(\Sigma(u)) = a_{0}^i - a^i, \qquad \oint_{B_i}\Phi_{\Sigma_0}(\Sigma(u)) = b^{0}_i - b_i = \frac{\partial \mathfrak{F}^{SW}_{\Sigma_0}}{\partial a^i}(a_0) - \frac{\partial \mathfrak{F}^{SW}_{\Sigma_0}}{\partial a^i}(a), \qquad i = 1,...,g
\end{equation*}
where
\begin{equation*}
    a^i = a^i(\Sigma(u)) := \oint_{A_i}dS_{SW}(\Sigma(u)), \qquad b_i = b_i(\Sigma(u)) := \oint_{B_i}dS_{SW}(\Sigma(u)) = \frac{\partial \mathfrak{F}^{SW}_{\Sigma_0}}{\partial a^i}(a),
\end{equation*}
$a^i_0 := a^i(\Sigma_0), b_i^0 := b_i(\Sigma_0)$ and $\mathfrak{F}^{SW}_{\Sigma_0}: \mathcal{B}_{\Sigma_0} \rightarrow \mathbb{C}$ is the Seiberg-Witten prepotential which is well-defined on a contractible set $\mathcal{B}_{\Sigma_0}$.

Let us compute $i\circ \Phi_{\Sigma_0}(\Sigma(u)) \in W^{Ram}_{Airy}$. On $\mathbb{A}_{i\pm, \epsilon_{i\pm}, M_{i\pm}}\cap V_p \subset \Sigma_0\cap U_{i\pm} \cap V_p$ we have $z_p(w_{i\pm};u) = z(\eta_{i\pm};u) = z(\sqrt{w_{i\pm} - P(z_i(u);u)};u)$. The square-root does not introduce any ambiguity because we are restricted to the subset $\mathbb{A}_{i\pm, \epsilon_{i\pm}, M_{i\pm}}\cap V_p$. We rewrite (\ref{phisw}) on $\mathbb{A}_{i\pm, \epsilon_{i\pm}, M_{i\pm}}\cap V_p$ in the local coordinates $(\bar{w}_{i\pm}, \bar{z}_{i\pm})$ as 
\begin{align*}
    \Phi_{\Sigma_0}(\Sigma(u))|_{\mathbb{A}_{i\pm, \epsilon_{i\pm}, M_{i\pm}}\cap V_p} = \frac{z\left(\sqrt{F^{-1}_{i\pm}(\bar{w}_{i\pm}) - P(z_i(u_0);u_0)};u_0\right)}{F'_{i\pm}\left(F^{-1}_{i\pm}(\bar{w}_{i\pm})\right)\sqrt{F^{-1}_{i\pm}(\bar{w}_{i\pm})^2 - 4}}d\bar{w}_{i\pm} - \frac{z\left(\sqrt{F^{-1}_{i\pm}(\bar{w}_{i\pm}) - P(z_i(u);u)};u\right)}{F'_{i\pm}\left(F^{-1}_{i\pm}(\bar{w}_{i\pm})\right)\sqrt{F^{-1}_{i\pm}(\bar{w}_{i\pm})^2 - 4}}d\bar{w}_{i\pm}.
\end{align*}
On $\mathbb{A}_{i\pm, \epsilon_{i\pm}, M_{i\pm}} \subset \Sigma_0$ we have $\bar{w}_{i\pm} = \bar{\eta}_{i\pm}^2 + F_{i\pm}\left(P(z_i(u_0);u_0)\right) = \bar{\eta}_{i\pm}^2$ and using (\ref{fsoln2}) it follows that
\begin{align*}
    &i_{i\pm}\circ\Phi_{\Sigma_0}(\Sigma(u))|_{\mathbb{A}_{i\pm, \epsilon_{i\pm}, M_{i\pm}}}\\
    &\qquad\qquad= \frac{z\left(\sqrt{F^{-1}_{i\pm}(\bar{\eta}^2_{i\pm}) - P(z_i(u_0);u_0)};u_0\right)\bar{\eta}_{i\pm}}{z_{odd}\left(\sqrt{F^{-1}_{i\pm}(\bar{\eta}_{i\pm}^2) - P(z_i(u_0);u_0)};u_0\right)}d\bar{\eta}^2_{i\pm} - \frac{z\left(\sqrt{F^{-1}_{i\pm}(\bar{\eta}^2_{i\pm}) - P(z_i(u);u)};u\right)\bar{\eta}_{i\pm}}{z_{odd}\left(\sqrt{F^{-1}_{i\pm}(\bar{\eta}_{i\pm}^2) - P(z_i(u_0);u_0)};u_0\right)}d\bar{\eta}^2_{i\pm}\\
    &\qquad\qquad= \bar{\eta}_{i\pm}d\bar{\eta}_{i\pm}^2 - \frac{\left(z\left(\sqrt{F^{-1}_{i\pm}(\bar{\eta}^2_{i\pm}) - P(z_i(u);u)};u\right) - z_{even}\left(\sqrt{F^{-1}_{i\pm}(\bar{\eta}^2_{i\pm}) - P(z_i(u_0);u_0)};u_0\right)\right)\bar{\eta}_{i\pm}}{z_{odd}\left(\sqrt{F^{-1}_{i\pm}(\bar{\eta}_{i\pm}^2) - P(z_i(u_0);u_0)};u_0\right)}d\bar{\eta}_{i\pm}^2\\
    &\qquad\qquad= \bar{\eta}_{i\pm}d\bar{\eta}_{i\pm}^2 - \sum_{k=0}^\infty b_{i\pm,k}(\Sigma)\left(1 - \frac{F_{i\pm}(P(z_i(u);u))}{\bar{\eta}^2_{i\pm}}\right)^{\frac{k}{2}}\bar{\eta}_{i\pm}^kd\bar{\eta}_{i\pm}^2\\
    &\qquad\qquad= \bar{\eta}_{i\pm}d\bar{\eta}_{i\pm}^2 - \exp\left(-F_{i\pm}\left(P(z_i(u);u)\right)\mathcal{L}_{\frac{1}{2\bar{\eta}_{i\pm}}\partial_{\bar{\eta}_{i\pm}}}\right)\left(\sum_{k=0}^\infty b_{i\pm,k}(\Sigma)\bar{\eta}^k_{i\pm} d(\bar{\eta}^2_{i\pm})\right)\\
    &\qquad\qquad= \Phi_{t_{i\pm}(\Sigma_0)}(t_{i\pm}(\Sigma)).
\end{align*}
Evidently, we have an agreement: $i\circ\Phi_{\Sigma_0}(\Sigma(u)) = \Phi_{t(\Sigma_0)}\circ \gamma(\Sigma)$. In particular, each $i_{i\pm}\circ\Phi_{\Sigma_0}(\Sigma(u)) = \sum_{k\neq 0}J^{i\pm}_k(u)\bar{\eta}_{i\pm}^k\frac{d\bar{\eta}_{i\pm}}{\bar{\eta}_{i\pm}}$ satisfies the residue constrains and therefore, $i\circ\Phi_{\Sigma_0}(\Sigma(u)) = \sum_{i=1}^g\sum_{k\neq 0}J^{i\pm}_k(u)[i\pm]\otimes \bar{\eta}_{i\pm}^k\frac{d\bar{\eta}_{i\pm}}{\bar{\eta}_{i\pm}} \in L^{Ram}_{Airy} \in W^{Ram}_{Airy}$. 

\subsection{Seiberg-Witten Prepotential and Topological Recursion}\label{swprepotentialandtrsection}

Let us now express $\Phi_{\Sigma_0}(\Sigma(u))$ in terms of the normalized holomorphic differential $\{\omega_{i = 1,...,g}\}$ and meromorphic differentials $\{\bar{e}^{k,i\pm}\}$ (see Definition \ref{endifferentialdefinition}) on $\Sigma_0$. From Corollary \ref{gisvoplush0corollary}, the fact that $\Phi_{\Sigma_0}(\Sigma(u))\in G_{\Sigma_0}$ and the principal part of $i_{i\pm}\circ \tilde{\Phi}_{\Sigma_0}(\Sigma(u))$ is the principal part of $i_{i\pm}\circ \Phi_{\Sigma_0}(\Sigma(u)) = \Phi_{t_{i\pm}(\Sigma_0)}(t_{i\pm}(\Sigma))$ which is $\sum_{k > 0}J^{i\pm}_k(u)\bar{\eta}^k_{i\pm}\frac{d\bar{\eta}_{i\pm}}{\bar{\eta}_{i\pm}}$, we have
\begin{equation}\label{phiinglobaldiffformbasis1}
    \Phi_{\Sigma_0}(\Sigma(u)) = \sum_{i=1}^g\sum_{k > 0}^\infty \left(J^{i+}_k(u)\bar{e}^{k,i+} + J^{i-}_k(u)\bar{e}^{k,i-}\right) + \sum_{i=1}^g(a_{0}^i - a^i)\omega_i.
\end{equation}
Another way to look at this is that the principal part of $\sum_{i=1}^g\sum_{k > 0}^\infty \left(J^{i+}_k(u)\bar{e}^{k,i+} + J^{i-}_k(u)\bar{e}^{k,i-}\right)$ exactly matches that of $\Phi_{\Sigma_0}(\Sigma(u))$. Therefore, $\Phi_{\Sigma_0}(\Sigma(u)) - \sum_{i=1}^g\sum_{k > 0}^\infty \left(J^{i+}_k(u)\bar{e}^{k,i+} + J^{i-}_k(u)\bar{e}^{k,i-}\right)$ can be analytically extended to a meromorphic differential form defined on the entire curve $\Sigma_0$, and it is, in fact, a holomorphic on $\Sigma_0$ because neither term has singularity anywhere except at ramification points $r_\alpha, \alpha \in Ram$. Subtracting $\sum_{i=1}^g(a_{0}^i - a^i)\omega_i$, we get a global holomorphic differential form on $\Sigma_0$ with vanishing $A$-periods, hence must be identically zero and (\ref{phiinglobaldiffformbasis1}) follows. 

Let us extend the set $\{\omega_1,...,\omega_g\}$ into a new canonical basis of $W^{Ram}_{Airy}$. First, let us compute the $\Omega_{Airy}$ symplectic pairings between $\{e^{k,i\pm}\}$ and $\{\omega_i\}$. Suppose that
\begin{equation*}
    i(\omega_j) = \sum_{i=1}^g\sum_{k=1}^\infty\left(c^{k,i+}_j [i+]\otimes k\bar{\eta}_{i+}^k\frac{d\bar{\eta}_{i+}}{\bar{\eta}_{i+}} + c^{k,i-}_j [i-]\otimes k\bar{\eta}_{i-}^k\frac{d\bar{\eta}_{i-}}{\bar{\eta}_{i-}}\right)
\end{equation*}
then
\begin{align*}
    \oint_{p \in B_j}\bar{e}^{k,i\pm}(p) &= \frac{1}{2\pi ik}\oint_{p \in B_j}\oint_{\bar{\eta}_{i\pm} = 0} \frac{B(p, q(\bar{\eta}_{i\pm}))}{\bar{\eta}^k_{i\pm}} = \frac{1}{2\pi i k}\oint_{\bar{\eta}_{i\pm}=0}\frac{1}{\bar{\eta}_{i\pm}^k}\oint_{p \in B_j}B(p, q(\bar{\eta}_{i\pm}))\\
    &= \frac{1}{k}\oint_{\bar{\eta}_{i\pm} = 0}\frac{1}{\bar{\eta}_{i\pm}^k}\omega_j(q(\bar{\eta}_{i\pm})) = 2\pi ic^{k,i\pm}_j.
\end{align*}
Then it follows that the symplectic pairings are
\begin{equation*}
    \Omega_{Airy}(\bar{e}^{k,i\pm}, \omega_j) = \frac{1}{2\pi i}\sum_{l=1}^g\left(\oint_{A_l}\omega_j\oint_{B_l}\bar{e}^{i\pm}_k - \oint_{B_l}\omega_j\oint_{A_l}\bar{e}^{i\pm}_k\right) = \frac{1}{2\pi i}\oint_{B_j}\bar{e}^{k,i\pm} = c^{k,i\pm}_j.
\end{equation*}
This shows that $\{\omega_i, e^{k,i\pm}\ |\ i = 1,...,g, k \geq 2\}$ cannot be extended to a canonical basis. Let us examine some properties of the coefficients $c^{k,i\pm}_j$.

\begin{lemma}
$c^{k,i\pm}_j = -c^{k,i\mp}_j$.
\end{lemma}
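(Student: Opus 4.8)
The plan is to exploit the fact that the two ramification points $r_{i+}$ and $r_{i-}$ of the Seiberg-Witten curve lie above the same branch point $z = z_i(u_0)$ of the hyperelliptic map $\pi : \Sigma_0 \to \mathbb{P}^1$, and are interchanged by the hyperelliptic involution $\iota : (w,z) \mapsto (1/w, z)$. First I would recall from Definition \ref{endifferentialdefinition} that $\bar{e}^{k,i\pm} = \frac{1}{2\pi i k}\oint_{\bar{\eta}_{i\pm} = 0} \frac{B(\cdot, q(\bar{\eta}_{i\pm}))}{\bar{\eta}_{i\pm}^k}$ and that the symplectic pairing computation just performed gives $c^{k,i\pm}_j = \Omega_{Airy}(\bar{e}^{k,i\pm}, \omega_j) = \frac{1}{2\pi i}\oint_{B_j}\bar{e}^{k,i\pm} = \frac{1}{2\pi i k}\oint_{\bar{\eta}_{i\pm}=0}\frac{1}{\bar{\eta}_{i\pm}^k}\omega_j(q(\bar{\eta}_{i\pm}))$. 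So it suffices to compare the Laurent coefficients (in the standard local coordinate $\bar{\eta}_{i\pm}$) of the normalized holomorphic differential $\omega_j$ near $r_{i+}$ and near $r_{i-}$.

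The key step is to track how the standard local coordinates $\bar{\eta}_{i+}$ and $\bar{\eta}_{i-}$ behave under the hyperelliptic involution $\iota$. Since $\iota$ fixes $z$ and $\iota(r_{i+}) = r_{i-}$, and since $\bar{\eta}_{i\pm}^2 = F_{i\pm}(P(z;u) - \text{const})$ with $F_{i+} = F_{i-}$ (as noted in the excerpt, because the chosen third root of unity makes these functions equal, mirroring $w_{i+} = w + 1/w = w_{i-}$), the function $\bar{\eta}_{i\pm}$ depends only on $z$ near the branch point and in fact $\bar{\eta}_{i+}$ and $\bar{\eta}_{i-}$ are literally the same function of $z$ composed with the two branches $w_\pm(z)$ of the double cover. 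Pulling back along $\iota$ therefore identifies a neighborhood of $r_{i+}$ with a neighborhood of $r_{i-}$ via $\bar{\eta}_{i+} \leftrightarrow \bar{\eta}_{i-}$ up to sign; I would pin down that the correct relation is $\iota^* \bar{\eta}_{i-} = \bar{\eta}_{i+}$ with no extra sign (or with a sign, to be determined by the explicit formula \eqref{stdlocalcoordinatesswformula} and the branch choice), while the crucial point is that $\omega_j$, being a holomorphic differential pulled back from data symmetric under $\iota$ in the relevant sense, satisfies $\iota^*\omega_j = -\omega_j$. This last fact is standard for hyperelliptic curves: the space of holomorphic differentials is spanned by $\frac{z^{k-1}dz}{y}$ with $y = \Lambda^{g+1}(w - 1/w)$, and $\iota^* y = -y$ while $\iota^* z = z$, so every holomorphic differential is anti-invariant.

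Combining these, $c^{k,i-}_j = \frac{1}{2\pi i k}\oint_{\bar{\eta}_{i-}=0}\frac{\omega_j(q(\bar{\eta}_{i-}))}{\bar{\eta}_{i-}^k}$; substituting the change of variable induced by $\iota$ and using $\iota^*\omega_j = -\omega_j$ turns this integral into $-\frac{1}{2\pi i k}\oint_{\bar{\eta}_{i+}=0}\frac{\omega_j(q(\bar{\eta}_{i+}))}{\bar{\eta}_{i+}^k} = -c^{k,i+}_j$, with the contour orientation preserved because $\iota$ is holomorphic. I expect the main obstacle to be bookkeeping the signs carefully: one must verify that the third-root-of-unity choice for $F_{i\pm}$ really does make $\bar{\eta}_{i+}$ and $\bar{\eta}_{i-}$ match up under $\iota$ without introducing a spurious cube root or sign, and that the orientation of the small loop $\{\bar{\eta}_{i\pm} = 0\}$ is carried correctly through the substitution. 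A cleaner alternative, which I would pursue in parallel as a sanity check, is to argue directly at the level of $\Phi_{\Sigma_0}(\Sigma(u))$: the Seiberg-Witten differential $dS_{SW} = z\,dw/w$ satisfies $\iota^* dS_{SW} = -dS_{SW} + d(\text{something})$ (indeed $z\,dw/w$ maps to $-z\,dw/w$ under $w \mapsto 1/w$ up to an exact form), so $\Phi_{\Sigma_0}(\Sigma(u))$ is anti-invariant under $\iota$ modulo exact forms; since $\iota$ swaps $r_{i+}$ and $r_{i-}$, reading off the principal parts near the two points and comparing with the expansion \eqref{phiinglobaldiffformbasis1} forces $J^{i+}_k = -J^{i-}_k$, and the same anti-invariance argument applied to each $\omega_j$ (which appears in the expansion with coefficient $a_0^j - a^j$ symmetric under $\iota$, hence $\omega_j$ itself anti-invariant) yields $c^{k,i+}_j = -c^{k,i-}_j$ after matching coefficients of $\bar{e}^{k,i\pm}$.
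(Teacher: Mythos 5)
Your proposal is correct and follows essentially the same route as the paper: both proofs use the hyperelliptic involution $(w,z)\mapsto(1/w,z)$ swapping $r_{i+}$ and $r_{i-}$, the anti-invariance $\omega_j(p) = -\omega_j(\hat p)$ of holomorphic differentials, and the fact that $F_{i+}=F_{i-}$ forces $\bar\eta_{i+}(p)=\bar\eta_{i-}(\hat p)$, so that comparing Laurent coefficients gives the sign flip. The only difference is presentational — the paper asserts the coordinate identification directly rather than flagging it as a sign to be pinned down — so no further work is needed.
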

\begin{proof}
We note that each normalized holomorphic differential form $\omega_i$ can be written as a linear combination of $\{\frac{z^{i-1}dz}{w - \frac{1}{w}}\ |\ i = 1,...,g\}$. Since we can map the neighbourhood $\Sigma_0 \cap U_{i+}$ of each ramification point $r_{i+}$ to the corresponding neighbourhood $\Sigma_0 \cap U_{i-}$ of $r_{i-}$ via the involution $p = (z,w) \mapsto \hat{p} = (z,\frac{1}{w})$, it follows that $\omega_k(p) = -\omega_k(\hat{p})$. Let us examine the involuation in term of the standard local coordinates $\bar{\eta}_{i\pm}$ as defined in (\ref{stdlocalcoordinatesswformula}). Suppose that $p \in \Sigma_0\cap U_{i+}$. First, we note that we have $\eta_{i+}(p) = \eta_{i-}(\hat{p})$ because $\eta^2_{i\pm} = P(z;u) - P(z_i(u);u)$ and $z(p) = z(\hat{p})$. Because we have chosen $F_{i+}$ and $F_{i-}$ to be the same (see (\ref{fsoln2})), therefore the relationship between $\bar{\eta}_{i\pm}$ and $\eta_{i\pm}$ as given in (\ref{stdlocalcoordinatesswformula}) implies that $\bar{\eta}_{i+}(p) = \bar{\eta}_{i-}(\hat{p})$. Writing $\omega_{k}(p) = -\omega_k(\hat{p})$ in the standard local coordinates we have
\begin{equation*}
    \sum_{k=0}^\infty c^{k,i+}_jk\bar{\eta}_{i+}^k(p)\frac{d\bar{\eta}_{i+}(p)}{\bar{\eta}_{i+}(p)} = i_{i+}\omega_j(p) = -i_{i-}\omega_j(\hat{p}) = -\sum_{k=0}^\infty c^{k,i-}_jk\bar{\eta}_{i-}^k(\hat{p})\frac{d\bar{\eta}_{i-}(\hat{p})}{\bar{\eta}_{i-}(\hat{p})} = -\sum_{k=0}^\infty c^{k,i-}_jk\bar{\eta}_{i+}^k(p)\frac{d\bar{\eta}_{i+}(p)}{\bar{\eta}_{i+}(p)},
\end{equation*}
from which we conclude that $c^{k,i\pm}_j = -c^{k,i\mp}_j$.
\end{proof}

Let $(c^i_j)$ be a $g\times g$ matrix where $c^i_j := c^{1,i+}_j = -c^{1,i-}_j$.
\begin{lemma}
The matrix $(c^i_j)$ is invertible. 
\end{lemma}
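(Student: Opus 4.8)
The plan is to show that the $g \times g$ matrix $(c^i_j)$ with $c^i_j := c^{1,i+}_j = -c^{1,i-}_j$ is invertible by identifying it (up to an invertible change of basis on the $\omega$-side) with the matrix of leading Laurent coefficients of the normalized holomorphic differentials at the ramification points, and then arguing that a nontrivial kernel vector would produce a nonzero global holomorphic form on $\Sigma_0$ which vanishes to higher-than-expected order at every ramification point — forcing it to have too many zeros, a contradiction.

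First I would make the relevant coefficient explicit. From the symplectic pairing computation already carried out, $c^{k,i\pm}_j = \frac{1}{2\pi i}\oint_{B_j}\bar e^{k,i\pm} = \frac{1}{2\pi i k}\oint_{\bar\eta_{i\pm}=0}\bar\eta_{i\pm}^{-k}\,\omega_j(q(\bar\eta_{i\pm}))$, so for $k=1$, $c^i_j$ is (a constant multiple of) the coefficient of $\bar\eta_{i+}\frac{d\bar\eta_{i+}}{\bar\eta_{i+}} = d\bar\eta_{i+}$ in the Laurent (here, Taylor) expansion $i_{i+}\omega_j = \sum_{k\ge 1} k\,c^{k,i+}_j \bar\eta_{i+}^{k}\frac{d\bar\eta_{i+}}{\bar\eta_{i+}}$ of the normalized holomorphic differential $\omega_j$ at $r_{i+}$ in the standard local coordinate. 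Since $\bar\eta_{i+}$ is a genuine local coordinate on $\Sigma_0$ near $r_{i+}$, the quantity $c^i_j$ is simply the value, up to a nonzero scalar depending only on $i$, of $\omega_j/d\bar\eta_{i+}$ at $r_{i+}$; in particular $(c^i_j)$ is invertible if and only if the matrix $\big(\omega_j(r_{i+})\big)_{i,j}$ (in any local trivialization) is invertible.

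Next I would suppose for contradiction that there is $0 \ne (\lambda^1,\dots,\lambda^g) \in \mathbb{C}^g$ with $\sum_j \lambda^j c^i_j = 0$ for all $i = 1,\dots,g$, and set $\omega := \sum_j \lambda^j \omega_j \in \Gamma(\Sigma_0, \Omega^1_{\Sigma_0})$, which is nonzero since the $\omega_j$ are a basis. The relation says precisely that the leading Taylor coefficient of $\omega$ in the coordinate $\bar\eta_{i+}$ vanishes at each $r_{i+}$, i.e. $\omega$ has a zero of order $\ge 1$ in $\bar\eta_{i+}$ at every $r_{i+}$; but recall from Section \ref{intro.swsection} that every holomorphic form on the hyperelliptic curve $\Sigma_0$ is a linear combination of $\frac{z^{k-1}dz}{y}$, $y = \Lambda^{g+1}(w-\tfrac1w)$, and such a form already vanishes at each ramification point $(y,z)=(0,z^{i\pm})$ because $dz$ has a simple zero there while $1/y$ has a simple pole — so generic holomorphic forms vanish to order exactly $1$ at each $r_\alpha$. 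The extra vanishing imposed by the kernel relation would force $\omega$ to vanish to order $\ge 2$ (in $z - z_i(u_0)$, equivalently order $\ge 1$ in $\bar\eta_{i+}$ beyond the automatic one) at every $r_{i+}$; counting zeros of $\omega$ on $\Sigma_0$ then exceeds $\deg \Omega^1_{\Sigma_0} = 2g-2$, contradicting $\omega \ne 0$. I would make this precise by working on the $z$-line double cover $\pi:\Sigma_0\to\mathbb P^1$: writing $\omega = \frac{Q(z)\,dz}{y}$ with $\deg Q \le g-1$, the condition $c^i_j\lambda^j = 0$ translates into $Q(z_i(u_0)) = 0$ for each of the $g$ distinct critical points $z_i(u_0)$ of $P(z;u_0)$, forcing $Q \equiv 0$ and hence $\omega = 0$, the contradiction.

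The main obstacle I anticipate is the bookkeeping in the second step — pinning down exactly which Taylor coefficient of $\omega$ the vanishing $\sum_j\lambda^j c^i_j=0$ controls, given the two coordinate changes involved (from $(w,z)$ to $(w_{i\pm},z_{i\pm})$ via (\ref{coordtransformationwztounbar}), then to $(\bar w_{i\pm},\bar z_{i\pm})$ via (\ref{coordstransfunbartobar})), and verifying that these changes do not accidentally kill the leading term or mix it with the constant. Since each transformation is of the biholomorphic form (\ref{coordtransformationpreservingomegaandfoliation}) with nonvanishing derivative at the ramification point, $\bar\eta_{i+}$ differs from $\eta_{i+}$ (and from $\sqrt{z-z_i(u_0)}$ up to a nonzero scalar) by a unit, so the order of vanishing is preserved and the argument goes through; but this is the place where care is needed. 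Once that is settled, the zero-count argument via $Q(z)$ with $\deg Q \le g-1$ having $\ge g$ roots is the clean finish.
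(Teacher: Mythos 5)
Your core argument is correct and is essentially the paper's: a kernel vector $(\lambda^j)$ produces a nonzero holomorphic form $\omega=\sum_j\lambda^j\omega_j$ vanishing at the ramification points, which is impossible by a zero count. The paper finishes abstractly ($\omega$ vanishes at all $2g$ points $r_{i\pm}$, but $\deg\Omega^1_{\Sigma_0}=2g-2$), whereas you finish concretely by writing $\omega=Q(z)\,dz/y$ with $\deg Q\le g-1$ and noting that $Q(z_i(u_0))=0$ for the $g$ distinct critical points forces $Q\equiv 0$; both are valid, and your version has the small advantage of making explicit why the vanishing at $r_{i+}$ and $r_{i-}$ are the same condition (both reduce to $Q(z_i)=0$).

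However, your middle paragraph contains a genuine confusion that you should excise. The points $r_{i\pm}$ of $Ram$ are the foliation ramification points, where $P'(z)=0$ and $w+\tfrac1w=P(z_i(u_0))\neq\pm 2$, so $y=\Lambda^{g+1}(w-\tfrac1w)\neq 0$ there and $z-z_i(u_0)$ is an honest local coordinate. They are \emph{not} the hyperelliptic branch points $(y,z)=(0,z^{i\pm})$ from Section \ref{intro.swsection}, where $P(z)^2=4\Lambda^{2g+2}$. Consequently the claim that $\frac{z^{k-1}dz}{y}$ ``already vanishes to order $1$'' at each $r_\alpha$, and that the kernel relation forces vanishing to order $\ge 2$, is false: at $r_{i\pm}$ nothing vanishes automatically, and the relation $\sum_j\lambda^jc^i_j=0$ forces only order $\ge 1$ vanishing of $\omega$ there (equivalently $Q(z_i)=0$), exactly as your final, correct computation shows. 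Since $2g>2g-2$ (resp.\ $g>g-1$), order-one vanishing at these points already suffices, so the conclusion stands once that paragraph is replaced by the correct statement.
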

\begin{proof}
Let $(a^j) \in \mathbb{C}^g$ be a vector such that $\sum_{j=1}^g c^i_ja^j = 0$. Then $\omega := \sum_{j=1}^g a^j\omega_j$ is a holomorphic differential form on $\Sigma_0$ such that 
\begin{equation*}
    \omega(r_{i\pm}) = \frac{1}{2\pi i}\oint_{\bar{\eta}_{i\pm}=0}\frac{1}{\bar{\eta}_{i\pm}}\omega(q(\bar{\eta}_{i\pm})) = \sum_{j=1}^g c^{1,i\pm}_ja^j = \pm \sum_{j=1}^g c_j^ia^j = 0
\end{equation*}
for all $i\pm \in Ram$. So $\omega$ vanishes at every ramification points and therefore has at least $2g$ zeros and has no poles. But $\deg K_{\Sigma_0} = 2g - 2 < 2g$ which is a contradiction, unless $(a^j) = 0$. In other words, $(c_j^i)$ is invertible.
\end{proof}

Let $(b^i_j)$ be the inverse of $(c^i_j)$, i.e. $\sum_{k=1}^g b^{k}_ic_{k}^j = \delta_i^j$.
Now, let us choose a new canonical basis for $W^{Ram}_{Airy}$ to be
\begin{equation}\label{newbasissw}
\{\tau^i,\bar{\tau}^i,\tau^{k,i\pm}, \omega_i, \bar{\omega}_i, \omega_{k,i\pm}\ |\ i = 1,...,g, k\geq 2\}
\end{equation}
where
\begin{align*}
    \tau^i &:= \frac{1}{2}\sum_{j=1}^gb^{i}_j\left(\bar{e}^{1,j+} - \bar{e}^{1,j-}\right), \qquad \bar{\tau}^i := \frac{1}{2}\sum_{j=1}^gb^{i}_j\left(\bar{e}^{1,j+} + \bar{e}^{1,j-}\right), \qquad \tau^{k,i\pm} := \bar{e}^{k,i\pm} - \sum_{j,l=1}^gc^{k,i\pm}_jb^{j}_l\bar{e}^{1,l\pm}\\
    \bar{\omega}_i &:= \sum_{j=1}^g\sum_{k=1}^\infty\left(c^{k,j+}_i[j+]\otimes k\bar{\eta}_{j+}^k\frac{d\bar{\eta}_{j+}}{\bar{\eta}_{j+}} - c^{k,j-}_i[j-]\otimes k\bar{\eta}_{j-}^k\frac{d\bar{\eta}_{j-}}{\bar{\eta}_{j-}}\right), \qquad \omega^{k,i\pm} := [i\pm]\otimes k\bar{\eta}_{i\pm}^k\frac{d\bar{\eta}_{i\pm}}{\bar{\eta}_{i\pm}}.
\end{align*}
Note that this is the type of gauge transformation we studied in Section \ref{analyticresconstraintssubsection} and Section \ref{ramproductresconstraintssubsection}.
\begin{remark}
Observe that $\tau^i, \bar{\tau}^i, \tau^{k,i\pm}$ and $\omega_i$ are all global meromorphic differential forms on $\Sigma_0$ belonging to $G_{\Sigma_0}$ while $\bar{\omega}_i, \omega_{k,i\pm} \in W^{Ram}_{Airy}$ are only locally defined near each ramification points $r_{i\pm}$.
\end{remark}

To confirm that we have defined a canonical basis we can check that for $i,j = 1,\cdots, g, \alpha, \beta \in Ram$ and $l,k \geq 2$ we have
\begin{align*}
    \Omega_{Airy}(\tau^i, \omega_j) = \delta_j^i, \qquad \Omega_{Airy}(\bar{\tau}^i, \bar{\omega}_j) = \delta_j^i, \qquad \Omega_{Airy}(\tau^{k,\alpha}, \omega_{l,\beta}) = \delta^k_l\delta^\alpha_\beta
\end{align*}
and all other pairings vanish. In particular,  
\begin{equation}\label{abperiodsofnewbasis}
    \oint_{A_i}\omega_j = \delta_{j}^i,\qquad \oint_{B_i}\omega_j = \tau_{ij}(a_0),\qquad \oint_{A_i}\tau^j = 0,\qquad \oint_{B_i}\tau^j = 2\pi i \delta^{j}_i
\end{equation}
and all $A,B$-periods of $\bar{\tau}^i, \bar{e}^{k,i\pm}$ are zero. 

Substitute $\bar{e}^{1,i\pm} = \sum_{j=1}^gc^i_j(\bar{\tau}^j \pm \tau^j), \bar{e}^{k,i\pm} = \tau^{k,i\pm} + \sum_{j,l = 1}^gc^{k,i\pm}_j(\bar{\tau}^j \pm \tau^j)$ into (\ref{phiinglobaldiffformbasis1}) and recollect terms, we can rewrite (\ref{phiinglobaldiffformbasis1}) as
\begin{align*}
    \Phi_{\Sigma_0}(\Sigma(u)) &= \sum_{i=1}^g\left(\alpha^i\omega_i + \bar{x}^i\bar{\omega}_i + \sum_{k=2}^\infty\left(\bar{x}^{k,i+}\omega_{k,i+} + \bar{x}^{k,i-}\omega_{k,i-}\right)\right)\\
    &\qquad + \sum_{i=1}^g\left(\beta_i\tau^i + \bar{y}_i\bar{\tau}^i + \sum_{k=2}^\infty \left(\bar{y}_{k,i+}\tau^{k,i+} + \bar{y}_{k,i-}\tau^{k,i-}\right)\right)\label{phiinglobaldiffformbasis2}\numberthis
\end{align*}
where
\begin{align*}
    \alpha^i &= a_{0}^i - a^i, \qquad \bar{x}^i = 0, \qquad \bar{x}^{k,i\pm} = 0,\qquad
    \beta_i = \sum_{j=1}^g\sum_{k=1}^\infty \left(c^{k,j+}_iJ^{j+}_k(u) - c^{k,j-}_iJ^{j-}_k(u)\right), \\ 
    \bar{y}_i &= \sum_{j=1}^g\sum_{k=1}^\infty \left(c^{k,j+}_iJ^{j+}_k(u) + c^{k,j-}_iJ^{j-}_k(u)\right),\qquad
    \bar{y}_{k,i\pm} = J^{i\pm}_k(u).
\end{align*}
There is no technical difficulties involved in recollection of terms because $\bar{e}^{k,i\pm}$ are written as a finite sum of $\tau^i, \bar{\tau}^i, \tau^{k,i\pm}$ and infinite sums commute with finite sums. 

Let $(V^{mer}_{\Sigma_0}, \bar{A}_{Airy}, \bar{B}_{Airy}, \bar{C}_{Airy}, \bar{\epsilon}_{Airy})$ be the gauge transformed quantum Airy structure of\\
$(V^{Ram}_{Airy}, A_{Airy}, B_{Airy}, C_{Airy}, \epsilon_{Airy})$ corresponding to the basis (\ref{newbasissw}) of $W^{|Ram|}_{Airy}$. The ATR output is given by $S = \sum_{g\geq 0}\hbar^{g-1}S_g$ where $S_g = \sum_{n=1}^\infty S_{g,n}$,
\begin{align*}
    S_{g,n} &= S_{g,n}\left(\{\alpha^i\}, \{\bar{x}^i\}, \{\bar{x}^{k,i\pm}\}\right)\\
    &= \sum_{\substack{n_1,n_2,n_3 \geq 0\\n_1 + n_2 + n_3 = n}}\frac{1}{n_1!n_2!n_3!}\sum_{\substack{i_1,...,i_{n_1} = 1,...,g\\j_1,...,j_{n_2} = 1,...,g\\k_1,...,k_{n_3} \geq 2\\\alpha_1,...,\alpha_{n_3} \in Ram}}S_{g,n;\alpha^{i_1}...\alpha^{i_{n_1}}\bar{x}^{j_1}...\bar{x}^{j_{n_2}}\bar{x}^{k_1;\alpha_1}...\bar{x}^{k_{n_3};\alpha_{n_3}}}\alpha^{i_1}...\alpha^{i_{n_1}}\bar{x}^{j_1}...\bar{x}^{j_{n_2}}\bar{x}^{k_1,\alpha_1}...\bar{x}^{k_{n_3},\alpha_{n_3}}
\end{align*}
where
\begin{equation*}
    S_{g,n;\alpha^{i_1}...\alpha^{i_{n_1}}\bar{x}^{j_1}...\bar{x}^{j_{n_2}}\bar{x}^{k_1;\alpha_1}...\bar{x}^{k_{n_3};\alpha_{n_3}}} := \frac{\partial}{\partial\alpha^{i_1}}...\frac{\partial}{\partial\alpha^{i_{n_1}}}\frac{\partial}{\partial\bar{x}^{j_1}}...\frac{\partial}{\partial\bar{x}^{j_{n_2}}}\frac{\partial}{\partial\bar{x}^{k_1,\alpha_1}}...\frac{\partial}{\partial\bar{x}^{k_{n_3};\alpha_{n_3}}}S_g\Bigg|_{\substack{\alpha^i = \bar{x}^i = \bar{x}^{k,i\pm} = 0\\\forall k \geq 2, i = 1,...,g}}.
\end{equation*}
Since $\Phi_{\Sigma_0}(\Sigma(u)) \in L^{Ram}_{Airy}$, it follows that we can re-write (\ref{phiinglobaldiffformbasis2}) as
\begin{align*}\label{phiinglobaldiffformbasis3}
    \Phi_{\Sigma_0}(\Sigma(u)) = \sum_{i=1}^g(a_{0}^i - a^i)\omega_i &+ \sum_{i=1}^g\left(\frac{\partial S_0}{\partial \alpha^i}\Big|_{\substack{\alpha^i = a_{0}^i-a^i\\\bar{x}^i = \bar{x}^{k,i\pm}=0\\\forall k \geq 2, i = 1,..,g}}\tau^i + \frac{\partial S_0}{\partial \bar{x}^i}\Big|_{\substack{\alpha^i = a_{0}^i-a^i\\\bar{x}^i = \bar{x}^{k,i\pm}=0\\\forall k \geq 2, i = 1,..,g}}\bar{\tau}^i\right)\\
    &+ \sum_{i=1}^g\sum_{k = 2}^\infty \left(\frac{\partial S_0}{\partial \bar{x}^{k,i+}}\Big|_{\substack{\alpha^i = a_{0}^i-a^i\\\bar{x}^i = \bar{x}^{k,i\pm}=0\\\forall k \geq 2, i = 1,..,g}}\bar{e}^{k,i+} + \frac{\partial S_0}{\partial \bar{x}^{k,i-}}\Big|_{\substack{\alpha^i = a_{0}^i-a^i\\\bar{x}^i = \bar{x}^{k,i\pm}=0\\\forall k \geq 2, i = 1,..,g}}\bar{e}^{k,i-}\right)\numberthis
\end{align*}
Let $\mathcal{S}_0 = \mathcal{S}_0(\alpha^1,...,\alpha^g) := S_{0}(\{\alpha^i\}, \{\bar{x}^i = 0\}, \{\bar{x}^{k,i\pm} = 0\})$. Compute the $B$-periods of $\Phi_{\Sigma_0}(\Sigma(u))$ using (\ref{phiinglobaldiffformbasis3}), (\ref{abperiodsofnewbasis}) and the fact that $\frac{\partial}{\partial \alpha^i} = -\frac{\partial}{\partial a^i}$ we get
\begin{equation*}
    \frac{\partial \mathfrak{F}^{SW}_{\Sigma_0}}{\partial a^i}(a_{0}^1,...,a_{0}^g) - \frac{\partial \mathfrak{F}^{SW}_{\Sigma_0}}{\partial a^i}(a^1,...,a^g) = \oint_{B_i}\Phi_{\Sigma_0}(\Sigma(u)) = \sum_{j=1}^g(a_{0}^j - a^j)\tau_{ji}(a_0) - 2\pi i\frac{\partial \mathcal{S}_0(a_{0}^1-a^1,...,a_{0}^g - a^g)}{\partial a^i}.
\end{equation*}
Integrating both sides we obtain
\begin{align*}
    \mathfrak{F}^{SW}_{\Sigma_0}(a^1,...,a^g) &= \mathfrak{F}^{SW}_{\Sigma_0}(a_{0}^1,...,a_{0}^g) + \sum_{i=1}^g(a^i - a_{0}^i)\frac{\partial \mathfrak{F}^{SW}_{\Sigma_0}}{\partial a^i}(a_{0}^1,...,a_{0}^g)\\
    &\qquad + \frac{1}{2}\sum_{i,j=1}^g(a^i - a_{0}^i)(a^j - a_{0}^j)\frac{\partial^2\mathfrak{F}^{SW}_{\Sigma_0}}{\partial a^i\partial a^j}(a_{0}^1,...,a_{0}^g) + 2\pi i\mathcal{S}_0(a_{0}^1-a^1,...,a_{0}^g-a^g)\numberthis.
\end{align*}

Let us consider a topological recursion with spectral curve $(\Sigma_0, \log w, z, B)$ where $\Sigma_0$ is the Seiberg-Witten curve given by $w + \frac{1}{w} = P(z;u_0)$ and $B = B(p,q)$ is the normalized Bergman kernel on $\Sigma_0\times \Sigma_0$. The initial condition of the recursion shall be given by 
\begin{equation*}
\omega_{0,1}(p) = z(p)\frac{dw(p)}{w(p)} = dS_{SW}(p),\qquad \omega_{0,2}(p,q) = B(p,q).
\end{equation*}
It is easy to check using (\ref{coordstransfunbartobar}) that in the standard local coordinate $\bar{\eta}_{i\pm}$ in the neighbourhood $\Sigma_0\cap U_{i\pm}$ of each ramification point $r_{i\pm}(\Sigma_0)$ we have
\begin{equation*}
    \omega_{0,1}(\bar{\eta}_{i\pm}) - \omega_{0,1}(-\bar{\eta}_{i\pm}) = 4\bar{\eta}_{i\pm}^2d\bar{\eta}_{i\pm}.
\end{equation*}
From Proposition \ref{atrvstrproposition}, the output meromorphic multi-differential forms are given by
\begin{align*}
    \omega^{SW}_{g,n}(p_1,...,p_n) &= \sum_{\sigma \in S_n}\sum_{\substack{n_1,n_2,n_3 \geq 0\\n_1 + n_2 + n_3 = n}}\frac{1}{n_1!n_2!n_3!}\sum_{\substack{i_1,...,i_{n_1} = 1,...,g\\j_1,...,j_{n_2} = 1,...,g\\k_1,...,k_{n_3} \geq 2\\\alpha_1,...,\alpha_{n_3} \in Ram}}\Bigg(S_{g,n;\alpha^{i_1}...\alpha^{i_{n_1}}\bar{x}^{j_1}...\bar{x}^{j_{n_2}}\bar{x}^{k_1,\alpha_1}...\bar{x}^{k_{n_3},\alpha_{n_3}}}\\
    &\qquad\times \tau^{i_1}(p_{\sigma(1)})...\tau^{i_{n_1}}(p_{\sigma(n_1)})\bar{\tau}^{j_1}(p_{\sigma(n_1 + 1)})...\bar{\tau}^{j_{n_2}}(p_{\sigma(n_1 + n_2)})\bar{e}^{k_1,\alpha_1}(p_{\sigma(n_1+n_2+1)})...\bar{e}^{k_{n_3},\alpha_{n_3}}(p_{\sigma(n)})\Bigg)
\end{align*}
where the superscript $SW$ is added to signify that $\omega^{SW}_{g,n}$ are produced from topological recursion using Seiberg-Witten curve as initial data. Then we have
\begin{equation*}
    \oint_{p_1 \in B_{i_1}}...\oint_{p_n\in B_{i_n}}\omega^{SW}_{g,n}(p_1,...,p_n) = (2\pi i)^n S_{g,n;\alpha^{i_1}...\alpha^{i_n}}.
\end{equation*}
By definition we have
\begin{align*}
    &2\pi i\mathcal{S}_0(a_{0}^1-a^1,...,a_{0}^g-a^g) = \sum_{n = 3}^\infty\frac{1}{n!}\sum_{i_1,...,i_n = 1}^gS_{0,n;\alpha^{i_1}...\alpha^{i_n}}(a_{0}^{i_1} - a^{i_1})...(a_{0}^{i_n} - a^{i_n})\\
    &\qquad\qquad= \sum_{n=3}^\infty\frac{(-1)^n}{n!}\frac{1}{(2\pi i)^{n-1}}\sum_{i_1,...,i_n = 1}^g\left(\oint_{p_1 \in B_{i_1}}...\oint_{p_n\in B_{i_n}}\omega^{SW}_{0,n}(p_1,...,p_n)\right)(a^{i_1} - a_{0}^{i_1})...(a^{i_n} - a_{0}^{i_n}).
\end{align*}
Therefore, we arrive at the relation between the Seiberg-Witten prepotential $\mathfrak{F}^{SW}_{\Sigma_0}$ and meromorphic multi-differential forms $\omega_{0,n}$ produced from topological recursion:
\begin{align*}
    \mathfrak{F}^{SW}_{\Sigma_0}&(a^1,...,a^g) = \mathfrak{F}^{SW}_{\Sigma_0}(a_{0}^1,...,a_{0}^g)\\ 
    & + \sum_{i=1}^g(a^i - a_{0}^i)\frac{\partial \mathfrak{F}^{SW}_{\Sigma_0}}{\partial a^i}(a_{0}^1,...,a_{0}^g) + \frac{1}{2}\sum_{i,j=1}^g(a^i - a_{0}^i)(a^j - a_{0}^j)\frac{\partial^2\mathfrak{F}^{SW}_{\Sigma_0}}{\partial a^i\partial a^j}(a_{0}^1,...,a_{0}^g)\\
    & - \sum_{n=3}^\infty\frac{1}{n!}\left(\frac{i}{2\pi}\right)^{n-1}\sum_{i_1,...,i_n = 1}^g\left(\oint_{p_1 \in B_{i_1}}...\oint_{p_n\in B_{i_n}}\omega^{SW}_{0,n}(p_1,...,p_n)\right)(a^{i_1} - a_{0}^{i_1})...(a^{i_n} - a_{0}^{i_n})\label{swprepotentialvsomega0nformula}\numberthis,
\end{align*}
on the open neighbourhood $\mathcal{B}_{\Sigma_0}\subseteq \mathcal{B}$ of $[\Sigma_0] \in \mathcal{B}$. This agrees with the result of Theorem \ref{prepotentialvsomega0ntheorem} after identifying $\mathbf{a}^i := -a^i$ and $\mathfrak{F}_{\mathcal{B}}(\mathbf{a}^1,\cdots,\mathbf{a}^g) = \mathfrak{F}^{SW}_{\Sigma_0}(a^1,\cdots,a^g)$.

\printbibliography
\end{document}